\theoremstyle{plain}
\newtheorem{theorem}{Theorem}[section]
\newtheorem{lemma}[theorem]{Lemma}
\newtheorem{proposition}[theorem]{Proposition}
\newtheorem{corollary}[theorem]{Corollary}
\theoremstyle{definition}
\newtheorem{example}[theorem]{Example}
\newtheorem{definition}[theorem]{Definition}
\theoremstyle{remark}
\newtheorem{remark}[theorem]{Remark}
\newtheorem{question}[theorem]{Question}
\newtheorem{problem}[theorem]{Problem}
\DeclareMathOperator{\inter}{{\rm int}}
\DeclareMathOperator{\cl}{{\rm cl}}
\DeclareMathOperator{\bd}{{\rm bd}}
\DeclareMathOperator{\osc}{{\rm osc}}
\DeclareMathOperator{\Max}{{\rm Max}}
\DeclareMathOperator{\BMax}{\bf{Max}}
\begin{document}
\title{$\mathbf{E}$-compact extensions in the absence of the Axiom of Choice}
\author{ AliReza Olfati \\
Department of Mathematics, Faculty of Basic Sciences,\\ 
Yasouj University, Daneshjoo St.,  Yasouj 75918-74934, Iran\\
E-mail: alireza.olfati@yu.ac.ir
\and 
Eliza Wajch\\
Institute of Mathematics,
Faculty of Exact and Natural Sciences,\\
Siedlce University,\\
ul. 3 Maja 54, 08-110 Siedlce, Poland\\
E-mail: eliza.wajch@gmail.com}
\maketitle
\begin{abstract}
The main aim of this work is to show, in the absence of the Axiom of Choice, fundamental results on $\mathbf{E}$-compact extensions of $\mathbf{E}$-completely regular spaces, in particular, on Hewitt realcompactifications and Banaschewski compactifications. Some original results concern a special subring of the ring of all continuous real functions on a given zero-dimensional $T_1$-space. New facts about $P$-spaces, Baire topologies and $G_{\delta}$-topologies are also shown. Not all statements investigated here have proofs in $\mathbf{ZF}$. Some statements are shown equivalent to the Boolean Prime ideal Theorem, some are consequences of the Axiom of Countable Multiple Choices. \\

\noindent \emph{Mathematics Subject Classification}: Primary 03E65, 54D35, 54D60, 54C40; Secondary 03E25, 54C25, 54C35, 54D80, 28A60\\
 
\noindent\emph{Keywords and phrases}: $\mathbf{E}$-compact extension, $\mathbf{E}$-compactness, realcompactness, $\mathbb{N}$-compactness, Banaschewski compactification, maximal Tychonoff compactification, ring of continuous functions, real ideal, measure, weak forms of the Axiom of Choice.
\end{abstract}

\tableofcontents

\section{Introduction}
\label{Intro}
\subsection{Basic terminology and the main aim} 
\label{s1.1}
In this article, the intended context of reasoning and statements of theorems is the Zermelo-Fraenkel set theory $\mathbf{ZF}$ in which the Axiom of Choice (denoted by $\mathbf{AC}$) is deleted. To stress the fact that a result is proved in $\mathbf{ZF}$ or $\mathbf{ZF+\Psi}$ (where $\mathbf{\Psi}$ is a statement independent of $\mathbf{ZF}$), we shall write
at the beginning of the statements of the theorems and propositions [$%
\mathbf{ZF}$] or [$\mathbf{ZF+\Psi}$], respectively. The system $\mathbf{ZF+AC}$ is denoted by $\mathbf{ZFC}$. 

Before we describe in brief in Section \ref{s1.6} what our work is about, let us establish notation, recall several definitions and known facts.

We usually denote topological or metric spaces with boldface letters, and their underlying sets with lightface letters. For instance, if $\mathbf{X}$ is a topological or metric space, then $X$ is its underlying set.  

In what follows, we denote by $\tau_{nat}$ the natural topology on the real line $\mathbb{R}$, and by $d_e$ the metric on $\mathbb{R}$ defined by: for all $x,y\in\mathbb{R}$, $d_e(x,y)=|x-y|$. If this is not misleading, $\mathbb{R}$ will denote both the topological space $\langle \mathbb{R}, \tau_{nat}\rangle$ and the metric space $\langle \mathbb{R}, d_e\rangle$. The set of all positive integers is denoted by $\mathbb{N}$. We recall that a set $X$ is \emph{Dedekind-finite} if no proper subset of $X$ is equipotent to $X$; otherwise, $X$ is \emph{Dedekind-infinite}. We may assume that $\mathbb{N}=\omega\setminus\{0\}$ where $0$ is the empty set $\emptyset$, and $\omega$ is the set of all Dedekind-finite ordinal numbers of von Neumann. For $n\in\omega$, $n+1=n\cup\{n\}$. A set $X$ is \emph{finite} if there exists $n\in\omega$ such that $X$ is equipotent to $n$. A set equipotent to a subset of $\omega$ is called \emph{countable}. Sets equipotent to $\omega$ are called \emph{denumerable}.

Given a set $X$,  the power set of $X$ is denoted by $\mathcal{P}(X)$. The set of all finite subsets of $X$ is denoted by $[X]^{<\omega}$, and the set of all countable subsets of $X$ is denoted by $[X]^{\leq\omega}$.  The discrete space $\mathbf{X}=\langle X, \mathcal{P}(X)\rangle$ is denoted by $X_{disc}$.  For simplicity, the discrete space $\mathbb{N}_{disc}$ is denoted by $\mathbb{N}$.

Suppose that $\mathbf{X}=\langle X, \tau\rangle$ is a given topological space, and $A\subseteq X$. Then $\tau|_{A}=\{U\cap A: U\in\tau\}$ and $\mathbf{A}=\langle A, \tau|_{A}\rangle$ is the (topological) subspace of $\mathbf{X}$. If it is not stated otherwise, all subsets of $X$  will be considered as topological subspaces of $\mathbf{X}$ and, for simplicity, if $A\subseteq X$, then the subspace $\mathbf{A}$ of $\mathbf{X}$ will be also denoted by $A$. The closure of $A$ in $\mathbf{X}$ is denoted by $\cl_{\mathbf{X}}(A)$. The interior of $A$ in $\mathbf{X}$ is denoted by $\inter_{\mathbf{X}}(A)$, and $\bd_{\mathbf{X}}(A)=\cl_{\mathbf{X}}(A)\setminus\inter_{\mathbf{X}}(A)$.

For topological spaces $\mathbf{X}=\langle X, \tau_X\rangle$ and $\mathbf{E}=\langle E, \tau_E\rangle$, $C(\mathbf{X}, \mathbf{E})$ stands for the set of all $\langle \tau_X, \tau_E\rangle$-continuous mappings of $\mathbf{X}$ into $\mathbf{E}$, and $C^{\ast}(\mathbf{X}, \mathbf{E})$ is the set of all $f\in C(\mathbf{X}, \mathbf{E})$ such that $\cl_{\mathbf{E}}(f[X])$ is compact. If this is not misleading, instead of a $\langle \tau_X, \tau_E\rangle$-continuous mapping, we write simply a \emph{continuous mapping}. As usual, $C(\mathbf{X})=C(\mathbf{X}, \mathbb{R})$ and $C^{\ast}(\mathbf{X})=C^{\ast}(\mathbf{X}, \mathbb{R})$. In $\mathbf{ZFC}$, a systematic study of $\mathbf{E}$-completely regular and $\mathbf{E}$-compact spaces in the sense of the following definition was started by R. Engelking and S. Mr\'owka in \cite{enmr}.

\begin{definition}
	\label{s1:d1}
	(Cf. e.g., \cite{enmr}, \cite{mr1}--\cite{mr5}.) Let $\mathbf{E}$ be a given topological space. Then a topological space $\mathbf{X}$ is called  $\mathbf{E}$-\emph{completely regular} (respectively,  $\mathbf{E}$-\emph{compact}) if there exists a non-empty set $J$ such that $\mathbf{X}$ is homeomorphic with a subspace (respectively, a closed subspace) of $\mathbf{E}^J$. (The definition of $\mathbf{E}^J$ is recalled in Section \ref{s1.2} above Definition \ref{s1:d7}.)
\end{definition}

The general concept of $\mathbf{E}$-compactness was introduced as a common generalization of realcompactness and $\mathbb{N}$-compactness in the sense of items (i) and (ii) of the following definition:

\begin{definition}
	\label{s1:d2}
	A topological space $\mathbf{X}$ is called:
	\begin{enumerate}
		\item[(i)] \emph{realcompact} if $\mathbf{X}$ is $\langle \mathbb{R}, \tau_{nat}\rangle$-compact;
		\item[(ii)] $\mathbb{N}$-\emph{compact} if $\mathbf{X}$ is $\langle \mathbb{N}, \mathcal{P}(\mathbb{N})\rangle$-compact;
		\item[(iii)] \emph{Cantor-compact} if $\mathbf{X}$ is $\langle \{0, 1\}, \mathcal{P}(\{0, 1\})\rangle$-compact;
		\item[(iv)] \emph{Tychonoff-compact} if $\mathbf{X}$ is $\langle [0, 1], \tau_{nat}|_{[0, 1]}\rangle$-compact.
	\end{enumerate}
\end{definition}

\begin{definition}
	\label{s1:d3}
	Let $\mathbf{X}$, $\mathbf{Y}$ and $\mathbf{E}$ be  topological spaces.
	\begin{enumerate}
		\item[(a)] An \emph{extension} of $\mathbf{X}$ is an ordered pair $\langle\mathbf{Y}, h\rangle$ where  $h$ is a homeomorphic embedding of $\mathbf{X}$ into $\mathbf{Y}$ such that $h[X]$ is dense in $\mathbf{Y}$.
		\item[(b)] If, for $i\in\{1,2\}$,  $\langle\mathbf{Y}_i, h_i\rangle$ are extensions of $\mathbf{X}$, then:
		\begin{enumerate}
			\item[(i)] we write $\langle \mathbf{Y}_1, h_1\rangle\leq \langle \mathbf{Y}_2, h_2\rangle$ if there exists a continuous surjection $f: \mathbf{Y}_2\to \mathbf{Y}_1$ such that $f\circ h_2=h_1$;
			\item[(ii)] we write $\langle \mathbf{Y}_1, h_1\rangle\thickapprox\langle\mathbf{Y}_2, h_2\rangle$ and say that $\langle \mathbf{Y}_1, h_1\rangle$ and $\langle\mathbf{Y}_2, h_2\rangle$ are \emph{equivalent extensions} of $\mathbf{X}$ if there exists a homeomorphism $h: \mathbf{Y}_1\to\mathbf{Y}_2$ such that $h\circ h_1=h_2$.
		\end{enumerate}
		\item[(c)] If $\mathcal{P}$ is a topological property, then we say that an extension $\langle\mathbf{Y}, h\rangle$ has $\mathcal{P}$ if $\mathbf{Y}$ has $\mathcal{P}$. In particular, an extension $\langle \mathbf{Y}, h\rangle$ of $\mathbf{X}$ is called a \emph{Hausdorff} (respectively, \emph{compact}, $\mathbf{E}$-\emph{compact}) extension of $\mathbf{X}$ if $\mathbf{Y}$ is a Hausdorff (respectively, compact, $\mathbf{E}$-compact) space. Compact extensions of $\mathbf{X}$ are called \emph{compactifications} of $\mathbf{X}$.
	\end{enumerate}
\end{definition}

\begin{remark}
	\label{s1:r4} 
	Let $\mathbf{X}$ be a topological space.
	\begin{enumerate}
		\item[(i)]Suppose $\langle \mathbf{Y}_1, h_1\rangle$ and $\langle \mathbf{Y}_2, h_2\rangle$ are Hausdorff extensions of $\mathbf{X}$. Then $\langle \mathbf{Y}_1, h_1\rangle\thickapprox \langle\mathbf{Y}_2, h_2\rangle$ if and only if $\langle \mathbf{Y}_1, h_1\rangle\leq \langle\mathbf{Y}_2, h_2\rangle$ and $\langle \mathbf{Y_2}, h_2\rangle\leq\langle \mathbf{Y}_1, h_1\rangle$.  
		
		\item[(ii)] For an extension $\langle\mathbf{Y}, h\rangle$ of $\mathbf{X}$, the subspace $Y\setminus h[X]$ of $\mathbf{Y}$ is called the \emph{remainder} of $\langle\mathbf{Y}, h\rangle$, the space $\mathbf{X}$ is identified with the subspace $h[X]$ of $\mathbf{Y}$, and $h$ is identified  with the identity mapping $\text{id}_{X}$ on $X$. 
		
		\item[(iii)] If $\langle \alpha\mathbf{X}, \alpha\rangle$ is a compactification of $\mathbf{X}$, we denote this compactification by $\alpha\mathbf{X}$, its underlying set by $\alpha X$,  and its remainder by $\alpha X\setminus X$. Analogously, if $\langle \mathbf{Y}, h\rangle$ is an arbitrary extension of $\mathbf{X}$, the space $\mathbf{Y}$ and the extension $\langle \mathbf{Y}, h\rangle$ can be both denoted by $h\mathbf{X}$.
	\end{enumerate}
\end{remark} 

In the sequel, we will be concerned mainly with Hausdorff extensions of Hausdorff spaces, in particular, for a given Hausdorff space $\mathbf{E}$, we will be concerned  with $\mathbf{E}$-compact extensions of $\mathbf{E}$-completely regular spaces.
We need the following generalization of the concept of the Hewitt realcompactification of a Tychonoff space:

\begin{definition}
	\label{s1:d5}
	Suppose that $\mathbf{X}$ and $\mathbf{E}$ are Hausdorff spaces such that $\mathbf{X}$ is $\mathbf{E}$-completely regular.
	An $\mathbf{E}$-compact extension $\langle \mathbf{Y}, h\rangle$ of $\mathbf{X}$ is called  a \emph{Hewitt} $\mathbf{E}$-\emph{compact extension} of $\mathbf{X}$ if it satisfies  the following condition:
	$$(\forall f\in C(\mathbf{X}, \mathbf{E}))(\exists \tilde{f}\in C(\mathbf{Y}, \mathbf{E}))\text{ } \tilde{f}\circ h=f.$$
\end{definition}

\begin{remark}
	\label{s1:r6}
	Suppose that $\mathbf{X}$ and $\mathbf{E}$ are Hausdorff spaces such that $\mathbf{X}$ is $\mathbf{E}$-completely regular. Under this assumption, it will be clearly stated in Theorem\ref{s4:t8} that it holds in $\mathbf{ZF}$ that there exists a unique (up to the equivalence $\approx$) Hewitt $\mathbf{E}$-compact extension of $\mathbf{X}$. This is why we can denote the uniquely determined (up to $\approx$) Hewitt $\mathbf{E}$-compact extension of $\mathbf{X}$ by $v_{\mathbf{E}}\mathbf{X}$. In particular, $v\mathbf{X}=v_{\mathbb{R}}\mathbf{X}$ is the \emph{Hewitt realcompactification} of $\mathbf{X}$.
\end{remark}

Needless to say that a huge number of articles on $E$-compactness in $\mathbf{ZFC}$ have appeared (see, e.g., \cite{enmr}, \cite{mr0}--\cite{mr5}). Some general results on $\mathbf{E}$-compactness have been applied to an intensive investigation of $\mathbf{E}$-compact extensions of topological spaces, in particular, to  Hewitt realcompactifications of Tychonoff spaces in $\mathbf{ZFC}$. Basic results on realcompatness in $\mathbf{ZFC}$ are collected in \cite[Chapter 3.11]{en}.  However, they have not been investigated in deep in the absence of $\mathbf{AC}$ yet. 

The main aim of our present work is to start a systematic study of realcompact spaces, $\mathbb{N}$-compact spaces and related topics in $\mathbf{ZF}$ by having a deeper look at Hewitt $\mathbf{E}$-compact extensions of $\mathbf{E}$-completely regular Hausdorff spaces in $\mathbf{ZF}$. Clearly, to do it well, we need to apply also Hausdorff compactifications in $\mathbf{ZF}$.  We recommend \cite{kw1} as the most extensive introduction to Hausdorff compactifications in the absence of $\mathbf{AC}$ that has ever been written so far. 

Before we pass to the body of the paper, let us establish other notation and terminology, and recall several useful results for references in the sequel, also to illuminate the current knowledge about the main topics of this work in $\mathbf{ZF}$. 

\subsection{Products, evaluation maps and continuous extensions}
\label{s1.2}

A \emph{base} (or, equivalently, an \emph{open base}) of a topological space $\mathbf{X}=\langle X, \tau\rangle$ is a base for $\tau$ (that is, a base for the open sets of $\mathbf{X}$). A \emph{ base for the closed sets} or, equivalently, a \emph{closed base} of $\mathbf{X}$ is a family $\mathcal{C}$ of subsets of $X$ such that $\{X\setminus C: C\in\mathcal{C}\}$ is a base of $\mathbf{X}$. 

Given  a set $J$ and a family $\mathcal{A}=\{A_j: j\in J\}$ of sets, every function $a\in\prod\limits_{j\in J}A_j$ is called a \emph{choice function} of $\mathcal{A}$. If $I$ is an infinite subset of $J$, then every choice function of $\{A_j: j\in I\}$ is called a \emph{partial choice function} of $\mathcal{A}$. A \emph{multiple choice function} of $\mathcal{A}$ is every function $a\in\prod\limits_{j\in J}([A_j]^{<\omega}\setminus\{\emptyset\})$. For $j_0\in J$, the standard projection of $\prod\limits_{j\in J}A_j$ into $A_{j_0}$ is denoted by $\pi_{j_0}$. If $\mathcal{A}$ consists of non-empty sets but fails to have a choice function, then, for any $j_0\in J$, the projection $\pi_{j_0}:\prod\limits_{j\in J}A_j\to A_{j_0}$ is not a surjection. 

Given a family $\{ \mathbf{X}_j: j\in J\}$ of topological spaces such that, for every $j\in J$,  $\mathbf{X}_j=\langle X_j, \tau_j\rangle$, we equip the set $X=\prod\limits_{j\in J}X_j$ with the product topology $\tau^{prod}={\prod\limits_{j\in J}}^{prod}\tau_j$ whose base is the family 
$$\left\{\bigcap\limits_{j\in K}\pi_j^{-1}[V_j]: (K\in[J]^{<\omega}\setminus\{\emptyset\})\wedge ((\forall j\in K)\text{ } V_j\in\tau_j)\right\},$$
called  the \emph{canonical base} of $\mathbf{X}=\langle X, \tau^{prod}\rangle=\prod\limits_{j\in J}\mathbf{X}_j$.  If, for every $j\in J$, $\mathbf{X}_j=\mathbf{Y}$, then $\prod\limits_{j\in J}\mathbf{X}_j$ is denoted by $\mathbf{Y}^J$. In particular, if $J$ is infinite, $2=\{0,1\}$ and $\mathbf{2}=2_{disc}$, then $\mathbf{2}^J$ is called a \emph{Cantor cube}, and  $[0,1]^J$ is called a \emph{Tychonoff cube}.  

\begin{definition}
	\label{s1:d7}
	Let $J$ be a non-empty set.  Let $X$ be a set,  $\{Y_j: j\in J\}$ be a family of sets, and  $\mathcal{F}=\{f_j: j\in J\}$ a family of mappings such that, for every $j\in J$,  $f_j: X\to Y_j$. Then the mapping $e_{\mathcal{F}}: X\to\prod\limits_{j\in J}Y_j$ defined by:
	$$(\forall x\in X)(\forall j\in J)\text{ } e_{\mathcal{F}}(x)(j)=f_j(x)$$
	is called the \emph{evaluation mapping determined by} $\mathcal{F}$.
\end{definition}

One can easily check that the following theorem from \cite[Exercise 2.3.D]{en} is provable in $\mathbf{ZF}$:

\begin{theorem}
	\label{s1:t8}
	$[\mathbf{ZF}]$
	Let $J$ be a non-empty set.  Let $\mathbf{X}$ be a topological space,  $\{\mathbf{Y}_j: j\in J\}$ be a family of topological spaces, and  $\mathcal{F}=\{f_j: j\in J\}$ a family of mappings such that, for every $j\in J$,  $f_j\in C(\mathbf{X}_j, \mathbf{Y}_j)$. For $K\in [J]^{<\omega}\setminus\{\emptyset\}$, let $\mathcal{F}_K=\{f_j: j\in K\}$. Then the evaluation mapping $e_{\mathcal{F}}$ is a homeomorphic embedding of $\mathbf{X}$ into $\prod\limits_{j\in J}\mathbf{Y}_j$ if and only if the family $\{ e_{\mathcal{F}_K}: K\in [J]^{<\omega}\setminus\{\emptyset\}\}$ separates points of $X$ and separates points from closed sets of $\mathbf{X}$.
\end{theorem}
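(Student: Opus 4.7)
The plan is to prove the biconditional by verifying necessity and sufficiency separately, leveraging the canonical base of the product topology on $\prod_{j\in J}\mathbf{Y}_j$ whose members have the form $\bigcap_{j\in K}\pi_j^{-1}[V_j]$ with $K\in[J]^{<\omega}\setminus\{\emptyset\}$ and each $V_j$ open in $\mathbf{Y}_j$. Continuity of every evaluation map $e_{\mathcal{F}}$ and $e_{\mathcal{F}_K}$ is automatic in $\mathbf{ZF}$ because, for each $j$, the composition $\pi_j\circ e_{\mathcal{F}}=f_j$ is continuous; this is the point at which continuity enters, and it requires no form of choice.

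For the necessity direction, I first use that a homeomorphic embedding is injective to obtain point separation: if $x\neq y$, then $e_{\mathcal{F}}(x)\neq e_{\mathcal{F}}(y)$ yields some $j\in J$ with $f_j(x)\neq f_j(y)$, so the singleton $K=\{j\}$ makes $e_{\mathcal{F}_K}$ distinguish $x$ and $y$. To separate $x$ from a closed $F$ with $x\notin F$, I exploit that $e_{\mathcal{F}}[F]$ is closed in the subspace $e_{\mathcal{F}}[X]$, hence $e_{\mathcal{F}}(x)\notin\cl(e_{\mathcal{F}}[F])$ in the full product, and extract a canonical basic open neighborhood $\bigcap_{j\in K}\pi_j^{-1}[V_j]$ of $e_{\mathcal{F}}(x)$ disjoint from $e_{\mathcal{F}}[F]$. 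The corresponding finite box $\prod_{j\in K}V_j$ then separates $e_{\mathcal{F}_K}(x)$ from $e_{\mathcal{F}_K}[F]$ in $\prod_{j\in K}\mathbf{Y}_j$.

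For the sufficiency direction, point separation by $\{e_{\mathcal{F}_K}\}$ is equivalent to injectivity of $e_{\mathcal{F}}$, so the continuous bijection $e_{\mathcal{F}}:\mathbf{X}\to e_{\mathcal{F}}[\mathbf{X}]$ is a homeomorphism iff it is closed, i.e., iff for every closed $F\subseteq X$ and every $x\notin F$ one has $e_{\mathcal{F}}(x)\notin\cl(e_{\mathcal{F}}[F])$ in the product. Given such $F$ and $x$, the hypothesis furnishes some $K\in[J]^{<\omega}\setminus\{\emptyset\}$ and, after shrinking to the canonical base, open sets $V_j\subseteq Y_j$ for $j\in K$ with $e_{\mathcal{F}_K}(x)\in\prod_{j\in K}V_j$ and $(\prod_{j\in K}V_j)\cap e_{\mathcal{F}_K}[F]=\emptyset$. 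Lifting this box to $\widetilde{W}=\bigcap_{j\in K}\pi_j^{-1}[V_j]$ in $\prod_{j\in J}\mathbf{Y}_j$, I verify that $\widetilde{W}$ is an open neighborhood of $e_{\mathcal{F}}(x)$ disjoint from $e_{\mathcal{F}}[F]$, since any $y\in F$ with $e_{\mathcal{F}}(y)\in\widetilde{W}$ would force $e_{\mathcal{F}_K}(y)\in\prod_{j\in K}V_j$, contradicting disjointness.

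I anticipate no substantive obstacle: the whole argument is a routine translation between points of the big product and their $K$-coordinate reductions, and every witness produced is a single point or a single basic open set, so no form of $\mathbf{AC}$ enters. The only mild care needed is to keep straight the two ambient products and their canonical bases; this is precisely why the statement must be formulated in terms of the entire family $\{e_{\mathcal{F}_K}:K\in[J]^{<\omega}\setminus\{\emptyset\}\}$ rather than in terms of the individual maps $f_j$, as projecting onto a single coordinate can destroy the required separation property.
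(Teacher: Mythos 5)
Your proof is correct: continuity of $e_{\mathcal{F}}$ via the projections, the translation between canonical basic open sets $\bigcap_{j\in K}\pi_j^{-1}[V_j]$ of the full product and boxes $\prod_{j\in K}V_j$ in the finite subproducts, and the characterization of embeddings via $e_{\mathcal{F}}(x)\notin\cl(e_{\mathcal{F}}[F])$ are exactly the intended routine argument, and indeed no form of choice is invoked since every step produces a single witness. The paper itself offers no proof (it cites the statement as an easy exercise from Engelking), so there is nothing to contrast with; your write-up fills that gap faithfully.
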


\begin{definition}
	\label{s1:d9}
	Let $\mathcal{A}$ be a family of subsets of a set $X$.
	\begin{enumerate}
		\item[(i)] $\mathcal{A}_{\delta}=\{ \bigcap \mathcal{C}: \mathcal{C}\in [\mathcal{A}]^{\leq\omega}\setminus\{\emptyset\}\}$. 
		\item[(ii)] $\mathcal{A}_{\sigma}=\{ \bigcup \mathcal{C}: \mathcal{C}\in [\mathcal{A}]^{\leq\omega}\}.$
		\item[(iii)] $\mathcal{A}$ is \emph{closed under countable intersections} if $\mathcal{A}_{\delta}\subseteq\mathcal{A}$. If, for all $A,B\in\mathcal{A}$, $A\cap B\in\mathcal{A}$, then it is said that $\mathcal{A}$ is \emph{closed under finite intersections}.
		\item[(iv)] $\mathcal{A}$ is \emph{closed under countable unions} if $\mathcal{A}_{\sigma}\subseteq\mathcal{A}$. If, for all $A, B\in\mathcal{A}$, $A\cup B\in\mathcal{A}$, then it is said that $\mathcal{A}$ is \emph{closed under finite unions}.
	\end{enumerate}
\end{definition}

It was noticed in \cite{pw} that the following very useful modification of the Taimanov Extension Theorem (see \cite[Theorem 3.2.1]{en}), proved true in $\mathbf{ZFC}$ independently in \cite{blasco} and \cite{w1}, is also provable in $\mathbf{ZF}$:

\begin{theorem}
	\label{s1:t10}
	$($Taimanov's Extension Theorem.$)$ $[\mathbf{ZF}]$ Let $\mathbf{X}$ be a dense subspace of a topological space $\mathbf{T}$. Let $\mathcal{C}$ be a closed under finite intersections base for the closed sets of a compact Hausdorff space $\mathbf{Y}$. Let $f\in C(\mathbf{X}, \mathbf{Y})$. Then there exists $\tilde{f}\in C(\mathbf{T}, \mathbf{Y})$ such that $\tilde{f}\upharpoonright X= f$  if and only if, for every pair $A,B$ of disjoint members of $\mathcal{C}$, the following holds: 
	$$\cl_{\mathbf{T}}(f^{-1}[A])\cap\cl_{\mathbf{T}}(f^{-1}[B])=\emptyset.$$
\end{theorem}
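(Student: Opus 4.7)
Necessity is immediate: if $\tilde{f}\in C(\mathbf{T},\mathbf{Y})$ extends $f$, then for any $A\in\mathcal{C}$ the set $\tilde{f}^{-1}[A]$ is closed in $\mathbf{T}$ and contains $f^{-1}[A]$, so $\cl_{\mathbf{T}}(f^{-1}[A])\subseteq\tilde{f}^{-1}[A]$; disjointness of $A,B\in\mathcal{C}$ then forces the two closures to be disjoint.

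For sufficiency, the plan is to define, for each $t\in T$, the family of closed subsets of $Y$
$$\mathcal{G}_t=\{\cl_{\mathbf{Y}}(f[U\cap X]):U\text{ is open in }\mathbf{T}\text{ and }t\in U\},$$
and to set $\tilde{f}(t)$ to be the unique point of $\bigcap\mathcal{G}_t$. Non-emptiness is straightforward: density of $X$ in $\mathbf{T}$ gives $\mathcal{G}_t$ the finite intersection property, and compactness of $\mathbf{Y}$ (applied in $\mathbf{ZF}$) produces $\bigcap\mathcal{G}_t\neq\emptyset$ without any appeal to choice, as the construction is canonical. That $\tilde{f}$ restricts to $f$ on $X$ is then clear because $f(x)\in f[U\cap X]$ whenever $x\in U$. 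Continuity of $\tilde{f}$ at $t$ follows by a standard finite-cover argument: given open $V\ni\tilde{f}(t)$, compactness yields finitely many open $U_1,\ldots,U_n\ni t$ with $\cl_{\mathbf{Y}}(f[(U_1\cap\ldots\cap U_n)\cap X])\subseteq V$, so $U=U_1\cap\ldots\cap U_n$ is an open neighbourhood of $t$ with $\tilde{f}[U]\subseteq V$ since $\cl_{\mathbf{Y}}(f[U\cap X])$ belongs to $\mathcal{G}_s$ for every $s\in U$.

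The heart of the argument, and the only step where the disjointness hypothesis is used, is the singleton property of $\bigcap\mathcal{G}_t$. The plan is to argue by contradiction: assume $y_1\neq y_2$ both lie in $\bigcap\mathcal{G}_t$; since $\mathbf{Y}$ is compact Hausdorff, hence normal in $\mathbf{ZF}$, pick disjoint open sets $V_1,V_2$ with $y_i\in V_i$ and $\cl_{\mathbf{Y}}V_1\cap\cl_{\mathbf{Y}}V_2=\emptyset$; then extract disjoint $A_1,A_2\in\mathcal{C}$ with $\cl_{\mathbf{Y}}V_i\subseteq A_i$ by writing each $\cl_{\mathbf{Y}}V_i$ as an intersection of members of $\mathcal{C}$ (valid because $\mathcal{C}$ is a closed base) and twice invoking the compactness of a closed subspace of $\mathbf{Y}$ together with the closure of $\mathcal{C}$ under finite intersections to trim the representing families down to single elements. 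Since $y_i\in\cl_{\mathbf{Y}}(f[U\cap X])$ for every open $U\ni t$ and $V_i$ is an open neighbourhood of $y_i$ contained in $A_i$, one produces some $x\in U\cap X$ with $f(x)\in V_i\subseteq A_i$, whence $t\in\cl_{\mathbf{T}}(f^{-1}[A_1])\cap\cl_{\mathbf{T}}(f^{-1}[A_2])$, contradicting the hypothesis. The main obstacle lies precisely in this separation step: the disjointness condition on $\mathcal{C}$ does not directly let one separate arbitrary distinct points of $\mathbf{Y}$ by disjoint members of $\mathcal{C}$, and one must first upgrade a Hausdorff separation of $y_1,y_2$ to a separation by disjoint $A_1,A_2\in\mathcal{C}$ whose interiors still contain $y_1,y_2$ respectively.
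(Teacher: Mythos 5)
The paper does not actually prove Theorem \ref{s1:t10}; it only records (following \cite{pw}) that the proofs from \cite{blasco} and \cite{w1} go through in $\mathbf{ZF}$, so there is no in-paper argument to compare against. Your proposal is a correct, self-contained proof along the classical Taimanov lines: the canonical family $\mathcal{G}_t$, the singleton property via the separation hypothesis, and the finite-subcover continuity argument are all sound, and you are right that no choice is needed anywhere (the only selections made are finite, and the families $\mathcal{G}_t$ and $\{C\in\mathcal{C}:\cl_{\mathbf{Y}}V_i\subseteq C\}$ are defined canonically).

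One detail in the separation step deserves to be made explicit. After fixing open $V_1,V_2$ with $y_i\in V_i$ and $\cl_{\mathbf{Y}}V_1\cap\cl_{\mathbf{Y}}V_2=\emptyset$, you cannot trim the two representing families $\mathcal{D}_i=\{C\in\mathcal{C}:\cl_{\mathbf{Y}}V_i\subseteq C\}$ \emph{independently}: shrinking $\mathcal{D}_1$ against $\cl_{\mathbf{Y}}V_2$ and $\mathcal{D}_2$ against $\cl_{\mathbf{Y}}V_1$ produces $A_1$ disjoint from $\cl_{\mathbf{Y}}V_2$ and $A_2$ disjoint from $\cl_{\mathbf{Y}}V_1$, but not necessarily $A_1\cap A_2=\emptyset$. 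The two compactness applications must be sequential: since $\bigcap\mathcal{D}_1=\cl_{\mathbf{Y}}V_1$ misses the compact set $\cl_{\mathbf{Y}}V_2$, a finite intersection $A_1\in\mathcal{C}$ of members of $\mathcal{D}_1$ already misses $\cl_{\mathbf{Y}}V_2$; then, since $\bigcap\mathcal{D}_2=\cl_{\mathbf{Y}}V_2$ misses the compact set $A_1$, a finite intersection $A_2\in\mathcal{C}$ of members of $\mathcal{D}_2$ misses $A_1$. With that ordering spelled out, $y_i\in V_i\subseteq\inter_{\mathbf{Y}}(A_i)$ and $A_1\cap A_2=\emptyset$, and the rest of your contradiction goes through exactly as you describe.
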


A relatively simple corollary to Taimanov's Extension Theorem is the following result due to Blasco (see \cite{blasco}):

\begin{theorem}
	\label{s1:t11}
	$($Blasco's Extension Theorem.$)$ $[\mathbf{ZF}]$ Let $\mathbf{X}$ be a dense subspace of a topological space $\mathbf{T}$, and let $f\in C^{\ast}(\mathbf{X})$. Then $f$ has a continuous extension $\tilde{f}:\mathbf{T}\to\mathbb{R}$ if and only if, for every pair $a,b$ of real numbers with $a<b$, the sets $f^{-1}[(-\infty, a]]$ and $f^{-1}[[b, +\infty)]$ have disjoint closures in $\mathbf{T}$.
\end{theorem}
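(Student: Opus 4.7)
The plan is to deduce Blasco's Extension Theorem from Taimanov's Extension Theorem by replacing the non-compact codomain $\mathbb{R}$ with a closed interval that contains $f[X]$.

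Since $f\in C^{\ast}(\mathbf{X})$, the set $\cl_{\mathbb{R}}(f[X])$ is compact in $\mathbb{R}$, hence bounded, so there exist real numbers $m\leq M$ with $f[X]\subseteq [m,M]$. Setting $\mathbf{I}=\langle[m,M],\tau_{nat}|_{[m,M]}\rangle$, we obtain a compact Hausdorff space and a continuous mapping $f\in C(\mathbf{X},\mathbf{I})$. Any continuous extension $\tilde{f}:\mathbf{T}\to\mathbf{I}$ can be composed with the inclusion $\mathbf{I}\hookrightarrow\mathbb{R}$ to yield the desired extension to $\mathbb{R}$; conversely, if an extension to $\mathbb{R}$ exists, we will verify the required separation condition directly.

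To apply Theorem \ref{s1:t10}, take $\mathcal{C}=\{[a,b]:m\leq a\leq b\leq M\}$. One checks routinely that $\mathcal{C}$ is a closed base of $\mathbf{I}$ (its complements in $[m,M]$ yield the canonical subbase of open half-lines) and that $\mathcal{C}$ is closed under finite intersections (since the intersection of two closed intervals is either a closed interval or empty, and in the latter case can be absorbed using any single-point member of $\mathcal{C}$). For the sufficiency in Blasco's theorem, assume the hypothesis on disjoint closures for pairs $a<b$. Two members $[a_1,b_1],[a_2,b_2]\in\mathcal{C}$ are disjoint iff $b_1<a_2$ or $b_2<a_1$; in the first case put $a=b_1$ and $b=a_2$, so $a<b$, and note
\[
f^{-1}[[a_1,b_1]]\subseteq f^{-1}[(-\infty,a]],\qquad f^{-1}[[a_2,b_2]]\subseteq f^{-1}[[b,+\infty)],
\]
so the hypothesis forces the closures in $\mathbf{T}$ to be disjoint; the other case is symmetric. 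Theorem \ref{s1:t10} then delivers a continuous extension $\tilde{f}:\mathbf{T}\to\mathbf{I}\subseteq\mathbb{R}$.

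For necessity, suppose $\tilde{f}:\mathbf{T}\to\mathbb{R}$ is a continuous extension of $f$. For any $a<b$, the sets $\tilde{f}^{-1}[(-\infty,a]]$ and $\tilde{f}^{-1}[[b,+\infty)]$ are closed and disjoint in $\mathbf{T}$, and they contain $f^{-1}[(-\infty,a]]$ and $f^{-1}[[b,+\infty)]$ respectively, so the closures of the latter in $\mathbf{T}$ are also disjoint.

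No step seems to present a genuine obstacle once the reduction to a compact codomain is in place; the only point requiring mild care is the translation between the Taimanov condition (phrased for disjoint pairs from a closed base of $[m,M]$) and Blasco's condition (phrased for half-line preimages), which is handled by the choice $a=b_1,b=a_2$ above. Everything is carried out inside $\mathbf{ZF}$, since Theorem \ref{s1:t10} is available in $\mathbf{ZF}$ and no choice principle enters the argument.
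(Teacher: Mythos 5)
Your overall strategy --- replacing $\mathbb{R}$ by a compact interval $[m,M]\supseteq f[X]$ and invoking Theorem \ref{s1:t10} --- is exactly the intended reduction, and your necessity argument is fine. The gap is in the verification of the hypotheses of Theorem \ref{s1:t10}: the family $\mathcal{C}=\{[a,b]: m\leq a\leq b\leq M\}$ is \emph{not} a base for the closed sets of $\mathbf{I}$ in the sense of Section \ref{s1.2}. Indeed, the two-point set $\{m,M\}$ is closed in $\mathbf{I}$ but is not an intersection of closed subintervals (the only member of $\mathcal{C}$ containing it is $[m,M]$ itself); equivalently, the complements $[m,a)\cup(b,M]$ form only a \emph{subbase}, not a base, of the topology of $[m,M]$ (no nonempty set of this form is contained in $(c,d)$ when $m<c<d<M$). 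Your own parenthetical remark that the complements yield ``the canonical subbase'' signals the problem: Theorem \ref{s1:t10} requires a base, and a subbase closed under finite intersections of its members is not thereby a base for the closed sets, since that would require closure under finite \emph{unions}. A secondary slip: $\mathcal{C}$ is not literally closed under finite intersections either, because $\emptyset\notin\mathcal{C}$ and the empty set cannot be ``absorbed using any single-point member''; you must adjoin $\emptyset$ explicitly.

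The repair is routine but has to be made. Take instead $\mathcal{C}'=\{\emptyset\}\cup\{\bigcup_{i\in n}[a_i,b_i]: n\in\mathbb{N},\ m\leq a_i\leq b_i\leq M\}$, the finite unions of closed subintervals. This \emph{is} a base for the closed sets of $\mathbf{I}$ (every closed $F\subseteq[m,M]$ is the intersection of the finite unions of intervals containing it, since for $x\notin F$ one finds such a union containing $F$ and missing $x$), and it is closed under finite intersections and finite unions. The disjointness condition of Theorem \ref{s1:t10} for two disjoint members $A=\bigcup_i[a_i,b_i]$ and $B=\bigcup_j[c_j,d_j]$ of $\mathcal{C}'$ then reduces to the case of two disjoint intervals, because $\cl_{\mathbf{T}}(f^{-1}[A])=\bigcup_i\cl_{\mathbf{T}}(f^{-1}[[a_i,b_i]])$ (closure commutes with finite unions) and likewise for $B$; that pairwise case is exactly the computation you carried out with $a=b_1$, $b=a_2$. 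With this substitution the proof goes through in $\mathbf{ZF}$ as you intended.
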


It is obvious that the following well-known proposition holds in $\mathbf{ZF}$:
\begin{proposition}
	\label{s1:p12}
	$[\mathbf{ZF}]$ Let $\mathbf{X}$ be a dense subspace of a topological space $\mathbf{T}$ and let $\mathbf{Y}$  be a Hausdorff space. If $f,g\in C(\mathbf{T},\mathbf{Y})$ and $f\upharpoonright X=g\upharpoonright X$, then $f=g$.
\end{proposition}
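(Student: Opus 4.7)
The plan is to show that the set $A = \{t \in T : f(t) = g(t)\}$ is a closed subset of $\mathbf{T}$ containing the dense set $X$, whence $A = T$ and $f = g$.

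First I would observe that the hypothesis $f\upharpoonright X = g\upharpoonright X$ immediately gives $X \subseteq A$. So once closedness of $A$ is established, the density of $X$ in $\mathbf{T}$ yields $T = \cl_{\mathbf{T}}(X) \subseteq \cl_{\mathbf{T}}(A) = A$, and we are done.

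The core step is to prove that $A$ is closed, equivalently that $T \setminus A$ is open. Fix $t \in T \setminus A$, so $f(t) \neq g(t)$. Since $\mathbf{Y}$ is Hausdorff, there exist disjoint open sets $U, V$ in $\mathbf{Y}$ with $f(t) \in U$ and $g(t) \in V$. By continuity of $f$ and $g$, the set $W = f^{-1}[U] \cap g^{-1}[V]$ is open in $\mathbf{T}$ and contains $t$. For any $t' \in W$ we have $f(t') \in U$ and $g(t') \in V$, and since $U \cap V = \emptyset$ this forces $f(t') \neq g(t')$, i.e.\ $t' \in T \setminus A$. Thus $W \subseteq T \setminus A$, showing that $T \setminus A$ is open.

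There is no real obstacle here: the argument uses only the definition of the Hausdorff separation axiom and the definition of continuity, so it goes through verbatim in $\mathbf{ZF}$ without any appeal to a choice principle. (An equivalent packaging, if one prefers to avoid the pointwise construction of separating neighborhoods, is to note that the map $h : \mathbf{T} \to \mathbf{Y} \times \mathbf{Y}$ defined by $h(t) = \langle f(t), g(t)\rangle$ is continuous, that the diagonal $\Delta_Y = \{\langle y, y\rangle : y \in Y\}$ is closed in $\mathbf{Y} \times \mathbf{Y}$ precisely because $\mathbf{Y}$ is Hausdorff, and hence $A = h^{-1}[\Delta_Y]$ is closed in $\mathbf{T}$; but the first, more elementary, presentation seems preferable here.)
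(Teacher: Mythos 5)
Your proof is correct and entirely in $\mathbf{ZF}$; the paper states this proposition without proof as a well-known fact, and your argument (the agreement set is closed since $\mathbf{Y}$ is Hausdorff, contains the dense set $X$, hence equals $T$) is exactly the standard reasoning the authors have in mind.
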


Given an extension $h\mathbf{X}$ of a topological space $\mathbf{X}$, we let 
$$C_{h}(\mathbf{X})=\{ f\in C(\mathbf{X}): (\exists \tilde{f}\in C(h\mathbf{X})) \tilde{f}\circ h= f\}.$$
In view of Proposition \ref{s1:p12}, for every $f\in C_h(\mathbf{X})$, there exists a unique $\tilde{f}\in C(h\mathbf{X})$ such that $\tilde{f}\circ h=f$. If $f\in C_h(\mathbf{X})$, the unique $\tilde{f}\in C(h\mathbf{X})$ with $\tilde{f}\circ h=f$ will be denoted by $f^h$. 

\subsection{A list of weaker forms of $\mathbf{AC}$ }
\label{s1.3}

Working in $\mathbf{ZF}$, we also show statements provable in $\mathbf{ZFC}$ but independent of $\mathbf{ZF}$ by proving that they imply forms known to be false in some models of $\mathbf{ZF}$.  Given a statement $\mathbf{\Psi}$ which cannot be proved true in $\mathbf{ZF}$, if possible, we show a weaker form  $\mathbf{\Phi}$ of $\mathbf{AC}$ such that $\mathbf{\Psi}$ holds in every model of $\mathbf{ZF+\Phi}$. In particular, we apply the forms from \cite{hr}, denoted and  defined as follows:

\begin{definition}
	\label{s1:d13}
	\begin{enumerate}
		\item $\mathbf{CAC}$  (\cite[Form 8]{hr}): Every denumerable family of non-empty sets has a choice function.
		
		\item $\mathbf{CMC}$ (\cite[Form 126]{hr}): Every denumerable family of non-empty sets has a multiple choice function.
		
		\item $\mathbf{CMC}_{\omega}$ (\cite[Form 350]{hr}): Every denumerable family of denumerable sets has a multiple choice function.	
		
		\item $\mathbf{BPI}$ (the Boolean Prime Ideal Theorem, \cite[Form 14]{hr}): Every Boolean algebra has a prime ideal.
	\end{enumerate}
\end{definition}

\begin{remark}
	\label{s1:r14}
	We shall use the following equivalents of $\mathbf{BPI}$:
	\begin{enumerate}
		\item[(i)] $\mathbf{UFT}$ (the Ultrafilter Theorem, \cite[Form 14A]{hr}): For every non-empty set $S$, every filter in $\mathcal{P}(S)$ can be extended to an ultrafilter in $\mathcal{P}(S)$.
		\item[(ii)] Tychonoff's Compactness Theorem for Compact Hausdorff spaces (\cite[Form 14J]{hr}): Every product of compact Hausdorff spaces is compact.
		\item[(iii)] Compactness of Cantor cubes (\cite[Form 14K]{hr}): For every infinite set $J$, the Cantor cube $\mathbf{2}^J$ is compact.
	\end{enumerate} 
	The readers can find more detailed information about several other statements equivalent to $\mathbf{BPI}$ (so also to $\mathbf{UFT}$), for instance, in \cite{her}.
\end{remark}

\subsection{Function spaces, zero-sets, $P$-spaces,  $G_{\delta}$-topologies and Baire topologies}
\label{s1.4}

Let  $\mathbf{Y}=\langle Y, \rho\rangle$ be a given metric space. For every $y\in Y$ and a positive real number $\varepsilon$, 
$$B_{\rho}(y, \varepsilon)=\{z\in Y: \rho(y,z)<\varepsilon\}.$$
We denote by $\tau(\rho)$ the topology on $Y$ having the family 
$$\{B_{\rho}(y, \varepsilon): y\in Y\wedge \varepsilon\in (0, +\infty)\}$$
as a base. The topology $\tau(\rho)$ is usually called \emph{induced by} $\rho$. For $A\subseteq Y$, let 
$$\delta_{\rho}(A)=\begin{cases}\sup\{ \rho(x,y): x,y\in A\} &\text{ if } A\neq\emptyset;\\
	0 &\text{ if } A=\emptyset.\end{cases}$$

For a topological space $\mathbf{X}=\langle X, \tau\rangle$ and the metric space $\mathbf{Y}=\langle Y, \rho\rangle$, let $C(\mathbf{X},\mathbf{Y})=C(\mathbf{X}, \langle Y, \tau(\rho)\rangle)$.  We denote by $d_u$ the metric of uniform convergence on $C(\mathbf{X},\mathbf{Y})$ defined as follows:
$$(\forall f, g\in C(\mathbf{X}, \mathbf{Y})) \text{ } d_u(f,g)=\sup\{ \min\{\rho (f(x), g(x)), 1)\}: x\in X\}.$$
The topological space $\langle C(\mathbf{X},\mathbf{Y}), \tau(d_u)\rangle$ is denoted by $C_{u}(\mathbf{X},\mathbf{Y})$.  For $f\in C(\mathbf{X},\mathbf{Y})$ and $A\subseteq X$, the oscillation of $f$ on $A$ is defined by:
$$\osc_A(f)=\begin{cases} \sup\{\rho(f(x), f(y)): x,y\in A\} &\text{ if } A\neq\emptyset;\\
	0 &\text{ if } A=\emptyset.\end{cases}$$

The space $C_{u}(\mathbf{X}, \langle \mathbb{R}, d_e\rangle)=C_{u}(\mathbf{X}, \mathbb{R})$ is denoted by $C_{u}(\mathbf{X})$. 

For $f\in C(\mathbf{X})$, let $Z(f)=f^{-1}[\{0\}]$. Then $Z(f)$ is the \emph{zero-set} determined by $f$. Let $\mathcal{Z}(\mathbf{X})=\{Z(f): f\in C(\mathbf{X})\}$. The members of $\mathcal{Z}(\mathbf{X})$ are called \emph{zero-sets} of $\mathbf{\mathbf{X}}$, their complements are called \emph{co-zero sets} of $\mathbf{\mathbf{X}}$. The family of all co-zero sets of $\mathbf{X}$ is denoted by $\mathcal{Z}^{c}(\mathbf{X})$. The family of all clopen subsets of a topological space $\mathbf{X}$ is denoted by $\mathcal{CO}(\mathbf{X})$. The family of all $G_{\delta}$-sets of $\mathbf{X}$ is denoted by $\mathcal{G}_{\delta}(\mathbf{X})$. The family of all $F_{\sigma}$-sets of $\mathbf{X}$ is denoted by $\mathcal{F}_{\sigma}(\mathbf{X})$.

\begin{definition}
	\label{s1:d15}
	A topological space $\mathbf{X}$ is called:
	\begin{enumerate}
		\item[(i)] \emph{completely regular} if $\mathcal{Z}^c(\mathbf{X})$ is a base of $\mathbf{X}$; a completely regular $T_1$-space is called \emph{Tychonoff} or a $T_{3\frac{1}{2}}$-space;
		\item[(ii)] \emph{zero-dimensional} if $\mathcal{CO}(\mathbf{X})$ is a base of $\mathbf{X}$;
		\item[(iii)] \emph{strongly zero-dimensional} if $\mathbf{X}$ is completely regular and, for every pair $Z_1,Z_2$ of disjoint zero-sets in $\mathbf{X}$, there exists $U\in\mathcal{CO}(\mathbf{X})$ such that $Z_1\subseteq U\subseteq X\setminus Z_2$;
		\item[(iv)] a \emph{P-space} if $\mathcal{G}_{\delta}(\mathbf{X})$ is the topology of $\mathbf{X}$ (cf. \cite{gh});
		\item[(v)] \emph{functionally Hausdorff} if, for every pair $x_0, x_1$ of distinct points of $\mathbf{X}$, there exists $f\in C(\mathbf{X})$ such that $f(x_0)=0$ and $f(x_1)=1$;
		\item[(vi)] \emph{rimcompact} (or, equivalently, \emph{peripherally compact}) if there exists a base $\mathcal{B}$ of $\mathbf{X}$ such that, for every $U\in\mathcal{B}$, $\bd_{\mathbf{X}}(U)$ is compact.
	\end{enumerate}
\end{definition}

One can easily check that, in $\mathbf{ZF}$, every rimcompact Hausdorff space is regular. Clearly, a topological space $\mathbf{X}$ is Tychonoff if and only if it is $\mathbb{R}$-completely regular.

\begin{definition}
	\label{s1:d16}
	For a topological space $\mathbf{X}$, we let $\mathcal{CO}_{\sigma}(\mathbf{X})=\mathcal{CO}(\mathbf{X})_{\sigma}$, $\mathcal{CO}_{\delta}(\mathbf{X})=\mathcal{CO}(\mathbf{X})_{\delta}$ and $\mathcal{Z}_{\delta}(\mathbf{X})=\mathcal{Z}(\mathbf{X})_{\delta}$. Every member of $\mathcal{CO}_{\delta}(\mathbf{X})$ is called a $c_{\delta}$-\emph{set} in $\mathbf{X}$. Every member of $\mathcal{Z}_{\delta}(\mathbf{X})$ is called a $z_{\delta}$-\emph{set} in $\mathbf{X}$.
\end{definition}

\begin{definition}
	\label{s1:d17} Let $\mathbf{X}=\langle X, \tau\rangle$ be a topological space. 
	\begin{enumerate}
		\item[(i)] (Cf., e.g., \cite{lr} and \cite{lorch}.) The \emph{Baire topology} of $\mathbf{X}$ is the topology $\tau_z$ on $X$ such that $\mathcal{Z}(\mathbf{X})$ is a base of $\langle X, \tau_z\rangle$. We denote the space $\langle X, \tau_z\rangle$ by $(\mathbf{X})_z$.
		\item[(ii)] (Cf. \cite{lr} and \cite{kow}.)  The $G_{\delta}$-\emph{topology} of $\mathbf{X}$  is the topology $\tau_{\delta}$ on $X$ such  that $\mathcal{G}_{\delta}(\mathbf{X})$ is a base of $\langle X, \tau_{\delta}\rangle$. We denote the space $\langle X, \tau_{\delta}\rangle$ by $(\mathbf{X})_{\delta}$.
	\end{enumerate}
\end{definition}

In the following theorem, we recall several facts established in \cite{kow}.

\begin{theorem}
	\label{s1:t18}
	$($Cf. \cite{kow}.$)$ $[\mathbf{ZF}]$
	\begin{enumerate}
		\item[(a)] For every topological space $\mathbf{X}$, the following inclusions hold: $$\mathcal{CO}(\mathbf{X})\subseteq\mathcal{CO}_{\delta}(\mathbf{X})\subseteq\mathcal{Z}(\mathbf{X})\subseteq\mathcal{G}_{\delta}(\mathbf{X}).$$
		\item[(b)] $\mathbf{CMC}$ implies the following: for every topological space $\mathbf{X}$, the families $\mathcal{CO}_{\delta}(\mathbf{X})$, $\mathcal{Z}(\mathbf{X})$ and $\mathcal{G}_{\delta}(\mathbf{X})$ are closed under countable intersections, and $(\mathbf{X})_{\delta}$ is a $P$-space.
		\item[(c)] Each of the following sentences (i)--(iii) implies $\mathbf{CMC}_{\omega}$:
		\begin{enumerate}
			\item[(i)] For every Tychonoff space $\mathbf{X}$ at least one of the families $\mathcal{Z}(\mathbf{X})$ and $\mathcal{G}_{\delta}(\mathbf{X})$ is closed under countable intersections.
			\item[(ii)] For every strongly zero-dimensional $T_1$-space $\mathbf{X}$, $\mathcal{CO}_{\delta}(\mathbf{X})$ is closed under countable intersections.
			\item[(iii)] For every Tychonoff space $\mathbf{X}$, $(\mathbf{X})_{\delta}$ is a $P$-space.
		\end{enumerate}
	\end{enumerate} 		
\end{theorem}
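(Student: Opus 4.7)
My plan treats parts (a), (b), (c) in turn, with increasing care for the absence of choice. Part (a) is entirely canonical and choice-free. The inclusion $\mathcal{CO}(\mathbf{X})\subseteq\mathcal{CO}_\delta(\mathbf{X})$ is trivial. For $\mathcal{CO}_\delta(\mathbf{X})\subseteq\mathcal{Z}(\mathbf{X})$, if $U_n\in\mathcal{CO}(\mathbf{X})$ then $\chi_{X\setminus U_n}$ is continuous, so $g:=\sum_{n\in\omega}2^{-n}\chi_{X\setminus U_n}$ is a uniformly convergent continuous function with $Z(g)=\bigcap_n U_n$. For $\mathcal{Z}(\mathbf{X})\subseteq\mathcal{G}_\delta(\mathbf{X})$, write $Z(f)=\bigcap_{k\in\mathbb{N}}f^{-1}[(-\tfrac1k,\tfrac1k)]$.

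For part (b), I would apply $\mathbf{CMC}$ to a denumerable family of non-empty sets of witnesses and then collapse each finite selection by an operation that is canonical on finite sets. For $\mathcal{G}_\delta(\mathbf{X})$, given $(B_n)_n$ in $\mathcal{G}_\delta(\mathbf{X})$, apply $\mathbf{CMC}$ to the denumerable family $\{\mathcal{K}_n:n\in\omega\}$ where $\mathcal{K}_n:=\{(V_k)_{k\in\omega}\in\tau(\mathbf{X})^\omega:\bigcap_k V_k=B_n\}$, obtain finite non-empty $\mathcal{G}_n\subseteq\mathcal{K}_n$, and set $V_{n,k}:=\bigcap_{(V_j)\in\mathcal{G}_n}V_k$ (a canonical finite intersection of opens, with $\bigcap_k V_{n,k}=B_n$); then $(V_{n,k})_{(n,k)\in\omega\times\omega}$ witnesses $\bigcap_n B_n\in\mathcal{G}_\delta(\mathbf{X})$. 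The same argument with clopens in place of opens treats $\mathcal{CO}_\delta(\mathbf{X})$, and the analogue with averages $h_n:=|\mathcal{F}_n|^{-1}\sum_{f\in\mathcal{F}_n}f\in C(\mathbf{X},[0,1])$ (where $\mathcal{F}_n\subseteq\{f\in C(\mathbf{X},[0,1]):Z(f)=B_n\}$ is supplied by $\mathbf{CMC}$) combined with $g:=\sum_n 2^{-n}h_n$ handles $\mathcal{Z}(\mathbf{X})$. The $P$-space assertion then follows pointwise: for $x\in\bigcap_k V_k$ with $V_k$ open in $(\mathbf{X})_\delta$, a further application of $\mathbf{CMC}$ to $\{\{G\in\mathcal{G}_\delta(\mathbf{X}):x\in G\subseteq V_k\}:k\in\omega\}$ combined with the closure just established yields a basic $G_\delta$-neighborhood of $x$ inside $\bigcap_k V_k$.

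Part (c) I would prove by contrapositive, encoding a purported failure of $\mathbf{CMC}_\omega$ as a topological space. Given a denumerable family $(A_n)_{n\in\omega}$ of pairwise disjoint denumerable sets, form $X:=\{p_n:n\in\omega\}\cup\bigsqcup_{n\in\omega}A_n$ with each $a\in A_n$ isolated and basic neighborhoods of $p_n$ of the form $\{p_n\}\cup(A_n\setminus F)$ for $F\in[A_n]^{<\omega}$, as in \cite{kow}. This $\mathbf{X}$ is a topological sum of one-point-compactified discrete spaces, hence Tychonoff, strongly zero-dimensional, and $T_1$. The crux is that a canonical denumerable family of $c_\delta$-sets (respectively zero-sets, $G_\delta$-sets) of $\mathbf{X}$ built from the $\{p_n\}\cup(A_n\setminus F)$ has intersection in the same class exactly when there is a simultaneous representation of each $A_n$ as a countable union of finite sets; from any such representation, selecting the first non-empty finite piece of each $A_n$ yields a multiple choice function for $(A_n)_n$, contradicting the hypothesis.

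The principal obstacle is the systematic avoidance of hidden appeals to $\mathbf{CAC}$. In (b), the use of canonical finite operations (intersections of opens, averages of functions) rather than enumerations of the chosen finite selections is essential; otherwise one tacitly uses choice to flatten a countable union of countable sets. In (c), the rigidity of the chosen test family must force exactly a multiple choice function, not a stronger selection such as an enumeration of each $A_n$; the matching construction in \cite{kow} is designed precisely to this end.
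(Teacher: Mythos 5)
The paper offers no proof of this theorem at all: it is explicitly ``recalled'' from \cite{kow}, so there is no in-paper argument to compare against. Judged on its own terms, your treatment of (a) is correct and standard, and your treatment of (b) is sound and properly choice-aware: applying $\mathbf{CMC}$ to the sets of witnesses and then collapsing each finite selection by a \emph{canonical} finite operation (finite intersection of open or clopen sets, averages or sums of the finitely many chosen functions) is exactly the right device, and your two-step derivation of the $P$-space property of $(\mathbf{X})_{\delta}$ from the closure result is fine. Parts (c)(i) and (c)(ii) also go through with your space: applying the closure hypothesis to the single countable family $\{X\setminus A_n:n\in\omega\}$ produces \emph{one} witness object (a function $f$ with $Z(f)=\{p_n:n\in\omega\}$, or a single sequence of clopen sets), from which the finite sets $\{a\in A_n: |f(a)|\geq 1/k\}$, resp.\ $A_n\setminus C_k$, are extracted canonically and uniformly in $n$.

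The gap is in (c)(iii). For the topological sum $\mathbf{X}=\bigoplus_n(\{p_n\}\cup A_n)$, the hypothesis that $(\mathbf{X})_{\delta}$ is a $P$-space tells you only that $P=\{p_n:n\in\omega\}$ is \emph{open} in $\tau_{\delta}$, i.e.\ that for each $n$ there \emph{exists} some $G\in\mathcal{G}_{\delta}(\mathbf{X})$ with $p_n\in G\subseteq P$, together with some sequence of open sets intersecting to $G$. It does not tell you that $P$ itself lies in $\mathcal{G}_{\delta}(\mathbf{X})$, which is what your phrase ``has intersection in the same class'' would require. To extract a multiple choice function you must now select, for each $n$, one such pair $(G,(V_k)_k)$ --- and that is again a countable family of choices, precisely the principle you are trying to derive. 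Different witnesses for the same $n$ yield different ``first non-empty finite pieces,'' and their union over all witnesses can be infinite, so there is no canonical amalgamation. The repair is to collapse the points at infinity to one: take $\mathbf{X}$ to be the Alexandroff compactification of the discrete space $\bigsqcup_n A_n$ (compact, Hausdorff, zero-dimensional, hence Tychonoff and strongly zero-dimensional in $\mathbf{ZF}$). Each $X\setminus A_n$ is individually a $c_{\delta}$-set, a zero-set and a $G_{\delta}$-set, and $\{\infty\}=\bigcap_n(X\setminus A_n)$ is a $G_{\delta}$-set of $(\mathbf{X})_{\delta}$; since $\{\infty\}$ is a singleton, its $\tau_{\delta}$-openness is witnessed by a \emph{single} basic set $G\in\mathcal{G}_{\delta}(\mathbf{X})$ with $G=\{\infty\}$, hence by a single sequence $(V_k)_k$ of cofinite open sets, and $\psi(n)=A_n\cap(X\setminus V_{k(n)})$ with $k(n)$ least such that this is non-empty is the desired multiple choice function. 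This one space then also serves for (i) and (ii).
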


A lot of basic facts about $P$-spaces in the absence of the Axiom of Choice are given in \cite{kow}. As in \cite{kow}, for a topological space $\mathbf{X}$, we pay special attention to the subsets $A(\mathbf{X})$ and $U_{\aleph_0}(\mathbf{X})$ of $C(\mathbf{X})$, defined as follows:

\begin{definition} 
	\label{s1:d19}
	\begin{enumerate}
		\item[(i)] (Cf. \cite{kow}.)
		$U_{\aleph_0}(\mathbf{X})=\nonumber\\
		\{f\in C(\mathbf{X}): (\forall \varepsilon >0)(\exists \mathcal{A}\in [\mathcal{CO}(\mathbf{X})]^{\leq\omega})(\bigcup\mathcal{A}=X\wedge (\forall A\in\mathcal{A})\osc_A(f)\leq\varepsilon)\}.$
		\item[(ii)] $U_{\aleph_0}^{\ast}(\mathbf{X})=U_{\aleph_0}(\mathbf{X})\cap C^{\ast}(\mathbf{X})$.
		\item[(iii)] (Cf. \cite{bk} and \cite{kow}.) $A(\mathbf{X})=\{f\in C(\mathbf{X}): (\forall U\in\tau_{nat})f^{-1}[U]\in \mathcal{CO}_{\sigma}(\mathbf{X})\}$. 
	\end{enumerate}
\end{definition}

\begin{remark}
	\label{s1:r20}
	One can easily verify in $\mathbf{ZF}$ that, for every non-empty topological space $\mathbf{X}$, $U_{\aleph_0}(\mathbf{X})$, $U_{\aleph_0}^{\ast}(\mathbf{X})$ and $A(\mathbf{X})$ are subrings of the ring $C(\mathbf{X})$.  
	
	It was noticed in \cite{kow} that, given a topological space $\mathbf{X}$, the set $U_{\aleph_0}(\mathbf{X})$ is closed in $C_{u}(\mathbf{X})$. Since $C^{\ast}(\mathbf{X})$ is also closed in $C_{u}(\mathbf{X})$, it follows that $U_{\aleph_0}^{\ast}(\mathbf{X})$ is closed in  $C_{u}(\mathbf{X})$.
\end{remark}

In the following theorem, we summarize several results established in \cite{kow}.

\begin{theorem}
	\label{s1:t21}
	$($Cf. \cite{kow}.$)$ $[\mathbf{ZF}]$ For every topological space $\mathbf{X}$, the following conditions are satisfied:
	\begin{enumerate}
		\item[(a)] If $\mathbf{X}$ is completely regular and $A(\mathbf{X})=C(\mathbf{X})$, then $\mathbf{X}$ is strongly zero-dimensional.
		\item[(b)] If $\mathbf{X}$ is a $P$-space, then $C(\mathbf{X}, \mathbb{R}_{disc})=C(\mathbf{X})$.
		\item[(c)]  $\mathbf{CMC}$ implies the following:
		\begin{enumerate}
			\item[(i)] for every $f\in U_{\aleph_0}(\mathbf{X})$, the zero-set $Z(f)$ is a countable intersection of clopen sets of $\mathbf{X}$; 
			\item[(ii)] $A(\mathbf{X})=U_{\aleph_0}(\mathbf{X})=\cl_{C_{u}(\mathbf{X})}(C(\mathbf{X},\mathbb{R}_{disc}))$;
			\item[(iii)] if $\mathbf{X}$ is completely regular, then $\mathbf{X}$ is strongly zero-dimensional if and only if $A(\mathbf{X})=C(\mathbf{X})$;
			\item[(iv)] if $\mathbf{X}$ is completely regular, then $\mathbf{X}$ is strongly zero-dimensional if and only if $U_{\aleph_0}(\mathbf{X})=C(\mathbf{X})$;
			\item[(v)] if $\mathbf{X}$ is completely regular and $C(\mathbf{X}, \mathbb{R}_{disc})=C(\mathbf{X})$, then $\mathbf{X}$ is a $P$-space;
			\item[(vi)] if $\mathbf{X}$ is zero-dimensional, then $\mathbf{X}$ is a $P$-space if and only if the following equalities hold: $C(\mathbf{X})=U_{\aleph_0}(\mathbf{X})=C(\mathbf{X}, \mathbb{R}_{disc})$.
		\end{enumerate}
	\end{enumerate}
\end{theorem}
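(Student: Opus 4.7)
My plan is to follow \cite{kow} and treat the clauses in the order (b), (a), then the parts of (c). Parts (a) and (b) are provable in plain $\mathbf{ZF}$, whereas every item of (c) makes essential use of $\mathbf{CMC}$.

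For (b), in a $P$-space $\mathcal{G}_{\delta}(\mathbf{X})$ is the topology, so every zero-set, being $G_{\delta}$, is open and hence clopen; given $f\in C(\mathbf{X})$ and $r\in\mathbb{R}$, $f^{-1}[\{r\}]$ is a zero-set and therefore clopen, which is precisely what is needed for $f$ to be continuous into $\mathbb{R}_{disc}$. For (a), fix disjoint zero-sets $Z_1,Z_2$. Complete regularity yields $g\in C(\mathbf{X},[0,1])$ with $g[Z_1]\subseteq\{0\}$ and $g[Z_2]\subseteq\{1\}$; since $A(\mathbf{X})=C(\mathbf{X})$, both $g^{-1}[(-\infty,1/2)]$ and $g^{-1}[(1/2,+\infty)]$ lie in $\mathcal{CO}_{\sigma}(\mathbf{X})$, and the critical step is to combine these two disjoint countable clopen decompositions into a single clopen $U$ with $Z_1\subseteq U\subseteq X\setminus Z_2$.

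For (c), $\mathbf{CMC}$ is invoked systematically. In (i), $\mathbf{CMC}$ selects for every $n\in\mathbb{N}$ a countable clopen cover $\mathcal{A}_n$ of $X$ on which $f$ has oscillation at most $1/n$, so $Z(f)=\bigcap_n\bigcup\{A\in\mathcal{A}_n:0\in f[A]\}$; Theorem \ref{s1:t18}(b) then upgrades each inner union to a member of $\mathcal{CO}_{\delta}(\mathbf{X})$ whose countable intersection is again a $c_{\delta}$-set. For (ii) I would first observe $C(\mathbf{X},\mathbb{R}_{disc})\subseteq U_{\aleph_{0}}(\mathbf{X})$ in $\mathbf{ZF}$, by pulling back a countable $\varepsilon$-cover of $\mathbb{R}$ and noting that preimages of arbitrary subsets under a map into $\mathbb{R}_{disc}$ are clopen, whence $\cl_{C_{u}(\mathbf{X})}(C(\mathbf{X},\mathbb{R}_{disc}))\subseteq U_{\aleph_{0}}(\mathbf{X})$ by Remark \ref{s1:r20}; then $\mathbf{CMC}$ yields both $A(\mathbf{X})\subseteq U_{\aleph_{0}}(\mathbf{X})$ (refine the $\mathcal{CO}_{\sigma}$-preimages of an $\varepsilon$-cover of $\mathbb{R}$ into countable clopen covers) and $U_{\aleph_{0}}(\mathbf{X})\subseteq A(\mathbf{X})$ (express $f^{-1}[(a,b)]$ as the countable union over $n$ of the members of $\mathcal{A}_{1/n}$ whose image lies in $(a,b)$); finally $U_{\aleph_{0}}(\mathbf{X})\subseteq\cl_{C_{u}(\mathbf{X})}(C(\mathbf{X},\mathbb{R}_{disc}))$ is obtained by $\mathbf{CMC}$-choosing one value of $f$ per piece of each $\mathcal{A}_{1/n}$ to assemble a locally constant $\mathbb{R}_{disc}$-valued $(1/n)$-approximant. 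Items (iii), (iv) follow from (a) together with the converses in (ii); (v) follows because $C(\mathbf{X},\mathbb{R}_{disc})=C(\mathbf{X})$ makes every zero-set clopen, and a $\mathbf{CMC}$-driven use of Theorem \ref{s1:t18}(a,b) then yields that every $G_{\delta}$ in a completely regular space is open; and (vi) is a synthesis of (b), (iv), (v) in the zero-dimensional setting.

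The principal obstacle is the choice-free extraction of a single clopen separator in part (a); the other delicate point is the organized use of $\mathbf{CMC}$ across all positive $\varepsilon$ in (c)(ii) to simultaneously produce $\mathcal{CO}_{\sigma}$-refinements and locally constant approximants.
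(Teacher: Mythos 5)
First, a caveat: the paper contains no proof of Theorem \ref{s1:t21} at all -- it is explicitly presented as a summary of results established in \cite{kow} -- so your sketch can only be judged against the mathematics itself. Your architecture ((a) and (b) in $\mathbf{ZF}$, all of (c) via $\mathbf{CMC}$) is the right one, and (b), (c)(v), (c)(vi) and most of (c)(ii) are sound as outlined. But there are three concrete gaps. In (a), the sets $g^{-1}[(-\infty,1/2)]$ and $g^{-1}[(1/2,+\infty)]$ are disjoint and do not cover $X$, and from two disjoint members of $\mathcal{CO}_{\sigma}(\mathbf{X})$ one cannot in general extract a clopen separator: the interleaving trick $E_n=A_n\setminus(B_1\cup\dots\cup B_{n-1})$, $F_n=B_n\setminus(A_1\cup\dots\cup A_n)$ yields complementary open sets only when $\bigcup_nA_n\cup\bigcup_nB_n=X$. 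You must instead take overlapping preimages, say $g^{-1}[(-\infty,2/3)]=\bigcup_nA_n$ and $g^{-1}[(1/3,+\infty)]=\bigcup_nB_n$, whose union is $X$; then $U=\bigcup_nE_n$ is clopen with $Z_1\subseteq U\subseteq X\setminus Z_2$. You flag this as ``the critical step'' but your choice of sets makes it fail.

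Second, in (c)(i) the inner unions $\bigcup\{A\in\mathcal{A}_n:0\in f[A]\}$ lie in $\mathcal{CO}_{\sigma}(\mathbf{X})$, not $\mathcal{CO}_{\delta}(\mathbf{X})$, and Theorem \ref{s1:t18}(b) (closure of $\mathcal{CO}_{\delta}$ under countable intersections) cannot ``upgrade'' a countable union of clopen sets to a $c_{\delta}$-set. The repair is to use $\mathbf{CMC}$ to enumerate each cover and disjointify it into a countable clopen \emph{partition} (replace $A_k$ by $A_k\setminus\bigcup_{j<k}A_j$, which preserves the oscillation bound); any union of members of a clopen partition is clopen, so the inner unions become clopen and $Z(f)\in\mathcal{CO}_{\delta}(\mathbf{X})$. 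Third, in (iii)--(iv) the implication ``strongly zero-dimensional $\Rightarrow U_{\aleph_0}(\mathbf{X})=C(\mathbf{X})$'' does not follow from (a) together with (ii): item (ii) only identifies the three rings with one another, and (a) gives the converse direction. For unbounded $f\in C(\mathbf{X})$ and $\varepsilon>0$ you must separate, for every $k\in\mathbb{Z}$, the disjoint zero-sets $f^{-1}[(-\infty,k\varepsilon]]$ and $f^{-1}[[(k+1)\varepsilon,+\infty)]$ by clopen sets $C_k$ and assemble a countable clopen cover of small oscillation from them; choosing the $C_k$ simultaneously for all $k$ is precisely where $\mathbf{CMC}$ enters (finitely many candidates can be combined canonically). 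The paper's own Theorem \ref{s2:t13} carries out this argument for bounded $f$ in $\mathbf{ZF}$; the unbounded case is the missing piece of your (iv).
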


That a regular $P$-space may fail to be zero-dimensional in $\mathbf{ZF}$ was noticed in \cite{kow}.

\subsection{Special compactifications in $\mathbf{ZF}$, filters, ultrafilters and maximal ideals}
\label{s1.5}

In what follows, for a topological space $\mathbf{X}$, we denote by $\mathcal{E}(\mathbf{X})$ the set of all $\mathcal{F}\subseteq C(\mathbf{X})$ such that the evaluation map $e_{\mathcal{F}}$ is a homeomorphic embedding. Let $\mathcal{E}^{\ast}(\mathbf{X})=\{\mathcal{F}\in\mathcal{E}(\mathbf{X}): \mathcal{F}\subseteq C^{\ast}(\mathbf{X})\}$.  Clearly, a non-empty topological space $\mathbf{X}$ is Tychonoff if and only if $\mathcal{E}^{\ast}(\mathbf{X})\neq\emptyset$. 

\begin{definition}
	\label{s1:d22}
	For $\mathcal{F}\in\mathcal{E}(\mathbf{X})$, the extension $\langle \cl_{\mathbb{R}^{\mathcal{F}}}(e_{\mathcal{F}}[X]), e_{\mathcal{F}}\rangle$ of $\mathbf{X}$ is denoted by $e_{\mathcal{F}}\mathbf{X}$ and called the \emph{extension generated by} $\mathcal{F}$. If $e_{\mathcal{F}}$ is compact, we call it the \emph{compactification of} $\mathbf{X}$ \emph{generated by} $\mathcal{F}$.
\end{definition}

It is well known that the following useful theorem holds in $\mathbf{ZFC}$; however, we remark that its most elementary proof is also valid in $\mathbf{ZF}$.

\begin{theorem}
	\label{s1:t23}
	$[\mathbf{ZF}]$
	Let $\mathbf{X}$ be a non-empty Tychonoff space and let $\mathcal{F},\mathcal{G}\in\mathcal{E}( \mathbf{X})$. Then $e_{\mathcal{F}}\mathbf{X}\approx e_{\mathcal{G}}\mathbf{X}$ if and only if $\mathcal{F}\subseteq C_{e_{\mathcal{G}}}(\mathbf{X})$ and $\mathcal{G}\subseteq C_{e_{\mathcal{F}}}(\mathbf{X})$.
\end{theorem}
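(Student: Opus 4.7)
The plan is to reduce everything to the universal property of product spaces together with the uniqueness of continuous extensions to dense subsets (Proposition \ref{s1:p12}). First I would record the trivial observation that, for every $\mathcal{F}\in\mathcal{E}(\mathbf{X})$, one automatically has $\mathcal{F}\subseteq C_{e_{\mathcal{F}}}(\mathbf{X})$: the projection $\pi_f:\mathbb{R}^{\mathcal{F}}\to\mathbb{R}$ restricted to the closure $e_{\mathcal{F}}\mathbf{X}$ is a continuous extension of $f$ via $\pi_f\circ e_{\mathcal{F}}=f$. This will be used on both sides of the equivalence.

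For the necessity direction, suppose $h:e_{\mathcal{F}}\mathbf{X}\to e_{\mathcal{G}}\mathbf{X}$ is a homeomorphism witnessing $e_{\mathcal{F}}\mathbf{X}\approx e_{\mathcal{G}}\mathbf{X}$, so $h\circ e_{\mathcal{F}}=e_{\mathcal{G}}$. For each $g\in\mathcal{G}$ the composition $g^{e_{\mathcal{G}}}\circ h\in C(e_{\mathcal{F}}\mathbf{X})$ satisfies $(g^{e_{\mathcal{G}}}\circ h)\circ e_{\mathcal{F}}=g^{e_{\mathcal{G}}}\circ e_{\mathcal{G}}=g$, hence $g\in C_{e_{\mathcal{F}}}(\mathbf{X})$. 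Symmetrically, via $h^{-1}$, every $f\in\mathcal{F}$ lies in $C_{e_{\mathcal{G}}}(\mathbf{X})$.

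For the sufficiency direction, assume $\mathcal{F}\subseteq C_{e_{\mathcal{G}}}(\mathbf{X})$ and $\mathcal{G}\subseteq C_{e_{\mathcal{F}}}(\mathbf{X})$. I would define $\Phi:e_{\mathcal{G}}\mathbf{X}\to\mathbb{R}^{\mathcal{F}}$ coordinate-wise by $\Phi(y)(f)=f^{e_{\mathcal{G}}}(y)$, which is continuous because each coordinate $f^{e_{\mathcal{G}}}$ is continuous. A direct computation shows $\Phi\circ e_{\mathcal{G}}=e_{\mathcal{F}}$: for $x\in X$ and $f\in\mathcal{F}$, $\Phi(e_{\mathcal{G}}(x))(f)=f^{e_{\mathcal{G}}}(e_{\mathcal{G}}(x))=f(x)=e_{\mathcal{F}}(x)(f)$. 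Consequently $\Phi$ carries $e_{\mathcal{G}}[X]$ into $e_{\mathcal{F}}[X]$, and hence, by continuity and closedness of $e_{\mathcal{F}}\mathbf{X}$ in $\mathbb{R}^{\mathcal{F}}$, carries all of $e_{\mathcal{G}}\mathbf{X}=\cl(e_{\mathcal{G}}[X])$ into $e_{\mathcal{F}}\mathbf{X}$. Symmetrically one constructs $\Psi:e_{\mathcal{F}}\mathbf{X}\to e_{\mathcal{G}}\mathbf{X}$ with $\Psi\circ e_{\mathcal{F}}=e_{\mathcal{G}}$.

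It then remains to check that $\Phi$ and $\Psi$ are mutually inverse. The composition $\Psi\circ\Phi$ is continuous and agrees with $\mathrm{id}_{e_{\mathcal{G}}\mathbf{X}}$ on the dense subset $e_{\mathcal{G}}[X]$, since $\Psi(\Phi(e_{\mathcal{G}}(x)))=\Psi(e_{\mathcal{F}}(x))=e_{\mathcal{G}}(x)$. As $e_{\mathcal{G}}\mathbf{X}$ is Hausdorff, Proposition \ref{s1:p12} forces $\Psi\circ\Phi=\mathrm{id}$, and the analogous argument gives $\Phi\circ\Psi=\mathrm{id}$. Thus $\Psi$ is the required equivalence. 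The main obstacle is just bookkeeping directions of maps and verifying that the range of $\Phi$ really lands inside $e_{\mathcal{F}}\mathbf{X}$ rather than merely inside $\mathbb{R}^{\mathcal{F}}$; once the dense-image computation is in place everything else is forced by uniqueness of continuous extensions, so no form of $\mathbf{AC}$ is invoked and the argument is entirely in $\mathbf{ZF}$.
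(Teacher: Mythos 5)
Your proof is correct and is precisely the ``most elementary proof'' that the paper alludes to without writing out (the paper states Theorem \ref{s1:t23} with no proof, only the remark that the standard argument is valid in $\mathbf{ZF}$): extend coordinatewise using the unique extensions $f^{e_{\mathcal{G}}}$, land in the closure by continuity, and conclude the two maps are mutually inverse from Proposition \ref{s1:p12} on the dense images. All steps are choice-free since uniqueness of the extensions makes $f\mapsto f^{e_{\mathcal{G}}}$ a definable assignment, so nothing needs to be added.
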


For a Tychonoff space $\mathbf{X}$, we denote the extension $e_{C^{\ast}(\mathbf{X})}\mathbf{X}$ by $\beta^{f}\mathbf{X}$ and, for simplicity, we put $e_{\beta^f}=e_{C^{\ast}(\mathbf{X})}$.

In $\mathbf{ZF}$, \v{C}ech-Stone compactifications, maximal Tychonoff compactifications and Banaschewski compactifications can be defined as follows:

\begin{definition}
	\label{s1:d24}
	\begin{enumerate}
		\item[(i)] A \emph{\v{C}ech-Stone compactification} of a Hausdorff space $\mathbf{X}$ is a Hausdorff
		compactification $\beta \mathbf{X}$ of $\mathbf{X}$ such that, for
		every compact Hausdorff space $\mathbf{K}$ and for each continuous mapping $%
		f:\mathbf{X}\rightarrow \mathbf{K}$, there exists a continuous mapping $\tilde{f}:\beta
		\mathbf{X}\rightarrow \mathbf{K}$ such that $f=\tilde{f}\circ \beta $.
		
		\item[(ii)]  A \emph{Banaschewski compactification} of a zero-dimensional $T_1$-space $\mathbf{X}$ is a zero-dimen\-sional
		$T_1$-compactification $ \beta_0 \mathbf{X} $ of $\mathbf{X}$ such that, for
		every compact zero-dimensional $T_1$-space $\mathbf{K}$ and for each continuous mapping $%
		f:\mathbf{X}\rightarrow \mathbf{K}$, there exists a continuous mapping $\tilde{f}:\beta_0
		\mathbf{X}\rightarrow \mathbf{K}$ such that $f=\tilde{f}\circ \beta_0 $.
		
		\item[(iii)] (Cf. \cite{kw1}.) If $\mathbf{X}$ is a Tychonoff space and its extension $\beta^{f}\mathbf{X}$ is compact, then the compactification $\beta^{f}\mathbf{X}$ is called the \emph{maximal functional compactification} of $\mathbf{X}$.
		
		\item[(iv)] A \emph{maximal Tychonoff compactification} of a Tychonoff space $\mathbf{X}$ is a Tychonoff compactification $\beta_{T}\mathbf{X}$ of $\mathbf{X}$ such that, for every compact Tychonoff space $\mathbf{K}$ and every $f\in C(\mathbf{X},\mathbf{K})$, there exists $\tilde{f}\in C(\beta_T\mathbf{X})$ such that $\tilde{f}\circ\beta_T=f$.
	\end{enumerate}
\end{definition}

It holds in $\mathbf{ZF}$ that a Hausdorff space $\mathbf{X}$ can have at most one (up to the equivalence $\thickapprox$) \v{C}ech-Stone compactification but even a Tychonoff space may fail to have its \v{C}ech-Stone compactification (see, e.g.,\cite{kw1}).  Similarly, in $\mathbf{ZF}$, a zero-dimensional $T_1$-space can have at most one (up to the equivalence $\thickapprox$) Banaschewski compactification but may fail to have its Banaschewski compactification. For instance, if $\mathcal{M}$ is any model of $\mathbf{ZF}$ in which there are non-compact Cantor cubes, every non-compact Cantor cube in $\mathcal{M}$ fails to have its Banaschewski compactification in $\mathcal{M}$ (see \cite[Proposition 3.17]{kw1}).

Definition \ref{s1:d24}(iv) was not stated explicitly in \cite{kw1}. By using \cite[Proposition 3.6]{kw1} and Theorem \ref{s1:t10}, one can show that the following theorem holds in $\mathbf{ZF}$:

\begin{theorem}
	\label{s1:t25}
	$[\mathbf{ZF}]$ Let $\mathbf{X}$ be a Tychonoff space. Then a maximal Tychonoff compactification of $\mathbf{X}$ exists if and only if the extension $\beta^{f}\mathbf{X}$ is compact. Furthermore, if $\beta^{f}\mathbf{X}$ is compact, then $\beta_{T}\mathbf{X}\approx\beta^{f}\mathbf{X}$ and, for every Tychonoff compactification $\gamma\mathbf{X}$ of $\mathbf{X}$, it holds that $\gamma\mathbf{X}\leq\beta_{T}\mathbf{X}$.
\end{theorem}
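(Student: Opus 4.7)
The plan is to handle $(\Leftarrow)$ via Taimanov's Extension Theorem and to handle $(\Rightarrow)$ together with the final two claims by building a canonical continuous map $\Phi:\beta_{T}\mathbf{X}\to\beta^{f}\mathbf{X}$ and showing it is a homeomorphism. For $(\Leftarrow)$, assuming $\beta^{f}\mathbf{X}$ is compact, I first note that $\beta^{f}\mathbf{X}$ is a Tychonoff compactification of $\mathbf{X}$, being a closed subspace of the Tychonoff product $\mathbb{R}^{C^{\ast}(\mathbf{X})}$. To verify Definition \ref{s1:d24}(iv), I fix a compact Tychonoff space $\mathbf{K}$ and $f\in C(\mathbf{X},\mathbf{K})$ and apply Theorem \ref{s1:t10} to the closed base $\mathcal{Z}(\mathbf{K})$, which is closed under finite intersections. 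The point to check is that disjoint $A,B\in\mathcal{Z}(\mathbf{K})$ satisfy $\cl_{\beta^{f}\mathbf{X}}(f^{-1}[A])\cap\cl_{\beta^{f}\mathbf{X}}(f^{-1}[B])=\emptyset$; writing $A=Z(f_1)$, $B=Z(f_2)$, the function $h=f_1^2/(f_1^2+f_2^2)$ is in $C^{\ast}(\mathbf{K})$, equals $0$ on $A$ and $1$ on $B$, so $h\circ f\in C^{\ast}(\mathbf{X})$, and the restriction of the projection $\pi_{h\circ f}$ to $\beta^{f}\mathbf{X}$ is a continuous real extension of $h\circ f$ that separates these preimages. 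Taimanov's theorem then supplies the desired $\tilde{f}\in C(\beta^{f}\mathbf{X},\mathbf{K})$ with $\tilde{f}\circ\beta^{f}=f$.

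For $(\Rightarrow)$, assuming $\beta_{T}\mathbf{X}$ exists, the key observation is that each $g\in C^{\ast}(\mathbf{X})$ takes values in the compact Tychonoff space $\cl_{\mathbb{R}}(g[X])$; by the universal property of $\beta_{T}\mathbf{X}$ together with Proposition \ref{s1:p12} there is a \emph{unique} continuous extension $\tilde{g}:\beta_{T}\mathbf{X}\to\cl_{\mathbb{R}}(g[X])$. Uniqueness lets me define, without any choice, the map $\Phi:\beta_{T}\mathbf{X}\to\mathbb{R}^{C^{\ast}(\mathbf{X})}$ by $\Phi(y)(g)=\tilde{g}(y)$. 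Then $\Phi$ is continuous with $\Phi\circ\beta_{T}=e_{\beta^{f}}$; its image is compact (and hence closed in $\mathbb{R}^{C^{\ast}(\mathbf{X})}$), contains $e_{\beta^{f}}[X]$, and is contained in the closed set $\beta^{f}\mathbf{X}$, so $\Phi[\beta_{T}\mathbf{X}]=\beta^{f}\mathbf{X}$ and $\beta^{f}\mathbf{X}$ is compact. The map $\Phi$ is also injective: distinct $y_1,y_2\in\beta_{T}\mathbf{X}$ are separated by some $h\in C^{\ast}(\beta_{T}\mathbf{X})$ (by functional Hausdorffness of the Tychonoff space $\beta_{T}\mathbf{X}$), and Proposition \ref{s1:p12} forces $\tilde{g}=h$ for $g=h\circ\beta_{T}\in C^{\ast}(\mathbf{X})$, so $\Phi(y_1)(g)\neq\Phi(y_2)(g)$. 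A continuous bijection from a compact space onto a Hausdorff space is a homeomorphism, so $\Phi$ implements the equivalence $\beta_{T}\mathbf{X}\approx\beta^{f}\mathbf{X}$.

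Finally, for an arbitrary Tychonoff compactification $\gamma\mathbf{X}$, the universal property of $\beta_{T}\mathbf{X}$ applied to the dense embedding $\gamma:\mathbf{X}\to\gamma\mathbf{X}$ produces a continuous $\tilde{\gamma}:\beta_{T}\mathbf{X}\to\gamma\mathbf{X}$ with $\tilde{\gamma}\circ\beta_{T}=\gamma$; its image is compact and contains the dense set $\gamma[X]$, so $\tilde{\gamma}$ is surjective and $\gamma\mathbf{X}\leq\beta_{T}\mathbf{X}$. The main place where $\mathbf{ZF}$-care is needed is avoiding Tychonoff-style compactness of products: compactness of $\beta^{f}\mathbf{X}$ is either an explicit hypothesis in $(\Leftarrow)$ or transported along the continuous map $\Phi$ from $\beta_{T}\mathbf{X}$ in $(\Rightarrow)$, and all extensions used are unique by Proposition \ref{s1:p12}, so nothing depends on $\mathbf{BPI}$ or any other choice principle.
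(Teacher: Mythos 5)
Your proof is correct, and it follows essentially the route the paper indicates for this theorem (which it leaves as a cited sketch): Taimanov's Extension Theorem \ref{s1:t10} applied to the Wallman base $\mathcal{Z}(\mathbf{K})$ for the $(\Leftarrow)$ direction, and for $(\Rightarrow)$ the canonically defined evaluation map into $\mathbb{R}^{C^{\ast}(\mathbf{X})}$, which is exactly the mechanism behind Theorem \ref{s1:t31} (the inclusion $C^{\ast}(\mathbf{X})\subseteq C_{\beta_T}(\mathbf{X})$ forcing $\beta^{f}\mathbf{X}$ to be compact). All the $\mathbf{ZF}$-sensitive points (uniqueness of extensions via Proposition \ref{s1:p12} replacing any appeal to choice, Heine--Borel for $\mathbb{R}$, compact-onto-Hausdorff bijections being homeomorphisms) are handled correctly.
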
 

One of the most familiar types of Hausdorff compactifications in $\mathbf{ZFC}$ are compactifications of Wallman type. They are constructed as spaces of ultrafilters of Wallman bases. We use the same slight modification of the concept of a Wallman base, as in \cite{pw} and \cite{kw1}. Let us recall it in the following definition.

\begin{definition}
	\label{s1:d26}
	A family $\mathcal{C}$ of subsets of a topological space $\mathbf{X}$ is called a \emph{normal} or \emph{Wallman base} for $\mathbf{X}$ if $\mathcal{C}$ satisfies the following conditions:
	\begin{enumerate} 
		\item[(i)] $\mathcal{C}$ is a base for closed sets of $\mathbf{X}$;
		\item[(ii)] if $C_1, C_2\in\mathcal{C}$, then $C_1\cap C_2\in\mathcal{C}$ and $C_1\cup C_2\in\mathcal{C}$;
		\item[(iii)] for each set $A\subseteq X$ and for each $x\in X\setminus A$, if $A$ is a singleton or $A$ is closed in $\mathbf{X}$, then there exists $C\in\mathcal{C}$ such that $x\in C\subseteq X\setminus A$;
		\item[(iv)] if $A_1, A_2\in\mathcal{C}$ and $A_1\cap A_2=\emptyset$, then there exist $C_1, C_2\in\mathcal{C}$ with $A_1\cap C_1=A_2\cap C_2=\emptyset$  and $C_1\cup C_2=X$.
	\end{enumerate}
\end{definition}

\begin{definition} 
	\label{s1:d27}
	Let $X$ be a set and let $\mathcal{R}$ be a non-empty family of subsets of $X$ such that $\mathcal{R}$ is closed under finite intersections. Let $\mathcal{F}$ be a filter in $\mathcal{R}$.
	\begin{enumerate}
		\item[(i)] $\mathcal{F}$ is called \emph{fixed} if $\bigcap\mathcal{F}\neq\emptyset$; otherwise, $\mathcal{F}$ is called \emph{free}.
		
		\item[(ii)] $\mathcal{F}$ has the \emph{countable intersection property} (c.i.p. in abbreviation) if, for every non-empty countable subcollection $\mathcal{H}$ of $\mathcal{F}$, $\bigcap\mathcal{H}\neq\emptyset$. 
		
		\item[(iii)] $\mathcal{F}$ is an \emph{ultrafilter} in $\mathcal{R}$ if, for every $A\in\mathcal{R}\setminus\mathcal{F}$, the family $\mathcal{F}\cup\{A\}$ is not contained in a filter in $\mathcal{R}$.
	\end{enumerate}
\end{definition}

\begin{definition}
	\label{s1:d28}
	Let $\mathbf{X}$ be a topological space.
	\begin{enumerate}
		\item[(i)]  Filters (respectively, ultrafilters) in $\mathcal{Z}(\mathbf{X})$ are called $z$-\emph{filters} (respectively, $z$-\emph{ultrafil\-ters}) in $\mathbf{X}$. 
		\item[(ii)] Filters (respectively, ultrafilters) in $\mathcal{CO}(\mathbf{X})$ are called \emph{clopen filters} (respectively, \emph{clopen ultrafilters}) in $\mathbf{X}$. 
	\end{enumerate}
\end{definition}

For Wallman-type extensions, we use the same notation as in \cite{pw} and \cite{kw1}. Namely, suppose that $\mathcal{C}$  is a Wallman base for a topological space $\mathbf{X}$. We denote by $\mathcal{W}(X, \mathcal{C})$ the set of all ultrafilters in $\mathcal{C}$. For $A\in\mathcal{C}$, we put
$$[A]_{\mathcal{C}}=\{p\in\mathcal{W}(X, \mathcal{C}): A\in p\}.$$
The  \emph{Wallman space} of $\mathbf{X}$ corresponding to $\mathcal{C}$ is denoted by $\mathcal{W}(X, \mathcal{C})$ and it is the set $\mathcal{W}(X, \mathcal{C})$ equipped with the topology having the collection $\{ [A]_{\mathcal{C}}: A\in\mathcal{C}\}$ as a base for closed sets. The canonical embedding of $\mathbf{X}$ into $\mathcal{W}(X, \mathcal{C})$ is the mapping $h_{\mathcal{C}}: X\to\mathcal{W}(X, \mathcal{C})$ defined as follows:
$$(\forall x\in X) \text{ } h_{\mathcal{C}}(x)=\{A\in \mathcal{C}: x\in A\}.$$ 
The pair $\langle\mathcal{W}(X, \mathcal{C}), h_{\mathcal{C}}\rangle$ is called \emph{the Wallman extension} of $\mathbf{X}$ corresponding to $\mathcal{C}$ and, for simplicity, this extension is denoted also by $\mathcal{W}(X, \mathcal{C})$. In the case when $\mathbf{X}=X_{disc}$, the Wallman space $\mathcal{W}(X, \mathcal{P}(X))$ corresponding to the power set $\mathcal{P}(X)$ is usually called the \emph{Stone space} of $X$.

In the following theorem, applying Theorem \ref{s1:t25}, we slightly reformulate several results established in  \cite[Section 3]{kw1}.

\begin{theorem}
	\label{s1:t29}
	$($Cf. \cite[Section 3]{kw1}.$)$ $[\mathbf{ZF}]$ Let $\mathbf{X}$ be a non-empty Tychonoff space. 
	\begin{enumerate}
		\item[(i)] If $\beta_{T}\mathbf{X}$ exists, then $\mathcal{W}(X, \mathcal{Z}(\mathbf{X}))$ is a Hausdorff compactification of $\mathbf{X}$ such that $\mathcal{W}(X, \mathcal{Z}(\mathbf{X}))\approx\beta_{T}\mathbf{X}$.
		\item[(ii)] $\beta_{T}\mathbf{X}$ exists if and only if $\mathcal{W}(X, \mathcal{Z}(\mathbf{X}))$ is compact.
		\item[(iii)] If $\beta\mathbf{X}$ exists, then $\beta_{T}\mathbf{X}$ also exists and $\beta_{T}\mathbf{X}\leq\beta\mathbf{X}$.
		\item[(iv)] If $\beta\mathbf{X}$ exists and is completely regular, then $\beta_{T}\mathbf{X}$ exists and $\beta_{T}\mathbf{X}\approx\beta\mathbf{X}$.
	\end{enumerate}
\end{theorem}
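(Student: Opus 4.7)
The plan is to prove the four parts in the order listed, using Theorem \ref{s1:t25} as a bridge between $\beta_T\mathbf{X}$ and $\beta^f\mathbf{X}$, and using Taimanov's Extension Theorem (Theorem \ref{s1:t10}) to handle Wallman extensions.

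First I would establish the prerequisite that $\mathcal{Z}(\mathbf{X})$ is a Wallman base for $\mathbf{X}$ in the sense of Definition \ref{s1:d26}: it is a closed base (by complete regularity), is closed under finite intersections and unions (straightforward ring-theoretic observations on $C(\mathbf{X})$), separates points from closed sets, and the normality condition (iv) follows because for disjoint zero-sets $Z(f)$ and $Z(g)$ the map $x\mapsto |f(x)|/(|f(x)|+|g(x)|)$ is continuous and yields a separating pair of zero-sets covering $X$. This makes $\mathcal{W}(X,\mathcal{Z}(\mathbf{X}))$ a Hausdorff extension of $\mathbf{X}$ via $h_{\mathcal{Z}}$ in $\mathbf{ZF}$.

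For parts (i) and (ii), I would construct explicit continuous maps in both directions between $\mathcal{W}(X,\mathcal{Z}(\mathbf{X}))$ and $\beta^f\mathbf{X}$. In one direction, for each $f\in C^{\ast}(\mathbf{X})$, Taimanov's theorem produces a continuous extension $\tilde{f}:\mathcal{W}(X,\mathcal{Z}(\mathbf{X}))\to\cl_{\mathbb{R}}(f[X])$ (disjoint members of $\mathcal{Z}(\mathbf{X})$ are separated in the Wallman space since ultrafilters cannot contain both). The evaluation family $\{\tilde{f}:f\in C^{\ast}(\mathbf{X})\}$ then assembles into a continuous map $E:\mathcal{W}(X,\mathcal{Z}(\mathbf{X}))\to\mathbb{R}^{C^{\ast}(\mathbf{X})}$ extending $e_{\beta^f}$; if $\mathcal{W}(X,\mathcal{Z}(\mathbf{X}))$ is compact, then $E[\mathcal{W}(X,\mathcal{Z}(\mathbf{X}))]$ is compact and contains $e_{\beta^f}[X]$ densely, forcing $\beta^f\mathbf{X}$ to be compact, so $\beta_T\mathbf{X}$ exists by Theorem \ref{s1:t25}. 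Conversely, assuming $\beta_T\mathbf{X}$ exists, define $\Phi:\beta_T\mathbf{X}\to\mathcal{W}(X,\mathcal{Z}(\mathbf{X}))$ by $\Phi(p)=\{Z\in\mathcal{Z}(\mathbf{X}):p\in\cl_{\beta_T\mathbf{X}}(Z)\}$ and verify that $\Phi(p)$ is a $z$-ultrafilter, $\Phi$ is continuous, onto, and $\Phi\circ\beta_T=h_{\mathcal{Z}}$. Since $\beta_T\mathbf{X}$ is compact, so is $\mathcal{W}(X,\mathcal{Z}(\mathbf{X}))$, and one sees $\Phi$ is a homeomorphism using Remark \ref{s1:r4}(i) applied to the two Hausdorff compactifications. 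The main obstacle here is verifying that $\Phi$ is well-defined and surjective in $\mathbf{ZF}$: surjectivity is the delicate point, since for a $z$-ultrafilter $\mathcal{U}$ we need compactness of $\beta_T\mathbf{X}$ to witness the finite intersection property of $\{\cl_{\beta_T\mathbf{X}}(Z):Z\in\mathcal{U}\}$, and then use maximality of $\mathcal{U}$ to match it to the chosen point.

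For (iii), assume $\beta\mathbf{X}$ exists. For each $f\in C^{\ast}(\mathbf{X})$ the \v{C}ech-Stone universal property gives a continuous $\tilde{f}:\beta\mathbf{X}\to\cl_{\mathbb{R}}(f[X])$, and the evaluation map of this family is a continuous function $E:\beta\mathbf{X}\to\mathbb{R}^{C^{\ast}(\mathbf{X})}$ satisfying $E\circ\beta=e_{\beta^f}$; then $E[\beta\mathbf{X}]$ is compact, closed, and contains $e_{\beta^f}[X]$ densely, so $\beta^f\mathbf{X}=E[\beta\mathbf{X}]$ is compact. Theorem \ref{s1:t25} yields existence of $\beta_T\mathbf{X}$, and the \v{C}ech-Stone property applied to the compact Hausdorff $\beta_T\mathbf{X}$ produces the required continuous surjection witnessing $\beta_T\mathbf{X}\leq\beta\mathbf{X}$. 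For (iv), if $\beta\mathbf{X}$ is additionally completely regular then $\beta\mathbf{X}$ is itself a compact Tychonoff space, and the \v{C}ech-Stone property implies the defining property of $\beta_T\mathbf{X}$ in Definition \ref{s1:d24}(iv); uniqueness of the maximal Tychonoff compactification (up to $\approx$) then gives $\beta_T\mathbf{X}\approx\beta\mathbf{X}$.
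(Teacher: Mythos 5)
Your argument is essentially correct, but it is worth noting that the paper does not prove Theorem \ref{s1:t29} at all: it simply declares the four items to be reformulations of results from \cite[Section 3]{kw1} combined with Theorem \ref{s1:t25}. You have therefore supplied a self-contained proof where the paper relies on an external citation. Your route --- checking that $\mathcal{Z}(\mathbf{X})$ is a Wallman base, extending each $f\in C^{\ast}(\mathbf{X})$ over $\mathcal{W}(X,\mathcal{Z}(\mathbf{X}))$ via Taimanov's theorem and assembling the extensions into an evaluation map landing on $\beta^{f}\mathbf{X}$, and in the reverse direction sending $p\in\beta_T X$ to the trace $z$-ultrafilter $\Phi(p)$ --- is the standard Wallman/\v{C}ech--Stone machinery and every step you invoke is available in $\mathbf{ZF}$ (no choice is needed because the Taimanov extensions are unique by Proposition \ref{s1:p12}, so the family $\{\tilde f\}$ exists as a set without selection). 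Parts (iii) and (iv) are handled correctly by the universal property of $\beta\mathbf{X}$ together with Theorem \ref{s1:t25}. The one place where your sketch leans on an unstated lemma is the verification that $\Phi(p)$ is closed under finite intersections: this needs the identity $\cl_{\beta_T\mathbf{X}}(Z_1)\cap\cl_{\beta_T\mathbf{X}}(Z_2)=\cl_{\beta_T\mathbf{X}}(Z_1\cap Z_2)$ for zero-sets of $\mathbf{X}$, which in turn requires the $\mathbf{ZF}$-fact that disjoint zero-sets of $\mathbf{X}$ have disjoint closures in $\beta^{f}\mathbf{X}\approx\beta_T\mathbf{X}$ (via the Urysohn quotient $|f|/(|f|+|g|)$, which lies in $C^{\ast}(\mathbf{X})$ and hence extends) together with a shrinking argument using a zero-set neighbourhood of $p$ in the Tychonoff space $\beta_T\mathbf{X}$. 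You correctly flag well-definedness and surjectivity of $\Phi$ as the delicate points, and the surjectivity argument (finite intersection property plus compactness plus maximality) is sound, so this is a matter of filling in a routine lemma rather than a gap in the strategy. What your approach buys is independence from \cite{kw1}; what the paper's approach buys is brevity.
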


The following theorem can be easily deduced from \cite[Theorem 3.16]{kw1}.

\begin{theorem} 
	\label{s1:t30}
	The following conditions are all equivalent:
	\begin{enumerate}
		\item[(i)] $\mathbf{BPI}$;
		\item[(ii)] every Tychonoff-compact space is compact;
		\item[(iii)] every Cantor-compact space is compact;
		\item[(iv)] for every Tychonoff space $\mathbf{X}$, $\beta_{T}\mathbf{X}$ exists;
		\item[(v)] every Tychonoff space $\mathbf{X}$ has a Hausdorff compactification $\alpha\mathbf{X}$ such that every clopen set of $\mathbf{X}$ has a clopen closure in $\alpha\mathbf{X}$;
		\item[(vi)] every zero-dimensional $T_1$-space $\mathbf{X}$ has a Hausdorff compactification $\alpha\mathbf{X}$ such that every clopen set of $\mathbf{X}$ has a clopen closure in $\alpha\mathbf{X}$.
	\end{enumerate}
\end{theorem}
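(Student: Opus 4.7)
The plan is to prove the six-way equivalence by two chains that both return to (i): first the cycle (i) $\Rightarrow$ (ii) $\Rightarrow$ (iii) $\Rightarrow$ (i) among the three ``compactness'' formulations, and then the chain (i) $\Rightarrow$ (iv) $\Rightarrow$ (v) $\Rightarrow$ (vi) $\Rightarrow$ (i) pulling in the compactification statements. The known $\mathbf{BPI}$-equivalents in Remark \ref{s1:r14} will do most of the heavy lifting, and Theorem \ref{s1:t25} will bridge (iv) to the functional extension $\beta^{f}\mathbf{X}$.

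For (i) $\Rightarrow$ (ii): a Tychonoff-compact space is a closed subspace of some $[0,1]^{J}$, and this cube is compact under $\mathbf{BPI}$ by Remark \ref{s1:r14}(ii). For (ii) $\Rightarrow$ (iii): since $\mathbf{2}$ is a closed subspace of $[0,1]$, and products of coordinatewise closed subspaces are closed (a $\mathbf{ZF}$ fact), every Cantor-compact space is Tychonoff-compact. For (iii) $\Rightarrow$ (i): every Cantor cube is trivially Cantor-compact, so (iii) forces all Cantor cubes to be compact, which is $\mathbf{BPI}$ by Remark \ref{s1:r14}(iii). For (i) $\Rightarrow$ (iv): the image of $e_{C^{\ast}(\mathbf{X})}$ lies in $\prod_{f\in C^{\ast}(\mathbf{X})}\cl_{\mathbb{R}}(f[X])$, a product of compact Hausdorff spaces that is compact under (i), so $\beta^{f}\mathbf{X}$ is a closed subspace of a compact space, and Theorem \ref{s1:t25} gives $\beta_{T}\mathbf{X}$. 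For (iv) $\Rightarrow$ (v): take $\alpha\mathbf{X}=\beta_{T}\mathbf{X}$; for any clopen $U\subseteq X$, the characteristic function $\chi_{U}$ belongs to $C^{\ast}(\mathbf{X})$ with image $\{0,1\}$, so via $\beta_{T}\mathbf{X}\approx\beta^{f}\mathbf{X}$ it has a canonical continuous extension $\widetilde{\chi_{U}}\colon\beta_{T}\mathbf{X}\to\{0,1\}$; its $1$-preimage is clopen, closed, contains $U$, and by density of $X$ equals $\cl_{\beta_{T}\mathbf{X}}(U)$. Then (v) $\Rightarrow$ (vi) is immediate since every zero-dimensional $T_{1}$-space is Tychonoff.

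The main obstacle is (vi) $\Rightarrow$ (i). By Remark \ref{s1:r14}(iii) it suffices to show that each Cantor cube $\mathbf{X}=\mathbf{2}^{J}$ is compact. Apply (vi) to obtain a Hausdorff compactification $\alpha\mathbf{X}$ in which every clopen subset of $X$ has clopen closure. For each $j\in J$, the set $U_{j}=\pi_{j}^{-1}[\{1\}]$ is clopen in $\mathbf{X}$, so $V_{j}:=\cl_{\alpha\mathbf{X}}(U_{j})$ is clopen in $\alpha\mathbf{X}$; since $U_{j}$ is closed in $\mathbf{X}$ one has $V_{j}\cap X=U_{j}$. Defining $\tilde{\pi}_{j}(y)=1$ for $y\in V_{j}$ and $\tilde{\pi}_{j}(y)=0$ otherwise gives a canonical continuous extension of $\pi_{j}$, uniformly in $j$, so no selection step is invoked. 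The associated evaluation map $e\colon\alpha\mathbf{X}\to\mathbf{2}^{J}$ given by $e(y)(j)=\tilde{\pi}_{j}(y)$ is continuous and restricts to the identity on $X$, so $e[\alpha\mathbf{X}]\supseteq X=\mathbf{2}^{J}$, and $\mathbf{2}^{J}$ is compact as a continuous image of the compact space $\alpha\mathbf{X}$. The delicate point, and the reason (vi) is hypothesized with this specific strength, is precisely that the clopen-closure assumption supplies the sets $V_{j}$ uniformly in $j$ without any choice procedure, allowing the evaluation map to be defined in $\mathbf{ZF}$.
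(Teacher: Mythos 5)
Your proof is correct. Note, however, that the paper does not actually prove Theorem \ref{s1:t30} at all: it only remarks that the statement ``can be easily deduced from \cite[Theorem 3.16]{kw1}'', so your argument is a genuinely self-contained derivation using only the material already recalled in the paper (Remark \ref{s1:r14} and Theorem \ref{s1:t25}), which is a real gain in readability. The cycle (i)$\Rightarrow$(ii)$\Rightarrow$(iii)$\Rightarrow$(i) is exactly the expected reduction to the compactness of Tychonoff and Cantor cubes, and your (i)$\Rightarrow$(iv) via the compactness of $\prod_{f}\cl_{\mathbb{R}}(f[X])$ together with Theorem \ref{s1:t25} is the standard route. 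The only step requiring care is (vi)$\Rightarrow$(i), and your treatment of it is sound: the assignment $j\mapsto V_j=\cl_{\alpha\mathbf{X}}(\pi_j^{-1}[\{1\}])$ is definable, the hypothesis of (vi) guarantees each $V_j$ is clopen so that $\tilde{\pi}_j=\chi_{V_j}$ is continuous, and the evaluation map $e$ is then a $\mathbf{ZF}$-definable continuous surjection of $\alpha\mathbf{X}$ onto $\mathbf{2}^J$ (using that $V_j\cap X=U_j$ because $U_j$ is closed in $\mathbf{X}$, and that continuous images of compact spaces are compact in $\mathbf{ZF}$). Two cosmetic remarks: in (iv)$\Rightarrow$(v) you could bypass $\beta^f\mathbf{X}$ entirely by invoking Definition \ref{s1:d24}(iv) with $\mathbf{K}=\mathbf{2}$, and in (vi)$\Rightarrow$(i) it is worth saying explicitly that you identify $X$ with its image in $\alpha\mathbf{X}$ as in Remark \ref{s1:r4}(ii); neither affects correctness.
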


The following theorem was of significant importance in \cite{kw1} and we apply it in Section 2.

\begin{theorem}
	\label{s1:t31}
	$($Cf. \cite[Theorem 3.1]{kw1}.$)$ $[\mathbf{ZF}]$ Let $\mathbf{X}$ be a topological space and let $\mathcal{F}\in\mathcal{E}(\mathbf{X})$. Then $e_{\mathcal{F}}\mathbf{X}$ is compact if and only if there exists a  compactification $\alpha\mathbf{X}$ of $\mathbf{X}$ such that $\mathcal{F}\subseteq C_{\alpha}(\mathbf{X})$.
\end{theorem}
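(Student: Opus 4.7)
The plan is to prove both implications directly from the definitions, with the forward direction nearly immediate and the backward direction relying on a single evaluation-map construction using the continuous extensions guaranteed by the hypothesis $\mathcal{F}\subseteq C_{\alpha}(\mathbf{X})$. I expect no serious obstacle, since no choice principle is needed beyond the \textbf{ZF}-facts that a continuous image of a compact space is compact and that a compact subset of a Hausdorff space is closed; the only care required is to keep track of what the relevant index set is when writing out the evaluation maps.

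\emph{Forward direction.} Assuming $e_{\mathcal{F}}\mathbf{X}$ is compact, I would simply take $\alpha\mathbf{X}=e_{\mathcal{F}}\mathbf{X}$, so $\alpha=e_{\mathcal{F}}$. Then for each $f\in\mathcal{F}$ the projection $\pi_{f}:\mathbb{R}^{\mathcal{F}}\to\mathbb{R}$ is continuous, hence its restriction $\tilde f=\pi_{f}\upharpoonright \cl_{\mathbb{R}^{\mathcal{F}}}(e_{\mathcal{F}}[X])$ is a continuous map from $e_{\mathcal{F}}\mathbf{X}$ to $\mathbb{R}$, and by the very definition of $e_{\mathcal{F}}$ we have $\tilde f\circ e_{\mathcal{F}}(x)=\pi_{f}(e_{\mathcal{F}}(x))=f(x)$ for every $x\in X$. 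Thus $f\in C_{e_{\mathcal{F}}}(\mathbf{X})=C_{\alpha}(\mathbf{X})$, proving $\mathcal{F}\subseteq C_{\alpha}(\mathbf{X})$.

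\emph{Backward direction.} Assume a compactification $\alpha\mathbf{X}$ of $\mathbf{X}$ with $\mathcal{F}\subseteq C_{\alpha}(\mathbf{X})$ exists; for each $f\in\mathcal{F}$ let $f^{\alpha}\in C(\alpha\mathbf{X})$ be the unique continuous extension, which is available by Proposition \ref{s1:p12} together with the definition of $C_{\alpha}(\mathbf{X})$. Form the evaluation mapping $e_{\mathcal{F}^{\alpha}}:\alpha X\to\mathbb{R}^{\mathcal{F}}$ determined by the family $\mathcal{F}^{\alpha}=\{f^{\alpha}:f\in\mathcal{F}\}$ (indexed by $\mathcal{F}$ itself), so that for $y\in\alpha X$ and $f\in\mathcal{F}$ we have $e_{\mathcal{F}^{\alpha}}(y)(f)=f^{\alpha}(y)$. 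This map is continuous, and the identity $f^{\alpha}\circ\alpha=f$ gives the key equality $e_{\mathcal{F}^{\alpha}}\circ\alpha=e_{\mathcal{F}}$.

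\emph{Conclusion of the backward direction.} The image $e_{\mathcal{F}^{\alpha}}[\alpha X]$ is compact as a continuous image of the compact space $\alpha\mathbf{X}$, and it is closed in $\mathbb{R}^{\mathcal{F}}$ since $\mathbb{R}^{\mathcal{F}}$ is Hausdorff. From $e_{\mathcal{F}^{\alpha}}\circ\alpha=e_{\mathcal{F}}$ one gets $e_{\mathcal{F}}[X]=e_{\mathcal{F}^{\alpha}}[\alpha[X]]\subseteq e_{\mathcal{F}^{\alpha}}[\alpha X]$, and hence $\cl_{\mathbb{R}^{\mathcal{F}}}(e_{\mathcal{F}}[X])\subseteq e_{\mathcal{F}^{\alpha}}[\alpha X]$. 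Conversely, since $\alpha[X]$ is dense in $\alpha\mathbf{X}$ and $e_{\mathcal{F}^{\alpha}}$ is continuous, $e_{\mathcal{F}^{\alpha}}[\alpha X]\subseteq\cl_{\mathbb{R}^{\mathcal{F}}}(e_{\mathcal{F}^{\alpha}}[\alpha[X]])=\cl_{\mathbb{R}^{\mathcal{F}}}(e_{\mathcal{F}}[X])$. The two inclusions identify $\cl_{\mathbb{R}^{\mathcal{F}}}(e_{\mathcal{F}}[X])$ with the compact set $e_{\mathcal{F}^{\alpha}}[\alpha X]$, so $e_{\mathcal{F}}\mathbf{X}$ is compact, completing the proof.
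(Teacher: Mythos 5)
Your proof is correct and is the standard evaluation-map argument: the paper itself gives no proof of this theorem but cites it from \cite[Theorem 3.1]{kw1}, where essentially the same reasoning is used (extend each $f\in\mathcal{F}$ over $\alpha\mathbf{X}$, observe $e_{\mathcal{F}^{\alpha}}\circ\alpha=e_{\mathcal{F}}$, and identify $\cl_{\mathbb{R}^{\mathcal{F}}}(e_{\mathcal{F}}[X])$ with the compact image $e_{\mathcal{F}^{\alpha}}[\alpha X]$). You correctly note the two points where $\mathbf{ZF}$-care is needed: no choice is required to select the extensions $f^{\alpha}$ because they are unique by Proposition \ref{s1:p12} (the codomain $\mathbb{R}$ being Hausdorff), and the facts that continuous images of compact spaces are compact and that compact subsets of Hausdorff spaces are closed are both $\mathbf{ZF}$-theorems.
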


\begin{definition}
	\label{s1:d32}
	Suppose that $\mathbf{X}=\langle X, \tau_X\rangle$ is a non-compact locally compact Hausdorff space. For an element  $\infty\notin X$, let $X(\infty)=X\cup\{\infty\}$ and $\mathbf{X}(\infty)=\langle X(\infty), \tau_{X(\infty)}\rangle$ where 
	$$\tau_{X(\infty)}=\tau_X\cup\{ V\subseteq X(\infty): X\setminus V \text{ is compact in } \mathbf{X}\}.$$
	Then $\mathbf{X}(\infty)$ is called the \emph{Alexandroff compactification} of $\mathbf{X}$ (or, equivalently, the one-point Hausdorff compactification of $\mathbf{X}$).
\end{definition}

\begin{definition}
	\label{s1:d33}
	Let $R$ be a given commutative ring. 
	\begin{enumerate}
		\item[(i)] $\Max(R)$ is the set of all maximal ideals of $R$;
		\item[(ii)]  the \emph{hull-kernel topology} $\tau_{hk}$ on $\Max(R)$  is the topology whose base is the family $\{\{M\in\Max(R): r\notin M\}: r\in R\}$; 
		\item[(iii)] $\BMax(R)=\langle \Max(R), \tau_{hk}\rangle$.
	\end{enumerate}
\end{definition}

For a non-empty Tychonoff space $\mathbf{X}$, the \emph{canonical embedding} of $\mathbf{X}$ into $\BMax(C(\mathbf{X}))$ is the mapping $h_{max}$ defined as follows:
$$(\forall x\in X) \text{ } h_{max}(x)=\{f\in C(\mathbf{X}): f(x)=0\}.$$
The ordered pair $\langle\BMax(C(\mathbf{X})), h_{max}\rangle$ is an extension of $\mathbf{X}$ denoted, for simplicity, by $\BMax(C(\mathbf{X}))$. 

A well-known result of $\mathbf{ZFC}$ asserts that, for every non-empty Tychonoff space $\mathbf{X}$, the extension $\BMax(C(\mathbf{X}))$ is a Hausdorff compactification of $\mathbf{X}$ equivalent to the \v Cech-Stone compactification of $\mathbf{X}$ (see \cite[Chapter 7.11]{gj}). Since, in view of Theorems \ref{s1:t29} and \ref{s1:t30}, this cannot be a result of $\mathbf{ZF}$, let us state the following theorems concerning $\BMax(C(\mathbf{X}))$ in $\mathbf{ZF}$. 

\begin{theorem}
	\label{s1:t34}
	$[\mathbf{ZF}]$
	For every non-empty Tychonoff space $\mathbf{X}$, the following conditions are satisfied:
	\begin{enumerate}
		\item[(i)] the extensions $\mathcal{W}(X, \mathcal{Z}(\mathbf{X}))$ and $\BMax(C(\mathbf{X}))$ of $\mathbf{X}$ are equivalent;
		\item[(ii)] if $\beta_T\mathbf{X}$ exists, then $\BMax(C(\mathbf{X}))$ is a Hausdorff compactification of $\mathbf{X}$ such that $\beta_T\mathbf{X}\approx \BMax(C(\mathbf{X}))$;
		\item[(iii)] $\BMax(C(\mathbf{X}))$ is compact if and only if $\beta_T\mathbf{X}$ exists.
	\end{enumerate}
\end{theorem}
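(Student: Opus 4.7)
Part (i) carries all the content; once it is in hand, parts (ii) and (iii) drop out of Theorem \ref{s1:t29} by transporting compactness and equivalence along the homeomorphism from (i). So I would organise the proof as: first establish (i) by exhibiting an explicit, choice-free bijection between $z$-ultrafilters on $\mathbf{X}$ and maximal ideals of $C(\mathbf{X})$ that simultaneously respects the Wallman and hull-kernel topologies and both canonical embeddings; then deduce (ii) and (iii) in one line each.

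For (i), I would introduce the two maps $\Phi\colon\mathcal{W}(X,\mathcal{Z}(\mathbf{X}))\to\Max(C(\mathbf{X}))$ given by $\Phi(p)=\{f\in C(\mathbf{X}): Z(f)\in p\}$ and $\Psi\colon\Max(C(\mathbf{X}))\to\mathcal{W}(X,\mathcal{Z}(\mathbf{X}))$ given by $\Psi(M)=\{Z(f): f\in M\}$. Both are defined by explicit formulas, so their existence requires no choice. First I would check that $\Phi(p)$ is an ideal (using $Z(f)\cap Z(g)\subseteq Z(f+g)$ and $Z(f)\subseteq Z(fh)$ together with upward closure of the filter $p$), and that $\Psi(M)$ is a $z$-filter (using $Z(f)\cap Z(g)=Z(f^2+g^2)$, closure of $M$ under ring operations, and the fact that $Z(f)=\emptyset$ forces $f$ to be invertible in $C(\mathbf{X})$, hence $f\notin M$). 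Maximality of $\Phi(p)$ would be proved by the standard trick: if $f\notin \Phi(p)$ then $Z(f)\notin p$, so by the ultrafilter property there is $g$ with $Z(g)\in p$ and $Z(f)\cap Z(g)=\emptyset$, whence $f^2+g^2$ is nowhere zero, invertible in $C(\mathbf{X})$, and $1=f\cdot\frac{f}{f^2+g^2}+g\cdot\frac{g}{f^2+g^2}$ shows $\langle\Phi(p),f\rangle=C(\mathbf{X})$. The ultrafilter property of $\Psi(M)$ uses the dual observation: if $Z(g)\notin\Psi(M)$ then $g\notin M$ (since $Z(h)=Z(g)$ with $h\in M$ would give $Z(g)\in\Psi(M)$), so maximality yields $1=h+gk$ with $h\in M$, and $Z(h)\cap Z(g)=\emptyset$ with $Z(h)\in\Psi(M)$.

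The slightly delicate step, which I expect to be the main obstacle to keep rigorous, is verifying $\Phi\circ\Psi=\mathrm{id}$: one must show that $Z(f)=Z(g)$ with $g\in M$ already forces $f\in M$. The same invertibility trick dispatches this—if not, $\langle M,f\rangle=C(\mathbf{X})$ gives $1=m+fk$ with $m\in M$, so $Z(m)\cap Z(f)=\emptyset$, hence $Z(m)\cap Z(g)=\emptyset$, hence $m^2+g^2\in M$ is invertible, contradicting properness of $M$. After this, the topological and base-point compatibilities are transparent: the basic closed set $[Z(f)]_{\mathcal{Z}(\mathbf{X})}$ of $\mathcal{W}(X,\mathcal{Z}(\mathbf{X}))$ corresponds under $\Phi$ to the basic closed set $\{M\in\Max(C(\mathbf{X})): f\in M\}$ of $\BMax(C(\mathbf{X}))$, and $\Phi(h_{\mathcal{Z}(\mathbf{X})}(x))=\{f: x\in Z(f)\}=\{f: f(x)=0\}=h_{max}(x)$, proving the equivalence asserted in (i).

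For (ii), if $\beta_T\mathbf{X}$ exists, Theorem \ref{s1:t29}(i) gives $\mathcal{W}(X,\mathcal{Z}(\mathbf{X}))\approx\beta_T\mathbf{X}$, which combined with (i) yields $\BMax(C(\mathbf{X}))\approx\beta_T\mathbf{X}$ (and hence it is a Hausdorff compactification of $\mathbf{X}$). For (iii), (i) shows that $\BMax(C(\mathbf{X}))$ is compact if and only if $\mathcal{W}(X,\mathcal{Z}(\mathbf{X}))$ is compact, and by Theorem \ref{s1:t29}(ii) the latter is equivalent to the existence of $\beta_T\mathbf{X}$. Throughout, no appeal to $\mathbf{BPI}$, $\mathbf{AC}$, or any choice principle is needed, because $\Phi$ and $\Psi$ are definable by explicit formulas and the algebraic verifications are uniform.
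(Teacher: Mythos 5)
Your proposal is correct and follows essentially the same route as the paper: the paper defines the very same map $p\mapsto\{f\in C(\mathbf{X}):Z(f)\in p\}$, defers the verification that it is a base-point-preserving homeomorphism to the classical argument in Gillman--Jerison, Chapter 7.11, and then derives (ii) and (iii) from (i) together with Theorem \ref{s1:t29} exactly as you do. The only difference is that you spell out the algebraic verifications (which are indeed choice-free and correct as written; just remember to also record the trivial identity $\Psi\circ\Phi=\mathrm{id}$ and the upward closure of $\Psi(M)$ via $Z(fg)=Z(f)\cup Z(g)$ when writing it up).
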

\begin{proof}
	(i) Given a non-empty Tychonoff space $\mathbf{X}$, we define a mapping $\psi_{max}$ from $\mathcal{W}(X,\mathcal{Z}(\mathbf{X}))$ onto $\Max(C(\mathbf{X}))$ as follows:
	$$(\forall p\in\mathcal{W}(X, \mathcal{Z}(\mathbf{X}))) \text{ }  \psi_{max}(p)=\{f\in C(\mathbf{X}): Z(f)\in p\}.$$
	In much the same way, as in \cite[Chapter 7.11]{gj}, one can check that $\psi_{max}$ is a homeomorphism of $\mathcal{W}(X,\mathcal{Z}(\mathbf{X}))$ onto $\BMax(C(\mathbf{X}))$ such that $\psi_{max}\circ h_{\mathcal{Z}(\mathbf{X})}=h_{max}$. Hence (i) holds, That (ii) and (iii) also hold follows from (i) and Theorem \ref{s1:t29}.
\end{proof}

\begin{theorem}
	\label{s1:t35}
	$[\mathbf{ZF}]$ $\mathbf{BPI}$ is equivalent to the following statement: For every non-empty Tychonoff space $\mathbf{X}$, the space $\BMax(C(\mathbf{X}))$ is compact.
\end{theorem}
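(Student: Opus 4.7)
The plan is to derive Theorem \ref{s1:t35} as an almost immediate corollary of Theorem \ref{s1:t30} and Theorem \ref{s1:t34}, which together already set up the bridge between $\mathbf{BPI}$, the existence of $\beta_T\mathbf{X}$, and the compactness of $\BMax(C(\mathbf{X}))$. No genuinely new construction should be needed; the work is purely in chaining the equivalences in the correct order.

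For the forward direction, I would assume $\mathbf{BPI}$ and fix an arbitrary non-empty Tychonoff space $\mathbf{X}$. By the equivalence (i)$\Leftrightarrow$(iv) of Theorem \ref{s1:t30}, the maximal Tychonoff compactification $\beta_T\mathbf{X}$ exists. Then Theorem \ref{s1:t34}(ii) produces the equivalence $\beta_T\mathbf{X}\approx\BMax(C(\mathbf{X}))$ in the class of Hausdorff compactifications, so $\BMax(C(\mathbf{X}))$ is in particular compact. Since $\mathbf{X}$ was arbitrary, the statement holds.

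For the converse, I would assume that $\BMax(C(\mathbf{X}))$ is compact for every non-empty Tychonoff space $\mathbf{X}$. Applying Theorem \ref{s1:t34}(iii) to each such $\mathbf{X}$, the hypothesis immediately yields that $\beta_T\mathbf{X}$ exists for every non-empty Tychonoff $\mathbf{X}$. The empty Tychonoff space is trivially compact and can serve as its own maximal Tychonoff compactification, so in fact $\beta_T\mathbf{X}$ exists for every Tychonoff space $\mathbf{X}$. Invoking the equivalence (iv)$\Rightarrow$(i) in Theorem \ref{s1:t30} then delivers $\mathbf{BPI}$.

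The only step requiring any care is handling the empty-space corner case in the reverse direction, because Theorem \ref{s1:t30}(iv) quantifies over all Tychonoff spaces whereas the statement of Theorem \ref{s1:t35} restricts to non-empty ones; but this is a triviality rather than a real obstacle, since the empty space is compact and Tychonoff and trivially its own maximal Tychonoff compactification. Consequently there is no substantive obstruction, and the proof is essentially a one-line synthesis of the previously recorded results, which is exactly what one would expect given how Theorem \ref{s1:t34} was designed to transport compactness properties between $\mathcal{W}(X,\mathcal{Z}(\mathbf{X}))$ and $\BMax(C(\mathbf{X}))$.
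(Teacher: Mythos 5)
Your proposal is correct and follows exactly the paper's own route: the paper proves Theorem \ref{s1:t35} by declaring it a consequence of Theorems \ref{s1:t30} and \ref{s1:t34}, and your chaining of the equivalence (i)$\Leftrightarrow$(iv) in Theorem \ref{s1:t30} with items (ii) and (iii) of Theorem \ref{s1:t34} is just the explicit version of that one-line argument. The remark about the empty space is a harmless extra precaution that the paper leaves implicit.
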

\begin{proof}
	This is a consequence of Theorems \ref{s1:t30} and \ref{s1:t34}.
\end{proof}

All topological notions, if not defined here, can be found, for instance, in \cite{en}, \cite{gj}, \cite{nag} and \cite{w}.
\subsection{The content of the article in brief} 
\label{s1.6}

As we have already said in Section \ref{s1.1}, we work in $\mathbf{ZF}$, thus, in the absence of the Axiom of Choice. In Section \ref{s2.1}, we give necessary and sufficient conditions for a zero-dimensional $T_1$-space to have its Banaschewski compactification in the sense of Definition \ref{s1:d24}(ii) (see Theorem \ref{s2:t1}). We notice that the statement ``Every zero-dimensional $T_1$-space has its Banaschewski compactification'' is an equivalent of $\mathbf{BPI}$ (see Theorem \ref{s2:t4}). A Cantor cube $\mathbf{2}^J$ is compact if and only if its Banaschewski compactification exists (see Theorem \ref{s2:t5}). For a strongly zero-dimensional $T_1$-space $\mathbf{X}$, the Banaschewski compactification of $\mathbf{X}$ exists if and only if the maximal Tychonoff compactification of $\mathbf{X}$ exists (see Theorem \ref{s2:t6}).  In Section \ref{s2.2}, we show that it holds in $\mathbf{ZF}$ that a completely regular space $\mathbf{X}$ is strongly zero-dimensional if and only if $U_{\aleph_0}^{\ast}(\mathbf{X})=C^{\ast}(\mathbf{X})$ (see Theorem \ref{s2:t13}). Furthermore,  we prove in $\mathbf{ZF}$ that, for a zero-dimensional $T_1$- space $\mathbf{X}$, $\beta_0\mathbf{X}$ exists if and only if the extension of $\mathbf{X}$ generated by $U_{\aleph_0}^{\ast}(\mathbf{X})$ is compact (see Theorem \ref{s2:t16}); moreover, if $\beta_0\mathbf{X}$ exists, then it is equivalent to the compactification of $\mathbf{X}$ generated by $U_{\aleph_0}^{\ast}(\mathbf{X})$, and $C(\beta_0\mathbf{X})$ is the set of continuous extensions over $\beta_0\mathbf{X}$ of functions from $U_{\aleph_0}^{\ast}(\mathbf{X})$ (see Corollary \ref{s2:c17}).

In Section \ref{s2.3}, for every non-empty zero-dimensional $T_1$-space, we prove in $\mathbf{ZF}$ that the Banaschewski compactification $\beta_0\mathbf{X}$ exists if and only if the space $\BMax(U_{\aleph_0}^{\ast}(\mathbf{X}))$ is compact, and if $\beta_0\mathbf{X}$ exists, then it is homeomorphic with $\BMax(U_{\aleph_0}^{\ast}(\mathbf{X}))$ (see Theorem \ref{s2:t24}) and, moreover,  $\mathbf{CMC}$ implies that  $\beta_0\mathbf{X}$ and $\BMax(U_{\aleph_0}(\mathbf{X}))$ are homeomorphic (see Theorem \ref{s2:t22})).  We also notice that, for every non-empty Tychonoff space $\mathbf{X}$, it holds in $\mathbf{ZF}$ that $\beta_T\mathbf{X}$ exists if and only if $\BMax(C^{\ast}(\mathbf{X}))$ is compact (see Remark \ref{s2:r25}). This is a special case of a more general result shown in Remark \ref{s2:r25}. We deduce that, for every non-empty zero-dimensional $T_1$-space $\mathbf{X}$, it  holds in $\mathbf{ZF}$ that if $\BMax(U_{\aleph_0}(\mathbf{X}))$ is compact, then $\beta_0\mathbf{X}$ exists (see Remark \ref{s2:r26}). Theorem \ref{s2:t27} gives several equivalents of $\mathbf{BPI}$ in $\mathbf{ZF}$ in terms of spaces of maximal ideals with their hull-kernel topologies.

We show in Theorem \ref{s2:t29} that it holds in $\mathbf{ZF}$ that, given a Tychonoff space $\mathbf{Y}$ and a non-empty zero-dimensional $T_1$-space $\mathbf{X}$ for which $\beta_0\mathbf{X}$ exists, if the rings $C(\mathbf{Y})$ and  $U_{\aleph_0}^{\ast}(\mathbf{X})$ are isomorphic, then $\beta_T\mathbf{Y}$ exists and is homeomorphic with $\beta_0\mathbf{X}$; furthermore, if the rings $C(\mathbf{Y})$ and $U_{\aleph_0}(\mathbf{X})$ are isomorphic, then $\mathbf{CMC}$ implies that $\beta_T\mathbf{Y}$ exists and is homeomorphic with $\beta_0\mathbf{X}$ (see Theorem \ref{s2:t29}).

Section \ref{s3} concerns Baire and $G_{\delta}$-topologies (see Definition \ref{s1:d17}). Among other facts, it is noticed that $\mathbf{CMC}$ implies that the Baire topology and the $G_{\delta}$-topology of a completely regular space $\mathbf{X}$ coincide (see Proposition \ref{s3:p1}(v)), and $(\mathbf{X})_z$ is a $P$-space (see Corollary \ref{s3:c2}). The statement ``For every Tychonoff space $\mathbf{X}$, the space $(\mathbf{X})_z$ is a $P$-space'' is unprovable in $\mathbf{ZF}$ for it implies $\mathbf{CMC}_{\omega}$ (see Proposition \ref{s3:p3}). If $\mathbf{X}$ is a completely regular space whose every singleton is of type $G_{\delta}$ in $\mathbf{X}$, then $\mathbf{CMC}$ implies that $(\mathbf{X})_z$ is discrete (see Proposition \ref{s3:p6} and Definition \ref{s3:d5}).

In Section \ref{s4}, for a Hausdorff space $\mathbf{E}$, we check whether some basic statements on $\mathbf{E}$-compact spaces and Hewitt $\mathbf{E}$-compactifications, known to be true in $\mathbf{ZFC}$,  are also true in $\mathbf{ZF}$. We introduce a new notion of a compactly $\mathbf{E}$-Urysohn space (see Definition \ref{s4:d11}). Given a non-compact locally compact, compactly $\mathbf{E}$-Urysohn space $\mathbf{E}$, we describe a useful construction of the Hewitt $\mathbf{E}$-compact extension of a space $\mathbf{X}$ for which $C^{\ast}(\mathbf{X}, \mathbf{E})$ is a homeomorphic embedding (see Theorem \ref{s4:t14}). We apply this construction to a characterization of $\mathbf{E}$-compactness (see Corollary \ref{s4:c15}), in particular, to  $\mathbb{N}$-compactness (see Theorem \ref{s4:t16}(ii)).

In Section \ref{s5}, we notice that, for a Tychonoff space $\mathbf{E}$ such that $\mathbb{R}$ is $\mathbf{E}$-compact, it holds (in $\mathbf{ZF}$) that an $\mathbf{E}$-completely regular space $\mathbf{X}$ is strongly zero-dimensional if and only if the Hewitt $\mathbf{E}$-compact extension of $\mathbf{X}$ is strongly zero-dimensional (see Theorem \ref{s5:t2}). We show that if $\mathbf{E}$ is a Hausdorff space such that $\mathbb{R}_{disc}$ is $\mathbf{E}$-compact, then $\mathbf{CMC}$ implies that, for every $\mathbf{E}$-completely regular space $\mathbf{X}$, the rings $U_{\aleph_0}(\mathbf{X})$ and $U_{\aleph_0}(v_{\mathbf{E}}\mathbf{X})$ are isomorphic (see Theorem \ref{s5:t4}(i)), and, moreover, if $\mathbf{X}$ is a $P$-space, so is $v_{\mathbf{E}}\mathbf{X}$ (see Theorem \ref{s5:t7}). A characterization of strong zero-dimensionality in terms of rings of functions is also given in $\mathbf{ZF+CMC}$ (see Theorems \ref{s5:t4}(ii) and \ref{s5:t6}).

In Section \ref{s6}, we give a detailed $\mathbf{ZF}$-proof of Chew's theorem asserting that a zero-dimensional $T_1$-space is $\mathbb{N}$-compact if and only if every clopen ultrafilter in $\mathbf{X}$ with the countable intersection property is fixed (see Theorem \ref{s6:t1}). Chew's original proof was not in $\mathbf{ZF}$. We prove that it holds in $\mathbf{ZF+CMC}$ that a zero-dimensional $T_1$-space is $\mathbb{N}$-compact if and only if every ultrafilter in $\mathcal{CO}_{\delta}(\mathbf{X})$ with the countable intersection property is fixed (see Theorem \ref{s6:t5}). To get (in $\mathbf{ZF}$) a relevant characterization of realcompactness in terms of $z$-ultrafilters, we introduce the notion of a $z$-filter with the weak countable intersection property (see Definition \ref{s6:d8}). We prove that it holds in $\mathbf{ZF}$ that a Tychonoff space $\mathbf{X}$ is realcompact if and only if every $z$-ultrafilter in $\mathbf{X}$ with the weak countable intersection property is fixed (see Theorem \ref{s6:t10}). We deduce that the statement ``A Tychonoff space $\mathbf{X}$ is realcompact if and only if every $z$-ultrafilter in $\mathbf{X}$ with the countable intersection property is fixed'', well-known to be true in $\mathbf{ZFC}$, is also true in $\mathbf{ZF+CMC}$. This implies that it holds in $\mathbf{ZF+CMC}$ that if a Tychonoff space $\mathbf{X}$ is a countable union of $z$-embedded realcompact subspaces, then $\mathbf{X}$ is realcompact (see Theorem \ref{s6:t12}). We introduce the notion of a $c_{\delta}$-embedded subspace (see Definition \ref{s6:d13}) and prove that it holds in $\mathbf{ZF+CMC}$ that if a zero-dimensional $T_1$-space is a countable union of its $c_{\delta}$-embedded $\mathbb{N}$-compact subspaces, then $\mathbf{X}$ is $\mathbb{N}$-compact (see Theorem \ref{s6:t16}). Other relevant results and their applications are deduced (see Corollary \ref{s6:c17}, Theorem \ref{s6:t18}, Corollaries \ref{s6:c20}--\ref{s6:c22}). Notions of a $z$-perfect and a $c_{\delta}$-perfect mapping are introduced (see Definition \ref{s6:d24}) to show that it holds in $\mathbf{ZF}$ that if $\mathbf{X}=\langle X, \tau\rangle$ is a realcompact (respectively, $\mathbb{N}$-compact) space, and $\tau^{\ast}$ is a finer than $\tau$ completely regular (respectively, zero-dimensional) topology on $X$ such that $\tau^{\ast}$ is weaker than the Baire topology of $\mathbf{X}$ (respectively, weaker than the topology generated by $\mathcal{CO}_{\delta}(\mathbf{X})$), then the space $\langle X, \tau^{\ast}\rangle$ is also realcompact (respectively, $\mathbb{N}$-compact) (see Corollary \ref{s6:c27}).

In Section \ref{s7}, the concepts of an $r$-extension and an $r_{\mathbb{N}}$-extension of Definition \ref{s7:d1} are used in the following useful characterizations of realcompactness and $\mathbb{N}$-compactness, given in Proposition \ref{s7:p3}: it holds in $\mathbf{ZF}$ that a topological space $\mathbf{X}$ is realcompact (respectively, $\mathbb{N}$-compact) if and only if it has an $r$-extension (respectively, $r_{\mathbb{N}}$-extension). Proposition \ref{s7:p3} is applied to Baire sets (that is, to members of the $\sigma$-field generated by the family of zero-sets) in a realcompact space and to zero-Baire sets  (that is, to members of the $\sigma$-field generated by the family of $c_{\delta}$-sets) in an $\mathbb{N}$-compact space. Namely, by applying Proposition \ref{s7:p3}, we prove that it holds in $\mathbf{ZF+CMC}$ that every Baire subset of a realcompact space is realcompact, and every zero-Baire set in an  $\mathbb{N}$-compact space is also $\mathbb{N}$-compact (see Theorem \ref{s7:t10}).

The aim of Section \ref{s8} is to apply $z$-measures, $c$-measures and Dirac measures (see Definition \ref{s8:d2}) to characterizations of realcompactness and $\mathbb{N}$-compactness. We introduce the notion of a weakly $\sigma$-additive $z$-measure on a topological space (see Definition \ref{s8:d6}). We prove that it holds in $\mathbf{ZF}$ that a Tychonoff space $\mathbf{X}$ is realcompact if and only if every 2-valued weakly $\sigma$-additive $z$-measure on $\mathbf{X}$ is a Dirac $z$-measure (see Theorem \ref{s8:t8}). We also prove that it holds in $\mathbf{ZF}$ that a zero-dimensional $T_1$-space is $\mathbb{N}$-compact if and only if every 2-valued countably additive $c$-measure on this space is a Dirac $c$-measure (see Theorem \ref{s8:t12}). Some consequences of the results are also shown in Section \ref{s8} (see Corollaries \ref{s8:c9} and \ref{s8:c11}, Theorem \ref{s8:t13} and Remark \ref{s8:r14}).

Section \ref{s9} gives a deeper insight into characters and real ideals of the rings $C(\mathbf{X}, \mathbb{R}_{disc})$ and $U_{\aleph_0}(\mathbf{X})$ for a zero-dimensional $T_1$-space $\mathbf{X}$. For a given non-empty $\mathbb{N}$-compact space, we prove that it holds in $\mathbf{ZF}$ that every character of the ring $C(\mathbf{X}, \mathbb{R}_{disc})$ is determined by a unique point (see Theorem \ref{s9:t5}), and it holds in $\mathbf{ZF+CMC}$ that every character of $U_{\aleph_0}(\mathbf{X})$ is determined by a unique point (see Theorem \ref{s9:t6}). Moreover, we prove that, given a zero-dimensional $T_1$-space $\mathbf{X}$, it holds in $\mathbf{ZF}$ that $\mathbf{X}$ is $\mathbb{N}$-compact if and only if every real ideal of $C(\mathbf{X}, \mathbb{R}_{disc})$ is fixed, and it holds is $\mathbf{ZF+CMC}$ that $\mathbf{X}$ is $\mathbb{N}$-compact if and only if every real ideal of $U_{\aleph_0}(\mathbf{X})$ is fixed (see Theorem \ref{s9:t8}).

Our final goal is to prove in Section \ref{s10} that, assuming that $\mathbf{X}$ is a non-empty $\mathbb{N}$-compact space whose Banaschewski compactification exists, it holds in $\mathbf{ZF+CMC}$ that $\mathbf{X}$ is strongly zero-dimensional if and only if there exists a Tychonoff space $\mathbf{Y}$ such that the rings $C(\mathbf{Y})$ and $U_{\aleph_0}(\mathbf{X})$ are isomorphic (see Theorem \ref{s10:t3}).

Since many open problems have arisen while writing this paper, for the convenience of readers, we include a shortlist of open problems in Section \ref{s11}.

\section{Several new remarks on Banaschewski compactifications in $\mathbf{ZF}$}
\label{s2}

\subsection{Banaschewski compactifications via extensions generated by sets of functions}
\label{s2.1}

For topological spaces $\mathbf{X}$ and $\mathbf{E}$, let $\mathcal{E}(\mathbf{X}, \mathbf{E})$ stand for the collection of all sets $\mathcal{F}\subseteq C(\mathbf{X}, \mathbf{E})$ such that the evaluation map $e_{\mathcal{F}}$ is a homeomorphic embedding. 

Suppose $\mathbf{X}$ is a given zero-dimensional $T_1$-space, and $Y$ is a set consisting of at least two points. Then $C^{\ast}(\mathbf{X}, Y_{disc})\in \mathcal{E}(\mathbf{X}, Y_{disc})$. To simplify notation, we put $e_{Y}^{\ast}=e_{C^{\ast}(\mathbf{X}, Y_{disc})}$. In particular, $e^{\ast}_2=e_{C(\mathbf{X}, \mathbf{2})}$  is a homeomorphic embedding of $\mathbf{X}$ into the Cantor cube $\mathbf{2}^{C(\mathbf{X},\mathbf{2})}$; however, in $\mathbf{ZF}$, the Cantor cube  $\mathbf{2}^{C(\mathbf{X},\mathbf{2})}$ and the extension $e^{\ast}_{2}\mathbf{X}$ of $\mathbf{X}$ may fail to be compact. Therefore, it is reasonable to search for conditions under which, for a given zero-dimensional $T_1$-space $\mathbf{X}$, the Banaschewski compactification of $\mathbf{X}$ exists. The theorem given below can be proved by applying the methods shown in \cite{kw1}. Let us sketch a proof of this theorem for completeness.

\begin{theorem}
	\label{s2:t1}
	$[\mathbf{ZF}]$ For every zero-dimensional $T_1$-space $\mathbf{X}$, the following conditions are all equivalent:
	\begin{enumerate}
		\item[(i)] the extension $e^{\ast}_{2}\mathbf{X}$ is compact;
		\item[(ii)] $\beta_0\mathbf{X}$ exists;
		\item[(iii)] there exists a Hausdorff compactification $\alpha\mathbf{X}$ of $\mathbf{X}$ such that $C(\mathbf{X}, \mathbf{2})\subseteq C_{\alpha}(\mathbf{X})$; 
		\item[(iv)] $\mathcal{W}(X, \mathcal{CO}(\mathbf{X}))$ is compact.
	\end{enumerate}
\end{theorem}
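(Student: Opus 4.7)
The plan is to establish the cycle (iii) $\Leftrightarrow$ (i) $\Rightarrow$ (iv) $\Rightarrow$ (ii) $\Rightarrow$ (iii), leaning on Theorem~\ref{s1:t31} for the functional side and Taimanov's Extension Theorem~\ref{s1:t10} for the Wallman side. The equivalence (i) $\Leftrightarrow$ (iii) should come as a direct application of Theorem~\ref{s1:t31} to the family $\mathcal{F}=C(\mathbf{X},\mathbf{2})$, treated as a subfamily of $C^{\ast}(\mathbf{X})$ via the closed inclusion $\mathbf{2}\hookrightarrow\mathbb{R}$; any real continuous extension of a $\mathbf{2}$-valued function has image in the closed set $\{0,1\}$, so the compactness of $e^{\ast}_{2}\mathbf{X}$ is unaffected by whether closures are taken in $\mathbb{R}^{C(\mathbf{X},\mathbf{2})}$ or in $\mathbf{2}^{C(\mathbf{X},\mathbf{2})}$.

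For (i) $\Rightarrow$ (ii) I would first observe that $e^{\ast}_{2}\mathbf{X}$, as a closed subspace of a Cantor cube, is automatically zero-dimensional and $T_{1}$, and compact by hypothesis, so it is a zero-dimensional $T_{1}$-compactification of $\mathbf{X}$. To verify the universal property, let $\mathbf{K}$ be any compact zero-dimensional $T_{1}$-space and $f\in C(\mathbf{X},\mathbf{K})$. Because $\mathcal{CO}(\mathbf{K})$ separates points and separates points from closed sets in $\mathbf{K}$, Theorem~\ref{s1:t8} yields in $\mathbf{ZF}$ that the evaluation map $e_{C(\mathbf{K},\mathbf{2})}:\mathbf{K}\to\mathbf{2}^{C(\mathbf{K},\mathbf{2})}$ is a homeomorphic embedding with closed image. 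For each $g\in C(\mathbf{K},\mathbf{2})$, the function $g\circ f$ lies in $C(\mathbf{X},\mathbf{2})$, so the projection $\pi_{g\circ f}$ restricted to $e^{\ast}_{2}\mathbf{X}$ is a continuous $\mathbf{2}$-valued extension of $g\circ f$; assembling these coordinate extensions produces a continuous map $e^{\ast}_{2}\mathbf{X}\to\mathbf{2}^{C(\mathbf{K},\mathbf{2})}$ whose image lies in $e_{C(\mathbf{K},\mathbf{2})}[K]$ by density and closedness, and composing with the inverse of the embedding supplies the required $\tilde{f}\in C(e^{\ast}_{2}\mathbf{X},\mathbf{K})$. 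The converse (ii) $\Rightarrow$ (iii) is immediate: since $\mathbf{2}$ is a compact zero-dimensional $T_{1}$-space, the universal property of $\beta_{0}\mathbf{X}$ extends each member of $C(\mathbf{X},\mathbf{2})$ continuously to $\beta_{0}\mathbf{X}$.

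On the Wallman side, $\mathcal{CO}(\mathbf{X})$ is a Wallman base in the sense of Definition~\ref{s1:d26}, and the Wallman space $\mathcal{W}(X,\mathcal{CO}(\mathbf{X}))$ is always zero-dimensional $T_{1}$ Hausdorff, since $[A]_{\mathcal{CO}(\mathbf{X})}$ and $[X\setminus A]_{\mathcal{CO}(\mathbf{X})}$ partition it. For (iv) $\Rightarrow$ (ii), compactness turns it into a zero-dimensional $T_{1}$-compactification of $\mathbf{X}$; Taimanov's Extension Theorem~\ref{s1:t10}, applied to the closed base $\mathcal{CO}(\mathbf{K})$ of a compact zero-dimensional $T_{1}$-space $\mathbf{K}$, produces the required continuous extension of any $f\in C(\mathbf{X},\mathbf{K})$, because disjoint clopens in $\mathbf{K}$ pull back to disjoint clopens in $\mathbf{X}$, whose $[\cdot]_{\mathcal{CO}(\mathbf{X})}$-closures in the Wallman space are likewise disjoint. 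For (i) $\Rightarrow$ (iv), I would define
$$
\Phi:e^{\ast}_{2}\mathbf{X}\to\mathcal{W}(X,\mathcal{CO}(\mathbf{X})),\qquad \Phi(y)=\{A\in\mathcal{CO}(\mathbf{X}):y(\chi_{A})=1\},
$$
where $\chi_{A}\in C(\mathbf{X},\mathbf{2})$ denotes the characteristic function of $A$, and then check that $\Phi(y)$ is always a clopen ultrafilter (finite intersections of its members are nonempty because each basic neighborhood of $y$ in the Cantor cube meets $e^{\ast}_{2}[X]$), that $\Phi$ is continuous (a basic closed set $[A]_{\mathcal{CO}(\mathbf{X})}$ pulls back to $\pi_{\chi_{A}}^{-1}[\{1\}]\cap e^{\ast}_{2}X$), and that $\Phi$ is surjective (given $p$, setting $y(f)=1$ iff $f^{-1}[\{1\}]\in p$ yields $y\in e^{\ast}_{2}X$ with $\Phi(y)=p$). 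Then $\mathcal{W}(X,\mathcal{CO}(\mathbf{X}))$ is compact as a continuous image of the compact space $e^{\ast}_{2}\mathbf{X}$.

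The main obstacle in $\mathbf{ZF}$ is to run the universal-property argument in (i) $\Rightarrow$ (ii) without invoking any hidden choice; this is why the embedding $\mathbf{K}\hookrightarrow\mathbf{2}^{J}$ must be taken with $J=C(\mathbf{K},\mathbf{2})$ rather than with any smaller separating family, and why the coordinate assembly has to be certified by Theorem~\ref{s1:t8}. As a byproduct, since every $f\in C(\mathbf{X},\mathbf{2})$ equals the characteristic function of $f^{-1}[\{1\}]$, the map $\Phi$ of step (i) $\Rightarrow$ (iv) is a bijection and hence a homeomorphism by compactness, so $\beta_{0}\mathbf{X}\approx\mathcal{W}(X,\mathcal{CO}(\mathbf{X}))$ whenever either side exists.
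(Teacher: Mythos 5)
Your proof is correct, but it is organized quite differently from the paper's. The paper closes the loop as (i)$\Rightarrow$(ii)$\Rightarrow$(iii)$\Rightarrow$(i) and handles the Wallman space separately via (i)$\Rightarrow$(iv) and (iv)$\Rightarrow$(iii); its (i)$\Rightarrow$(ii) invokes Taimanov's Theorem~\ref{s1:t10} twice (once to convert extendability of each $g\circ f$ into disjointness of closures, once to extend $f$ itself), and its (i)$\Rightarrow$(iv) argues abstractly that the point $p\in\bigcap\{\cl_{e^{\ast}_{2}\mathbf{X}}(e^{\ast}_{2}[A]):A\in\mathcal{A}\}$ determines a clopen ultrafilter extending a given clopen filter $\mathcal{A}$. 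You instead get (i)$\Leftrightarrow$(iii) in one stroke from Theorem~\ref{s1:t31} (correctly noting that closures in $\mathbb{R}^{C(\mathbf{X},\mathbf{2})}$ and in $\mathbf{2}^{C(\mathbf{X},\mathbf{2})}$ agree), prove the universal property in (i)$\Rightarrow$(ii) by assembling the canonical coordinate extensions $\pi_{g\circ f}$ through the embedding $\mathbf{K}\hookrightarrow\mathbf{2}^{C(\mathbf{K},\mathbf{2})}$ rather than via Taimanov, and replace the filter-extension argument for (i)$\Rightarrow$(iv) by an explicit continuous surjection $\Phi$ onto the Wallman space. Your coordinate-assembly and your map $\Phi$ are both manifestly choice-free (each extension used is canonical), and $\Phi$ being a continuous bijection from a compact space onto a Hausdorff one gives you Corollary~\ref{s2:c2} ($\beta_0\mathbf{X}\approx\mathcal{W}(X,\mathcal{CO}(\mathbf{X}))$) as a free byproduct, whereas the paper's route is shorter on the Wallman side because it only needs compactness, not the homeomorphism. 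The only cosmetic blemish is that your cycle (iii)$\Leftrightarrow$(i)$\Rightarrow$(iv)$\Rightarrow$(ii)$\Rightarrow$(iii) already suffices, so the separate direct proof of (i)$\Rightarrow$(ii) is redundant; it is harmless, but you could drop either it or the (iv)$\Rightarrow$(ii) step.
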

\begin{proof}
	Assume (i). Let $\mathbf{K}$ be a compact zero-dimensional $T_1$-space and let $f\in C(\mathbf{X}, \mathbf{K})$. Let $A,B\in \mathcal{CO}(\mathbf{K})$ and $A\cap B=\emptyset$. Choose $g\in C(\mathbf{K},\mathbf{2})$ such that $A\subseteq g^{-1}[\{0\}]$ and $B\subseteq g^{-1}[\{1\}]$. Then $g\circ f\in C(\mathbf{X}, \mathbf{2})\subseteq C_{e^{\ast}_{2}}(\mathbf{X})$, so, by Theorem \ref{s1:t10}, $\cl_{e^{\ast}_{2}\mathbf{X}}(e^{\ast}_{2}[(g\circ f)^{-1}[\{0\}]])\cap \cl_{e^{\ast}_{2}\mathbf{X}}(e^{\ast}_{2}[(g\circ f)^{-1}[\{1\}]])=\emptyset$. Hence,  $\cl_{e^{\ast}_{2}\mathbf{X}}(e^{\ast}_{2}[f^{-1}[A]])\cap \cl_{e^{\ast}_{2}\mathbf{X}}(e^{\ast}_{2}[f^{-1}[B]])=\emptyset$. Thus, by Theorem \ref{s1:t10}, there exists $\tilde{f}\in C(e^{\ast}_{2}\mathbf{X}, \mathbf{K})$ with $\tilde{f}\circ e^{\ast}_{2}=f$. This proves that $e^{\ast}_{2}\mathbf{X}$ is the Banaschewski compactification of $\mathbf{X}$. Hence (i) implies (ii). 
	
	If (ii) holds, then every Hausdorff compactification $\alpha\mathbf{X}$ of $\mathbf{X}$ equivalent to $\beta_0\mathbf{X}$ is such that $C(\mathbf{X}, \mathbf{2})\subseteq C_{\alpha}(\mathbf{X})$. Hence (ii) implies (iii). That (iii) implies (i) follows directly from Theorem \ref{s1:t31}. It is obvious that (iv) implies (iii). To complete the proof, it suffices to show that (i) implies (iv).
	
	Assuming that (i) holds, we mimic and suitably modify the proof that (iii) implies (i) in Theorem 3.10 in \cite{kw1}. Namely, let $\mathcal{A}$ be any filter in $\mathcal{CO}(\mathbf{X})$. There exists $p\in e^{\ast}_{2}\mathbf{X}$ such that $p\in\bigcap\{ \cl_{e^{\ast}_{2}\mathbf{X}}e^{\ast}_{2}[A]: A\in\mathcal{A}\}$. We define
	$$\mathcal{F}=\{ C\in\mathcal{CO}(\mathbf{X}): p\in\cl_{e^{\ast}_{2}\mathbf{X}}(e^{\ast}_{2}[C])\}.$$ 
	Let us leave it as an exercise a relatively simple proof that $\mathcal{F}$ is an ultrafilter in $\mathcal{CO}(\mathbf{X})$ such that $\mathcal{A}\subseteq\mathcal{F}$. This shows that every filter in $\mathcal{CO}(\mathbf{X})$ is contained in an ultrafilter in $\mathcal{CO}(\mathbf{X})$. In consequence,  $\mathcal{W}(X, \mathcal{CO}(\mathbf{X}))$ is compact. Hence (i) implies (iv).
\end{proof}

\begin{corollary}
	\label{s2:c2}
	$[\mathbf{ZF}]$ Let $\mathbf{X}$ be a zero-dimensional $T_1$-space which satisfies conditions (i)-(iv) of Theorem \ref{s2:t1}. Then
	$$\beta_0\mathbf{X}\approx e^{\ast}_{2}\mathbf{X}\approx\mathcal{W}(X, \mathcal{CO}(\mathbf{X})).$$
\end{corollary}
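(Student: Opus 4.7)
The plan is to deduce both equivalences from what has been proved in Theorem \ref{s2:t1} together with the uniqueness up to $\approx$ of the Banaschewski compactification recorded in Section \ref{s1.5}. For $\beta_0\mathbf{X}\approx e^{\ast}_{2}\mathbf{X}$, observe that the argument for (i) $\Rightarrow$ (ii) in Theorem \ref{s2:t1} produced not merely \emph{some} Banaschewski compactification of $\mathbf{X}$, but showed that $e^{\ast}_{2}\mathbf{X}$ itself satisfies the universal property of Definition \ref{s1:d24}(ii); uniqueness then gives the first equivalence immediately.

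For $\beta_0\mathbf{X}\approx\mathcal{W}(X,\mathcal{CO}(\mathbf{X}))$, I would first verify that $\mathcal{CO}(\mathbf{X})$ is a Wallman base in the sense of Definition \ref{s1:d26}: conditions (i) and (ii) are immediate, (iii) follows from zero-dimensionality combined with the $T_1$-property, and (iv) is witnessed by the complementary clopen pair $X\setminus A_1$, $X\setminus A_2$. Because $\mathcal{CO}(\mathbf{X})$ is closed under complements, each basic closed set $[A]_{\mathcal{CO}(\mathbf{X})}$ is in fact clopen in $\mathcal{W}(X,\mathcal{CO}(\mathbf{X}))$, since a direct ultrafilter argument yields the identity $\mathcal{W}(X,\mathcal{CO}(\mathbf{X}))\setminus[A]_{\mathcal{CO}(\mathbf{X})}=[X\setminus A]_{\mathcal{CO}(\mathbf{X})}$. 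Hence $\mathcal{W}(X,\mathcal{CO}(\mathbf{X}))$ is a zero-dimensional Hausdorff space, and by condition (iv) of Theorem \ref{s2:t1} it is compact, so it is a zero-dimensional $T_1$-compactification of $\mathbf{X}$.

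To finish, I would check the Banaschewski universal property for $\mathcal{W}(X,\mathcal{CO}(\mathbf{X}))$ via Taimanov's Extension Theorem \ref{s1:t10}: given a compact zero-dimensional $T_1$-space $\mathbf{K}$ and $f\in C(\mathbf{X},\mathbf{K})$, use $\mathcal{CO}(\mathbf{K})$ as the closed base of $\mathbf{K}$; for disjoint $A,B\in\mathcal{CO}(\mathbf{K})$ the preimages $f^{-1}[A]$, $f^{-1}[B]$ are disjoint members of $\mathcal{CO}(\mathbf{X})$, and the standard Wallman identity $\cl_{\mathcal{W}(X,\mathcal{CO}(\mathbf{X}))}(h_{\mathcal{CO}(\mathbf{X})}[C])=[C]_{\mathcal{CO}(\mathbf{X})}$ for $C\in\mathcal{CO}(\mathbf{X})$ reduces the required disjointness of closures to the trivial observation that no ultrafilter in $\mathcal{CO}(\mathbf{X})$ can contain two disjoint members. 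Theorem \ref{s1:t10} then supplies the desired extension of $f$, so $\mathcal{W}(X,\mathcal{CO}(\mathbf{X}))$ is a Banaschewski compactification of $\mathbf{X}$ and, by uniqueness, is $\approx$-equivalent to $\beta_0\mathbf{X}$. The only mildly delicate step is the clopenness of the base sets $[A]_{\mathcal{CO}(\mathbf{X})}$, which is precisely what distinguishes this zero-dimensional setting from the general Wallman construction and what allows Taimanov's theorem to be invoked without any form of choice.
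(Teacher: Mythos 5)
Your proposal is correct and follows the route the paper intends: the corollary is stated without proof precisely because the argument for (i)\,$\Rightarrow$\,(ii) in Theorem \ref{s2:t1} already exhibits $e^{\ast}_{2}\mathbf{X}$ as a Banaschewski compactification, and uniqueness up to $\approx$ (recorded in Section \ref{s1.5}) does the rest. Your careful verification that $\mathcal{CO}(\mathbf{X})$ is a Wallman base, that the basic closed sets $[A]_{\mathcal{CO}(\mathbf{X})}$ are clopen, and that $\mathcal{W}(X,\mathcal{CO}(\mathbf{X}))$ satisfies the Banaschewski universal property via Theorem \ref{s1:t10} supplies exactly the details the paper leaves implicit, and all the steps check out in $\mathbf{ZF}$.
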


\begin{corollary}
	\label{s2:c3}
	$[\mathbf{ZF}]$ Let $\mathbf{X}$ be a zero-dimensional $T_1$-space for which $\beta_T\mathbf{X}$ exists. Then $\beta_0\mathbf{X}$ also exists and $\beta_0\mathbf{X}\leq\beta_{T}\mathbf{X}$.
\end{corollary}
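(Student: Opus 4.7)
The plan is to use Theorem~\ref{s2:t1}, specifically the equivalence of (ii) and (iii), by producing a Hausdorff compactification $\alpha\mathbf{X}$ with $C(\mathbf{X},\mathbf{2})\subseteq C_{\alpha}(\mathbf{X})$; the obvious candidate is $\alpha\mathbf{X}=\beta_{T}\mathbf{X}$ itself. A zero-dimensional $T_1$-space is Tychonoff, so the hypothesis that $\beta_{T}\mathbf{X}$ exists makes sense, and $\beta_{T}\mathbf{X}$ is a Tychonoff compactification of $\mathbf{X}$ by Definition~\ref{s1:d24}(iv).

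First I would observe that the two-point discrete space $\mathbf{2}$ is a compact Tychonoff space. By the universal property in Definition~\ref{s1:d24}(iv), every $f\in C(\mathbf{X},\mathbf{2})$ admits a continuous extension $\tilde f\colon\beta_{T}\mathbf{X}\to\mathbf{2}$ with $\tilde f\circ\beta_{T}=f$. This is precisely the statement $C(\mathbf{X},\mathbf{2})\subseteq C_{\beta_{T}}(\mathbf{X})$. Consequently, $\beta_{T}\mathbf{X}$ witnesses condition (iii) of Theorem~\ref{s2:t1} for the zero-dimensional $T_1$-space $\mathbf{X}$, which yields the existence of $\beta_{0}\mathbf{X}$ via (iii)$\Rightarrow$(ii).

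For the inequality $\beta_{0}\mathbf{X}\leq\beta_{T}\mathbf{X}$, I would note that $\beta_{0}\mathbf{X}$, being a compact zero-dimensional $T_1$-space, is in particular a compact Tychonoff space. Applying Definition~\ref{s1:d24}(iv) once more, this time to the canonical embedding $\beta_{0}\colon\mathbf{X}\to\beta_{0}\mathbf{X}$ regarded as an element of $C(\mathbf{X},\beta_{0}\mathbf{X})$, produces a continuous mapping $g\colon\beta_{T}\mathbf{X}\to\beta_{0}\mathbf{X}$ satisfying $g\circ\beta_{T}=\beta_{0}$. The image $g[\beta_{T}\mathbf{X}]$ is compact, hence closed in the Hausdorff space $\beta_{0}\mathbf{X}$, and contains the dense set $\beta_{0}[X]$, so $g$ is surjective. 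This provides the required continuous surjection in Definition~\ref{s1:d3}(b)(i), establishing $\beta_{0}\mathbf{X}\leq\beta_{T}\mathbf{X}$.

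There is no genuine obstacle here; both halves of the statement fall out immediately from the universal property of $\beta_{T}\mathbf{X}$ applied to the target spaces $\mathbf{2}$ and $\beta_{0}\mathbf{X}$, combined with the characterization given by Theorem~\ref{s2:t1}. The only point warranting explicit mention is the mild typographical reading of Definition~\ref{s1:d24}(iv), where $\tilde f\in C(\beta_{T}\mathbf{X})$ must be understood as $\tilde f\in C(\beta_{T}\mathbf{X},\mathbf{K})$; both invocations of the universal property above are faithful to that reading.
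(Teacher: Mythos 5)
Your proof is correct and follows the route the paper intends: the corollary is placed immediately after Theorem \ref{s2:t1} precisely so that one applies (iii)$\Rightarrow$(ii) with $\alpha\mathbf{X}=\beta_T\mathbf{X}$, the extendability of $C(\mathbf{X},\mathbf{2})$ coming from the universal property of $\beta_T\mathbf{X}$ applied to the compact Tychonoff space $\mathbf{2}$. Your argument for $\beta_0\mathbf{X}\leq\beta_T\mathbf{X}$ via the universal property applied to $\beta_0\in C(\mathbf{X},\beta_0\mathbf{X})$ is likewise the standard one (it is also immediate from the last clause of Theorem \ref{s1:t25}, since $\beta_0\mathbf{X}$ is a Tychonoff compactification), and all steps are valid in $\mathbf{ZF}$.
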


\begin{theorem}
	\label{s2:t4}
	$[\mathbf{ZF}]$ The following conditions are all equivalent:
	\begin{enumerate}
		\item[(i)] $\mathbf{BPI}$;
		\item[(ii)] for every zero-dimensional $T_1$-space $\mathbf{X}$, $\beta_0\mathbf{X}$ exists;
		\item[(iii)] for every infinite set $J$, the Banaschewski compactification of the Cantor cube $\mathbf{2}^J$ exists;
		\item[(iv)] for every infinite set $X$, the Banaschewski compactification of the discrete space $X_{disc}$ exists.
	\end{enumerate}
\end{theorem}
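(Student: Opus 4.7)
The plan is to establish the cyclic chain of implications $(\text{i})\Rightarrow(\text{ii})\Rightarrow(\text{iii})\Rightarrow(\text{iv})\Rightarrow(\text{i})$. For $(\text{i})\Rightarrow(\text{ii})$, assume $\mathbf{BPI}$; by Remark~\ref{s1:r14}(iii) every Cantor cube is compact, so for any zero-dimensional $T_{1}$-space $\mathbf{X}$ the extension $e^{\ast}_{2}\mathbf{X}$ is a closed subspace of the compact Cantor cube $\mathbf{2}^{C(\mathbf{X},\mathbf{2})}$ and is therefore compact, whence Theorem~\ref{s2:t1} furnishes $\beta_{0}\mathbf{X}$. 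The implication $(\text{ii})\Rightarrow(\text{iii})$ is immediate because Cantor cubes are zero-dimensional $T_{1}$-spaces. For $(\text{iv})\Rightarrow(\text{i})$, apply Theorem~\ref{s2:t1} to each discrete space $X_{disc}$: compactness of the Wallman space $\mathcal{W}(X,\mathcal{P}(X))$ for every infinite $X$ is equivalent, since its points are the ultrafilters on $X$ and the sets $[A]_{\mathcal{P}(X)}$ for $A$ running over a filter $\mathcal{F}$ have the finite intersection property, to every filter on $X$ being extendable to an ultrafilter; combined with the trivial finite case this yields $\mathbf{UFT}$, hence $\mathbf{BPI}$ by Remark~\ref{s1:r14}(i).

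The substantive step is $(\text{iii})\Rightarrow(\text{iv})$. Given an infinite set $X$, I embed $X_{disc}$ into the Cantor cube $\mathbf{2}^{\mathcal{P}(X)}$ via the map $\varphi:X\to\mathbf{2}^{\mathcal{P}(X)}$ defined by $\varphi(x)(A)=\chi_{A}(x)$, where $\chi_{A}$ denotes the characteristic function of $A$. The projections $\pi_{\{x\}}$ show that $\varphi$ is a homeomorphism onto a discrete subspace $\varphi[X]$ of $\mathbf{2}^{\mathcal{P}(X)}$. By (iii), $\beta_{0}(\mathbf{2}^{\mathcal{P}(X)})$ exists, and I set $K:=\cl_{\beta_{0}(\mathbf{2}^{\mathcal{P}(X)})}(\varphi[X])$; then $K$ is a compact zero-dimensional $T_{1}$-space in which $\varphi[X]$, identified via $\varphi$ with $X_{disc}$, is dense.

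To verify that $K$ enjoys the Banaschewski universal property, fix a compact zero-dimensional $T_{1}$-space $\mathbf{L}$ and $g\in C(X_{disc},\mathbf{L})$. Because $\mathbf{L}$ is compact zero-dimensional $T_{1}$, the evaluation $e^{\ast}_{2}:\mathbf{L}\to\mathbf{2}^{I}$ with $I=C(\mathbf{L},\mathbf{2})$ is a closed homeomorphic embedding. For each $i\in I$ put $A_{i}=(i\circ g)^{-1}[\{1\}]\subseteq X$ and observe that $\pi_{A_{i}}(\varphi(x))=\chi_{A_{i}}(x)=i(g(x))$. Applying the Banaschewski universal property of $\mathbf{2}^{\mathcal{P}(X)}$ to each $\pi_{A_{i}}:\mathbf{2}^{\mathcal{P}(X)}\to\mathbf{2}$ yields a continuous extension $\widetilde{g}_{i}:\beta_{0}(\mathbf{2}^{\mathcal{P}(X)})\to\mathbf{2}$; assembling these coordinatewise defines a continuous $\widetilde{g}:\beta_{0}(\mathbf{2}^{\mathcal{P}(X)})\to\mathbf{2}^{I}$ with $\widetilde{g}(\varphi(x))=e^{\ast}_{2}(g(x))$. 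The inclusion $\widetilde{g}[\varphi[X]]\subseteq e^{\ast}_{2}[L]$, together with closedness of $e^{\ast}_{2}[L]$ in $\mathbf{2}^{I}$, forces $\widetilde{g}[K]\subseteq e^{\ast}_{2}[L]$, so $(e^{\ast}_{2})^{-1}\circ\widetilde{g}\upharpoonright K:K\to\mathbf{L}$ is the required continuous extension of $g\circ\varphi^{-1}$. I expect the decisive difficulty to be precisely this concluding step: the Cantor cube $\mathbf{2}^{I}$ need not be compact in $\mathbf{ZF}$, so the Banaschewski property cannot be invoked with target $\mathbf{2}^{I}$ directly; extending coordinate-by-coordinate into the compact target $\mathbf{2}$ and then reassembling, combined with the closedness of $e^{\ast}_{2}[L]$, is the device that makes the argument work in choiceless set theory.
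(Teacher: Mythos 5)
Your proof is correct, and for one link in the chain it takes a genuinely different route from the paper. The implications $(\mathrm{i})\Rightarrow(\mathrm{ii})$, $(\mathrm{ii})\Rightarrow(\mathrm{iii})$ and $(\mathrm{iv})\Rightarrow(\mathrm{i})$ coincide with the paper's arguments (the last via Theorem \ref{s2:t1} and extension of filters to ultrafilters in $\mathcal{W}(X,\mathcal{P}(X))$). Where you diverge is in closing the cycle: the paper never proves $(\mathrm{iii})\Rightarrow(\mathrm{iv})$ at all; instead it derives $(\mathrm{iii})\Rightarrow(\mathrm{i})$ directly by observing that $\beta_0\mathbf{2}^J$ yields a Hausdorff compactification $\alpha\mathbf{2}^J$ with $C(\mathbf{2}^J,\mathbf{2})\subseteq C_{\alpha}(\mathbf{2}^J)$ and then invoking the external equivalence of $\mathbf{BPI}$ with such a statement from \cite[Theorem 3.16]{kw1}. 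Your substitute --- embedding $X_{disc}$ into $\mathbf{2}^{\mathcal{P}(X)}$ via $\varphi(x)(A)=\chi_A(x)$, taking $K=\cl_{\beta_0(\mathbf{2}^{\mathcal{P}(X)})}(\varphi[X])$, and verifying the Banaschewski universal property by factoring each coordinate $i\circ g$ through a projection $\pi_{A_i}$ defined on the whole cube, extending those, and reassembling inside the closed set $e^{\ast}_{2}[L]$ --- is sound and makes the argument self-contained, at the price of being longer. Your diagnosis of the delicate point is also accurate: $\mathbf{2}^I$ need not be compact in $\mathbf{ZF}$, and the coordinate-by-coordinate extension into the compact target $\mathbf{2}$, combined with the closedness of $e^{\ast}_{2}[L]$ (which follows in $\mathbf{ZF}$ from compactness of $\mathbf{L}$ and Hausdorffness of $\mathbf{2}^I$), circumvents this. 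One small point worth making explicit: forming the family $\{\widetilde{g}_i: i\in I\}$ requires no choice because each extension $\widetilde{g}_i$ is unique by Proposition \ref{s1:p12} ($\mathbf{2}^{\mathcal{P}(X)}$ is dense in $\beta_0(\mathbf{2}^{\mathcal{P}(X)})$ and $\mathbf{2}$ is Hausdorff), so the assignment $i\mapsto\widetilde{g}_i$ is definable. What each approach buys: the paper's is shorter but leans on a cited equivalent of $\mathbf{BPI}$; yours proves a stronger structural fact in passing (every discrete space inherits a Banaschewski compactification from that of $\mathbf{2}^{\mathcal{P}(X)}$) entirely within $\mathbf{ZF}$.
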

\begin{proof}
	That (i) implies (ii) follows from the well-known fact of $\mathbf{ZF}$ that $\mathbf{BPI}$ is equivalent to the statement that all Cantor cubes are compact (see Remark \ref{s1:r14}). It is obvious that (ii) implies both (iii) and (iv). 
	
	Suppose that (iii) holds. Let $J$ be an infinite set. By (iii), there exists a compactification $\alpha \mathbf{2}^J$ such that $C(\mathbf{2}^J, \mathbf{2})\subseteq C_{\alpha}(\mathbf{2}^J)$. By the equivalence of (i) and (iii) of \cite[Theorem 3.16]{kw1}, $\mathbf{UFT}$ holds. Hence, in view of Remark \ref{s1:r14}, (iii) implies (i).
	
	Let $X$ be an infinite set. Assuming (iv), we infer from Theorem \ref{s2:t1} that the space $\mathcal{W}(X, \mathcal{P}(X))$ is compact, so every filter in $\mathcal{P}(X)$ is contained in an ultrafilter in $\mathcal{P}(X)$. In consequence, (iv) implies $\mathbf{UFT}$. This, together with Remark \ref{s1:r14}, completes the proof.
\end{proof}

For Cantor cubes, we can state the following theorem:

\begin{theorem}
	\label{s2:t5}
	$[\mathbf{ZF}]$ Let $J$ be an infinite set. Then the following conditions are all equivalent:
	\begin{enumerate}
		\item[(i)] the Cantor cube $\mathbf{2}^J$ is compact;
		\item[(ii)] the maximal Tychonoff compactification of $\mathbf{2}^J$ exists;
		\item[(iii)] the Banaschewski compactification of $\mathbf{2}^J$ exists. 
	\end{enumerate}
\end{theorem}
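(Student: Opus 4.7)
The plan is to prove the trivial implications (i)$\Rightarrow$(ii) and (i)$\Rightarrow$(iii) directly, and to recover (i) from either (ii) or (iii) by one and the same ``folding'' trick based on the universal extension property of the relevant compactification applied to the coordinate projections. The trivial directions are immediate: if $\mathbf{2}^J$ is already compact, then $\langle\mathbf{2}^J,\mathrm{id}_{\mathbf{2}^J}\rangle$ is its own Hausdorff compactification, and both universal properties in Definition \ref{s1:d24}(ii) and (iv) are fulfilled by taking the given continuous map into the target as its own extension.

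For (ii)$\Rightarrow$(i), I would argue as follows. Assume $\beta_T\mathbf{2}^J$ exists. The two-point discrete space $\mathbf{2}$ is a compact Tychonoff space, so Definition \ref{s1:d24}(iv) guarantees that, for each $j\in J$, the projection $\pi_j\in C(\mathbf{2}^J,\mathbf{2})$ admits a continuous extension $\tilde\pi_j\colon\beta_T\mathbf{2}^J\to\mathbf{2}$. Putting $\tilde{\mathcal F}=\{\tilde\pi_j:j\in J\}$ and invoking the universal property of the product, we obtain the continuous evaluation map $e_{\tilde{\mathcal F}}\colon\beta_T\mathbf{2}^J\to\mathbf{2}^J$. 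Identifying $\mathbf{2}^J$ with its dense image in $\beta_T\mathbf{2}^J$, the restriction $e_{\tilde{\mathcal F}}\upharpoonright\mathbf{2}^J$ is the identity on $\mathbf{2}^J$; consequently $\mathbf{2}^J\subseteq e_{\tilde{\mathcal F}}[\beta_T\mathbf{2}^J]\subseteq\mathbf{2}^J$, so $e_{\tilde{\mathcal F}}[\beta_T\mathbf{2}^J]=\mathbf{2}^J$. As $\beta_T\mathbf{2}^J$ is compact, its continuous image $\mathbf{2}^J$ is compact too.

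For (iii)$\Rightarrow$(i), the argument is structurally identical, invoking Definition \ref{s1:d24}(ii) instead: since $\mathbf{2}$ is a compact zero-dimensional $T_1$-space, each projection $\pi_j$ extends to a continuous $\tilde\pi_j\colon\beta_0\mathbf{2}^J\to\mathbf{2}$, and the evaluation map $e_{\{\tilde\pi_j\}_{j\in J}}\colon\beta_0\mathbf{2}^J\to\mathbf{2}^J$ exhibits $\mathbf{2}^J$ as a continuous (hence compact) image of $\beta_0\mathbf{2}^J$.

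I do not expect a real obstacle here. The whole proof hinges on the single observation that $\mathbf{2}$ belongs simultaneously to the class of compact Tychonoff spaces and to the class of compact zero-dimensional $T_1$-spaces, so the universal extension property available in either (ii) or (iii) can be applied coordinatewise to the projections defining the product, after which bundling the extensions via the evaluation map yields a continuous retraction of the compactification onto $\mathbf{2}^J$. No appeal to $\mathbf{BPI}$ or any weak choice principle is needed, so the equivalence genuinely holds in $\mathbf{ZF}$.
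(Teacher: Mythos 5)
Your proof is correct, and it is the natural argument here; the paper states Theorem \ref{s2:t5} without proof, but your retraction trick (extend the projections over the compactification and bundle them into an evaluation map onto $\mathbf{2}^J$) is exactly the mechanism underlying the paper's neighbouring results (e.g.\ the proof of Theorem \ref{s2:t4} via \cite[Theorem 3.16]{kw1}, and Theorem \ref{s2:t1}). The only point worth making explicit is that forming the family $j\mapsto\tilde\pi_j$ involves no hidden appeal to choice over $J$: since $\mathbf{2}^J$ is dense in the Hausdorff space $\beta_T\mathbf{2}^J$ (respectively, $\beta_0\mathbf{2}^J$), each $\pi_j$ has a \emph{unique} continuous extension by Proposition \ref{s1:p12}, so the assignment is definable; with that observed, the argument is complete in $\mathbf{ZF}$.
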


It is well known that it holds in $\mathbf{ZFC}$ that if $\mathbf{X}$ is a zero-dimensional $T_1$-space,  then $\beta\mathbf{X}\approx\beta_0\mathbf{X}$ if and only if $\mathbf{X}$ is strongly zero-dimensional. Let us try to investigate if this result of $\mathbf{ZFC}$ also holds in $\mathbf{ZF}$.  

\begin{theorem} 
	\label{s2:t6}
	$[\mathbf{ZF}]$ Suppose that $\mathbf{X}$ is a Tychonoff space.  Then the following conditions are satisfied:
	\begin{enumerate}
		\item[(i)] $\mathbf{X}$ is strongly zero-dimensional if and only if the extension $\beta^{f}\mathbf{X}$ is strongly zero-dimensional.
		\item[(ii)] If $\mathbf{X}$ is strongly zero-dimensional and $\beta_{T}\mathbf{X}$ exists, then $\beta_0\mathbf{X}$ exists and $\beta_{T}\mathbf{X}\approx\beta_0\mathbf{X}$. In consequence, if $\mathbf{X}$ is strongly zero-dimensional and $\beta^{f}\mathbf{X}$ is compact, then $\beta_0\mathbf{X}$ exists and  $\beta^{f}\mathbf{X}\approx\beta_0\mathbf{X}$.
		\item[(iii)] If $\mathbf{X}$ is strongly zero-dimensional, then $\beta_0\mathbf{X}$ exists if and only if $\beta_{T}\mathbf{X}$ exists.
	\end{enumerate}
\end{theorem}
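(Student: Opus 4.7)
The overall strategy rests on three $\mathbf{ZF}$-provable facts that I will use throughout. First, $\beta^{f}\mathbf{X}$ is Tychonoff because it is a closed subspace of $\mathbb{R}^{C^{\ast}(\mathbf{X})}$. Second, in any Tychonoff space, disjoint zero-sets $Z(f_1),Z(f_2)$ are functionally separated by the explicit function $|f_1|/(|f_1|+|f_2|)$, which is bounded. Third, by the construction of $\beta^{f}\mathbf{X}$ as $e_{C^{\ast}(\mathbf{X})}\mathbf{X}$, every $f\in C^{\ast}(\mathbf{X})$ extends uniquely (by Proposition \ref{s1:p12}) to $f^{\beta^{f}}\in C(\beta^{f}\mathbf{X})$, and the range of $f^{\beta^{f}}$ lies in $\cl_{\mathbb{R}}(f[X])$ by density of $X$ in $\beta^{f}\mathbf{X}$. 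Combined, these give the $\mathbf{ZF}$-fact that a Tychonoff strongly zero-dimensional space is zero-dimensional: for $x\in U$ open, functionally separate $x$ from $X\setminus U$, then apply strong zero-dimensionality to the disjoint zero-sets $f^{-1}[[0,1/3]]$ and $f^{-1}[[2/3,1]]$.

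For (i), $\Leftarrow$, given disjoint zero-sets $Z_1,Z_2\subseteq X$, I would functionally separate them by $f\in C^{\ast}(\mathbf{X})$ with $0\leq f\leq 1$, extend $f$ to $f^{\beta^{f}}$, and form the disjoint zero-sets $Z_i^{\ast\ast}=(f^{\beta^{f}})^{-1}[[0,1/3]]$ and $(f^{\beta^{f}})^{-1}[[2/3,1]]$ of $\beta^{f}\mathbf{X}$. Strong zero-dimensionality of $\beta^{f}\mathbf{X}$ provides a clopen $V'\subseteq\beta^{f}X$ separating them, and then $V=V'\cap X$ is a clopen of $\mathbf{X}$ with $Z_1\subseteq V\subseteq X\setminus Z_2$. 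For $\Rightarrow$, given disjoint zero-sets $Z_1',Z_2'\subseteq\beta^{f}\mathbf{X}$, functionally separate them by $\phi:\beta^{f}\mathbf{X}\to[0,1]$ with $\phi[Z_1']=\{0\}$ and $\phi[Z_2']=\{1\}$, and set $f=\phi\upharpoonright X$, so $\phi=f^{\beta^{f}}$. Apply strong zero-dimensionality of $\mathbf{X}$ to the disjoint zero-sets $Z_i^{\ast}=f^{-1}[[0,1/3]]$ and $f^{-1}[[2/3,1]]$ to get a clopen $V\subseteq X$ with $Z_1^{\ast}\subseteq V\subseteq X\setminus Z_2^{\ast}$. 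Since $\chi_V\in C^{\ast}(\mathbf{X})$ has image in the closed set $\{0,1\}\subseteq\mathbb{R}$, its extension $\chi_V^{\beta^{f}}$ is $\{0,1\}$-valued, so $V'=(\chi_V^{\beta^{f}})^{-1}[\{1\}]$ is clopen in $\beta^{f}\mathbf{X}$ with $V'\cap X=V$. The inclusion $Z_1'\subseteq V'$ is forced by a density argument: any $p\in Z_1'\setminus V'$ would satisfy $\chi_V^{\beta^{f}}(p)=0$ and $\phi(p)=0$, so $p$ would lie in the open set $W=(\chi_V^{\beta^{f}})^{-1}[(-1/2,1/2)]\cap\phi^{-1}[(-1/3,1/3)]$; density of $X$ produces $x\in X\cap W$ with $f(x)\in[0,1/3)$ (so $x\in Z_1^{\ast}\subseteq V$, hence $\chi_V(x)=1$) while simultaneously $\chi_V(x)=0$, a contradiction. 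A symmetric argument gives $V'\cap Z_2'=\emptyset$.

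Part (ii) combines (i) with Theorem \ref{s1:t25}: the existence of $\beta_T\mathbf{X}$ forces $\beta^{f}\mathbf{X}$ to be compact with $\beta_T\mathbf{X}\thickapprox\beta^{f}\mathbf{X}$, and (i) then makes $\beta^{f}\mathbf{X}$ strongly zero-dimensional, hence zero-dimensional by the fact noted in paragraph 1. To identify $\beta^{f}\mathbf{X}$ with $\beta_0\mathbf{X}$, it suffices to verify the Banaschewski universal property, and here one observes that any compact zero-dimensional $T_1$-space $\mathbf{K}$ is automatically Tychonoff (a clopen separation of a point from a disjoint closed set directly yields a $\{0,1\}$-valued continuous separating function), so the $\beta_T$-universal property extends every $f\in C(\mathbf{X},\mathbf{K})$ across $\beta^{f}\mathbf{X}$. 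The ``in consequence'' clause then follows by one more invocation of Theorem \ref{s1:t25}.

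For (iii), $\beta_T\Rightarrow\beta_0$ is part of (ii). For the converse, I plan to prove $C^{\ast}(\mathbf{X})\subseteq C_{\beta_0}(\mathbf{X})$ via Blasco's Extension Theorem \ref{s1:t11}. For $f\in C^{\ast}(\mathbf{X})$ and $a<b$, strong zero-dimensionality yields a clopen $V\subseteq X$ with $f^{-1}[(-\infty,a]]\subseteq V\subseteq X\setminus f^{-1}[[b,+\infty)]$; the Banaschewski universal property applied to $\chi_V:\mathbf{X}\to\mathbf{2}$ produces a continuous extension $\chi_V^{\beta_0}:\beta_0\mathbf{X}\to\mathbf{2}$, and hence a clopen $V'=(\chi_V^{\beta_0})^{-1}[\{1\}]\subseteq\beta_0\mathbf{X}$ with $V'\cap X=V$. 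Then $\cl_{\beta_0\mathbf{X}}(f^{-1}[(-\infty,a]])\subseteq V'$ and $\cl_{\beta_0\mathbf{X}}(f^{-1}[[b,+\infty)])\subseteq\beta_0X\setminus V'$ are disjoint, so Blasco extends $f$ to $\beta_0\mathbf{X}$. Hence $C^{\ast}(\mathbf{X})\subseteq C_{\beta_0}(\mathbf{X})$; Theorem \ref{s1:t31} gives compactness of $\beta^{f}\mathbf{X}$, and Theorem \ref{s1:t25} delivers $\beta_T\mathbf{X}$. The main obstacle is the density argument in (i)~$\Rightarrow$, which must replace the usual net-convergence reasoning of $\mathbf{ZFC}$ while coping with the fact that a zero-set $Z_1'\subseteq\beta^{f}\mathbf{X}$ need not equal the closure of its trace $Z_1'\cap X$.
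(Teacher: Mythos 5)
Your proposal is correct, and for parts (i) and (ii) it follows the same route as the paper: the paper proves (i) only by reference (to Engelking's Theorem 6.2.12 and to the arguments of its Theorems \ref{s5:t1} and \ref{s5:t2}), and your write-up supplies exactly the details that reference hides — in particular the density argument showing $Z_1'\subseteq V'$ and $V'\cap Z_2'=\emptyset$, which is the same implicit step used in the paper's proof of Theorem \ref{s5:t1} when it asserts $Z_0\subseteq U$ and $U\cap Z_1=\emptyset$ for the $\{0,1\}$-valued extension $\tilde{g}$. Part (ii) is verbatim the paper's argument (Theorem \ref{s1:t25} plus the observation that a compact zero-dimensional $T_1$-space is Tychonoff). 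For (iii) you diverge mildly: the paper verifies the maximal-Tychonoff universal property of $\beta_0\mathbf{X}$ directly, extending each $f\in C(\mathbf{X},\mathbf{K})$ for compact Tychonoff $\mathbf{K}$ via Taimanov's Theorem \ref{s1:t10} applied to the closed base $\mathcal{Z}(\mathbf{K})$, whereas you prove only $C^{\ast}(\mathbf{X})\subseteq C_{\beta_0}(\mathbf{X})$ via Blasco's Theorem \ref{s1:t11} and then route through Theorems \ref{s1:t31} and \ref{s1:t25}. Both are sound in $\mathbf{ZF}$; your version needs only the real-valued special case of Taimanov and delegates the universal property to the already-established Theorem \ref{s1:t25}, at the cost of an extra indirection through the compactness of $e_{C^{\ast}(\mathbf{X})}\mathbf{X}$, while the paper's version is self-contained but invokes the full Taimanov theorem.
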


\begin{proof}
	That (i) holds can be proved in much the same way, as Theorem 6.2.12 in \cite{en} asserting that, in $\mathbf{ZFC}$, a Tychonoff space is strongly zero-dimensional if and only if its \v{C}ech-Stone compactification is strongly zero-dimensional. We omit details here for they are similar to the ones in the proofs of the forthcoming Theorems \ref{s5:t1} and \ref{s5:t2}.
	
	(ii) Now, assume that $\mathbf{X}$ is a strongly zero-dimensional space for which $\beta_{T}\mathbf{X}$ exists. Then, in the light of (i) and Theorem \ref{s1:t25}, $\beta_{T}\mathbf{X}$ is strongly zero-dimensional. Clearly, if $\mathbf{K}$ is a zero-dimensional $T_1$-space, then $\mathbf{K}$ is Tychonoff, so, for every $f\in C(\mathbf{X}, \mathbf{K})$, there exists $\tilde{f}\in C(\beta_T\mathbf{X}, \mathbf{K})$ with $\tilde{f}\circ\beta_{T}=f$. This shows that $\beta_{T}\mathbf{X}$ is the Banaschewski compactification of $\mathbf{X}$. In view of Theorem \ref{s1:t25}, the last statement of (ii) is also true.
	
	(iii) For the proof of (iii), we assume that $\mathbf{X}$ is strongly zero-dimensional and $\beta_0\mathbf{X}$ exists. Let $\mathbf{K}$ be a Tychonoff space and let $f\in C(\mathbf{X},\mathbf{K})$. Let $Z_1,Z_2\in\mathcal{Z}(\mathbf{K})$ and $Z_1\cap Z_2=\emptyset$. Then $f^{-1}[Z_1], f^{-1}[Z_2]\in \mathcal{Z}(\mathbf{X})$. Since $\mathbf{X}$ is strongly zero-dimensional, there exists $V\in\mathcal{CO}(\mathbf{X})$ such that $f^{-1}[Z_1]\subseteq V\subseteq X\setminus f^{-1}[Z_2]$. Clearly, there exists $U\in\mathcal{CO}(\beta_0\mathbf{X})$ with $U\cap X=V$. This implies that $\cl_{\beta_0\mathbf{X}}(f^{-1}[Z_1])\cap\cl_{\beta_0\mathbf{X}}(f^{-1}[Z_2])=\emptyset$. Hence, by Theorem \ref{s1:t10}, there exists $\tilde{f}\in C(\beta_0\mathbf{X}, \mathbf{K})$ with $\tilde{f}\circ\beta_0=f$. In consequence, $\beta_0\mathbf{X}$ is the maximal Tychonoff compactification of $\mathbf{X}$. This, together with (ii), completes the proof of (iii).
\end{proof}

Unfortunately, at this moment, we are unable to solve the following problem:

\begin{problem}
	\label{s2:q7}
	Is there a model $\mathcal{M}$ of $\mathbf{ZF}$ in which there is a strongly zero-dimensional $T_1$-space $\mathbf{X}$ for which $\beta_0\mathbf{X}$ exists in $\mathcal{M}$ but $\beta\mathbf{X}$ does not exist in $\mathcal{M}$?
\end{problem}

A positive answer to Problem \ref{s2:q7} will be also a positive answer to the following still open problem:

\begin{problem}
	\label{s2:q8}
	Is there a model $\mathcal{M}$ of $\mathbf{ZF}$ in which there is a Tychonoff space $\mathbf{X}$ for which $\beta_T\mathbf{X}$ exists in $\mathcal{M}$ but $\beta\mathbf{X}$ does not exist in $\mathcal{M}$?
\end{problem}

The following open problem is also interesting:

\begin{problem}
	\label{s2:q9}
	Can a Cantor cube fail to be strongly zero-dimensional in a model of $\mathbf{ZF}$?
\end{problem}

It is worth noticing that the following theorem holds in $\mathbf{ZF}$:

\begin{theorem}
	\label{s2:t10}
	$[\mathbf{ZF}]$
	Let $\mathbf{X}$ be a zero-dimensional $T_1$-space and let $Y$ be a set that consists of at least two points. Then $e^{\ast}_2\mathbf{X}\approx e^{\ast}_Y\mathbf{X}$. In consequence, if the Banaschewski compactification of $\mathbf{X}$ exists, then $\beta_0\mathbf{X}\approx e^{\ast}_Y\mathbf{X}$.
\end{theorem}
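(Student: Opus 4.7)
The plan is to construct mutually inverse continuous maps $\Phi\colon e^{\ast}_Y\mathbf{X}\to e^{\ast}_2\mathbf{X}$ and $\Psi\colon e^{\ast}_2\mathbf{X}\to e^{\ast}_Y\mathbf{X}$ that commute with the embeddings $e^{\ast}_Y$ and $e^{\ast}_2$. Since the two extensions live inside products of Hausdorff spaces, Proposition \ref{s1:p12} will let us upgrade any equality on the dense subsets $e^{\ast}_Y[X]$ and $e^{\ast}_2[X]$ to an equality on the whole extensions. To begin, I would pick two distinct points $y_0,y_1\in Y$ (possible since $|Y|\geq 2$) and use them to fix once and for all the embedding $\iota\colon\mathbf{2}\to Y_{disc}$ with $\iota(0)=y_0$, $\iota(1)=y_1$, together with the retraction $r\colon Y_{disc}\to\mathbf{2}$ with $r(y_0)=0$ and $r(y)=1$ for $y\neq y_0$; both are continuous because $Y_{disc}$ is discrete.

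The map $\Phi$ is defined coordinate-wise: for each $g\in C(\mathbf{X},\mathbf{2})$, set $\Phi(p)(g)=r(\pi_{\iota\circ g}(p))$. This is continuous as a composition of a projection with $r$, and a direct computation gives $\Phi(e^{\ast}_Y(x))(g)=g(x)=e^{\ast}_2(x)(g)$ for every $x\in X$. Hence $\Phi$ sends $e^{\ast}_Y[X]$ into $e^{\ast}_2[X]$; by continuity and density, $\Phi$ actually maps $e^{\ast}_Y\mathbf{X}$ into $e^{\ast}_2\mathbf{X}$ and satisfies $\Phi\circ e^{\ast}_Y=e^{\ast}_2$.

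The map $\Psi$ is the subtler half and uses discreteness of $Y_{disc}$. For each $f\in C^{\ast}(\mathbf{X},Y_{disc})$, the range $f[X]$ is compact in a discrete space, hence finite; for every $y\in f[X]$, the characteristic function $\chi_y$ of the clopen set $f^{-1}[\{y\}]$ belongs to $C(\mathbf{X},\mathbf{2})$. The finite sum $\sum_{y\in f[X]}\pi_{\chi_y}$ is a continuous real-valued function on $e^{\ast}_2\mathbf{X}$ that equals $1$ on the dense subset $e^{\ast}_2[X]$, hence equals $1$ throughout; since each $\pi_{\chi_y}$ takes values in $\mathbf{2}$ on $e^{\ast}_2\mathbf{X}$, for every $q\in e^{\ast}_2\mathbf{X}$ there is a unique $y_q(f)\in f[X]$ with $\pi_{\chi_{y_q(f)}}(q)=1$. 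Put $\Psi(q)(f)=y_q(f)$. The fibres of the coordinate map $q\mapsto\Psi(q)(f)$ are either the clopen set $\pi_{\chi_y}^{-1}[\{1\}]$ (when $y\in f[X]$) or empty, which shows $\Psi$ is continuous into the product $Y_{disc}^{C^{\ast}(\mathbf{X},Y_{disc})}$. A direct computation yields $\Psi(e^{\ast}_2(x))(f)=f(x)$, so $\Psi\circ e^{\ast}_2=e^{\ast}_Y$, and $\Psi$ maps $e^{\ast}_2\mathbf{X}$ into $e^{\ast}_Y\mathbf{X}$ by continuity and density.

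The two compositions $\Phi\circ\Psi$ and $\Psi\circ\Phi$ agree with the identity on $e^{\ast}_2[X]$ and $e^{\ast}_Y[X]$ respectively by the computations above, hence on all of $e^{\ast}_2\mathbf{X}$ and $e^{\ast}_Y\mathbf{X}$ by Proposition \ref{s1:p12}. Combined with $\Phi\circ e^{\ast}_Y=e^{\ast}_2$, this yields $e^{\ast}_2\mathbf{X}\approx e^{\ast}_Y\mathbf{X}$; the consequence about $\beta_0\mathbf{X}$ is then immediate from Corollary \ref{s2:c2}. The main thing to watch is that the construction stays within $\mathbf{ZF}$: the partition $\{f^{-1}[\{y\}]:y\in f[X]\}$ is canonically indexed by the finite set $f[X]$, so no choice principle is invoked in defining $\Psi$ uniformly in $f$.
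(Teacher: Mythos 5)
Your proof is correct, and it reaches the conclusion by a genuinely different route than the paper. The paper first replaces $\mathbf{2}$ by $Z=\{y_0,y_1\}\subseteq Y$, then proves the two inequalities $e^{\ast}_Z\mathbf{X}\leq e^{\ast}_Y\mathbf{X}$ (by coordinate restriction) and $e^{\ast}_Y\mathbf{X}\leq e^{\ast}_Z\mathbf{X}$ (by invoking Taimanov's Extension Theorem, Theorem \ref{s1:t10}, to extend each $f\in C^{\ast}(\mathbf{X},Y_{disc})$ over $e^{\ast}_Z\mathbf{X}$), and finally appeals to the antisymmetry of $\leq$ for Hausdorff extensions (Remark \ref{s1:r4}(i)) to get $\approx$. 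You instead construct the two maps $\Phi$ and $\Psi$ explicitly and check directly that they are mutually inverse; your $\Psi$ replaces the Taimanov step by the elementary observation that each $f$ with finite range is encoded by the finitely many characteristic functions $\chi_y$ of its fibres --- which are themselves coordinates of $e^{\ast}_2\mathbf{X}$ --- and that the partition-of-unity identity $\sum_{y\in f[X]}\pi_{\chi_y}=\mathbf{1}$ persists from the dense copy of $X$ to its closure, singling out the value $\Psi(q)(f)$. Both arguments hinge on the same core fact (members of $C^{\ast}(\mathbf{X},Y_{disc})$ have finite range, hence are determined by $2$-valued data), but yours is more self-contained: it avoids the Taimanov machinery and the $\leq$-antisymmetry lemma, and it exhibits the equivalence homeomorphism concretely. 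Your closing remark about $\mathbf{ZF}$-admissibility is on point --- the only choice made is the single selection of $y_0,y_1$, and the fibre partitions are canonically indexed by $f[X]$ --- and the final step via Corollary \ref{s2:c2} is exactly as in the paper.
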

\begin{proof}
	Let $y_0, y_1$ be a fixed pair of distinct points of $Y$ and let $Z=\{ y_0, y_1\}$. Since $e^{\ast}_2\mathbf{X}\approx e^{\ast}_Z\mathbf{X}$, it suffices to prove that $e^{\ast}_Z\mathbf{X}\approx e^{\ast}_Y\mathbf{X}$.
	
	We notice that $e^{\ast}_Z[X]\subseteq\{t\in Y^{C^{\ast}(\mathbf{X}, Y_{disc})}: (\forall f\in C(\mathbf{X}, Z_{disc})) t(f)\in Z\}$. We can define a continuous surjection $h: e^{\ast}_Y\mathbf{X}\to e^{\ast}_Z\mathbf{X}$ as follows. For every $t\in e_{Y}^{\ast}X$ and every $f\in C(\mathbf{X}, Z_{disc})$, $h(t)(f)= t(f)$. This shows that $e^{\ast}_Z\mathbf{X}\leq e^{\ast}_Y\mathbf{X}$.  
	
	Now, to show that $e^{\ast}_Y\mathbf{X}\leq e^{\ast}_Z\mathbf{X}$, we consider any $f\in C^{\ast}(\mathbf{X}, Y_{disc})$ and any pair $A,B$ of disjoint non-empty subsets of $Y$. We take $g: Y\to Z$ defined as follows: 
	$$
	g(y)=\begin{cases} y_0 &\text{ if } y\in Y\setminus B;\\
		y_1 &\text{ if } y\in B.\end{cases}
	$$
	\noindent Then $g\circ f\in C(\mathbf{X}, Z_{disc})$, which implies that 
	$$\cl_{e^{\ast}_Z\mathbf{X}}e^{\ast}_Z[f^{-1}[A]]\cap \cl_{e^{\ast}_Z\mathbf{X}}e^{\ast}_Z[f^{-1}[B]]=\emptyset.$$
	Hence, by Theorem \ref{s1:t10} and Proposition \ref{s1:p12}, there exists a unique continuous function $\tilde{f}:e^{\ast}_Z\mathbf{X}\rightarrow (f[X])_{disc}$ such that $\tilde{f}\circ e^{\ast}_Z=f$. Let $\mathcal{F}=\{\tilde{f}: f\in C^{\ast}(\mathbf{X}, Y_{disc})\}$. Then $e_{\mathcal{F}}: e^{\ast}_Z\mathbf{X}\to Y_{disc}^{C^{\ast}(\mathbf{X}, Y_{disc})}$ is continuous, $e_{\mathcal{F}}[e^{\ast}_{Z}X]=e^{\ast}_{Y}X$ and $e_{\mathcal{F}}\circ e^{\ast}_Z=e^{\ast}_Y$. Thus, $e^{\ast}_{Y}\mathbf{X}\leq e^{\ast}_{Z}\mathbf{X}$. This completes the proof that $e^{\ast}_Z\mathbf{X}\approx e^{\ast}_Y\mathbf{X}$. 
\end{proof}

\subsection{Strong zero-dimensionality and Banaschewski compactifications in terms of $U_{\aleph_0}^{\ast}(\mathbf{X})$}
\label{s2.2}

Given a zero-dimensional $T_1$-space $\mathbf{X}$, it is obvious that $U_{\aleph_0}^{\ast}(\mathbf{X})\in\mathcal{E}^{\ast}(\mathbf{X})$; thus, is worthwhile to have a deeper look at the extension of $\mathbf{X}$ generated by $U_{\aleph_0}^{\ast}(\mathbf{X})$ and its relationship with the Banaschewski compactification of $\mathbf{X}$. To this aim, let us begin with the following proposition which gives a simpler description of $U_{\aleph_0}^{\ast}(\mathbf{X})$.

\begin{proposition}
	\label{s2:p11}
	For every topological space $\mathbf{X}$, the following equality holds:
	$U_{\aleph_0}^{\ast}(\mathbf{X})=\{ f\in C(\mathbf{X}): (\forall \varepsilon\in (0, +\infty)(\exists \mathcal{A}\in [\mathcal{CO}]^{<\omega}): \bigcup\mathcal{A}=X\wedge(\forall A\in\mathcal{A}) \osc_A(f)\leq\varepsilon\}.$
\end{proposition}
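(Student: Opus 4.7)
The plan is to prove the two inclusions separately.

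For $\supseteq$, a finite clopen cover is in particular a countable one, so any $f$ in the right-hand side automatically lies in $U_{\aleph_0}(\mathbf{X})$. To confirm $f \in C^{\ast}(\mathbf{X})$, I would apply the hypothesis with $\varepsilon = 1$ to obtain a finite clopen cover $\{A_1, \ldots, A_n\}$ of $X$ with $\osc_{A_i}(f) \leq 1$, discard any empty pieces, and select a point $x_i$ in each remaining $A_i$ --- a purely \emph{finite} choice, provable in $\mathbf{ZF}$. Then $f[X] \subseteq \bigcup_{i=1}^{n}[f(x_i) - 1, f(x_i) + 1]$ is bounded, so $\cl_{\mathbb{R}}(f[X])$ is compact.

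For the nontrivial inclusion $\subseteq$, fix $f \in U_{\aleph_0}^{\ast}(\mathbf{X})$ and $\varepsilon > 0$. Since $f$ is bounded, pick $a < b$ with $f[X] \subseteq [a, b]$. An elementary calculation with equally spaced, slightly enlarged intervals produces a finite open cover $J_1, \ldots, J_m$ of $[a, b]$ consisting of intervals of length less than $\varepsilon$, together with a Lebesgue-type constant $\delta > 0$ such that every subset of $[a, b]$ of diameter at most $\delta$ is contained in some $J_k$. I would then invoke the definition of $U_{\aleph_0}(\mathbf{X})$ at level $\delta$ to extract a countable clopen cover $\{A_n : n \in \omega\}$ of $X$ with $\osc_{A_n}(f) \leq \delta$.

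Next I would pass to the disjointification $A'_n = A_n \setminus \bigcup_{k<n} A_k$, each $A'_n$ being clopen as a finite Boolean combination of clopens, yielding a pairwise disjoint clopen cover of $X$ with oscillation at most $\delta$ on each piece. For each $n$ with $A'_n \neq \emptyset$, the image $f[A'_n]$ has diameter at most $\delta$ and therefore lies in some $J_k$; defining $k(n) = \min\{k \in \{1, \ldots, m\} : f[A'_n] \subseteq J_k\}$ assigns each $A'_n$ to a unique index without any appeal to countable choice, because $\min$ is a definable operation on a finite set. Grouping, I set $B_i = \bigcup\{A'_n : k(n) = i\}$ for $i = 1, \ldots, m$. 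These $B_i$'s form a finite partition of $X$ into open sets, and the \emph{finiteness} is what makes each $B_i$ automatically closed: $X \setminus B_i = \bigcup_{j \neq i} B_j$ is a finite union of open sets. Combined with $B_i \subseteq f^{-1}[J_i]$, which gives $\osc_{B_i}(f)$ less than the length of $J_i$ and hence less than $\varepsilon$, this produces the required finite clopen cover.

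The main obstacle is routing around countable choice in the passage from the countable cover $\{A_n\}$ to the finite cover $\{B_i\}$. The two $\mathbf{ZF}$-safe moves that make this work are the $\min$-based definition of $k(n)$, which collapses what would otherwise be countably many independent selections into a single definable function, and the observation that any finite partition of $X$ into open sets is automatically a partition into clopen sets.
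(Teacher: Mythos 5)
Your proof is correct and follows essentially the same route as the paper's: both subdivide the bounded range $f[X]$ into finitely many short intervals and merge the members of a (disjointified) countable clopen cover with small oscillation according to where $f$ sends each piece. The only differences are cosmetic --- the paper assigns a piece to every interval its image meets and closes with an $\varepsilon/3$ triangle-inequality estimate (using disjointness of the countable cover to see that each resulting union is clopen), whereas you assign each piece exclusively via a Lebesgue number and a $\min$, and invoke the fact that a finite open partition of $X$ is automatically clopen; both devices are $\mathbf{ZF}$-safe.
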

\begin{proof} Given a topological space $\mathbf{X}$,  $f\in U_{\aleph_0}^{\ast}(\mathbf{X})$ and $\varepsilon\in (0, +\infty)$, 
	we fix a disjoint countable clopen cover $\mathcal{A}$ of $\mathbf{X}$ such that, for every $A\in\mathcal{A}$, $\osc_A(f)\leq\frac{\varepsilon}{3}$. Let $a=\inf f[X]$ and $b=\sup  f[X]$. We may assume that $a<b$. Choose $n\in\omega$ such that $\frac{b-a}{n}\leq\frac{\varepsilon}{3}$. For $i\in n+1$,  let $a_i= a+\frac{i(b-a)}{n}$ and, for $i\in n\setminus\{0\}$, let  $C_i=[a_{i-1}, a_{i}]$. Let $A_i=\bigcup\{A\in\mathcal{A}: f[A]\cap C_i\neq\emptyset\}$.  Then $\{A_i: i\in n+1\}$ is a finite clopen cover of $\mathbf{X}$. Fix $i\in n+1$ and $x,y\in A_i$. There exist $A(x), A(y)\in\mathcal{A}$ such that $x\in A(x)$, $y\in A(y)$, $f[A(x)]\cap C_i\neq\emptyset\neq f[A(y)]\cap C_i$. Let $x_1\in A(x)$ and $y_1\in A(y)$ be such that $f(x_1)\in C_i$ and $f(y_1)\in C_i$. To complete the proof, it suffices to notice that  $|f(x)-f(y)|\leq |f(x)-f(x_1)|+|f(x_1)-f(y_1)|+|f(y_1)-f(y)|\leq\varepsilon$.  
\end{proof}

It is interesting that, although we do not know if it is possible to prove in $\mathbf{ZF}$ that $U_{\aleph_0}(\mathbf{X})=\cl_{C_{u}(\mathbf{X})}(C(\mathbf{X}, \mathbb{R}_{disc}))$ (see Theorem \ref{s1:t21}), the following proposition has a relatively simple proof in $\mathbf{ZF}$:

\begin{proposition}
	\label{s2:p12}
	$[\mathbf{ZF}]$  For every topological space $\mathbf{X}$, the following equality holds:
	$$U_{\aleph_0}^{\ast}(\mathbf{X})=\cl_{C_u(\mathbf{X})}C^{\ast}(\mathbf{X}, \mathbb{R}_{disc}).$$
\end{proposition}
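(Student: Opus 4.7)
The plan is to prove the two inclusions separately, using Proposition \ref{s2:p11} for the forward direction and the closedness of $U_{\aleph_0}^{\ast}(\mathbf{X})$ in $C_u(\mathbf{X})$ (Remark \ref{s1:r20}) for the reverse direction. A preliminary observation is that $C^{\ast}(\mathbf{X},\mathbb{R}_{disc})$ consists precisely of the continuous functions $g:\mathbf{X}\to\mathbb{R}$ with finite range, since in $\mathbb{R}_{disc}$ the compact sets are exactly the finite sets, so $\cl_{\mathbb{R}_{disc}}(g[X])$ compact forces $g[X]$ to be finite.

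For the inclusion $\cl_{C_u(\mathbf{X})}C^{\ast}(\mathbf{X},\mathbb{R}_{disc})\subseteq U_{\aleph_0}^{\ast}(\mathbf{X})$, I would first verify that $C^{\ast}(\mathbf{X},\mathbb{R}_{disc})\subseteq U_{\aleph_0}^{\ast}(\mathbf{X})$. Indeed, if $g\in C^{\ast}(\mathbf{X},\mathbb{R}_{disc})$ has finite range $\{c_1,\ldots,c_n\}$, then $\{g^{-1}[\{c_i\}]:i=1,\ldots,n\}$ is a finite disjoint clopen cover of $X$, each piece having oscillation $0$ under $g$, so $g\in U_{\aleph_0}^{\ast}(\mathbf{X})$ by Proposition \ref{s2:p11}, and $g$ is bounded. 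Then, since $U_{\aleph_0}^{\ast}(\mathbf{X})$ is closed in $C_u(\mathbf{X})$ by Remark \ref{s1:r20}, the inclusion follows by taking closures.

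For the reverse inclusion $U_{\aleph_0}^{\ast}(\mathbf{X})\subseteq\cl_{C_u(\mathbf{X})}C^{\ast}(\mathbf{X},\mathbb{R}_{disc})$, fix $f\in U_{\aleph_0}^{\ast}(\mathbf{X})$ and $\varepsilon\in(0,1)$. By Proposition \ref{s2:p11}, there is a finite clopen cover $\mathcal{A}=\{A_1,\ldots,A_n\}$ of $X$ with $\osc_{A_i}(f)\leq\varepsilon$ for every $i$. Refine this cover to a finite disjoint clopen cover by taking the Boolean combinations $B_S=\bigcap_{i\in S}A_i\cap\bigcap_{i\notin S}(X\setminus A_i)$ for $S\subseteq\{1,\ldots,n\}$; the nonempty $B_S$ form a finite disjoint clopen partition of $X$, and each nonempty $B_S$ is contained in some $A_i$. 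Since there are only finitely many nonempty pieces, pick one point $x_S\in B_S$ for each nonempty $B_S$ (a purely finite choice, hence available in $\mathbf{ZF}$), and define $g=\sum_{S}f(x_S)\chi_{B_S}$. Then $g\in C^{\ast}(\mathbf{X},\mathbb{R}_{disc})$, and for every $x\in X$, if $x\in B_S$ then $|f(x)-g(x)|=|f(x)-f(x_S)|\leq\osc_{A_i}(f)\leq\varepsilon$ for the appropriate $i$, so $d_u(f,g)\leq\varepsilon$.

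The argument presents no genuine obstacle: the only point requiring care is that all the choices involved are finite (the finite clopen refinement and the selection of one representative per piece), so no form of choice beyond $\mathbf{ZF}$ enters. What makes the statement strong is that the usual $\mathbf{ZFC}$-style proof of the analogous equality for $U_{\aleph_0}(\mathbf{X})$ needs countably many choices to handle the countable clopen covers arising at arbitrary $\varepsilon$, which is exactly why the unbounded version (Theorem \ref{s1:t21}(c)(ii)) requires $\mathbf{CMC}$, whereas the bounded version does not.
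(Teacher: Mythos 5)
Your proof is correct and follows essentially the same route as the paper's: both directions rest on Proposition \ref{s2:p11} together with the closedness of $U_{\aleph_0}^{\ast}(\mathbf{X})$ in $C_u(\mathbf{X})$ from Remark \ref{s1:r20}, and the approximating step function is built from a finite clopen partition using only finitely many choices. The only cosmetic difference is that you make the disjointification of the cover explicit via Boolean combinations, whereas the paper passes directly to a partition.
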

\begin{proof}
	It has been noticed in Remark \ref{s1:r20} that, given a topological space $\mathbf{X}$, the set $U_{\aleph_0}^{\ast}(\mathbf{X})$ is closed in $C_u(\mathbf{X})$. It is obvious that $C^{\ast}(\mathbf{X}, \mathbb{R}_{disc})\subseteq U_{\aleph_0}^{\ast}(\mathbf{X})$. In consequence, $\cl_{C_u(\mathbf{X})}C^{\ast}(\mathbf{X}, \mathbb{R}_{disc})\subseteq U_{\aleph_0}^{\ast}(\mathbf{X})$. To prove that the reverse inclusion also holds, let us fix $f\in U_{\aleph_0}^{\ast}(\mathbf{X})$ and a positive real number $\varepsilon$. It follows from Proposition \ref{s2:p11} that there is  $n\in\omega$ for which we can fix a clopen partition $\{A_i; i\in n+1\}$ of $\mathbf{X}$ such that, for every $i\in n+1$, $\osc_{A_i}(f)\leq\frac{\varepsilon}{2}$. Without loss of generality, we may assume that, for every $i\in n+1$, $A_i\neq\emptyset$. We fix $z\in\prod\limits_{i\in n+1}A_i$ and define a function $g:X\to\mathbb{R}$ as follows: for every $i\in n+1$ and $t\in A_i$, $g(t)=f(z(i))$. Then $g\in C^{\ast}(\mathbf{X}, \mathbb{R}_{disc})$. Let $t_0\in X$. There exists a unique $i_0\in n+1$ such that $t_0\in A_{i_0}$. Then $|f(t_0)-g(t_0)|=|f(t_0)-f(z(i_0))|\leq\frac{\varepsilon}{2}<\varepsilon$. This shows that $f\in\cl_{C_u(\mathbf{X})}C^{\ast}(\mathbf{X}, \mathbb{R}_{disc})$.
\end{proof}

We recall that, by Theorem \ref{s1:t21}, the statement ``A completely regular space $\mathbf{X}$ is strongly zero-dimensional if and only if $U_{\aleph_0}(\mathbf{X})=C(\mathbf{X})$ is provable in $\mathbf{ZF+CMC}$; unfortunately, we do not know if this statement is also provable in $\mathbf{ZF}$. However, we can prove the following theorem in $\mathbf{ZF}$:

\begin{theorem}
	\label{s2:t13}
	$[\mathbf{ZF}]$ Let $\mathbf{X}$ be a completely regular space. Then $\mathbf{X}$ is strongly zero-dimensional if and only if $U_{\aleph_0}^{\ast}(\mathbf{X})=C^{\ast}(\mathbf{X})$. 
\end{theorem}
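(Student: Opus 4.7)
The plan is to prove both implications directly, working throughout in $\mathbf{ZF}$ and invoking only finite choice arguments.

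For the forward direction, assume $\mathbf{X}$ is strongly zero-dimensional and take $f\in C^{\ast}(\mathbf{X})$ with $f[X]\subseteq [a,b]$. Fix $\varepsilon>0$ and choose $n\in\omega$ with $(b-a)/n<\varepsilon/2$. Set $\alpha_i=a+i(b-a)/n$ for $0\le i\le n$ and $\eta=(b-a)/(3n)$. For each $i\in\{1,\ldots,n-1\}$ the sets $A_i=f^{-1}[[a,\alpha_i]]$ and $B_i=f^{-1}[[\alpha_i+\eta, b]]$ are disjoint zero-sets, so strong zero-dimensionality supplies a clopen $U_i$ with $A_i\subseteq U_i\subseteq X\setminus B_i$. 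Only finitely many such choices are needed, which is unproblematic in $\mathbf{ZF}$. Put $W_0=\emptyset$, $W_i=U_1\cup\cdots\cup U_i$ for $1\le i\le n-1$, and define $V_i=W_i\setminus W_{i-1}$ for $1\le i\le n-1$ together with $V_n=X\setminus W_{n-1}$. Each $V_i$ is clopen (finite Boolean combinations of clopens), and the $V_i$ partition $X$. A direct bookkeeping check using $f(U_j)\subseteq [a,\alpha_j+\eta)$ and $f(X\setminus U_j)\subseteq (\alpha_j,b]$ shows that $f[V_i]$ is contained in an interval of length at most $(b-a)/n+\eta<\varepsilon$, so $\osc_{V_i}(f)\le\varepsilon$. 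By Proposition \ref{s2:p11}, this gives $f\in U_{\aleph_0}^{\ast}(\mathbf{X})$.

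For the converse, let $Z_1,Z_2$ be disjoint zero-sets, say $Z_j=Z(f_j)$ for $j=1,2$. The function $g=f_1^2/(f_1^2+f_2^2)$ is well-defined on $X$ (since $f_1^2+f_2^2>0$ everywhere), belongs to $C^{\ast}(\mathbf{X})$, takes values in $[0,1]$, and satisfies $g\equiv 0$ on $Z_1$ and $g\equiv 1$ on $Z_2$. By hypothesis $g\in U_{\aleph_0}^{\ast}(\mathbf{X})$, so by Proposition \ref{s2:p11} there is a finite clopen cover $\mathcal{A}$ of $X$ with $\osc_A(g)\le 1/3$ for every $A\in\mathcal{A}$. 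Set $U=\bigcup\{A\in\mathcal{A}:A\cap Z_1\neq\emptyset\}$. Then $U$ is clopen (finite union of clopens) and contains $Z_1$. Moreover, any $A\in\mathcal{A}$ meeting $Z_1$ has $g[A]\subseteq [0,1/3]$, hence cannot meet $Z_2$ (where $g=1$). Thus $U\cap Z_2=\emptyset$, giving $Z_1\subseteq U\subseteq X\setminus Z_2$, which witnesses strong zero-dimensionality.

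The only genuinely delicate point is the forward implication: strong zero-dimensionality only separates disjoint zero-sets one pair at a time, whereas we need a single finite clopen \emph{partition} whose pieces have small oscillation. The trick is that finite unions and differences of clopens are again clopen without invoking any choice principle, so the nested family $W_i$ and the partition $\{V_i\}$ are formed for free in $\mathbf{ZF}$; the extra slack $\eta$ inserted between $A_i$ and $B_i$ absorbs the ambiguity in the behavior of $f$ near the grid boundaries and keeps the resulting oscillation bound strictly below $\varepsilon$. The converse is a short application of Proposition \ref{s2:p11} combined with the standard $g=f_1^2/(f_1^2+f_2^2)$ construction, which is entirely effective.
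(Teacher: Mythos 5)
Your proof is correct and follows essentially the same route as the paper's: both directions grid the range of $f$, apply strong zero-dimensionality only finitely many times (harmless in $\mathbf{ZF}$), and rely on the finite clopen cover characterization of $U_{\aleph_0}^{\ast}(\mathbf{X})$ from Proposition \ref{s2:p11}, with the converse being the same small-oscillation argument applied to the standard separating function $f_1^2/(f_1^2+f_2^2)$. The only cosmetic difference is in the forward direction, where the paper separates $f^{-1}[E_i]$ from the preimages of the far-away subintervals to obtain a finite clopen cover, while you separate sublevel from superlevel sets at each grid point and disjointify the resulting nested chain into a clopen partition; both give the required oscillation bound.
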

\begin{proof}
	Suppose that $U_{\aleph_0}^{\ast}(\mathbf{X})=C^{\ast}(\mathbf{X})$. Let $Z_1, Z_2$ be a pair of disjoint zero-sets of $\mathbf{X}$. Take a function $f\in C^{\ast}(\mathbf{X})$ such that $Z_1=\{x\in X: f(x)=0\}$ and $Z_2=\{x\in X: f(x)=1\}$. Since $f\in U_{\aleph_0}^{\ast}(\mathbf{X})$, there exists a finite clopen partition $\mathcal{A}$ of $\mathbf{X}$ such that, for every $A\in\mathcal{A}$, $\osc_{A}(f)\leq\frac{1}{2}$. Let $C=\bigcup\{A\in\mathcal{A}: A\cap Z_1\neq\emptyset\}$. Then $C$ is clopen in $\mathbf{X}$ and $Z_1\subseteq C$. Suppose that there exists $x\in C\cap Z_2$. For such an $x$, we choose $A\in\mathcal{A}$ such that $x\in A$ and $A\cap Z_1\neq\emptyset$. We choose $y\in A\cap Z_1$. Then $|f(x)-f(y)|\leq\frac{1}{2}$ because $\osc_{A}(f)\leq\frac{1}{2}$. On the other hand $|f(x)-f(y)|=1$, which is impossible. Hence $C\cap Z_2=\emptyset$. This proves that $\mathbf{X}$ is strongly zero-dimensional if $U_{\aleph_0}^{\ast}(\mathbf{X})=C^{\ast}(\mathbf{X})$.
	
	Now, suppose that $\mathbf{X}$ is strongly zero-dimensional and $f\in C^{\ast}(\mathbf{X})$. Let $a=\inf f[X]$ and $b=\sup f[X]$. Fix $\varepsilon >0$ and $n\in\omega$, such that $\frac{b-a}{n}<\frac{\varepsilon}{3}$. For $i\in n+1$, let $a_i= a+\frac{i(b-a)}{n}$. For every $i\in n$, let $E_i=[a_i, a_{i+1}]$. Let  $D_0=[a_2, b]$, $D_{n-1}=[a_0, a_{n-2}]$ and, for every $i\in n$ with $0<i<n-1$, let $D_i=[a_0, a_{i-1}]\cup [a_{i+2}, b]$. Since $\mathbf{X}$ is strongly zero-dimensional, we can choose a family $\{C_i: i\in n\}$ of clopen sets of $\mathbf{X}$ such that, for every $i\in n$, $f^{-1}[ E_i]\subseteq C_i$ and $C_i\cap f^{-1}[ D_i]=\emptyset$. Then $X=\bigcup\limits_{i\in n}C_i$. Let us fix $i\in n$ and $x,y\in C_i$. Suppose that $0<i<n-1$. Then $f(x), f(y)\in [a_{i-1}, a_{i+2}]$, so $|f(x)-f(y)|\leq\varepsilon$. A similar argument shows that if $i=0$ or $i=n-1$, then $|f(x)-f(y)|\leq\varepsilon$. Hence, for every $i\in n$, $\osc_{C_i}(f)\leq\varepsilon$. This proves that $f\in U_{\aleph_0}^{\ast}(\mathbf{X})$.  
\end{proof}

\begin{remark}
	\label{s2:r14}
	Let $\mathbf{X}$ be a completely regular space. It follows from Theorem \ref{s2:t13} that if $U_{\aleph_0}(\mathbf{X})=C(\mathbf{X})$, then it is provable in $\mathbf{ZF}$ that $\mathbf{X}$ is strongly zero-dimensional. It might be interesting to know if it can be proved in $\mathbf{ZF}$ that if $\mathbf{X}$ is strongly zero-dimensional, then  $U_{\aleph_0}(\mathbf{X})=C(\mathbf{X})$.
\end{remark}

Let us give a $\mathbf{ZF}$-proof of the following lemma for completeness.

\begin{lemma}
	\label{s2:l15}
	$[\mathbf{ZF}]$
	Let $\mathbf{X}$ be a dense subspace of a topological space $\mathbf{Y}$. Let $F\subseteq C^{\ast}(\mathbf{X})$. Suppose that every function $f\in F$ is continuously extendable over $\mathbf{Y}$. Then every function from $\cl_{C_{u}(\mathbf{X})}(F)$ is continuosly extendable over $\mathbf{Y}$. Furthermore, if $\tilde{F}=\{f\in C^{\ast}(\mathbf{Y}): f\upharpoonright X\in F\}$, then
	$$\cl_{C_{u}(\mathbf{X})}(F)=\{f\upharpoonright X: f\in\cl_{C_{u}(\mathbf{Y})}(\tilde{F})\}.$$
\end{lemma}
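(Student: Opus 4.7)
The plan is to establish the extendability claim via Blasco's Extension Theorem (Theorem \ref{s1:t11}) applied to uniform approximants, and then deduce the displayed equality by noting that restriction from $\mathbf{Y}$ to $\mathbf{X}$ acts as an isometry on bounded continuous functions with respect to the metric $d_u$.

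For the first assertion, I would fix $g \in \cl_{C_{u}(\mathbf{X})}(F)$ and a pair of reals $a < b$, set $\varepsilon = \min\{(b-a)/3,\, 1/2\}$, and choose $f \in F$ with $d_u(g,f) < \varepsilon$; since $\varepsilon < 1$, this forces $|g(x) - f(x)| < \varepsilon$ for every $x \in X$, so that $g^{-1}[(-\infty,a]] \subseteq f^{-1}[(-\infty,a+\varepsilon]]$ and $g^{-1}[[b,+\infty)] \subseteq f^{-1}[[b-\varepsilon,+\infty)]$. Because $a+\varepsilon < b-\varepsilon$ and $f$ extends continuously over $\mathbf{Y}$, the necessity direction of Theorem \ref{s1:t11} gives that the closures in $\mathbf{Y}$ of $f^{-1}[(-\infty,a+\varepsilon]]$ and $f^{-1}[[b-\varepsilon,+\infty)]$ are disjoint; the above inclusions then force the closures of $g^{-1}[(-\infty,a]]$ and $g^{-1}[[b,+\infty)]$ in $\mathbf{Y}$ to be disjoint as well, and the sufficiency direction of Theorem \ref{s1:t11} yields the desired continuous extension of $g$.

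For the second assertion, I would first note that by Proposition \ref{s1:p12} every $h \in C(\mathbf{X})$ admitting a continuous extension to $\mathbf{Y}$ has a unique such extension $\tilde h$, and when $h$ is bounded the inclusion $\tilde h[Y] \subseteq \cl_{\mathbb{R}}(h[X])$ forces $\tilde h \in C^{\ast}(\mathbf{Y})$. Moreover, density of $X$ in $\mathbf{Y}$ together with continuity of $y \mapsto \min\{|f(y)-g(y)|,1\}$ yields $d_u(f,g) = d_u(f\!\upharpoonright\! X,\, g\!\upharpoonright\! X)$ for all $f,g \in C^{\ast}(\mathbf{Y})$, so restriction is an isometry of $\tilde F$ onto $F$. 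The inclusion $\supseteq$ in the displayed equality is then immediate. For $\subseteq$, given $g \in \cl_{C_u(\mathbf{X})}(F)$, the first part together with the boundedness remark provides a unique $\tilde g \in C^{\ast}(\mathbf{Y})$ with $\tilde g \!\upharpoonright\! X = g$; for each $\varepsilon > 0$ one picks $f \in F$ with $d_u(g,f) < \varepsilon$, observes that its unique extension $\tilde f$ belongs to $\tilde F$, and reads off $d_u(\tilde g, \tilde f) < \varepsilon$ from the isometry, so $\tilde g \in \cl_{C_u(\mathbf{Y})}(\tilde F)$.

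The only points requiring genuine care are the truncation $\min\{\cdot,1\}$ in the definition of $d_u$, which forces the choice of $\varepsilon < 1$ in the first paragraph, and the verification that the extension remains bounded so as to actually live in $\tilde F$ rather than merely in $C(\mathbf{Y})$. No form of choice is invoked anywhere because every continuous extension is uniquely determined by Proposition \ref{s1:p12}, so the assignment $f \mapsto \tilde f$ is a well-defined function of $F$ into $\tilde F$ rather than an application of a choice principle.
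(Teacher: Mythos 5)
Your proof is correct and, for the first assertion, is essentially identical to the paper's argument: both approximate the limit function uniformly by a member of $F$, compare the Lebesgue sets $g^{-1}[(-\infty,a]]\subseteq f^{-1}[(-\infty,a+\varepsilon]]$ and $g^{-1}[[b,+\infty)]\subseteq f^{-1}[[b-\varepsilon,+\infty)]$, and apply both directions of Blasco's Extension Theorem (Theorem \ref{s1:t11}). The paper omits the proof of the displayed equality as elementary; your isometry argument via density of $X$ in $\mathbf{Y}$, together with the correct attention to the truncation in $d_u$ and to boundedness of the extensions, fills that gap soundly and without any appeal to choice.
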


\begin{proof} Let $f\in \cl_{C_{u}(\mathbf{X})}(F)$. We consider any pair $a,b$ of real numbers such that $a<b$. We fix a positive real number $\varepsilon$ such that $a+\varepsilon<b-\varepsilon$. We can fix $g\in F$ such that, for every $x\in X$, $|f(x)-g(x)|\leq\varepsilon$. Let $A_f=f^{-1}[(-\infty, a]]$, $B_f=f^{-1}[[b, +\infty)]$, $C_g=g^{-1}[(-\infty, a+\varepsilon]]$ and $D_g=g^{-1}[[b-\varepsilon, +\infty)]$. We notice that $A_f\subseteq C_g$ and $B_f\subseteq D_g$. Since $g$ is continuously extendable over $\mathbf{Y}$, we have $\cl_{\mathbf{Y}}(C_g)\cap\cl_{\mathbf{Y}}(D_g)=\emptyset$. This implies that $\cl_{\mathbf{Y}}(A_f)\cap\cl_{\mathbf{Y}}(B_f)=\emptyset$. Hence, it follows from Blasco's Extension Theorem (see Theorem \ref{s1:t11}) that $f$ is continuously extendable over $\mathbf{Y}$. 
	
	We omit an elementary proof that the equality $\cl_{C_{u}(\mathbf{X})}(F)=\{f\upharpoonright X: f\in\cl_{C_{u}(\mathbf{Y})}(\tilde{F})\}$ holds.
\end{proof}

The standard well-known proof in $\mathbf{ZFC}$ of the first part of Lemma \ref{s2:l15} involves the Fr\'echet-Urysohn property of the function space $C_{u}(\mathbf{X})$ for it involves the $\mathbf{ZFC}$-fact that if $f\in \cl_{C_{u}(\mathbf{X})}(F)$, then there is a sequence $(f_n)_{n\in\omega}$ of functions from $F$ which is uniformly convergent to $f$. However, in $\mathbf{ZF}$, the space $C_{u}(\mathbf{X})$ may  fail to be Fr\'echet-Urysohn (see, e.g., \cite[Theorem 4.54]{her}). This is why we have included a $\mathbf{ZF}$-proof of Lemma \ref{s2:l15}.  

\begin{theorem}
	\label{s2:t16}
	$[\mathbf{ZF}]$ Let $\mathbf{X}$ be a non-empty zero-dimensional $T_1$-space. Then the extensions $e_2^{\ast}\mathbf{X}$ and $e_{U_{\aleph_0}^{\ast}(\mathbf{X})}\mathbf{X}$ of $\mathbf{X}$ are equivalent.  Furthermore:
	$$U_{\aleph_0}^{\ast}(\mathbf{X})=\{ f\circ e_2^{\ast}: f\in U_{\aleph_0}^{\ast}(e_2^{\ast}\mathbf{X})\},$$
	and $\beta_0\mathbf{X}$ exists if and only if $e_{U_{\aleph_0}^{\ast}(\mathbf{X})}\mathbf{X}$ is compact.
\end{theorem}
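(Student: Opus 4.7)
My strategy is threefold: first, establish the equivalence of the two extensions via Theorem~\ref{s1:t23}; second, leverage that equivalence to read off the functional identity; third, deduce the compactness criterion from Theorem~\ref{s2:t1}. The setup is straightforward: since $\mathbf{X}$ is zero-dimensional $T_1$ (hence Tychonoff) and every characteristic function of a clopen subset of $\mathbf{X}$ lies in $U_{\aleph_0}^{\ast}(\mathbf{X})$ (it has zero oscillation on a two-piece clopen partition), we have $C(\mathbf{X},\mathbf{2})\subseteq U_{\aleph_0}^{\ast}(\mathbf{X})$ and both families belong to $\mathcal{E}^{\ast}(\mathbf{X})$. I will apply Theorem~\ref{s1:t23} with $\mathcal{F}=C(\mathbf{X},\mathbf{2})$ and $\mathcal{G}=U_{\aleph_0}^{\ast}(\mathbf{X})$. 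The inclusion $\mathcal{F}\subseteq C_{e_{\mathcal{G}}}(\mathbf{X})$ is immediate from $\mathcal{F}\subseteq\mathcal{G}$, since each coordinate of an evaluation embedding extends as the corresponding coordinate projection from $\mathbb{R}^{\mathcal{G}}$.

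The substantive step is the reverse inclusion $U_{\aleph_0}^{\ast}(\mathbf{X})\subseteq C_{e_2^{\ast}}(\mathbf{X})$, for which the plan is to combine Theorem~\ref{s2:t10}, Proposition~\ref{s2:p12}, and Lemma~\ref{s2:l15}. Applying Theorem~\ref{s2:t10} with $Y=\mathbb{R}$ yields $e_2^{\ast}\mathbf{X}\approx e_{C^{\ast}(\mathbf{X},\mathbb{R}_{disc})}\mathbf{X}$; moreover, reading the explicit construction in that proof shows that each $f\in C^{\ast}(\mathbf{X},\mathbb{R}_{disc})$ admits a continuous extension $\tilde{f}$ over $e_2^{\ast}\mathbf{X}$ whose range sits inside the discrete subspace $(f[X])_{disc}$, so in fact $\tilde{f}\in C^{\ast}(e_2^{\ast}\mathbf{X},\mathbb{R}_{disc})$; consequently $C^{\ast}(\mathbf{X},\mathbb{R}_{disc})\subseteq C_{e_2^{\ast}}(\mathbf{X})$. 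Proposition~\ref{s2:p12} identifies $U_{\aleph_0}^{\ast}(\mathbf{X})=\cl_{C_u(\mathbf{X})}C^{\ast}(\mathbf{X},\mathbb{R}_{disc})$, and Lemma~\ref{s2:l15} now delivers $U_{\aleph_0}^{\ast}(\mathbf{X})\subseteq C_{e_2^{\ast}}(\mathbf{X})$. With both inclusions verified, Theorem~\ref{s1:t23} produces the equivalence $e_2^{\ast}\mathbf{X}\approx e_{U_{\aleph_0}^{\ast}(\mathbf{X})}\mathbf{X}$, and combining this with Theorem~\ref{s2:t1} immediately gives the last claim, that $\beta_0\mathbf{X}$ exists iff $e_{U_{\aleph_0}^{\ast}(\mathbf{X})}\mathbf{X}$ is compact.

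The functional equality is harvested from the same analysis. The inclusion $\supseteq$ is a routine trace: given $g\in U_{\aleph_0}^{\ast}(e_2^{\ast}\mathbf{X})$ and $\varepsilon>0$, any countable clopen cover of $e_2^{\ast}\mathbf{X}$ witnessing $\osc_A(g)\leq\varepsilon$ restricts on $\mathbf{X}$ to a countable clopen cover with the analogous bound for $g\circ e_2^{\ast}$. For $\subseteq$, I will invoke the second clause of Lemma~\ref{s2:l15}: since the extensions constructed above of members of $C^{\ast}(\mathbf{X},\mathbb{R}_{disc})$ belong to $C^{\ast}(e_2^{\ast}\mathbf{X},\mathbb{R}_{disc})$, Lemma~\ref{s2:l15} identifies $U_{\aleph_0}^{\ast}(\mathbf{X})=\cl_{C_u(\mathbf{X})}C^{\ast}(\mathbf{X},\mathbb{R}_{disc})$ with the restrictions to $X$ of functions from $\cl_{C_u(e_2^{\ast}\mathbf{X})}C^{\ast}(e_2^{\ast}\mathbf{X},\mathbb{R}_{disc})$, and the latter closure equals $U_{\aleph_0}^{\ast}(e_2^{\ast}\mathbf{X})$ by Proposition~\ref{s2:p12} applied to $e_2^{\ast}\mathbf{X}$.

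The hardest point I foresee is the discrete-range refinement embedded in the middle paragraph: verifying that the continuous extension over $e_2^{\ast}\mathbf{X}$ of a function $f\in C^{\ast}(\mathbf{X},\mathbb{R}_{disc})$ remains $\mathbb{R}_{disc}$-valued. The standard $\mathbf{ZFC}$ argument via uniformly convergent sequences is unavailable here, since $C_u(\mathbf{X})$ need not be Fr\'echet--Urysohn in $\mathbf{ZF}$; instead I plan to read discreteness off the explicit presentation in the proof of Theorem~\ref{s2:t10}, in which $\tilde{f}$ is manufactured as a coordinate projection inside the product $\mathbb{R}_{disc}^{C^{\ast}(\mathbf{X},\mathbb{R}_{disc})}$, so that discreteness becomes visible by inspection rather than through convergence. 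Once this technical point is settled, the rest of the argument runs purely inside the ZF-valid framework of Lemma~\ref{s2:l15} (which itself relies on Blasco's extension theorem rather than sequential convergence).
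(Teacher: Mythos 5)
Your proposal is correct and follows essentially the same route as the paper's proof: both rest on $C(\mathbf{X},\mathbf{2})\subseteq U_{\aleph_0}^{\ast}(\mathbf{X})$, the extendability of members of $C^{\ast}(\mathbf{X},\mathbb{R}_{disc})$ over $e_2^{\ast}\mathbf{X}$ (via Theorem~\ref{s2:t10}), Proposition~\ref{s2:p12} and Lemma~\ref{s2:l15} to pass to the uniform closure, Theorem~\ref{s1:t23} for the equivalence, and Theorem~\ref{s2:t1} for the compactness criterion. You merely spell out details the paper leaves as "clear," including the observation that the extensions retain discrete (indeed finite) range.
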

\begin{proof}
	Clearly, $C(\mathbf{X}, \mathbf{2})\subseteq U_{\aleph_0}^{\ast}(\mathbf{X})$. On the other hand, it is clear that, for every $f\in C^{\ast}(\mathbf{X}, \mathbb{R}_{disc})$, the function $f\circ (e_2^{\ast})^{-1}$ is continuously extendable over $e_2^{\ast}\mathbf{X}$. All this, taken together with Theorem \ref{s1:t23} and Lemma \ref{s2:l15}, implies that the extensions $e_2^{\ast}\mathbf{X}$ and $e_{U_{\aleph_0}^{\ast}(\mathbf{X})}\mathbf{X}$ of $\mathbf{X}$ are equivalent. The equality $U_{\aleph_0}^{\ast}(\mathbf{X})=\{ f\circ e_2^{\ast}: f\in U_{\aleph_0}^{\ast}(e_2^{\ast}\mathbf{X})\}$ follows from Lemma \ref{s2:l15}. To complete the proof, it suffices to apply Theorem \ref{s2:t1}.
\end{proof}

\begin{corollary}
	\label{s2:c17}
	$[\mathbf{ZF}]$ Let $\mathbf{X}$ be a zero-dimensional $T_1$-space whose Banaschewki compactification $\beta_0\mathbf{X}$ exists. Then $\beta_0\mathbf{X}\approx e_{U_{\aleph_0}^{\ast}(\mathbf{X})}\mathbf{X}$ and 
	$$U_{\aleph_0}^{\ast}(\mathbf{X})=\{ f\upharpoonright X: f\in C(\beta_0\mathbf{X})\}.$$
	In consequence, $C(\beta_0\mathbf{X})$ consists of the continuous extensions over $\beta_0\mathbf{X}$ of functions from $U_{\aleph_0}^{\ast}(\mathbf{X})$.
\end{corollary}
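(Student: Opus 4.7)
The plan is to obtain this corollary by assembling Theorem \ref{s2:t16} with Theorem \ref{s2:t13}, since all the heavy lifting has already been done. First, because $\beta_0\mathbf{X}$ exists, Corollary \ref{s2:c2} gives $\beta_0\mathbf{X}\approx e_2^{\ast}\mathbf{X}$, and Theorem \ref{s2:t16} gives $e_2^{\ast}\mathbf{X}\approx e_{U_{\aleph_0}^{\ast}(\mathbf{X})}\mathbf{X}$. Composing these two equivalences via the transitivity of $\approx$ yields the first claim $\beta_0\mathbf{X}\approx e_{U_{\aleph_0}^{\ast}(\mathbf{X})}\mathbf{X}$.

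For the equality of sets, I would transport the formula from Theorem \ref{s2:t16}, namely $U_{\aleph_0}^{\ast}(\mathbf{X})=\{f\circ e_2^{\ast}: f\in U_{\aleph_0}^{\ast}(e_2^{\ast}\mathbf{X})\}$, along the equivalence $\beta_0\mathbf{X}\approx e_2^{\ast}\mathbf{X}$ so that (under the standard identification of $\mathbf{X}$ with its image in $\beta_0\mathbf{X}$) one obtains $U_{\aleph_0}^{\ast}(\mathbf{X})=\{f\upharpoonright X: f\in U_{\aleph_0}^{\ast}(\beta_0\mathbf{X})\}$. It then suffices to prove that $U_{\aleph_0}^{\ast}(\beta_0\mathbf{X})=C(\beta_0\mathbf{X})$, because $\beta_0\mathbf{X}$ is compact, so $C(\beta_0\mathbf{X})=C^{\ast}(\beta_0\mathbf{X})$, and the final ``in consequence'' sentence is just a reformulation of this equality using Proposition \ref{s1:p12} (uniqueness of continuous extensions into the Hausdorff space $\mathbb{R}$).

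The remaining verification is that $\beta_0\mathbf{X}$ is strongly zero-dimensional, so that Theorem \ref{s2:t13} applies to it. This holds in $\mathbf{ZF}$ for any compact Hausdorff zero-dimensional space $\mathbf{K}$ by the following routine argument: if $A,B$ are disjoint closed subsets of $\mathbf{K}$, then for each $x\in A$ one picks a clopen $U_x$ with $x\in U_x\subseteq K\setminus B$ using zero-dimensionality and the Hausdorff property; since $A$ is closed in the compact space $\mathbf{K}$, it is itself compact, so finitely many $U_{x_1},\dots,U_{x_n}$ cover $A$, and their union is a clopen set separating $A$ from $B$. In particular, this separates any two disjoint zero-sets of $\beta_0\mathbf{X}$ by a clopen set, so Theorem \ref{s2:t13} yields $U_{\aleph_0}^{\ast}(\beta_0\mathbf{X})=C^{\ast}(\beta_0\mathbf{X})=C(\beta_0\mathbf{X})$, which closes the argument.

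The only step with any content is the strong zero-dimensionality of $\beta_0\mathbf{X}$, and even there the obstacle is minor because we only use compactness of closed subsets of a compact space and the definition of zero-dimensionality. Everything else is a bookkeeping application of the equivalences and identifications set up in Theorems \ref{s2:t16} and \ref{s2:t13}, so no new construction or choice principle is required.
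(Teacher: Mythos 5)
Your proof is correct and follows essentially the same route as the paper's: both reduce everything to the strong zero-dimensionality of the compact zero-dimensional Hausdorff space $\beta_0\mathbf{X}$, apply Theorem \ref{s2:t13} to get $U_{\aleph_0}^{\ast}(\beta_0\mathbf{X})=C^{\ast}(\beta_0\mathbf{X})=C(\beta_0\mathbf{X})$ by compactness, and then conclude via Theorem \ref{s2:t16} and Corollary \ref{s2:c2}. One caution for the $\mathbf{ZF}$ context: the phrase ``for each $x\in A$ one picks a clopen $U_x$'' is an infinite simultaneous choice; replace it by covering $A$ with the family of \emph{all} clopen subsets of $K\setminus B$ (which covers $A$ by zero-dimensionality) and extracting a finite subcover by compactness of the closed set $A$, which requires no choice.
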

\begin{proof}
	Since the space $\beta_0\mathbf{X}$ is compact, Tychonoff and zero-dimensional, it is strongly zero-dimensional. It follows from Theorem \ref{s2:t13} that $C^{\ast}(\beta_0\mathbf{X})=U_{\aleph_0}^{\ast}(\beta_0\mathbf{X})$. Thus, by the compactness of $\beta_0\mathbf{X}$, we have $C(\beta_{0}\mathbf{X})=U_{\aleph_0}^{\ast}(\beta_0\mathbf{X})$. To conclude the proof, it suffices to apply Theorem \ref{s2:t16} and Corollary \ref{s2:c2}
\end{proof}

\subsection{Banaschewski compactifications via spaces of maximal ideals with hull-kernel topologies}
\label{s2.3}

For a zero-dimensional $T_1$-space $\mathbf{X}$ for which $\beta_0\mathbf{X}$ exists, let us have a look at the space $\BMax(U_{\aleph_0}(\mathbf{X}))$ and its connection with $\beta_0\mathbf{X}$. To this aim, we need some preliminary facts. We begin with the following definition:

\begin{definition}
	\label{s2:d18}
	Let $\mathbf{X}$ be a zero-dimensional $T_1$-space.
	\begin{enumerate}
		\item[(i)] For every $p\in e_{2}^{\ast} X$, let 
		$$\mathbb{A}^p=\{Z\in\mathcal{CO}_{\delta}(\mathbf{X}): p\in\cl_{e_{2}^{\ast}\mathbf{X}}(Z)\}.$$
		\item[(ii)] A $c_{\delta}$-\emph{ultrafilter} in $\mathbf{X}$ is an ultrafilter in $\mathcal{CO}_{\delta}(\mathbf{X})$. A $c_{\delta}$-\emph{filter} in $\mathbf{X}$ is a filter in $\mathcal{CO}_{\delta}(\mathbf{X})$.
		\item[(iii)] If $\mathcal{F}$ is a $c_{\delta}$-filter in $\mathbf{X}$, then
		$$I[\mathcal{F}]=\{f\in U_{\aleph_0}(\mathbf{X}): Z(f)\in\mathcal{F}\}.$$
		\item[(iv)] If $I$ is an ideal in $U_{\aleph_0}(\mathbf{X})$, then 
		$$Z[I]=\{ Z(f): f\in I\}.$$
	\end{enumerate}
\end{definition}

We omit a simple proof of the following lemma:
\begin{lemma}
	\label{s2:l19}
	$[\mathbf{ZF}]$ For every zero-dimensional $T_1$-space  $\mathbf{X}$ and arbitrary $c_{\delta}$-sets $Z, Z_1,Z_2$ in $\mathbf{X}$, the following conditions are satisfied:
	\begin{enumerate}
		\item[(i)] there exists $f\in C(\mathbf{X})$ such that $Z=Z(f)$ and $f$ is continuously extendable over $e_{2}^{\ast}\mathbf{X}$;
		\item[(ii)] if $Z_1\cap Z_2=\emptyset$, then $\cl_{e_{2}^{\ast}\mathbf{X}}(Z_1)\cap \cl_{e_{2}^{\ast}\mathbf{X}}(Z_2)=\emptyset$;
		\item[(iii)] $\cl_{e_{2}^{\ast}\mathbf{X}}(Z_1)\cap \cl_{e_{2}^{\ast}\mathbf{X}}(Z_2)= \cl_{e_{2}^{\ast}\mathbf{X}}(Z_1\cap Z_2)$.
	\end{enumerate}
\end{lemma}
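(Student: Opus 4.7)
The plan is to prove (i), (ii), and (iii) in turn: (i) by an explicit series construction, (iii) by reducing to (ii) via a clopen-neighborhood argument, and (ii) by producing a separating function in $U_{\aleph_0}^{\ast}(\mathbf{X})$.

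For (i), fix a representation $Z=\bigcap_{n\in\omega}C_n$ with $C_n\in\mathcal{CO}(\mathbf{X})$; replacing $C_n$ by $\bigcap_{k\le n}C_k$, one may assume the sequence is decreasing. Set
\[
f(x)=\sum_{n\in\omega}\frac{1-\chi_{C_n}(x)}{2^{n+1}}.
\]
Each partial sum is a step function taking finitely many values on the clopen partition induced by the $C_k$, so it lies in $C^{\ast}(\mathbf{X},\mathbb{R}_{disc})$; the tail estimate $\sum_{n>N}2^{-(n+1)}=2^{-(N+1)}$ gives uniform convergence of the series. By Proposition \ref{s2:p12}, $f\in U_{\aleph_0}^{\ast}(\mathbf{X})$, and one checks directly that $Z(f)=Z$. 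Theorem \ref{s2:t16} then supplies the required continuous extension of $f$ over $e_{2}^{\ast}\mathbf{X}$.

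For (ii), apply (i) to obtain $f_i\in U_{\aleph_0}^{\ast}(\mathbf{X})$ with $Z(f_i)=Z_i$ and continuous extensions $\tilde f_i\in C(e_{2}^{\ast}\mathbf{X})$. Using decreasing representations $Z_i=\bigcap_n C_n^{(i)}$, the clopen families $V_n=C_n^{(1)}\setminus C_n^{(2)}$ and $W_n=C_n^{(2)}\setminus C_n^{(1)}$ satisfy $V_n\cap W_m=\emptyset$ for all $n,m$ (using decreasingness), $Z_1\subseteq\bigcup_n V_n$, and $Z_2\subseteq\bigcup_n W_n$. Use this data to construct a function $g\in U_{\aleph_0}^{\ast}(\mathbf{X})$ with $g|_{Z_1}=0$ and $g|_{Z_2}=1$ by assembling it from $\mathbb{R}_{disc}$-valued step functions supported on the $V_n$ and $W_n$; Proposition \ref{s2:p12} then places $g$ in $U_{\aleph_0}^{\ast}(\mathbf{X})$, and Theorem \ref{s2:t16} produces the continuous extension $\tilde g\in C(e_{2}^{\ast}\mathbf{X})$. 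Continuity forces $\tilde g\equiv 0$ on $\cl_{e_{2}^{\ast}\mathbf{X}}(Z_1)$ and $\tilde g\equiv 1$ on $\cl_{e_{2}^{\ast}\mathbf{X}}(Z_2)$, so these closures are disjoint in the Hausdorff space $e_{2}^{\ast}\mathbf{X}$.

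For (iii), the inclusion $\supseteq$ is immediate from monotonicity of closure. For $\subseteq$, suppose $p\in\cl_{e_{2}^{\ast}\mathbf{X}}(Z_1)\cap\cl_{e_{2}^{\ast}\mathbf{X}}(Z_2)\setminus\cl_{e_{2}^{\ast}\mathbf{X}}(Z_1\cap Z_2)$. Since the sets $\widetilde{W}$ for $W\in\mathcal{CO}(\mathbf{X})$ form a clopen base of $e_{2}^{\ast}\mathbf{X}$, there is $U\in\mathcal{CO}(\mathbf{X})$ with $Z_1\cap Z_2\subseteq U$ and $p\in\widetilde{X\setminus U}$. Put $Z_i'=Z_i\cap(X\setminus U)\in\mathcal{CO}_{\delta}(\mathbf{X})$; then $Z_1'\cap Z_2'\subseteq U\cap(X\setminus U)=\emptyset$. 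Since $\widetilde{X\setminus U}$ is clopen in $e_{2}^{\ast}\mathbf{X}$, intersection with it commutes with closure, giving $p\in\cl_{e_{2}^{\ast}\mathbf{X}}(Z_1')\cap\cl_{e_{2}^{\ast}\mathbf{X}}(Z_2')$. This contradicts (ii) applied to the disjoint pair $Z_1',Z_2'$.

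The hardest step is assembling the continuous separating function $g$ in (ii): the disjoint clopen supports $V_n,W_n$ give a natural blueprint ($g=0$ on $\bigcup_n V_n$, $g=1$ on $\bigcup_n W_n$), but defining $g$ on the remainder $X\setminus(\bigcup_n V_n\cup\bigcup_n W_n)$ so that $g$ is continuous and arises as a uniform limit of step functions with finite clopen partitions—that is, $g\in U_{\aleph_0}^{\ast}(\mathbf{X})$—is the real technical content of the lemma.
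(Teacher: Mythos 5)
The paper itself omits the proof of this lemma, so there is nothing to compare against; judging your argument on its own terms, parts (i) and (iii) are correct, but part (ii) has a genuine gap — and you say so yourself. In (ii) you never actually construct the separating function $g$: you produce the disjoint clopen families $V_n=C_n^{(1)}\setminus C_n^{(2)}$ and $W_n=C_n^{(2)}\setminus C_n^{(1)}$ with $Z_1\subseteq\bigcup_n V_n$ and $Z_2\subseteq\bigcup_n W_n$, and then write ``use this data to construct $g$'' while conceding in your last paragraph that defining $g$ on $X\setminus(\bigcup_nV_n\cup\bigcup_nW_n)$ continuously is ``the real technical content.'' That concession is accurate: $\bigcup_nV_n$ and $\bigcup_nW_n$ are merely disjoint open sets (each a countable union of clopen sets), and in general such a pair admits no clopen separator and no canonical continuous $0$--$1$ function agreeing with the blueprint; the extra structure coming from $Z_i=\bigcap_nC_n^{(i)}$ must be used, and your write-up does not use it. Since (iii) is derived from (ii), the gap propagates.

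Here is how to close it, and it is simpler than the $U_{\aleph_0}^{\ast}$ route you set up. With decreasing representations $Z_1=\bigcap_nC_n$, $Z_2=\bigcap_nD_n$ and $Z_1\cap Z_2=\emptyset$, the sets $S_n=\bigcap_{k\in n}(C_k\cap D_k)\setminus(C_n\cap D_n)$ form a countable clopen \emph{partition} of $X$, because $\bigcap_{n\in\omega}(C_n\cap D_n)=Z_1\cap Z_2=\emptyset$. Put $U=\bigcup_{n\in\omega}(S_n\cap C_n)$. Then $X\setminus U=\bigcup_{n\in\omega}(S_n\setminus C_n)$, so both $U$ and its complement are countable unions of clopen sets, hence open; thus $U\in\mathcal{CO}(\mathbf{X})$. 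If $x\in Z_1$ then $x\in C_{n}$ for the unique $n$ with $x\in S_n$, so $Z_1\subseteq U$; if $x\in Z_2$ then $x\in D_n$ for all $n$, so $x\notin C_n$ for the $n$ with $x\in S_n$, whence $Z_2\cap U=\emptyset$. Since $\chi_U\in C(\mathbf{X},\mathbf{2})$ is one of the coordinates of $e^{\ast}_2$, the sets $\widetilde U$ and $\widetilde{X\setminus U}$ are disjoint clopen subsets of $e^{\ast}_2\mathbf{X}$ containing $\cl_{e^{\ast}_2\mathbf{X}}(Z_1)$ and $\cl_{e^{\ast}_2\mathbf{X}}(Z_2)$ respectively. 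This replaces your unfinished construction of $g$ and makes Proposition \ref{s2:p12} and Theorem \ref{s2:t16} unnecessary for (ii); your part (i) (the series $\sum_n 2^{-(n+1)}(1-\chi_{C_n})$ together with Proposition \ref{s2:p12} and Theorem \ref{s2:t16}) and your reduction of (iii) to (ii) via the clopen base $\{\widetilde W: W\in\mathcal{CO}(\mathbf{X})\}$ are both sound as written.
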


\begin{proposition}
	\label{s2:p20}
	$[\mathbf{ZF}]$ Let $\mathbf{X}$ be a zero-dimensional $T_1$-space. Then the following conditions are satisfied:
	\begin{enumerate}
		\item[(i)] for every $p\in e_{2}^{\ast} X$, $\mathbb{A}^p$ is a $c_{\delta}$-ultrafilter in $\mathbf{X}$;
		\item[(ii)] if $e_{2}^{\ast}\mathbf{X}$ is compact, then, for every $c_{\delta}$-ultrafilter $\mathcal{F}$ in $\mathbf{X}$, there exists a unique $p\in e_{2}^{\ast}X$ such that $\mathcal{F}=\mathbb{A}^p$.
	\end{enumerate}
\end{proposition}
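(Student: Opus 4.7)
The plan is to exploit the fact that $e_{2}^{\ast}\mathbf{X}$ is a subspace of the Cantor cube $\mathbf{2}^{C(\mathbf{X},\mathbf{2})}$, hence Hausdorff and zero-dimensional; consequently it has a base of clopen sets, and for every clopen set $V$ of $e_{2}^{\ast}\mathbf{X}$, the trace $V\cap X$ is clopen in $\mathbf{X}$ and, by density of $X$ in $e_{2}^{\ast}\mathbf{X}$, we have $V\subseteq\cl_{e_{2}^{\ast}\mathbf{X}}(V\cap X)$. Lemma \ref{s2:l19}(iii) will be the key tool translating the Boolean structure of $\mathcal{CO}_{\delta}(\mathbf{X})$ into that of $\{\cl_{e_{2}^{\ast}\mathbf{X}}(Z):Z\in\mathcal{CO}_{\delta}(\mathbf{X})\}$.

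For (i), I first check that $\mathbb{A}^{p}$ is a $c_{\delta}$-filter. It contains $X$ (since $\cl_{e_{2}^{\ast}\mathbf{X}}(X)=e_{2}^{\ast}X\ni p$); it excludes $\emptyset$ (since $\cl_{e_{2}^{\ast}\mathbf{X}}(\emptyset)=\emptyset$); it is upward closed in $\mathcal{CO}_{\delta}(\mathbf{X})$ by monotonicity of closure; and it is closed under finite intersection because $\mathcal{CO}_{\delta}(\mathbf{X})$ is (the union of two countable families witnessing $Z_{1},Z_{2}\in\mathcal{CO}_{\delta}(\mathbf{X})$ witnesses $Z_{1}\cap Z_{2}\in\mathcal{CO}_{\delta}(\mathbf{X})$, requiring no countable choice) and Lemma \ref{s2:l19}(iii) gives $\cl(Z_{1})\cap\cl(Z_{2})=\cl(Z_{1}\cap Z_{2})$. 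For maximality, take $Z\in\mathcal{CO}_{\delta}(\mathbf{X})\setminus\mathbb{A}^{p}$, so $p\notin\cl_{e_{2}^{\ast}\mathbf{X}}(Z)$; by zero-dimensionality of $e_{2}^{\ast}\mathbf{X}$, fix a clopen set $V$ of $e_{2}^{\ast}\mathbf{X}$ with $p\in V$ and $V\cap\cl_{e_{2}^{\ast}\mathbf{X}}(Z)=\emptyset$, and put $Z'=V\cap X$. Then $Z'\in\mathcal{CO}(\mathbf{X})\subseteq\mathcal{CO}_{\delta}(\mathbf{X})$; by density, $p\in V\subseteq\cl_{e_{2}^{\ast}\mathbf{X}}(Z')$, so $Z'\in\mathbb{A}^{p}$; and $Z'\cap Z\subseteq V\cap\cl_{e_{2}^{\ast}\mathbf{X}}(Z)=\emptyset$. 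Hence $\mathbb{A}^{p}\cup\{Z\}$ cannot be extended to a $c_{\delta}$-filter, so $\mathbb{A}^{p}$ is a $c_{\delta}$-ultrafilter.

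For (ii), assume $e_{2}^{\ast}\mathbf{X}$ is compact and let $\mathcal{F}$ be a $c_{\delta}$-ultrafilter in $\mathbf{X}$. The family $\{\cl_{e_{2}^{\ast}\mathbf{X}}(Z):Z\in\mathcal{F}\}$ is a family of nonempty closed subsets of $e_{2}^{\ast}\mathbf{X}$ with the finite intersection property: by Lemma \ref{s2:l19}(iii) and the filter property, $\cl(Z_{1})\cap\cl(Z_{2})=\cl(Z_{1}\cap Z_{2})\neq\emptyset$ whenever $Z_{1},Z_{2}\in\mathcal{F}$ (as $Z_{1}\cap Z_{2}\in\mathcal{F}$ and $\emptyset\notin\mathcal{F}$). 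Compactness yields $p\in\bigcap_{Z\in\mathcal{F}}\cl_{e_{2}^{\ast}\mathbf{X}}(Z)$, i.e.\ $\mathcal{F}\subseteq\mathbb{A}^{p}$; since $\mathcal{F}$ is ultra and $\mathbb{A}^{p}$ is a filter, $\mathcal{F}=\mathbb{A}^{p}$. For uniqueness, if $p\neq q$ in $e_{2}^{\ast}X$, the Hausdorff zero-dimensional structure of $e_{2}^{\ast}\mathbf{X}$ provides disjoint clopen neighborhoods $U\ni p$ and $W\ni q$; then $U\cap X\in\mathcal{CO}_{\delta}(\mathbf{X})$, the density argument gives $U\cap X\in\mathbb{A}^{p}$, while $W$ is an open neighborhood of $q$ disjoint from $U\supseteq U\cap X$, so $q\notin\cl_{e_{2}^{\ast}\mathbf{X}}(U\cap X)$, yielding $U\cap X\notin\mathbb{A}^{q}$.

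No step looks like a serious obstacle; the only delicate issue is ensuring that the trace construction $V\mapsto V\cap X$ correctly bridges the Boolean structures on $e_{2}^{\ast}\mathbf{X}$ and $\mathbf{X}$, which is controlled by the density of $X$ in $e_{2}^{\ast}\mathbf{X}$ and by Lemma \ref{s2:l19}(iii). The argument is carried out entirely within $\mathbf{ZF}$, with no appeal to a choice principle.
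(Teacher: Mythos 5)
Your proof is correct and follows essentially the same route as the paper's: Lemma \ref{s2:l19}(iii) for the filter property, a clopen neighborhood of $p$ missing $\cl_{e_{2}^{\ast}\mathbf{X}}(D)$ traced back to $X$ for maximality, and the centered family of closures plus compactness for (ii). You additionally spell out the uniqueness of $p$ in (ii) via disjoint clopen neighborhoods, a point the paper's proof leaves implicit.
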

\begin{proof}
	(i) Let $p\in e_{2}^{\ast} X$. Clearly, $\emptyset\notin \mathbb{A}^p$. Let $Z_1,Z_2\in\mathbb{A}^p$. It follows from Lemma \ref{s2:l19}(iii) that  $Z_1\cap Z_2\in \mathbb{A}^p$. It is obvious that if $Z\in\mathbb{A}^p$, $C\in\mathcal{CO}_{\delta}(\mathbf{X})$ and $Z\subseteq C$, then $C\in\mathbb{A}^p$. Hence $\mathbb{A}^{p}$ is a $c_{\delta}$-filter in $\mathbf{X}$.
	
	Suppose that $D\in\mathcal{CO}_{\delta}(\mathbf{X})$ and $D\notin\mathbb{A}^{p}$. Then $p\notin \cl_{e_{2}^{\ast}\mathbf{X}}(D)$. Thus, there exists $E\in\mathcal{CO}(e_{2^{\ast}}\mathbf{X})$ such that $p\in E$ and $E\cap D=\emptyset$, Then $E\cap X\in\mathbb{A}^{p}$ and $E\cap X\cap D=\emptyset$. Hence $\mathbb{A}^p$ is a $c_{\delta}$-ultrafilter in $\mathbf{X}$.
	
	(ii) Now, suppose that $e_{2}^{\ast}\mathbf{X}$ is compact and $\mathcal{F}$ is a $c_{\delta}$-ultrafilter in $\mathbf{X}$. Then the family $\{\cl_{e_{2}^{\ast}\mathbf{X}}(F): F\in\mathcal{F}\}$ is a centered family of closed sets of $e_{2}^{\ast}\mathbf{X}$. By the compactness of $e_{2}^{\ast}\mathbf{X}$, there exists $p\in\bigcap \{\cl_{e_{2}^{\ast}\mathbf{X}}(F): F\in\mathcal{F}\}$. Then $\mathcal{F}\subseteq\mathbb{A}^p$. Since $\mathcal{F}$ is a $c_{\delta}$-ultrafilter, we deduce from (i) that $\mathcal{F}=\mathbb{A}^p$.
\end{proof}

In what follows, given a non-empty topological space $\mathbf{X}$, for a real number $c$, we denote by $\mathbf{c}$ the constant function from $C(\mathbf{X})$ such that $\mathbf{c}[X]=\{c\}$.

\begin{proposition}
	\label{s2:p21}
	$[{\mathbf{ZF}}]$ Let $\mathbf{X}$ be a non-empty zero-dimensional $T_1$-space. Then the following conditions (a) and (b) are satisfied:
	\begin{enumerate}
		\item[(a)] For every $c_{\delta}$-filter $\mathcal{F}$ in $\mathbf{X}$, the set $I[\mathcal{F}]$ is a proper ideal of $U_{\aleph_0}(\mathbf{X})$.
		\item[(b)] $\mathbf{CMC}$ implies the following:
		\begin{enumerate}
			\item[(i)] for every $f\in U_{\aleph_0}(\mathbf{X})$, if $Z(f)=\emptyset$, then $\frac{1}{f}\in U_{\aleph_0}(\mathbf{X})$;
			\item[(ii)] for every proper ideal $I$ of $U_{\aleph_0}(\mathbf{X})$, the set $Z[I]$ is a $c_{\delta}$-filter in $\mathbf{X}$; 
			\item[(iii)] if $\mathcal{F}$ is a $c_{\delta}$-ultrafilter in $\mathbf{X}$, then $I[\mathcal{F}]$ is a maximal ideal of $U_{\aleph_0}(\mathbf{X})$;
			\item[(iv)] if  $M$ is a maximal ideal of $U_{\aleph_0}(\mathbf{X})$, then $Z[M]$ is a $c_{\delta}$-ultrafilter in $\mathbf{X}$;
			\item[(v)] if $e_{2}^{\ast}\mathbf{X}$ is compact, then, for every maximal ideal $M$ of $U_{\aleph_0}(\mathbf{X})$, there exists a unique $p\in e_{2}^{\ast}X$ such that $M=I[\mathbb{A}^p]$.
		\end{enumerate}
	\end{enumerate}
\end{proposition}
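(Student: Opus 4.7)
The plan separates part (a), which is a routine algebraic verification, from part (b), where every subitem ultimately hinges on one construction: producing, under $\mathbf{CMC}$, a function $k\in U_{\aleph_0}(\mathbf{X})$ with a prescribed $c_\delta$-set as its zero-set. Throughout I would use that $\mathcal{CO}_\delta(\mathbf{X})$ is closed under finite unions and intersections in $\mathbf{ZF}$ (by merging two countable families of clopens), that $Z(\mathbf{0})=X$ and $Z(\mathbf{1})=\emptyset$, and the algebraic identity $Z(f)\cap Z(g)=Z(f^2+g^2)$.

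For part (a), I would check $\mathbf{0}\in I[\mathcal{F}]$ (since $X\in\mathcal{F}$) and $\mathbf{1}\notin I[\mathcal{F}]$ (since $\emptyset\notin\mathcal{F}$), giving properness; then closure under addition and absorption: from $f,g\in I[\mathcal{F}]$ one reads off $Z(f)\cap Z(g)\in\mathcal{F}$, so its superset $Z(f+g)$ lies in $\mathcal{F}$ by upward closure within $\mathcal{CO}_\delta(\mathbf{X})$, and $Z(f)\subseteq Z(fh)$ for $h\in U_{\aleph_0}(\mathbf{X})$ handles the absorption step analogously. The main obstacle at this stage is verifying that $Z(f+g)$ and $Z(fh)$ themselves lie in $\mathcal{CO}_\delta(\mathbf{X})$; for arbitrary elements of $U_{\aleph_0}(\mathbf{X})$ this is Theorem \ref{s1:t21}(c)(i), which is stated under $\mathbf{CMC}$, so I expect the authors to use that input implicitly here.

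For (b)(i) I would invoke $\mathbf{CMC}$ together with Theorem \ref{s1:t21}(c)(ii) to identify $U_{\aleph_0}(\mathbf{X})$ with $A(\mathbf{X})$; then $1/f\in A(\mathbf{X})$ whenever $Z(f)=\emptyset$, since $(1/f)^{-1}[U]=f^{-1}[g^{-1}[U]]$ for the reciprocal homeomorphism $g$ of $\mathbb{R}\setminus\{0\}$, preserving the defining pullback condition. The main technical heart for the rest of (b) is an engine producing, for each $Z\in\mathcal{CO}_\delta(\mathbf{X})$, a function $k\in U_{\aleph_0}(\mathbf{X})$ with $Z(k)=Z$. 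Under $\mathbf{CMC}$ I would fix an enumeration $(A_n)_{n\in\omega}$ of a countable clopen family with $\bigcap_n A_n=Z$ and set $k=\sum_{n\in\omega}2^{-n}\chi_{X\setminus A_n}$. Each partial sum lies in $C(\mathbf{X},\mathbb{R}_{disc})\subseteq A(\mathbf{X})=U_{\aleph_0}(\mathbf{X})$ by Theorem \ref{s1:t21}(c)(ii), the series is uniformly Cauchy, and $U_{\aleph_0}(\mathbf{X})$ is closed in $C_u(\mathbf{X})$ by Remark \ref{s1:r20}, so $k\in U_{\aleph_0}(\mathbf{X})$ and inspection gives $Z(k)=Z$.

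With the engine in hand, (ii) becomes: $Z[I]\subseteq\mathcal{CO}_\delta(\mathbf{X})$, $\emptyset\notin Z[I]$ by (i), finite intersections via $Z(f^2+g^2)$, and upward closure by replacing a given superset $Z'$ of some $Z(f)$ with $fk$ where the engine supplies $k$ with $Z(k)=Z'$, so that $fk\in I$ and $Z(fk)=Z'$. For (iii), given $g\in U_{\aleph_0}(\mathbf{X})\setminus I[\mathcal{F}]$ the ultrafilter property of $\mathcal{F}$ supplies $F\in\mathcal{F}$ disjoint from $Z(g)$; the engine yields $f\in I[\mathcal{F}]$ with $Z(f)=F$, so $Z(f^2+g^2)=\emptyset$, and by (i) the element $f^2+g^2$ is a unit, forcing the ideal generated by $I[\mathcal{F}]\cup\{g\}$ to be improper. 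For (iv), if a $c_\delta$-filter $\mathcal{F}'$ properly extended $Z[M]$, I would pick $F'\in\mathcal{F}'\setminus Z[M]$, produce $k\in U_{\aleph_0}(\mathbf{X})$ with $Z(k)=F'$ from the engine; maximality of $M$ gives $1=m+kh$ with $m\in M$, so $Z(m)\cap F'\subseteq Z(1-kh)\cap Z(kh)=\emptyset$, contradicting $Z(m),F'\in\mathcal{F}'$. Finally (v) combines (iv), Proposition \ref{s2:p20}(ii), and the identity $I[Z[M]]=M$ (the containment $M\subseteq I[Z[M]]$ is immediate, and properness together with maximality of $M$ promotes it to equality), delivering both existence and uniqueness of $p$.
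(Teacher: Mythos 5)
Your proposal is correct and follows the same overall strategy as the paper's proof: part (a) is the same routine verification (the paper works with $f-g$ where you work with $f+g$, which is immaterial), and all of part (b) is driven by the same key device, namely realizing each $c_{\delta}$-set as $Z(k)$ for some $k\in U_{\aleph_0}(\mathbf{X})$ under $\mathbf{CMC}$. The paper extracts this from Theorem \ref{s1:t21}(c)(ii) without spelling out the construction (it is made explicit only later, in Proposition \ref{s6:p14}(i)); your $k=\sum_{n}2^{-n}\chi_{X\setminus A_n}$ is exactly that construction, and making it explicit is a small gain in self-containedness. The only genuine divergences are in (iii) and (iv): the paper argues by comparing filters (any proper ideal $M$ containing $I[\mathcal{F}]$ yields a $c_{\delta}$-filter $Z[M]$ containing $\mathcal{F}$, and maximality of the (ultra)filter forces equality), whereas you argue in the dual, unit-based style ($f^2+g^2$ has empty zero-set, hence is invertible by (i), so adjoining anything outside the ideal produces the whole ring); both are standard and correct, and your version of (v) via $I[Z[M]]=M$ needs only the extra one-line computation $Z[I[\mathbb{A}^q]]=\mathbb{A}^q$ to close the uniqueness argument. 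Finally, your worry about part (a) --- that concluding $Z(f+g)\in\mathcal{F}$ presupposes $Z(f+g)\in\mathcal{CO}_{\delta}(\mathbf{X})$, which for arbitrary members of $U_{\aleph_0}(\mathbf{X})$ the paper establishes only under $\mathbf{CMC}$ --- is well taken, but it applies verbatim to the paper's own proof of (a), which silently uses the same fact for $Z(f-g)$ and $Z(fh)$; so this is a subtlety of the statement as printed (harmless in practice, since (a) is only ever invoked alongside $\mathbf{CMC}$), not a gap introduced by your argument.
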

\begin{proof}
	Although, in the proof of this proposition, we use very similar arguments to relevant ones in \cite[Chapter 2]{gj}, there are essential differences as well. Therefore, let us give details for completeness.
	
	(a) Let $\mathcal{F}$ be a $c_{\delta}$-filter in $\mathbf{X}$. Let $f,g, h \in U_{\aleph_0}(\mathbf{X})$ be such that $Z(f)$ and $Z(g)$ are both in $\mathcal{F}$. Since $Z(f)\cap Z(g)\subseteq Z(f-g)$ and $Z(f)\subseteq Z(fh)$, we infer that $f-g\in I[\mathcal{F}]$ and $fh\in I[\mathcal{F}]$. Since $\emptyset\notin\mathcal{F}$, we deduce that $\mathbf{1}\notin I[\mathcal{F}]$. All this taken together implies that $I[\mathcal{F}]$ is a proper ideal of $U_{\aleph_0}(\mathbf{X})$. \medskip
	
	(b) Let us assume $\mathbf{CMC}$. Then, by Theorem \ref{s1:t21}(c)(ii), $A(\mathbf{X})=U_{\aleph_0}(\mathbf{X})$. Take any $f\in A(\mathbf{X})$ such that $Z(f)=\emptyset$. In the light of the definition of $A(\mathbf{X})$ and Theorem \ref{s1:t18}(b), one can easily check that $\frac{1}{f}\in A(\mathbf{X})$. Hence (i) holds.
	
	(ii) Let $I$ be a proper ideal of $U_{\aleph_0}(\mathbf{X})$. It follows from $\mathbf{CMC}$ and Theorem \ref{s1:t21}(c) that $Z[I]\subseteq\mathcal{CO}_{\delta}(\mathbf{X})$. We deduce from (i) that if $f\in I$, then $Z(f)\neq\emptyset$ because $I$ is a proper ideal. Hence $\emptyset\notin Z[I]$.  Let $Z_1,Z_2\in Z[I]$. There exist $f_1,f_2\in I$ such that $Z(f_1)=Z_1$ and $Z(f_2)=Z_2$. Let $g= f_1^2+f_2^2$. Then $g\in I$ and $Z(g)=Z_1\cap Z_2$. Hence $Z_1\cap Z_2\in Z[I]$. Finally, suppose that $Z\in CO_{\delta}(\mathbf{X})$ and $Z_1\subseteq Z$. Then there exists $h\in \cl_{C_{u}(\mathbf{X})}(C(\mathbf{X}, \mathbb{R}_{disc}))$ such that $Z=Z(h)$. By Theorem \ref{s1:t21}(c)(ii), $h\in U_{\aleph_0}(\mathbf{X})$. 
	Then $h\cdot f_1\in I$ and $Z(h\cdot f_1)= Z$. Hence $Z\in Z[I]$. This completes the proof of (ii).
	
	(iii) Suppose that $\mathcal{F}$ is a $c_{\delta}$-ultrafilter in $\mathbf{X}$. Then, by (a), $I[\mathcal{F}]$ is a proper ideal in $U_{\aleph_0}(\mathbf{X})$. Suppose that $M$ is a proper ideal in $U_{\aleph_0}(\mathbf{X})$ such that $I[\mathcal{F}]\subseteq M$. By (ii), $Z[M]$ is a $c_{\delta}$-filter in $\mathbf{X}$. Let $Z\in\mathcal{F}$. It follows from Theorem \ref{s1:t21}(c)(ii) that there exists $f\in U_{\aleph_0}(\mathbf{X})$ such that $Z=Z(f)$. Then $f\in I[\mathcal{F}]$, so $f\in M$. This implies that $\mathcal{F}\subseteq Z[M]$. Since $\mathcal{F}$ is a $c_{\delta}$-ultrafilter in $\mathbf{X}$, we have $\mathcal{F}= Z[M]$. We notice that if $g\in M\setminus I[\mathcal{F}]$, then $Z(g)\in Z[M]\setminus \mathcal{F}$. Hence $M=I[\mathcal{F}]$. This proves (iii).
	
	(iv) Now, let us assume that $M$ is a maximal ideal of $U_{\aleph_0}(\mathbf{X})$. Then the ideal $M$ is proper, so it follows from (ii) that $Z[M]$ is a $c_{\delta}$-filter in $\mathbf{X}$. Let $\mathcal{U}$ be any $c_{\delta}$-filter in $\mathbf{X}$ such that $Z[M]\subseteq \mathcal{U}$. Then $M\subseteq I[\mathcal{U}]$ and, by (iii), $I[\mathcal{U}]$ is a proper ideal of $U_{\aleph_0}(\mathbf{X})$. Hence, by the maximality of $M$, $M=I[\mathcal{U}]$. Suppose that $Z\in\mathcal{U}$. There exists $f\in U_{\aleph_0}(\mathbf{X})$ such that $Z=Z(f)$. Then $f\in I[\mathcal{U}]$, so $Z\in Z[M]$. Hence $Z[M]=\mathcal{U}$.
	
	(v) Suppose that $e_{2}^{\ast}\mathbf{X}$ is compact, and $M$ is a maximal ideal of $U_{\aleph_0}(\mathbf{X})$. By (iv), $Z[M]$ is a $c_{\delta}$-ultrafilter in $\mathbf{X}$. By Proposition \ref{s2:p20}(iv), there exists a unique $p\in e_{2}^{\ast}X$ such that $Z[M]=\mathbb{A}^p$. One can check that $M=I[\mathbb{A}^p]$ and if  $p\in e_{2}^{\ast}X$ is such that $M=I[\mathbb{A}^q]$, then $p=q$ because $\mathbb{A}^p=\mathbb{A}^q$.
\end{proof}

\begin{theorem}
	\label{s2:t22}
	$[\mathbf{ZF}]$ Let $\mathbf{X}$ be a non-empty zero-dimensional $T_1$-space for which the Banaschewski compactification $\beta_0\mathbf{X}$ exists. Then $\mathbf{CMC}$ implies that the topological spaces $\BMax(U_{\aleph_0}(\mathbf{X}))$ and $\beta_0\mathbf{X}$ are homeomorphic. 
\end{theorem}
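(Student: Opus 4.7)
The plan is to construct an explicit map $\Phi:\beta_0\mathbf{X}\to\BMax(U_{\aleph_0}(\mathbf{X}))$ by the rule $\Phi(p)=I[\mathbb{A}^p]$, where I identify $\beta_0\mathbf{X}$ with $e_2^{\ast}\mathbf{X}$ via Corollary \ref{s2:c2}. Since $\beta_0\mathbf{X}$ exists, Theorem \ref{s2:t1} guarantees that $e_2^{\ast}\mathbf{X}$ is compact, so under $\mathbf{CMC}$ the map $\Phi$ is a well-defined bijection onto $\Max(U_{\aleph_0}(\mathbf{X}))$ by Proposition \ref{s2:p21}(b)(v); on the dense subspace $X$ it agrees with the canonical embedding $x\mapsto\{f\in U_{\aleph_0}(\mathbf{X}):f(x)=0\}$.

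Continuity of $\Phi$ is then the easy half. A subbasic open set of $\BMax(U_{\aleph_0}(\mathbf{X}))$ has the form $V_f=\{M\in\Max(U_{\aleph_0}(\mathbf{X})):f\notin M\}$ for some $f\in U_{\aleph_0}(\mathbf{X})$, and unwinding the definition of $\mathbb{A}^p$ gives
\[
\Phi^{-1}[V_f]=\{p\in\beta_0\mathbf{X}:Z(f)\notin\mathbb{A}^p\}=\beta_0\mathbf{X}\setminus\cl_{\beta_0\mathbf{X}}(Z(f)),
\]
which is open in $\beta_0\mathbf{X}$.

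The main step, and the step where the real work lies, is to verify that $\BMax(U_{\aleph_0}(\mathbf{X}))$ is Hausdorff; for then the continuous bijection $\Phi$ from the compact space $\beta_0\mathbf{X}$ onto the Hausdorff space $\BMax(U_{\aleph_0}(\mathbf{X}))$ is automatically a homeomorphism by the usual $\mathbf{ZF}$-argument (continuous images of compact sets are compact, and compact sets in Hausdorff spaces are closed). To separate distinct maximal ideals $M_1,M_2$, let $p_1,p_2\in\beta_0\mathbf{X}$ be the corresponding points under $\Phi$; by the zero-dimensionality and Hausdorffness of $\beta_0\mathbf{X}$, pick a clopen set $V\subseteq\beta_0\mathbf{X}$ with $p_1\in V$ and $p_2\notin V$, put $V_1=V\cap X$, $V_2=(\beta_0\mathbf{X}\setminus V)\cap X$, and consider the characteristic functions $f_1=\chi_{V_2}$ and $f_2=\chi_{V_1}$, which belong to $C(\mathbf{X},\mathbb{R}_{disc})\subseteq U_{\aleph_0}(\mathbf{X})$. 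Using the density of $X$ in $\beta_0\mathbf{X}$ and the fact that $V$ is clopen, one has $\cl_{\beta_0\mathbf{X}}(V_1)=V$ and $\cl_{\beta_0\mathbf{X}}(V_2)=\beta_0\mathbf{X}\setminus V$, from which $f_i\in M_i\setminus M_{3-i}$ for $i=1,2$. Since $f_1f_2=\mathbf{0}$ and maximal ideals in a commutative ring with identity are prime, no maximal ideal can simultaneously avoid both $f_1$ and $f_2$, so $V_{f_1}$ and $V_{f_2}$ are disjoint open neighbourhoods of $M_2$ and $M_1$, respectively.

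The role of $\mathbf{CMC}$ is confined to securing the maximal-ideal / $c_\delta$-ultrafilter correspondence in Proposition \ref{s2:p21}; everything after that is carried out in pure $\mathbf{ZF}$, so I expect the main obstacle to be purely bookkeeping around the identifications $\beta_0\mathbf{X}\approx e_2^{\ast}\mathbf{X}$ and the verification that the natural map $p\mapsto I[\mathbb{A}^p]$ does indeed agree with the hull-kernel topology on both sides.
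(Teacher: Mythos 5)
Your proposal is correct and follows essentially the same route as the paper: the same map $p\mapsto I[\mathbb{A}^p]$ on $\beta_0\mathbf{X}\approx e_2^{\ast}\mathbf{X}$, bijectivity from Proposition \ref{s2:p21}(b)(v) under $\mathbf{CMC}$, and continuity via the identity $\Phi^{-1}[V_f]=\beta_0X\setminus\cl_{\beta_0\mathbf{X}}(Z(f))$. The one point where you go beyond the paper is that you explicitly verify the Hausdorffness of $\BMax(U_{\aleph_0}(\mathbf{X}))$ (via the orthogonal characteristic functions $f_1f_2=\mathbf{0}$ and primeness of maximal ideals) before invoking the compact-to-Hausdorff argument, whereas the paper simply asserts that continuity of the bijection suffices; your added step is sound and closes that small gap.
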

\begin{proof}
	Since $\beta_0\mathbf{X}$ exists. it follows from Corollary 2.2 that $e_{2}^{\ast}\mathbf{X}$ is a Hausdorff compactification of $\mathbf{X}$ equivalent to $\beta_0\mathbf{X}$. Therefore, we may assume that $\beta_0\mathbf{X}=e_{2}^{\ast}\mathbf{X}$. Assuming $\mathbf{CMC}$, we infer from Proposition \ref{s2:p21} that we can define a mapping $\psi: \beta_0\mathbf{X}\to\text{Max}(U_{\aleph_0}(\mathbf{X}))$ as follows:
	$$(\forall p\in\beta_0\mathbf{X}) \psi(p)=I[\mathbb{A}^p].$$
	
	It follows from Proposition \ref{s2:p21} that $\psi$ is a bijection. To show that $\psi$ is a homeomorphism of $\beta_0\mathbf{X}$ onto $\BMax(U_{\aleph_0}(\mathbf{X}))$, it suffices to prove that $\psi$ is continuous. To this aim, we fix $f\in U_{\aleph_0}(\mathbf{X})$ and consider the basic open set $D(f)=\{M\in\Max(U_{\aleph_0}(\mathbf{X})): f\notin M\}$. We notice that $\psi^{-1}[D(f)]=\beta_0 X\setminus\cl_{\beta_0\mathbf{X}}Z(f)$. This shows that $\psi$ is continuous. 
\end{proof}

\begin{remark}
	\label{s2:r23}
	Regarding Theorem \ref{s2:t22}, we can say more about it. Namely, let us assume $\mathbf{CMC}$ and suppose that $\mathbf{X}$ is a non-empty zero-dimensional $T_1$-space whose Banaschewski compactification $\beta_0\mathbf{X}$ exists. Let $\psi$ be the mapping defined in the proof of Theorem \ref{s2:t22}. We notice that, for every $p\in X$, $\psi(p)=\{f\in U_{\aleph_0}(\mathbf{X}): p\in Z(f)\}$, so for the mapping $\phi=\psi\upharpoonright X$, the pair $\langle \BMax(U_{\aleph_0}(\mathbf{X})), \phi\rangle$ is a Hausdorff compactification of $\mathbf{X}$ equivalent to $\beta_0\mathbf{X}$.
\end{remark}

We do not know if $\mathbf{CMC}$ can be omitted in Theorem \ref{s2:t22}, however,  if we replace $U_{\aleph_0}(\mathbf{X})$ with $U_{\aleph_0}^{\ast}(\mathbf{X})$, we get the following theorem of $\mathbf{ZF}$:

\begin{theorem}
	\label{s2:t24}
	$[\mathbf{ZF}]$
	Let $\mathbf{X}$ be a non-empty zero-dimensional $T_1$-space. Then the following conditions are equivalent:
	\begin{enumerate}
		\item[(i)] the Banaschewski compactification $\beta_0\mathbf{X}$ of $\mathbf{X}$ exists;
		\item[(ii)] the space $\BMax(U_{\aleph_0}^{\ast}(\mathbf{X}))$ is compact.
	\end{enumerate}
	Furthermore, if conditions (i)--(ii) are satisfied, then $\BMax(U_{\aleph_0}^{\ast}(\mathbf{X}))$ is homeomorphic with $\beta_0\mathbf{X}$.
\end{theorem}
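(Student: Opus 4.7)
The plan is to establish the theorem by constructing a canonical homeomorphism between $\BMax(U_{\aleph_0}^{\ast}(\mathbf{X}))$ and the Wallman extension $\mathcal{W}(X, \mathcal{CO}(\mathbf{X}))$, in direct analogy with the correspondence in Theorem \ref{s1:t34}(i), and then combining this with Theorem \ref{s2:t1} and Corollary \ref{s2:c2}: Theorem \ref{s2:t1} already gives that $\beta_0\mathbf{X}$ exists iff $\mathcal{W}(X, \mathcal{CO}(\mathbf{X}))$ is compact, and Corollary \ref{s2:c2} supplies the equivalence $\mathcal{W}(X, \mathcal{CO}(\mathbf{X})) \approx \beta_0\mathbf{X}$ in that case.

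To each clopen ultrafilter $p$ I would associate a ``limit along $p$'' ring homomorphism $\mathrm{ev}_p \colon U_{\aleph_0}^{\ast}(\mathbf{X}) \to \mathbb{R}$ via Proposition \ref{s2:p11}: for $f \in U_{\aleph_0}^{\ast}(\mathbf{X})$ and each $n$, fix a finite clopen partition $\mathcal{A}_n$ with $\osc_A(f) \leq 1/n$ for every $A \in \mathcal{A}_n$. Because $\mathcal{A}_n$ is finite and $p$ is an ultrafilter, there is a unique $A_n \in \mathcal{A}_n \cap p$; no choice principle is invoked. Setting $r_n = \inf f[A_n]$, the estimate $|r_n - r_m| \leq 1/n + 1/m$ (obtained by picking any point of the non-empty clopen $A_n \cap A_m \in p$) shows $(r_n)$ is Cauchy, and I define $\mathrm{ev}_p(f)$ as its limit. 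A direct check using common refinements of partitions shows $\mathrm{ev}_p$ is a ring homomorphism, surjective by taking constants, so $I_p := \ker(\mathrm{ev}_p)$ is maximal; put $\psi(p) = I_p$.

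Next I would show $\psi \colon \mathcal{W}(X, \mathcal{CO}(\mathbf{X})) \to \BMax(U_{\aleph_0}^{\ast}(\mathbf{X}))$ is a bijection. Injectivity is immediate from $A \in p \setminus q$ giving $\chi_A \in I_q \setminus I_p$. For surjectivity, to a maximal ideal $M$ attach $p_M = \{A \in \mathcal{CO}(\mathbf{X}) : \chi_A \notin M\}$, which is a clopen ultrafilter because $M$ is prime and $\chi_A (1-\chi_A) = 0$. The identity $\psi(p_M) = M$ reduces by maximality to the inclusion $M \subseteq I_{p_M}$, which is the delicate step: if $f \in M$ but $\mathrm{ev}_{p_M}(f) \neq 0$, then on a sufficiently fine partition the clopen union $B$ of partition pieces where $|f|$ is bounded below by a positive constant belongs to $p_M$. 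The function $h$ equal to $1/f$ on $B$ and to $0$ on $X \setminus B$ is continuous and bounded, and membership in $U_{\aleph_0}^{\ast}(\mathbf{X})$ follows from Proposition \ref{s2:p11} by refining partitions where $f$ varies little on $B$. But then $fh = \chi_B \in M$, contradicting $\chi_B \notin M$.

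For the topological part, I would compute $\psi([A]) = D(\chi_A)$ where $D(\chi_A) = \{M : \chi_A \notin M\}$, so $\psi$ carries the clopen base $\{[A] : A \in \mathcal{CO}(\mathbf{X})\}$ onto the clopen family $\{D(\chi_A) : A \in \mathcal{CO}(\mathbf{X})\}$. Continuity of $\psi$ at a general basic open $D(f)$ follows because whenever $\mathrm{ev}_p(f) \neq 0$ one can pick $n$ so that $|f| > |\mathrm{ev}_p(f)|/3$ on $A_n$, giving $p \in [A_n] \subseteq \psi^{-1}(D(f))$; continuity of $\psi^{-1}$ is automatic from $\psi([A]) = D(\chi_A)$ being open in $\BMax(U_{\aleph_0}^{\ast}(\mathbf{X}))$. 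Then compactness of either side is equivalent to compactness of the other, and Theorem \ref{s2:t1} together with Corollary \ref{s2:c2} produce both the biconditional and the final homeomorphism $\BMax(U_{\aleph_0}^{\ast}(\mathbf{X})) \approx \beta_0\mathbf{X}$. The main obstacle is the inclusion $M \subseteq I_{p_M}$, specifically the construction of the inverse $1/f$ on the clopen set $B$ and the verification that the resulting $h$ lies in $U_{\aleph_0}^{\ast}(\mathbf{X})$.
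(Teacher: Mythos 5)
Your route is genuinely different from the paper's, and in outline it works. The paper proves (i)$\Rightarrow$(ii) by quoting Corollary \ref{s2:c17} (the rings $C(\beta_0\mathbf{X})$ and $U_{\aleph_0}^{\ast}(\mathbf{X})$ are isomorphic) together with Theorem \ref{s1:t34} (a compact Tychonoff space is homeomorphic to the maximal-ideal space of its ring of continuous functions), and proves (ii)$\Rightarrow$(i) by exhibiting $\langle \BMax(U_{\aleph_0}^{\ast}(\mathbf{X})), \psi^{\ast}\rangle$ directly as a Hausdorff compactification of $\mathbf{X}$ over which every $f\in U_{\aleph_0}^{\ast}(\mathbf{X})$ extends (via Blasco's Extension Theorem), then invoking Theorems \ref{s1:t31} and \ref{s2:t16}. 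You instead build an \emph{unconditional} homeomorphism $\mathcal{W}(X,\mathcal{CO}(\mathbf{X}))\to\BMax(U_{\aleph_0}^{\ast}(\mathbf{X}))$ -- a Gelfand-type duality between clopen ultrafilters and maximal ideals of $U_{\aleph_0}^{\ast}(\mathbf{X})$, parallel to Theorem \ref{s1:t34}(i) for $z$-ultrafilters and $C(\mathbf{X})$ -- and then transfer everything through Theorem \ref{s2:t1} and Corollary \ref{s2:c2}. What your approach buys is precisely this unconditional correspondence, valid even when neither space is compact; what it costs is the analytic work of constructing $\mathrm{ev}_p$ and verifying $M\subseteq I_{p_M}$, which you handle correctly (the inverse $h$ of $f$ on the clopen set $B$ does lie in $U_{\aleph_0}^{\ast}(\mathbf{X})$ by the oscillation estimate $\osc_A(1/f)\leq \osc_A(f)\cdot(2/|c|)^2$ on pieces $A\subseteq B$, via Proposition \ref{s2:p11}).

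One point needs repair in the $\mathbf{ZF}$ setting. Your definition of $\mathrm{ev}_p(f)$ begins by fixing, \emph{for each} $n$, a finite clopen partition $\mathcal{A}_n$ with oscillation at most $1/n$ on its pieces. That is a simultaneous choice from a denumerable family of non-empty sets of partitions, i.e.\ an instance of $\mathbf{CAC}$, which is exactly the kind of principle this paper refuses to assume; the choice-free step you point to (selecting the unique $A_n\in\mathcal{A}_n\cap p$) comes only \emph{after} the partitions have been chosen. The gap is harmless because the limit is definable without any choice: set $\mathrm{ev}_p(f)=\inf\{\sup f[A]: A\in p\}$. For any single $\varepsilon>0$ one instantiates one partition $\mathcal{A}$ with $\osc_A(f)\leq\varepsilon$ for all $A\in\mathcal{A}$ and its unique member $A_0\in p$, and checks that this infimum lies within $\varepsilon$ of $\sup f[A_0]$ (using that every $A\in p$ meets $A_0$); all subsequent estimates -- additivity, multiplicativity, the lower bound $|f|\geq|c|/2$ on $A_0$, and the continuity computation for $\psi^{-1}[D(f)]$ -- are statements quantified over $\varepsilon$ and require only finitely many existential instantiations each, so they go through in $\mathbf{ZF}$. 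With that adjustment your proof is complete.
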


\begin{proof}
	Suppose that (i) holds. Then the space $\beta_0\mathbf{X}$ is compact, Tychonoff and zero-dimensional, so it is strongly zero-dimensional. By Theorem \ref{s2:t13}, $C^{\ast}(\beta_0\mathbf{X})=U_{\aleph_0}^{\ast}(\beta_0\mathbf{X})$. Since $\beta_0\mathbf{X}$ is a compact Tychonoff space, we know from Theorem \ref{s1:t34} that the spaces $\beta_0\mathbf{X}$ and $\BMax(C(\beta_0\mathbf{X}))$ are homeomorphic. By Corollary \ref{s2:c17}, the rings $C(\beta_0\mathbf{X})$ and $U_{\aleph_0}^{\ast}(\mathbf{X})$ are isomorphic. This implies that the spaces $\BMax( C(\beta_0\mathbf{X}))$ and $\BMax(U_{\aleph_0}^{\ast}(\mathbf{X}))$ are homeomorphic. In consequence, the spaces $\beta_0\mathbf{X}$ and $\BMax(U_{\aleph_0}^{\ast}(\mathbf{X}))$ are homeomorphic, so (i) implies (ii). 
	
	Now, suppose that (ii) is satisfied. Then, by assigning to $p\in X$ the ideal $\psi^{\ast}(p)=\{f\in U_{\aleph_0}^{\ast}(\mathbf{X}): p\in Z(f)\}$, we obtain a compactification $\langle \BMax(U_{\aleph_0}^{\ast}(\mathbf{X})), \psi^{\ast}\rangle$ of $\mathbf{X}$. Let us consider any function $f\in U_{\aleph_0}^{\ast}(\mathbf{X})$ and show that $f\circ \psi^{\ast}$ is continuously extendable over $\BMax(U_{\aleph_0}^{\ast}(\mathbf{X}))$. To this aim, we fix any real numbers $a,b$ with $a<b$. Let $Z_a=\{x\in X: f(x)\leq a\}$ and $Z_b=\{x\in X: f(x)\geq b\}$. We are going to show that the sets $\psi^{\ast}[Z_a]$ and $\psi^{\ast}[Z_b]$ have disjoint closures in $\BMax(U_{\aleph_0}^{\ast}(\mathbf{X}))$. 
	
	Suppose that $M_0\in\cl(\psi^{\ast}[Z_a])\cap \cl(\psi^{\ast}[Z_b])$ where the closures are taken in the space $\BMax(U_{\aleph_0}^{\ast}(\mathbf{X}))$. Let $g:\mathbb{R}\to [0,  1]$ be the function defined as follows: 
	$$
	g(t)=\begin{cases} 0 &\text{ if } t\in (-\infty, a);\\
		\frac{t-a}{b-a} &\text{ if } t\in [a, b];\\
		1 &\text{ if } t\in (b, +\infty).
	\end{cases}
	$$
	Put $f_0=g\circ f$ and $f_1= \mathbf{1}-f_0$. Then $f_0, f_1\in U_{\aleph_0}^{\ast}(\mathbf{X})$. For every $i\in\{0, 1\}$, the set $V_i=\{M\in\BMax(U_{\aleph_0}^{\ast}(\mathbf{X})): f_0\notin M\}$ is open in $\BMax(U_{\aleph_0}^{\ast}(\mathbf{X}))$. Since $Z_a\subseteq Z(f_0)$ and $Z_b\subseteq Z(f_1)$, we have  $V_0\cap \psi^{\ast}[Z_a]=\emptyset$ and $V_1\cap\psi^{\ast}[Z_b]=\emptyset$. This implies that $M_0\notin V_0\cup V_1$, so $f_0, f_1\in M_0$. In consequence, $\mathbf{1}=f_0+f_1\in M_0$, which is impossible. The contradiction obtained shows that the sets $\psi^{\ast}[Z_a]$ and $\psi^{\ast}[Z_b]$ have disjoint closures in $\BMax(U_{\aleph_0}^{\ast}(\mathbf{X}))$. It follows from Blasco's Extension Theorem (see Theorem \ref{s1:t11}) that $U_{\aleph_0}^{\ast}(\mathbf{X})\subseteq C_{\psi^{\ast}}(\mathbf{X})$. By Theorems \ref{s1:t31} and \ref{s2:t16}, $\beta_0\mathbf{X}$ exists. Hence (ii) implies (i).
\end{proof}

\begin{remark}
	\label{s2:r25}
	Let $\mathbf{X}$ be a non-empty Tychonoff space. For $\mathcal{F}\in\mathcal{E}^{\ast}(\mathbf{X})$, we define:
	$$C_{\mathcal{F}}(\mathbf{X})=\{f\circ e_{\mathcal{F}}: f\in C^{\ast}(e_{\mathcal{F}}\mathbf{X})\};$$
	$$(\forall x\in X) h_{\mathcal{F}}(x)=\{f\in C_{\mathcal{F}}(\mathbf{X}): f(x)=0\}.$$
	Arguing similarly to the proof of Theorem \ref{s2:t24}, one can show that it holds in $\mathbf{ZF}$ that $e_{\mathcal{F}}\mathbf{X}$ is compact if and only if $\BMax(C_{\mathcal{F}}(\mathbf{X}))$ is compact; moreover, if $e_{\mathcal{F}}\mathbf{X}$ is compact, then the compactifications $e_{\mathcal{F}}\mathbf{X}$ and $\langle \BMax(C_{\mathcal{F}}(\mathbf{X})), h_{\mathcal{F}}\rangle$ of $\mathbf{X}$ are equivalent. In particular, it holds in $\mathbf{ZF}$ that 
	$\beta_T\mathbf{X}$ exists if and only if the space $\BMax(C^{\ast}(\mathbf{X}))$ is compact; moreover, if $\beta_T\mathbf{X}$ exists, then $\langle \BMax(C^{\ast}(\mathbf{X})), h_{C^{\ast}(\mathbf{X})}\rangle$ is a Hausdorff compactification of $\mathbf{X}$ equivalent to $\beta_T\mathbf{X}$. 
\end{remark}

\begin{remark}
	\label{s2:r26}
	In much the same way, as in the proof that (ii) implies (i) in Theorem \ref{s2:t24}, we can show in $\mathbf{ZF}$ that if $\mathbf{X}$ is a non-empty zero-dimensional $T_1$-space such that $\BMax(U_{\aleph_0}(\mathbf{X}))$ is compact, then $\beta_0\mathbf{X}$ exists.
\end{remark}

Theorem \ref{s2:t24} and Remark \ref{s2:r25} allow us to formulate the following theorem which extends the list of equivalent conditions given in Theorems \ref{s1:t30} and \ref{s2:t4}, and which shows that, in Theorem \ref{s1:t35}, $\BMax(C(\mathbf{X}))$ can be replaced with $\BMax(C^{\ast}(\mathbf{X}))$.

\begin{theorem}
	\label{s2:t27}
	$[\mathbf{ZF}]$
	The following conditions are all equivalent:
	\begin{enumerate}
		\item[(i)] $\mathbf{BPI}$;
		\item[(ii)] for every non-empty Tychonoff space $\mathbf{X}$, the space $\BMax(C^{\ast}(\mathbf{X}))$ is compact;
		\item[(iii)]for every infinite set $J$, the space $\BMax(C^{\ast}(\mathbf{2}^J))$ is compact;
		\item[(iv)] for every infinite set $J$, the space $\BMax(U_{\aleph_0}^{\ast}(\mathbf{2}^J))$ is compact;
		\item[(v)] for every non-empty zero-dimensional $T_1$-space $\mathbf{X}$,  $\BMax(U_{\aleph_0}^{\ast}(\mathbf{X}))$ is a compact space.
	\end{enumerate}
\end{theorem}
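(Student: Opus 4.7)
The plan is to establish the cycle (i)$\Rightarrow$(ii)$\Rightarrow$(iii)$\Rightarrow$(i) together with (i)$\Rightarrow$(v)$\Rightarrow$(iv)$\Rightarrow$(i), so that all five conditions become equivalent. Each implication is essentially a direct translation through the earlier characterizations already available in the paper, so no new machinery is needed; the proof is bookkeeping.

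First I would handle the top cycle. For (i)$\Rightarrow$(ii), apply Theorem \ref{s1:t30} to produce $\beta_T\mathbf{X}$ for every non-empty Tychonoff $\mathbf{X}$, and then quote Remark \ref{s2:r25} to conclude that $\BMax(C^{\ast}(\mathbf{X}))$ is compact. The implication (ii)$\Rightarrow$(iii) is immediate because every Cantor cube $\mathbf{2}^J$ is a non-empty Tychonoff space. For (iii)$\Rightarrow$(i), fix an infinite set $J$; by Remark \ref{s2:r25} the compactness of $\BMax(C^{\ast}(\mathbf{2}^J))$ yields the existence of $\beta_T\mathbf{2}^J$, and then Theorem \ref{s2:t5} forces $\mathbf{2}^J$ itself to be compact. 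Since this holds for every infinite $J$, Remark \ref{s1:r14}(iii) delivers $\mathbf{BPI}$.

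Next I would close the bottom cycle. For (i)$\Rightarrow$(v), apply Theorem \ref{s2:t4} to get $\beta_0\mathbf{X}$ for every non-empty zero-dimensional $T_1$-space $\mathbf{X}$, and then Theorem \ref{s2:t24} turns this into compactness of $\BMax(U_{\aleph_0}^{\ast}(\mathbf{X}))$. The step (v)$\Rightarrow$(iv) is trivial since each $\mathbf{2}^J$ is a non-empty zero-dimensional $T_1$-space. Finally, for (iv)$\Rightarrow$(i), fix an infinite set $J$; Theorem \ref{s2:t24} converts the compactness of $\BMax(U_{\aleph_0}^{\ast}(\mathbf{2}^J))$ into the existence of $\beta_0\mathbf{2}^J$, and Theorem \ref{s2:t4}((iii)$\Rightarrow$(i)) then yields $\mathbf{BPI}$.

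There is no serious obstacle here: the whole argument is a diagram chase through Theorems \ref{s1:t30}, \ref{s2:t4}, \ref{s2:t5}, \ref{s2:t24} and Remark \ref{s2:r25}. The only point that requires a moment's care is the passage from ``$\beta_T\mathbf{2}^J$ exists'' (or ``$\beta_0\mathbf{2}^J$ exists'') back to ``$\mathbf{2}^J$ is compact'', where one must invoke Theorem \ref{s2:t5} rather than attempt a direct construction; once that is cited, Remark \ref{s1:r14}(iii) gives $\mathbf{BPI}$ with no further work.
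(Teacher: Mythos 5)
Your proposal is correct and follows exactly the route the paper intends: the paper states Theorem \ref{s2:t27} without a written proof, remarking only that it follows from Theorem \ref{s2:t24} and Remark \ref{s2:r25} together with the equivalences in Theorems \ref{s1:t30} and \ref{s2:t4}, and your diagram chase through those results (plus Theorem \ref{s2:t5} and Remark \ref{s1:r14}(iii) to return from the existence of $\beta_T\mathbf{2}^J$ or $\beta_0\mathbf{2}^J$ to $\mathbf{BPI}$) is precisely that argument made explicit.
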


In the light of Theorem \ref{s2:t22} and Remark \ref{s2:r26}, we can write down the following corollary:

\begin{corollary}
	\label{s2:c28}
	$[\mathbf{ZF+CMC}]$  For every zero-dimensional $T_1$-space $\mathbf{X}$, it holds that $\beta_0\mathbf{X}$ exists if and only if $\BMax(U_{\aleph_0}(\mathbf{X}))$ is compact.
\end{corollary}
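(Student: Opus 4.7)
The plan is essentially to assemble the two preceding results, since Theorem \ref{s2:t22} handles one direction and Remark \ref{s2:r26} handles the other. First, I would dispose of the trivial case where $\mathbf{X}=\emptyset$ separately (or observe that the corollary is stated for zero-dimensional $T_1$-spaces and the non-empty case is the substantive one, matching the hypotheses of Theorem \ref{s2:t22} and Remark \ref{s2:r26}).

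For the forward implication, assume $\beta_0\mathbf{X}$ exists. Since we are working in $\mathbf{ZF+CMC}$, Theorem \ref{s2:t22} applies and yields a homeomorphism between $\BMax(U_{\aleph_0}(\mathbf{X}))$ and $\beta_0\mathbf{X}$. Because $\beta_0\mathbf{X}$, as a Hausdorff compactification, is by definition compact, so is $\BMax(U_{\aleph_0}(\mathbf{X}))$. This step is a straightforward appeal to Theorem \ref{s2:t22} and is where the hypothesis $\mathbf{CMC}$ is actually needed.

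For the reverse implication, assume $\BMax(U_{\aleph_0}(\mathbf{X}))$ is compact. Then Remark \ref{s2:r26} — which is asserted in plain $\mathbf{ZF}$, with no appeal to $\mathbf{CMC}$ — immediately gives the existence of $\beta_0\mathbf{X}$. So this direction requires no fresh argument at all; one is simply quoting a ready-made consequence of the ``(ii) implies (i)'' part of Theorem \ref{s2:t24}.

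Since both halves are already in place, there is no real obstacle; the only subtlety worth flagging in the write-up is the asymmetry of the two halves with respect to $\mathbf{CMC}$. The forward direction genuinely uses $\mathbf{CMC}$ (through Theorem \ref{s2:t22}, which in turn relies on Proposition \ref{s2:p21}(b) and thus on $A(\mathbf{X})=U_{\aleph_0}(\mathbf{X})$), whereas the reverse direction is pure $\mathbf{ZF}$; hence the equivalence is a theorem of $\mathbf{ZF+CMC}$. Accordingly, the final write-up can be a two-sentence proof: ``The forward implication follows from Theorem \ref{s2:t22}; the reverse implication follows from Remark \ref{s2:r26}.''
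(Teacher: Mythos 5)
Your proposal is correct and matches the paper's own derivation exactly: the paper introduces the corollary with the words ``In the light of Theorem \ref{s2:t22} and Remark \ref{s2:r26}, we can write down the following corollary,'' which is precisely your two-sentence argument (forward direction via Theorem \ref{s2:t22} under $\mathbf{CMC}$, reverse direction via Remark \ref{s2:r26} in plain $\mathbf{ZF}$). Your extra remarks on the empty case and on the asymmetry of the two halves with respect to $\mathbf{CMC}$ are accurate but not needed beyond what the paper already does.
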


\begin{theorem}
	\label{s2:t29}
	$[\mathbf{ZF}]$
	Let $\mathbf{X}$ be a non-empty zero-dimensional $T_1$-space for which the Banaschewski compactification $\beta_0\mathbf{X}$ exists. Let $\mathbf{Y}$ be a non-empty Tychonoff space. Then the following conditions are satisfied:
	\begin{enumerate}
		\item[(i)] if the rings $U_{\aleph_0}^{\ast}(\mathbf{X})$ and $C(\mathbf{Y})$ are isomorphic, then $\beta_T\mathbf{Y}$  exists and, moreover, the spaces $\beta_0\mathbf{X}$ and $\beta_T\mathbf{Y}$ are homeomorphic;
		\item[(ii)] $\mathbf{CMC}$ implies that if the rings $U_{\aleph_0}(\mathbf{X})$ and $C(\mathbf{Y})$ are isomorphic, then $\beta_T\mathbf{Y}$ exists and, moreover, the spaces  $\beta_0\mathbf{X}$ and $\beta_T\mathbf{Y}$ are homeomorphic.
	\end{enumerate}
\end{theorem}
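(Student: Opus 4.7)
The plan is to reduce both parts to a single observation about hull-kernel spaces of isomorphic rings, and then to invoke the characterizations of $\beta_0\mathbf{X}$ and $\beta_T\mathbf{Y}$ as spaces of maximal ideals already recorded above. The first step I would write out (with a short $\mathbf{ZF}$-proof) is the routine naturality fact: if $\Phi:R\to S$ is an isomorphism of commutative rings with $1$, then the assignment $\Phi^{\ast}(N)=\Phi^{-1}[N]$ defines a homeomorphism $\Phi^{\ast}:\BMax(S)\to\BMax(R)$. Bijectivity is immediate because $\Phi$ is a ring isomorphism; continuity follows from the identity $(\Phi^{\ast})^{-1}[\{M\in\Max(R):r\notin M\}]=\{N\in\Max(S):\Phi(r)\notin N\}$, and the symmetric computation with $\Phi^{-1}$ gives continuity of the inverse.

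For (i), since $\beta_0\mathbf{X}$ exists, Theorem~\ref{s2:t24} provides a homeomorphism $\BMax(U_{\aleph_0}^{\ast}(\mathbf{X}))\cong\beta_0\mathbf{X}$; in particular $\BMax(U_{\aleph_0}^{\ast}(\mathbf{X}))$ is compact. The assumed ring isomorphism $U_{\aleph_0}^{\ast}(\mathbf{X})\cong C(\mathbf{Y})$, together with the preliminary lemma, yields a homeomorphism $\BMax(C(\mathbf{Y}))\cong\BMax(U_{\aleph_0}^{\ast}(\mathbf{X}))$, so $\BMax(C(\mathbf{Y}))$ is compact. Theorem~\ref{s1:t34}(iii) then guarantees that $\beta_T\mathbf{Y}$ exists, and Theorem~\ref{s1:t34}(ii) gives $\beta_T\mathbf{Y}\approx\BMax(C(\mathbf{Y}))$. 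Composing the three homeomorphisms produces the desired homeomorphism of $\beta_T\mathbf{Y}$ onto $\beta_0\mathbf{X}$.

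For (ii), I would run the same argument, using Theorem~\ref{s2:t22} in place of Theorem~\ref{s2:t24}; the extra assumption $\mathbf{CMC}$ enters at exactly this one step, because Theorem~\ref{s2:t22} asserts only under $\mathbf{CMC}$ that $\BMax(U_{\aleph_0}(\mathbf{X}))$ is homeomorphic to $\beta_0\mathbf{X}$. With that homeomorphism in hand, the isomorphism $U_{\aleph_0}(\mathbf{X})\cong C(\mathbf{Y})$ transfers compactness to $\BMax(C(\mathbf{Y}))$, and Theorem~\ref{s1:t34} concludes the proof exactly as in (i).

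I anticipate no deep obstacle: the argument is essentially a diagram-chase among three homeomorphisms. The only point requiring care is confirming that the naturality statement $R\cong S\Rightarrow\BMax(R)\cong\BMax(S)$ really is a theorem of $\mathbf{ZF}$; this is true because $\Phi^{\ast}$ is given by an explicit formula and its two-sided inverse is the analogous formula built from $\Phi^{-1}$, so no choice principle is invoked at this step. Consequently the use of $\mathbf{CMC}$ in (ii) is confined entirely to the appeal to Theorem~\ref{s2:t22}, which keeps the two parts cleanly separated.
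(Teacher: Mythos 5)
Your proposal is correct and follows essentially the same route as the paper's proof: both parts pass from the assumed ring isomorphism to a homeomorphism of the $\BMax$ spaces, then invoke Theorem~\ref{s2:t24} (respectively Theorem~\ref{s2:t22} under $\mathbf{CMC}$) to identify $\BMax(U_{\aleph_0}^{\ast}(\mathbf{X}))$ (respectively $\BMax(U_{\aleph_0}(\mathbf{X}))$) with $\beta_0\mathbf{X}$, and Theorem~\ref{s1:t34} to obtain the existence of $\beta_T\mathbf{Y}$ and its identification with $\BMax(C(\mathbf{Y}))$. The only difference is that you spell out the $\mathbf{ZF}$-valid naturality lemma that a ring isomorphism induces a homeomorphism of hull-kernel spaces, which the paper uses tacitly; this is a harmless and correct addition.
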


\begin{proof}
	(i) Suppose that the rings $U_{\aleph_0}^{\ast}(\mathbf{X})$ and $C(\mathbf{Y})$ are isomorphic. Then the spaces $\BMax(U_{\aleph_0}^{\ast}(\mathbf{X}))$ and $\BMax(C(\mathbf{Y}))$ are homeomorphic. By Theorem \ref{s2:t24}, $\BMax(C(\mathbf{Y}))$ is compact. Therefore, by Theorem \ref{s1:t34}, $\beta_T\mathbf{Y}$ exists and is homeomorphic with the space $\BMax(C(\mathbf{Y}))$. In consequence, $\beta_0\mathbf{X}$ and $\beta_T\mathbf{Y}$ are homeomorphic.\medskip
	
	(ii) Now, assume $\mathbf{CMC}$ and suppose that the rings $U_{\aleph_0}(\mathbf{X})$ and $C(\mathbf{Y})$ are isomorphic. Then the spaces $\BMax(U_{\aleph_0}(\mathbf{X}))$ and $\BMax(C(\mathbf{Y}))$ are homeomorphic. Hence, by Theorem \ref{s2:t22}, the spaces $\beta_0\mathbf{X}$ and $\BMax(C(\mathbf{Y}))$ are homeomorphic. In much the same way, as in the proof of (i), we conclude that $\beta_T\mathbf{Y}$ exists and is homeomorphic with $\beta_0\mathbf{X}$.
\end{proof}

In Section \ref{s10}, given a non-empty $\mathbb{N}$-compact space $\mathbf{X}$ for which $\beta_0\mathbf{X}$ exists, other results are obtained about the existence of a Tychonoff space $\mathbf{Y}$ for which the rings $U_{\aleph_0}(\mathbf{X})$ and $C(\mathbf{Y})$ are isomorphic.

\subsection{On Baire and $G_{\delta}$-topologies}
\label{s3}

In this section, we establish several facts concerning Baire topologies and $G_{\delta}$-topologies (see Definition \ref{s1:d17}). We begin with the following proposition:

\begin{proposition}
	\label{s3:p1} 
	$[\mathbf{ZF}]$ For every topological space $\mathbf{X}=\langle X, \tau\rangle$, the following conditions are satisfied:
	\begin{enumerate}
		\item[(i)] $\tau_z\subseteq\tau_{\delta}$ and $\tau\subseteq\tau_{\delta}$;
		\item[(ii)] if $\tau\subseteq\tau_z$ and $\mathbf{X}$ is a $T_0$-space, then $\mathbf{X}$ is functionally Hausdorff;
		\item[(iii)] if $\mathbf{X}$ is completely regular, then $\tau\subseteq\tau_{z}$;
		\item[(iv)] if $\tau\subseteq\tau_z$ and $\mathbf{X}$ is either a rimcompact $T_0$-space or a countably compact $($not necessarily $T_0$$)$ space, then $\mathbf{X}$ is completely regular;
		\item[(v)] $\mathbf{CMC}$ implies that if $\mathbf{X}$ is completely regular, then $\tau_{\delta}=\tau_z$. 
	\end{enumerate}
\end{proposition}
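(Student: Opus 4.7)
The plan is to prove the five parts in sequence, since (ii)--(v) lean on their predecessors. For (i), observe that $Z(f)=\bigcap_{n\in\mathbb{N}} f^{-1}\bigl[\bigl(-\tfrac{1}{n},\tfrac{1}{n}\bigr)\bigr]\in\mathcal{G}_{\delta}(\mathbf{X})$, so the base $\mathcal{Z}(\mathbf{X})$ of $\tau_z$ sits inside the base $\mathcal{G}_{\delta}(\mathbf{X})$ of $\tau_{\delta}$; and $\tau\subseteq\tau_{\delta}$ is automatic since every open set is a $G_{\delta}$. For (ii), given distinct $x_0,x_1$, the $T_0$ axiom supplies an open set containing exactly one of them, say $x_0\in U$ and $x_1\notin U$; since $\tau\subseteq\tau_z$ there is $f\in C(\mathbf{X})$ with $x_0\in Z(f)\subseteq U$, so $f(x_0)=0\neq f(x_1)$, and rescaling by $1/f(x_1)$ yields a continuous separator with values $0$ and $1$. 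For (iii), the identity $X\setminus Z(f)=\bigcup_{n\in\mathbb{N}} Z\bigl(\max\bigl(\tfrac{1}{n}-|f|,0\bigr)\bigr)$ exhibits every cozero-set as a union of zero-sets, so the cozero base provided by complete regularity lies in $\tau_z$.

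For (iv) I handle the two cases separately. In the countably compact case ($T_0$ not assumed), given $x\in U$ open, use $\tau\subseteq\tau_z$ to pick $g\in C(\mathbf{X})$ with $x\in Z(g)\subseteq U$, and consider the cozero-sets $V_n=\{z:|g(z)|<\tfrac{1}{n}\}$. If no $V_n$ sits in $U$, then the decreasing family of closed sets $F_n=\{z:|g(z)|\leq\tfrac{1}{n}\}\cap(X\setminus U)$ is non-empty with the finite intersection property, and the closed-set reformulation of countable compactness (equivalent to the open-cover version by De Morgan, hence choice-free) yields $z\in\bigcap_n F_n$ with $g(z)=0$ and $z\in X\setminus U$, contradicting $Z(g)\subseteq U$. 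In the rimcompact $T_0$ case, (ii) makes $\mathbf{X}$ functionally Hausdorff, hence Hausdorff; given $x\in U$, rimcompactness supplies open $V$ with $x\in V\subseteq U$ and $\partial V$ compact. For each $y\in\partial V$ pick $f_y\in C(\mathbf{X},[0,1])$ with $f_y(x)=0$ and $f_y(y)=1$, extract a finite subcover of $\partial V$ by sets $\{f_{y_i}>\tfrac{2}{3}\}$, and set $f=\max_i f_{y_i}$, so $f(x)=0$ and $f|_{\partial V}>\tfrac{2}{3}$. Define
$$
F_1(z)=\begin{cases} \min(2f(z),1) & \text{if } z\in V,\\ 1 & \text{if } z\in X\setminus V;\end{cases}
$$
continuity across $\partial V$ is forced by $f>\tfrac{2}{3}$, which makes $\min(2f,1)$ equal to $1$ on a neighborhood of $\partial V$ inside $V$. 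Since $F_1(x)=0$ and $F_1\equiv 1$ on $X\setminus V\supseteq X\setminus U$, complete regularity follows.

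For (v), parts (i) and (iii) already give $\tau\cup\tau_z\subseteq\tau_{\delta}$, so only $\tau_{\delta}\subseteq\tau_z$ requires $\mathbf{CMC}$. Given a basic $\tau_{\delta}$-open set $G=\bigcap_{n\in\mathbb{N}} U_n$ with each $U_n\in\tau$ and $x\in G$, complete regularity makes
$$
A_n=\bigl\{f\in C(\mathbf{X},[0,1]):f(x)=0 \text{ and } f|_{X\setminus U_n}=1\bigr\}
$$
non-empty for every $n$. Invoke $\mathbf{CMC}$ to obtain $(B_n)_{n\in\mathbb{N}}$ with $B_n\in[A_n]^{<\omega}\setminus\{\emptyset\}$, and set $g_n=\min B_n\in A_n$, so that $x\in Z(g_n)\subseteq U_n$. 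By Theorem \ref{s1:t18}(b), $\mathbf{CMC}$ makes $\mathcal{Z}(\mathbf{X})$ closed under countable intersections, so $\bigcap_n Z(g_n)$ is a zero-set containing $x$ and contained in $G$, witnessing that $G$ is $\tau_z$-open.

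The hard part is the rimcompact subcase of (iv): the gluing of $F_1$ across $\partial V$ is delicate because $V$ itself need not be a cozero-set, so a cozero-neighbourhood of $x$ inside $V$ is not directly available; matching $\min(2f,1)$ to the constant $1$ across $\partial V$ requires the strict margin $f>\tfrac{2}{3}$ on the compact boundary, which is exactly what the finite $\max$ over compact $\partial V$ delivers from functional Hausdorffness.
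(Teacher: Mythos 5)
Your parts (i), (ii), (iii), (v), and the countably compact half of (iv) are correct and run essentially parallel to the paper's own argument. The small divergences are harmless: in (v) the paper finishes by explicitly forming $f=\sum_{n}f_n/2^{n+1}$ and checking $x\in Z(f)\subseteq V$ instead of citing Theorem~\ref{s1:t18}(b) (your citation is legitimate, just slightly heavier machinery than needed once the $g_n$ are in hand); and in the countably compact case the paper covers the closed countably compact set $X\setminus U$ by the increasing open sets $\{\psi>\tfrac1n\}$ rather than intersecting your decreasing closed sets $F_n$ — the two are interchangeable, choice-free reformulations.

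The rimcompact half of (iv), however, has a genuine $\mathbf{ZF}$ gap as written. The instruction ``for each $y\in\bd_{\mathbf{X}}(V)$ pick $f_y\in C(\mathbf{X},[0,1])$ with $f_y(x)=0$ and $f_y(y)=1$'' is a simultaneous choice from the family $\bigl(\{f\in C(\mathbf{X},[0,1]):f(x)=0,\ f(y)=1\}\bigr)_{y\in\bd_{\mathbf{X}}(V)}$, indexed by a compact set that may well be uncountable; in $\mathbf{ZF}$ such a choice function need not exist, so the cover $\{\{f_y>\tfrac23\}:y\in\bd_{\mathbf{X}}(V)\}$ from which you extract a finite subcover cannot be formed in the first place. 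The gap is repairable. One fix is to replace the point-indexed cover by the set-indexed cover $\mathcal{W}=\{\{f>\tfrac23\}: f\in C(\mathbf{X},[0,1]),\ f(x)=0\}$, which exists with no choice, covers $\bd_{\mathbf{X}}(V)$ by functional Hausdorffness, and after compactness leaves only finitely many witnesses to select. The more economical fix — and the one the paper uses — is to invoke the hypothesis $\tau\subseteq\tau_z$ a second time: since $V\in\tau\subseteq\tau_z$, there is a single $f\in C(\mathbf{X})$ with $x\in Z(f)\subseteq V$, hence $|f|>0$ on the compact boundary, and the increasing open cover $\{|f|>\tfrac1n\}$ of $\bd_{\mathbf{X}}(V)$ produces $r>0$ with $|f|\geq r$ there, with no choice at all. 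Once a single such $f$ bounded away from $0$ on the boundary is available, your gluing is sound exactly as you describe it: $F_1$ is identically $1$ on the open set $\{f>\tfrac12\}\cup\bigl(X\setminus\cl_{\mathbf{X}}(V)\bigr)$, which together with $V$ covers $X$, so continuity follows; and it is a genuine (minor) improvement over the paper that you never need the regularity of Hausdorff rimcompact spaces, since $F_1\equiv 1$ already on $X\setminus V\supseteq X\setminus U$.
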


\begin{proof}
	It is obvious that (i) and (iii) hold. For the proofs of (ii) and (iv), we assume that $\mathbf{X}=\langle X, \tau\rangle$ is a given topological space for which $\tau\subseteq\tau_z$.\medspace
	
	(ii) Suppose that $\mathbf{X}$ is a $T_0$-space. For a pair $x,y$ of distinct points of $X$, we choose $U\in\tau$ such that $U\cap\{x,y\}$ is a singleton.  Without loss of generality,  we may assume that $x\in U$ and $y\notin U$. Since $\tau\subseteq\tau_z$, we can choose $f\in C(\mathbf{X})$ such that $x\in Z(f)\subseteq U$.  Then $g=|\frac{f}{f(y)}|\in C(\mathbf{X})$, $g(x)=0$ and $g(y)=1$. Hence $\mathbf{X}$ is functionally Hausdorff.\medspace
	
	(iv)  Now, we assume that $A$ is a closed set in $\mathbf{X}$, and  $x\in X\setminus A$. 
	
	Suppose that $\mathbf{X}$ is a rimcompact  $T_0$-space. Since $\tau\subseteq\tau_z$, it follows from (ii) that $\mathbf{X}$ is Hausdorff. Since every Hausdorff rimcompact space is regular, the space $\mathbf{X}$ is regular. This, together with the rimcompactness of $\mathbf{X}$, implies that there exists $U\in\tau$ such that $x\in U\subseteq\cl_{\mathbf{X}}(U)\subseteq X\setminus A$ and $\text{bd}_{\mathbf{X}}(U)$ is compact.  Since $U\in\tau_z$, there exists $f\in C(\mathbf{X})$ such that $x\in Z(f)\subseteq U$. Then $\text{bd}_{\mathbf{X}}(U)\subseteq f^{-1}[\mathbb{R}\setminus \{0\}]$. Since the set $\text{bd}_{\mathbf{X}}(U)$ is compact in $\mathbf{X}$, there exists a positive real number $r$ such that, for every $t\in \text{bd}_{\mathbf{X}}(U)$, $|f(t)|>r$. Then $g=\min\{\frac{|f|}{r}, 1\}\in C(\mathbf{X})$, $g(x)=0$ and $\text{bd}_{\mathbf{X}}(U)\subseteq g^{-1}(1)$. We define a function $h: X\to [0,1]$ as follows:
	$$
	h(t)=\begin{cases} g(t) &\text{ if } t\in \cl_{\mathbf{X}}(U);\\
		1 &\text{ if } t\in X\setminus\cl_{\mathbf{X}}(U).\end{cases}
	$$
	One can easily check that $h\in C(\mathbf{X}, [0,1])$, $h(x)=0$ and $A\subseteq h^{-1}[\{1\}]$. Hence $\mathbf{X}$ is completely regular.
	
	Now, suppose that $\mathbf{X}$ is a countably compact space. Since $\tau\subseteq\tau_z$, there exists $\psi\in C(\mathbf{X}, [0,1])$ with $x\in Z(\psi)\subseteq X\setminus A$. Since $A$ is countably compact, there exists $s\in (0, 1]$ such that, for every $y\in A$, $\psi(y)>s$. Then $\phi=\min\{\frac{\psi}{s}, 1\}\in C(\mathbf{X}, [0,1])$, $\phi(x)=0$ and $A\subseteq \phi^{-1}[\{1\}]$.\medspace
	
	(v) Assume $\mathbf{CMC}$ and suppose that $\mathbf{X}$ is completely regular. By (i), $\tau_z\subseteq\tau_{\delta}$, so it suffices to show that $\tau_{\delta}\subseteq\tau_z$. To this aim, we consider any $V\in\tau_{\delta}$ and $x\in V$. There exists a family $\{U_n: n\in\omega\}\subseteq\tau$ such that $x\in\bigcap\limits_{n\in\omega}U_n\subseteq V$. For every $n\in\omega$, let $C_n=\{f\in C(\mathbf{X}, [0,1]): f(x)=0\wedge X\setminus U_n\subseteq f^{-1}[\{1\}]\}$. Since $\mathbf{X}$ is completely regular, for every $n\in\omega$, $C_n\neq\emptyset$. It follows from $\mathbf{CMC}$ that there exists a family $\{F_n: n\in\omega\}$ of non-empty finite sets such that, for every $n\in\omega$, $F_n\subseteq C_n$. For every $n\in\omega$, let $f_n=\min\{f: f\in F_n\}$. Then, for every $n\in\omega$, $f_n\in C(\mathbf{X}, [0,1])$, $f_n(x)=0$ and $X\setminus U_n\subseteq f_n^{-1}[\{1\}]$. Put $f=\sum\limits_{n\in\omega}\frac{f_n}{2^{n+1}}$. It is easily seen that $x\in Z(f)\subseteq V$. Hence $V\in\tau_z$.
\end{proof}

\begin{corollary}
	\label{s3:c2}
	$[\mathbf{ZF+CMC}]$ For every completely regular space $\mathbf{X}$, $(\mathbf{X})_z=(\mathbf{X})_{\delta}$ and $(\mathbf{X})_z$ is a $P$-space. 
\end{corollary}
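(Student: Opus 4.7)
The plan is to read off the corollary as an immediate consequence of two results already on the table, so there is essentially no work beyond a direct chain of implications. I would first note that the equality $(\mathbf{X})_z = (\mathbf{X})_\delta$ is precisely the conclusion of Proposition \ref{s3:p1}(v): under $\mathbf{CMC}$, complete regularity of $\mathbf{X}$ forces $\tau_z = \tau_\delta$, which is exactly the statement that the Baire topology and the $G_\delta$-topology coincide as topological spaces.

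Next, to obtain the $P$-space assertion, I would invoke Theorem \ref{s1:t18}(b), which says that $\mathbf{CMC}$ implies $(\mathbf{X})_\delta$ is a $P$-space for every topological space $\mathbf{X}$ (no regularity needed). Combining with the first step, $(\mathbf{X})_z$ and $(\mathbf{X})_\delta$ are the same topological space, so $(\mathbf{X})_z$ inherits the $P$-space property.

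There is no real obstacle here; both inputs are already proved and quoted. The only thing to be careful about is the logical order: one must use the complete regularity hypothesis exactly once (for Proposition \ref{s3:p1}(v)), while the $P$-space statement coming from Theorem \ref{s1:t18}(b) does not itself require complete regularity but only $\mathbf{CMC}$. So the proof in full is: by Proposition \ref{s3:p1}(v), $(\mathbf{X})_z = (\mathbf{X})_\delta$; by Theorem \ref{s1:t18}(b), $(\mathbf{X})_\delta$ is a $P$-space; hence $(\mathbf{X})_z$ is a $P$-space. This is a one-line corollary rather than a theorem requiring a new argument.
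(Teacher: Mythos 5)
Your proof is correct and is exactly the paper's argument: the authors also derive the corollary directly from Proposition \ref{s3:p1}(v) (giving $\tau_z=\tau_{\delta}$ under $\mathbf{CMC}$ and complete regularity) together with Theorem \ref{s1:t18}(b) (giving that $(\mathbf{X})_{\delta}$ is a $P$-space under $\mathbf{CMC}$). Your remark about where the complete regularity hypothesis is actually used is accurate and consistent with the paper.
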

\begin{proof}
	This follows directly from Proposition \ref{s3:p1}(v) and Theorem \ref{s1:t18}(b).
\end{proof}

\begin{proposition}
	\label{s3:p3}
	$[\mathbf{ZF}]$ The statement ``For every Tychonoff space $\mathbf{X}$, $(\mathbf{X})_z$ is a $P$-space'' implies $\mathbf{CMC}_{\omega}$. 
\end{proposition}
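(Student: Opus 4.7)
The plan is to take an arbitrary denumerable family of denumerable sets and build from it a Tychonoff space whose Baire topology being a $P$-space will encode a multiple choice function. Given a denumerable family $\{A_n: n\in\omega\}$ of denumerable sets, I may assume (by replacing each $A_n$ with $A_n\times\{n\}$ if necessary) that the family is pairwise disjoint. Let $\mathbf{X}$ be the Alexandroff one-point compactification of the topological sum $\bigoplus_{n\in\omega}(A_n)_{disc}$, so $X=\{\infty\}\cup\bigcup_{n\in\omega}A_n$, every $a\in\bigcup A_n$ is isolated, and basic neighborhoods of $\infty$ have the form $\{\infty\}\cup(\bigcup A_n\setminus F)$ for finite $F\subseteq\bigcup A_n$. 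A direct construction of Urysohn functions (points are either isolated or have cofinite-type neighborhoods, and closed sets avoiding $\infty$ are finite, so explicit $[0,1]$-valued separating functions can be written down without any choice) shows $\mathbf{X}$ is compact Hausdorff, hence Tychonoff in $\mathbf{ZF}$.

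Next I would show that, for every $n\in\omega$, the set $U_n:=X\setminus A_n=\{\infty\}\cup\bigcup_{m\neq n}A_m$ is actually a zero-set of $\mathbf{X}$, hence lies in $\tau_z$. For this, since $A_n$ is denumerable, there exists (in $\mathbf{ZF}$, for each fixed $n$) a bijection $\langle a_{n,k}:k\in\omega\rangle$ from $\omega$ onto $A_n$; using any such enumeration, define $f_n:X\to[0,1]$ by $f_n(a_{n,k})=\frac{1}{k+1}$ and $f_n\equiv 0$ on $U_n$. The preimage $f_n^{-1}([0,\varepsilon))$ equals $X\setminus\{a_{n,k}: \frac{1}{k+1}\geq\varepsilon\}$, a cofinite-type neighborhood of $\infty$, so $f_n$ is continuous and $Z(f_n)=U_n$. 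It is crucial here that the statement ``$U_n$ is a zero-set'' is an existence statement for each fixed $n$, so no uniform choice of enumerations is needed; the family $\{U_n:n\in\omega\}$ itself is defined by the canonical assignment $n\mapsto X\setminus A_n$.

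By hypothesis, $(\mathbf{X})_z$ is a $P$-space, so the countable intersection $\bigcap_{n\in\omega}U_n$ of $\tau_z$-open sets is $\tau_z$-open. But $\bigcap_{n\in\omega}U_n=\{\infty\}$, so $\{\infty\}\in\tau_z$. Since $\mathcal{Z}(\mathbf{X})$ is a base for $\tau_z$ and $\infty\in\{\infty\}$, there exists a zero-set $Z$ with $\infty\in Z\subseteq\{\infty\}$, forcing $\{\infty\}=Z=Z(g)$ for some $g\in C(\mathbf{X})$. This $g$ is the engine of the argument: continuity of $g$ at $\infty$ implies that for every $k\in\omega\setminus\{0\}$ the set $F_k:=\{a\in\bigcup A_n:|g(a)|\geq\frac{1}{k}\}$ is a closed subset of $X$ not containing $\infty$, hence is finite, and $\bigcup_k F_k=\{a:g(a)\neq 0\}=\bigcup_{n\in\omega}A_n$.

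Finally I extract the multiple choice function as follows: for each $n\in\omega$, let $k_n:=\min\{k\in\omega\setminus\{0\}:F_k\cap A_n\neq\emptyset\}$, which exists because $A_n\neq\emptyset$ and $A_n\subseteq\bigcup_k F_k$, and set $h(n):=F_{k_n}\cap A_n$. Then $h(n)$ is a nonempty finite subset of $A_n$, so $h$ is a multiple choice function for $\{A_n:n\in\omega\}$, proving $\mathbf{CMC}_{\omega}$. The main obstacle is the second step, namely verifying that each $U_n$ belongs to $\tau_z$ in $\mathbf{ZF}$ without a uniform choice of enumerations; this is resolved by observing that $\tau_z$-openness of a fixed set is an existence statement handled pointwise by the (choiceless) denumerability of the single set $A_n$.
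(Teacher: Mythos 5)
Your proof is correct and is essentially the standard argument that the paper itself delegates to the proof of \cite[Theorem 4.10]{kow}: build the one-point compactification of a disjoint sum of denumerable discrete sets, observe that each $X\setminus A_n$ is a zero-set without any uniform choice of enumerations, use the $P$-space hypothesis to make $\{\infty\}$ a zero-set $Z(g)$, and read off a multiple choice function from the finiteness of the sets $\{a:|g(a)|\ge 1/k\}$. One small wording caution: ``compact Hausdorff, hence Tychonoff'' is not a valid inference in $\mathbf{ZF}$ (the paper notes this explicitly), but your parenthetical explicit construction of separating functions (equivalently, the zero-dimensionality of $\mathbf{X}$) does justify complete regularity, so there is no actual gap.
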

\begin{proof}
	The proposition can be proved in much the same way, as \cite[Theorem 4.10]{kow}.
\end{proof}

Regarding item (iv) of Proposition \ref{s3:p1}, the following example is worth noticing for it shows that, in general, for a Hausdorff space $\langle X, \tau\rangle$,  the inclusion $\tau\subseteq \tau_z$ is not enough to get the complete regularity of $\langle X, \tau\rangle$:

\begin{example}
	\label{s3:e4}
	$[\mathbf{ZF}]$
	
	(a) Let $X=\mathbb{R}$ and $A=\{\frac{1}{n}: n\in\mathbb{N}\}$. Let $\tau$ be the topology on $X$ generated by $\tau_{nat}\cup\{\mathbb{R}\setminus A\}$. It is well known that the space $\langle X, \tau\rangle$ is Hausdorff but not regular (see, e.g., \cite[Example 1.5.6]{en}). Since, for every $x\in\mathbb{R}$, $\{x\}\in\mathcal{Z}(\mathbf{X})$, the inclusion $\tau\subseteq\tau_z$ holds. In fact, $\tau_z=\mathcal{P}(\mathbb{R})$, so $(\mathbf{X})_z$ is discrete.\medskip
	
	(b) (\emph{Mysior's space}) The simplest known example of a regular $T_1$-space which is not completely regular in $\mathbf{ZFC}$ is Mysior's space constructed in \cite{mys}. Let $\mathbf{M}=\langle M, \tau\rangle$ be Mysior's space shown in \cite{mys} (see also \cite[Example 1.5.9]{en}). Although Mysior's arguments that $\mathbf{M}$ is not completely regular are not all in $\mathbf{ZF}$, one can modify them to see that $\mathbf{M}$ is also not completely regular in $\mathbf{ZF}$. To show this, let us use the notation from \cite{mys} and, in the inductive process in \cite{mys},  assuming that $n\in\mathbb{N}$ is such that $K_n$ is Dedekind-infinite, choose a denumerable subset $C$ of $K_n$.  Next, notice that even in $\mathbf{ZF}$, for every $\langle  c, 0\rangle\in C$, the set $I_c^{'}\setminus f^{-1}[\{0\}]$ is countable. Furthermore, there is a family $\{\leq_c:\langle c, 0\rangle\in C\}$ such that, for every $\langle c, 0\rangle\in C$, $\leq_c$ is a well-ordering on $I_c^{'}\setminus f^{-1}[\{0\}]$. Therefore, the set $\bigcup\{I_c^{'}\setminus f^{-1}[\{0\}]: \langle c, o\rangle\in C\}$ is also countable in $\mathbf{ZF}$ as a countable union of well-ordered countable sets. Its projection $P$ into the line $\{\langle x, 0\rangle: x\in\mathbb{R}\}$ is countable, so the set $F=\{\langle x, 0\rangle: x\in [n, n+1]\}\setminus P$ is Dedekind-infinite because no infinite Dedekind-finite subset of $\mathbb{R}$ is of type $G_{\delta}$ in $\mathbb{R}$. All this taken together with the arguments from \cite{mys} shows that it holds in $\mathbf{ZF}$ that $\mathbf{M}$ is a regular $T_1$-space that is not completely regular.  It is easy to notice that $\tau\subseteq\tau_z$. 
\end{example}

\begin{definition}
	\label{s3:d5}
	A topological space $\mathbf{X}$ is of \emph{countable pseudocharacter} if every singleton of $X$ is of type $G_{\delta}$ in $\mathbf{X}$.
\end{definition}

\begin{proposition}
	\label{s3:p6}
	$[\mathbf{ZF}]$
	\begin{enumerate}
		\item[(i)] For every topological space $\mathbf{X}$, it holds that $(\mathbf{X})_{\delta}$ is discrete if and only if $\mathbf{X}$ is of countable pseudocharacter.
		\item[(ii)] For every topological space $\mathbf{X}$ it holds that $(\mathbf{X})_z$ is discrete if and only if every singleton of $X$ is a zero-set in $\mathbf{X}$.
		
		\item[(iii)] $\mathbf{CMC}$ implies that if $\mathbf{X}$ is a completely regular space and $x\in X$ is such that $\{x\}\in\mathcal{G}_{\delta}(\mathbf{X})$, then $\{x\}\in\mathcal{Z}(\mathbf{X})$.
		
		\item[(iv)] $\mathbf{CMC}$ implies that if a completely regular space $\mathbf{X}$ is of countable pseudocharacter, then $(\mathbf{X})_z$ is discrete. 
		
	\end{enumerate}
\end{proposition}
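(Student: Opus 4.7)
The plan is to dispose of (i) and (ii) almost immediately from the definitions, then reduce (iv) to (ii) and (iii), with the real content being (iii), which will be proved by the same choice-via-$\mathbf{CMC}$ trick used in Proposition \ref{s3:p1}(v).

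For (i) and (ii), the key observation is purely general: if $\mathcal{B}$ is a base of a topological space $\mathbf{Y}$ and $y \in Y$, then $\{y\}$ is open in $\mathbf{Y}$ if and only if $\{y\} \in \mathcal{B}$, since any base element $B$ with $y \in B \subseteq \{y\}$ must coincide with $\{y\}$. Applying this with $\mathbf{Y}=(\mathbf{X})_{\delta}$ and $\mathcal{B}=\mathcal{G}_{\delta}(\mathbf{X})$ yields (i); applying it with $\mathbf{Y}=(\mathbf{X})_{z}$ and $\mathcal{B}=\mathcal{Z}(\mathbf{X})$ yields (ii). These are essentially one-line arguments.

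For (iii), which is where $\mathbf{CMC}$ enters, I assume $\mathbf{X}$ is completely regular and fix $x \in X$ with $\{x\}=\bigcap_{n\in\omega}U_n$ for some family $\{U_n:n\in\omega\}$ of open sets of $\mathbf{X}$. For each $n\in\omega$, set
\[
C_n=\{f\in C(\mathbf{X},[0,1]):f(x)=0\ \wedge\ X\setminus U_n\subseteq f^{-1}[\{1\}]\}.
\]
By complete regularity, each $C_n$ is non-empty, so $\mathbf{CMC}$ furnishes a sequence $(F_n)_{n\in\omega}$ of non-empty finite subsets with $F_n\subseteq C_n$. Let $f_n=\min F_n\in C_n$ and $f=\sum_{n\in\omega}f_n/2^{n+1}$; this series converges uniformly, so $f\in C(\mathbf{X},[0,1])$. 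Clearly $f(x)=0$, and if $y\neq x$ then $y\notin U_n$ for some $n$, whence $f_n(y)=1$ and $f(y)\ge 2^{-(n+1)}>0$. Thus $Z(f)=\{x\}$, so $\{x\}\in\mathcal{Z}(\mathbf{X})$. This is the same pattern as the proof of Proposition \ref{s3:p1}(v), and the sole obstacle to a $\mathbf{ZF}$ proof is the simultaneous choice of the $f_n$'s, which is exactly what $\mathbf{CMC}$ supplies.

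Finally, (iv) is a direct combination: assuming $\mathbf{CMC}$, if $\mathbf{X}$ is completely regular of countable pseudocharacter, then $\{x\}\in\mathcal{G}_{\delta}(\mathbf{X})$ for every $x\in X$, so (iii) gives $\{x\}\in\mathcal{Z}(\mathbf{X})$ for every $x\in X$, and then (ii) yields that $(\mathbf{X})_z$ is discrete.
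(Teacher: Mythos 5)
Your proposal is correct and follows essentially the same route as the paper, which dismisses (i) and (ii) as obvious, derives (iv) from (iii) and (ii), and proves (iii) by mimicking the proof of Proposition \ref{s3:p1}(v) — exactly the $\mathbf{CMC}$-plus-minimum-plus-weighted-sum construction you carry out in detail.
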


\begin{proof}
	It is obvious that (i) and (ii) hold, and (iv) follows from (iii). Mimicking the proof of (v) of Proposition \ref{s3:p1}, one can easily show that (iii) also holds.
\end{proof}

The following new open problems seem interesting:

\begin{problem}
	\label{s3:q7}
	Is there a model of $\mathbf{ZF}$ in which there exists a Tychonoff space $\mathbf{X}$ of countable pseudocharacter for which $(\mathbf{X})_z$ is not discrete?
\end{problem}

\begin{problem}
	\label{s3:q8}
	Is there a model of $\mathbf{ZF}$ in which there exists a Tychonoff space $\mathbf{X}$ for which $(\mathbf{X})_z\neq (\mathbf{X})_{\delta}$?
\end{problem}

Clearly, a positive answer to Problem \ref{s3:q7} gives also a positive answer to Problem \ref{s3:q8}.

\begin{proposition} 
	\label{s3:p9}
	$[\mathbf{ZF}]$ For every topological space  $\mathbf{X}$, the space $(\mathbf{X})_z$ is zero-dimension\-al; furthermore, $(\mathbf{X})_z$ is $T_0$ if and only if $\mathbf{X}$ is functionally Hausdorff.
\end{proposition}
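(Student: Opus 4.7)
The plan is to handle the two assertions separately, each by a direct argument from the definition of $\tau_z$; neither step requires a choice principle, so the proof will live in $\mathbf{ZF}$.

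For zero-dimensionality, observe that $\mathcal{Z}(\mathbf{X})$ is by definition an open base for $(\mathbf{X})_z$, so it suffices to show each zero-set $Z(f)$ is also $\tau_z$-closed, i.e.\ that the cozero set $X\setminus Z(f)$ lies in $\tau_z$. The standard identity $X\setminus Z(f)=\bigcup_{n\in\mathbb{N}}\{x\in X:|f(x)|\geq 1/n\}$ expresses the cozero set as a union of zero-sets, since $\{x:|f(x)|\geq 1/n\}=Z(g_n)$ with $g_n=\max\{1/n-|f|,0\}\in C(\mathbf{X})$; every $Z(g_n)$ is a basic $\tau_z$-open set, so their union is $\tau_z$-open. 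Equivalently, one can argue pointwise without any union: for each $x$ with $f(x)\neq 0$, the function $h_x=\max\{|f(x)|/2-|f|,0\}$ lies in $C(\mathbf{X})$, and $Z(h_x)$ is a $\tau_z$-open neighbourhood of $x$ contained in $X\setminus Z(f)$. Either route shows every basic $\tau_z$-open set is clopen, giving the zero-dimensionality.

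For the forward direction of the $T_0$ equivalence, assume $\mathbf{X}$ is functionally Hausdorff and pick distinct $x,y\in X$. Choose $f\in C(\mathbf{X})$ with $f(x)=0$ and $f(y)=1$; then $Z(f)\in\tau_z$ contains $x$ but not $y$, so $(\mathbf{X})_z$ is $T_0$. For the converse, assume $(\mathbf{X})_z$ is $T_0$, fix distinct $x,y\in X$, and use the fact that $\mathcal{Z}(\mathbf{X})$ is a base of $\tau_z$ to find $f\in C(\mathbf{X})$ such that $Z(f)$ contains exactly one of $x,y$. If $x\in Z(f)$ and $y\notin Z(f)$, then $f(y)\neq 0$, and $g:=f/f(y)\in C(\mathbf{X})$ satisfies $g(x)=0$, $g(y)=1$; if instead $y\in Z(f)$ and $x\notin Z(f)$, then $f(x)\neq 0$, and $g:=\mathbf{1}-f/f(x)\in C(\mathbf{X})$ satisfies $g(x)=0$ and $g(y)=1$. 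In either case $\mathbf{X}$ is functionally Hausdorff.

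I do not anticipate any genuine obstacle; the only care needed is to note that every choice made (of the function $f$, and of which of the two cases we are in) is a single choice, not a choice over a family, so the argument stays within $\mathbf{ZF}$.
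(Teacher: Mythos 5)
Your proof is correct and follows essentially the same route as the paper: zero-dimensionality via the fact that each cozero set is a countable union of zero-sets, and the $T_0$ equivalence via explicit separating functions. The only difference is in the converse direction, where the paper first upgrades $T_0$ to Hausdorff (using zero-dimensionality) and then completely separates two disjoint zero-sets, whereas you work directly from the $T_0$ separation by rescaling a single function — a slightly more economical variant of the same idea.
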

\begin{proof}
	Let us fix any topological space $\mathbf{X}=\langle X, \tau\rangle$. To show that the space $(\mathbf{X})_z$ is zero-dimensional, it suffices to notice that if $U\in\mathcal{Z}(\mathbf{X})$, then $X\setminus U\in\tau_z$ because $X\setminus U$ is a countable union of zero-sets of $\mathbf{X}$. It is obvious that if $\mathbf{X}$ is functionally Hausdorff, then $(\mathbf{X})_z$ is Hausdorff, so also $T_0$. Now, suppose that $(\mathbf{X})_z$ is $T_0$. Then $(\mathbf{X})_z$ is Hausdorff for it is a zero-dimensional $T_0$-space. Let $x_1, x_2$ be a pair of distinct points of $X$. Since $(\mathbf{X})_z$ is Hausdorff, there exists a pair $Z_1,Z_2$ of members of $\mathcal{Z}(\mathbf{X})$ such that $Z_1\cap Z_2=\emptyset$, $x_1\in Z_1$ and $x_2\in Z_2$. This implies that there exists $f\in C(\mathbf{X})$ such that $f(x_1)=0$ and $f(x_2)=1$, so $\mathbf{X}$ is functionally Hausdorff.
\end{proof}

\section{Basic facts on $\mathbf{E}$-compact spaces in $\mathbf{ZF}$}
\label{s4}

Let us have a look at the concepts from Definition \ref{s1:d2} in $\mathbf{ZF}$. Clearly, every Cantor-compact space is Tychonoff-compact, realcompact and $\mathbb{N}$-compact. If a topological space $\mathbf{X}$ is $\mathbb{N}$-compact, then $\mathbf{X}$ is a zero-dimensional $T_1$-space. If a topological space $\mathbf{X}$ is $\mathbf{E}$-compact for some Tychonoff space $\mathbf{E}$, then $\mathbf{X}$ is Tychonoff.  Every compact Tychonoff space is Tychonoff-compact, and every compact zero-dimensional $T_1$-space is Cantor-compact. It follows from Theorem \ref{s1:t30} that the statements ``Every Tychonoff-compact space is compact'' and ``Every Cantor-compact space is compact'' are both equivalent to $\mathbf{BPI}$, so independent of $\mathbf{ZF}$. In $\mathbf{ZF}$, a Hausdorff compact space may fail to be Tychonoff-compact because there exists a model of $\mathbf{ZF}$ in which a Hausdorff compact space need not be completely regular (cf., e.g., \cite{gt}). 

Using Theorem \ref{s1:t8}, one can easily deduce that the following proposition holds in $\mathbf{ZF}$:

\begin{proposition}
	\label{s4:p1}
	$[\mathbf{ZF}]$
	Let $\mathbf{X}$ and $\mathbf{E}$ be non-empty topological spaces. Then the following conditions are equivalent:
	\begin{enumerate}
		\item[(i)] $\mathbf{X}$ is $\mathbf{E}$-completely regular;
		\item[(ii)] $\mathcal{E}(\mathbf{X}, \mathbf{E})\neq\emptyset$;
		\item[(ii)]  $C(\mathbf{X},\mathbf{E})\in\mathcal{E}(\mathbf{X}, \mathbf{E})$. 
	\end{enumerate}
\end{proposition}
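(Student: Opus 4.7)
The plan is to prove the cycle $(\text{iii}) \Rightarrow (\text{ii}) \Rightarrow (\text{i}) \Rightarrow (\text{iii})$, which uses essentially only Definition \ref{s1:d1} and Theorem \ref{s1:t8}.

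The implication $(\text{iii}) \Rightarrow (\text{ii})$ is immediate from the definition of $\mathcal{E}(\mathbf{X},\mathbf{E})$: if $C(\mathbf{X},\mathbf{E})\in\mathcal{E}(\mathbf{X},\mathbf{E})$ then $\mathcal{E}(\mathbf{X},\mathbf{E})\neq\emptyset$. For $(\text{ii}) \Rightarrow (\text{i})$, I would pick any $\mathcal{F}\in\mathcal{E}(\mathbf{X},\mathbf{E})$; then $e_{\mathcal{F}}$ is a homeomorphic embedding of $\mathbf{X}$ into $\mathbf{E}^{\mathcal{F}}$, witnessing $\mathbf{E}$-complete regularity (take $J=\mathcal{F}$, which is non-empty since the embedding has a non-empty codomain factor; alternatively, observe that constant maps provide elements of $C(\mathbf{X},\mathbf{E})$, so $\mathcal{F}$ may be assumed non-empty).

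The main content is $(\text{i}) \Rightarrow (\text{iii})$. Assume $\mathbf{X}$ is $\mathbf{E}$-completely regular, so there exist a non-empty set $J$ and a homeomorphic embedding $h:\mathbf{X}\to\mathbf{E}^{J}$. For each $j\in J$, set $f_j=\pi_j\circ h$, which lies in $C(\mathbf{X},\mathbf{E})$, and let $\mathcal{F}=\{f_j:j\in J\}$. Then $h=e_{\mathcal{F}}$ is a homeomorphic embedding, so by Theorem \ref{s1:t8} the family $\{e_{\mathcal{F}_K}:K\in[J]^{<\omega}\setminus\{\emptyset\}\}$ separates points of $X$ and separates points from closed sets of $\mathbf{X}$. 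Since $\mathcal{F}\subseteq C(\mathbf{X},\mathbf{E})$, every finite subfamily of $\mathcal{F}$ is also a finite subfamily of $C(\mathbf{X},\mathbf{E})$, and the separating properties are preserved under enlarging the family. Therefore the analogous family of finite evaluation maps built from $C(\mathbf{X},\mathbf{E})$ also separates points and separates points from closed sets, and the other direction of Theorem \ref{s1:t8} yields that $e_{C(\mathbf{X},\mathbf{E})}$ is a homeomorphic embedding, i.e.\ $C(\mathbf{X},\mathbf{E})\in\mathcal{E}(\mathbf{X},\mathbf{E})$.

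I do not expect a genuine obstacle here; the argument is structural and invokes no form of choice beyond what is built into Theorem \ref{s1:t8}. The only point requiring a line of care is the passage from "the evaluation map of the specific family $\mathcal{F}$ extracted from $h$ is a homeomorphic embedding" to "the evaluation map of the larger family $C(\mathbf{X},\mathbf{E})$ is as well"; this is handled cleanly by the two-directional form of Theorem \ref{s1:t8}, since the separation conditions are monotone in the family.
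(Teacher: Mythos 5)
Your proof is correct and follows exactly the route the paper intends: the paper states Proposition \ref{s4:p1} without proof, remarking only that it is "easily deduced" from Theorem \ref{s1:t8}, and your argument --- extracting the coordinate family $\{\pi_j\circ h\}$ from an embedding $h:\mathbf{X}\to\mathbf{E}^J$ and using the monotonicity of the separation conditions in Theorem \ref{s1:t8} to pass to the full family $C(\mathbf{X},\mathbf{E})$ --- is precisely that deduction.
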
 

Proposition \ref{s4:p1} can be regarded as a slight reformulation of \cite[Theorem 3.8]{mr2}. Our next theorem is a generalization of Theorem 3.11.3 in \cite{en}, having its roots already in \cite{sh},  but it has not been pointed out so far that this theorem holds in $\mathbf{ZF}$. Therefore, let us sketch its proof for completeness and to make our paper more self-contained.

\begin{theorem}
	\label{s4:t2}
	$[\mathbf{ZF}]$  Let $\mathbf{E}$ be a Hausdorff space and let $\mathbf{X}$ be an $\mathbf{E}$-completely regular space. 
	Then the following conditions are equivalent:
	\begin{enumerate}
		\item[(i)] $\mathbf{X}$ is $\mathbf{E}$-compact;
		\item[(ii)] for every Hausdorff space $\mathbf{Y}$ containing $\mathbf{X}$ as a dense subspace such that, for every $f\in C(\mathbf{X},\mathbf{E})$, there exists $\tilde{f}\in C(\mathbf{Y},\mathbf{E})$ with $\tilde{f}|_X=f$, the equality $\mathbf{X}=\mathbf{Y}$ holds.
	\end{enumerate}
\end{theorem}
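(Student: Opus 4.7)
The plan is to prove both directions by exhibiting explicit extensions and exploiting uniqueness of continuous extensions into Hausdorff spaces (Proposition \ref{s1:p12}) rather than any form of choice, so that everything stays within $\mathbf{ZF}$.

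For the implication (ii)$\Rightarrow$(i), I would take the evaluation map $e=e_{C(\mathbf{X},\mathbf{E})}:\mathbf{X}\to\mathbf{E}^{C(\mathbf{X},\mathbf{E})}$. Since $\mathbf{X}$ is $\mathbf{E}$-completely regular, Proposition \ref{s4:p1} gives that $e$ is a homeomorphic embedding. Let $\mathbf{Y}=\cl_{\mathbf{E}^{C(\mathbf{X},\mathbf{E})}}(e[X])$, a Hausdorff space because $\mathbf{E}$ is. Identifying $\mathbf{X}$ with $e[X]$, for each $f\in C(\mathbf{X},\mathbf{E})$ the restriction $\pi_f\upharpoonright Y$ is a continuous extension of $f$. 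Hypothesis (ii) then forces $\mathbf{Y}=\mathbf{X}$, that is $e[X]$ is closed in the product, so $\mathbf{X}$ is $\mathbf{E}$-compact with witness $J=C(\mathbf{X},\mathbf{E})$.

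For (i)$\Rightarrow$(ii), fix a closed embedding $h:\mathbf{X}\to\mathbf{E}^J$ witnessing $\mathbf{E}$-compactness, let $\mathbf{Y}$ be as in (ii), and write $h_j=\pi_j\circ h\in C(\mathbf{X},\mathbf{E})$. By hypothesis each $h_j$ has a continuous extension $\tilde{h}_j\in C(\mathbf{Y},\mathbf{E})$, which is the unique such extension by Proposition \ref{s1:p12}. Assembling these coordinates gives a continuous $\tilde{h}:\mathbf{Y}\to\mathbf{E}^J$ with $\tilde{h}\upharpoonright X=h$. Density of $X$ in $\mathbf{Y}$ and continuity of $\tilde{h}$ yield $\tilde{h}[Y]\subseteq\cl_{\mathbf{E}^J}(h[X])=h[X]$, using that $h[X]$ is closed in $\mathbf{E}^J$.

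Now I would combine this with $h^{-1}:h[X]\to X$, which is continuous since $h$ is a homeomorphic embedding, to obtain a continuous map $\phi=h^{-1}\circ\tilde{h}:\mathbf{Y}\to\mathbf{X}$ satisfying $\phi\upharpoonright X=\text{id}_X$. Composing with the inclusion $j:\mathbf{X}\hookrightarrow\mathbf{Y}$ gives two continuous maps $j\circ\phi,\text{id}_{\mathbf{Y}}:\mathbf{Y}\to\mathbf{Y}$ that agree on the dense subset $X$; since $\mathbf{Y}$ is Hausdorff, Proposition \ref{s1:p12} gives $j\circ\phi=\text{id}_{\mathbf{Y}}$, so every $y\in Y$ equals $\phi(y)\in X$, proving $\mathbf{X}=\mathbf{Y}$. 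The only subtle point, which is essentially the crux, is confirming that $\tilde{h}[Y]$ lands inside $h[X]$ rather than merely in its closure; this is where the closedness of $h[X]$ in $\mathbf{E}^J$ is used decisively, and it is the step that fails if one weakens $\mathbf{E}$-compactness to mere $\mathbf{E}$-complete regularity.
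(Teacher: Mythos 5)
Your proposal is correct and follows essentially the same route as the paper: for (ii)$\Rightarrow$(i) you take $\mathbf{Y}=\cl_{\mathbf{E}^{C(\mathbf{X},\mathbf{E})}}(e[X])$ with the projections as the extensions, and for (i)$\Rightarrow$(ii) you assemble the unique coordinate extensions into a map $\tilde{h}:\mathbf{Y}\to\mathbf{E}^J$ landing in the closed set $h[X]$ and conclude via Proposition \ref{s1:p12}. Your explicit remark that uniqueness of the extensions $\tilde{h}_j$ (Proposition \ref{s1:p12}) is what makes the family $\{\tilde{h}_j:j\in J\}$ available without any appeal to choice is exactly the point that keeps the argument in $\mathbf{ZF}$, and it matches the paper's proof.
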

\begin{proof}
	$(i)\rightarrow (ii)$ Suppose that $\mathbf{X}$ is $\mathbf{E}$-compact. There exists a non-empty set $J$ and a homeomorphic embedding $h$ of $\mathbf{X}$ into  $\mathbf{E}^{J}$ such that $h[X]$ is a closed subset of $\mathbf{E}^{J}$. Let $\mathbf{Y}$ be a Hausdorff space such that $\mathbf{X}$ is a dense subspace of $\mathbf{Y}$ and, for every $f\in C(\mathbf{X},\mathbf{E})$, there exists  $\tilde{f}\in C(\mathbf{Y},\mathbf{E})$ with $\tilde{f}|_X=f$. In view of Proposition \ref{s1:p12}, for every $j\in J$, there exists a unique $h_j\in C(\mathbf{Y}, \mathbf{E})$ such that, for every $x\in X$, $h_j(x)=\pi_j\circ h(x)$. Let $\mathcal{H}=\{h_j: j\in J\}$ and $e=e_{\mathcal{H}}$. Then $e\in C(\mathbf{Y}, \mathbf{E}^J)$. Hence $e[Y]=e[\cl_{\mathbf{Y}} X]\subseteq \cl_{\mathbf{E}^J} e[X]=\cl_{\mathbf{E}^J} h[X]=h[X]$. Therefore, we can consider the mapping $h^{-1}\circ e: Y\to X$. We notice that, for every $x\in X$, $h^{-1}(e(x))=x$. It follows from Proposition \ref{s1:p12} that, for every $y\in Y$,  $h^{-1}(e(y))=y$. This is possible only when $Y=X$. Hence (i) implies (ii).
	
	$(ii)\rightarrow (i)$ Now, let $J=C(\mathbf{X}, \mathbf{E})$ and let $\mathbf{Y}$ be the subspace of $\mathbf{E}^{J}$ with the underlying set $Y=\cl_{\mathbf{E}^J}e_J[X]$. By Proposition \ref{s4:p1}, the subspace $e_J[X]$ of $\mathbf{E}^J$ is homeomorphic with $\mathbf{X}$.  Since, for every mapping $f\in C(e_J[X], \mathbf{E})$, there exists a mapping $\tilde{f}\in C(\mathbf{Y}, \mathbf{E})$ such that, for every $x\in X$, $\tilde e_J(x)= f(e_J(x))$, it follows from (ii) that $Y=e_J[X]$. This shows that (ii) implies (i).
\end{proof}

\begin{corollary}
	\label{s4:c3}
	$[\mathbf{ZF}]$  Let $\mathbf{E}$ be a Hausdorff space and let $\mathbf{X}$ be an $\mathbf{E}$-completely regular space. Then $\mathbf{X}$ is $\mathbf{E}$-compact if and only if the set $e_{C(\mathbf{X}, \mathbf{E})}[X]$ is closed in $\mathbf{E}^{C(\mathbf{X}, \mathbf{E})}$. 
\end{corollary}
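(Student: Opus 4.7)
The plan is to obtain both implications directly from Theorem \ref{s4:t2} and Proposition \ref{s4:p1}, with no further tools required. Set $J=C(\mathbf{X},\mathbf{E})$ and write $e=e_{J}$ for brevity; since $\mathbf{X}$ is $\mathbf{E}$-completely regular, Proposition \ref{s4:p1} tells us that $J\in\mathcal{E}(\mathbf{X},\mathbf{E})$, so $e$ is a homeomorphic embedding of $\mathbf{X}$ into $\mathbf{E}^{J}$.

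For the ``if'' direction, I would simply note that if $e[X]$ is closed in $\mathbf{E}^{J}$, then $e$ exhibits $\mathbf{X}$ as homeomorphic to a closed subspace of $\mathbf{E}^{J}$, which is exactly the definition of $\mathbf{E}$-compactness given in Definition \ref{s1:d1}.

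For the ``only if'' direction, assume $\mathbf{X}$ is $\mathbf{E}$-compact, and put $Y=\cl_{\mathbf{E}^{J}}(e[X])$ with $\mathbf{Y}$ the subspace of $\mathbf{E}^{J}$ determined by $Y$. Since $\mathbf{E}$ is Hausdorff, so is $\mathbf{E}^{J}$, hence $\mathbf{Y}$ is Hausdorff, and $e[X]$ is dense in $\mathbf{Y}$ by construction. Through $e$, identify $\mathbf{X}$ with $e[X]$ as a dense subspace of $\mathbf{Y}$. For each $f\in C(\mathbf{X},\mathbf{E})=J$, the projection $\pi_{f}:\mathbf{E}^{J}\to\mathbf{E}$ is continuous, and for every $x\in X$ one has $\pi_{f}(e(x))=e(x)(f)=f(x)$; therefore $\tilde{f}:=\pi_{f}\upharpoonright Y$ belongs to $C(\mathbf{Y},\mathbf{E})$ and satisfies $\tilde{f}\upharpoonright e[X]=f$. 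Thus $\mathbf{Y}$ satisfies the hypotheses of condition (ii) of Theorem \ref{s4:t2}, whence $\mathbf{Y}=\mathbf{X}$, i.e., $\cl_{\mathbf{E}^{J}}(e[X])=e[X]$, so $e[X]$ is closed in $\mathbf{E}^{J}$.

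There is no real obstacle here; the argument is essentially a bookkeeping exercise around Theorem \ref{s4:t2}. The only mildly delicate point is the implicit identification of $\mathbf{X}$ with $e[X]$, which is legitimate because Proposition \ref{s4:p1} ensures $e$ is a homeomorphic embedding, and Proposition \ref{s1:p12} guarantees that the continuous extensions $\tilde{f}$ built from the projections $\pi_{f}$ are uniquely determined, so invoking Theorem \ref{s4:t2} is unambiguous.
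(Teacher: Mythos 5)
Your proof is correct and follows essentially the same route the paper intends: the ``if'' direction is immediate from Definition \ref{s1:d1} together with Proposition \ref{s4:p1}, and the ``only if'' direction is exactly the argument embedded in the proof of the implication (ii)$\rightarrow$(i) of Theorem \ref{s4:t2}, where $\mathbf{Y}=\cl_{\mathbf{E}^{J}}(e_J[X])$ is shown to satisfy the extension hypothesis via the projections $\pi_f$ and hence must equal $e_J[X]$. Nothing further is needed.
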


Let us write down several basic properties of $\mathbf{E}$-compactness, being relevant to the properties of realcompactness described in \cite[Chapter 3.11]{en}.

\begin{proposition}
	\label{s4:p4}
	$[\mathbf{ZF}]$ For every topological space $\mathbf{E}$, every closed subspace of an $\mathbf{E}$-compact space is $\mathbf{E}$-compact.
\end{proposition}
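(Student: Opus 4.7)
The plan is to argue directly from the definition of $\mathbf{E}$-compactness. Suppose $\mathbf{Y}$ is a closed subspace of an $\mathbf{E}$-compact space $\mathbf{X}$. By Definition \ref{s1:d1}, I can fix a non-empty set $J$ and a homeomorphic embedding $h:\mathbf{X}\to\mathbf{E}^{J}$ such that $h[X]$ is closed in $\mathbf{E}^{J}$. The restriction $h\upharpoonright Y$ is then a homeomorphic embedding of $\mathbf{Y}$ into $\mathbf{E}^{J}$ onto its image $h[Y]$.

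The key observation is that $h[Y]$ is closed in $\mathbf{E}^{J}$. Since $h$ is a homeomorphism between $\mathbf{X}$ and the subspace $h[X]$ of $\mathbf{E}^{J}$, the image $h[Y]$ is closed in $h[X]$ (this uses only that $Y$ is closed in $\mathbf{X}$ together with continuity of $h^{-1}$, which is a $\mathbf{ZF}$-fact about homeomorphisms). Because $h[X]$ is itself closed in $\mathbf{E}^{J}$, transitivity of the relation ``being a closed subset of'' gives that $h[Y]$ is closed in $\mathbf{E}^{J}$. Thus $\mathbf{Y}$ is homeomorphic with a closed subspace of $\mathbf{E}^{J}$, so $\mathbf{Y}$ is $\mathbf{E}$-compact.

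There is essentially no obstacle here: the entire argument is a one-line consequence of the definition and uses no form of choice. The only mild subtlety worth flagging is the degenerate case, for instance when $Y=\emptyset$; however, Definition \ref{s1:d1} only requires the indexing set $J$ to be non-empty (not the space itself), and the empty space embeds as the empty closed subset of any non-empty $\mathbf{E}^{J}$, so the statement holds vacuously in that case. No appeal to Theorem \ref{s4:t2} or to the evaluation map construction is needed.
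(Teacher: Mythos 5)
Your proof is correct and is the standard argument the paper evidently has in mind: the paper states Proposition \ref{s4:p4} without proof, treating it as an immediate consequence of Definition \ref{s1:d1}, and your argument (closedness of $h[Y]$ in $h[X]$ plus transitivity of closedness, with no use of choice) is exactly that immediate consequence. The remark about the empty subspace is a reasonable, if unnecessary, precaution.
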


The following theorem is a modification of the $\mathbf{ZFC}$-theorem 3.11.5 of \cite{en} (cf. also \cite{sh}). In fact, Theorem 3.11.5 of \cite{en} is not a theorem of $\mathbf{ZF}$ for it is false in every model of $\mathbf{ZF}+\neg\mathbf{AC}$.

\begin{theorem}
	\label{s4:t5}
	$[\mathbf{ZF}]$
	Let $J$ be a non-empty set, $\{\mathbf{X}_j: j\in J\}$ a family of topological spaces,  and $\mathbf{E}$ a Hausdorff space. Let $\mathbf{X}=\prod\limits_{j\in J}\mathbf{X}_j$.
	\begin{enumerate}
		\item[(i)] If, for every $j\in J$, the space $\mathbf{X}_j$ is $\mathbf{E}$-compact, so is $\mathbf{X}$.
		\item[(ii)]  If $X=\prod\limits_{j\in J}X_j\neq\emptyset$, and $\mathbf{X}$ is $\mathbf{E}$-compact, then, for every $j\in J$, $\mathbf{X}_j$ is $\mathbf{E}$-compact. 
	\end{enumerate}
\end{theorem}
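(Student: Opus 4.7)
The plan for (i) is to construct a single, uniformly defined embedding of $\mathbf{X}$ into a power of $\mathbf{E}$, so as to avoid choosing a witness of $\mathbf{E}$-compactness separately for each factor. For every $j\in J$, put $K_j:=C(\mathbf{X}_j,\mathbf{E})$ and $e_j:=e_{K_j}$; by Proposition~\ref{s4:p1}, $e_j$ is a homeomorphic embedding of $\mathbf{X}_j$ into $\mathbf{E}^{K_j}$, and by Corollary~\ref{s4:c3}, the $\mathbf{E}$-compactness of $\mathbf{X}_j$ is precisely the closedness of $e_j[X_j]$ in $\mathbf{E}^{K_j}$. Next, define $\Phi\colon\mathbf{X}\to\prod_{j\in J}\mathbf{E}^{K_j}$ by $\Phi(x)(j)=e_j(x(j))$. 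A coordinate-wise check using the defining property of the product topology shows that $\Phi$ is a homeomorphic embedding with image $\prod_{j\in J}e_j[X_j]$, and this image is closed because its complement in $\prod_{j\in J}\mathbf{E}^{K_j}$ equals $\bigcup_{j\in J}\pi_j^{-1}\bigl[\mathbf{E}^{K_j}\setminus e_j[X_j]\bigr]$. Since $\prod_{j\in J}\mathbf{E}^{K_j}$ is canonically homeomorphic to $\mathbf{E}^{K}$ for $K=\bigcup_{j\in J}\{j\}\times K_j$, this exhibits $\mathbf{X}$ as a closed subspace of a power of $\mathbf{E}$, which proves (i).

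For (ii), the argument reduces to the closed-subspace Proposition~\ref{s4:p4}. Since $\mathbf{E}$ is Hausdorff, all powers of $\mathbf{E}$ are Hausdorff, so $\mathbf{X}$ is Hausdorff; because $X\neq\emptyset$ forces every $X_j$ to be non-empty, each $\mathbf{X}_j$ is Hausdorff, and hence singletons of $\mathbf{X}_j$ are closed. Fix $j_0\in J$ and $a\in X$ (each a single choice from a non-empty set, permissible in $\mathbf{ZF}$), and consider the section $s\colon\mathbf{X}_{j_0}\to\mathbf{X}$ given by $s(x)(j_0)=x$ and $s(x)(j)=a(j)$ for $j\neq j_0$. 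Then $s$ is a homeomorphic embedding with inverse $\pi_{j_0}\upharpoonright s[X_{j_0}]$, and its image
\[
s[X_{j_0}]=\bigcap_{j\in J\setminus\{j_0\}}\pi_j^{-1}[\{a(j)\}]
\]
is closed in $\mathbf{X}$ by the Hausdorffness observation above. Hence $s[X_{j_0}]$, which is homeomorphic to $\mathbf{X}_{j_0}$, is a closed subspace of the $\mathbf{E}$-compact space $\mathbf{X}$, and Proposition~\ref{s4:p4} yields the $\mathbf{E}$-compactness of $\mathbf{X}_{j_0}$.

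The main obstacle lies in (i): a direct transcription of the standard $\mathbf{ZFC}$ argument would, for each $j$, select an index set $K_j$ and a witnessing embedding $h_j\colon\mathbf{X}_j\to\mathbf{E}^{K_j}$ of $\mathbf{E}$-compactness and then paste these together, but a simultaneous choice of such pairs $(K_j,h_j)$ over an arbitrary index set $J$ is not available in $\mathbf{ZF}$. Replacing the unspecified witnesses by the uniformly definable canonical evaluation embedding $e_{C(\mathbf{X}_j,\mathbf{E})}$, and then transferring closedness of the image back to $\mathbf{E}$-compactness via Corollary~\ref{s4:c3}, is the move that makes the argument go through in $\mathbf{ZF}$; every remaining step --- the coordinate-wise verification that $\Phi$ is an embedding, closedness of a product of closed sets in a product space, and the Hausdorff reduction used in (ii) --- is routine.
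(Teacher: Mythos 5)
Your proposal is correct and follows essentially the same route as the paper: for (i) you replace arbitrary witnesses of $\mathbf{E}$-compactness by the canonical evaluation embeddings $e_{C(\mathbf{X}_j,\mathbf{E})}$ (whose images are closed by Corollary~\ref{s4:c3}), realize $\mathbf{X}$ as the closed set $\prod_{j\in J}e_j[X_j]$ inside $\prod_{j\in J}\mathbf{E}^{C(\mathbf{X}_j,\mathbf{E})}\cong\mathbf{E}^{K}$, and for (ii) you identify each factor with a closed slice of the Hausdorff space $\mathbf{X}$ and invoke Proposition~\ref{s4:p4}. This matches the paper's proof, with your version merely spelling out a few routine verifications (injectivity/surjectivity of $\Phi$ onto the product of images, closedness of that product) that the paper leaves implicit.
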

\begin{proof}
	(i) Assume that, for every $j\in J$, $\mathbf{X}_j$ is $\mathbf{E}$-compact. For $j\in J$, let $C_j=C(\mathbf{X}_j, \mathbf{E})$ and $e_j=e_{C_j}$ Then $e_j$ is a homeomorphic embedding of $\mathbf{X}_j$ into $\mathbf{E}^{C_j}$. Of course, for every $j\in J$, the space $\mathbf{X}_j$ is Hausdorff. We have shown in the proof of Theorem \ref{s4:t2} that, for every $j\in J$, the set $e_j[X_j]$ is closed in $\mathbf{E}^{C_j}$. This implies that the set $Y=\prod\limits_{j\in J}e_j[X_j]$ is closed in $\prod\limits_{j\in J}\mathbf{E}^{C_j}$. Since $\prod\limits_{j\in J}\mathbf{E}^{C_j}$ is $\mathbf{E}$-compact, it follows that $\mathbf{X}$ is $\mathbf{E}$-compact because $\mathbf{X}$ is homeomorphic with a closed subspace of $\prod\limits_{j\in J}\mathbf{E}^{C_j}$.\medskip
	
	(ii) Now, assume that $\mathbf{X}$ is an $\mathbf{E}$-compact space, and $X\neq\emptyset$.  Then $\mathbf{X}$ is Hausdorff. Since $X\neq\emptyset$ and $\mathbf{X}$ is a $T_1$-space, we deduce that, for every $j\in J$, the space $\mathbf{X}_j$ is homeomorphic with a closed subspace of $\mathbf{X}$, so $\mathbf{X}_j$ is $\mathbf{E}$-compact by Proposition \ref{s4:p4}.
\end{proof}

That products of realcompact spaces are realcompact in $\mathbf{ZFC}$ was noticed already in \cite{com} and  \cite{sal}.

In much the same way, as in \cite[proofs of Corollaries 3.11.17 and 2.11.18]{en}, one can prove the following propositions:

\begin{proposition}
	\label{s4:p6}
	$[\mathbf{ZF}]$ Let $J$ be a non-empty set and $\mathbf{E}$ a non-empty Hausdorff space. Suppose that $\{A_j: j\in J\}$ is a family of $\mathbf{E}$-compact subspaces of a topological space $\mathbf{X}$.  Then the subspace $A=\bigcap\limits_{j\in J}A_j$ of $\mathbf{X}$ is $\mathbf{E}$-compact.
\end{proposition}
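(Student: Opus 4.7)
The plan is to mirror the standard Engelking argument from \cite[Corollaries 3.11.17 and 3.11.18]{en}, adapted to the $\mathbf{ZF}$ setting. By Theorem \ref{s4:t5}(i), the product $\mathbf{P} = \prod_{j \in J} \mathbf{A}_j$ is $\mathbf{E}$-compact. I would then define the diagonal embedding $\Phi : A \to P$ by $\Phi(x)(j) = x$ for every $j \in J$, which is well-defined since $A \subseteq A_j$ for each $j \in J$, and show (a) that $\Phi$ is a homeomorphic embedding and (b) that $\Phi[A]$ is closed in $\mathbf{P}$. Proposition \ref{s4:p4} will then yield the $\mathbf{E}$-compactness of $\Phi[A]$, and hence of $\mathbf{A}$ via the homeomorphism $A \cong \Phi[A]$.

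For (a), verification is routine: $\Phi$ is injective, and for any fixed $j_0 \in J$ the composition $\pi_{j_0} \circ \Phi$ coincides with the continuous inclusion $\iota_{j_0} : A \hookrightarrow A_{j_0}$, so $\Phi$ is continuous; moreover, $\pi_{j_0} \upharpoonright \Phi[A]$ supplies a continuous inverse on the image, and transitivity of subspace topologies shows that the topology induced on $A$ from $A_{j_0}$ coincides with its subspace topology from $\mathbf{X}$. Hence $\Phi$ is a homeomorphism onto $\Phi[A]$.

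Part (b) is the main step. Given $g \in P \setminus \Phi[A]$, I would first observe that $g$ cannot be constant: if $g(j) = x$ for all $j$, then $x \in A_j$ for every $j \in J$, whence $x \in A$ and $g = \Phi(x) \in \Phi[A]$. Thus $g$ is non-constant, so there exist $j_0, k_0 \in J$ with $g(j_0) \neq g(k_0)$. I would then select open sets $U, V$ of $\mathbf{X}$ with $g(j_0) \in U$, $g(k_0) \in V$, and $U \cap V \cap A = \emptyset$, and check that the basic open set $W = \pi_{j_0}^{-1}[U \cap A_{j_0}] \cap \pi_{k_0}^{-1}[V \cap A_{k_0}]$ of $\mathbf{P}$ contains $g$ and is disjoint from $\Phi[A]$: any $\Phi(a) \in W$ would force $a \in U \cap V \cap A = \emptyset$, a contradiction.

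The principal obstacle is the separation step in (b), namely producing $U$ and $V$ with $U \cap V \cap A = \emptyset$. Each $A_j$ is Hausdorff, as a subspace of the Hausdorff space $\mathbf{E}^{K}$ witnessing its $\mathbf{E}$-compactness; in the standard setting where $\mathbf{X}$ is Hausdorff (which is the tacit context of Engelking's argument), one may simply take $U, V$ to be disjoint open neighborhoods of $g(j_0), g(k_0)$ in $\mathbf{X}$, and the rest of the argument proceeds verbatim. No form of $\mathbf{AC}$ beyond $\mathbf{ZF}$ enters: Theorem \ref{s4:t5}(i) and Proposition \ref{s4:p4} are already $\mathbf{ZF}$-theorems, and the separation step requires only existential selection of open sets for each individual $g$ rather than any global choice function.
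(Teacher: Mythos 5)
Your argument is precisely the standard one that the paper itself points to (the paper prints no proof of Proposition \ref{s4:p6}, only a reference to Engelking): form $\mathbf{P}=\prod_{j\in J}\mathbf{A}_j$, which is $\mathbf{E}$-compact by Theorem \ref{s4:t5}(i), embed $A$ diagonally, show the image is closed, and invoke Proposition \ref{s4:p4}. Steps (a) and the reduction of (b) to the separation claim are correct and choice-free. The separation step, however, is not a ``tacit context'' that can be waved at --- it is a genuine gap relative to the hypotheses as printed. The proposition assumes nothing about $\mathbf{X}$ beyond being a topological space, and without Hausdorffness the statement is actually false, so no repair of step (b) is possible from the stated hypotheses. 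Concretely, let $\mathbf{E}=\mathbb{R}$ and let $X=\omega_1\cup\{p,q\}$ be two copies of $\omega_1+1$ glued along $\omega_1$ (basic neighbourhoods of $p$, resp.\ $q$, are the sets $\{p\}\cup(\alpha,\omega_1)$, resp.\ $\{q\}\cup(\alpha,\omega_1)$). Then $A_1=\omega_1\cup\{p\}$ and $A_2=\omega_1\cup\{q\}$ are each homeomorphic to $\omega_1+1$, which is compact, zero-dimensional and $T_1$, hence Tychonoff and realcompact --- all of this in $\mathbf{ZF}$ --- while $A_1\cap A_2=\omega_1$ is not realcompact in any model of $\mathbf{ZFC}$. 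So the Hausdorffness you need must be added as a hypothesis, not presumed.

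Two refinements are worth recording. First, full Hausdorffness of $\mathbf{X}$ is more than you need: since $g(j_0)$ and $g(k_0)$ may be assumed to lie in $\cl_{\mathbf{X}}(A)$ (otherwise one of them has a neighbourhood missing $A$ and you take $V=X$), it suffices that the subspace $\cl_{\mathbf{X}}(A)$, or each $A_j\cup A_k$, be Hausdorff; each single $A_j$ is automatically Hausdorff (being $\mathbf{E}$-compact with $\mathbf{E}$ Hausdorff), which already disposes of the case where $g(j_0)$ and $g(k_0)$ lie in a common $A_j$ --- the counterexample above lives exactly in the remaining case. Second, in every application of Proposition \ref{s4:p6} in the paper (Theorem \ref{s4:t14}, Corollaries \ref{s6:c20} and \ref{s6:c21}) the sets $A_j$ sit inside a Hausdorff space, so the corrected statement is all that is ever used; you should make the extra hypothesis explicit in your write-up rather than appeal to Engelking's conventions.
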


\begin{proposition}
	\label{s4:p7}
	$[\mathbf{ZF}]$ Let $\mathbf{E}$ be a non-empty Hausdorff space. Let $\mathbf{X}$, $\mathbf{Y}$ be Hausdorff spaces and let $f\in C(\mathbf{X}, \mathbf{Y})$. If $\mathbf{X}$ is  $\mathbf{E}$-compact, then, for every $\mathbf{E}$-compact subspace $B$ of $\mathbf{Y}$, the subspace $f^{-1}[B]$ of $\mathbf{X}$ is $\mathbf{E}$-compact. 
\end{proposition}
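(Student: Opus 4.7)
The plan is to use the standard graph-of-$f$ trick, adapted so that it uses only the preservation results already established in the paper (Proposition \ref{s4:p4} and Theorem \ref{s4:t5}). First I would define the mapping $g:\mathbf{X}\to\mathbf{X}\times\mathbf{Y}$ by $g(x)=\langle x,f(x)\rangle$. Because $f$ is continuous, $g$ is continuous, and since $\pi_X\circ g=\text{id}_X$, the map $g$ is a homeomorphic embedding of $\mathbf{X}$ onto its image, the graph $\Gamma_f=\{\langle x,f(x)\rangle:x\in X\}$. Using that $\mathbf{Y}$ is Hausdorff, a routine $\mathbf{ZF}$ argument shows that $\Gamma_f$ is closed in $\mathbf{X}\times\mathbf{Y}$: for any $\langle x_0,y_0\rangle\notin\Gamma_f$, we have $y_0\neq f(x_0)$, so disjoint open sets $V,W$ in $\mathbf{Y}$ separate $y_0$ from $f(x_0)$, and $f^{-1}[W]\times V$ is then an open neighbourhood of $\langle x_0,y_0\rangle$ disjoint from $\Gamma_f$.

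Next, I would apply Theorem \ref{s4:t5}(i) to the two-element family $\{\mathbf{X},\mathbf{B}\}$ (after disposing of the trivial case $B=\emptyset$, where $f^{-1}[B]=\emptyset$ is $\mathbf{E}$-compact) to conclude that $\mathbf{X}\times\mathbf{B}$ is $\mathbf{E}$-compact. The set $\Gamma_f\cap(X\times B)$ is the intersection of the closed set $\Gamma_f\subseteq X\times Y$ with the subspace $X\times B$, hence closed in $\mathbf{X}\times\mathbf{B}$, and Proposition \ref{s4:p4} then yields that $\Gamma_f\cap(X\times B)$ is $\mathbf{E}$-compact.

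To finish, I would observe that the restriction $g\upharpoonright f^{-1}[B]$ is a homeomorphism of the subspace $f^{-1}[B]$ of $\mathbf{X}$ onto the subspace $\Gamma_f\cap(X\times B)$ of $\mathbf{X}\times\mathbf{B}$, with inverse the restriction of $\pi_X$. Hence $f^{-1}[B]$ is homeomorphic to an $\mathbf{E}$-compact space and is therefore itself $\mathbf{E}$-compact.

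I do not expect any serious obstacle: the argument is direct, and the only place where one might have feared an invocation of choice is the product step, which is precisely covered by Theorem \ref{s4:t5}(i) in $\mathbf{ZF}$. The mild points to keep track of are the edge cases $X=\emptyset$ or $B=\emptyset$, and the verification that $\Gamma_f$ is closed using only the Hausdorffness of $\mathbf{Y}$ (no separation hypothesis on $\mathbf{X}$ is needed, though $\mathbf{X}$ is automatically Hausdorff being $\mathbf{E}$-compact with $\mathbf{E}$ Hausdorff).
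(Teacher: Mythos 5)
Your argument is correct and is essentially the proof the paper intends: the paper only remarks that Proposition \ref{s4:p7} can be proved ``in much the same way'' as Corollary 3.11.18 of \cite{en}, and that corollary is proved by exactly your graph trick --- identifying $f^{-1}[B]$ with the closed subspace $\Gamma_f\cap(X\times B)$ of the $\mathbf{E}$-compact product $\mathbf{X}\times\mathbf{B}$ and invoking the product and closed-subspace preservation results (here Theorem \ref{s4:t5}(i) and Proposition \ref{s4:p4}). Your care about the closedness of the graph using only Hausdorffness of $\mathbf{Y}$ and about the trivial case $B=\emptyset$ is appropriate but raises no new issues.
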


We obtain the following result relevant to \cite[Theorem 3.11.16]{en}. We enclose a sketch of a proof of it in $\mathbf{ZF}$ because it differs from the proof of Theorem 3.11.16 given in \cite{en}. Our method is similar to that in \cite[the proof of Theorem 2.1]{mr2}.

\begin{theorem}
	\label{s4:t8}
	$[\mathbf{ZF}]$
	Let $\mathbf{E}$ be a non-empty Hausdorff space, and $\mathbf{X}$ an $\mathbf{E}$-completely regular space. Then $e_{C(\mathbf{X}, \mathbf{E})}\mathbf{X}$ is a Hewitt $\mathbf{E}$-compact extension of $\mathbf{X}$. Furthermore, for every Hewitt $\mathbf{E}$-compact extension $\langle \mathbf{Y}, h\rangle$ of $\mathbf{X}$, it holds that $\langle\mathbf{Y}, h\rangle\approx e_{C(\mathbf{X}, \mathbf{E})}\mathbf{X}$ and, for every $\mathbf{E}$-compact space $\mathbf{Z}$ and every $f\in C(\mathbf{X},\mathbf{Z})$, there exists $\tilde{f}\in C(\mathbf{Y}, \mathbf{Z})$ such that $\tilde{f}\circ h=f$.  
\end{theorem}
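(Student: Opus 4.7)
The plan is to establish the four assertions in sequence: existence, the Hewitt extension property of $e_{C(\mathbf{X},\mathbf{E})}\mathbf{X}$, uniqueness up to $\approx$, and the universal extension property to arbitrary $\mathbf{E}$-compact targets.

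First I would verify that $e_{C(\mathbf{X},\mathbf{E})}\mathbf{X}$ is an $\mathbf{E}$-compact extension. By Proposition \ref{s4:p1}, the evaluation $e_{C(\mathbf{X},\mathbf{E})}$ is a homeomorphic embedding of $\mathbf{X}$ into $\mathbf{E}^{C(\mathbf{X},\mathbf{E})}$, and by construction its image is dense in $\cl_{\mathbf{E}^{C(\mathbf{X},\mathbf{E})}}(e_{C(\mathbf{X},\mathbf{E})}[X])$. The underlying space, being a closed subspace of a power of $\mathbf{E}$, is $\mathbf{E}$-compact by Proposition \ref{s4:p4} and Theorem \ref{s4:t5}(i). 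The Hewitt property follows at once: for each $f\in C(\mathbf{X},\mathbf{E})$, the projection $\pi_f$ restricted to $e_{C(\mathbf{X},\mathbf{E})}[X]$ equals $f\circ e_{C(\mathbf{X},\mathbf{E})}^{-1}$, so $\tilde{f}=\pi_f\upharpoonright e_{C(\mathbf{X},\mathbf{E})}\mathbf{X}$ is the required continuous extension.

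For uniqueness, suppose $\langle\mathbf{Y},h\rangle$ is an arbitrary Hewitt $\mathbf{E}$-compact extension of $\mathbf{X}$. For each $f\in C(\mathbf{X},\mathbf{E})$ the Hewitt condition furnishes $\tilde{f}\in C(\mathbf{Y},\mathbf{E})$ with $\tilde{f}\circ h=f$; this $\tilde{f}$ is unique since $\mathbf{E}$ is Hausdorff and $h[X]$ is dense (Proposition \ref{s1:p12}). Put $\mathcal{F}=\{\tilde{f}:f\in C(\mathbf{X},\mathbf{E})\}$ and consider the evaluation $e_{\mathcal{F}}:\mathbf{Y}\to\mathbf{E}^{C(\mathbf{X},\mathbf{E})}$; then $e_{\mathcal{F}}\circ h=e_{C(\mathbf{X},\mathbf{E})}$, hence by continuity $e_{\mathcal{F}}[Y]\subseteq\cl_{\mathbf{E}^{C(\mathbf{X},\mathbf{E})}}(e_{C(\mathbf{X},\mathbf{E})}[X])$. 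To see $e_{\mathcal{F}}$ is a homeomorphic embedding, I would use $\mathbf{E}$-compactness of $\mathbf{Y}$ to fix a closed embedding $\psi:\mathbf{Y}\hookrightarrow\mathbf{E}^{J'}$. Each $\pi_j\circ\psi\in C(\mathbf{Y},\mathbf{E})$ equals $\widetilde{(\pi_j\circ\psi\circ h)}$ by Proposition \ref{s1:p12}, so the coordinate projections of $\psi$ appear among the coordinate projections of $e_{\mathcal{F}}$. Thus the family $\mathcal{F}$ separates points of $Y$ and separates points from closed sets, and Theorem \ref{s1:t8} yields that $e_{\mathcal{F}}$ is a homeomorphic embedding.

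The key step — and what I expect to be the main subtlety — is showing $e_{\mathcal{F}}[Y]=\cl_{\mathbf{E}^{C(\mathbf{X},\mathbf{E})}}(e_{C(\mathbf{X},\mathbf{E})}[X])$, which gives equivalence of the extensions. Identifying $\mathbf{Y}$ with $e_{\mathcal{F}}[\mathbf{Y}]$, let $\mathbf{W}$ denote its closure; then $\mathbf{W}$ is a Hausdorff space in which $\mathbf{Y}$ is dense. For any $g\in C(\mathbf{Y},\mathbf{E})$, setting $f=g\circ h\in C(\mathbf{X},\mathbf{E})$, the projection $\pi_f\upharpoonright\mathbf{W}$ is continuous and agrees with $g$ on the dense set $h[X]$, so by Proposition \ref{s1:p12} it continuously extends $g$ to $\mathbf{W}$. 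Now Theorem \ref{s4:t2} applied to the $\mathbf{E}$-compact space $\mathbf{Y}$ forces $\mathbf{Y}=\mathbf{W}$, i.e.\ $e_{\mathcal{F}}[Y]=\cl_{\mathbf{E}^{C(\mathbf{X},\mathbf{E})}}(e_{C(\mathbf{X},\mathbf{E})}[X])$. The identity $e_{\mathcal{F}}\circ h=e_{C(\mathbf{X},\mathbf{E})}$ then witnesses $\langle\mathbf{Y},h\rangle\approx e_{C(\mathbf{X},\mathbf{E})}\mathbf{X}$.

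Finally, for the universal extension to an arbitrary $\mathbf{E}$-compact target $\mathbf{Z}$, I would fix a closed homeomorphic embedding $\phi:\mathbf{Z}\hookrightarrow\mathbf{E}^{K}$. Given $f\in C(\mathbf{X},\mathbf{Z})$, each $f_k=\pi_k\circ\phi\circ f$ lies in $C(\mathbf{X},\mathbf{E})$ and admits an extension $\tilde{f_k}\in C(\mathbf{Y},\mathbf{E})$. Define $F:\mathbf{Y}\to\mathbf{E}^K$ by $F(y)(k)=\tilde{f_k}(y)$; it is continuous and satisfies $F\circ h=\phi\circ f$ on $X$. Since $\phi[Z]$ is closed in $\mathbf{E}^K$, the density of $h[X]$ in $\mathbf{Y}$ gives $F[Y]\subseteq\cl_{\mathbf{E}^K}(\phi[f[X]])\subseteq\phi[Z]$, so $\tilde{f}=\phi^{-1}\circ F\in C(\mathbf{Y},\mathbf{Z})$ is the sought extension, completing the proof.
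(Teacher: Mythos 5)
Your proposal is correct and follows essentially the same route as the paper: the Hewitt property turns $f\mapsto\tilde f$ into a bijection between $C(\mathbf{X},\mathbf{E})$ and $C(\mathbf{Y},\mathbf{E})$, your $e_{\mathcal{F}}$ is exactly the paper's re-indexing homeomorphism $\psi$ composed with $e_{C(\mathbf{Y},\mathbf{E})}$, and the universal-property argument via coordinatewise extension into a closed copy of $\mathbf{Z}$ in a power of $\mathbf{E}$ is identical. The only cosmetic difference is that you obtain closedness of $e_{\mathcal{F}}[Y]$ by invoking Theorem \ref{s4:t2}, whereas the paper transports the closedness of $e_{C(\mathbf{Y},\mathbf{E})}[Y]$ (Corollary \ref{s4:c3}) through the product homeomorphism; both are valid in $\mathbf{ZF}$.
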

\begin{proof}
	By Definition \ref{s1:d5}, it is obvious that $e_{C(\mathbf{X}, \mathbf{E})}\mathbf{X}$ is a Hewitt $\mathbf{E}$-compact extension of $\mathbf{X}$. Let $\langle\mathbf{Y}, h\rangle$ be any Hewitt $\mathbf{E}$-compact extension of $\mathbf{X}$. By Proposition \ref{s4:p1} and Corollary \ref{s4:c3}, $e_{C(\mathbf{Y}, \mathbf{E})}$ is a homeomorphic embedding such that $e_{C(\mathbf{Y},\mathbf{E})}[Y]$ is closed in $\mathbf{E}^{C(\mathbf{Y}, \mathbf{E})}$. We define a mapping $\psi: E^{C(\mathbf{X}, \mathbf{E})}\to E^{C(\mathbf{Y},\mathbf{E})}$ as follows: for every $t\in E^{C(\mathbf{X}, \mathbf{E})}$ and every $f\in C(\mathbf{Y}, \mathbf{E})$, $\psi(t)(f)= t(f\circ h).$ Then $\psi$ is a homeomorphism of $\mathbf{E}^{C(\mathbf{X},\mathbf{E})}$ onto $\mathbf{E}^{C(\mathbf{Y},\mathbf{E})}$. Let $g: e_{C(\mathbf{X}, \mathbf{E})}X\to Y$ be defined by: for every $t\in e_{C(\mathbf{X}, \mathbf{E})}X$, $g(t)=e_{C(\mathbf{Y},\mathbf{E})}^{-1}\circ \psi(t)$. Then $g$ is the required homeomorphism of $e_{C(\mathbf{X}, \mathbf{E})}\mathbf{X}$ onto $\mathbf{Y}$ showing that $e_{C(\mathbf{X}, \mathbf{E})}\mathbf{X}\approx\langle \mathbf{Y}, h\rangle$.
	
	Now, let $\mathbf{Z}$ be an $\mathbf{E}$-compact space and let $f\in C(\mathbf{X},\mathbf{Z})$. By Proposition \ref{s4:p1} and Corollary \ref{s4:c3}, $e_{C(\mathbf{Z}, \mathbf{E})}$ is a homeomorphic embedding such that $e_{C(\mathbf{Z}, \mathbf{E})}[Z]$ is closed in $\mathbf{E}^{C(\mathbf{Z},\mathbf{E})}$. For every $j\in C(\mathbf{Z}, \mathbf{E})$, let $f_j=\pi_j\circ e_{C(\mathbf{Z}, \mathbf{E})}\circ f$  and let $\tilde{f}_j\in C(\mathbf{Y}, \mathbf{E})$ be the continuous extension of $f_j$ over $\mathbf{Y}$. We define $\tilde{f}: Y\to Z$ as follows: for every $y\in Y$, $\tilde{f}(y)=e_{C(\mathbf{Z},\mathbf{E})}^{-1}\circ e_{\{\tilde{f}_j: j\in C(\mathbf{Z}, \mathbf{E})\}}$ (see \cite[Lemma 3.11.15]{en}). Then $\tilde{f}\in C(\mathbf{Y},\mathbf{Z})$ and $\tilde{f}\circ h=f$.
\end{proof}

\begin{remark}
	\label{s4:r9}
	Let us recall that, for a Hausdorff space $\mathbf{E}$ and an $\mathbf{E}$-completely regular space $\mathbf{X}$, we denote by $v_{\mathbf{E}}\mathbf{X}$ any Hewitt $\mathbf{E}$-compact extension of $\mathbf{X}$ for, in view of Theorem \ref{s4:t8}, such an extension is unique (up to $\approx$); moreover, if $v_{\mathbf{E}}\mathbf{X}=\langle \mathbf{Y}, h\rangle$, for simplicity, we identify $v_{\mathbf{E}}\mathbf{X}$ with $\mathbf{Y}$, $\mathbf{X}$ with the subspace $h[X]$ of $\mathbf{Y}$, and $h$ with the identity map $\text{id}_{X}$. 
\end{remark}

\begin{corollary}
	\label{s4:c10}
	$[\mathbf{ZF}]$ Let $\mathbf{E}$ be a Hausdorff space. Then an $\mathbf{E}$-completely regular space $\mathbf{X}$ is $\mathbf{E}$-compact if and only if $\mathbf{X}=v_{\mathbf{E}}\mathbf{X}$.
\end{corollary}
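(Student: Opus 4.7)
The plan is to deduce both implications directly from Theorem \ref{s4:t2} and Theorem \ref{s4:t8}, using the identification convention of Remark \ref{s4:r9}. No new construction is required; the work lies entirely in matching the hypotheses.

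For the nontrivial direction $(\Rightarrow)$, I would take $\mathbf{Y} = v_{\mathbf{E}}\mathbf{X}$ and verify the hypotheses of Theorem \ref{s4:t2}. By Theorem \ref{s4:t8} the Hewitt $\mathbf{E}$-compact extension exists and is (up to $\approx$) realised by $e_{C(\mathbf{X},\mathbf{E})}\mathbf{X}$, which by Definition \ref{s1:d3}(a) makes $\mathbf{X}$ a dense subspace of $v_{\mathbf{E}}\mathbf{X}$ under the identification from Remark \ref{s4:r9}. Moreover, since $v_{\mathbf{E}}\mathbf{X}$ is $\mathbf{E}$-compact it embeds as a closed subspace of some power $\mathbf{E}^J$, hence is Hausdorff. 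The defining property of a Hewitt $\mathbf{E}$-compact extension (Definition \ref{s1:d5}) guarantees that every $f\in C(\mathbf{X},\mathbf{E})$ has a continuous extension to $v_{\mathbf{E}}\mathbf{X}$. All hypotheses of Theorem \ref{s4:t2}(ii) are therefore met, and the $\mathbf{E}$-compactness of $\mathbf{X}$ combined with the implication $(i)\Rightarrow(ii)$ there forces $\mathbf{X} = v_{\mathbf{E}}\mathbf{X}$.

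For the direction $(\Leftarrow)$, suppose $\mathbf{X} = v_{\mathbf{E}}\mathbf{X}$. By Theorem \ref{s4:t8}, $v_{\mathbf{E}}\mathbf{X}$ is $\mathbf{E}$-compact (it is constructed as $e_{C(\mathbf{X},\mathbf{E})}\mathbf{X}$, and Definition \ref{s1:d3}(c) together with Definition \ref{s1:d5} label it as an $\mathbf{E}$-compact extension). Hence $\mathbf{X}$ itself is $\mathbf{E}$-compact. One could alternatively invoke Corollary \ref{s4:c3}: the equality $\mathbf{X} = v_{\mathbf{E}}\mathbf{X}$ means that $e_{C(\mathbf{X},\mathbf{E})}[X]$ already equals its closure in $\mathbf{E}^{C(\mathbf{X},\mathbf{E})}$, so it is closed there, which is exactly the criterion for $\mathbf{E}$-compactness.

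There is no genuine obstacle; the subtlety is purely notational, namely keeping track of the identification in Remark \ref{s4:r9} so that the equation ``$\mathbf{X} = v_{\mathbf{E}}\mathbf{X}$'' is read as the canonical embedding being surjective. Once that convention is fixed, both implications are one-line consequences of results already proved in this section.
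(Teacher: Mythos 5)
Your proof is correct and is essentially the argument the paper intends: the corollary is stated without proof precisely because it follows immediately from Theorem \ref{s4:t2} (taking $\mathbf{Y}=v_{\mathbf{E}}\mathbf{X}$ for the forward direction) together with Theorem \ref{s4:t8} and the identification in Remark \ref{s4:r9}, or equivalently from Corollary \ref{s4:c3} as you note. Both implications check out, including the verification that $v_{\mathbf{E}}\mathbf{X}$ is Hausdorff and that the density and extension hypotheses of Theorem \ref{s4:t2}(ii) are satisfied.
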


A well-known theorem of $\mathbf{ZFC}$ asserts that a Tychonoff space $\mathbf{X}$ is realcompact if and only if, for every $x_0\in\beta X\setminus X$, there exists $f\in C(\beta\mathbf{X}, [0,1])$ such that $x_0\in Z(f)$ and $Z(f)\cap X=\emptyset$ (see \cite[Theorem 3.11.10]{en}). To get, in $\mathbf{ZF}$, a modification and a generalization of this theorem, we introduce a new concept of a compactly $\mathbf{E}$-Urysohn space in the following definition:

\begin{definition}
	\label{s4:d11}
	Let $\mathbf{X}$ and $\mathbf{E}$ be Hausdorff spaces. We say that $\mathbf{X}$ is compactly $\mathbf{E}$-Urysohn if the following condition is satisfied: for every pair $A, B$ of disjoint closed sets in $\mathbf{X}$, if at least one of the sets $A, B$ is compact in $\mathbf{X}$, then there exists $f\in C^{\ast}(\mathbf{X}, \mathbf{E})$ such that $\cl_{\mathbf{E}}(f[A])\cap\cl_{\mathbf{E}}(f[B])=\emptyset$. 
\end{definition}

In what follows, for a locally compact non-compact Hausdorff space $\mathbf{E}$, we fix an element $\infty\notin E$ and, as in Definition \ref{s1:d32}, denote by $\mathbf{E}(\infty)$ the Alexandroff compactification of $\mathbf{E}$ with $E(\infty)\setminus E=\{\infty\}$.

One can easily observe that the following proposition holds:
\begin{proposition}
	\label{s4:p12}
	$[\mathbf{ZF}]$
	\begin{enumerate}
		\item[(i)]  Every discrete or compact space $\mathbf{E}$ is compactly $\mathbf{E}$-Urysohn. In particular, the discrete space $\mathbb{N}$ is compactly $\mathbb{N}$-Urysohn.
		\item[(ii)] A Hausdorff space $\mathbf{X}$ is compactly $\mathbb{R}$-Urysohn if and only if $\mathbf{X}$ is completely regular. In particular, $\mathbb{R}$ is compactly $\mathbb{R}$-Urysohn.
		\item[(iii)] Let $\mathbf{E}$ be a non-compact locally compact Hausdorff space. Then $\mathbf{E}$ is compactly $\mathbf{E}(\infty)$-Urysohn.
	\end{enumerate}
\end{proposition}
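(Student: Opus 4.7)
The plan is to verify each clause of Proposition~\ref{s4:p12} by producing an explicit continuous map witnessing the compactly $\mathbf{E}$-Urysohn property, while paying attention to what can be done without choice.

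For part (i), I would split into two cases. If $\mathbf{E}$ is compact, the identity $\mathrm{id}_{E}$ lies in $C^{\ast}(\mathbf{E},\mathbf{E})$ because $\cl_{\mathbf{E}}(E)=E$ is compact, and for disjoint closed $A,B\subseteq E$ it satisfies $\cl_{\mathbf{E}}(\mathrm{id}_{E}[A])\cap\cl_{\mathbf{E}}(\mathrm{id}_{E}[B])=A\cap B=\emptyset$. If $\mathbf{E}$ is discrete, then compactness in $\mathbf{E}$ coincides with finiteness (a fact provable in $\mathbf{ZF}$ from the open cover by singletons), so taking $A$ to be the compact (hence finite) one of the pair and fixing any two distinct points $e_{0},e_{1}\in E$ (trivial cases $|E|\leq 1$ are vacuous), the two-valued map $f$ sending $A$ to $e_{0}$ and $E\setminus A$ to $e_{1}$ is automatically continuous, has finite (thus compact) range, and satisfies $\cl_{\mathbf{E}}(f[A])\cap\cl_{\mathbf{E}}(f[B])\subseteq\{e_{0}\}\cap\{e_{1}\}=\emptyset$. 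The particular case $\mathbf{E}=\mathbb{N}$ is then immediate.

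For part (ii), I would prove both directions. For $(\Leftarrow)$, assume $\mathbf{X}$ is completely regular and $A,B\subseteq X$ are disjoint closed sets with $A$ compact. Rather than assigning a separating function to each $x\in A$, I work with the predefined set $\mathcal{F}=\{f\in C(\mathbf{X},[0,1]):f[B]\subseteq\{1\}\}$; complete regularity guarantees that $\{f^{-1}[[0,\tfrac{1}{2})]:f\in\mathcal{F}\}$ is an open cover of $A$, compactness of $A$ extracts a finite subcover, and the finite choice principle (provable in $\mathbf{ZF}$) lets me recover functions $f_{1},\dots,f_{n}\in\mathcal{F}$ whose preimages still cover $A$. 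Then $f=\min\{f_{1},\dots,f_{n}\}\in C^{\ast}(\mathbf{X})$ satisfies $f[A]\subseteq[0,\tfrac{1}{2})$ and $f[B]\subseteq\{1\}$, hence $\cl_{\mathbb{R}}(f[A])\cap\cl_{\mathbb{R}}(f[B])=\emptyset$. For $(\Rightarrow)$, assume $\mathbf{X}$ is Hausdorff and compactly $\mathbb{R}$-Urysohn; given closed $A\subseteq X$ and $x\notin A$, the singleton $\{x\}$ is compact and closed (by Hausdorffness) and disjoint from $A$, so the hypothesis yields $f\in C^{\ast}(\mathbf{X})$ with $f(x)\notin\cl_{\mathbb{R}}(f[A])$. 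Setting $d=d_{e}(f(x),\cl_{\mathbb{R}}(f[A]))>0$ and $g(t)=\min(|t-f(x)|/d,1)$, the composition $g\circ f\in C(\mathbf{X},[0,1])$ witnesses $g\circ f(x)=0$ and $(g\circ f)[A]\subseteq\{1\}$, so $\mathbf{X}$ is completely regular; the particular assertion for $\mathbb{R}$ follows from this direction.

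For part (iii), I would simply take the inclusion $\iota\colon\mathbf{E}\hookrightarrow\mathbf{E}(\infty)$. As a subspace embedding it is continuous, and its range has closure $\mathbf{E}(\infty)$ which is compact, so $\iota\in C^{\ast}(\mathbf{E},\mathbf{E}(\infty))$. Given disjoint closed $A,B\subseteq E$ with $A$ compact in $\mathbf{E}$, the set $A$ remains compact and thus closed in the Hausdorff space $\mathbf{E}(\infty)$, so $\cl_{\mathbf{E}(\infty)}(A)=A$. Since the open neighbourhoods of $\infty$ in $\mathbf{E}(\infty)$ are exactly $\{\infty\}\cup(E\setminus K)$ for $K$ compact in $\mathbf{E}$, one has $\cl_{\mathbf{E}(\infty)}(B)\subseteq B\cup\{\infty\}$, and since $\infty\notin E\supseteq A$ and $A\cap B=\emptyset$, the two closures are disjoint.

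The only delicate point is the choice-sensitive step in $(\Leftarrow)$ of (ii): one cannot pick a separating $f_{x}\in\mathcal{F}$ for each $x\in A$ without an appeal to choice, so the argument must first form the full family $\mathcal{F}$, cover $A$ by all the open sets $f^{-1}[[0,\tfrac{1}{2})]$ simultaneously, and only after compactness has reduced to finitely many open sets invoke finite choice to recover a separating tuple of functions. Every other step is either a direct construction or a routine application of Hausdorffness and local compactness.
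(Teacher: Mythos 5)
Your proof is correct, and it supplies exactly the details that the paper leaves out: the authors state Proposition \ref{s4:p12} with no proof at all (``one can easily observe\dots''), so there is no argument of theirs to compare against, but what you give is the standard verification they evidently have in mind. The one step that genuinely requires care in $\mathbf{ZF}$ is the $(\Leftarrow)$ direction of (ii), and you handle it properly: instead of selecting a separating function $f_x$ for each $x\in A$ (which would need a choice function on an arbitrary compact set), you cover $A$ by the preimages $f^{-1}[[0,\tfrac12)]$ taken over the \emph{entire} set $\mathcal{F}$ and only invoke finite choice after compactness has reduced the cover to finitely many open sets; this is precisely the right way to keep the argument choice-free. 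Two trivial remarks: in the discrete case of (i) the compactness of $A$ is never actually used (the two-valued map works for any clopen $A$), and the final clause of (ii) is misattributed --- the assertion that $\mathbb{R}$ is compactly $\mathbb{R}$-Urysohn follows from the $(\Leftarrow)$ direction applied to the completely regular space $\mathbb{R}$, not from $(\Rightarrow)$. Neither affects correctness.
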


\begin{remark}
	\label{s4:r13}
	That a metrizable space $\mathbf{X}$  may fail to be $\mathbf{X}$-Urysohn can be shown by using De Groot's example of a dense subspace $\mathbf{G}$ of the plane $\mathbb{R}^2$ such that every $f\in C(\mathbf{G},\mathbf{G})$ is either the identity $\text{id}_{G}$ or a constant mapping (see \cite{DG}). Then $\mathbf{G}$ is not compactly $\mathbf{G}$-Urysohn. 
	
	At this moment, we do not have an example of a locally compact Hausdorff space $\mathbf{E}$ which is not compactly $\mathbf{E}$-Urysohn. 
\end{remark}

\begin{theorem}
	\label{s4:t14}
	$[\mathbf{ZF}]$ Let $\mathbf{E}$ be a non-compact, locally compact Hausdorff space which is compactly $\mathbf{E}$-Urysohn. Suppose that $\mathbf{X}$ is a topological space such that $C^{\ast}(\mathbf{X}, \mathbf{E})\in\mathcal{E}(\mathbf{X}, \mathbf{E})$. Then, for every $f\in C(\mathbf{X}, \mathbf{E})$, there exists a unique $\tilde{f}\in C(e_{C^{\ast}(\mathbf{X}, \mathbf{E})}\mathbf{X}, \mathbf{E}(\infty))$ such that $\tilde{f}\circ e_{C^{\ast}(\mathbf{X}, \mathbf{E})}=f$. Furthermore, if $Y=\bigcap\{\tilde{f}^{-1}[E]: f\in C(\mathbf{X}, \mathbf{E})\}$, then the subspace $\mathbf{Y}$ of $e_{C^{\ast}(\mathbf{X},\mathbf{E})}\mathbf{X}$ is $\mathbf{E}$-compact, and $\langle \mathbf{Y}, e_{C^{\ast}(\mathbf{X}, \mathbf{E})}\rangle\approx v_{\mathbf{E}}\mathbf{X}$.
\end{theorem}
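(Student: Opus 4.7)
The plan is to produce each extension $\tilde{f}$ by invoking Taimanov's Extension Theorem (Theorem \ref{s1:t10}) applied to $f$ viewed as a continuous map into the compact Hausdorff space $\mathbf{E}(\infty)$, and then to construct an explicit homeomorphism identifying $\mathbf{Y}$ with the Hewitt $\mathbf{E}$-compact extension $v_{\mathbf{E}}\mathbf{X}=e_{C(\mathbf{X},\mathbf{E})}\mathbf{X}$ supplied by Theorem \ref{s4:t8} (the hypothesis $C^{\ast}(\mathbf{X},\mathbf{E})\in\mathcal{E}(\mathbf{X},\mathbf{E})$ trivially upgrades to $C(\mathbf{X},\mathbf{E})\in\mathcal{E}(\mathbf{X},\mathbf{E})$, since adjoining more separating functions preserves the homeomorphic-embedding property in Theorem \ref{s1:t8}). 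Throughout, I identify $\mathbf{X}$ with $e^{\ast}[X]$ inside $\mathbf{T}:=e_{C^{\ast}(\mathbf{X},\mathbf{E})}\mathbf{X}$ and write $e^{\ast}:=e_{C^{\ast}(\mathbf{X},\mathbf{E})}$; uniqueness of $\tilde{f}$ is then immediate from Proposition \ref{s1:p12} since $\mathbf{E}(\infty)$ is Hausdorff.

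For the existence of $\tilde{f}$, I apply Taimanov with the closed base
$$\mathcal{C}=\{K\subseteq E:K\text{ compact in }\mathbf{E}\}\cup\{\{\infty\}\cup C:C\text{ closed in }\mathbf{E}\}$$
of $\mathbf{E}(\infty)$, which is visibly closed under finite intersections. Because $f^{-1}[\{\infty\}]=\emptyset$, the disjoint-closure hypothesis of Taimanov reduces to the case of a compact $K\subseteq E$ and a closed $C\subseteq E$ with $K\cap C=\emptyset$. This is exactly where the compactly $\mathbf{E}$-Urysohn assumption is used essentially: it produces $g\in C^{\ast}(\mathbf{E},\mathbf{E})$ with $\cl_{\mathbf{E}}(g[K])\cap\cl_{\mathbf{E}}(g[C])=\emptyset$, whence $h:=g\circ f\in C^{\ast}(\mathbf{X},\mathbf{E})$ and the projection $\pi_h:\mathbf{E}^{C^{\ast}(\mathbf{X},\mathbf{E})}\to\mathbf{E}$ extends $h$ continuously, giving $\cl_{\mathbf{T}}(f^{-1}[K])\subseteq\pi_h^{-1}[\cl_{\mathbf{E}}(g[K])]\cap\mathbf{T}$ and analogously for $C$; these closures are therefore disjoint. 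Verifying this step is the main obstacle in the argument.

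Now define $\psi:\mathbf{T}\to\mathbf{E}(\infty)^{C(\mathbf{X},\mathbf{E})}$ by $\psi(p)(f)=\tilde{f}(p)$; this is continuous coordinatewise and satisfies $\psi\circ e^{\ast}=e_{C(\mathbf{X},\mathbf{E})}$. By construction $Y=\psi^{-1}[\mathbf{E}^{C(\mathbf{X},\mathbf{E})}]$, and since $e_{C(\mathbf{X},\mathbf{E})}[X]\subseteq\mathbf{E}^{C(\mathbf{X},\mathbf{E})}$, continuity of $\psi$ combined with the identity $\cl_A(S)=\cl_B(S)\cap A$ for a subspace $A$ of $B$ forces $\psi[\mathbf{Y}]\subseteq\cl_{\mathbf{E}^{C(\mathbf{X},\mathbf{E})}}(e_{C(\mathbf{X},\mathbf{E})}[X])=v_{\mathbf{E}}\mathbf{X}$. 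Injectivity of $\psi|_{\mathbf{Y}}$ comes from observing that, for $g\in C^{\ast}(\mathbf{X},\mathbf{E})$, the maps $\tilde{g}$ and $\pi_g$ both extend $g$ continuously from the dense set $X$ to $\mathbf{T}$ into Hausdorff $\mathbf{E}(\infty)$, so $\tilde{g}=\pi_g$ on $\mathbf{T}$ by Proposition \ref{s1:p12}; consequently the family $\{\tilde{g}:g\in C^{\ast}(\mathbf{X},\mathbf{E})\}$ separates points of $\mathbf{T}\subseteq\mathbf{E}^{C^{\ast}(\mathbf{X},\mathbf{E})}$.

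For surjectivity of $\psi|_{\mathbf{Y}}$ and continuity of its inverse, I invoke the universal property of $v_{\mathbf{E}}\mathbf{X}$ from Theorem \ref{s4:t8}, which furnishes for each $f\in C(\mathbf{X},\mathbf{E})$ a continuous extension $\tilde{f}_v:v_{\mathbf{E}}\mathbf{X}\to\mathbf{E}$. Given $q\in v_{\mathbf{E}}\mathbf{X}$, set $p_q(g):=\tilde{g}_v(q)$ for $g\in C^{\ast}(\mathbf{X},\mathbf{E})$; a routine basic-neighbourhood argument, using density of $e_{C(\mathbf{X},\mathbf{E})}[X]$ in $v_{\mathbf{E}}\mathbf{X}$, shows $p_q\in\mathbf{T}$. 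Then the two continuous maps $q\mapsto\tilde{f}(p_q)$ and $q\mapsto\tilde{f}_v(q)$ from $v_{\mathbf{E}}\mathbf{X}$ to Hausdorff $\mathbf{E}(\infty)$ agree on $X$ and hence on $v_{\mathbf{E}}\mathbf{X}$ by Proposition \ref{s1:p12}, forcing $\tilde{f}(p_q)=\tilde{f}_v(q)\in E$ for every $f\in C(\mathbf{X},\mathbf{E})$; so $p_q\in\mathbf{Y}$ and $\psi(p_q)=q$. The assignment $q\mapsto p_q$ is continuous because each of its coordinates is some $\tilde{g}_v$, so it is the continuous inverse of $\psi|_{\mathbf{Y}}$. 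Thus $\psi|_{\mathbf{Y}}$ is a homeomorphism satisfying $\psi|_{\mathbf{Y}}\circ e^{\ast}=e_{C(\mathbf{X},\mathbf{E})}$, which simultaneously witnesses the $\mathbf{E}$-compactness of $\mathbf{Y}$ and the equivalence $\langle\mathbf{Y},e^{\ast}\rangle\approx v_{\mathbf{E}}\mathbf{X}$.
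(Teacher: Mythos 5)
Your argument is correct, and its technical core coincides with the paper's: the existence of $\tilde{f}$ is obtained exactly as in the paper by feeding Taimanov's theorem a pair of disjoint closed sets of $\mathbf{E}(\infty)$, observing that at least one of their traces on $E$ is compact, invoking the compactly $\mathbf{E}$-Urysohn property to get $g$, and pulling the separation back through the projection $\pi_{g\circ f}$ (your explicit closed base $\mathcal{C}$ is just the family of all closed sets of $\mathbf{E}(\infty)$, so this is only a cosmetic difference). Where you diverge is in the ``furthermore'' clause. The paper gets the $\mathbf{E}$-compactness of $\mathbf{Y}$ for free from its general stability results: each $\tilde{f}^{-1}[E]$ is $\mathbf{E}$-compact by Proposition \ref{s4:p7} (preimage of the $\mathbf{E}$-compact subspace $E$ under a continuous map defined on the $\mathbf{E}$-compact space $e_{C^{\ast}(\mathbf{X},\mathbf{E})}\mathbf{X}$), the intersection $Y$ is $\mathbf{E}$-compact by Proposition \ref{s4:p6}, and then $\langle\mathbf{Y},e^{\ast}\rangle$ is literally a Hewitt $\mathbf{E}$-compact extension in the sense of Definition \ref{s1:d5} (every $f$ extends to $\tilde{f}\upharpoonright Y$, which lands in $E$), so the equivalence with $v_{\mathbf{E}}\mathbf{X}$ is immediate from the uniqueness part of Theorem \ref{s4:t8}. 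You instead build the homeomorphism $\psi\upharpoonright Y$ onto the concrete model $e_{C(\mathbf{X},\mathbf{E})}\mathbf{X}$ by hand, which amounts to re-proving a special case of that uniqueness statement; the steps (identifying $\tilde{g}$ with $\pi_g$ on the dense set to get injectivity, and using the extensions $\tilde{g}_v$ coordinatewise to produce the continuous inverse $q\mapsto p_q$) all check out, at the cost of more verification than the paper's two-line citation. Either route is acceptable in $\mathbf{ZF}$.
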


\begin{proof}
	Let $e^{\ast}=e_{C^{\ast}(\mathbf{X}, \mathbf{E})}$. We fix $f\in C(\mathbf{X}, \mathbf{E})$ and a pair $C_1, C_2$ of disjoint closed sets of $\mathbf{E}(\infty)$. For $i\in\{1,2\}$, we put $A_i=C_i\cap E$. At least one of the sets $A_1, A_2$ is compact. Since $\mathbf{E}$ is compactly $\mathbf{E}$-Urysohn, there exists $g\in C^{\ast}(\mathbf{E}, \mathbf{E})$ such that $\cl_{\mathbf{E}}(g[A_1])\cap\cl_{\mathbf{E}}(g[A_2])=\emptyset$. Since $g\circ f\in C^{\ast}(\mathbf{X},\mathbf{E})$, there exists $\psi\in C(e^{\ast}\mathbf{X}, \mathbf{E})$ such that $\psi\circ e^{\ast}=g\circ f$.  We notice that, for $i\in\{1,2\}$,  $e^{\ast}[f^{-1}[C_i]]\subseteq \psi^{-1}[\cl_{\mathbf{E}}g[A_i]]$. This implies  that $\cl_{e^{\ast}\mathbf{X}}(e^{\ast}[f^{-1}[C_1]])\cap \cl_{e^{\ast}\mathbf{X}}(e^{\ast}[f^{-1}[C_2]])=\emptyset$. By Theorem \ref{s1:t10}, there exists a function $\tilde{f}\in C(e^{\ast}\mathbf{X}, \mathbf{E}(\infty))$ such that $\tilde{f}\circ e^{\ast}=f$. It follows from Propositions \ref{s4:p6} and \ref{s4:p7} that the space $\mathbf{Y}$ is $\mathbf{E}$-compact. This, together with Definition \ref{s1:d5}, shows that $\langle \mathbf{Y}, e^{\ast}\rangle \approx v_{\mathbf{E}}\mathbf{X}$. 
\end{proof}

\begin{corollary}
	\label{s4:c15}
	$[\mathbf{ZF}]$ Let $\mathbf{E}$ be a non-compact, locally compact Hausdorff space which is compactly $\mathbf{E}$-Urysohn. Then a topological space $\mathbf{X}$ such that $C^{\ast}(\mathbf{X}, \mathbf{E})\in\mathcal{E}(\mathbf{X}, \mathbf{E})$ is $\mathbf{E}$-compact if and only if, for every $p\in e_{C^{\ast}(\mathbf{X}, \mathbf{E})}X\setminus e_{C^{\ast}(\mathbf{X}, \mathbf{E})}[X]$, there exists $f\in C(\mathbf{X}, \mathbf{E})$ such that $\tilde{f}(p)=\infty$ where $\tilde{f}\in C(e_{C^{\ast}(\mathbf{X}, \mathbf{E})}\mathbf{X}, \mathbf{E}(\infty))$ is such that  $\tilde{f}\circ e_{C^{\ast}(\mathbf{X}, \mathbf{E})}=f$.
\end{corollary}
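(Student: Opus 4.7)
The plan is to deduce this corollary directly from Theorem \ref{s4:t14} combined with Corollary \ref{s4:c10}. Let me write $e^{\ast} = e_{C^{\ast}(\mathbf{X}, \mathbf{E})}$ and let $\mathbf{Y}$ be the subspace of $e^{\ast}\mathbf{X}$ with underlying set $Y = \bigcap\{\tilde{f}^{-1}[E] : f \in C(\mathbf{X},\mathbf{E})\}$ as in Theorem \ref{s4:t14}. First I would observe that since every $f \in C(\mathbf{X},\mathbf{E})$ takes values in $E$, the inclusion $e^{\ast}[X] \subseteq Y$ holds: indeed, if $x \in X$, then $\tilde{f}(e^{\ast}(x)) = f(x) \in E$ for all $f$. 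Moreover, since $e^{\ast}[X]$ is dense in $e^{\ast}\mathbf{X}$, it is a fortiori dense in $\mathbf{Y}$, so $\langle \mathbf{Y}, e^{\ast}\rangle$ is an extension of $\mathbf{X}$; Theorem \ref{s4:t14} guarantees that $\mathbf{Y}$ is $\mathbf{E}$-compact and $\langle \mathbf{Y}, e^{\ast}\rangle \approx v_{\mathbf{E}}\mathbf{X}$.

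Next I would invoke Corollary \ref{s4:c10}, which states that an $\mathbf{E}$-completely regular space is $\mathbf{E}$-compact if and only if it coincides with its Hewitt $\mathbf{E}$-compact extension. Via the equivalence $\langle \mathbf{Y}, e^{\ast}\rangle \approx v_{\mathbf{E}}\mathbf{X}$, this translates into: $\mathbf{X}$ is $\mathbf{E}$-compact if and only if the homeomorphic embedding $e^{\ast}$ maps $X$ onto $Y$, that is, $e^{\ast}[X] = Y$. Here I implicitly use that $\mathbf{X}$ is $\mathbf{E}$-completely regular, which follows from Proposition \ref{s4:p1} together with the hypothesis $C^{\ast}(\mathbf{X},\mathbf{E}) \in \mathcal{E}(\mathbf{X},\mathbf{E})$ (since $C^{\ast}(\mathbf{X},\mathbf{E}) \subseteq C(\mathbf{X},\mathbf{E})$).

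Finally I would rewrite $e^{\ast}[X] = Y$ in a pointwise form. Because $e^{\ast}[X] \subseteq Y \subseteq e^{\ast}X$, the equality $e^{\ast}[X] = Y$ is equivalent to $Y \cap (e^{\ast}X \setminus e^{\ast}[X]) = \emptyset$, i.e.\ to the condition that no $p \in e^{\ast}X \setminus e^{\ast}[X]$ satisfies $\tilde{f}(p) \in E$ for every $f \in C(\mathbf{X},\mathbf{E})$. Negating the inner universal quantifier, this says that for every $p \in e^{\ast}X \setminus e^{\ast}[X]$ there exists $f \in C(\mathbf{X},\mathbf{E})$ with $\tilde{f}(p) \notin E$, and since $\tilde{f} \in C(e^{\ast}\mathbf{X}, \mathbf{E}(\infty))$ and $E(\infty) \setminus E = \{\infty\}$, the latter is equivalent to $\tilde{f}(p) = \infty$. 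This gives the stated characterization.

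No real obstacle arises; the corollary is essentially a pointwise reformulation of the equality $e^{\ast}[X] = Y$. The only subtlety worth double-checking is that Theorem \ref{s4:t14} does supply a \emph{unique} continuous extension $\tilde{f} : e^{\ast}\mathbf{X} \to \mathbf{E}(\infty)$ for each $f \in C(\mathbf{X},\mathbf{E})$ (guaranteed by Proposition \ref{s1:p12} since $\mathbf{E}(\infty)$ is Hausdorff and $e^{\ast}[X]$ is dense in $e^{\ast}\mathbf{X}$), so the condition ``$\tilde{f}(p) = \infty$'' is unambiguous.
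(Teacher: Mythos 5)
Your derivation is correct and is exactly the route the paper intends: the corollary is stated without proof as an immediate consequence of Theorem \ref{s4:t14}, and your argument (identify $\langle\mathbf{Y},e^{\ast}\rangle$ with $v_{\mathbf{E}}\mathbf{X}$, apply Corollary \ref{s4:c10}, and rewrite $e^{\ast}[X]=Y$ pointwise using $E(\infty)\setminus E=\{\infty\}$) fills in precisely those steps. The side remarks on $\mathbf{E}$-complete regularity of $\mathbf{X}$ and on the uniqueness of $\tilde{f}$ via Proposition \ref{s1:p12} are the right things to check and are handled correctly.
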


\begin{theorem}
	\label{s4:t16}
	$[\mathbf{ZF}]$ 
	\begin{enumerate}
		\item[(i)]  A Tychonoff space $\mathbf{X}$ is realcompact if and only if, for every $p\in \beta^{f}X\setminus e_{\beta^f}[X]$, there exists $f\in C^{\ast}(\beta^f\mathbf{X})$ such that $f(p)=0$ and, for every $x\in X$, $f(e_{\beta^f}(x))>0$.
		\item[(ii)] Let $N=\{\frac{1}{n}: n\in\mathbb{N}\}$ and $N_0=\{0\}\cup N$ be considered as subspaces of $\mathbb{R}$. A zero-dimensional $T_1$-space $\mathbf{X}$ is $\mathbb{N}$-compact if and only if, for every $p\in e^{\ast}_{2}X\setminus e^{\ast}_{2}[X]$, there exists $f\in C(e^{\ast}_{2}\mathbf{X}, N_0)$ such that $f(p)=0$ and, for every $x\in X$, $f(e^{\ast}_{2}(x))>0$.
	\end{enumerate}
\end{theorem}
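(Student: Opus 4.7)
My plan is to derive both (i) and (ii) from Corollary \ref{s4:c15} by specializing to $\mathbf{E}=\mathbb{R}$ and $\mathbf{E}=\mathbb{N}_{disc}$ respectively, and then translating the abstract condition ``$\tilde{f}(p)=\infty$'' into the concrete bounded-function form stated in the theorem via a fixed homeomorphism of $\mathbf{E}(\infty)$ onto a subspace of $[0,1]$. Throughout, I reserve the letter $f$ for the (possibly unbounded) function on $\mathbf{X}$ furnished by Corollary \ref{s4:c15} and write $g$ (or $\phi$) for the bounded function on the extension supplied by the theorem.

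For part (i), the space $\mathbb{R}$ is non-compact, locally compact, Hausdorff, and compactly $\mathbb{R}$-Urysohn by Proposition \ref{s4:p12}(ii). For any non-empty Tychonoff $\mathbf{X}$ one has $C^{\ast}(\mathbf{X})\in\mathcal{E}(\mathbf{X},\mathbb{R})$ and $e_{C^{\ast}(\mathbf{X})}\mathbf{X}=\beta^{f}\mathbf{X}$ by definition of $\beta^{f}$, so Corollary \ref{s4:c15} reads: $\mathbf{X}$ is realcompact iff for every $p\in\beta^{f}X\setminus e_{\beta^f}[X]$ there exists $f\in C(\mathbf{X})$ whose unique extension $\tilde{f}\in C(\beta^{f}\mathbf{X},\mathbb{R}(\infty))$ from Theorem \ref{s4:t14} satisfies $\tilde{f}(p)=\infty$. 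The continuous map $h\colon\mathbb{R}(\infty)\to[0,1]$ defined by $h(v)=\frac{1}{1+|v|}$ for $v\in\mathbb{R}$ and $h(\infty)=0$ provides the translation: starting from $f$, set $g:=h\circ\tilde{f}\in C^{\ast}(\beta^{f}\mathbf{X})$; then $g(p)=0$ and $g(e_{\beta^f}(x))=\frac{1}{1+|f(x)|}>0$. Conversely, given $g$ as in the theorem, set $f(x):=\frac{1}{g(e_{\beta^f}(x))}\in C(\mathbf{X})$ and verify that the extension $\tilde{f}$ supplied by Theorem \ref{s4:t14} sends $p$ to $\infty$, at which point Corollary \ref{s4:c15} yields realcompactness.

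For part (ii) I take $\mathbf{E}=\mathbb{N}_{disc}$, which is compactly $\mathbb{N}$-Urysohn by Proposition \ref{s4:p12}(i). Theorem \ref{s2:t10} gives $e^{\ast}_{\mathbb{N}}\mathbf{X}\approx e^{\ast}_{2}\mathbf{X}$, so Corollary \ref{s4:c15} transferred along this equivalence reads: $\mathbf{X}$ is $\mathbb{N}$-compact iff for every $p\in e^{\ast}_{2}X\setminus e^{\ast}_{2}[X]$ there exists $f\in C(\mathbf{X},\mathbb{N})$ whose extension $\tilde{f}\colon e^{\ast}_{2}\mathbf{X}\to\mathbb{N}(\infty)$ satisfies $\tilde{f}(p)=\infty$. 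The homeomorphism $h\colon\mathbb{N}(\infty)\to N_0$ with $h(n)=\frac{1}{n}$ and $h(\infty)=0$ (recall that each $\frac{1}{n}$ is isolated in $N_0$) translates this into the theorem's condition via $\phi:=h\circ\tilde{f}$: one gets $\phi\in C(e^{\ast}_{2}\mathbf{X},N_0)$, $\phi(p)=0$ and $\phi(e^{\ast}_{2}(x))=\frac{1}{f(x)}\in N$. The converse passes from $\phi$ to $f:=1/(\phi\circ e^{\ast}_{2})\in C(\mathbf{X},\mathbb{N}_{disc})$, using that $\phi\circ e^{\ast}_{2}$ takes values in $N$ whose points are isolated in $N_0$. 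The step I expect to be hardest is the backward verification ``$\tilde{f}(p)=\infty$'' in each part, which I plan to handle by a neighborhood-and-density argument: for each $M>0$ the open set $U_M:=g^{-1}([0,\frac{1}{M+1}))$ is a neighborhood of $p$ on which $\tilde{f}=\frac{1}{g}>M+1$ holds on the dense subset $U_M\cap e_{\beta^f}[X]$, and continuity of $\tilde{f}$ into $\mathbb{R}(\infty)$ then forces $\tilde{f}[U_M]\subseteq[M+1,\infty)\cup\{\infty\}$, whence $\tilde{f}(p)=\infty$. Care is required because in $\mathbf{ZF}$ the space $\beta^{f}\mathbf{X}$ need not be sequential, so the argument must be run purely in terms of open neighborhoods and density, never via sequences.
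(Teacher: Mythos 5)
Your proposal is correct, and for part (ii) it is essentially the proof the paper gives: the paper likewise invokes Theorem \ref{s4:t14} with the discrete space $N=\{\frac{1}{n}:n\in\mathbb{N}\}$ (via Proposition \ref{s4:p12}(i) and the equivalence $e^{\ast}_{2}\mathbf{X}\approx e^{\ast}_{N}\mathbf{X}$ from Theorem \ref{s2:t10}), the only cosmetic difference being that you work with $\mathbb{N}_{disc}$ and then transport along the homeomorphism $\mathbb{N}(\infty)\to N_0$, whereas the paper takes $N$ itself so that its one-point compactification literally is $N_0$. Note that in (ii) your ``hard'' backward step is actually free: since $h\colon\mathbb{N}(\infty)\to N_0$ is a homeomorphism, $h^{-1}\circ\phi$ is itself a continuous extension of $f$ into $\mathbb{N}(\infty)$, hence equals $\tilde f$ by Proposition \ref{s1:p12}, and $\tilde f(p)=h^{-1}(0)=\infty$ with no density argument. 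For part (i) your route genuinely differs from the paper's, which only cites the classical argument of Engelking's Theorem 3.11.10; you instead derive (i) uniformly from Corollary \ref{s4:c15} with $\mathbf{E}=\mathbb{R}$, which is a clean unification of the two halves of the theorem and uses only machinery already established in the paper. Here the density argument is needed (your $h(v)=\frac{1}{1+|v|}$ is not injective), and it is correct as you run it, with one small repair: $g^{-1}[[0,\tfrac{1}{M+1})]$ need not be open if $g$ takes negative values off $e_{\beta^f}[X]$, so either replace $g$ by $|g|$ at the outset or use $U_M=g^{-1}[(-\tfrac{1}{M+1},\tfrac{1}{M+1})]$; on the dense set $U_M\cap e_{\beta^f}[X]$ one still has $0<g<\tfrac{1}{M+1}$ and the rest of your neighborhood-and-density argument goes through unchanged.
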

\begin{proof}
	That (i) holds, can be proved in much the same way, as Theorem 3.11.10 in \cite{en}.
	
	(ii) Let $\mathbf{X}$ be a zero-dimensional $T_1$-space. By Theorem \ref{s2:t10}, $e^{\ast}_{2}\mathbf{X}\approx e^{\ast}_{N}\mathbf{X}$. Clearly, $N_0$ is the one-point Hausdorff compactification of $N$. To conclude the proof of (ii), it suffices to apply Theorem \ref{s4:t14} and Proposition \ref{s4:p12}(i).
\end{proof}

\section{ $U_{\aleph_0}(\mathbf{X})$, $v_{\mathbf{E}}\mathbf{X}$ and strong zero-dimensionality}
\label{s5}

We know from Theorem \ref{s2:t6}(i) that it holds in $\mathbf{ZF}$ that a Tychonoff space $\mathbf{X}$ is strongly zero-dimensional if and only if its extension $\beta^{f}\mathbf{X}$ is strongly zero-dimensional. Let us show in the following two theorems that, to some extent, an analogous result about Hewitt $\mathbf{E}$-compact extensions can be obtained in $\mathbf{ZF}$. 

\begin{theorem}
	\label{s5:t1}
	$[\mathbf{ZF}]$ Let $\mathbf{E}$ be a Tychonoff space consisting of at least two points. Suppose that $\mathbf{X}$ is a strongly zero-dimensional $\mathbf{E}$-completely regular space. Then $v_{\mathbf{E}}\mathbf{X}$ is strongly zero-dimensional. In particular, for every strongly zero-dimensional $T_1$-space $\mathbf{Y}$, the space $v_{\mathbb{N}}\mathbf{Y}$ is strongly zero-dimensional.
\end{theorem}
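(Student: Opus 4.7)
My plan is to mimic the classical argument that strong zero-dimensionality passes to the \v{C}ech--Stone compactification (see \cite[Theorem 6.2.12]{en}) in the $\mathbf{E}$-compact setting, combining strong zero-dimensionality of $\mathbf{X}$ with the universal extension property of $v_{\mathbf{E}}\mathbf{X}$ supplied by Theorem \ref{s4:t8}. The space $v_{\mathbf{E}}\mathbf{X}$ embeds in $\mathbf{E}^{C(\mathbf{X},\mathbf{E})}$, so it is Tychonoff because $\mathbf{E}$ is. Since $\mathbf{E}$ is $T_1$ and has at least two distinct points $a$ and $b$, the subspace $\{a,b\}$ of $\mathbf{E}$ is both closed and (being finite $T_1$) discrete, so the two-point discrete space $\mathbf{2}$ is $\mathbf{E}$-compact; hence, by Theorem \ref{s4:t8}, every continuous map from $\mathbf{X}$ into $\mathbf{2}$ admits a continuous extension to $v_{\mathbf{E}}\mathbf{X}$.

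Given disjoint zero-sets $Z_1 = Z(g_1)$ and $Z_2 = Z(g_2)$ in $v_{\mathbf{E}}\mathbf{X}$, the function $h = g_1^2/(g_1^2 + g_2^2)$ is well-defined and continuous from $v_{\mathbf{E}}\mathbf{X}$ into $[0,1]$, vanishes on $Z_1$, and equals $1$ on $Z_2$. I would set $W_1 = \{x \in X : h(x) \leq 1/3\}$ and $W_2 = \{x \in X : h(x) \geq 2/3\}$; these are disjoint zero-sets of $\mathbf{X}$. By strong zero-dimensionality of $\mathbf{X}$, fix a clopen $V \in \mathcal{CO}(\mathbf{X})$ with $W_1 \subseteq V$ and $V \cap W_2 = \emptyset$. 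The characteristic map $\chi_V \in C(\mathbf{X},\mathbf{2})$ lifts, via Theorem \ref{s4:t8}, to $\tilde\chi \in C(v_{\mathbf{E}}\mathbf{X}, \mathbf{2})$, and then $U := \tilde\chi^{-1}[\{1\}]$ is a clopen subset of $v_{\mathbf{E}}\mathbf{X}$ with $U \cap X = V$.

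The main delicate step is to verify that $Z_1 \subseteq U$ and $U \cap Z_2 = \emptyset$, since points of $Z_i$ lying in the remainder $v_{\mathbf{E}}X \setminus X$ are not handled directly by the strong zero-dimensionality of $\mathbf{X}$. Here the buffer supplied by the strict inequalities $h < 1/3$ near $Z_1$ and $h > 2/3$ near $Z_2$ in $v_{\mathbf{E}}\mathbf{X}$ is decisive: for any $p \in Z_1$, the set $h^{-1}[[0,1/3)]$ is an open neighborhood of $p$ in $v_{\mathbf{E}}\mathbf{X}$, and by density of $X$ every neighborhood of $p$ contained in it meets $X$ in a subset of $W_1 \subseteq V$; hence $p \in \cl_{v_{\mathbf{E}}\mathbf{X}}(V) \subseteq U$ (the latter inclusion because $U$ is closed). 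A symmetric argument with $h^{-1}[(2/3,1]]$ and $X \setminus V$ (also clopen in $\mathbf{X}$) gives $Z_2 \subseteq \cl_{v_{\mathbf{E}}\mathbf{X}}(X \setminus V) \subseteq v_{\mathbf{E}}X \setminus U$.

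For the final assertion concerning $v_{\mathbb{N}}\mathbf{Y}$, it is enough to observe that every strongly zero-dimensional $T_1$-space $\mathbf{Y}$ is Tychonoff and zero-dimensional, so it embeds into some Cantor cube $\mathbf{2}^J$; since $\mathbf{2}$ is a closed subspace of $\mathbb{N}$, the cube $\mathbf{2}^J$ is a closed subspace of $\mathbb{N}^J$, so $\mathbf{Y}$ is $\mathbb{N}$-completely regular and the general result applies to yield strong zero-dimensionality of $v_{\mathbb{N}}\mathbf{Y}$.
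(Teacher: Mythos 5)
Your proof is correct and follows essentially the same route as the paper's: both adapt the argument of \cite[Theorem 6.2.12]{en} by separating the two zero-sets of $v_{\mathbf{E}}\mathbf{X}$ with a $[0,1]$-valued function, applying strong zero-dimensionality of $\mathbf{X}$ to the buffered preimages of $[0,1/3]$ and $[2/3,1]$, and extending the resulting $\{0,1\}$-valued function over $v_{\mathbf{E}}\mathbf{X}$ via the $\mathbf{E}$-compactness of $\mathbf{2}$. Your write-up merely makes explicit the density/closure verification that the paper leaves implicit in its final sentence.
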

\begin{proof}
	Let us slightly modify the proof of Theorem 6.2.12 in \cite{en}. Let $Z_0, Z_1\in\mathcal{Z}(v_{\mathbf{E}}\mathbf{X})$ and $Z_1\cap Z_2=\emptyset$. There exists $f\in C(v_{\mathbf{E}}\mathbf{X})$ such that, for every $i\in\{0, 1\}$,  $Z_i\subseteq f^{-1}[\{i\}]$ and $f[v_{\mathbf{E}}\mathbf{X}]\subseteq [0, 1]$. Since $\mathbf{X}$ is strongly zero-dimensional, there exists $g\in C(\mathbf{X})$ such that $g[X]\subseteq\{0,1\}$ and $X\cap f^{-1}[[0, 1/3]]\subseteq g^{-1}[\{0\}]\subseteq f^{-1}[[0, 2/3)]$. Since the discrete space $\mathbf{2}$ is $\mathbf{E}$-compact, there exists $\tilde{g}\in C(v_{\mathbf{E}}\mathbf{X})$ such that $\tilde{g}|X=g$ and $\tilde{g}[v_{\mathbf{E}}\mathbf{X}]\subseteq\{0, 1\}$. The set $U=\tilde{g}^{-1}[\{0\}]$ is clopen in $v_{\mathbf{E}}\mathbf{X}$, $Z_0\subseteq U$ and $U\cap Z_1=\emptyset$. 
\end{proof}

\begin{theorem}
	\label{s5:t2}
	$[\mathbf{ZF}]$ 
	Let $\mathbf{E}$ be a Tychonoff space such that $\mathbb{R}$ is $\mathbf{E}$-compact.  Then, for every $\mathbf{E}$-completely regular space $\mathbf{X}$,  the following conditions are equivalent:
	\begin{enumerate}
		\item[(i)] $\mathbf{X}$ is strongly zero-dimensional.
		\item[(ii)] $v_{\mathbf{E}}\mathbf{X}$ is strongly zero-dimensional. 
	\end{enumerate}
\end{theorem}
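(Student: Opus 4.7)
The plan is to treat the two implications separately. The direction (i) $\Rightarrow$ (ii) is essentially Theorem \ref{s5:t1}. Since $\mathbb{R}$ is $\mathbf{E}$-compact and $\mathbb{R}$ has at least two points, $\mathbf{E}$ must contain at least two points (otherwise no $\mathbf{E}^J$ contains a homeomorphic copy of $\mathbb{R}$). Therefore Theorem \ref{s5:t1} applies and yields the strong zero-dimensionality of $v_{\mathbf{E}}\mathbf{X}$ from that of $\mathbf{X}$.

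For (ii) $\Rightarrow$ (i), the key observation is the following extension property. Since $\mathbb{R}$ is $\mathbf{E}$-compact, Theorem \ref{s4:t8} (applied with $\mathbf{Z}=\mathbb{R}$) shows that every $f\in C(\mathbf{X})$ admits a continuous extension $\tilde f\in C(v_{\mathbf{E}}\mathbf{X})$. I will exploit this to transport clopen separations from $v_{\mathbf{E}}\mathbf{X}$ down to $\mathbf{X}$. Start with a pair $Z_1,Z_2\in\mathcal{Z}(\mathbf{X})$ with $Z_1\cap Z_2=\emptyset$ and choose $g_1,g_2\in C(\mathbf{X})$ with $Z_i=g_i^{-1}[\{0\}]$. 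In $\mathbf{ZF}$ the function
\[
f=\frac{|g_1|}{|g_1|+|g_2|}\in C(\mathbf{X},[0,1])
\]
is well defined because $|g_1|+|g_2|$ never vanishes, and it satisfies $Z_1\subseteq f^{-1}[\{0\}]$ and $Z_2\subseteq f^{-1}[\{1\}]$.

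Next, extend $f$ to $\tilde f\in C(v_{\mathbf{E}}\mathbf{X})$. The sets $\tilde Z_0=\tilde f^{-1}[\{0\}]$ and $\tilde Z_1=\tilde f^{-1}[\{1\}]$ are disjoint zero-sets in $v_{\mathbf{E}}\mathbf{X}$, and contain $Z_1$ and $Z_2$ respectively. By hypothesis $v_{\mathbf{E}}\mathbf{X}$ is strongly zero-dimensional, so there exists $\tilde U\in\mathcal{CO}(v_{\mathbf{E}}\mathbf{X})$ with $\tilde Z_0\subseteq\tilde U\subseteq v_{\mathbf{E}}X\setminus\tilde Z_1$. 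Setting $U=\tilde U\cap X$ yields a clopen subset of $\mathbf{X}$ with $Z_1\subseteq U\subseteq X\setminus Z_2$. Because $\mathbf{X}$ is $\mathbf{E}$-completely regular and $\mathbf{E}$ is Tychonoff, $\mathbf{X}$ embeds into a Tychonoff cube $\mathbf{E}^J$ and is therefore completely regular, which together with the clopen separation of disjoint zero-sets delivers strong zero-dimensionality.

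I do not anticipate a serious obstacle, since the two ingredients needed, namely the $\mathbf{ZF}$-extendability of real-valued continuous functions from $\mathbf{X}$ to $v_{\mathbf{E}}\mathbf{X}$ and the purely algebraic construction of the Urysohn-type separating function $f$, both go through without any choice principle. The only point deserving care is to invoke the extension property in the precise form given by Theorem \ref{s4:t8}, and to note that the hypothesis that $\mathbb{R}$ is $\mathbf{E}$-compact is exactly what is needed here; it is this hypothesis that separates Theorem \ref{s5:t2} from the one-directional Theorem \ref{s5:t1}.
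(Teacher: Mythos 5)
Your proof is correct and follows essentially the same route as the paper: the direction (i)$\Rightarrow$(ii) is delegated to Theorem \ref{s5:t1} in both cases, and for (ii)$\Rightarrow$(i) the paper simply cites \cite[Theorem 6.2.11]{en}, whose standard proof (disjoint zero-sets of $\mathbf{X}$ are completely separated in $v_{\mathbf{E}}\mathbf{X}$ because real-valued continuous functions extend by Theorem \ref{s4:t8}, and the clopen separator in $v_{\mathbf{E}}\mathbf{X}$ restricts to one in $\mathbf{X}$) is exactly what you have written out. Your version is merely more self-contained, and all steps are choice-free as claimed.
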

\begin{proof}
	That (i) implies (ii) follows from Theorem \ref{s5:t1}. That (ii) implies (i) can be deduced from \cite[Theorem 6.2.11]{en}.
\end{proof}

\begin{remark}
	\label{s5:r3}
	In \cite{ny}, P. Nyikos proved in $\mathbf{ZFC}$ that there exists a realcompact,  zero-dimensional space $\mathbf{X}$ such that $v_{\mathbb{N}}\mathbf{X}$ is strongly zero-dimensional but $\mathbf{X}$ is not strongly zero-dimensional. This result due to P. Nyikos was discussed also in \cite{mr4} and \cite{mr5}.  
\end{remark}

In view of Remark \ref{s1:r20}, for every non-empty topological space $\mathbf{X}$, $U_{\aleph_0}(\mathbf{X})$ is a subring of the ring $C(\mathbf{X})$. In the light of Theorem \ref{s1:t21}, for a non-empty $\mathbf{E}$-completely regular space $\mathbf{X}$, it is reasonable to have a look at a relationship between the rings $U_{\aleph_0}(v_{\mathbf{E}}\mathbf{X})$ and $U_{\aleph_0}(\mathbf{X})$ to get other natural conditions under which $\mathbf{X}$ is strongly zero-dimensional or a $P$-space. The following results give such conditions.

\begin{theorem}
	\label{s5:t4}
	$[\mathbf{ZF}]$ 
	Let $\mathbf{E}$ be a Hausdorff space such that $\mathbb{R}_{disc}$ is $\mathbf{E}$-compact. Let $\mathbf{X}$ be a non-empty $\mathbf{E}$-completely regular space. Then $\mathbf{CMC}$ implies that the following conditions are satisfied: 
	\begin{enumerate}
		\item[(i)] $U_{\aleph_0}(\mathbf{X})=\{f\upharpoonright X: f\in U_{\aleph_0}(v_{\mathbf{E}}\mathbf{X})\}$; in consequence, the rings $U_{\aleph_0}(\mathbf{X})$ and $U_{\aleph_0}(v_{\mathbf{E}}\mathbf{X})$ are isomorphic;
		\item[(ii)] $\mathbf{X}$ is strongly zero-dimensional if and only if $$C(\mathbf{X})=\{f\upharpoonright X: f\in U_{\aleph_0}(v_{\mathbf{E}}\mathbf{X})\}.$$
	\end{enumerate}
\end{theorem}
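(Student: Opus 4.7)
The plan is to establish (i) and derive (ii) as an immediate corollary. For (i), the inclusion $\{f\upharpoonright X:f\in U_{\aleph_0}(v_{\mathbf{E}}\mathbf{X})\}\subseteq U_{\aleph_0}(\mathbf{X})$ requires no choice: given $\tilde f\in U_{\aleph_0}(v_{\mathbf{E}}\mathbf{X})$ and $\varepsilon>0$, I restrict a countable clopen cover $\mathcal{A}$ of $v_{\mathbf{E}}X$ witnessing $\osc_A(\tilde f)\le\varepsilon$ to the cover $\{A\cap X:A\in\mathcal{A}\}$ of $\mathbf{X}$ by clopen sets, noting that $\osc_{A\cap X}(\tilde f\upharpoonright X)\le\osc_A(\tilde f)$. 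Once (i) is proved, the asserted ring isomorphism is just the restriction map, which is injective by density of $X$ in $v_{\mathbf{E}}\mathbf{X}$ combined with Proposition~\ref{s1:p12}.

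The reverse inclusion is where the assumptions actually do work. The two key ingredients are Theorem~\ref{s4:t8}, which (since $\mathbb{R}_{disc}$ is $\mathbf{E}$-compact) furnishes a unique continuous extension $\tilde g\in C(v_{\mathbf{E}}\mathbf{X},\mathbb{R}_{disc})$ for every $g\in C(\mathbf{X},\mathbb{R}_{disc})$, and Theorem~\ref{s1:t21}(c)(ii), which under $\mathbf{CMC}$ identifies $U_{\aleph_0}(\mathbf{X})$ with $\cl_{C_u(\mathbf{X})}C(\mathbf{X},\mathbb{R}_{disc})$. Given $f\in U_{\aleph_0}(\mathbf{X})$, for every $n\in\mathbb{N}$ the set $G_n$ of $g\in C(\mathbf{X},\mathbb{R}_{disc})$ with $|f-g|\le 2^{-n}$ on $X$ is non-empty, so $\mathbf{CMC}$ supplies non-empty finite $H_n\subseteq G_n$. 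I set $g_n=\min H_n$ (pointwise minimum); this lies in $C(\mathbf{X},\mathbb{R}_{disc})$ because any map out of the discrete space $\mathbb{R}_{disc}^{H_n}$ is continuous, and the bound $|f-g_n|\le 2^{-n}$ on $X$ follows at once from the uniform estimates satisfied by the members of $H_n$. This is exactly how the finite-set choice of $\mathbf{CMC}$ gets converted into a concrete approximating sequence, and is the subtle point of the proof.

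The gluing step is routine: the pointwise bound $|g_n-g_m|\le 2^{-n}+2^{-m}$ on the dense set $X$ extends, by continuity, to the same bound on $v_{\mathbf{E}}X$, making $(\tilde g_n)_{n\in\mathbb{N}}$ uniformly Cauchy in $C(v_{\mathbf{E}}\mathbf{X})$; its uniform limit $\tilde f$ satisfies $\tilde f\upharpoonright X=f$ and lies in $\cl_{C_u(v_{\mathbf{E}}\mathbf{X})}C(v_{\mathbf{E}}\mathbf{X},\mathbb{R}_{disc})=U_{\aleph_0}(v_{\mathbf{E}}\mathbf{X})$ by Theorem~\ref{s1:t21}(c)(ii) applied to $v_{\mathbf{E}}\mathbf{X}$. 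For (ii), combining (i) with Theorem~\ref{s1:t21}(c)(iv) is all that is needed: under $\mathbf{CMC}$ the completely regular $\mathbf{X}$ is strongly zero-dimensional iff $U_{\aleph_0}(\mathbf{X})=C(\mathbf{X})$, which (i) rewrites as $C(\mathbf{X})=\{f\upharpoonright X:f\in U_{\aleph_0}(v_{\mathbf{E}}\mathbf{X})\}$. The main obstacle throughout is that bare $\mathbf{CMC}$ does not provide a sequence of witnesses from $(G_n)$; the $\min H_n$ device is the key move, exploiting the lattice structure of $C(\mathbf{X},\mathbb{R}_{disc})$ to canonically reduce each finite $H_n$ to a single usable approximant.
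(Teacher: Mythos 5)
Your proof is correct and follows essentially the same route as the paper's: both reduce the hard inclusion of (i) to approximating $f\in U_{\aleph_0}(\mathbf{X})$ by a $\mathbf{CMC}$-extracted sequence from $C(\mathbf{X},\mathbb{R}_{disc})$ (via Theorem~\ref{s1:t21}(c)), extending each term uniquely over $v_{\mathbf{E}}\mathbf{X}$ using the $\mathbf{E}$-compactness of $\mathbb{R}_{disc}$, and passing to the uniform limit, with (ii) then read off from Theorem~\ref{s1:t21}(c). The only differences are cosmetic: you spell out the $\min H_n$ device that the paper delegates to the proof of Theorem 2.10 of \cite{kow}, and you obtain the Cauchy estimate for the extended sequence by a global density-and-continuity argument where the paper argues pointwise via local constancy of the $\tilde{f}_n$.
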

\begin{proof}
	It is obvious that the inclusion $\{f\upharpoonright X: f\in U_{\aleph_0}(v_{\mathbf{E}}\mathbf{X})\}\subseteq U_{\aleph_0}(\mathbf{X})$ holds in $\mathbf{ZF}$. For the rest of the proof, we assume $\mathbf{ZF+CMC}$.
	
	(i) To show that $U_{\aleph_0}(\mathbf{X})\subseteq \{f\upharpoonright X: f\in U_{\aleph_0}(v_{\mathbf{E}}\mathbf{X})\}$, let us fix $f\in U_{\aleph_0}(\mathbf{X})$. It follows from Theorem \ref{s1:t21}(c) that $f\in \cl_{C_{u}(\mathbf{X})}(C(\mathbf{X}, \mathbb{R}_{disc}))$. Thus, by $\mathbf{CMC}$, there exists a sequence $(f_n)_{n\in\mathbb{N}}$ of functions from $C(\mathbf{X}, \mathbb{R}_{disc})$ which converges in $C_{u}(\mathbf{X})$ to $f$ (see \cite[the proof of Theorem 2.10]{kow}).  Since $\mathbb{R}_{disc}$ is $\mathbf{E}$-compact, for every $n\in\mathbb{N}$, there exists a unique $\tilde{f}_n\in C(v_{\mathbf{E}}\mathbf{X}, \mathbb{R}_{disc})$ such that $f=\tilde{f}\upharpoonright X$. Let $p\in v_{\mathbf{E}} X$. To show that the sequence $(\tilde{f}_n(p))_{n\in\mathbb{N}}$ is a Cauchy sequence in $\mathbb{R}$, we take any real number $\varepsilon>0$ and choose $n_0\in\mathbb{N}$ such that, for every $n\in\mathbb{N}$ with $n\geq n_0$ and for every $x\in X$, $|f_n(x)-f(x)|<\frac{\varepsilon}{2}$. Let $m,n\in\mathbb{N}$ and $n,m>n_0$. By the continuity of $\tilde{f}_n, \tilde{f}_m$, there exists an open neighborhood $V$ of $p$ in $v_{\mathbf{E}}\mathbf{X}$ such that, for every $y\in V$, $\tilde{f}_m(p)=\tilde{f}_m(y)$ and $\tilde{f}_n(p)=\tilde{f}_n(y)$. There exists $x_0\in V\cap X$. We have $|\tilde{f}_m(p)-\tilde{f}_n(p)|\leq |\tilde{f}_m(p)-\tilde{f}_m(x_0)|+|\tilde{f}_m(x_0)-\tilde{f}_n(x_0)|+|\tilde{f}_n(x_0)-\tilde{f}_n(p)|=|f_n(x_0)-f_m(x_0)|\leq |f_n(x_0)-f(x_0)|+|f(x_0)-f_m(x_0)|<\varepsilon$. This proves that $(\tilde{f}_n(p))_{n\in\mathbb{N}}$ is a Cauchy sequence in $\mathbb{R}$. We define a function $\tilde{f}: v_{\mathbf{E}}X\to\mathbb{R}$ by putting $\tilde{f}(p)=\lim\limits_{n\to+\infty}\tilde{f}_n(p)$ for every $p\in v_{\mathbf{E}}X$.  Then $\tilde{f}\in \cl_{C_{u}(v_{\mathbf{E}}\mathbf{X})}C(v_\mathbf{E}\mathbf{X}, \mathbb{R}_{disc})$, so $\tilde{f}\in U_{\aleph_0}(v_{\mathbf{E}}\mathbf{X})$ by Theorem \ref{s1:t21}(c). It is easily seen that $\tilde{f}\upharpoonright X=f$. Hence $\mathbf{CMC}$ implies (i) in $\mathbf{ZF}$.
	
	(ii) Now, suppose that $\mathbf{X}$ is strongly zero-dimensional. By Theorem \ref{s1:t21}, $C(\mathbf{X})=U_{\aleph_0}(\mathbf{X})$. This, together with  (i), implies that $C(\mathbf{X})=\{f\upharpoonright X: f\in U_{\aleph_0}(v_{\mathbb{R}}\mathbf{X})\}$. On the other hand, if $\mathbf{X}$ is an $\mathbf{E}$-completely regular space for which $C(\mathbf{X})=\{f\upharpoonright X: f\in U_{\aleph_0}(v_{\mathbf{E}}\mathbf{X})\}$, it follows from (i) that $C(\mathbf{X})= U_{\aleph_0}(\mathbf{X})$, so $\mathbf{X}$ is strongly zero-dimensional by Theorem \ref{s1:t21}(c).
\end{proof}

Let us notice that, by \cite[the hint to Exercise 3.1 H(a)]{en}, the following proposition holds:

\begin{proposition}
	\label{s5:p5}
	$[\mathbf{ZF}]$ $\mathbb{R}_{disc}$ is $\mathbb{N}$-compact, so also $\mathbb{R}$-compact.
\end{proposition}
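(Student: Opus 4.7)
The plan is to use Corollary~\ref{s4:c3} and verify that the canonical evaluation map $e_{\mathcal{F}}\colon\mathbb{R}_{disc}\to\mathbb{N}^{\mathcal{F}}$ has closed image, where $\mathcal{F}=C(\mathbb{R}_{disc},\mathbb{N})=\mathbb{N}^{\mathbb{R}}$ (every function is continuous from a discrete space). I would fix $t\in\cl_{\mathbb{N}^{\mathcal{F}}}(e_{\mathcal{F}}[\mathbb{R}])$ and produce $x_0\in\mathbb{R}$ with $t=e_{\mathcal{F}}(x_0)$, entirely within $\mathbf{ZF}$. Being in the closure means: for every finite $F\subseteq\mathcal{F}$ there exists $x_F\in\mathbb{R}$ with $h(x_F)=t(h)$ for all $h\in F$.

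First I would define $\mathcal{U}_t:=\{A\subseteq\mathbb{R}:t(\chi_A)=1\}$, where $\chi_A\in\mathcal{F}$ is the characteristic function of $A$, and verify by the usual finite-consistency arguments that $\mathcal{U}_t$ is an ultrafilter on $\mathcal{P}(\mathbb{R})$ (routine checks of $\emptyset\notin\mathcal{U}_t$, ultra-dichotomy, closure under pairwise intersection, and upward closure, all from inspecting $x_F$ for small $F$). I would then show $\mathcal{U}_t$ is $\sigma$-complete: for any countable partition $\mathbb{R}=\bigcup_{n\in\mathbb{N}}A_n$ into disjoint sets, the labelling $h(x)=n$ for $x\in A_n$ lies in $\mathcal{F}$, and testing with $F=\{h,\chi_{A_{t(h)}}\}$ forces $A_{t(h)}\in\mathcal{U}_t$; the countable-intersection form then follows by partitioning $\mathbb{R}$ into the successive differences of an arbitrary countable descending family in $\mathcal{U}_t$ together with its total intersection.

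The key step is the $\mathbf{ZF}$-fact that every $\sigma$-complete ultrafilter $\mathcal{U}$ on $\mathcal{P}(\mathbb{R})$ is principal. For each $n\in\mathbb{N}$, $\sigma$-completeness applied to the countable partition $\{[k/n,(k+1)/n):k\in\mathbb{Z}\}$ yields a \emph{unique} $k_n\in\mathbb{Z}$ with $I_n:=[k_n/n,(k_n+1)/n)\in\mathcal{U}$, so the sequence $(I_n)_{n\in\mathbb{N}}$ is well-defined without any choice principle. Then $\bigcap_{n\in\mathbb{N}}I_n\in\mathcal{U}$ is non-empty and of diameter $0$, hence equals a singleton $\{x_0\}$, so $\{x_0\}\in\mathcal{U}$. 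Applied to $\mathcal{U}_t$ this furnishes $x_0\in\mathbb{R}$, and for any $h\in\mathcal{F}$ the $\sigma$-completeness argument for the partition $\mathbb{R}=\bigcup_{m\in\mathbb{N}}h^{-1}(m)$ yields $h^{-1}(t(h))\in\mathcal{U}_t$, whence $x_0\in h^{-1}(t(h))$ and $h(x_0)=t(h)$, i.e., $t=e_{\mathcal{F}}(x_0)$. This proves $\mathbb{R}_{disc}$ is $\mathbb{N}$-compact; the $\mathbb{R}$-compactness then follows at once, because $\mathbb{N}_{disc}$ is a closed subspace of $\mathbb{R}$, so $\mathbb{N}^J$ is closed in $\mathbb{R}^J$ and any closed subspace of $\mathbb{N}^J$ remains closed in $\mathbb{R}^J$.

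The main obstacle is the bijective correspondence between points of $\cl(e_{\mathcal{F}}[\mathbb{R}])$ and $\sigma$-complete ultrafilters on $\mathcal{P}(\mathbb{R})$: this is essentially Chew's theorem (Theorem~\ref{s6:t1}) in disguise and, while it can be extracted directly, it requires care in $\mathbf{ZF}$ because the $\sigma$-completeness direction uses essentially the $\mathbb{N}$-valued (and not merely $\{0,1\}$-valued) coordinates in $\mathcal{F}$. The dyadic-interval step itself is short and is the only place where the specific metric structure of $\mathbb{R}$ (as opposed to that of an arbitrary uncountable set) actually enters.
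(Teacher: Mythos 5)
Your proof is correct, and it is in effect a complete $\mathbf{ZF}$ working-out of the only argument the paper has in mind: the paper gives no proof of Proposition \ref{s5:p5} at all beyond citing a hint in Engelking, and that hint is precisely the scheme you execute (characteristic functions turn a point $t$ of the closure into an ultrafilter $\mathcal{U}_t$, the $\mathbb{N}$-valued labelling functions make $\mathcal{U}_t$ $\sigma$-complete, and the interval partitions $\{[k/n,(k+1)/n):k\in\mathbb{Z}\}$ force principality, all without any choice since each verification invokes a single finite coordinate set and the $k_n$ are uniquely determined). The only cosmetic repair needed is that the paper takes $\mathbb{N}=\omega\setminus\{0\}$, so $\chi_A$ should be replaced by, say, $1+\chi_A$ in order to lie in $C(\mathbb{R}_{disc},\mathbb{N})$.
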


\begin{theorem}
	\label{s5:t6}
	In $\mathbf{ZF}$,  $\mathbf{CMC}$ implies that, for every Tychonoff space $\mathbf{X}$, the following conditions are equivalent:
	\begin{enumerate}
		\item[(i)] $\mathbf{X}$ is strongly zero-dimensional;
		\item[(ii)] $C(\mathbf{X})=\{f\upharpoonright X: f\in U_{\aleph_0}(v_{\mathbb{R}}\mathbf{X})\}$;
		\item[(iii)] $\mathbf{X}$ is zero-dimensional and $C(\mathbf{X})=\{f\upharpoonright X: f\in U_{\aleph_0}(v_{\mathbb{N}}\mathbf{X})\}$.
	\end{enumerate}
\end{theorem}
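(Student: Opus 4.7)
The plan is to derive the theorem directly from Theorem~\ref{s5:t4}, invoked twice: once with $\mathbf{E}=\mathbb{R}$ and once with $\mathbf{E}=\mathbb{N}$. By Proposition~\ref{s5:p5}, $\mathbb{R}_{disc}$ is both $\mathbb{R}$-compact and $\mathbb{N}$-compact, so in each case the standing hypothesis of Theorem~\ref{s5:t4} is met in $\mathbf{ZF}$, and only $\mathbf{CMC}$ (which we assume) is additionally needed to conclude.

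For the equivalence \emph{(i)$\Leftrightarrow$(ii)}, I will note that a space is Tychonoff exactly when it is $\mathbb{R}$-completely regular (the remark following Definition~\ref{s1:d15}). Thus Theorem~\ref{s5:t4}(ii) applied with $\mathbf{E}=\mathbb{R}$ gives the equivalence immediately.

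For \emph{(i)$\Rightarrow$(iii)}, I would first observe that a strongly zero-dimensional space is zero-dimensional: if $x\in U\in\tau$, complete regularity yields $f\in C(\mathbf{X},[0,1])$ with $f(x)=0$ and $f[X\setminus U]\subseteq\{1\}$; the disjoint zero-sets $f^{-1}[\{0\}]$ and $f^{-1}[\{1\}]$ are separated by a clopen set $V$ with $x\in V\subseteq U$. Being zero-dimensional and $T_1$, $\mathbf{X}$ embeds (via characteristic functions of clopen sets) into $\mathbf{2}^{\mathcal{CO}(\mathbf{X})}$ and hence into $\mathbb{N}^{\mathcal{CO}(\mathbf{X})}$, so $\mathbf{X}$ is $\mathbb{N}$-completely regular. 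Then Theorem~\ref{s5:t4}(ii) with $\mathbf{E}=\mathbb{N}$ yields $C(\mathbf{X})=\{f\upharpoonright X:f\in U_{\aleph_0}(v_{\mathbb{N}}\mathbf{X})\}$, which is the second clause of (iii).

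For \emph{(iii)$\Rightarrow$(i)}, the same embedding argument shows that a Tychonoff zero-dimensional space is $\mathbb{N}$-completely regular, so Theorem~\ref{s5:t4}(ii) with $\mathbf{E}=\mathbb{N}$ applies and the displayed equality in (iii) is precisely the characterization of strong zero-dimensionality. Since the entire proof is a direct invocation of the already-established Theorem~\ref{s5:t4}, there is essentially no obstacle beyond checking $\mathbf{E}$-complete regularity in each case; the only nontrivial verification is the standard fact that strong zero-dimensionality implies zero-dimensionality, which was sketched above.
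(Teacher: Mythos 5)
Your proposal is correct and follows essentially the same route as the paper, whose proof is a one-line citation of Proposition \ref{s5:p5}, Theorem \ref{s5:t4}, Theorem \ref{s1:t21}(c), and the fact that every zero-dimensional $T_1$-space is $\mathbb{N}$-completely regular. You merely spell out the details the paper leaves implicit (the verification that strong zero-dimensionality implies zero-dimensionality and the embedding into $\mathbb{N}^{\mathcal{CO}(\mathbf{X})}$), and both steps are carried out correctly.
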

\begin{proof}
	This follows directly from Proposition \ref{s5:p5}, Theorems \ref{s5:t4}, \ref{s1:t21}(c) and the fact that every zero-dimensional $T_1$- space is $\mathbb{N}$-completely regular.
\end{proof}

\begin{theorem}
	\label{s5:t7}
	$[\mathbf{ZF}]$ Let $\mathbf{E}$ be a Tychonoff space such that $\mathbb{R}_{disc}$ is $\mathbf{E}$-compact. Let $\mathbf{X}$ be an $\mathbf{E}$-completely regular $P$-space. Then $\mathbf{CMC}$ implies that $v_{\mathbf{E}}\mathbf{X}$ is a $P$-space.
\end{theorem}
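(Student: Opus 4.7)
The strategy is to apply Theorem~\ref{s1:t21}(c)(v), under the assumed $\mathbf{CMC}$, to the $\mathbf{E}$-compact extension $v_{\mathbf{E}}\mathbf{X}$. Since $\mathbf{E}$ is Tychonoff and $v_{\mathbf{E}}\mathbf{X}$ is $\mathbf{E}$-compact, $v_{\mathbf{E}}\mathbf{X}$ is (homeomorphic with) a closed subspace of a power of $\mathbf{E}$ and is therefore itself Tychonoff, in particular completely regular. In view of Theorem~\ref{s1:t21}(c)(v), it then suffices to establish the equality
\[
C(v_{\mathbf{E}}\mathbf{X}, \mathbb{R}_{disc}) = C(v_{\mathbf{E}}\mathbf{X}).
\]

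To prove this equality, I would fix an arbitrary $f \in C(v_{\mathbf{E}}\mathbf{X})$ and examine its restriction $f \upharpoonright X$. Since $\mathbf{X}$ is a $P$-space, Theorem~\ref{s1:t21}(b) gives $C(\mathbf{X}) = C(\mathbf{X}, \mathbb{R}_{disc})$, so $f \upharpoonright X$ is continuous into $\mathbb{R}_{disc}$. Because $\mathbb{R}_{disc}$ is $\mathbf{E}$-compact by hypothesis, the universal property of the Hewitt $\mathbf{E}$-compact extension stated in Theorem~\ref{s4:t8} yields a continuous map $\tilde{g}\colon v_{\mathbf{E}}\mathbf{X} \to \mathbb{R}_{disc}$ extending $f\upharpoonright X$. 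Viewing $\tilde{g}$ as a map into $\mathbb{R}$ (composing with the continuous identity $\mathbb{R}_{disc} \to \mathbb{R}$), both $\tilde{g}$ and $f$ are continuous maps $v_{\mathbf{E}}\mathbf{X} \to \mathbb{R}$ that agree on the dense subspace $X$; Proposition~\ref{s1:p12} therefore forces $f = \tilde{g}$. Hence $f$ itself is continuous into $\mathbb{R}_{disc}$, giving $C(v_{\mathbf{E}}\mathbf{X}) \subseteq C(v_{\mathbf{E}}\mathbf{X}, \mathbb{R}_{disc})$; the reverse inclusion is immediate.

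The one place where $\mathbf{CMC}$ is genuinely consumed is the final appeal to Theorem~\ref{s1:t21}(c)(v), which converts the just-established ring equality into the conclusion that $v_{\mathbf{E}}\mathbf{X}$ is a $P$-space. I anticipate no serious obstacle here: the whole argument rests on the interplay between the $P$-space characterization through $\mathbb{R}_{disc}$-valued functions and the extension property enjoyed by $v_{\mathbf{E}}\mathbf{X}$ precisely because $\mathbb{R}_{disc}$ is $\mathbf{E}$-compact. The crucial observation, and the step that carries all of the content, is the identification $f = \tilde{g}$ via Proposition~\ref{s1:p12}, which ensures that the $\mathbb{R}$-valued extension produced by continuity into $\mathbb{R}$ and the $\mathbb{R}_{disc}$-valued extension produced by Theorem~\ref{s4:t8} are the same function.
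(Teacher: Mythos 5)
Your proposal is correct and follows essentially the same route as the paper's own proof: reduce to the ring equality $C(v_{\mathbf{E}}\mathbf{X},\mathbb{R}_{disc})=C(v_{\mathbf{E}}\mathbf{X})$ via Theorem~\ref{s1:t21}(c), use the $P$-space property of $\mathbf{X}$ to see that $f\upharpoonright X$ is $\mathbb{R}_{disc}$-continuous, extend over $v_{\mathbf{E}}\mathbf{X}$ using the $\mathbf{E}$-compactness of $\mathbb{R}_{disc}$, and identify the extension with $f$ by Proposition~\ref{s1:p12}. Your explicit remark that $v_{\mathbf{E}}\mathbf{X}$ is completely regular (needed to invoke Theorem~\ref{s1:t21}(c)(v)) is a small point the paper leaves implicit, but otherwise the arguments coincide.
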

\begin{proof}
	Let us assume that $\mathbf{CMC}$ holds. In view of Theorem \ref{s1:t21}(c), to prove that $v_{\mathbf{E}}\mathbf{X}$ is a $P$-space, it suffices to check that $C(v_{\mathbf{E}}\mathbf{X}, \mathbb{R}_{disc})=C(v_{\mathbf{E}}\mathbf{X})$. Since $\mathbb{R}_{disc}$ is $\mathbf{E}$-compact, $C(\mathbf{X}, \mathbb{R}_{disc})=\{f\upharpoonright X: f\in C(v_{\mathbf{E}}\mathbf{X}, \mathbb{R}_{disc})\}$. Let $f\in C(v_{\mathbf{E}}\mathbf{X})$. Then, since $\mathbf{X}$ is a P-space, it follows from Theorem \ref{s1:t21}(c) that $f\upharpoonright X\in C(\mathbf{X}, \mathbb{R}_{disc})$. Hence, since $\mathbb{R}_{disc}$ is $\mathbf{E}$-compact, there exists $g\in C(v_{\mathbf{E}}\mathbf{X}, \mathbb{R}_{disc})$ with $g\upharpoonright X=f\upharpoonright X$. Then $g=f$ by Proposition \ref{s1:p12} and, in consequence,  $C(v_{\mathbf{E}}\mathbf{X}, \mathbb{R}_{disc})=C(v_{\mathbf{E}}\mathbf{X})$.
\end{proof}

A hard open problem seems to be if $\mathbf{CMC}$ can be omitted or replaced by a weaker assumption in Theorems \ref{s5:t4}, \ref{s5:t6} and \ref{s5:t7}. In particular, the following open problems are interesting:

\begin{problem}
	\label{s5:q8} 
	Let $\mathbf{E}$ be a Tychonoff space such that $\mathbb{R}_{disc}$ is $\mathbf{E}$-compact. 

\begin{enumerate}
	\item[(i)]  If $\mathbf{X}$ is an $\mathbf{E}$-completely regular $P$-space, may $v_{\mathbf{E}}\mathbf{X}$ fail to be a $P$-space in a model of $\mathbf{ZF}$?
	\item[(ii)] If $\mathbf{X}$ is an  $\mathbf{E}$-completely regular space, may the rings $U_{\aleph_0}(\mathbf{X})$ and $U_{\aleph_0}(v_{\mathbf{E}}\mathbf{X})$ fail to be isomorphic in a model of $\mathbf{ZF}$?
\end{enumerate}
\end{problem}
\section{$\mathbb{N}$-compactness and realcompactness via filters}
\label{s6}

In \cite[Theorem A]{Chew}, Kim-Peu Chew gave a very useful characterization of $\mathbb{N}$-compact spaces in $\mathbf{ZFC}$ by showing that it holds in $\mathbf{ZFC}$ that a zero-dimensional $T_1$-space $\mathbf{X}$ is $\mathbb{N}$-compact if and only if every clopen ultrafilter in $\mathbf{X}$ with the countable intersection property is fixed. To prove the necessity of this condition,  Kim-Peu Chew used in \cite{Chew} the statement that, for every non-empty set $J$, the product $\mathbb{N}(\infty)^J$ is compact. However, in $\mathbf{ZF}$, the space  $\mathbb{N}(\infty)^J$ may fail to be compact, so Chew's proof of Theorem A of \cite{Chew} is incorrect in $\mathbf{ZF}$, but it is easily seen that it remains correct in $\mathbf{ZF+BPI}$. Let us modify Chew's proof to get a correct $\mathbf{ZF}$-proof of Theorem A of \cite{Chew}.

\begin{theorem}
	\label{s6:t1}
	$[\mathbf{ZF}]$ A zero-dimensional $T_1$-space $\mathbf{X}$ is $\mathbb{N}$-compact if and only if every clopen ultrafilter in $\mathbf{X}$ with c.i.p. is fixed. (Cf. \cite[Theorem A]{Chew}.)
\end{theorem}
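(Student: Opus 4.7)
The plan is to avoid Chew's appeal to compactness of $\mathbb{N}(\infty)^J$ (which implies $\mathbf{BPI}$ and is unavailable in $\mathbf{ZF}$) by working directly with an embedding witnessing $\mathbb{N}$-compactness for one direction and with the Hewitt $\mathbb{N}$-compactification $v_{\mathbb{N}}\mathbf{X}$ for the other.

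For necessity, I assume $\mathbf{X}$ is $\mathbb{N}$-compact and fix a homeomorphic embedding $h:\mathbf{X}\to\mathbb{N}^J$ with $h[X]$ closed in $\mathbb{N}^J$. Given a clopen ultrafilter $\mathcal{F}$ with c.i.p., for each $j\in J$ the family $\{h^{-1}[\pi_j^{-1}[\{n\}]]:n\in\mathbb{N}\}$ is a disjoint countable clopen cover of $X$, and c.i.p. forces exactly one $n_j\in\mathbb{N}$ with $h^{-1}[\pi_j^{-1}[\{n_j\}]]\in\mathcal{F}$ (if every complement lay in $\mathcal{F}$, a countable subfamily would intersect in $\emptyset$). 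Assembling $p=(n_j)_{j\in J}\in\mathbb{N}^J$, the finite intersection property of $\mathcal{F}$ applied to basic neighborhoods of $p$ in the product topology yields $p\in\overline{h[X]}=h[X]$, giving $x=h^{-1}(p)\in X$. If some $C\in\mathcal{F}$ excluded $x$, then $p\in h[X]\setminus h[C]$ would sit inside an open $U\subseteq\mathbb{N}^J$, and a basic neighborhood of $p$ inside $U$ would produce a finite intersection of members of $\mathcal{F}$ disjoint from $C$, contradicting filter-ness; hence $x\in\bigcap\mathcal{F}$.

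For sufficiency, I assume every clopen ultrafilter with c.i.p. is fixed, suppose toward contradiction that $\mathbf{X}$ is not $\mathbb{N}$-compact, and fix $p\in v_{\mathbb{N}}\mathbf{X}\setminus X$ using Corollary \ref{s4:c10}. For each $C\in\mathcal{CO}(\mathbf{X})$ the function $\chi_C:X\to\{1,2\}\subseteq\mathbb{N}$ (value $1$ on $C$, $2$ off $C$) lies in $C(\mathbf{X},\mathbb{N})$ and extends uniquely to $\tilde\chi_C:v_{\mathbb{N}}\mathbf{X}\to\mathbb{N}$; I set $\tilde C=\tilde\chi_C^{-1}[\{1\}]$ and $\mathcal{F}=\{C\in\mathcal{CO}(\mathbf{X}):p\in\tilde C\}$. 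Density of $X$ and discreteness of $\mathbb{N}$ force $\tilde\chi_C(p)\in\{1,2\}$ and $\tilde C_1\cap\tilde C_2=\widetilde{C_1\cap C_2}$ (two clopen subsets of $v_{\mathbb{N}}\mathbf{X}$ agreeing on the dense set $X$ must coincide), making $\mathcal{F}$ a clopen ultrafilter. It is free: any $x_0\in\bigcap\mathcal{F}$ would satisfy $\tilde f(p)=f(x_0)$ for every $f\in C(\mathbf{X},\mathbb{N})$ (apply the construction to $f^{-1}[\{f(x_0)\}]$), forcing $p=x_0\in X$.

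The decisive step, and main obstacle, is verifying c.i.p. for $\mathcal{F}$ without $\mathbf{BPI}$. Given $\{C_n\}_{n\in\mathbb{N}}\subseteq\mathcal{F}$, I replace $C_n$ by $\bigcap_{k\leq n}C_k$ to arrange a decreasing sequence still in $\mathcal{F}$, and suppose toward contradiction that $\bigcap_n C_n=\emptyset$. The trick is to encode the whole sequence into one continuous $\mathbb{N}$-valued function $h:X\to\mathbb{N}$ by $h(x)=\min\{n\geq 1:x\notin C_n\}$, well-defined by the emptiness hypothesis; with $C_0=X$, the level sets $h^{-1}[\{n\}]=C_{n-1}\setminus C_n$ are clopen for each $n\geq 1$, so $h\in C(\mathbf{X},\mathbb{N})$. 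Extending to $\tilde h$ on $v_{\mathbb{N}}\mathbf{X}$ and evaluating at $p$ gives $n_0=\tilde h(p)\in\mathbb{N}$ with $\tilde h^{-1}[\{n_0\}]=\widetilde{h^{-1}[\{n_0\}]}$ a clopen neighborhood of $p$, so $h^{-1}[\{n_0\}]\in\mathcal{F}$; but $h^{-1}[\{n_0\}]\cap C_{n_0}=\emptyset$ while $C_{n_0}\in\mathcal{F}$, contradiction. This explicit encoding of a countable clopen family by a single $\mathbb{N}$-valued function replaces Chew's compactness argument and is the device that lets the proof go through in $\mathbf{ZF}$.
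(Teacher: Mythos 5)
Your proof is correct, and it takes a genuinely different route from the paper's in both directions. For the direction ``$\mathbb{N}$-compact $\Rightarrow$ every clopen ultrafilter with c.i.p.\ is fixed'' (the one where Chew's original argument fails in $\mathbf{ZF}$), the paper passes to the Hewitt $\mathbb{N}(\infty)$-compact extension $\mathbf{P}=v_{\mathbb{N}(\infty)}\mathbf{X}$, pushes the ultrafilter forward, and runs a choiceless Tychonoff-style clustering argument that exploits the compactness and canonical well-ordering of $\mathbb{N}(\infty)$, finally invoking Corollary \ref{s4:c15} to produce a function hitting $\infty$ and hence a countable subfamily with empty intersection; you instead work directly inside a closed copy of $\mathbf{X}$ in $\mathbb{N}^J$, use the c.i.p.\ together with the ultrafilter dichotomy to single out, for each coordinate $j$, the unique fibre $h^{-1}[\pi_j^{-1}[\{n_j\}]]$ in $\mathcal{F}$ (so the point $p=(n_j)_{j\in J}$ is defined without any choice), and land in $h[X]$ by closedness. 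This is more elementary -- it needs neither $\mathbb{N}(\infty)$ nor Theorem \ref{s4:t14} -- and it proves the implication directly rather than in the contrapositive form ``free $\Rightarrow$ not c.i.p.'' For the converse direction the paper simply defers to Chew, observing his argument is already valid in $\mathbf{ZF}$, whereas you supply a self-contained proof via $v_{\mathbb{N}}\mathbf{X}$ (Theorem \ref{s4:t8} and Corollary \ref{s4:c10}); your key device of encoding a decreasing clopen sequence with empty intersection into the single function $h(x)=\min\{n\geq 1: x\notin C_n\}\in C(\mathbf{X},\mathbb{N})$, extending it over $v_{\mathbb{N}}\mathbf{X}$, and evaluating at the outside point $p$ is exactly the right replacement for any compactness appeal and is choice-free. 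The only places worth spelling out in a final write-up are the routine facts that two clopen subsets of $v_{\mathbb{N}}\mathbf{X}$ with the same trace on the dense set $X$ coincide (which underlies your identities $\tilde C_1\cap\tilde C_2=\widetilde{C_1\cap C_2}$ and $\tilde h^{-1}[\{n_0\}]=\widetilde{h^{-1}[\{n_0\}]}$) and that $\tilde f(p)=f(x_0)$ for all $f\in C(\mathbf{X},\mathbb{N})$ forces $p=x_0$ because $v_{\mathbb{N}}\mathbf{X}$ sits inside $\mathbb{N}^{C(\mathbf{X},\mathbb{N})}$ via the evaluation map.
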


\begin{proof}
	Let $\mathbf{X}$ be a zero-dimensional $T_1$-space. Chew's proof given in \cite{Chew} that if every clopen ultrafilter in $\mathbf{X}$ with c.i.p. is fixed, then $\mathbf{X}$ is $\mathbb{N}$-compact is correct also in $\mathbf{ZF}$. So, let us prove in $\mathbf{ZF}$ that if $\mathbf{X}$ is $\mathbb{N}$-compact, then every clopen ultrafilter in $\mathbf{X}$ with c.i.p. is fixed. 
	
	Suppose that $\mathcal{U}$ is a free clopen ultrafilter on an $\mathbb{N}$-compact space $\mathbf{X}$. We are going to show in $\mathbf{ZF}$ that $\mathcal{U}$ does not have the countable intersection property. To this aim, we put $J= C(\mathbf{X}, \mathbb{N}(\infty))$, $\mathbf{Y}=\mathbb{N}(\infty)^{J}$ and $P=\cl_{\mathbf{Y}}e_J[X]$. Then, by Theorem \ref{s4:t8}, for the subspace $\mathbf{P}$ of $\mathbf{Y}$, we  have $\mathbf{P}=v_{\mathbb{N}(\infty)}\mathbf{X}$. Since $C(\mathbf{X}, \mathbf{2})=\{f\circ e_J: f\in C(\mathbf{P}, \mathbf{2})\}$, for every $U\in\mathcal{U}$, the set $\cl_{\mathbf{P}}e_J[U]$ is clopen in $\mathbf{P}$. Let $\mathcal{U}^{\ast}=\{\cl_{\mathbf{P}}e_J[U]: U\in\mathcal{U}\}$. Then $\mathcal{U}^{\ast}$ is a clopen filter on $\mathbf{P}$. If $A\in\mathcal{CO}(\mathbf{P})$ and $A\notin\mathcal{U}^{\ast}$, then there exists $U_A\in\mathcal{U}$ such that $U_A\cap e_J^{-1}[A]=\emptyset$. Then $A\cap\cl_{\mathbf{P}}e_J[U_A]=\emptyset$. This implies that $\mathcal{U}^{\ast}$ is an ultrafilter in $\mathcal{CO}(\mathbf{P})$. Now, let us modify the standard proof of the Tychonoff Product Theorem.  
	
	For every $j\in J$, let $\mathcal{U}_j=\{\cl_{\mathbb{N}(\infty)}\pi_j[U]: U\in\mathcal{U}^{\ast}\}$. Then, for every $j\in J$,  $\mathcal{U}_j$ has the finite intersection property.  Since $\mathbb{N}(\infty)$ is compact and well-ordered, we can fix $p\in\mathbf{Y}$ such that, for every $j\in J$, $p(j)\in\bigcap\mathcal{U}_j$. Suppose that there exists $U_0\in\mathcal{U}^{\ast}$ such that $p\notin U_0$. There exists a family $\{V_j: j\in J\}$ of clopen sets of $\mathbb{N}(\infty)$ such that the set $K=\{j\in J: V_j\neq \mathbb{N}(\infty)\}$ is finite and, for $V=\prod\limits_{j\in J}V_j$, we have $p\in V\subseteq Y\setminus U_0$. Clearly, $K\neq\emptyset$ because $U_0\neq\emptyset$. For every $j\in K$ and $U\in\mathcal{U}^{\ast}$, we have $U\cap \pi_j^{-1}[V_j]\neq\emptyset$, so $\pi_j^{-1}[V_j]\cap P\in\mathcal{U}^{\ast}$ because $\mathcal{U}^{\ast}$ is a clopen ultrafilter in $\mathbf{P}$. This implies that $V\cap P\in\mathcal{U}^{\ast}$ which is impossible. The contradiction obtained proves that $p\in\bigcap\mathcal{U}^{\ast}$. Since $\mathcal{U}$ is free, we deduce that $p\notin e_J[X]$. This, together with the $\mathbb{N}$-compactness of $\mathbf{X}$ and Corollary \ref{s4:c15}, implies that there exists $f\in C(\mathbf{P}, \mathbb{N}(\infty))$ such that $f(p)=\infty$. In much the same way, as in the proof of Theorem A in \cite{Chew}, for every $n\in\mathbb{N}$, we consider the set $V_n=\mathbb{N}(\infty)\setminus\{i\in\mathbb{N}: i\leq n\}$ and $W_n=f^{-1}[V_n]$. We observe that, for every $n\in\mathbb{N}$,  $e_J^{-1}[W_n]\in\mathcal{U}$ but $\bigcap\limits_{n\in\mathbb{N}}e_J^{-1}[W_n]=\emptyset$, so $\mathcal{U}$ does not have the countable intersection property. 
\end{proof}

Let us recall the principle $\mathbf{A}(\delta\delta)$ defined in \cite{kow}.

\begin{definition}
	\label{s6:d2}
	(Cf. \cite[Definition 1.7(4)]{kow}.) $\mathbf{A}(\delta\delta)$ is the statement: For every set $X$ and every family $\mathcal{A}$ of subsets of $X$ such that $\mathcal{A}$ is closed under finite unions and finite intersections, for every family $\{\mathcal{U}_n: n\in\omega\}$ of non-empty countable subfamilies of $\mathcal{A}$, there exists a family $\{V_{m,n}: m,n\in\omega\}$ of members of $\mathcal{A}$ such that, for every $n\in\omega$, $\bigcap\mathcal{U}_n=\bigcap\limits_{m\in\omega}V_{m,n}$. 
\end{definition}

\begin{proposition}
	\label{s6:p3}
	$($Cf. \cite[Theorem 2.9]{kow}.$)$ The following implications are true in $\mathbf{ZF}$:
	$$\mathbf{CMC}\rightarrow \mathbf{A}(\delta\delta)\rightarrow \mathbf{CMC}_{\omega}.$$
\end{proposition}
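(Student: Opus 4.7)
For the implication $\mathbf{CMC}\rightarrow\mathbf{A}(\delta\delta)$, the plan is to use $\mathbf{CMC}$ to produce, for each $n$, a finite family of enumerations of $\mathcal{U}_n$, and then to build $V_{m,n}$ as a joint initial-segment intersection taken over all of these enumerations simultaneously. Given $\mathcal{A}$ closed under finite unions and finite intersections, together with $\{\mathcal{U}_n:n\in\omega\}$ of non-empty countable subfamilies of $\mathcal{A}$, I would let $S_n$ be the set of all surjections $f:\omega\to\mathcal{U}_n$, which is provably non-empty in $\mathbf{ZF}$ once one uses an injection $\mathcal{U}_n\hookrightarrow\omega$ and fills gaps by the preimage of the least element of its image. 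Applying $\mathbf{CMC}$ to $\{S_n:n\in\omega\}$ I would obtain non-empty finite $F_n\subseteq S_n$ and define
\[
V_{m,n}=\bigcap\{f(k):f\in F_n,\ k\leq m\}\in\mathcal{A}.
\]
The inclusion $\bigcap\mathcal{U}_n\subseteq\bigcap_m V_{m,n}$ is immediate; for the reverse, I would fix any single $f_0\in F_n$ and use that every member of $\mathcal{U}_n$ equals $f_0(k)$ for some $k\in\omega$.

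For the implication $\mathbf{A}(\delta\delta)\rightarrow\mathbf{CMC}_\omega$, the strategy is to encode a given denumerable family $\{D_n:n\in\omega\}$ of denumerable sets into a carefully chosen $\mathcal{A}$ so that any $\delta$-approximation forced by $\mathbf{A}(\delta\delta)$ automatically produces finite witnesses inside each $D_n$. After replacing $D_n$ by $D_n\times\{n\}$ to arrange pairwise disjointness, I would set $X=\bigcup_n D_n$ and take
\[
\mathcal{A}=\{S\subseteq X:(\forall n\in\omega)\ D_n\setminus S\text{ is finite}\}.
\]
Closure under finite unions and finite intersections is straightforward from De Morgan. With $\mathcal{U}_n=\{X\setminus\{d\}:d\in D_n\}\subseteq\mathcal{A}$, a countable subfamily since $D_n$ is denumerable, one has $\bigcap\mathcal{U}_n=X\setminus D_n$. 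Applying $\mathbf{A}(\delta\delta)$ would yield $V_{m,n}\in\mathcal{A}$ with $D_n=\bigcup_m(D_n\setminus V_{m,n})$, each piece $D_n\setminus V_{m,n}$ being finite by definition of $\mathcal{A}$. Setting $m_n=\min\{m:D_n\setminus V_{m,n}\neq\emptyset\}$ and $F_n=D_n\setminus V_{m_n,n}$ would then produce the required multiple choice function $\{F_n:n\in\omega\}$ without any further appeal to choice.

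The main obstacle in the first implication is that $\mathbf{CMC}$ delivers only finite sets of surjections rather than individual surjections; this is absorbed by taking the joint intersection over all $f\in F_n$ in the formula for $V_{m,n}$, so that picking a single representative $f_0\in F_n$ afterwards is enough to recover $\bigcap\mathcal{U}_n$. In the second implication, the critical design step is the choice of $\mathcal{A}$: it must be closed under finite Boolean operations while forcing any $\delta$-approximation from above of $X\setminus D_n$ to expose $D_n$ as a countable union of canonically finite pieces. Once the ``cofinite on each fiber $D_n$'' family is in place, the final extraction of the multiple choice function reduces to bookkeeping with minima in $\omega$, which is safe in $\mathbf{ZF}$.
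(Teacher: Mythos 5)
Your proof is correct. The paper offers no argument of its own for this proposition --- it is imported verbatim from Theorem~2.9 of \cite{kow} --- so there is no in-text proof to compare against; judged on its own, your argument is a sound $\mathbf{ZF}$ derivation of both implications. In the first implication the two points where choice could silently creep in are handled properly: the set $S_n$ of surjections $\omega\to\mathcal{U}_n$ is non-empty in $\mathbf{ZF}$ because a surjection is \emph{definable} from any injection $\mathcal{U}_n\hookrightarrow\omega$ (which exists since $\mathcal{U}_n$ is countable and non-empty) by filling gaps with the preimage of the least element of the image; and the later selection of a single $f_0\in F_n$ happens inside the proof of an inclusion for one fixed $n$, so it costs nothing. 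Intersecting over \emph{all} of $F_n$ rather than over one chosen surjection is precisely what keeps the family $\{V_{m,n}\}$ definable from the multiple choice function, and each $V_{m,n}$ lies in $\mathcal{A}$ because it is a finite, non-empty intersection. In the second implication, the ``cofinite on every fiber'' family $\mathcal{A}$ is closed under finite unions and intersections by De Morgan, $\bigcap\mathcal{U}_n=X\setminus D_n$, each $D_n\setminus V_{m,n}$ is finite by membership in $\mathcal{A}$, and the extraction of $F_n=D_n\setminus V_{m_n,n}$ via the least $m$ with non-empty difference is choice-free since $D_n\neq\emptyset$. The only cosmetic remarks: apply $\mathbf{CMC}$ to the disjointified indexed family $\{S_n\times\{n\}: n\in\omega\}$ so that it is genuinely denumerable, and at the end project the sets $F_n\subseteq D_n\times\{n\}$ back onto the original $D_n$; both steps are routine and do not affect the argument.
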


\begin{lemma}
	\label{s6:l4}
	$[\mathbf{ZF}]$
	Let $\mathcal{A}$ be a non-empty family of subsets of a set $X$ such that $\mathcal{A}$ is closed under finite unions and finite intersections. Then $\mathbf{CMC}$ implies that $\mathcal{A}_{\delta\delta}=\mathcal{A}_{\delta}$. 
\end{lemma}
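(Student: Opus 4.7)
The plan is to establish the non-trivial inclusion $\mathcal{A}_{\delta\delta}\subseteq\mathcal{A}_{\delta}$ (the reverse inclusion is immediate: any $A\in\mathcal{A}_\delta$ lies in $\mathcal{A}_{\delta\delta}$ via the singleton family $\{A\}$). Fix $B\in\mathcal{A}_{\delta\delta}$, so $B=\bigcap\mathcal{D}$ for some non-empty $\mathcal{D}\in[\mathcal{A}_{\delta}]^{\leq\omega}$. In $\mathbf{ZF}$ every countable set is either finite or denumerable, since every subset of $\omega$ inherits a well-ordering from $\omega$; this splits the argument into two cases. If $\mathcal{D}$ is finite, one can pick without any choice a witnessing $\mathcal{U}_i\in[\mathcal{A}]^{\leq\omega}\setminus\{\emptyset\}$ with $\bigcap\mathcal{U}_i=D_i$ for each of its finitely many members, and then $B=\bigcap\bigl(\bigcup_i\mathcal{U}_i\bigr)$, where a finite union of countable sets is countable in $\mathbf{ZF}$; hence $B\in\mathcal{A}_\delta$.

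Assume then that $\mathcal{D}$ is denumerable and fix an enumeration $\mathcal{D}=\{B_n:n\in\omega\}$. For each $n\in\omega$ put
$$T_n=\bigl\{\mathcal{U}\in[\mathcal{A}]^{\leq\omega}\setminus\{\emptyset\}:\bigcap\mathcal{U}=B_n\bigr\},$$
which is non-empty because $B_n\in\mathcal{A}_\delta$. Apply $\mathbf{CMC}$ to the denumerable family $\{T_n:n\in\omega\}$ to obtain a family $\{F_n:n\in\omega\}$ with each $F_n$ a finite non-empty subset of $T_n$, and set $\mathcal{V}_n=\bigcup F_n$. Then $\mathcal{V}_n$ is a finite union of countable subfamilies of $\mathcal{A}$, hence a non-empty countable subfamily of $\mathcal{A}$; moreover $\bigcap\mathcal{V}_n=B_n$ because every member of $F_n$ has intersection equal to $B_n$ and $F_n\neq\emptyset$.

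Now invoke Proposition \ref{s6:p3} to get $\mathbf{A}(\delta\delta)$ from $\mathbf{CMC}$, and apply it to the family $\{\mathcal{V}_n:n\in\omega\}$ of non-empty countable subfamilies of $\mathcal{A}$. This produces a family $\{V_{m,n}:m,n\in\omega\}\subseteq\mathcal{A}$ such that, for every $n\in\omega$, $B_n=\bigcap\mathcal{V}_n=\bigcap_{m\in\omega}V_{m,n}$. Therefore
$$B=\bigcap_{n\in\omega}B_n=\bigcap_{(m,n)\in\omega\times\omega}V_{m,n},$$
and since $\omega\times\omega$ is countable, the right-hand side is the intersection of a non-empty countable subfamily of $\mathcal{A}$, so $B\in\mathcal{A}_\delta$.

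The main obstacle is the selection step: writing each $B_n$ as an explicit countable intersection of members of $\mathcal{A}$ requires choosing, for each $n$, a particular witness from the non-empty set $T_n$, which is not free in $\mathbf{ZF}$. The device for sidestepping this is precisely the ``multiple'' aspect of $\mathbf{CMC}$: instead of a single witness per index, it yields finitely many, whose union remains a countable subfamily of $\mathcal{A}$ whose intersection is still $B_n$. Once the $\mathcal{V}_n$ are in hand, $\mathbf{A}(\delta\delta)$ is designed exactly to collapse the iterated $\delta$-operation into a single countable intersection, and no further choice is needed.
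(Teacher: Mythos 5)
Your proof is correct and follows essentially the same route as the paper's: use $\mathbf{CMC}$ to select, for each $B_n$, a finite non-empty set of countable witnessing subfamilies of $\mathcal{A}$, take their union (which is still countable and still has intersection $B_n$), and then apply $\mathbf{A}(\delta\delta)$ (obtained from $\mathbf{CMC}$ via Proposition \ref{s6:p3}) to collapse the double intersection. The only additions are the explicit treatment of the finite case and the trivial reverse inclusion, which the paper leaves implicit.
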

\begin{proof}
	Assuming $\mathbf{CMC}$, it suffices to show that $\mathcal{A}_{\delta\delta}\subseteq\mathcal{A}_{\delta}$. Suppose that, for every $n\in\omega$, $A_n\in\mathcal{A}_{\delta}$ and $A=\bigcap\limits_{n\in\omega}A_n$. For every $n\in\omega$, let $\mathcal{B}_n=\{ \mathcal{B}\in [\mathcal{A}]^{\leq\omega}\setminus\{\emptyset\}: A_n=\bigcap\mathcal{B}\}$. By $\mathbf{CMC}$, there exists a family $\{\mathcal{F}_n: n\in\omega\}$ such that, for every $n\in\omega$, $\mathcal{F}_n$ is a non-empty finite subfamily of $\mathcal{B}_n$. For every $n\in\omega$, let $\mathcal{U}_n=\bigcup\mathcal{F}_n$. Then, for every $n\in\omega$, $\mathcal{U}_n\in [\mathcal{A}]^{\leq\omega}\setminus\{\emptyset\}$ and $A_n=\bigcap\mathcal{U}_n$. By $\mathbf{A}(\delta\delta)$, there exists a family $\{V_{m,n}: m,n\in\omega\}$ of members of $\mathcal{A}$ such that, for every $n\in\omega$, $A_n=\bigcap\limits_{m\in\omega}V_{m,n}$. Then $A=\bigcap\limits_{n\in\omega}A_n\in \mathcal{A}_{\delta}$.
\end{proof}

\begin{theorem}
	\label{s6:t5}
	$[\mathbf{ZF}]$ Let $\mathbf{X}$ be zero-dimensional $T_1$-space. Then the following conditions are satisfied:
	\begin{enumerate}
		\item[(i)] if $\mathbf{X}$ is  $\mathbb{N}$-compact, then every ultrafilter in $\mathcal{CO}_{\delta}(\mathbf{X})$ with c.i.p. is fixed;
		\item[(ii)] $\mathbf{CMC}$ implies that if every ultrafilter in $\mathcal{CO}_{\delta}(\mathbf{X})$ with c.i.p. is fixed, then $\mathbf{X}$ is $\mathbb{N}$-compact.
	\end{enumerate}
\end{theorem}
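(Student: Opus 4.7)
The plan is to reduce both implications to Chew's characterization (Theorem \ref{s6:t1}) by comparing ultrafilters in $\mathcal{CO}_{\delta}(\mathbf{X})$ with clopen ultrafilters.

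For (i), no fragment of choice is needed. Given an ultrafilter $\mathcal{U}$ in $\mathcal{CO}_{\delta}(\mathbf{X})$ with c.i.p., I would set $\mathcal{U}_0 = \mathcal{U} \cap \mathcal{CO}(\mathbf{X})$ and verify that $\mathcal{U}_0$ is a clopen ultrafilter with c.i.p. The filter properties and c.i.p.\ are immediate, and $\mathcal{U}_0$ is maximal because, using that $\mathcal{CO}(\mathbf{X}) \subseteq \mathcal{CO}_{\delta}(\mathbf{X})$, if $C \in \mathcal{CO}(\mathbf{X}) \setminus \mathcal{U}_0$ then $C \notin \mathcal{U}$, the ultrafilter property of $\mathcal{U}$ in $\mathcal{CO}_{\delta}(\mathbf{X})$ yields $D \in \mathcal{U}$ with $D \cap C = \emptyset$, and $D \subseteq X \setminus C$ forces $X \setminus C \in \mathcal{U}_0$ by upward closure. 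Theorem \ref{s6:t1} then fixes $\mathcal{U}_0$ at some $x \in X$, and for any $D \in \mathcal{U}$, taking a representation $D = \bigcap_{n \in \omega} D_n$ with $D_n \in \mathcal{CO}(\mathbf{X})$ and using upward closure of $\mathcal{U}$ gives $D_n \in \mathcal{U}_0$ for each $n$, so $x \in \bigcap_n D_n = D$. Hence $\mathcal{U}$ is fixed.

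For (ii), under $\mathbf{CMC}$ I would again invoke Theorem \ref{s6:t1} and prove that every clopen ultrafilter $\mathcal{U}$ in $\mathbf{X}$ with c.i.p.\ is fixed. The key object is
$$\mathcal{V} = \{D \in \mathcal{CO}_{\delta}(\mathbf{X}) : (\exists g \in \mathcal{U}^{\omega}) \; \bigcap_{n \in \omega} g(n) \subseteq D\}.$$
I would verify that $\mathcal{V}$ is a filter in $\mathcal{CO}_{\delta}(\mathbf{X})$ (closure of $\mathcal{CO}_{\delta}(\mathbf{X})$ under finite intersections is automatic in $\mathbf{ZF}$, and c.i.p.\ of $\mathcal{U}$ keeps $\emptyset$ out), that $\mathcal{U} \subseteq \mathcal{V}$ (via constant $g$), and that $\mathcal{V}$ is an ultrafilter: if $D \in \mathcal{CO}_{\delta}(\mathbf{X}) \setminus \mathcal{V}$ and $D = \bigcap_{n} D_n$ with $D_n \in \mathcal{CO}(\mathbf{X})$, then not all $D_n$ can lie in $\mathcal{U}$, since otherwise the map $n \mapsto D_n$ would witness $D \in \mathcal{V}$; hence for some $n_0$ we have $X \setminus D_{n_0} \in \mathcal{U} \subseteq \mathcal{V}$ and $(X \setminus D_{n_0}) \cap D = \emptyset$.

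The hard part will be establishing c.i.p.\ of $\mathcal{V}$, because for a countable family $(V_k)_{k \in \omega} \subseteq \mathcal{V}$ a naive selection of witnesses $g_k \in \mathcal{U}^{\omega}$ with $\bigcap_n g_k(n) \subseteq V_k$ already demands countable choice beyond $\mathbf{CMC}$. The idea is to apply $\mathbf{CMC}$ to the non-empty sets
$$T_k = \{g \in \mathcal{U}^{\omega} : \bigcap_{n \in \omega} g(n) \subseteq V_k\} \qquad (k \in \omega),$$
obtaining a sequence $(F_k)_{k \in \omega}$ of non-empty finite subsets $F_k \subseteq T_k$, and then to define uniformly $H_k(n) = \bigcap_{g \in F_k} g(n)$. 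Since $F_k$ is finite, each $H_k(n)$ is a finite intersection of members of the filter $\mathcal{U}$, hence lies in $\mathcal{U}$, while $\bigcap_{n} H_k(n) \subseteq \bigcap_{g \in F_k} \bigcap_{n} g(n) \subseteq V_k$. Consequently $\{H_k(n) : k, n \in \omega\}$ is a countable subfamily of $\mathcal{U}$, so c.i.p.\ of $\mathcal{U}$ yields $\bigcap_{k, n} H_k(n) \neq \emptyset$, whence $\bigcap_k V_k \neq \emptyset$. The hypothesis now fixes $\mathcal{V}$, hence fixes $\mathcal{U} \subseteq \mathcal{V}$, and Theorem \ref{s6:t1} delivers the $\mathbb{N}$-compactness of $\mathbf{X}$.
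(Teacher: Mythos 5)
Your proof is correct, and part (i) takes a genuinely different and considerably more elementary route than the paper's. The paper proves (i) by embedding $\mathbf{X}$ into $\mathbb{N}(\infty)^{J}$ with $J=C(\mathbf{X},\mathbb{N}(\infty))$, pushing the given ultrafilter forward to the Hewitt $\mathbb{N}(\infty)$-compact extension $\mathbf{P}$, locating a point $p$ in the intersection of the relevant closures by a Tychonoff-type argument, and then extracting a countable subfamily with empty intersection from a map $f\in C(\mathbf{P},\mathbb{N}(\infty))$ with $f(p)=\infty$. Your observation that the trace $\mathcal{U}\cap\mathcal{CO}(\mathbf{X})$ is already a clopen ultrafilter with c.i.p., and that any point of its intersection lies in every member of $\mathcal{U}$ because each $c_{\delta}$-set in $\mathcal{U}$ is a countable intersection of clopen sets all belonging to the trace, reduces (i) to Theorem \ref{s6:t1} in a few lines and uses no choice whatsoever. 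For (ii) you construct essentially the same ultrafilter $\mathcal{V}$ as the paper (your $\{D:\exists g\in\mathcal{U}^{\omega},\ \bigcap_{n}g(n)\subseteq D\}$ coincides with the paper's $\{V:\exists U\in\mathcal{U}_{\delta},\ U\subseteq V\}$), and the verification that it is an ultrafilter is the same; the difference lies in establishing its c.i.p. The paper invokes $\mathbf{CMC}$ twice, once through Lemma \ref{s6:l4} (via the principle $\mathbf{A}(\delta\delta)$) to obtain $\mathcal{U}_{\delta\delta}=\mathcal{U}_{\delta}$ and once more to select finite sets of witnesses, whereas you apply $\mathbf{CMC}$ a single time to the sets $T_k$ of witnessing sequences and then take the doubly indexed intersection $\bigcap_{k,n}H_k(n)$ directly, bypassing Lemma \ref{s6:l4} entirely; this is, if anything, the cleaner organization. (The paper argues (ii) contrapositively, starting from a free clopen ultrafilter supplied by Theorem \ref{s6:t1}, but that is only a cosmetic difference from your direct formulation.)
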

\begin{proof}
	(i) Suppose that $\mathbf{X}$ is $\mathbb{N}$-compact. Similarly to the proof of Theorem \ref{s6:t1}, we put $J= C(\mathbf{X}, \mathbb{N}(\infty))$, $\mathbf{Y}=\mathbb{N}(\infty)^{J}$ and $P=\cl_{\mathbf{Y}}e_J[X]$. For the subspace $\mathbf{P}$ of $\mathbf{Y}$, we  have $\mathbf{P}=v_{\mathbb{N}(\infty)}\mathbf{X}$. 
	
	Suppose that  $\mathcal{U}$ is a free ultrafilter with c.i.p. in $\mathcal{CO}_{\delta}(\mathbf{X})$.  Let $\mathcal{U}^{\ast}=\{V\in \mathcal{CO}_{\delta}(\mathbf{P}): (\exists U\in\mathcal{U}): e_J[U]\subseteq V\}$. Then $\mathcal{U}^{\ast}$ is a filter in $\mathcal{CO}_{\delta}(\mathbf{P})$. In much the same way, as in the proof of Theorem \ref{s6:t1}, one can show that if $A\in\mathcal{CO}(\mathbf{P})$ and $A\notin\mathcal{U}^{\ast}$, then there exists $U\in\mathcal{U}$ such that $A\cap e_J[U]=\emptyset$. Mimicking the proof to Theorem \ref{s6:t1}, we can deduce that $\bigcap\mathcal{U}^{\ast}\neq\emptyset$ and, in consequence, there exist $p\in\bigcap\mathcal{U}^{\ast}$ and $f\in C(\mathbf{P}, \mathbb{N}(\infty))$ such that $f(p)=\infty$. In much the same way, as in the proof of Theorem \ref{s6:t1}, we deduce that $\mathcal{U}$ does not have the countable intersection property.\medskip
	
	(ii) Now, suppose that $\mathbf{X}$ is not $\mathbb{N}$-compact. By Theorem \ref{s6:t1}, there exists a free clopen ultrafilter $\mathcal{U}$ in $\mathbf{X}$ such that $\mathcal{U}$ has the countable intersection property. We define
	$$\mathcal{V}=\{ V\in\mathcal{CO}_{\delta}(\mathbf{X}): (\exists U\in \mathcal{U}_{\delta}) U\subseteq V\}.$$
	It is easily seen that $\mathcal{V}$ is a filter in $\mathcal{CO}_{\delta}(\mathbf{X})$ such that $\mathcal{U}\subseteq \mathcal{V}$. To check that $\mathcal{V}$ is an ultrafilter in $\mathcal{CO}_{\delta}(\mathbf{X})$, we consider any $A\in\mathcal{CO}_{\delta}(\mathbf{X})$ such that $A\notin\mathcal{V}$. We choose $\mathcal{A}\in[\mathcal{CO}(\mathbf{X})]^{\leq\omega}\setminus\{\emptyset\}$ such that $A=\bigcap\mathcal{A}$. Since $A\notin\mathcal{V}$, there exists $A_0\in\mathcal{A}$ such that $A_0\notin\mathcal{U}$. Since $\mathcal{U}$ is a clopen ultrafilter, there exists $U_0\in\mathcal{U}$ such that $A_0\cap U_0=\emptyset$. Then $A\cap U_0=\emptyset$. This shows that $\mathcal{V}$ is an ultrafilter in $\mathcal{CO}_{\delta}(\mathbf{X})$. Since $\mathcal{U}$ is free, so is $\mathcal{V}$.
	
	Assuming $\mathbf{CMC}$, let us show that $\mathcal{V}$ has the countable intersection property. To this aim, assume that, for every $n\in\omega$, $V_n\in\mathcal{V}$ and $\mathcal{A}_n=\{U\in\mathcal{U}_{\delta}: U\subseteq V_n\}$. Since $\mathcal{U}$ is an ultrafilter in $\mathcal{CO}(\mathbf{X})$, $\mathcal{U}$ is closed under finite unions and finite intersections. Therefore, in view of Lemma \ref{s6:l4}, it follows from $\mathbf{CMC}$ that $\mathcal{U}_{\delta\delta}=\mathcal{U}_{\delta}$.  By $\mathbf{CMC}$, there exists a sequence $(\mathcal{F}_n)_{n\in\omega}$ such that, for every $n\in\omega$, $\mathcal{F}_n$ is a non-empty finite subset of $\mathcal{A}_n$. For every $n\in\omega$, let $U_n=\bigcap \mathcal{F}_n$. Then, for every $n\in\omega$, $U_n\in\mathcal{U}_{\delta}$, so $W=\bigcap\limits_{n\in\omega}U_n\in\mathcal{U}_{\delta\delta}$. This, together with $\mathbf{CMC}$, implies that $W\in\mathcal{U}_{\delta}$. Hence $W\neq\emptyset$ because $\mathcal{U}$ has the countable intersection property.  Since $W\subseteq\bigcap\limits_{n\in\omega}V_n$, we have $\bigcap\limits_{n\in\omega}V_n\neq\emptyset$ . Hence, $\mathbf{CMC}$ implies that $\mathcal{V}$ is a free ultrafilter in $\mathcal{CO}_{\delta}(\mathbf{X})$ with the countable intersection property. 
\end{proof}

\begin{definition} 
	\label{s6:d6}
	$\mathbf{CHR}$ is the statement: For every Tychonoff space $\mathbf{X}$, it holds that $\mathbf{X}$ is realcompact if and only if every $z$-ultrafilter in $\mathbf{X}$ having the countable intersection property is fixed.
\end{definition}

It is well known that $\mathbf{CHR}$ is true in $\mathbf{ZFC}$ (cf., e.g.,  \cite[Theorem 3.11.11]{en}); however, the following problem is unsolved:

\begin{problem}
	\label{s6:q7}
	Is $\mathbf{CHR}$ provable in $\mathbf{ZF}$?
\end{problem}

To learn more about $\mathbf{CHR}$ in $\mathbf{ZF}$, we need the following definition:

\begin{definition} 
	\label{s6:d8}
	Let $\mathbf{X}$ be a topological space and let $\mathcal{F}\subseteq\mathcal{Z}(\mathbf{X})$. We say that:
	\begin{enumerate}
		\item[(i)] $\mathcal{F}$ is \emph{functionally accessible} if there exists a family $\{f_Z: Z\in\mathcal{F}\}$ of functions from $C^{\ast}(\mathbf{X})$ such that, for every $Z\in\mathcal{F}$, $f_Z^{-1}(\{0\})=Z$;
		\item[(ii)]$\mathcal{F}$ has the weak countable intersection property if, for every non-empty functionally accessible countable subfamily $\mathcal{F}^{\ast}$ of $\mathcal{F}$, $\bigcap\mathcal{F}^{\ast}\neq\emptyset$. 
	\end{enumerate}
\end{definition}

\begin{proposition}
	\label{s6:p9}
	$[\mathbf{ZF}]$ $\mathbf{CMC}$ implies that, for every topological space $\mathbf{X}$, it holds that every $z$-ultrafilter in $\mathbf{X}$ with the weak countable intersection property has the countable intersection property.
\end{proposition}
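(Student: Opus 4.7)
The plan is to show that, under $\mathbf{CMC}$, any countable subfamily of a $z$-ultrafilter with the weak countable intersection property is automatically functionally accessible, so the weak c.i.p.\ forces its intersection to be non-empty.

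Fix a topological space $\mathbf{X}$ and a $z$-ultrafilter $\mathcal{U}$ in $\mathbf{X}$ with the weak countable intersection property. Let $\{Z_n:n\in\omega\}$ be an arbitrary non-empty countable subfamily of $\mathcal{U}$; I want to conclude that $\bigcap_{n\in\omega}Z_n\neq\emptyset$. For each $n\in\omega$, since $Z_n\in\mathcal{Z}(\mathbf{X})$, the set
\[
A_n=\{f\in C^{\ast}(\mathbf{X}):f^{-1}[\{0\}]=Z_n\}
\]
is non-empty: if $g\in C(\mathbf{X})$ with $Z(g)=Z_n$, then $g^{\ast}=\min\{|g|,\mathbf{1}\}\in C^{\ast}(\mathbf{X})$ and $Z(g^{\ast})=Z_n$. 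Thus $\{A_n:n\in\omega\}$ is a denumerable family of non-empty sets.

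Now apply $\mathbf{CMC}$ to obtain a family $\{F_n:n\in\omega\}$ of non-empty finite sets with $F_n\subseteq A_n$ for every $n\in\omega$. For each $n$ define
\[
f_n=\frac{1}{|F_n|}\sum_{f\in F_n}|f|.
\]
Then $f_n\in C^{\ast}(\mathbf{X})$, and $f_n(x)=0$ iff $|f(x)|=0$ for every $f\in F_n$, which, since every $f\in F_n$ satisfies $Z(f)=Z_n$, is equivalent to $x\in Z_n$. Hence $Z(f_n)=Z_n$. In this way the map $n\mapsto f_n$ gives a well-defined assignment $Z_n\mapsto f_n$ witnessing that the subfamily $\{Z_n:n\in\omega\}$ of $\mathcal{U}$ is functionally accessible in the sense of Definition \ref{s6:d8}(i).

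By the weak countable intersection property of $\mathcal{U}$, we conclude $\bigcap_{n\in\omega}Z_n\neq\emptyset$. Since $\{Z_n:n\in\omega\}$ was an arbitrary non-empty countable subfamily of $\mathcal{U}$, the $z$-ultrafilter $\mathcal{U}$ has the countable intersection property. There is no real obstacle beyond the use of $\mathbf{CMC}$: the core issue is that in $\mathbf{ZF}$ one cannot in general pick, for each $Z_n$, a single function witnessing that $Z_n$ is a zero-set, but $\mathbf{CMC}$ lets us pick finitely many and then symmetrically amalgamate them into one bounded function with the correct zero-set, which is exactly what functional accessibility demands.
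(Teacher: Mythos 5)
Your proof is correct and follows essentially the same route as the paper's: use $\mathbf{CMC}$ to select, for each $Z_n$, a non-empty finite set of functions with zero-set $Z_n$, amalgamate them into a single function $f_n$ with $Z(f_n)=Z_n$, conclude that the countable subfamily is functionally accessible, and invoke the weak countable intersection property. If anything, your version is slightly more careful than the paper's, since Definition \ref{s6:d8}(i) requires the witnessing functions to lie in $C^{\ast}(\mathbf{X})$ and you arrange boundedness via $\min\{|g|,\mathbf{1}\}$ from the outset, whereas the paper selects from $C(\mathbf{X})$ and sums without truncating.
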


\begin{proof} Assuming $\mathbf{CMC}$, we fix a topological space $\mathbf{X}$ and a $z$-ultrafilter $\mathcal{U}$ in $\mathbf{X}$ such that $\mathcal{U}$ has the weak countable intersection property.  Let $\{Z_n: n\in\omega\}$ be a subfamily of $\mathcal{U}$. For every $n\in\omega$, let $\mathcal{F}_n=\{f\in C(X): Z_n= Z(f)\}$. By $\mathbf{CMC}$, there exists a family $\{\mathcal{H}_n: n\in\omega\}$ such that, for every $n\in\omega$, $\mathcal{H}_n$ is a non-empty finite subset of $\mathcal{F}_n$. For every $n\in\omega$, we define $f_n=\sum\limits_{f\in\mathcal{H}_n}|f|$. Then, for every $n\in\omega$,  $f_n\in C(\mathbf{X})$ and $Z(f_n)=Z_n$. This implies that the family $\{Z_n: n\in\omega\}$ is functionally accessible, so $\bigcap\limits_{n\in\omega}Z_n\neq\emptyset$. 
\end{proof}

The following theorem shows that a modification of $\mathbf{CHR}$ holds in $\mathbf{ZF}$.

\begin{theorem}
	\label{s6:t10}
	$[\mathbf{ZF}]$ A Tychonoff space $\mathbf{X}$ is realcompact if and only if every $z$-ultrafilter in $\mathbf{X}$ with the weak countable intersection property is fixed. 
\end{theorem}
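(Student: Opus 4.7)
My plan is to route both directions through the extension $\beta^{f}\mathbf{X} = e^{\ast}\mathbf{X}$, with $e^{\ast} := e_{C^{\ast}(\mathbf{X})}$, and Theorem \ref{s4:t16}(i). The bridge is that every $p \in e^{\ast}X$ gives a surjective ring homomorphism $\phi_p\colon C^{\ast}(\mathbf{X}) \to \mathbb{R}$, $\phi_p(f) = \tilde{f}(p)$, whose kernel $M_p$ is a maximal ideal of $C^{\ast}(\mathbf{X})$; the basic analytic tool is the contraction $|\phi_p(g)| \leq \sup_{x \in X}|g(x)|$, immediate from $\tilde{g}[e^{\ast}\mathbf{X}] \subseteq \cl_{\mathbb{R}}(g[X])$. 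I will set up a correspondence between points of $e^{\ast}X \setminus e^{\ast}[X]$ and free $z$-ultrafilters with weak c.i.p.

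For the direction $(\Rightarrow)$ assume $\mathbf{X}$ is realcompact and let $\mathcal{U}$ be a free $z$-ultrafilter; I build from $\mathcal{U}$ a point $p \in e^{\ast}X \setminus e^{\ast}[X]$ by a canonical binary halving. For each $f \in C^{\ast}(\mathbf{X})$ bounded by $c$, put $[a_0,b_0] = [-c,c]$ and, given $[a_n,b_n]$ with $f^{-1}([a_n,b_n]) \in \mathcal{U}$, let $m = (a_n+b_n)/2$; since $f^{-1}([a_n,m])$ and $f^{-1}([m,b_n])$ are both zero-sets and $\mathcal{U}$ is a prime $z$-filter, at least one lies in $\mathcal{U}$, and I always select the left half when it does, the right half otherwise. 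Setting $\phi(f) := \lim_n a_n$, the fact that finite intersections in $\mathcal{U}$ are non-empty makes $\phi$ a ring homomorphism and $p(f) := \phi(f)$ a point of $e^{\ast}X = \cl_{\mathbb{R}^{C^{\ast}(\mathbf{X})}}(e^{\ast}[X])$; freeness of $\mathcal{U}$ precludes $p \in e^{\ast}[X]$, because any $Z \in \mathcal{U}$ with $x_0 \notin Z$ and $g \in C^{\ast}(\mathbf{X})$ with $Z(g) = Z$ would give $0 = \phi(g) = g(x_0) \neq 0$. Theorem \ref{s4:t16}(i) then supplies $F \in C^{\ast}(e^{\ast}\mathbf{X})$ with $F(p) = 0$ and $F \circ e^{\ast} > 0$ on $X$; the pullback $f_0 := F \circ e^{\ast}$ has $\phi(f_0) = 0$, so $Z_n := f_0^{-1}([-1/n, 1/n]) = Z((f_0 - 1/n)^{+}) \in \mathcal{U}$ for every $n \in \mathbb{N}$, and these form a functionally accessible countable subfamily (witnessed by $f_n := (f_0 - 1/n)^{+}$) with $\bigcap_n Z_n = Z(f_0) = \emptyset$, violating weak c.i.p.

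For the contrapositive of $(\Leftarrow)$ assume $\mathbf{X}$ is not realcompact; Theorem \ref{s4:t16}(i) yields $p \in e^{\ast}X \setminus e^{\ast}[X]$ such that, after replacing $F$ by $|F|$, no $f \in M_p$ has $Z(f) = \emptyset$. Since $\phi_p$ fixes constants it is surjective, so $M_p$ is maximal, and $Z[M_p] \subseteq \mathcal{Z}(\mathbf{X}) \setminus \{\emptyset\}$ is closed under finite intersection via $f^2 + g^2$. To see $Z[M_p]$ is a $z$-ultrafilter, given $Z \in \mathcal{Z}(\mathbf{X}) \setminus Z[M_p]$ pick $g \in C^{\ast}(\mathbf{X})$ with $Z(g) = Z$ (normalizing any representative by $g \mapsto g/(1+|g|)$); then $g \notin M_p$, so $r := \phi_p(g) \neq 0$ and $g - r \in M_p$ with $Z(g-r) \cap Z = \emptyset$. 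Freeness follows because a common point $x_0$ would force $M_p = \{f \in C^{\ast}(\mathbf{X}) : f(x_0) = 0\}$, hence $\phi_p = \phi_{x_0}$ (using the $\mathbf{ZF}$-fact that $\mathbb{R}$ admits only the identity ring automorphism), whence $p = e^{\ast}(x_0)$, a contradiction. For weak c.i.p., if $\{Z_n\}$ is functionally accessible in $Z[M_p]$ via $f_n \in C^{\ast}(\mathbf{X})$ then each $f_n \in M_p$ (if not, the standard proper-ideal argument $M_p + (f_n) = C^{\ast}(\mathbf{X})$ combined with $Z(f_n) \in Z[M_p]$ yields a contradiction), and the uniformly convergent series $F := \sum_{n} 2^{-n-1} f_n^{2}/(1 + f_n^{2})$ lies in $C^{\ast}(\mathbf{X})$ with $Z(F) = \bigcap_n Z_n$ and $\phi_p(F) = 0$ by the contraction applied to partial sums, so $\bigcap_n Z_n = Z(F) \in Z[M_p]$ is non-empty. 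Thus $Z[M_p]$ is a free $z$-ultrafilter with weak c.i.p., contradicting the hypothesis.

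The main obstacle will be keeping every construction canonical and choice-free. The halving in $(\Rightarrow)$ replaces a compactness argument and needs the definable left-first tiebreaker; the identification $f \in M_p \Leftrightarrow Z(f) \in Z[M_p]$ in $(\Leftarrow)$ must go via the $M_p + (f)$ proper-ideal argument rather than through any bijection between maximal ideals and $z$-ultrafilters; and staying inside $C^{\ast}$ throughout is essential to avoid forming reciprocals of nowhere-zero functions that need not be bounded. The \emph{weak} c.i.p. restriction is exactly what makes this choice-free argument go through: it restricts attention to countable families supplied with explicit bounded witnessing functions, which is precisely the input required for forming the uniformly convergent series $F$.
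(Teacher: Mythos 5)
Your proof is correct and, at the top level, follows the same road as the paper's: both directions pass through $\beta^{f}\mathbf{X}=e_{C^{\ast}(\mathbf{X})}\mathbf{X}$ and Theorem \ref{s4:t16}(i), and both attach a canonical object on the remainder to a free $z$-ultrafilter. The differences are in the implementation of each direction. For ``realcompact $\Rightarrow$ free $z$-ultrafilters fail weak c.i.p.'', the paper produces the point $p$ from $\mathcal{U}$ by the canonical selection $p_j=\max\bigl(\bigcap\{\cl_{\mathbb{R}}\pi_j[e_J[U]]:U\in\mathcal{U}\}\bigr)$ from nested compact subsets of $\mathbb{R}$, whereas you use binary halving with a left-first tiebreaker (relying on primeness of $z$-ultrafilters, which is indeed a $\mathbf{ZF}$-fact); both selections are definable, and the rest of the argument (the sets $f_0^{-1}[[-1/n,1/n]]$ witnessed by $(f_0-1/n)^{+}$, legitimate since $f_0>0$ on $X$) coincides with the paper's. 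For the converse, the paper argues directly: for \emph{every} remainder point $p$ it forms the free $z$-ultrafilter $\{Z\in\mathcal{Z}(\mathbf{X}):p\in\cl_{\mathbf{Y}}(e_J[Z])\}$, applies the hypothesis to get a functionally accessible family with empty intersection, and assembles the witness $f=\sum_n 2^{-n}\min\{|f_n|,1\}$ required by Theorem \ref{s4:t16}(i). You instead argue by contradiction from a single $p$ admitting no witness, and must therefore prove the additional fact that $Z[M_p]$ \emph{has} the weak countable intersection property; your series $\sum_n 2^{-n-1}f_n^{2}/(1+f_n^{2})$ together with the contraction $|\phi_p(g)|\le\sup_{x\in X}|g(x)|$ performs the dual computation. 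Your route costs slightly more (the $M_p+(f_n)$ step, the verification that $Z[M_p]$ is a free $z$-ultrafilter), but it makes explicit the choice-free correspondence between remainder points of $\beta^{f}\mathbf{X}$, maximal ideals of $C^{\ast}(\mathbf{X})$, and $z$-ultrafilters with weak c.i.p., which the paper leaves implicit. One small loose end: before testing maximality you should record that $Z[M_p]$ is upward closed in $\mathcal{Z}(\mathbf{X})$ (if $Z(f)\subseteq Z(h)$ with $f\in M_p$ and $h\in C^{\ast}(\mathbf{X})$, then $Z(h)=Z(fh)$ with $fh\in M_p$), since being a filter in the sense of Definition \ref{s1:d27} is part of being an ultrafilter there; this is routine and does not affect the argument.
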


\begin{proof}
	For a Tychonoff space $\mathbf{X}$, let $J=C^{\ast}(\mathbf{X})$, $\mathbf{Y}=\mathbb{R}^J$ and let $P$ be the closure in $\mathbf{Y}$ of $e_J[X]$. It is obvious that $e_J=e_{\beta^f}$, and, for the subspace $\mathbf{P}$ of $\mathbf{Y}$, $\mathbf{P}=\beta^{f}\mathbf{X}$.
	
	\textit{Sufficiency.} We consider any point $p\in P\setminus e_J[X]$ and define $\mathcal{F}=\{Z\in\mathcal{Z}(\mathbf{X}): p\in\cl_{\mathbf{Y}}e_J[Z]\}$. One can check that $\mathcal{F}$ is a free $z$-ultrafilter in $\mathbf{X}$. Assuming that every $z$-ultrafilter in $\mathbf{X}$ with the weak countable intersection property is fixed, we obtain that $\mathcal{F}$ does not have the weak countable intersection property. Hence, there exists a family $\{f_n: n\in\mathbb{N}\}$ of functions from $C^{\ast}(\mathbf{X})$ such that, for every $n\in\mathbb{N}$, the set $Z_n=f_n^{-1}[\{0\}]$ belongs to $\mathcal{F}$ but $\bigcap\limits_{n\in\mathbb{N}}Z_n=\emptyset$. We define $f\in C^{\ast}(\mathbf{X})$ by: $f(x)=\sum\limits_{n=1}^{+\infty}\frac{\min\{|f_n(x)|, 1\}}{2^n}$ for every $x\in X$. Then, for every $x\in X$, $f(x)>0$. Let $\tilde{f}\in C^{\ast}(\mathbf{P})$ be such that $\tilde{f}\circ e_J=f$. Then $\tilde{f}(p)=0$. Hence $\mathbf{X}$ is realcompact by Theorem \ref{s4:t16}(i).
	
	\textit{Necessity.} Now, we assume that $\mathbf{X}$ is realcompact and $\mathcal{U}$ is a free $z$-ultrafilter in $\mathbf{X}$. For every $j\in J$, let $\mathcal{U}_j=\{ \cl_{\mathbb{R}}\pi_j[e_J[U]]: U\in\mathcal{U}\}$. Let $j\in J$.  It is easily seen that $\mathcal{U}_j$ has the finite intersection property and all members of $\mathcal{U}_j$ are compact subsets of $\mathbb{R}$. Therefore, $\bigcap\mathcal{U}_j\neq\emptyset$. We can define $p_j=\max(\bigcap\mathcal{U}_j)$. Let $p\in Y$ be such that, for every $j\in J$, $p(j)=p_j$. Suppose that there exists $U_0\in\mathcal{U}$ such that $p\notin\cl_{\mathbf{Y}}(e_J[U_0])$. There exists a collection $\{V_j: j\in J\}$ of open subsets of $\mathbb{R}$ such that $p\in V=\prod\limits_{j\in J}V_j$, the set $K=\{j\in J: V_j\neq\mathbb{R}\}$ is finite, and $V\cap e_J[U_0]=\emptyset$. Then $K\neq\emptyset$. Let us consider an arbitrary $j\in K$. We can find real numbers $a_j, b_j$ such that $[a_j, b_j]\subseteq V_j$ and $a_j<p_j<b_j$. Let $W_j=\pi_j^{-1}[[a_j, b_j]]$ and let $U\in\mathcal{U}$. Since $(a_j, b_j)\cap \pi_j[e_J[U]]\neq\emptyset$, we have $W_j\cap e_J[U]\neq\emptyset$. Therefore, $e_J^{-1}[W_j]\cap U\neq\emptyset$. Since $e_J^{-1}[W_j]\in\mathcal{Z}(\mathbf{X})$ and $\mathcal{U}$ is a $z$-ultrafilter in $\mathbf{X}$, we have $e_J^{-1}[W_j]\in\mathcal{U}$. Since $\mathcal{U}$ is a $z$-ultrafilter, we obtain that $\bigcap\limits_{j\in K}e_J^{-1}[W_j]\in\mathcal{U}$. For every $j\in J\setminus K$, let $W_j=\mathbb{R}$. Let $W=\prod\limits_{j\in J}W_j$. We have shown that $e_J^{-1}[W]\in\mathcal{U}$, so $e_J^{-1}[W]\cap U_0\neq\emptyset$. Thus, $W\cap e_J[U_0]\neq\emptyset$ but this is impossible. The contradiction obtained shows that $p\in\bigcap\{\cl_{\mathbf{Y}}(e_j[U]): U\in\mathcal{U}\}$. Since $\mathcal{U}$ is free, we deduce that $p\in e_J X\setminus e_J[X]$. Since $\mathbf{X}$ is realcompact, it follows from Theorem \ref{s4:t16}(i) that there exists $f\in C^{\ast}(\mathbf{P})$ such that $f(p)=0$ and, for every $x\in X$, $f(e_J(x))>0$. Without loss of generality, we may assume that $f[P]\subseteq[0, 1]$. For every $n\in\mathbb{N}$, we define $Z_n=e_J^{-1}[f^{-1}[[0, \frac{1}{n}]]]$. In much the same way, as above, one can check that, for every $n\in\mathbb{N}$, $Z_n\in\mathcal{U}$. However, $\bigcap\limits_{n\in\mathbb{N}}Z_n=\emptyset$ and the family $\{Z_n: n\in\mathbb{N}\}$ is functionally accessible. Hence $\mathcal{U}$ does not have the weak countable intersection property. 
\end{proof}

The following corollary gives a partial solution to Problem \ref{s6:q7}.
\begin{corollary}
	\label{s6:c11}
	$[\mathbf{ZF}]$
	$\mathbf{CMC}$ implies $\mathbf{CHR}$.
\end{corollary}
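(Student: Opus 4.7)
The plan is to derive $\mathbf{CHR}$ as a direct consequence of Theorem \ref{s6:t10} combined with Proposition \ref{s6:p9}. Assume $\mathbf{CMC}$, and fix a Tychonoff space $\mathbf{X}$. The goal is to show that $\mathbf{X}$ is realcompact if and only if every $z$-ultrafilter in $\mathbf{X}$ with the countable intersection property is fixed.

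For the forward direction, I would argue as follows. Suppose $\mathbf{X}$ is realcompact and let $\mathcal{U}$ be a $z$-ultrafilter in $\mathbf{X}$ with c.i.p. Observe that from Definition \ref{s6:d8}, every functionally accessible countable subfamily of $\mathcal{U}$ is in particular a countable subfamily, so c.i.p. trivially implies the weak c.i.p. Hence $\mathcal{U}$ has the weak countable intersection property, and by Theorem \ref{s6:t10}, $\mathcal{U}$ is fixed. This half does not even require $\mathbf{CMC}$.

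For the reverse direction, suppose every $z$-ultrafilter in $\mathbf{X}$ with c.i.p. is fixed, and let $\mathcal{U}$ be a $z$-ultrafilter with the weak countable intersection property. Here is where $\mathbf{CMC}$ enters: by Proposition \ref{s6:p9}, under $\mathbf{CMC}$, any $z$-ultrafilter with weak c.i.p. automatically has c.i.p. Thus $\mathcal{U}$ has c.i.p.\ and hence, by hypothesis, is fixed. Applying Theorem \ref{s6:t10} then yields that $\mathbf{X}$ is realcompact. Since $\mathbf{X}$ was arbitrary, $\mathbf{CHR}$ holds.

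There is essentially no obstacle here, since the two key technical inputs (Theorem \ref{s6:t10}, which is the $\mathbf{ZF}$-version of the realcompactness characterization in terms of the weak c.i.p., and Proposition \ref{s6:p9}, which bridges the weak c.i.p.\ and the c.i.p.\ using $\mathbf{CMC}$) do all the work. The proof is purely a combinatorial linkage between these two statements, and the write-up should take only a few lines.
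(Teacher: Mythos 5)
Your proof is correct and follows exactly the route the paper intends: the corollary is stated immediately after Proposition \ref{s6:p9} and Theorem \ref{s6:t10} precisely because it is their direct combination, with the trivial observation that c.i.p.\ implies weak c.i.p.\ handling the forward direction in $\mathbf{ZF}$ and Proposition \ref{s6:p9} supplying the converse bridge under $\mathbf{CMC}$. Nothing is missing.
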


We recall that a subspace $\mathbf{Y}$ of a topological space $\mathbf{X}$ is $z$-\emph{embedded} in $\mathbf{X}$ if, for every $Z\in\mathcal{Z}(\mathbf{Y})$, there exists $\tilde{Z}\in\mathcal{Z}(\mathbf{X})$ such that $Z=\tilde{Z}\cap Y$.  In \cite[Lemma 3.9]{Bl}, it is stated in $\mathbf{ZFC}$ that if a Tychonoff space $\mathbf{X}$ is a countable union of $z$-embedded realcompact subspaces, then $\mathbf{X}$ is realcompact. Let us show in the following theorem that, by applying our Theorem \ref{s6:t10} and, to a great extent, arguing similarly to \cite[the proof of Lemma 3.9]{Bl}, Lemma 3.9 of \cite{Bl} can be proved in $\mathbf{ZF+CMC}$: 

\begin{theorem}
	\label{s6:t12}
	$[\mathbf{ZF}]$ $\mathbf{CMC}$ implies that if a Tychonoff space $\mathbf{X}$ is a countable union of $z$-embedded realcompact subspaces, then $\mathbf{X}$ is realcompact.
\end{theorem}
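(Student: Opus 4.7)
The plan is to reduce to the filter characterization of realcompactness. By Theorem~\ref{s6:t10}, it suffices to prove that every $z$-ultrafilter $\mathcal{U}$ in $\mathbf{X}$ with the weak countable intersection property is fixed. Under $\mathbf{CMC}$, Proposition~\ref{s6:p9} converts ``weak c.i.p.'' into ordinary c.i.p., so we may assume $\mathcal{U}$ has the countable intersection property. Write $\mathbf{X} = \bigcup_{n \in \omega} X_n$ with each $X_n$ $z$-embedded in $\mathbf{X}$ and realcompact.

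First I would show that there exists $n_0 \in \omega$ such that $Z \cap X_{n_0} \neq \emptyset$ for every $Z \in \mathcal{U}$. Otherwise, for each $n$ the family $\mathcal{A}_n = \{Z \in \mathcal{U} : Z \cap X_n = \emptyset\}$ is non-empty, so $\mathbf{CMC}$ supplies finite non-empty sets $F_n \subseteq \mathcal{A}_n$; then $Z_n := \bigcap F_n$ lies in $\mathcal{U}$ and satisfies $Z_n \cap X_n = \emptyset$, whence $\bigcap_{n \in \omega} Z_n \subseteq X \setminus \bigcup_n X_n = \emptyset$, contradicting the c.i.p.\ of $\mathcal{U}$.

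Fixing such an $n_0$, I would next introduce
\[
\mathcal{W} = \{W \in \mathcal{Z}(X_{n_0}) : (\exists Z \in \mathcal{U})\ Z \cap X_{n_0} \subseteq W\}.
\]
The collection $\mathcal{W}$ is plainly a $z$-filter on $X_{n_0}$ (the choice of $n_0$ prevents $\emptyset \in \mathcal{W}$, and closure under finite intersections is inherited from $\mathcal{U}$). The key step is that $\mathcal{W}$ is in fact a $z$-ultrafilter: given $W' \in \mathcal{Z}(X_{n_0})$ that meets every member of $\mathcal{W}$, use the $z$-embedding of $X_{n_0}$ to pick $\widetilde{W'} \in \mathcal{Z}(\mathbf{X})$ with $\widetilde{W'} \cap X_{n_0} = W'$; if $\widetilde{W'} \notin \mathcal{U}$, then by maximality some $Z_0 \in \mathcal{U}$ is disjoint from $\widetilde{W'}$, forcing $Z_0 \cap X_{n_0} \in \mathcal{W}$ to be disjoint from $W'$, a contradiction; hence $\widetilde{W'} \in \mathcal{U}$ and $W' \in \mathcal{W}$. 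This exploitation of $z$-embedding is what lets us produce a $z$-ultrafilter on $X_{n_0}$ without extending a filter to an ultrafilter, which is the main potential obstacle in $\mathbf{ZF+CMC}$.

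Finally I would verify that $\mathcal{W}$ has c.i.p.\ and conclude. For $\{W_k\}_{k \in \omega} \subseteq \mathcal{W}$, use $\mathbf{CMC}$ to select $Z_k \in \mathcal{U}$ with $Z_k \cap X_{n_0} \subseteq W_k$; under $\mathbf{CMC}$, Theorem~\ref{s1:t18}(b) yields $Z := \bigcap_k Z_k \in \mathcal{Z}(\mathbf{X})$, and $Z \in \mathcal{U}$ (otherwise some $Z^\ast \in \mathcal{U}$ would be disjoint from $Z$, making $\{Z^\ast \cap Z_k\}_{k \in \omega}$ a countable subfamily of $\mathcal{U}$ with empty intersection). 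The choice of $n_0$ then gives $\emptyset \neq Z \cap X_{n_0} \subseteq \bigcap_k W_k$. Since $X_{n_0}$ is realcompact and $\mathbf{CMC}$ implies $\mathbf{CHR}$ by Corollary~\ref{s6:c11}, the $z$-ultrafilter $\mathcal{W}$ is fixed at some $y_0 \in X_{n_0}$. But $Z \cap X_{n_0} \in \mathcal{W}$ for every $Z \in \mathcal{U}$, so $y_0 \in Z$ for all such $Z$, which shows $\mathcal{U}$ is fixed, completing the argument.
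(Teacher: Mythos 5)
Your proof is correct and follows essentially the same route as the paper's: locate $n_0$ via the $\mathbf{CMC}$ argument, pass to the trace $z$-ultrafilter on $X_{n_0}$ (using $z$-embedding for maximality and $\mathbf{CMC}$ for its countable intersection property), and pull the fixed point back. The only differences are presentational — you describe the trace filter as the generated filter and justify closure of $\mathcal{Z}(\mathbf{X})$ under countable intersections via Theorem~\ref{s1:t18}(b) rather than via the functional-accessibility argument from Proposition~\ref{s6:p9} — and both are sound.
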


\begin{proof}
	Let us assume $\mathbf{CMC}$. Let $\mathbf{X}$ be a Tychonoff space such that $X=\bigcup\limits_{n\in\mathbb{N}}X_n$ where, for every $n\in\mathbb{N}$, the subspace $\mathbf{X}_n$ of $\mathbf{X}$ is realcompact and $z$-embedded in $\mathbf{X}$. Let $\mathcal{U}$ be a $z$-ultrafilter in $\mathbf{X}$ such that $\mathcal{U}$ has the countable intersection property. Suppose that, for every $n\in\mathbb{N}$, the set $\mathcal{U}_n=\{U\in\mathcal{U}: U\cap X_n=\emptyset\}$ is non-empty. By $\mathbf{CMC}$, there exists $\psi\in\prod\limits_{n\in\mathbb{N}}([\mathcal{U}_n]^{<\omega}\setminus\{\emptyset\})$. Then, for every $n\in\mathbb{N}$, the set $U_n=\bigcap\{U: U\in\psi(n)\}$ is a member of $\mathcal{U}$ such that $U_n\cap X_n=\emptyset$. Since $\bigcap\limits_{n\in\mathbb{N}}U_n=\emptyset$, we obtain a contradiction with the countable intersection property of $\mathcal{U}$. Therefore, there exists $n_0\in\mathbb{N}$ such that $\mathcal{U}_{n_0}=\emptyset$. Let $\mathcal{F}=\{U\cap X_{n_0}: U\in\mathcal{U}\}$. Then $\mathcal{F}$ is a $z$-filter on $\mathbf{X}_{n_0}$. Since $\mathbf{X}_{n_0}$ is $z$-embedded in $\mathbf{X}$, $\mathcal{F}$ is a $z$-ultrafilter on $\mathbf{X}_{n_0}$. For every $n\in\mathbb{N}$, let $F_n\in\mathcal{F}$ and $\mathcal{W}_n=\{U\in\mathcal{U}: F_n=U\cap X_{n_0}\}$. Since $\mathcal{U}$ is closed under finite intersections, in much the same way,  as in the proof that, for some $n\in\mathbb{N}$,  $\mathcal{U}_n\neq\emptyset$, we can deduce from $\mathbf{CMC}$ that there exists a family $\{A_n: n\in\mathbb{N}\}$ of members of $\mathcal{U}$ such that, for every $n\in\mathbb{N}$, $F_n=A_n\cap X_{n_0}$. Let $A=\bigcap\limits_{n\in\mathbb{N}}A_n$.  It follows from $\mathbf{CMC}$ and the proof of Proposition \ref{s6:p9} that the family $\{A_n: n\in\mathbb{N}\}$ is functionally accessible. This implies that $A\in\mathcal{Z}(\mathbf{X})$.  Since the $z$-ultrafilter $\mathcal{U}$ has the countable intersection property, we infer that $A\in\mathcal{U}$. Hence $A\cap X_{n_0}\neq\emptyset$, so $\bigcap\limits_{n\in\mathbb{N}}F_n\neq\emptyset$. Therefore, $\mathcal{F}$ has the countable intersection property. Since $\mathbf{X}_{n_0}$ is realcompact, the filter $\mathcal{F}$ is fixed. Hence $\mathcal{U}$ is also fixed. In the light of Theorem \ref{s6:t10}, $\mathbf{X}$ is realcompact.
\end{proof}

To establish an accurate modification of Theorem  \ref{s6:t12} for $\mathbb{N}$-compactness, we need the following definition:

\begin{definition}
	\label{s6:d13}
	A subspace $\mathbf{S}$ of a topological space $\mathbf{X}$ will be called:
	\begin{enumerate}
		\item[(i)] $c$-\emph{embedded} in $\mathbf{X}$ if, for every $A\in\mathcal{CO}(\mathbf{S})$, there exists $B\in\mathcal{CO}(\mathbf{X})$ such that $A=B\cap S$;
		\item[(ii)] $c_{\delta}$-\emph{embedded} in $\mathbf{X}$ if, for every $A\in\mathcal{CO}_{\delta}(\mathbf{S})$, there exists $B\in\mathcal{CO}_{\delta}(\mathbf{X})$ such that $A=B\cap S$.
	\end{enumerate}
\end{definition}

\begin{proposition}
	\label{s6:p14}
	$[\mathbf{ZF}]$  $\mathbf{CMC}$ implies that, for every topological space $\mathbf{X}$, the following conditions are satisfied:
	\begin{enumerate}
		\item[(i)] for every set $A\subseteq X$, it holds that $A\in\mathcal{CO}_{\delta}(\mathbf{X})$ if and only if there exists $f\in U_{\aleph_0}(\mathbf{X})$ such that $A=Z(f)$; furthermore, we may demand that $0\leq f\leq 1$;  
		\item[(ii)]  for every ultrafilter $\mathcal{U}$ in $\mathcal{CO}_{\delta}(\mathbf{X})$, it holds that $\mathcal{U}$ has the countable intersection property if and only if $\mathcal{U}$ is closed under countable intersections; 
		\item[(iii)]   for every subspace $\mathbf{S}$ of $\mathbf{X}$, it holds that if $\mathbf{S}$ is $c$-embedded in $\mathbf{X}$, then it is also $c_{\delta}$-embedded in $\mathbf{X}$.
	\end{enumerate}
\end{proposition}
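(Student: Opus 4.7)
My plan is to handle the three items in order, drawing on Theorems \ref{s1:t18} and \ref{s1:t21}, which under $\mathbf{CMC}$ supply the key closure and representation properties of the families involved.

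For (i), one direction is essentially Theorem \ref{s1:t21}(c)(i): under $\mathbf{CMC}$, every zero-set $Z(f)$ with $f \in U_{\aleph_0}(\mathbf{X})$ is a countable intersection of clopens, hence lies in $\mathcal{CO}_\delta(\mathbf{X})$. For the converse, given $A \in \mathcal{CO}_\delta(\mathbf{X})$, fix an indexing $A = \bigcap_{n \in \omega} C_n$ with each $C_n \in \mathcal{CO}(\mathbf{X})$, and put
\[
f = \sum_{n \in \omega} \frac{\chi_{X \setminus C_n}}{2^{n+1}}.
\]
Each partial sum is a continuous function taking only finitely many values, hence lies in $C(\mathbf{X}, \mathbb{R}_{disc})$, and the series converges uniformly; therefore $f \in \cl_{C_u(\mathbf{X})}(C(\mathbf{X}, \mathbb{R}_{disc}))$, which equals $U_{\aleph_0}(\mathbf{X})$ by Theorem \ref{s1:t21}(c)(ii). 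Plainly $0 \le f \le 1$ and $Z(f) = A$.

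For (ii), the implication from closure under countable intersections to c.i.p. is immediate because $\emptyset \notin \mathcal{U}$. For the converse, let $\{U_n : n \in \omega\} \subseteq \mathcal{U}$. Theorem \ref{s1:t18}(b) gives, from $\mathbf{CMC}$, that $\mathcal{CO}_\delta(\mathbf{X})$ is closed under countable intersections, so $B := \bigcap_n U_n \in \mathcal{CO}_\delta(\mathbf{X})$. If $B$ were not in $\mathcal{U}$, the ultrafilter property (combined with the fact, trivial in $\mathbf{ZF}$, that $\mathcal{CO}_\delta(\mathbf{X})$ is closed under finite intersections) would produce some $V \in \mathcal{U}$ with $V \cap B = \emptyset$, so that $\{V, U_0, U_1, \dots\}$ would be a countable subfamily of $\mathcal{U}$ with empty intersection, contradicting c.i.p.

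For (iii), I would invoke $\mathbf{CMC}$ directly to lift a chosen representation. Given $A \in \mathcal{CO}_\delta(\mathbf{S})$, write $A = \bigcap_n A_n$ with $A_n \in \mathcal{CO}(\mathbf{S})$. By $c$-embedding, for each $n$ the family $\mathcal{B}_n = \{B \in \mathcal{CO}(\mathbf{X}) : B \cap S = A_n\}$ is non-empty. $\mathbf{CMC}$ yields finite non-empty $F_n \subseteq \mathcal{B}_n$; set $B_n = \bigcap F_n$, which is clopen in $\mathbf{X}$ and satisfies $B_n \cap S = A_n$. Then $B := \bigcap_n B_n$ lies in $\mathcal{CO}_\delta(\mathbf{X})$ and $B \cap S = A$, as needed. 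The main organizational hurdle running through all three parts is the same: in $\mathbf{ZF}$ one must simultaneously make countably many selections of representing clopens or partitions, and this is precisely what $\mathbf{CMC}$—via Theorems \ref{s1:t18}(b) and \ref{s1:t21}(c)—delivers.
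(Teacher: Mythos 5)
Your proposal is correct and follows essentially the same route as the paper's proof: part (i) builds the same series $\sum \chi_{X\setminus C_n}/2^{n+1}$ of $\mathbb{R}_{disc}$-valued partial sums and invokes Theorem \ref{s1:t21}(c), part (ii) uses Theorem \ref{s1:t18}(b) plus the ultrafilter property exactly as the paper does, and part (iii) is the same $\mathbf{CMC}$-selection of finite subfamilies $F_n\subseteq\mathcal{B}_n$ followed by intersecting. No gaps.
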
 

\begin{proof}
	Let us assume $\mathbf{CMC}$ and fix a topological space $\mathbf{X}$. We recall that, by Theorem \ref{s1:t18}(b), $\mathbf{CMC}$ implies that $\mathcal{CO}_{\delta}(\mathbf{X})$ is closed under countable intersections.

	(i) Let $A\in\mathcal{CO}_{\delta}(\mathbf{X})$. Choose a family $\{A_n: n\in\mathbb{N}\}$ of clopen sets of $\mathbf{X}$ such that $A=\bigcap\limits_{n\in\mathbb{N}}A_n$. For every $n\in\mathbb{N}$, let $e_n: X\to\{0, 1\}$ be the characteristic function of $X\setminus A_n$, that is, for every $x\in X$,  $e_n(x)=0$ if and only if $x\in A_n$. For every $n\in\mathbb{N}$, let $f_n=\sum\limits_{i=1}^{n}\frac{e_i}{2^i}$. Then $f_n\in C(\mathbf{X}, \mathbb{R}_{disc})$. Let $f=\lim\limits_{n\to+\infty}f_n$. It follows from Theorem \ref{s1:t21}(c) that $f\in U_{\aleph_0}(\mathbf{X})$. Of course, $A=Z(f)$. On the other hand, if $g\in U_{\aleph_0}(\mathbf{X})$, then $Z(g)\in\mathcal{CO}_{\delta}(\mathbf{X})$ by Theorem \ref{s1:t21}(c). Hence (i) holds.
	
	(ii) Suppose that $\mathcal{U}$ is an ultrafilter in $\mathcal{CO}_{\delta}(\mathbf{X})$. Let $\{F_n: n\in\omega\}$ be a family of members of $\mathcal{U}$ and  let $F=\bigcap\limits_{n\in\omega}F_n$. Since $\mathcal{CO}_{\delta}(\mathbf{X})$ is closed under countable intersections,  $F\in\mathcal{CO}_{\delta}(\mathbf{X})$. Thus, assuming that  $F\notin\mathcal{U}$, we can fix $U_F\in\mathcal{U}$ such that $F\cap U_F=\emptyset$. Then $\{ U_F\}\cap\{F_n: n\in\omega\}$ witnesses that $\mathcal{U}$ does not have the countable intersection property. On the other hand, if an ultrafilter in $\mathcal{CO}_{\delta}(\mathbf{X})$ is closed under countable intersections, then it has the countable intersection property. 
	
	(iii) Suppose that $\mathbf{S}$ is a $c$-embedded subspace of $\mathbf{X}$. Let $C\in\mathcal{CO}_{\delta}(\mathbf{S})$. Let $\{C_n: n\in\omega\}$ be a family of members of $\mathcal{CO}(\mathbf{S})$ such that $C=\bigcap\limits_{n\in\omega}C_n$. Since $\mathbf{S}$ is $c$-embedded in $\mathbf{X}$, for every $n\in\omega$, the family $\mathcal{D}_n=\{D\in\mathcal{CO}(\mathbf{X}): C_n=D\cap S\}$ is non-empty. By $\mathbf{CMC}$, there exists $\psi\in\prod\limits_{n\in\omega}([\mathcal{D}_n]^{<\omega}\setminus\{\emptyset\})$. For every $n\in\omega$, let $D_n=\bigcap\psi(n)$. Let $D=\bigcap\limits_{n\in\omega}D_n$. Since, for every $n\in\omega$, $D_n\in\mathcal{CO}(\mathbf{X})$, we have $D\in\mathcal{CO}_{\delta}(\mathbf{X})$. It is obvious that $D\cap S=C$, so (iii) holds.
\end{proof}

\begin{remark}
	\label{s5:r015}
	Let us notice that, in proof of Proposition \ref{s6:p14}(i), to show that $f\in U_{\aleph_0}(\mathbf{X})$ one need not use $\mathbf{CMC}$. Indeed, for a topological space $\mathbf{X}$, given a sequence $(f_n)_{n\in\mathbb{N}}$ of functions from $C(\mathbf{X}, \mathbb{R}_{disc})$  such that $(f_n)_{n\in\mathbb{N}}$ is uniformly convergent to a function $f:\mathbf{X}\to\mathbb{R}$, one can check that it holds in $\mathbf{ZF}$ that $f\in U_{\aleph_0}(\mathbf{X})$ by using the following simple arguments instead of Theorem \ref{s1:t21}(c). For a fixed positive real number $\varepsilon$, there exists $n_0\in\mathbb{N}$ such that, for every $x\in X$, $|f(x)-f_{n_0}(x)|\leq\frac{\varepsilon}{4}$. For $q\in\mathbb{Q}$, we put $V_q=f_{n_0}^{-1}[[q-\frac{\varepsilon}{4}, q+\frac{\varepsilon}{4}]]$. Then, for every $q\in\mathbb{Q}$, $V_q\in\mathcal{CO}(\mathbf{X})$ and $\osc_{V_q}(f)\leq\varepsilon$; furthermore, $\bigcup\{ V_q: q\in\mathbb{Q}\}=X$. This shows that $f\in U_{\aleph_0}(\mathbf{X})$ in $\mathbf{ZF}$.
\end{remark}

We shall omit a simple proof of the following proposition:

\begin{proposition}
	\label{s6:p15}
	$[\mathbf{ZF}]$
	Let $\mathbf{S}$ be a $c_{\delta}$-embedded (respectively, $c$-embedded) subspace of a topological space $\mathbf{X}$. Let $\mathcal{F}$ be a a filter in $\mathcal{CO}_{\delta}(\mathbf{S})$ (respectively, in $\mathcal{CO}(\mathbf{S})$) and let $\mathcal{U}$ be an ultrafilter in $\mathcal{CO}_{\delta}(\mathbf{X})$ (respectively, in $\mathcal{CO}(\mathbf{X})$). Then:
	\begin{enumerate} 
		\item[(i)] the family 
		$$\mathcal{F}_X=\{V\in\mathcal{CO}_{\delta}(\mathbf{X}) \text{(respectively,} V\in\mathcal{CO}(\mathbf{X})\text{)}: V\cap S\in\mathcal{F}\}$$ is a filter in $\mathcal{CO}_{\delta}(\mathbf{X})$ (respectively, in $\mathcal{CO}(\mathbf{X})$);
		\item[(ii)] if, for every $U\in\mathcal{U}$, $U\cap S\neq\emptyset$, then the family 
		$$\mathcal{U}_S=\{U\cap S: U\in\mathcal{U}\}$$
		is an ultrafilter in $\mathcal{CO}_{\delta}(\mathbf{S})$ (respectively, in $\mathcal{CO}(\mathbf{S})$).  
	\end{enumerate} 
\end{proposition}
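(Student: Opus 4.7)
I will treat the two cases ($\mathcal{CO}_\delta$ and $\mathcal{CO}$) in parallel, writing $\mathcal{R}(\mathbf{Y})$ for either family on a space $\mathbf{Y}$. Two preliminary facts carry the argument. First, $\mathcal{R}(\mathbf{Y})$ is closed under finite intersections (obvious for $\mathcal{CO}$, and for $\mathcal{CO}_\delta$ via $(\bigcap_n A_n)\cap(\bigcap_n B_n)=\bigcap_n(A_n\cap B_n)$) and under finite unions (obvious for $\mathcal{CO}$, and for $\mathcal{CO}_\delta$ via the distributive identity $(\bigcap_n A_n)\cup(\bigcap_m B_m)=\bigcap_{n,m}(A_n\cup B_m)$; this indexing by $\omega\times\omega$ needs no choice since for each specific member of $\mathcal{CO}_\delta$ I just name one witnessing countable family). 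Second, the map $A\mapsto A\cap S$ sends $\mathcal{R}(\mathbf{X})$ into $\mathcal{R}(\mathbf{S})$, preserves finite intersections, unions, and inclusions, and is surjective by the hypothesis that $\mathbf{S}$ is $c_\delta$- or $c$-embedded in $\mathbf{X}$.

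For (i), the filter axioms for $\mathcal{F}_X$ simply lift the filter axioms for $\mathcal{F}$ along the restriction map. Namely, $X\cap S=S\in\mathcal{F}$ gives $X\in\mathcal{F}_X$; $\emptyset\cap S=\emptyset\notin\mathcal{F}$ gives $\emptyset\notin\mathcal{F}_X$; closure under finite intersections in $\mathcal{R}(\mathbf{X})$ follows from $(V_1\cap V_2)\cap S=(V_1\cap S)\cap(V_2\cap S)$; and upward closure uses the monotonicity $V\subseteq W\Rightarrow V\cap S\subseteq W\cap S$ together with upward closure of $\mathcal{F}$. All of this is mechanical.

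For (ii), the filter axioms for $\mathcal{U}_S$ are equally direct: the standing hypothesis $U\cap S\neq\emptyset$ for every $U\in\mathcal{U}$ gives $\emptyset\notin\mathcal{U}_S$; $S=X\cap S\in\mathcal{U}_S$ since $X\in\mathcal{U}$; and $(U_1\cap S)\cap(U_2\cap S)=(U_1\cap U_2)\cap S$ handles finite intersections. The first step that actually uses embeddedness is upward closure of $\mathcal{U}_S$ in $\mathcal{R}(\mathbf{S})$: given $U\in\mathcal{U}$ and $W\in\mathcal{R}(\mathbf{S})$ with $U\cap S\subseteq W$, lift $W$ to $W'\in\mathcal{R}(\mathbf{X})$ with $W'\cap S=W$, form $U\cup W'\in\mathcal{R}(\mathbf{X})$ (union-closure), note $U\cup W'\in\mathcal{U}$ because $\mathcal{U}$ is upward-closed and contains $U$, and compute $(U\cup W')\cap S=(U\cap S)\cup W=W$.

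The only point with genuine content is maximality of $\mathcal{U}_S$, where I expect the slight obstacle, since it is the sole place where the ultrafilter property of $\mathcal{U}$ and the embeddedness of $\mathbf{S}$ interact. Given $A\in\mathcal{R}(\mathbf{S})\setminus\mathcal{U}_S$, I lift to $A'\in\mathcal{R}(\mathbf{X})$ with $A'\cap S=A$; if $A'\in\mathcal{U}$ then $A\in\mathcal{U}_S$, so $A'\notin\mathcal{U}$. By Definition \ref{s1:d27}(iii), $\mathcal{U}\cup\{A'\}$ is not contained in any filter of $\mathcal{R}(\mathbf{X})$, which, since $\mathcal{R}(\mathbf{X})$ is closed under finite intersections, forces the finite intersection property to fail; hence there exists $U\in\mathcal{U}$ with $U\cap A'=\emptyset$. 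Descending, $(U\cap S)\cap A=U\cap A'\cap S=\emptyset$ with $U\cap S\in\mathcal{U}_S$, so $\mathcal{U}_S\cup\{A\}$ cannot be extended to a filter in $\mathcal{R}(\mathbf{S})$, proving maximality.
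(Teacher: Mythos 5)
Your proof is correct, and it supplies exactly the argument the paper omits (the paper states only ``We shall omit a simple proof of the following proposition''). The two points that carry any weight are handled properly: closure of $\mathcal{CO}_{\delta}$ under finite unions (via $(\bigcap_n A_n)\cup(\bigcap_m B_m)=\bigcap_{n,m}(A_n\cup B_m)$, needing only finitely many instantiations and hence no choice) is what makes upward closure of $\mathcal{U}_S$ work, and for maximality the extraction of a $U\in\mathcal{U}$ with $U\cap A'=\emptyset$ from Definition \ref{s1:d27}(iii) is justified, since otherwise $\{B\in\mathcal{R}(\mathbf{X}):(\exists U\in\mathcal{U})\ U\cap A'\subseteq B\}$ would be a filter containing $\mathcal{U}\cup\{A'\}$.
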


\begin{theorem}
	\label{s6:t16}
	$[\mathbf{ZF}]$ Let $\mathbf{X}$ be a zero-dimensional  $T_1$-space. Let $\{\mathbf{S}_n: n\in\omega\}$ be a family of $c_{\delta}$-embedded $\mathbb{N}$-compact subspaces of $\mathbf{X}$ such that $X=\bigcup\limits_{n\in\omega}S_n$. Then $\mathbf{CMC}$ implies that $\mathbf{X}$ is $\mathbb{N}$-compact. 
\end{theorem}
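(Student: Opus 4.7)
The plan is to mimic the proof of Theorem \ref{s6:t12} (the realcompact analogue), but with zero-sets replaced by $c_\delta$-sets, $z$-ultrafilters replaced by ultrafilters in $\mathcal{CO}_\delta(\mathbf{X})$, and Theorem \ref{s6:t10} replaced by Theorem \ref{s6:t5}. Assume $\mathbf{CMC}$. By Theorem \ref{s6:t5}(ii), it suffices to show that every ultrafilter $\mathcal{U}$ in $\mathcal{CO}_\delta(\mathbf{X})$ having the countable intersection property is fixed.

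First I would locate an index $n_0\in\omega$ such that every $U\in\mathcal{U}$ meets $S_{n_0}$. Suppose, to the contrary, that for every $n\in\omega$ the family $\mathcal{U}_n=\{U\in\mathcal{U}:U\cap S_n=\emptyset\}$ is non-empty. Applying $\mathbf{CMC}$ to $\{\mathcal{U}_n:n\in\omega\}$, pick $\psi\in\prod_{n\in\omega}([\mathcal{U}_n]^{<\omega}\setminus\{\emptyset\})$; for each $n$, set $U_n=\bigcap\psi(n)$, which belongs to $\mathcal{U}$ (as the filter is closed under finite intersections) and satisfies $U_n\cap S_n=\emptyset$. Then $\{U_n:n\in\omega\}$ is a countable subfamily of $\mathcal{U}$ with empty intersection, contradicting the c.i.p. of $\mathcal{U}$.

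Fixing such an $n_0$, since $\mathbf{S}_{n_0}$ is $c_\delta$-embedded in $\mathbf{X}$, Proposition \ref{s6:p15}(ii) guarantees that $\mathcal{F}=\{U\cap S_{n_0}:U\in\mathcal{U}\}$ is an ultrafilter in $\mathcal{CO}_\delta(\mathbf{S}_{n_0})$. I would then show that $\mathcal{F}$ inherits the countable intersection property. Given $\{F_k:k\in\omega\}\subseteq\mathcal{F}$, for each $k$ the family $\mathcal{W}_k=\{U\in\mathcal{U}:U\cap S_{n_0}=F_k\}$ is non-empty, so by $\mathbf{CMC}$ we may select $\varphi\in\prod_{k\in\omega}([\mathcal{W}_k]^{<\omega}\setminus\{\emptyset\})$ and put $A_k=\bigcap\varphi(k)\in\mathcal{U}$; note $A_k\cap S_{n_0}=F_k$. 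By Theorem \ref{s1:t18}(b), $\mathcal{CO}_\delta(\mathbf{X})$ is closed under countable intersections under $\mathbf{CMC}$, so $A=\bigcap_{k\in\omega}A_k\in\mathcal{CO}_\delta(\mathbf{X})$; by Proposition \ref{s6:p14}(ii), $\mathcal{U}$ is closed under countable intersections, hence $A\in\mathcal{U}$. Thus $\emptyset\neq A\cap S_{n_0}=\bigcap_{k\in\omega}F_k$, showing $\mathcal{F}$ has c.i.p.

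Finally, since $\mathbf{S}_{n_0}$ is $\mathbb{N}$-compact, Theorem \ref{s6:t5}(i) yields that $\mathcal{F}$ is fixed, say $x_0\in\bigcap\mathcal{F}$. Then $x_0\in U$ for every $U\in\mathcal{U}$, so $\mathcal{U}$ is fixed, completing the proof via Theorem \ref{s6:t5}(ii). The main obstacle is Step 3: verifying c.i.p. for $\mathcal{F}$, which is precisely the place where $\mathbf{CMC}$ is needed twice — once to choose the finite subsets $\varphi(k)\subseteq\mathcal{W}_k$ so that each $A_k$ is a canonically constructed element of $\mathcal{U}$ restricting to $F_k$, and implicitly (via Theorem \ref{s1:t18}(b) and Proposition \ref{s6:p14}(ii)) to ensure that $\mathcal{CO}_\delta(\mathbf{X})$ and the ultrafilter $\mathcal{U}$ are both closed under countable intersections.
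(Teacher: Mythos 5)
Your proposal is correct and follows essentially the same route as the paper's proof: reduce via Theorem \ref{s6:t5} to fixing an ultrafilter in $\mathcal{CO}_{\delta}(\mathbf{X})$ with c.i.p., use $\mathbf{CMC}$ to locate an index $n_0$ with every member of $\mathcal{U}$ meeting $S_{n_0}$ (as in the proof of Theorem \ref{s6:t12}), pass to the trace ultrafilter on $\mathbf{S}_{n_0}$ via Proposition \ref{s6:p15}(ii), and verify its c.i.p. using $\mathbf{CMC}$ together with closure of $\mathcal{U}$ under countable intersections. You merely spell out in full the steps the paper delegates to the proof of Theorem \ref{s6:t12}, and your reasoning there is sound.
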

\begin{proof}
	Assuming $\mathbf{CMC}$, we consider any ultrafilter $\mathcal{U}$ in $\mathcal{CO}_{\delta}(\mathbf{X})$ such that $\mathcal{U}$ has the countable intersection property. It follows from Proposition \ref{s6:p14}(ii) that $\mathcal{U}$ is closed under countable intersections. In much the same way, as in the proof of Theorem \ref{s6:t12}, we can show that there exists $n_0\in\omega$ such that, for every $U\in\mathcal{U}$, $U\cap S_{n_0}\neq\emptyset$. By Proposition \ref{s6:p15}(ii), the family $\mathcal{U}_{n_0}=\{U\cap S_{n_0}: U\in\mathcal{U}\}$ is an ultrafilter in $\mathcal{CO}_{\delta}(\mathbf{S}_{n_0})$. Since $\mathcal{U}$ is closed under countable intersections, $\mathbf{CMC}$ implies that $\mathcal{U}_{n_0}$ is closed under countable intersections. Since $\mathbf{S}_{n_0}$ is $\mathbb{N}$-compact, it follows from Theorem \ref{s6:t5} that $\mathcal{U}_{n_0}$ is fixed. This implies that $\mathcal{U}$ is fixed, so $\mathbf{X}$ is $\mathbb{N}$-compact by Theorem \ref{s6:t5}.
\end{proof}

\begin{corollary}
	\label{s6:c17}
	$[\mathbf{ZF}]$ Let $\mathbf{X}$ be a zero-dimensional $T_1$-space. Let $\{\mathbf{S}_n: n\in\omega\}$ be a family of $c$-embedded $\mathbb{N}$-compact subspaces of $\mathbf{X}$ such that $X=\bigcup\limits_{n\in\omega}S_n$. Then $\mathbf{CMC}$ implies that $\mathbf{X}$ is $\mathbb{N}$-compact. 
\end{corollary}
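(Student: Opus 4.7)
The plan is to deduce this corollary immediately from Theorem \ref{s6:t16} by upgrading the hypothesis. Namely, Theorem \ref{s6:t16} gives the same conclusion under the stronger assumption that each $\mathbf{S}_n$ is $c_{\delta}$-embedded in $\mathbf{X}$, so it suffices to show that in the presence of $\mathbf{CMC}$ every $c$-embedded subspace of the zero-dimensional $T_1$-space $\mathbf{X}$ is automatically $c_{\delta}$-embedded. But this is precisely the content of Proposition \ref{s6:p14}(iii).

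So, assuming $\mathbf{CMC}$, I would first invoke Proposition \ref{s6:p14}(iii) applied to each $\mathbf{S}_n$: since $\mathbf{S}_n$ is $c$-embedded in $\mathbf{X}$, it is $c_{\delta}$-embedded in $\mathbf{X}$. Thus $\{\mathbf{S}_n: n\in\omega\}$ is now a countable family of $c_{\delta}$-embedded $\mathbb{N}$-compact subspaces of $\mathbf{X}$ with $X=\bigcup_{n\in\omega} S_n$. Applying Theorem \ref{s6:t16} (whose use of $\mathbf{CMC}$ is already accounted for) yields that $\mathbf{X}$ is $\mathbb{N}$-compact, which is the desired conclusion.

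There is no substantive obstacle here; the work has been front-loaded into Proposition \ref{s6:p14}(iii) (where $\mathbf{CMC}$ is used to pick, for each clopen set $C_n$ of $\mathbf{S}_n$ a finite family of clopen extensions in $\mathbf{X}$ and intersect them) and into Theorem \ref{s6:t16} (where $\mathbf{CMC}$ is used both to ensure that ultrafilters in $\mathcal{CO}_{\delta}(\mathbf{X})$ with the countable intersection property are closed under countable intersections, via Proposition \ref{s6:p14}(ii), and to pass from an ultrafilter $\mathcal{U}$ in $\mathcal{CO}_{\delta}(\mathbf{X})$ to some $n_0$ for which $\mathcal{U}_{n_0}=\{U\cap S_{n_0}: U\in\mathcal{U}\}$ is a well-defined ultrafilter in $\mathcal{CO}_{\delta}(\mathbf{S}_{n_0})$). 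The corollary itself is essentially a one-line deduction, and the proof should be written accordingly: a single appeal to Proposition \ref{s6:p14}(iii) followed by a reference to Theorem \ref{s6:t16}.
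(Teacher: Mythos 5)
Your proposal is correct and is exactly the paper's own argument: the authors likewise prove the corollary by combining Proposition \ref{s6:p14}(iii) (under $\mathbf{CMC}$, $c$-embedded implies $c_{\delta}$-embedded) with Theorem \ref{s6:t16}. Your additional remarks about where $\mathbf{CMC}$ enters are accurate but not needed for the deduction itself.
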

\begin{proof}
	It suffices to apply Theorem \ref{s6:t16} and Proposition \ref{s6:p14}(iii). 
\end{proof}

In \cite[Fact 21]{nik} (see also \cite[Theorem 8.16]{gj} and \cite[Exercise 3.11 A]{en}), it was proved that it holds in $\mathbf{ZFC}$ that every Tychonoff space expressible as the union of a realcompact subspace and a compact subspace is realcompact. We can establish the following strengthening of \cite[Fact 21]{nik}.

\begin{theorem}
	\label{s6:t18}
	$[\mathbf{ZF}]$ Let $\mathbf{X}=\langle X, \tau\rangle$ be a $T_1$-space. Suppose that $X=S\cup K$ where the set $K$ is compact in $\mathbf{X}$. Then the following conditions are satisfied:
	\begin{enumerate}
		\item[(i)] if $\mathbf{X}$ is completely regular and the subspace $\mathbf{S}$ of $\mathbf{X}$ is realcompact, then $\mathbf{X}$ is realcompact;
		\item[(ii)] if $\mathbf{X}$ is zero-dimensional and the subspace $\mathbf{S}$ of $\mathbf{X}$ is $\mathbb{N}$-compact, then $\mathbf{X}$ is $\mathbb{N}$-compact. 
	\end{enumerate}
\end{theorem}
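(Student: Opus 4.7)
The plan is to derive both parts from the filter-theoretic characterizations of realcompactness and $\mathbb{N}$-compactness: Theorem \ref{s6:t10} for (i) and Theorem \ref{s6:t1} for (ii). Arguing by contradiction, let $\mathcal{U}$ be a free ultrafilter on $\mathbf{X}$ of the appropriate kind (a $z$-ultrafilter with the weak countable intersection property for (i), a clopen ultrafilter with the countable intersection property for (ii)). Each member of $\mathcal{U}$ is closed in $\mathbf{X}$. If every $U\in\mathcal{U}$ meets $K$, then $\{U\cap K:U\in\mathcal{U}\}$ is a centered family of closed subsets of the compact space $K$ and therefore has nonempty intersection, producing a common point of $\mathcal{U}$ and contradicting its freeness. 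Otherwise there is $U_0\in\mathcal{U}$ with $U_0\cap K=\emptyset$, so $U_0\subseteq S$.

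From here, the strategy is to transfer $\mathcal{U}$ to an ultrafilter of the same kind on $\mathbf{S}$ (or on $U_0$, which is a closed subspace of $\mathbf{S}$ and is therefore realcompact, respectively $\mathbb{N}$-compact, by Proposition \ref{s4:p4}), use the filter-theoretic characterization on that subspace to find a fixed point, and then lift it back to $\mathbf{X}$ to contradict the freeness of $\mathcal{U}$. For (ii) this transfer is immediate: $U_0$ is clopen in $\mathbf{X}$, so every clopen subset of $\mathbf{S}$ contained in $U_0$ is clopen in $\mathbf{X}$, and the family $\bar{\mathcal{G}}=\{W\in\mathcal{CO}(\mathbf{S}):W\cap U_0\in\mathcal{U}\}$ is then a clopen ultrafilter on $\mathbf{S}$ whose countable intersection property transfers directly from that of $\mathcal{U}$ via the sequence $\{W_n\cap U_0\}_{n\in\mathbb{N}}\subseteq\mathcal{U}$. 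Theorem \ref{s6:t1} then yields a point of $\bigcap\bar{\mathcal{G}}$ that belongs to every member of $\mathcal{U}$, the desired contradiction.

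For (i), $U_0=Z(g_0)$ is only a zero-set, which makes the analogous transfer more delicate. The plan is to show that the trace $\mathcal{U}|_{U_0}=\{Z\in\mathcal{U}:Z\subseteq U_0\}$ is a $z$-ultrafilter on $U_0$ with the weak countable intersection property, and then invoke Theorem \ref{s6:t10}. The ultrafilter property relies on $U_0$, as a zero-set in the completely regular space $\mathbf{X}$, being $z$-embedded in $\mathbf{X}$, so that every zero-set of $U_0$ is the restriction of a zero-set of $\mathbf{X}$; this lets the ultrafilter property of $\mathcal{U}$ on $\mathbf{X}$ be applied to witnesses coming from $U_0$. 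The weak countable intersection property requires a canonical $\mathbf{ZF}$-construction: given a functionally accessible family $\{W_n\}_{n\in\mathbb{N}}\subseteq\mathcal{U}|_{U_0}$ witnessed by $h_n\in C^*(U_0)$, one must produce $\phi_n\in C^*(\mathbf{X})$ with $Z(\phi_n)\in\mathcal{U}$ and $Z(\phi_n)\subseteq W_n$. The natural candidate combines the defining function $g_0$ of $U_0$ with canonical extensions $\tilde h_n$ of $h_n$, for instance $\phi_n=\tilde h_n^{2}+g_0^{2}$, giving $Z(\phi_n)=W_n$. The main obstacle is producing such canonical extensions in $\mathbf{ZF}$ without invoking any form of the Axiom of Countable Choice; once that is done, the weak countable intersection property of $\mathcal{U}$ on $\mathbf{X}$ forces $\bigcap_{n\in\mathbb{N}}W_n\neq\emptyset$, yielding a fixed point of $\mathcal{U}|_{U_0}$ and hence of $\mathcal{U}$, the required contradiction.
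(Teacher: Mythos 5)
Your part (ii) is correct and is essentially the paper's argument: the paper also extracts $Z_0\in\mathcal{F}$ with $Z_0\cap K=\emptyset$ from compactness and passes to the trace ultrafilter on $\mathbf{S}$, using exactly the observation that a clopen subset of $\mathbf{S}$ intersected with the clopen set $Z_0$ is clopen in $\mathbf{X}$; your $\bar{\mathcal{G}}$ is just a cosmetic variant of the paper's $\mathcal{F}_S$.

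Part (i), however, has a genuine gap, and in fact the route you chose cannot be repaired as stated. First, your assertion that $U_0$, being a zero-set of a completely regular space, is $z$-embedded in $\mathbf{X}$ is false: in the Niemytzki plane the $x$-axis $L$ is the zero-set of the second coordinate projection and is discrete of cardinality $|\mathbb{R}|$, so $\mathcal{Z}(L)=\mathcal{P}(L)$ has cardinality $2^{|\mathbb{R}|}$, while separability bounds the number of zero-sets of the whole space by $|\mathbb{R}|$; hence not every zero-set of $L$ is a trace of a zero-set of the plane. This already breaks your proof that $\mathcal{U}|_{U_0}$ is a $z$-ultrafilter on $U_0$. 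Second, the obstacle you yourself flag — producing extensions $\tilde h_n\in C^{\ast}(\mathbf{X})$ of the given $h_n\in C^{\ast}(U_0)$ — is not a mere technicality: a zero-set need not be $C^{\ast}$-embedded (the same $L$ is the standard example), so no such $\tilde h_n$ need exist, with or without choice. The paper avoids both problems by never restricting to $Z_0$: it works with $\mathcal{F}_S=\{S\cap F: F\in\mathcal{F}\}$ on all of $\mathbf{S}$, picks a single zero-set $Z_1$ with $K\subseteq\inter_{\mathbf{X}}(Z_1)$ and $Z_1\cap Z_0=\emptyset$, fixes $g\in C(\mathbf{S},[0,1])$ with $Z(g)=S\cap Z_1$, and then, since $X\setminus\inter_{\mathbf{X}}(Z_1)\subseteq S$, glues the explicit formula $f_n/(f_n+g)$ on $X\setminus\inter_{\mathbf{X}}(Z_1)$ with the constant $1$ on $Z_1$. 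Because the witnessing functions $f_n$ of a functionally accessible family in $\mathcal{Z}(\mathbf{S})$ are already defined on all of $S$, this produces, canonically and without any choice, functions $h_n\in C(\mathbf{X},[0,1])$ with $Z(h_n)=F_n\cap Z_0\in\mathcal{F}$; the same gluing also yields the ultrafilter property of $\mathcal{F}_S$ by showing $W\cap Z_0\in\mathcal{Z}(\mathbf{X})$ for every $W\in\mathcal{Z}(\mathbf{S})$. If you want to salvage your write-up of (i), you should replace the trace on $U_0$ by the trace on $S$ and use this gluing construction in place of an extension theorem.
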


\begin{proof}
	
	Let $\mathbf{X}$ be a Tychonoff space (respectively, a zero-dimensional space). Suppose that $\mathcal{F}$ is a free ultrafilter in $\mathcal{Z}(\mathbf{X})$ (respectively, in $\mathcal{CO}(\mathbf{X})$). Put
	$$\mathcal{F}_S= \{S\cap F: F\in\mathcal{F}\}.$$
	\noindent Since $\bigcap\mathcal{F}=\emptyset$ and $K$ is compact in $\mathbf{X}$, we can fix $Z_0\in\mathcal{F}$ such that $K\cap Z_0=\emptyset$. Now, one can easily check that $\mathcal{F}_S$ is a filter in $\mathcal{Z}(\mathbf{S})$ (respectively, in $\mathcal{CO}(\mathbf{S})$). 
	Let us assume that $W\in\mathcal{Z}(\mathbf{S})$.
	
	(i) For the proof of (i), assuming that $\mathbf{X}$ is a Tychonoff space, by the compactness of $K$ in $\mathbf{X}$, we can choose $Z_1\in\mathcal{Z}(\mathbf{X})$ such that $K\subseteq \inter_{\mathbf{X}}(Z_1)$ and $Z_0\cap Z_1=\emptyset$. In much the same way, as in \cite[the proof of Fact 21]{nik}, one can show that $W\cap Z_0\in\mathcal{Z}(\mathbf{X})$. Suppose that, for every $T\in\mathcal{F}_S$, $W\cap T\neq\emptyset$. Then, for every $F\in\mathcal{F}$, $W\cap Z_0\cap F\neq\emptyset$, so $W\cap Z_0\in\mathcal{F}$. Hence $W\in\mathcal{F}_S$. This proves that $\mathcal{F}_S$ is an ultrafilter in $\mathcal{Z}(\mathbf{S})$. 
	
	Now, assume that $\mathcal{F}$ has the weak countable intersection property. Let us show that $\mathcal{F}_S$ has the weak countable intersection property. To this aim, we assume that  $\{f_n: n\in\omega\}$ is a family of functions from $C(\mathbf{S}, [0, 1])$ such that, for every $n\in\omega$, $F_n=\{s\in S: f_n(s)=0\}\in\mathcal{F}_S$. We can fix $g\in C(\mathbf{S}, [0, 1])$ such that $S\cap Z_1=\{ s\in S: g(s)=0\}$. Using the idea from \cite[the proof of Fact 21]{nik}, for every $n\in\omega$, we define a function $h_n\in C(\mathbf{X}, [0, 1])$ as follows:
	$$
	h_n(x)=\begin{cases} \frac{f_n(x)}{f_n(x)+g(x)} &\text{ if } x\in X\setminus \inter_{\mathbf{X}}Z_1;\\
		1 &\text{ if } x\in Z_1.\end{cases}
	$$
	We notice that, for every $n\in\omega$, $F_n\cap Z_0\in\mathcal{F}$ and $F_n\cap Z_0=\mathcal{Z}(h_n)$. Hence,  the family $\{F_n\cap Z_0: n\in\omega\}$ is a countable  admissible subfamily of $\mathcal{F}$. Since $\mathcal{F}$ has the weak countable intersection property, $\bigcap\limits_{n\in\omega}(F_n\cap Z_0)\neq\emptyset$. This proves that $\mathcal{F}_S$ has the weak countable intersection property. Since $\mathcal{F}_S$ is free, $\mathbf{S}$ is not realcompact by Theorem \ref{s6:t10}. This completes the proof of (i).
	
	(ii) For the proof of (ii), we assume that $\mathbf{X}$ is zero-dimensional, $\mathcal{F}$ is a free ultrafilter in $\mathcal{CO}(\mathbf{X})$, and $Z_0\in\mathcal{F}$ is such that $Z_0\cap K=\emptyset$.  We notice that if $W\in\mathcal{CO}(\mathbf{S})$, then,  $W\cap Z_0\in \mathcal{CO}(\mathbf{X})$, so, arguing in much the same way, as in the proof of (i), we can show that $\mathcal{F}_S$ is an ultrafilter in $\mathcal{CO}(\mathbf{S})$. Assuming that $\mathcal{F}$ has the countable intersection property, to check that $\mathcal{F}_S$ has the countable intersection property, it suffices to notice that if $\{E_n: n\in\omega\}$ is a collection of members of $\mathcal{F}_S$, then $\{E_n\cap Z_0: n\in\omega\}$ is a collection of members of $\mathcal{F}$. Hence, by Theorem \ref{s6:t1}, if $\mathbf{S}$ is $\mathbb{N}$-compact, so is $\mathbf{X}$.  
\end{proof}

\begin{definition}
	\label{s6:d19}
	A topological space $\mathbf{X}$ is called \emph{hereditarily realcompact} (respectively, \emph{hereditarily} $\mathbb{N}$-\emph{compact}) if every subspace of $\mathbf{X}$ is realcompact (respectively, $\mathbb{N}$-compact).
\end{definition}

Let us write down the following corollary to Theorem \ref{s6:t18}:

\begin{corollary}
	\label{s6:c20}
	$[\mathbf{ZF}]$ 
	Let $f: \mathbf{X}\to\mathbf{Y}$ be a continuous bijection of a Tychonoff space $\mathbf{X}$ onto a Tychonoff space $\mathbf{Y}$. Then the following conditions are satisfied:
	\begin{enumerate}
		\item[(i)] if $\mathbf{Y}$ is hereditarily realcompact, so is $\mathbf{X}$;
		\item[(ii)] if $\mathbf{X}$ is zero-dimensional and $\mathbf{Y}$ is hereditarily $\mathbb{N}$-compact, so is $\mathbf{X}$.
	\end{enumerate}
\end{corollary}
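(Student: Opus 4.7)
My strategy is to treat (i) and (ii) uniformly by reducing each to a single auxiliary claim. Given any subspace $A\subseteq X$, the restriction $f|_A:A\to f[A]$ is a continuous bijection of Tychonoff (respectively, zero-dimensional $T_1$) spaces, and $f[A]$, as a subspace of $\mathbf{Y}$, inherits hereditary realcompactness (respectively, hereditary $\mathbb{N}$-compactness). Hence it suffices to prove the following: if $g:\mathbf{P}\to\mathbf{Q}$ is a continuous bijection of a Tychonoff (resp.\ zero-dimensional $T_1$) space $\mathbf{P}$ onto a hereditarily realcompact (resp.\ hereditarily $\mathbb{N}$-compact) space $\mathbf{Q}$, then $\mathbf{P}$ is realcompact (resp.\ $\mathbb{N}$-compact). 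Applying the auxiliary claim to each $f|_A$ then yields that every subspace $A$ of $\mathbf{X}$ has the desired property.

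For the auxiliary claim in case (i), I would invoke Theorem~\ref{s6:t10}: a Tychonoff space is realcompact iff every $z$-ultrafilter with the weak countable intersection property is fixed. Let $\mathcal{U}$ be such a $z$-ultrafilter on $\mathbf{P}$. Continuity of $g$ gives $g^{-1}[Z]\in\mathcal{Z}(\mathbf{P})$ for every $Z\in\mathcal{Z}(\mathbf{Q})$, and for $h\in C^{\ast}(\mathbf{Q})$ the composite $h\circ g$ lies in $C^{\ast}(\mathbf{P})$ with $g^{-1}[h^{-1}(0)]=(h\circ g)^{-1}(0)$. Combined with the bijectivity of $g$, these facts imply that the push-forward family $\mathcal{V}=\{Z\in\mathcal{Z}(\mathbf{Q}):g^{-1}[Z]\in\mathcal{U}\}$ is a $z$-filter on $\mathbf{Q}$ which inherits the weak countable intersection property from $\mathcal{U}$. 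For case (ii), an analogous reduction is carried out using Theorem~\ref{s6:t1}, with $z$-ultrafilters replaced by clopen ultrafilters, zero-sets by clopen sets, and the weak countable intersection property by the full countable intersection property. In either case, once a point $q\in\bigcap\mathcal{V}$ is produced, the bijectivity of $g$ yields $g^{-1}(q)\in\bigcap\mathcal{U}$ by a routine verification that $g^{-1}(q)\in U$ for every $U\in\mathcal{U}$.

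The main obstacle is that $\mathcal{V}$ is not, in general, a $z$-ultrafilter (respectively, clopen ultrafilter) on $\mathbf{Q}$: the $g$-image of a zero-set (respectively, clopen set) of $\mathbf{P}$ need not be a zero-set (respectively, clopen set) of $\mathbf{Q}$, so the maximality of $\mathcal{U}$ does not transfer to $\mathcal{V}$. Merely knowing that $\mathbf{Q}$ is realcompact (resp.\ $\mathbb{N}$-compact) is therefore insufficient to conclude $\bigcap\mathcal{V}\neq\emptyset$. The hereditary hypothesis on $\mathbf{Q}$ is what resolves this difficulty: by restricting to members $Z_0\in\mathcal{V}$, one obtains hereditarily realcompact (resp.\ hereditarily $\mathbb{N}$-compact) subspaces on which the trace of $\mathcal{V}$ can be analysed, and by iterating this restriction together with the inherited weak countable intersection property one extracts a common point of $\bigcap\mathcal{V}$. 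Carrying out this last step in $\mathbf{ZF}$ -- without appealing to Zorn's lemma to extend $\mathcal{V}$ to an ultrafilter and using only the hereditary hypothesis together with the inherited intersection property -- is the delicate part of the argument.
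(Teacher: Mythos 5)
Your outer reduction is fine and matches the paper's: both arguments reduce the corollary to the claim that a continuous bijection from a Tychonoff (resp.\ zero-dimensional $T_1$) space onto a hereditarily realcompact (resp.\ hereditarily $\mathbb{N}$-compact) space forces the domain to be realcompact (resp.\ $\mathbb{N}$-compact), and then apply this to each restriction $f\upharpoonright A$. The gap is in the auxiliary claim itself. You push the $z$-ultrafilter $\mathcal{U}$ forward to $\mathcal{V}=\{Z\in\mathcal{Z}(\mathbf{Q}):g^{-1}[Z]\in\mathcal{U}\}$, correctly observe that $\mathcal{V}$ is only a $z$-filter and not an ultrafilter, and then defer the essential step --- producing a point of $\bigcap\mathcal{V}$ --- to an unspecified ``iteration of restrictions'' that you yourself label the delicate part. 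That step is the entire content of the claim and it is not supplied. Theorem~\ref{s6:t10} (resp.\ Theorem~\ref{s6:t1}) characterizes realcompactness (resp.\ $\mathbb{N}$-compactness) via \emph{ultra}filters; to apply it to $\mathbf{Q}$ or to a realcompact subspace $Z_0\in\mathcal{V}$ you would have to enlarge the trace of $\mathcal{V}$ to an ultrafilter while preserving the (weak) countable intersection property, and in $\mathbf{ZF}$ there is no mechanism for doing so --- ultrafilter extension is exactly the kind of choice principle this paper must avoid. Restricting to members of $\mathcal{V}$ does not make the filter maximal, and the hereditary hypothesis, as you deploy it, does not close this gap.

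The paper's proof takes a different, choice-free route worth internalizing: it works inside $v_{\mathbb{N}}\mathbf{X}$ (resp.\ $v\mathbf{X}$), which \emph{is already} $\mathbb{N}$-compact (resp.\ realcompact), extends $f$ to $\tilde f\colon v_{\mathbb{N}}\mathbf{X}\to\mathbf{Y}$, and supposes $p\in v_{\mathbb{N}}X\setminus X$ with $\tilde f(p)=y$. Hereditary $\mathbb{N}$-compactness of $\mathbf{Y}$ makes $Y\setminus\{y\}$ $\mathbb{N}$-compact, so Proposition~\ref{s4:p7} (applicable because the domain of $\tilde f$ is $\mathbb{N}$-compact) makes $S=v_{\mathbb{N}}X\setminus\tilde f^{-1}[\{y\}]$ $\mathbb{N}$-compact; Theorem~\ref{s6:t18} then shows $S\cup f^{-1}[\{y\}]$ is $\mathbb{N}$-compact, and this set lies between $X$ and $v_{\mathbb{N}}X$ while omitting $p$, contradicting the uniqueness of the Hewitt $\mathbb{N}$-compact extension. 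The filter-theoretic difficulty you ran into is thereby confined to Theorem~\ref{s6:t18}, where the \emph{compactness} of the added piece is precisely what keeps the restricted filter an ultrafilter. If you want to salvage a direct argument, that is the mechanism you would need to replicate; as written, your proof does not establish the claim.
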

\begin{proof}
	We shall prove (ii). The proof of (i) is similar (see \cite[the proof of Fact 24]{nik}).
	
	Let us assume that $\mathbf{X}$ is zero-dimensional, and $\mathbf{Y}$ is hereditarily $\mathbb{N}$-compact.  Let $\tilde{f}: v_{\mathbb{N}}\mathbf{X}\to\mathbf{Y}$ be the continuous extension of $f$. Suppose that $p\in v_{\mathbb{N}}X\setminus X$ and $\tilde{f}(p)=y$. The subspace $Y\setminus\{y\}$ of $\mathbf{Y}$  is $\mathbb{N}$-compact. Let $S=v_{\mathbb{N}}X\setminus \tilde{f}^{-1}[\{y\}]$. By Proposition \ref{s4:p7}, the subspace $\mathbf{S}$ of $v_{\mathbb{N}}\mathbf{X}$ is $\mathbb{N}$-compact. The set $K=\{x\in X: f(x)=p\}$ is a singleton, so it is compact. Hence, the subspace $S\cup K$ of $v_{\mathbb{N}}\mathbf{X}$ is $\mathbb{N}$-compact. Since $X\subseteq S\cup K\subseteq v_{\mathbb{N}}X$, we infer that $X=v_{\mathbb{N}}X$. Therefore, $\mathbf{X}$ is $\mathbb{N}$-compact. If $\mathbf{Z}$ is a subspace of $\mathbf{X}$ then there is a continuous bijection of $\mathbf{Z}$ onto a subspace of $\mathbf{Y}$, so $\mathbf{Z}$ is also $\mathbb{N}$-compact.
\end{proof}

\begin{corollary}
	\label{s6:c21}
	$[\mathbf{ZF}]$ Let $\mathbf{X}=\langle X, \tau_X\rangle$ be a $T_1$-space. Then the following hold:
	
	\begin{enumerate}
		\item[(i)] if $\mathbf{X}$ is completely regular, then $\mathbf{X}$ is hereditarily realcompact if and only if, for every completely regular topology $\tau$ on $X$ such that $\tau_X\subseteq \tau$, the space $\langle X, \tau\rangle$  is realcompact;
		\item[(ii)] if $\mathbf{X}$ is zero-dimensional, then  $\mathbf{X}$ is hereditarily $\mathbb{N}$-compact if and only if, for zero-dimensional topology $\tau$ on $X$ such that $\tau_X\subseteq \tau$, the space $\langle X, \tau\rangle$  is $\mathbb{N}$-compact.
	\end{enumerate}
\end{corollary}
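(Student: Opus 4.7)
The plan is to handle (i) and (ii) in parallel, writing $\mathcal{P}$ for the relevant compactness property (realcompactness in (i), $\mathbb{N}$-compactness in (ii)) and $\mathcal{R}$ for the relevant separation class (complete regularity in (i), zero-dimensionality in (ii)). In each case $\mathbf{X}$ lies in class $\mathcal{R}$ by hypothesis, and one wants to match ``hereditarily $\mathcal{P}$'' with ``every finer $\mathcal{R}$-topology yields a $\mathcal{P}$-space''.

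For the forward implication, I would invoke Corollary \ref{s6:c20} directly. Given an $\mathcal{R}$-topology $\tau$ on $X$ with $\tau_X\subseteq\tau$, the identity map $\mathrm{id}_X:\langle X,\tau\rangle\to\langle X,\tau_X\rangle$ is a continuous bijection of the form covered by Corollary \ref{s6:c20} (Tychonoff to Tychonoff in (i); zero-dimensional domain to hereditarily $\mathbb{N}$-compact codomain in (ii)). Hence $\langle X,\tau\rangle$ inherits hereditary $\mathcal{P}$-ness, and in particular is $\mathcal{P}$.

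The substantive direction is the converse. Given an arbitrary $A\subseteq X$, I would manufacture one finer $\mathcal{R}$-topology $\tau^\ast$ on $X$ in which $A$ is clopen, namely the topology of the topological sum $A\oplus(X\setminus A)$, where each summand carries the subspace topology inherited from $\mathbf{X}$. Three quick checks are needed: that $\tau_X\subseteq\tau^\ast$, which follows from the decomposition $W=(W\cap A)\cup(W\cap(X\setminus A))$ for $W\in\tau_X$; that $\tau^\ast$ is $T_1$ and lies in class $\mathcal{R}$, which uses that topological sums preserve $T_1$-ness together with complete regularity (for (i)) and zero-dimensionality (for (ii)); and that the subspace topology $\tau^\ast|_A$ agrees with $\tau_X|_A$, since $A$ is clopen in $\tau^\ast$. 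The hypothesis then gives that $\langle X,\tau^\ast\rangle$ is $\mathcal{P}$, and since $A$ is closed in $\langle X,\tau^\ast\rangle$, Proposition \ref{s4:p4} yields that $A$ as a subspace of $\langle X,\tau^\ast\rangle$ — equivalently as a subspace of $\mathbf{X}$ — is $\mathcal{P}$, as desired.

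I do not foresee any genuine obstacle: the argument reduces to the $\mathbf{ZF}$-provable stability of the classes ``completely regular $T_1$'' and ``zero-dimensional $T_1$'' under disjoint topological sums, the heredity of $\mathcal{P}$ along closed subspaces (Proposition \ref{s4:p4}), and Corollary \ref{s6:c20}. The only point deserving a verified line is the coincidence $\tau^\ast|_A=\tau_X|_A$, which is immediate from $A\in\tau^\ast$.
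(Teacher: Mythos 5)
Your proof is correct, and while the forward implication (via Corollary \ref{s6:c20} applied to $\mathrm{id}_X:\langle X,\tau\rangle\to\langle X,\tau_X\rangle$) coincides with the paper's, your converse takes a genuinely different route. The paper proceeds pointwise: for each $x\in X$ it refines $\tau_X$ only by isolating $x$, obtains that the co-singleton $X\setminus\{x\}$ (now closed) is realcompact (resp. $\mathbb{N}$-compact) via Proposition \ref{s4:p4}, and then recovers an arbitrary subspace $S$ as $\bigcap_{x\in X\setminus S}(X\setminus\{x\})$, invoking Proposition \ref{s4:p6} on arbitrary intersections of $\mathbf{E}$-compact subspaces. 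You instead attack an arbitrary $A\subseteq X$ in one step, passing to the sum topology of $A\oplus(X\setminus A)$, in which $A$ is clopen and carries its original subspace topology; then Proposition \ref{s4:p4} alone finishes. Your argument is shorter and dispenses with Proposition \ref{s4:p6} entirely; its only extra burden is the (routine, $\mathbf{ZF}$-valid) verification that the classes completely regular $T_1$ and zero-dimensional $T_1$ are stable under subspaces and finite disjoint sums, which you correctly flag. The paper's version has the mild advantage of showing that it suffices to test only those finer topologies obtained by isolating a single point, but for the statement as formulated both arguments are equally adequate.
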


\begin{proof}
	Let us prove (ii) and omit a similar proof of (i).
	
	Suppose that, for every zero-dimensional topology $\tau$ on $X$ such that $\tau_X\subseteq \tau$, the space $\langle X, \tau\rangle$ is $\mathbb{N}$-compact. Fix a point $x\in X$ and let $\tau(x)=\tau\cup\{V\cup\{x_0\}: V\in\tau_X\}$. Then $\tau(x)$ is a zero-dimensional topology on $X$, and $x$ is an isolated point of $\langle X, \tau(x)\rangle$. For $S(x)= X\setminus \{x\}$, we have $\{V\cap S(x): V\in\tau_X\}=\{V\cap S: V\in \tau(x)\}$. By our hypothesis and by Proposition \ref{s4:p4}, the subspace $S(x)$ of $\langle X, \tau(x)\rangle$ is $\mathbb{N}$-compact. Hence, the subspace $\mathbf{S}(x)$ of $\mathbf{X}$ is $\mathbb{N}$-compact. If $S\subseteq X$, then $S=\bigcap\limits_{x\in X\setminus S}(X\setminus \{x\})$, so $\mathbf{X}$ is hereditarily $\mathbb{N}$-compact by Proposition \ref{s4:p6}. To complete the proof of (ii), it suffices to apply Corollary \ref{s6:c20}.
\end{proof}

\begin{corollary}
	\label{s6:c22}
	$[\mathbf{ZF}]$ If $\mathbf{X}$ is hereditarily realcompact, so is $(\mathbf{X})_z$. Similarly, if $\mathbf{X}$ is hereditarily $\mathbb{N}$-compact, so is $(\mathbf{X})_z$.
\end{corollary}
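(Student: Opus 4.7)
The plan is to reduce both assertions directly to Corollary \ref{s6:c21}, treating $\tau_z$ merely as a topology on $X$ that is finer than $\tau_X$ and coarser than any ``comparison'' topology we need to consider. To set this up, I would first note that a hereditarily realcompact (respectively, hereditarily $\mathbb{N}$-compact) space $\mathbf{X}$ is in particular realcompact (respectively, $\mathbb{N}$-compact), and hence $\mathbf{X}$ is Tychonoff (respectively, a zero-dimensional $T_1$-space). In either case $\mathbf{X}$ is completely regular, so Proposition \ref{s3:p1}(iii) yields the inclusion $\tau_X \subseteq \tau_z$.

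Next I would verify that $(\mathbf{X})_z$ satisfies the separation/dimension hypothesis needed on the ``output side'' of Corollary \ref{s6:c21}. By Proposition \ref{s3:p9}, the space $(\mathbf{X})_z$ is always zero-dimensional. Since $\mathbf{X}$ is a Tychonoff $T_1$-space, it is functionally Hausdorff, so by the second clause of Proposition \ref{s3:p9} the space $(\mathbf{X})_z$ is $T_0$. But any zero-dimensional $T_0$-space is automatically Hausdorff, hence $T_1$, and any zero-dimensional $T_1$-space is Tychonoff. Thus $(\mathbf{X})_z$ is simultaneously a zero-dimensional $T_1$-space and a completely regular space, which is exactly what Corollary \ref{s6:c21}(ii) and \ref{s6:c21}(i) require.

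To close the argument, let $\tau^{\ast}$ be any completely regular (respectively, zero-dimensional) topology on $X$ with $\tau_z \subseteq \tau^{\ast}$. Then $\tau_X \subseteq \tau_z \subseteq \tau^{\ast}$, so applying the forward direction of Corollary \ref{s6:c21}(i) (respectively, \ref{s6:c21}(ii)) to the hereditarily realcompact (respectively, hereditarily $\mathbb{N}$-compact) space $\mathbf{X}$ shows that $\langle X, \tau^{\ast}\rangle$ is realcompact (respectively, $\mathbb{N}$-compact). Now applying the converse direction of the same corollary to $(\mathbf{X})_z$ yields that $(\mathbf{X})_z$ is hereditarily realcompact (respectively, hereditarily $\mathbb{N}$-compact). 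I do not anticipate any serious obstacle here; the only point requiring care is the verification in the middle paragraph that $(\mathbf{X})_z$ has the correct separation and zero-dimensionality properties so that Corollary \ref{s6:c21} is genuinely applicable to it.
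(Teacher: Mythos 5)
Your proposal is correct and follows the route the paper clearly intends: the corollary is derived from Corollary \ref{s6:c21}, using Proposition \ref{s3:p1}(iii) for the inclusion $\tau_X\subseteq\tau_z$ and Proposition \ref{s3:p9} to see that $(\mathbf{X})_z$ is a zero-dimensional Tychonoff (hence completely regular) space to which Corollary \ref{s6:c21} applies. The double application of Corollary \ref{s6:c21} (forward on $\mathbf{X}$, backward on $(\mathbf{X})_z$) is a clean way to package the argument, and all the auxiliary verifications you flag do go through.
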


\begin{remark}
	\label{s6:r23} 
	In \cite[Remark (iii), p.  234]{lr}, it is said that every  Tychonoff topology stronger than a realcompact topology is realcompact in $\mathbf{ZFC}$. That the above-mentioned statement of \cite[Remark (iii), p. 234]{lr} is false, follows easily from Corollary \ref{s6:c21}.
\end{remark}

\begin{definition}
	\label{s6:d24}
	Let $\mathbf{X}$ and $\mathbf{Y}$ be topological spaces. A continuous surjection $f: \mathbf{X}\to\mathbf{Y}$ is called $z$-perfect (respectively, $c_{\delta}$-perfect if, for every $p\in Y$, the set $f^{-1}[\{p\}]$ is compact in $\mathbf{X}$  and, for every closed in $\mathbf{X}$ set $C$ and every $y\in Y\setminus f[C]$, there exists $Z\in \mathcal{Z}(\mathbf{Y})$ (respectively, $Z\in\mathcal{C}_{\delta}(\mathbf{Y})$) such that $y\in Z\subseteq Y\setminus f[C]$.
\end{definition}

\begin{lemma}
	\label{s6:l25}
	$[\mathbf{ZF}]$ Let $C$ be a subset of a topological space $\mathbf{X}$. Then $C\in CO_{\delta}(\mathbf{X})$  if and only if there exists $f\in C(\mathbf{X},\mathbb{N}(\infty))$ such that $C=f^{-1}[\{\infty\}]$.
\end{lemma}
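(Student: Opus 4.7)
The plan is to prove both implications directly from the topological structure of the Alexandroff compactification $\mathbb{N}(\infty)$, exploiting that every point of $\mathbb{N}$ is isolated in $\mathbb{N}(\infty)$ and that $\{\infty\}$ is a canonical countable intersection of clopen neighbourhoods of $\infty$.

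For the ``only if'' direction, I would first observe that for every $n\in\mathbb{N}$ the finite set $\{1,\dots,n\}$ is clopen in $\mathbb{N}(\infty)$ (each of its points is isolated, and it is closed since its complement is a neighbourhood of $\infty$), so $V_n=\mathbb{N}(\infty)\setminus\{1,\dots,n\}$ is clopen as well. Since $\{\infty\}=\bigcap_{n\in\mathbb{N}}V_n$, if $f\in C(\mathbf{X},\mathbb{N}(\infty))$ with $C=f^{-1}[\{\infty\}]$, then $C=\bigcap_{n\in\mathbb{N}}f^{-1}[V_n]$ is a countable intersection of members of $\mathcal{CO}(\mathbf{X})$, hence $C\in\mathcal{CO}_{\delta}(\mathbf{X})$.

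For the converse, given $C=\bigcap_{n\in\mathbb{N}}B_n$ with each $B_n\in\mathcal{CO}(\mathbf{X})$, I would pass to the decreasing clopen sequence $A_n=\bigcap_{k=1}^{n}B_k$ (which is clopen as a finite intersection of clopen sets), so that $A_1\supseteq A_2\supseteq\cdots$ and $C=\bigcap_{n\in\mathbb{N}}A_n$. Setting $A_0=X$, the family $\{C\}\cup\{A_{n-1}\setminus A_n:n\in\mathbb{N}\}$ is a partition of $X$ whose non-limit parts are clopen. I then define $f:X\to\mathbb{N}(\infty)$ by $f(x)=n$ when $x\in A_{n-1}\setminus A_n$ and $f(x)=\infty$ when $x\in C$; by construction $f^{-1}[\{\infty\}]=C$.

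The only point that then requires verification is the continuity of $f$. For any subset $U\subseteq\mathbb{N}$ one has $f^{-1}[U]=\bigcup_{n\in U}(A_{n-1}\setminus A_n)$, which is open as a union of clopen sets of $\mathbf{X}$. For a basic open neighbourhood $V=\{\infty\}\cup(\mathbb{N}\setminus F)$ of $\infty$ with $F\subseteq\mathbb{N}$ finite, one computes $f^{-1}[V]=X\setminus\bigcup_{n\in F}(A_{n-1}\setminus A_n)$, which is clopen (hence open) as the complement of a finite union of clopen sets. This completes the argument. I do not foresee serious obstacles; the only care needed is to pass to a decreasing clopen presentation of $C$ and to handle the boundary case $A_0=X$, both of which are routine and require no form of choice, so the proof goes through in $\mathbf{ZF}$.
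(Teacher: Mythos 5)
Your proof is correct and takes essentially the same approach as the paper: both arguments build the map that is constant equal to $n$ on the layer $A_{n-1}\setminus A_n$ and equal to $\infty$ on $C$, and read off the easy direction from the fact that $\{\infty\}$ is a countable intersection of clopen sets of $\mathbb{N}(\infty)$. Yours is in fact marginally tidier, since by tolerating empty layers you avoid the paper's separate treatment of the case where $C$ is clopen and its normalization to a strictly decreasing sequence, and you spell out the continuity check that the paper leaves implicit (only note that your labels ``only if''/``converse'' are swapped relative to the statement, which is harmless).
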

\begin{proof}
	Clearly, if $f\in C(\mathbf{X}, \mathbb{N}(\infty))$ and $C=f^{-1}[\{\infty\}]$, then $C$ is a $c_{\delta}$-set of $\mathbf{X}$. On the other hand, if $C$ is clopen in $\mathbf{X}$, we define a function $g\in C(\mathbf{X}, \mathbb{N}(\infty))$ as follows:
	$$
	g(x)=\begin{cases} 1 &\text{ if } x\in X\setminus C;\\
		\infty &\text{ if } x\in C.\end{cases}
	$$
	\noindent Then $C=g^{-1}[\{\infty\}]$. 
	
	Let us assume that $C$ is not clopen in $\mathbf{X}$ but $C\in\mathcal{CO}_{\delta}(\mathbf{X})$. Then we can fix a family $\{C_n: n\in\omega\}$ of clopen sets of $\mathbf{X}$ such that  $C=\bigcap\limits_{n\in\omega}C_n$ where $C_0=X$ and, for every $n\in\omega$, $C_{n+1}\subsetneq C_n$. Let us define a function $h: X\to\mathbb{N}(\infty)$ as follows:
	$$
	h(x)=\begin{cases} n+1 &\text{ if $x\in C_n\setminus C_{n+1}$;}\\
		\infty &\text{ if $x\in C$.}\end{cases}
	$$
	Then $h\in C(\mathbf{X}, \mathbb{N}(\infty))$ and $C=h^{-1}[\{\infty\}]$. 
\end{proof}

\begin{theorem}
	\label{s6:t26}
	$[\mathbf{ZF}]$ Suppose that $\mathbf{X}$ and $\mathbf{Y}$ are Tychonoff (respectively, zero-dimension\-al $T_1$) spaces, and $f:\mathbf{X}\to\mathbf{Y}$ is a $z$-perfect (respectively $c_{\delta}$-perfect) mapping. Let $\tilde{f}: v_{\mathbb{R}}\mathbf{X}\to v_{\mathbb{R}}\mathbf{Y}$ (respectively, $\tilde{f}:v_{\mathbb{N}}\mathbb{X}\to v_{\mathbb{N}}\mathbb{Y}$) be the continuous extension of $f$. Then $\tilde{f}[v_{\mathbb{R}}X\setminus X]\subseteq v_{\mathbb{R}}Y\setminus Y$ (respectively, $\tilde{f}[v_{\mathbb{N}}X\setminus X]\subseteq v_{\mathbb{N}}Y\setminus Y$).
\end{theorem}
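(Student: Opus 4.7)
The plan is to argue by contradiction in each case. Suppose $p \in v_{\mathbb{R}}X \setminus X$ (respectively, $p \in v_{\mathbb{N}}X \setminus X$) with $\tilde{f}(p) = y \in Y$. The fibre $K = f^{-1}[\{y\}]$ is compact in $\mathbf{X}$, hence a compact subspace of $v_{\mathbb{R}}\mathbf{X}$ (respectively $v_{\mathbb{N}}\mathbf{X}$, which is zero-dimensional as a subspace of some $\mathbb{N}^J$), and $p \notin K$ since $p \notin X$. Using complete regularity of $v_{\mathbb{R}}\mathbf{X}$, I would pick $h \in C(v_{\mathbb{R}}\mathbf{X}, [0,1])$ with $h(p) = 0$ and $h[K] = \{1\}$; in the zero-dimensional case I would instead collapse a finite subcover of the cover of $K$ by clopen sets of $v_{\mathbb{N}}\mathbf{X}$ missing $p$ to produce a clopen $W$ with $p \in W$ and $W \cap K = \emptyset$. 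Setting $C = \{x \in X : h(x) \leq 1/2\}$ (respectively $C = W \cap X$) yields a closed (respectively clopen) subset of $\mathbf{X}$ disjoint from $K$, so $y \notin f[C]$.

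The assumed $z$-perfect (respectively $c_{\delta}$-perfect) property of $f$ then supplies a zero-set $Z = g^{-1}[\{0\}]$ with $g \in C(\mathbf{Y},[0,1])$ (respectively a $c_{\delta}$-set $Z = g^{-1}[\{\infty\}]$ with $g \in C(\mathbf{Y}, \mathbb{N}(\infty))$, via Lemma \ref{s6:l25}) such that $y \in Z \subseteq Y \setminus f[C]$. Next I would transport $g$ across the Hewitt extension: $\mathbb{R}$ is tautologically $\mathbb{R}$-compact, and $\mathbb{N}(\infty)$ is $\mathbb{N}$-compact in $\mathbf{ZF}$ (the evaluation map $e_{C(\mathbb{N}(\infty),\mathbb{N})}$ is an embedding, separating points and points from closed sets, with compact hence closed image in $\mathbb{N}^J$), so by Theorem \ref{s4:t8} the function $g$ extends to $\tilde{g}$ on $v_{\mathbb{R}}\mathbf{Y}$ (respectively $v_{\mathbb{N}}\mathbf{Y}$). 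Consequently $\tilde{g}\circ\tilde{f}$ is the continuous extension of $g \circ f$ to $v_{\mathbb{R}}\mathbf{X}$ (respectively $v_{\mathbb{N}}\mathbf{X}$) and takes the value $0$ (respectively $\infty$) at $p$ because $\tilde{f}(p) = y \in Z$.

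Finally, I would patch these ingredients to produce a contradiction. In the realcompact case, define $F = (\tilde{g}\circ\tilde{f}) + h$: the defining choice of $C$ forces $(g \circ f)(x) = 0 \Rightarrow h(x) > 1/2$ for $x \in X$, so $F > 0$ pointwise on $X$; hence $1/F|_X \in C(\mathbf{X})$ extends to some $\psi \in C(v_{\mathbb{R}}\mathbf{X})$, and $F \cdot \psi$ agrees with the constant $\mathbf{1}$ on the dense set $X$, so equals $\mathbf{1}$ on $v_{\mathbb{R}}\mathbf{X}$ by Proposition \ref{s1:p12}, contradicting $F(p) = 0$. In the $\mathbb{N}$-compact case the clopenness of $W$ lets me define $F : v_{\mathbb{N}}\mathbf{X} \to \mathbb{N}(\infty)$ piecewise by $F = \tilde{g}\circ\tilde{f}$ on $W$ and $F \equiv 1$ on $v_{\mathbb{N}}X \setminus W$; then $F$ is continuous, $F(p) = \infty$, but $F[X] \subseteq \mathbb{N}$, because $x \in W \cap X = C$ prevents $f(x) \in Z$ (so $(g\circ f)(x)\in\mathbb{N}$) while $x \notin W$ gives $F(x) = 1$. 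Since $\mathbb{N}$ is $\mathbb{N}$-compact, $F|_X$ extends to some $F' \in C(v_{\mathbb{N}}\mathbf{X}, \mathbb{N})$, and Proposition \ref{s1:p12} forces $F = F'$ on $v_{\mathbb{N}}\mathbf{X}$, again a contradiction as $\infty \notin \mathbb{N}$. The main obstacle is engineering the patched $F$ so that every $x \in X$ genuinely lands in the ``finite'' (or strictly positive) part—this is exactly what the $z$-perfect/$c_{\delta}$-perfect separation arranges—and checking in $\mathbf{ZF}$, without invoking any choice principle, that $\mathbb{N}(\infty)$ is $\mathbb{N}$-compact so that the extension $\tilde{g}$ in Theorem \ref{s4:t8} is available.
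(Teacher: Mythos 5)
Your proof is correct and follows the same skeleton as the paper's: assume $\tilde f(p)=y\in Y$ for some $p$ in the remainder, use compactness of the fibre $K=f^{-1}[\{y\}]$ to separate $p$ from $K$ inside the Hewitt extension, feed the trace on $X$ of the separating set into the $z$-perfectness (respectively, $c_{\delta}$-perfectness) hypothesis, and pull the resulting zero-set (respectively, $c_{\delta}$-set) of $\mathbf{Y}$ back through $\tilde f$. The two arguments part ways only at the end: the paper intersects the pulled-back set with the set separating $p$ from $K$ to obtain a non-empty zero-set (respectively, $c_{\delta}$-set) $Z_5$ of $v_{\mathbb{R}}\mathbf{X}$ (respectively, $v_{\mathbb{N}}\mathbf{X}$) contained in the remainder, and then derives the contradiction from Proposition \ref{s4:p7}, Lemma \ref{s6:l25} and the minimality of the Hewitt extension (Theorem \ref{s4:t2}); you instead assemble an explicit function $F$ that is strictly positive (respectively, $\mathbb{N}$-valued) on $X$ but vanishes (respectively, takes the value $\infty$) at $p$, and contradict the defining extension property of $v_{\mathbb{R}}\mathbf{X}$ (respectively, $v_{\mathbb{N}}\mathbf{X}$) together with Proposition \ref{s1:p12}. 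The mathematical content is the same---your $F$ is in essence a function whose zero-set is the paper's $Z_5$---but your version is more self-contained, and it has the additional merit of explicitly verifying, in $\mathbf{ZF}$, that $\mathbb{N}(\infty)$ is $\mathbb{N}$-compact so that $g$ extends over $v_{\mathbb{N}}\mathbf{Y}$ via Theorem \ref{s4:t8}; the paper uses this extendability tacitly when it asserts the existence of $G\in\mathcal{CO}_{\delta}(v_{\mathbb{N}}\mathbf{Y})$ with $Z_3=G\cap Y$.
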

\begin{proof}
	
	Suppose that $p\in v_{\mathbb{R}}X\setminus X$ (respectively, $p\in v_{\mathbb{N}}X\setminus X$) is such that $\tilde{f}(p)\in Y$. There exists $x_p\in X$ such that $\tilde{f}(p)=f(x_p)$. The set $K=f^{-1}[f(x_p)]$ is compact in $\mathbf{X}$, so there exist sets $Z_1, Z_2\in\mathcal{Z}(v_{\mathbb{R}}\mathbf{X})$ (respectively, $Z_1, Z_2\in \mathcal{CO}(v_{\mathbb{N}}\mathbf{X})$) such that $p\in Z_1$, $K\subseteq Z_2$ and $Z_1\cap Z_2=\emptyset$. Since $f(x_p)\notin f[X\cap Z_1]$, there exists $Z_3\in\mathcal{Z}(\mathbf{Y})$ (respectively, $Z_3\in\mathcal{CO}_{\delta}(\mathbf{Y})$) such that $f(x_p)\in Z_3\subseteq Y\setminus f[X\cap Z_1]$. There exists $g\in C(v_{\mathbb{R}}\mathbf{Y}, [0, 1])$ (respectively, $G\in \mathcal{CO}_{\delta}(v_{\mathbb{N}}\mathbf{Y})$) such that $Z_3=g^{-1}[\{0\}]\cap Y$ (respectively, $Z_3=G\cap Y$). Let $Z_4=(g\circ \tilde{f})^{-1}[\{0\}]$ (respectively, $Z_4=\tilde{f}^{-1}[G]$).  Then $p\in Z_4\in\mathcal{Z}(v_{\mathbb{R}}\mathbf{X})$ (respectively, $p\in Z_4\in\mathcal{CO}_{\delta}(v_{\mathbb{N}}\mathbf{X})$) and $Z_4\cap Z_1\cap X=\emptyset$. Hence, for $Z_5=Z_4\cap Z_1$, we have $Z_5\in\mathcal{Z}(v_{\mathbb{R}}\mathbf{X})$ (respectively, $Z_5\in\mathcal{CO}_{\delta}(v_{\mathbb{N}}\mathbf{X})$) and $\emptyset\neq Z_5\subseteq v_{\mathbb{R}}X\setminus X$ (respectively, $\emptyset\neq Z_5\subseteq v_{\mathbb{N}}X\setminus X$).  To see that this is impossible, one can notice that the subspace $S=v_{\mathbb{R}}X\setminus Z_5$ (respectively, $S=v_{\mathbb{N}}X\setminus Z_5$) is realcompact (respectively, $\mathbb{N}$-compact) by Proposition \ref{s4:p7} (respectively, and Lemma \ref{s6:l25}); moreover, since $X\subseteq S\subseteq v_{\mathbb{R}}X$ (respectively, $X\subseteq S\subseteq v_{\mathbb{N}}X$), the equality $S=v_{\mathbb{R}}X$ (respectively, $S=v_{\mathbb{N}}X$) holds. 
\end{proof}

For a topology $\tau$ on $X$, let $\tau_{c_{\delta}}$ be the topology on $X$ generated by the family $\mathcal{CO}_{\delta}(\langle X, \tau\rangle)$. Obviously, $\tau_{c_{\delta}}\subseteq \tau_z$.

\begin{corollary}
	\label{s6:c27}
	$[\mathbf{ZF}]$ Let $\mathbf{X}=\langle X, \tau\rangle$ be a realcompact $($respectively, $\mathbb{N}$-compact$)$ space. Let $\tau^{\ast}$ be a topology on $X$ such that $\langle X, \tau^{\ast}\rangle$ is completely regular $($respectively, zero-dimensional$)$ and $\tau\subseteq\tau^{\ast}\subseteq \tau_{z}$ $($respectively,  $\tau\subseteq\tau^{\ast}\subseteq \tau_{c_{\delta}}$$)$. Then $\langle X, \tau^{\ast}\rangle$ is realcompact $($respectively, $\mathbb{N}$-compact$)$.
\end{corollary}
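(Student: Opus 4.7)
The plan is to deduce this corollary directly from Theorem \ref{s6:t26} by treating the identity map as a continuous bijection. First, I would observe that since $\tau\subseteq\tau^{\ast}$, the identity mapping $\mathrm{id}_X\colon\langle X,\tau^{\ast}\rangle\to\langle X,\tau\rangle$ is a continuous bijection, and that $\mathbf{X}$ is Tychonoff (respectively, zero-dimensional $T_1$) because it is realcompact (respectively, $\mathbb{N}$-compact), while $\mathbf{X}^{\ast}=\langle X,\tau^{\ast}\rangle$ is completely regular (respectively, zero-dimensional) by hypothesis, and is $T_1$ since $\tau\subseteq\tau^{\ast}$ and $\mathbf{X}$ is $T_1$.

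The main step is to verify that $\mathrm{id}_X$ is $z$-perfect (respectively, $c_{\delta}$-perfect) in the sense of Definition \ref{s6:d24}. The fibers are singletons, hence compact. For the remaining requirement, I would take an arbitrary set $C$ closed in $\mathbf{X}^{\ast}$ and any $y\in X\setminus C$. Then $X\setminus C\in\tau^{\ast}$, and, by the inclusion $\tau^{\ast}\subseteq\tau_z$ (respectively, $\tau^{\ast}\subseteq\tau_{c_{\delta}}$), $X\setminus C$ is a union of sets from the base $\mathcal{Z}(\mathbf{X})$ of $(\mathbf{X})_z$ (respectively, from the base $\mathcal{CO}_{\delta}(\mathbf{X})$ of $\tau_{c_{\delta}}$; this is indeed a base because $\mathcal{CO}_{\delta}(\mathbf{X})$ is closed under finite intersections, since if $A_i=\bigcap_{n\in\omega}A_{i,n}$ with $A_{i,n}\in\mathcal{CO}(\mathbf{X})$ for $i\in\{1,2\}$, then $A_1\cap A_2=\bigcap_{n\in\omega}(A_{1,n}\cap A_{2,n})\in\mathcal{CO}_{\delta}(\mathbf{X})$). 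Hence there exists $Z\in\mathcal{Z}(\mathbf{X})$ (respectively, $Z\in\mathcal{CO}_{\delta}(\mathbf{X})$) with $y\in Z\subseteq X\setminus C$, which is exactly the condition demanded by Definition \ref{s6:d24}.

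With $z$-perfection (respectively, $c_{\delta}$-perfection) established, I would apply Theorem \ref{s6:t26} to the continuous extension $\widetilde{\mathrm{id}_X}\colon v_{\mathbb{R}}\mathbf{X}^{\ast}\to v_{\mathbb{R}}\mathbf{X}$ (respectively, $\widetilde{\mathrm{id}_X}\colon v_{\mathbb{N}}\mathbf{X}^{\ast}\to v_{\mathbb{N}}\mathbf{X}$) to conclude that $\widetilde{\mathrm{id}_X}[\,v_{\mathbb{R}}X^{\ast}\setminus X\,]\subseteq v_{\mathbb{R}}X\setminus X$ (respectively, $\widetilde{\mathrm{id}_X}[\,v_{\mathbb{N}}X^{\ast}\setminus X\,]\subseteq v_{\mathbb{N}}X\setminus X$). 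Since $\mathbf{X}$ is realcompact (respectively, $\mathbb{N}$-compact), Corollary \ref{s4:c10} gives $v_{\mathbb{R}}X\setminus X=\emptyset$ (respectively, $v_{\mathbb{N}}X\setminus X=\emptyset$), which forces $v_{\mathbb{R}}X^{\ast}\setminus X=\emptyset$ (respectively, $v_{\mathbb{N}}X^{\ast}\setminus X=\emptyset$). Thus $\mathbf{X}^{\ast}=v_{\mathbb{R}}\mathbf{X}^{\ast}$ (respectively, $\mathbf{X}^{\ast}=v_{\mathbb{N}}\mathbf{X}^{\ast}$), and another appeal to Corollary \ref{s4:c10} yields realcompactness (respectively, $\mathbb{N}$-compactness) of $\mathbf{X}^{\ast}$.

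I do not anticipate a genuinely hard obstacle: the whole argument reduces to recognizing the identity as a $z$-perfect (respectively, $c_{\delta}$-perfect) map, which amounts to unpacking that $\tau_z$ and $\tau_{c_{\delta}}$ have $\mathcal{Z}(\mathbf{X})$ and $\mathcal{CO}_{\delta}(\mathbf{X})$ as bases. The only mild care needed is checking that $\mathcal{CO}_{\delta}(\mathbf{X})$ really is a base (not merely a subbase) for $\tau_{c_{\delta}}$, which, as noted above, follows from its closure under finite intersections in $\mathbf{ZF}$ without invoking any choice principle.
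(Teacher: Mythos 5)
Your proposal is correct and follows exactly the paper's route: the paper's proof of this corollary is precisely the one-line observation that $\mathrm{id}_X\colon\langle X,\tau^{\ast}\rangle\to\langle X,\tau\rangle$ is $z$-perfect (respectively, $c_{\delta}$-perfect), followed by an appeal to Theorem \ref{s6:t26}. You merely spell out the routine verification (singleton fibers, the base property of $\mathcal{Z}(\mathbf{X})$ and $\mathcal{CO}_{\delta}(\mathbf{X})$, and the emptiness of the remainder via Corollary \ref{s4:c10}) that the paper leaves implicit.
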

\begin{proof}
	It suffices to observe that the identity map $\text{id}_{X}:\langle X, \tau^{\ast}\rangle\to\langle X, \tau\rangle$ is $z$-perfect (respectively, $c_{\delta}$-perfect), so $\langle X, \tau^{\ast}\rangle$ is realcompact (respectively, $\mathbb{N}$-compact) by Theorem \ref{s6:t26}.
\end{proof}

\section{Baire sets in realcompact spaces}
\label{s7}
In what follows, for a set $X$, a \emph{field of subsets} on $X$ is a family $\mathfrak{M}\subseteq\mathcal{P}(X)$ such that $\emptyset\in \mathfrak{M}$ and, for all $A,B\in\mathfrak{M}$, $A\cup B\in\mathfrak{M}$ and $A\setminus B\in\mathfrak{M}$. If $\mathfrak{M}$ is a field on $X$ such that $\mathfrak{M}_{\sigma}\subseteq\mathfrak{M}$, then $\mathfrak{M}$ is called a $\sigma$-field of subsets of $X$. In \cite{hal}, fields on $X$ are called \emph{rings} (or \emph{Boolean rings}, and $\sigma$-fields are called $\sigma$-\emph{rings}).

If $\mathcal{A}$ is a family of subsets of a set $X$, then the intersection of all fields (respectively, $\sigma$-fields) on $X$ containing $\mathcal{A}$ is called the \emph{field} (respectively, $\sigma$-\emph{field}) \emph{generated by} $\mathcal{A}$.

For a topological space $\mathbf{X}$, we denote by $\mathfrak{Ba}(\mathbf{X})$ the $\sigma$-field of subsets of $X$ generated by $\mathcal{Z}(\mathbf{X})$. Members of $\mathfrak{Ba}(\mathbf{X})$ are called \emph{Baire sets} in $\mathbf{X}$. We denote by $\mathfrak{Ba}_0(\mathbf{X})$ the $\sigma$-field of subsets of $\mathbf{X}$ generated by $\mathcal{CO}(\mathbf{X})$. Members of $\mathfrak{Ba}_0(\mathbf{X})$ are called \emph{zero-Baire sets} of $\mathbf{X}$.

In \cite{neg}, Negrepontis proved that it holds in $\mathbf{ZFC}$ that if $\mathbf{X}$ is a realcompact space, then, for every $S\in\mathfrak{Ba}(\mathbf{X})$, the subspace $\mathbf{S}$ of $\mathbf{X}$ is realcompact. In this section, we discuss this result in the absence of $\mathbf{AC}$ and show its counterpart for $\mathbb{N}$-compact spaces.

\begin{definition}
	\label{s7:d1}
	Let $\mathbf{X}$ be a topological space and let $A\subseteq X$.
	\begin{enumerate} 
		\item[(i)] (Cf. \cite[Definition 2.4]{neg}.) The set $A$ (equivalently, the subspace $\mathbf{A}$ of $\mathbf{X}$) is called \emph{r-embedded} in $\mathbf{X}$ if, for every $p\in X\setminus A$, there exists $Z\in\mathcal{Z}(\mathbf{X})$ with $p\in Z$ and $Z\cap A=\emptyset$.
		\item[(ii)] The set $A$ (equivalently, the subspace $\mathbf{A}$ of $\mathbf{X}$) is called $r_{\mathbb{N}}$-\emph{embedded} in $X$ if, for every $p\in X\setminus A$, there exists $f\in C(\mathbf{X},\mathbb{N}(\infty))$ such that $p\in f^{-1}[\{\infty\}]$ and $f^{-1}[\{\infty\}]\cap A=\emptyset$.
	\end{enumerate}
\end{definition}

In the following definition, we modify the concept of an $r$-compactification from \cite[Definition 2.1]{neg} (cf. also \cite{mr0}).

\begin{definition}
	\label{s7:d2}
	Let $\mathbf{X}$ be a topological space.
	\begin{enumerate}
		\item[(i)] Suppose that $\mathcal{F}\subseteq C^{\ast}(\mathbf{X})$ and $\mathcal{F}\in\mathcal{E}(\mathbf{X})$. If $e_{\mathcal{F}}[X]$ is $r$-embedded in $e_{\mathcal{F}}\mathbf{X}$, then $e_{\mathcal{F}}\mathbf{X}$ will be called an $r$-\emph{extension} of $\mathbf{X}$.
		\item[(ii)] Suppose that $\mathcal{F}\subseteq C(\mathbf{X}, \mathbf{2})$ and $\mathcal{F}\in\mathcal{E}(\mathbf{X}, \mathbf{2})$. If  $e_{\mathcal{F}}[X]$ is $r_{\mathbb{N}}$-embedded in $e_{\mathcal{F}}\mathbf{X}$, then $e_{\mathcal{F}}\mathbf{X}$ will be called an $r_{\mathbb{N}}$-\emph{extension} of $\mathbf{X}$.
	\end{enumerate}
\end{definition}

The following proposition is a modification of \cite[Proposition 2.2]{neg}.

\begin{proposition} 
	\label{s7:p3}
	$[\mathbf{ZF}]$ A topological space is realcompact $($respectively, $\mathbb{N}$-compact$)$ if and only if it has an $r$-extension $($respectively, $r_{\mathbb{N}}$-extension$)$.
\end{proposition}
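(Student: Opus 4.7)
The proof splits into the realcompactness equivalence and the $\mathbb{N}$-compactness equivalence, and in each case I would run necessity and sufficiency in parallel. For necessity in the realcompact case, I would take $\mathcal{F}=C^{\ast}(\mathbf{X})$, so that $e_{\mathcal{F}}\mathbf{X}=\beta^{f}\mathbf{X}$. By Theorem \ref{s4:t16}(i), for every $p\in\beta^{f}X\setminus e_{\beta^{f}}[X]$ there is $f\in C^{\ast}(\beta^{f}\mathbf{X})$ with $f(p)=0$ and $f>0$ on $e_{\beta^{f}}[X]$; the zero-set $Z(f)$ then witnesses that $e_{\beta^{f}}[X]$ is $r$-embedded. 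For the $\mathbb{N}$-compact necessity, the analogous choice is $\mathcal{F}=C(\mathbf{X},\mathbf{2})$ so that $e_{\mathcal{F}}\mathbf{X}=e_{2}^{\ast}\mathbf{X}$, and I would invoke Theorem \ref{s4:t16}(ii) and then compose the resulting $N_{0}$-valued function with the obvious homeomorphism $N_{0}\to\mathbb{N}(\infty)$ sending $0$ to $\infty$; this produces the required element of $C(e_{2}^{\ast}\mathbf{X},\mathbb{N}(\infty))$ and shows that $e_{2}^{\ast}[X]$ is $r_{\mathbb{N}}$-embedded.

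For sufficiency in the realcompact case, suppose $e_{\mathcal{F}}\mathbf{X}$ is an $r$-extension with $\mathcal{F}\subseteq C^{\ast}(\mathbf{X})$. Then $e_{\mathcal{F}}\mathbf{X}$ is closed in $\mathbb{R}^{\mathcal{F}}$, which is realcompact by Theorem \ref{s4:t5}(i), and so $e_{\mathcal{F}}\mathbf{X}$ is realcompact by Proposition \ref{s4:p4}. The $r$-embedding condition immediately yields
\[
e_{\mathcal{F}}[X]=\bigcap\{e_{\mathcal{F}}X\setminus Z:Z\in\mathcal{Z}(e_{\mathcal{F}}\mathbf{X}),\ Z\cap e_{\mathcal{F}}[X]=\emptyset\},
\]
so the problem reduces to the lemma that a co-zero subset of a realcompact space is realcompact: if $V=\{y\in Y:g(y)\neq 0\}$ with $g\in C(Y)$ and $Y$ realcompact, then $y\mapsto\langle y,1/g(y)\rangle$ is a homeomorphism of $V$ onto the closed subset $\{\langle y,t\rangle:g(y)t=1\}$ of $Y\times\mathbb{R}$, and $Y\times\mathbb{R}$ is realcompact by Theorem \ref{s4:t5}(i), hence $V$ is realcompact by Proposition \ref{s4:p4}. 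Proposition \ref{s4:p6} then gives that $e_{\mathcal{F}}[X]$, being an intersection of realcompact subspaces of $e_{\mathcal{F}}\mathbf{X}$, is realcompact; under the identification $\mathbf{X}\cong e_{\mathcal{F}}[X]$ this proves that $\mathbf{X}$ is realcompact.

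The $\mathbb{N}$-compact sufficiency follows the same blueprint with $\mathbf{2}^{\mathcal{F}}$ in place of $\mathbb{R}^{\mathcal{F}}$ and $\mathbb{N}(\infty)$ in place of $\mathbb{R}$: since $\mathbf{2}$ is $\mathbb{N}$-compact, $e_{\mathcal{F}}\mathbf{X}$ is a closed subspace of the $\mathbb{N}$-compact space $\mathbf{2}^{\mathcal{F}}$, hence $\mathbb{N}$-compact. The $r_{\mathbb{N}}$-embedding writes $e_{\mathcal{F}}[X]$ as an intersection of sets of the form $Y\setminus f^{-1}[\{\infty\}]=f^{-1}[\mathbb{N}]$ with $f\in C(Y,\mathbb{N}(\infty))$, where $Y=e_{\mathcal{F}}\mathbf{X}$. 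The replacement for the co-zero lemma is the statement that each such $V=f^{-1}[\mathbb{N}]$ is $\mathbb{N}$-compact: I would map $V$ into $Y\times\mathbb{N}$ by $v\mapsto\langle v,f(v)\rangle$ and verify that the image is closed by a case split on whether a point $\langle y,n\rangle$ outside the image has $f(y)=\infty$ (use the neighbourhood $f^{-1}[\{m\in\mathbb{N}(\infty):m>n\}]\times\{n\}$) or $f(y)\in\mathbb{N}\setminus\{n\}$ (use $f^{-1}[\{f(y)\}]\times\{n\}$). Since $Y\times\mathbb{N}$ is $\mathbb{N}$-compact by Theorem \ref{s4:t5}(i), $V$ is $\mathbb{N}$-compact by Proposition \ref{s4:p4}, and Proposition \ref{s4:p6} finishes the proof.

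The main obstacle to watch is the $\mathbf{ZF}$-legitimacy of the closedness verification for the two graph embeddings, since many standard arguments about subsets of realcompact or $\mathbb{N}$-compact spaces rely on pointwise choice; however, the case analyses above use only the topology of $\mathbb{R}$ and of $\mathbb{N}(\infty)$, and every productive step is supplied by the $\mathbf{ZF}$-results already invoked (Propositions \ref{s4:p4}, \ref{s4:p6}, Theorems \ref{s4:t5} and \ref{s4:t16}), so no hidden use of choice enters.
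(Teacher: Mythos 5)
Your proof is correct, and while the necessity direction coincides with the paper's (both read it off from Theorem \ref{s4:t16}, with your composition with the homeomorphism $N_0\to\mathbb{N}(\infty)$ handling the $\mathbb{N}$-compact case exactly as intended), your sufficiency argument takes a genuinely different route. The paper transports the witnesses backwards: it takes the canonical continuous map $h:\beta^{f}\mathbf{X}\to e_{\mathcal{F}}\mathbf{X}$ (resp.\ $h:e_{2}^{\ast}\mathbf{X}\to e_{\mathcal{F}}\mathbf{X}$) with $h\circ e_{\beta^f}=e_{\mathcal{F}}$, uses the fact that $h$ maps the remainder into the remainder to pull each zero-set (resp.\ each $f^{-1}[\{\infty\}]$) back to $\beta^{f}\mathbf{X}$, concludes that $\beta^{f}\mathbf{X}$ itself is an $r$-extension, and then applies Theorem \ref{s4:t16} once more. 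You instead prove realcompactness of $e_{\mathcal{F}}[X]$ directly, writing it as the intersection of the complements of \emph{all} disqualifying zero-sets (correctly avoiding any pointwise choice of witnesses) and reducing to the graph-embedding lemma that a co-zero set of a realcompact space is realcompact, with the analogous $f^{-1}[\mathbb{N}]$ lemma in the $\mathbb{N}$-compact case; Propositions \ref{s4:p4}, \ref{s4:p6} and Theorem \ref{s4:t5} then finish. What the paper's route buys is brevity and a single point of entry (Theorem \ref{s4:t16}), at the cost of citing the remainder-preservation fact from Engelking; what your route buys is self-containment and a stronger intermediate conclusion (the subspace $e_{\mathcal{F}}[X]$ of an arbitrary $r$-extension is itself realcompact, not merely that $\beta^f\mathbf{X}$ is an $r$-extension). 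Your two auxiliary lemmas could in fact be obtained more quickly from material already in the paper: Proposition \ref{s4:p7} gives that $f^{-1}[B]$ is $\mathbf{E}$-compact for $B$ an $\mathbf{E}$-compact subspace of the codomain, so it suffices to note that $\mathbb{R}\setminus\{0\}$ is realcompact and that $\mathbb{N}$ is $\mathbb{N}$-compact in $\mathbb{N}(\infty)$ (the latter is essentially Lemma \ref{s6:l25} combined with Proposition \ref{s4:p7}); but your explicit closedness verifications are sound and fully choice-free as written.
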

\begin{proof}
	If $\mathbf{X}$ is realcompact (respectively, $\mathbb{N}$-compact), then $\beta^f\mathbf{X}$ (respectively, $e^{\ast}_{2}\mathbf{X}$ is an $r$-extension (respectively, $r_{\mathbb{N}}$-extension) of $\mathbf{X}$ by Theorem \ref{s4:t16}. On the hand, given an $r$-extension (respectively, $r_{\mathbb{N}}$-extension) of $\mathbf{X}$, there is a continuous mapping $h: \beta^f\mathbf{X}\to e_{\mathcal{F}}\mathbf{X}$ (respectively, $h: e^{\ast}_{2}\mathbf{X}\to e_{\mathcal{F}}\mathbf{X}$) such that $h\circ e_{\beta^f}=e_{\mathcal{F}}$ (respectively, $h\circ e^{\ast}_{2}=e_{\mathcal{F}}$). For this mapping $h$, we have $h[\beta^{f}X\setminus e_{\beta^f}[X]]\subseteq e_{\mathcal{F}}X\setminus e_{\mathcal{F}}[X]$ (respectively, $h[e_{2}^{\ast}X\setminus e_{2}^{\ast}[X]]\subseteq e_{\mathcal{F}}X\setminus e_{\mathcal{F}}[X]$) (cf. \cite[Lemma 3.5.6]{en}). Now, one can easily deduce that if $\mathbf{X}$ has an $r$-extension (respectively, $r_{\mathbb{N}}$-extension), then $\beta^f\mathbf{X}$ (respectively, $e^{\ast}_{2}\mathbf{X}$) is an $r$-extension (respectively, $r_{\mathbb{N}}$-extension) of $\mathbf{X}$, so $\mathbf{X}$ is realcompact (respectively, $\mathbb{N}$-compact) by Theorem \ref{s4:t16}.
\end{proof}

\begin{corollary}
	\label{s7:c4}
	$[\mathbf{ZF}]$ Every $r$-embedded $($respectively, $r_{\mathbb{N}}$-embedded$)$ subspace of a realcompact $($respectively, $\mathbb{N}$-compact$)$ space is realcompact $($respectively, $\mathbb{N}$-compact$)$.
\end{corollary}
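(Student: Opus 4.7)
By Proposition~\ref{s7:p3}, the plan is to exhibit an $r$-extension (respectively, $r_{\mathbb{N}}$-extension) of $\mathbf{A}$, obtained from the canonical extension $\beta^{f}\mathbf{X}$ (respectively, $e_{2}^{\ast}\mathbf{X}$) of $\mathbf{X}$, which by Proposition~\ref{s7:p3} is already an $r$-extension (respectively, $r_{\mathbb{N}}$-extension) of $\mathbf{X}$.

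First I would set $\mathcal{F}:=\{f\upharpoonright A : f\in C^{\ast}(\mathbf{X})\}$ (respectively, $\mathcal{F}:=\{f\upharpoonright A : f\in C(\mathbf{X},\mathbf{2})\}$) and check that $\mathcal{F}\in\mathcal{E}^{\ast}(\mathbf{A})$ (respectively, $\mathcal{F}\in\mathcal{E}(\mathbf{A},\mathbf{2})$); the separation properties of $C^{\ast}(\mathbf{X})$ (respectively, $C(\mathbf{X},\mathbf{2})$) on the ambient Tychonoff (respectively, zero-dimensional $T_{1}$) space restrict to $\mathbf{A}$. Then, pulling back along the surjection $f\mapsto f\upharpoonright A$, I would identify $e_{\mathcal{F}}\mathbf{A}$ with the subspace $\cl(e_{\beta^{f}}[A])$ of $\beta^{f}\mathbf{X}$ (respectively, $\cl(e_{2}^{\ast}[A])$ of $e_{2}^{\ast}\mathbf{X}$), via the coordinate-wise closed embedding $\iota$ of the smaller product into the larger whose image is precisely the locus of coordinate equalities prescribed by the restriction map. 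Verifying that $e_{\beta^{f}}[A]$ already lies in this image is routine, whence the closure taken in either product agrees.

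Next I would verify that $e_{\beta^{f}}[A]$ (respectively, $e_{2}^{\ast}[A]$) is $r$-embedded (respectively, $r_{\mathbb{N}}$-embedded) in this closure. Given $p$ in the closure but outside the image of $A$, I would split into two cases. If $p$ also lies outside the image of $X$, then Proposition~\ref{s7:p3} applied to $\mathbf{X}$ already produces the required zero-set (respectively, function into $\mathbb{N}(\infty)$) separating $p$ from the image of $X$, hence from the image of $A$; intersecting with the closure transfers the witness to the ambient subspace. If instead $p=e_{\beta^{f}}(x)$ (respectively, $p=e_{2}^{\ast}(x)$) for some $x\in X\setminus A$, then the $r$-embeddedness (respectively, $r_{\mathbb{N}}$-embeddedness) of $A$ in $\mathbf{X}$ supplies a witnessing $f\in C^{\ast}(\mathbf{X})$ (respectively, $f\in C(\mathbf{X},\mathbb{N}(\infty))$), which I would lift to the ambient product space via the coordinate projection $\pi_{f}$ (respectively, by expanding $f^{-1}[\{\infty\}]$ as an intersection of clopen sets via Lemma~\ref{s6:l25}, replacing each by the corresponding coordinate-projection preimage in $\mathbf{2}^{C(\mathbf{X},\mathbf{2})}$, intersecting to form a $c_{\delta}$-set in $e_{2}^{\ast}\mathbf{X}$, and re-applying Lemma~\ref{s6:l25} to obtain an $\mathbb{N}(\infty)$-valued function). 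Proposition~\ref{s7:p3} then delivers realcompactness (respectively, $\mathbb{N}$-compactness) of $\mathbf{A}$.

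The main obstacle will be the coordinate bookkeeping in the identification of $e_{\mathcal{F}}\mathbf{A}$ with $\cl(e_{\beta^{f}}[A])$ (respectively, $\cl(e_{2}^{\ast}[A])$): because $A$ need not be dense in $\mathbf{X}$, the restriction map $f\mapsto f\upharpoonright A$ may fail to be injective, so one must argue that the pulled-back product is closed in the larger product (being cut out by coordinate equalities) and that the $A$-image lands inside it, so that the two closures really do coincide under $\iota$.
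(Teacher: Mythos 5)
Your argument is essentially the paper's own proof: the paper also takes $e\mathbf{S}=\cl(e_{\beta^f}[S])$ in $\beta^f\mathbf{X}$ (respectively $\cl(e_{2}^{\ast}[S])$ in $e_{2}^{\ast}\mathbf{X}$), asserts that this is an $r$-extension (respectively $r_{\mathbb{N}}$-extension) of $\mathbf{S}$, and concludes via Proposition~\ref{s7:p3}. You have merely made explicit the details the paper leaves implicit (the identification of the closure with $e_{\mathcal{F}}\mathbf{S}$ for $\mathcal{F}$ the set of restrictions, and the two-case verification of $r$-embeddedness), and these details check out.
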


\begin{proof}
	Let $S$ be an $r$-embedded (respectively, $r_{\mathbb{N}}$-embedded) subset of a realcompact (respectively, $r_{\mathbb{N}}$-compact) space $\mathbf{X}$. Let $e\mathbf{S}$ be the closure of $e_{\beta^f}[S]$ (respectively, of $e^{\ast}_{2}[S]$) in $\beta^f\mathbf{X}$ (respectively, in $e^{\ast}_{2}\mathbf{X}$). Then $e\mathbf{S}$ is an $r$-extension (respectively, $r_{\mathbb{N}}$-extension) of $\mathbf{S}$. Therefore, $\mathbf{S}$ is realcompact (respectively, $\mathbb{N}$-compact) by Proposition \ref{s7:p3}.
\end{proof}

\begin{proposition}
	\label{s7:p6}
	$[\mathbf{ZF}]$ Let $A$ be a subset of a topological space $\mathbf{X}$.
	\begin{enumerate}
		\item[(i)] The set $A$ is $r_{\mathbb{N}}$-embedded in $\mathbf{X}$ if and only if, for every $p\in X\setminus A$,  there exists $C\in \mathcal{CO}_{\delta}(\mathbf{X})$ with $p\in C$ and $C\cap A=\emptyset$.
		
		\item[(ii)] If $\mathbf{X}$ is a zero-dimensional $T_1$-space, then $\mathbf{X}$ is $\mathbb{N}$-compact if and only if, for every $p\in e^{\ast}_{2}X\setminus e^{\ast}_{2}[X]$, there exists $C\in\mathcal{CO}_{\delta}(e^{\ast}_{2}\mathbf{X})$ such that $p\in C$ and $C\cap e^{\ast}_{2}[X]=\emptyset$.
	\end{enumerate}
\end{proposition}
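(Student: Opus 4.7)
The plan is to reduce both parts of Proposition \ref{s7:p6} to results already established in the paper, namely Lemma \ref{s6:l25} characterizing $c_{\delta}$-sets as preimages of $\{\infty\}$ under continuous maps into $\mathbb{N}(\infty)$, and Proposition \ref{s7:p3} which identifies $\mathbb{N}$-compactness with the existence of an $r_{\mathbb{N}}$-extension. The main work is essentially a translation between two equivalent descriptions of the same collection of sets.

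For (i), I would argue directly as follows. If $A$ is $r_{\mathbb{N}}$-embedded in $\mathbf{X}$ and $p \in X \setminus A$, then by Definition \ref{s7:d1}(ii) there exists $f \in C(\mathbf{X}, \mathbb{N}(\infty))$ with $p \in f^{-1}[\{\infty\}]$ and $f^{-1}[\{\infty\}] \cap A = \emptyset$; by Lemma \ref{s6:l25}, the set $C = f^{-1}[\{\infty\}]$ is a $c_{\delta}$-set with the required properties. Conversely, if for each $p \in X \setminus A$ there is $C \in \mathcal{CO}_{\delta}(\mathbf{X})$ with $p \in C$ and $C \cap A = \emptyset$, then Lemma \ref{s6:l25} supplies a continuous $f:\mathbf{X} \to \mathbb{N}(\infty)$ with $C = f^{-1}[\{\infty\}]$, which is exactly the witness required by Definition \ref{s7:d1}(ii). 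Thus (i) is an immediate consequence of Lemma \ref{s6:l25}, with no use of any choice principle.

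For (ii), I would combine (i) with the characterization of $\mathbb{N}$-compactness already in hand. By (i) applied to $A = e_{2}^{\ast}[X] \subseteq e_{2}^{\ast}X$, the condition in the statement is equivalent to $e_{2}^{\ast}[X]$ being $r_{\mathbb{N}}$-embedded in $e_{2}^{\ast}\mathbf{X}$, i.e.\ to $e_{2}^{\ast}\mathbf{X}$ being an $r_{\mathbb{N}}$-extension of $\mathbf{X}$ in the sense of Definition \ref{s7:d2}(ii). By Proposition \ref{s7:p3}, the existence of any $r_{\mathbb{N}}$-extension of $\mathbf{X}$ is equivalent to the $\mathbb{N}$-compactness of $\mathbf{X}$, so in particular if the condition holds then $\mathbf{X}$ is $\mathbb{N}$-compact. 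For the converse, assuming $\mathbf{X}$ is $\mathbb{N}$-compact I would invoke Theorem \ref{s4:t16}(ii): for each $p \in e_{2}^{\ast}X \setminus e_{2}^{\ast}[X]$ there is $f \in C(e_{2}^{\ast}\mathbf{X}, N_0)$ with $f(p)=0$ and $f(e_{2}^{\ast}(x))>0$ for every $x \in X$. Composing $f$ with the homeomorphism $N_0 \to \mathbb{N}(\infty)$ sending $0 \mapsto \infty$ and $\tfrac{1}{n}\mapsto n$ yields a continuous $g: e_{2}^{\ast}\mathbf{X} \to \mathbb{N}(\infty)$ with $g(p)=\infty$ and $g[e_{2}^{\ast}[X]] \subseteq \mathbb{N}$, so that $g^{-1}[\{\infty\}]$ contains $p$ and is disjoint from $e_{2}^{\ast}[X]$; by Lemma \ref{s6:l25} this preimage is a $c_{\delta}$-set, establishing the required condition.

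No serious obstacle is expected; the only point requiring some care is the direction $\mathbb{N}$-compact $\Rightarrow$ the $c_{\delta}$-separation condition, where one must translate the statement of Theorem \ref{s4:t16}(ii), phrased with the convergent sequence space $N_0 = \{0\} \cup \{1/n : n \in \mathbb{N}\}$, into the language of $\mathbb{N}(\infty)$. Since the map $N_0 \to \mathbb{N}(\infty)$ described above is a manifest homeomorphism definable without any choice, this translation is routine, and the whole argument proceeds in $\mathbf{ZF}$.
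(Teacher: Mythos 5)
Your proof is correct and follows essentially the same route as the paper, which also derives (i) directly from Lemma \ref{s6:l25} and obtains (ii) from (i) together with Theorem \ref{s4:t16}(ii); your extra detour through Proposition \ref{s7:p3} for one direction of (ii) and your explicit handling of the homeomorphism $N_0\to\mathbb{N}(\infty)$ are just fuller versions of what the paper leaves implicit.
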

\begin{proof}
	That (i) holds is a consequence of Lemma \ref{s6:l25}. It follows from (i) and Theorem \ref{s4:t16} that (ii) also holds.
\end{proof}

\begin{lemma}
	\label{s7:l7}
	$[\mathbf{ZF}]$ Let $\mathcal{A}$ be a non-empty family of $r$-embedded $($respectively, $r_{\mathbb{N}}$-embedded$)$ subsets of a topological space $\mathbf{X}$. Then the following conditions are satisfied:
	\begin{enumerate}
		\item[(i)]  every member of $\mathcal{A}_{\delta}$ is $r$-embedded (respectively, $r_{\mathbb{N}}$-embedded) in $\mathbf{X}$;
		\item[(ii)] $\mathbf{CMC}$ implies that every member of $\mathcal{A}_{\sigma}$ is $r$-embedded $($respectively, $r_{\mathbb{N}}$-embedded$)$ in $\mathbf{X}$.
	\end{enumerate}
\end{lemma}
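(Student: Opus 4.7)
\medskip

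\noindent\textbf{Proof proposal.} The two parts demand quite different treatments, and only part (ii) will need $\mathbf{CMC}$. For part (i), if $B=\bigcap_{n\in\omega}A_n\in\mathcal{A}_{\delta}$ and $p\in X\setminus B$, then there exists (at least one) $n_0\in\omega$ with $p\notin A_{n_0}$; the $r$-embedding (respectively, $r_{\mathbb{N}}$-embedding, combined with Proposition \ref{s7:p6}(i)) of this single $A_{n_0}$ immediately produces a zero-set (respectively, $c_{\delta}$-set) $W$ with $p\in W$ and $W\cap A_{n_0}=\emptyset$, whence $W\cap B\subseteq W\cap A_{n_0}=\emptyset$. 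Since we only need one index and one separator, no selection over the family $\{A_n\}_{n\in\omega}$ is required and the argument is carried out entirely in $\mathbf{ZF}$.

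Part (ii) is the substantive half, and the key difficulty is that separating $p$ from $B=\bigcup_{n\in\omega}A_n$ forces us to find a single separator disjoint from every $A_n$ simultaneously, so we must pick witnesses $W_n$ for each $n$ and amalgamate them. For the $r$-embedded case, my plan is to fix $p\in X\setminus B$, set
\[
\mathcal{H}_n=\{f\in C^{\ast}(\mathbf{X}):0\leq f\leq 1,\ f(p)=0,\ Z(f)\cap A_n=\emptyset\},
\]
and note that each $\mathcal{H}_n\neq\emptyset$: by the $r$-embedding of $A_n$ there is some $Z\in\mathcal{Z}(\mathbf{X})$ with $p\in Z$ and $Z\cap A_n=\emptyset$; writing $Z=Z(g)$ and replacing $g$ by $\min\{g^2,1\}$ yields an element of $\mathcal{H}_n$. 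Invoke $\mathbf{CMC}$ to obtain a sequence $(F_n)_{n\in\omega}$ of finite non-empty sets $F_n\subseteq\mathcal{H}_n$; from each $F_n$ extract a single function $f_n$, e.g.\ $f_n=\tfrac{1}{|F_n|}\sum_{g\in F_n}g$, which still lies in $\mathcal{H}_n$ because $Z(f_n)=\bigcap_{g\in F_n}Z(g)$. Define
\[
f=\sum_{n=0}^{\infty}\frac{f_n}{2^{n+1}}.
\]
Uniform convergence gives $f\in C^{\ast}(\mathbf{X})$ with $0\leq f\leq 1$ and $f(p)=0$, while $Z(f)=\bigcap_{n\in\omega}Z(f_n)\subseteq X\setminus B$, so $Z(f)\in\mathcal{Z}(\mathbf{X})$ witnesses the $r$-embedding of $B$.

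For the $r_{\mathbb{N}}$-embedded half of (ii) I would argue analogously but within $\mathcal{CO}_{\delta}(\mathbf{X})$ via Proposition \ref{s7:p6}(i). Put
\[
\mathcal{D}_n=\{C\in\mathcal{CO}_{\delta}(\mathbf{X}):p\in C,\ C\cap A_n=\emptyset\},
\]
which is non-empty by assumption. Apply $\mathbf{CMC}$ to select finite non-empty $F_n\subseteq\mathcal{D}_n$ and set $C_n=\bigcap F_n$; the observation that a finite intersection of $c_{\delta}$-sets is a $c_{\delta}$-set (merge the defining countable clopen families) is valid in $\mathbf{ZF}$, so $C_n\in\mathcal{CO}_{\delta}(\mathbf{X})$. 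The concluding step is to form $C=\bigcap_{n\in\omega}C_n$ and to know $C\in\mathcal{CO}_{\delta}(\mathbf{X})$, which is precisely the content of Theorem \ref{s1:t18}(b) under $\mathbf{CMC}$. Then $p\in C$ and $C\cap B=\emptyset$, giving the $r_{\mathbb{N}}$-embedding of $B$ by Proposition \ref{s7:p6}(i).

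The only real obstacle is bookkeeping: $\mathbf{CMC}$ must be applied once to pass from the hypothesis ``$\mathcal{H}_n\neq\emptyset$'' (respectively, ``$\mathcal{D}_n\neq\emptyset$'') to an actual sequence of single separators $f_n$ (respectively, $C_n$); after that, the combination either by a uniformly convergent series of bounded non-negative functions (for zero-sets) or by appeal to Theorem \ref{s1:t18}(b) (for $c_{\delta}$-sets) lands back in the desired class of separators, completing the proof.
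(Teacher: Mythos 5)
Your proof is correct and follows essentially the same route as the paper: part (i) by picking a single index $n_0$ with $p\notin A_{n_0}$, and part (ii) by using $\mathbf{CMC}$ to select finite sets of separators, intersecting them to get one separator $C_n$ per $A_n$, and then invoking the closure of $\mathcal{CO}_{\delta}(\mathbf{X})$ under countable intersections in $\mathbf{ZF+CMC}$. The only difference is that the paper omits the $r$-embedded half of (ii) as ``similar,'' whereas you supply it explicitly via the uniformly convergent series $\sum_n f_n/2^{n+1}$ of non-negative bounded functions; that argument is sound and is a welcome filling-in of the omitted case.
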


\begin{proof}
	(i) Arguing similarly to the proof of Lemma 3.4 in \cite{neg} and applying our Proposition \ref{s7:p6}(i), one can easily show that (i) holds.
	
	(ii) Now, assume that $\{A_n: n\in\omega\}$ is a subfamily of $\mathcal{A}$ such that, for every $n\in\omega$, the set $A_n$ is $r_{\mathbb{N}}$-embedded in $\mathbf{X}$. Let $A=\bigcup\limits_{n\in\omega}A_n$ and let $p\in X\setminus A$. For every $n\in\omega$, let $\mathcal{U}_n=\{U\in \mathcal{CO}_{\delta}(\mathbf{X}): p\in U\wedge U\cap A_n=\emptyset\}$. It follows from Proposition \ref{s7:p6}(i) that, for every $n\in\omega$, $\mathcal{U}_n\neq\emptyset$. By $\mathbf{CMC}$, there is a family $\{\mathcal{V}_n: n\in\omega\}$ such that, for every $n\in\omega$, $\mathcal{V}_n$ is a non-empty finite subset of $\mathcal{U}_n$. For every $n\in\omega$, let $C_n=\bigcap\mathcal{V}_n$. Clearly, for every $n\in\omega$, $C_n\in\mathcal{CO}_{\delta}(\mathbf{X})$, $p\in C_n$ and $C_n\cap A_n=\emptyset$. Let $C=\bigcap\limits_{n\in\omega}C_n$. By $\mathbf{CMC}$ and Lemma \ref{s6:l4}, $C\in\mathcal{CO}_{\delta}(\mathbf{X})$. Obviously, $p\in C$ and $C\cap A=\emptyset$. In view of Proposition \ref{s7:p6}(i), $A$ is $r_{\mathbb{N}}$-embedded in $\mathbf{X}$. Let us omit a similar proof that $A$ is $r$-embedded in $\mathbf{X}$ when, for every $n\in\omega$, $A_n$ is $r$-embedded in $\mathbf{X}$.
\end{proof}

\begin{definition}
	\label{s7:d8}
	Let $X$ be a set and let  $\emptyset\neq \mathcal{A}\subseteq\mathcal{P}(X)$. For every $\alpha\in\omega_1$, we define, by transfinite induction, the sets $\mathcal{F}_{\alpha}(\mathcal{A})$ as follows. We put  $\mathcal{F}_0(\mathcal{A})=\mathcal{A}$ and, for every non-zero ordinal $\alpha\in\omega_1$, we put:
	$$
	\mathcal{F}_{\alpha}(\mathcal{A})=\begin{cases} [\bigcup\limits_{\gamma\in\alpha}\mathcal{F}_{\gamma}(\mathcal{A})]_{\delta} &\text{ when $\alpha$ is even;}\\
		[\bigcup\limits_{\gamma\in\alpha} \mathcal{F}_{\gamma}(\mathcal{A})]_{\sigma} &\text{ when $\alpha$ is odd.}\end{cases}
	$$
	Furthermore, $\sigma(\mathcal{A})=\bigcup\limits_{\alpha\in\omega_1}\mathcal{F}_{\alpha}(\mathcal{A})$.
\end{definition}

Let us omit a standard, relatively simple proof of the following lemma:
\begin{lemma}
	\label{s7:l9}
	$[\mathbf{ZF}]$ Suppose that $\mathcal{A}$ is a family of subsets of a set $X$ such that $\emptyset\in \mathcal{A}$ and, for every $A\in\mathcal{A}$, $X\setminus A\in\mathcal{A}$. If $\omega_1$ is regular, then $\sigma(\mathcal{A})$ is a $\sigma$-field of subsets of $X$.
\end{lemma}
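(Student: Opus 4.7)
The plan is to verify the three defining ingredients of a $\sigma$-field in the sense of the excerpt: the family $\sigma(\mathcal{A})$ must contain $\emptyset$, be closed under set difference, and be closed under countable unions. I begin with a monotonicity observation: for every $\alpha\leq\beta<\omega_1$, $\mathcal{F}_{\alpha}(\mathcal{A})\subseteq\mathcal{F}_{\beta}(\mathcal{A})$. Indeed, for any non-empty family $\mathcal{B}$ of subsets of $X$ and any $B\in\mathcal{B}$, the singleton $\{B\}$ lies in $[\mathcal{B}]^{\leq\omega}\setminus\{\emptyset\}$ and $B=\bigcap\{B\}=\bigcup\{B\}$, so $\mathcal{B}\subseteq\mathcal{B}_{\delta}\cap\mathcal{B}_{\sigma}$; monotonicity then follows by transfinite induction on $\beta$. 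Since $\emptyset\in\mathcal{A}=\mathcal{F}_{0}(\mathcal{A})\subseteq\sigma(\mathcal{A})$ and $X=X\setminus\emptyset\in\mathcal{A}$ by hypothesis, both $\emptyset$ and $X$ belong to $\sigma(\mathcal{A})$.

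Next I would establish that $\sigma(\mathcal{A})$ is closed under countable unions and countable intersections. Fix $\{A_{n}:n\in\omega\}\subseteq\sigma(\mathcal{A})$, and for each $n\in\omega$ let $\alpha_{n}$ be the least $\alpha<\omega_{1}$ with $A_{n}\in\mathcal{F}_{\alpha}(\mathcal{A})$; this canonical minimum of a non-empty set of ordinals requires no form of choice. By the regularity of $\omega_{1}$, the supremum $\alpha=\sup_{n\in\omega}\alpha_{n}$ belongs to $\omega_{1}$, and by monotonicity every $A_{n}$ lies in $\mathcal{F}_{\alpha}(\mathcal{A})$. Choosing an odd ordinal $\beta$ with $\alpha<\beta<\omega_{1}$, we have $\bigcup_{n\in\omega}A_{n}\in [\bigcup_{\gamma<\beta}\mathcal{F}_{\gamma}(\mathcal{A})]_{\sigma}=\mathcal{F}_{\beta}(\mathcal{A})\subseteq\sigma(\mathcal{A})$. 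Exactly the same argument with $\beta$ even and $[\cdot]_{\sigma}$ replaced by $[\cdot]_{\delta}$ gives closure under countable intersections.

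For closure under complementation in $X$ I would proceed by transfinite induction on $\alpha<\omega_{1}$ with the clause: for every $A\in\mathcal{F}_{\alpha}(\mathcal{A})$, $X\setminus A\in\sigma(\mathcal{A})$. The base $\alpha=0$ is the hypothesis on $\mathcal{A}$. For $\alpha>0$, any $A\in\mathcal{F}_{\alpha}(\mathcal{A})$ has the form $\bigcap\mathcal{C}$ or $\bigcup\mathcal{C}$ (according to the parity of $\alpha$) for some non-empty countable $\mathcal{C}\subseteq\bigcup_{\gamma<\alpha}\mathcal{F}_{\gamma}(\mathcal{A})$; by De~Morgan's laws, $X\setminus A$ is then, respectively, a countable union or a countable intersection of the complements $X\setminus C$ with $C\in\mathcal{C}$. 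The family $\{X\setminus C:C\in\mathcal{C}\}$ is countable as the image of the countable set $\mathcal{C}$ under the complementation map, and lies in $\sigma(\mathcal{A})$ by the inductive hypothesis, so the previous paragraph delivers $X\setminus A\in\sigma(\mathcal{A})$. Once complementation is available, $A\setminus B=A\cap(X\setminus B)$ lies in $\sigma(\mathcal{A})$ by closure under intersections, which completes the verification that $\sigma(\mathcal{A})$ is a $\sigma$-field.

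The single place where the hypothesis on $\omega_{1}$ is essential, and hence the main obstacle, is the bounding step $\sup_{n}\alpha_{n}<\omega_{1}$ used to stabilize the hierarchy at stage $\omega_{1}$; in $\mathbf{ZF}$ alone $\omega_{1}$ can be singular (for instance, of cofinality $\omega$ in the Feferman--L\'evy model), in which case a countable family of sets chosen from unboundedly high levels could escape $\bigcup_{\alpha<\omega_{1}}\mathcal{F}_{\alpha}(\mathcal{A})$, and the above argument would break down. The assumption that $\omega_{1}$ is regular is precisely what removes this obstacle.
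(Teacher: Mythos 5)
Your proof is correct. The paper itself omits the proof of Lemma \ref{s7:l9} (calling it ``standard, relatively simple''), and your argument is exactly the standard one it has in mind: the hierarchy $\mathcal{F}_{\alpha}(\mathcal{A})$ is monotone, regularity of $\omega_1$ bounds any countable selection of levels below $\omega_1$ so that $\sigma(\mathcal{A})$ absorbs countable unions and intersections at a later odd or even stage, and complements are handled by transfinite induction via De Morgan; your closing remark correctly pinpoints that the only use of the hypothesis is the bounding of $\sup_n\alpha_n$, which can fail in $\mathbf{ZF}$ models where $\omega_1$ is singular.
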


\begin{theorem}
	\label{s7:t10}
	It is true in $\mathbf{ZF}$ that $\mathbf{CMC}$ implies that, for every topological space $\mathbf{X}$, the following conditions are satisfied:
	\begin{enumerate}
		\item[(i)] if $\mathbf{X}$ is realcompact, then, for every $S\in\mathfrak{Ba}(\mathbf{X})$, the subspace $\mathbf{S}$ of $\mathbf{X}$ is realcompact $($cf. \cite[Theorem 3.8]{neg}$)$;
		\item[(ii)] if $\mathbf{X}$ is $\mathbb{N}$-compact, then, for every $S\in\mathfrak{Ba}_0(\mathbf{X})$, the subspace $\mathbf{S}$ of $\mathbf{X}$ is $\mathbb{N}$-compact.
	\end{enumerate}
\end{theorem}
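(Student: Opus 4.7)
The strategy is to show that every member of $\mathfrak{Ba}(\mathbf{X})$ (respectively, $\mathfrak{Ba}_0(\mathbf{X})$) is $r$-embedded (respectively, $r_{\mathbb{N}}$-embedded) in $\mathbf{X}$, after which Corollary \ref{s7:c4} delivers the conclusion. I propose to realize the relevant $\sigma$-field as the transfinite hierarchy $\sigma(\mathcal{A})$ from Definition \ref{s7:d8} and run an induction whose successor steps are exactly the two clauses of Lemma \ref{s7:l7}.

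For part (i), first observe that $\mathbf{CMC}$ forces $\omega_1$ to be regular: if $(\alpha_n)_{n\in\omega}$ were a cofinal $\omega$-sequence in $\omega_1$, then $\mathbf{CMC}$ applied to the non-empty sets $B_n$ of bijections $\omega\to\alpha_n$ yields finite $F_n\subseteq B_n$, and listing each finite set $\{f(k):f\in F_n\}$ in increasing ordinal order exhibits $\omega_1=\bigcup_n\alpha_n$ as a surjective image of a countable set, a contradiction. Hence Lemma \ref{s7:l9} is available. Setting
$$\mathcal{A}=\mathcal{Z}(\mathbf{X})\cup\{X\setminus Z:Z\in\mathcal{Z}(\mathbf{X})\},$$
which contains $\emptyset$ and is closed under complementation, one obtains that $\sigma(\mathcal{A})$ is a $\sigma$-field containing $\mathcal{Z}(\mathbf{X})$; since every $\sigma$-field containing $\mathcal{Z}(\mathbf{X})$ must contain $\mathcal{A}$, minimality gives $\sigma(\mathcal{A})=\mathfrak{Ba}(\mathbf{X})$.

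The base step is routine: every $Z\in\mathcal{Z}(\mathbf{X})$ is $r$-embedded by the complete regularity of the (realcompact, hence Tychonoff) space $\mathbf{X}$, and every co-zero set $X\setminus Z(f)$ is $r$-embedded with $Z(f)$ itself as the separating zero-set at each $p\in Z(f)$. A transfinite induction on $\alpha\in\omega_1$ then yields that every member of $\mathcal{F}_{\alpha}(\mathcal{A})$ is $r$-embedded, invoking Lemma \ref{s7:l7}(i) at even stages and Lemma \ref{s7:l7}(ii)---the step where $\mathbf{CMC}$ is indispensable---at odd stages. Consequently every $S\in\mathfrak{Ba}(\mathbf{X})=\sigma(\mathcal{A})$ is $r$-embedded in $\mathbf{X}$, so Corollary \ref{s7:c4} gives that the subspace $\mathbf{S}$ is realcompact.

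Part (ii) is entirely parallel, with $\mathcal{A}_0=\mathcal{CO}(\mathbf{X})$: in a zero-dimensional $T_1$-space this family already contains $\emptyset$ and is closed under complement, and every clopen set $C$ is $r_{\mathbb{N}}$-embedded because $X\setminus C\in\mathcal{CO}_{\delta}(\mathbf{X})$ separates every $p\in X\setminus C$ from $C$ (cf.\ Proposition \ref{s7:p6}(i)). The identical transfinite induction, now invoking the $r_{\mathbb{N}}$-clauses of Lemma \ref{s7:l7}, shows that every zero-Baire set is $r_{\mathbb{N}}$-embedded, and Corollary \ref{s7:c4} concludes. The main obstacle throughout is coordinating the two distinct roles of $\mathbf{CMC}$---securing regularity of $\omega_1$ so that the hierarchy genuinely stabilizes at a $\sigma$-field coinciding with $\mathfrak{Ba}(\mathbf{X})$, and executing the countable simultaneous selection of separating sets embedded in Lemma \ref{s7:l7}(ii) at each odd stage of the induction.
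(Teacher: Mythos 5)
Your proposal is correct and follows essentially the same route as the paper: realize $\mathfrak{Ba}(\mathbf{X})$ and $\mathfrak{Ba}_0(\mathbf{X})$ as $\sigma(\mathcal{A})$ via Lemma \ref{s7:l9} (using that $\mathbf{CMC}$ makes $\omega_1$ regular), run transfinite induction with Lemma \ref{s7:l7} to get $r$-embedding (resp. $r_{\mathbb{N}}$-embedding) of all Baire (resp. zero-Baire) sets, and conclude with Corollary \ref{s7:c4}. You merely supply a few details the paper leaves implicit (the regularity of $\omega_1$ under $\mathbf{CMC}$ and the base cases), all of which check out.
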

\begin{proof}
	Let us assume $\mathbf{ZF+CMC}$.  Then $\omega_1$ is regular (see \cite{hr1}). Given a non-empty topological space $\mathbf{X}$, we put $\mathcal{A}=\mathcal{Z}(\mathbf{X})\cup\mathcal{Z}^{c}(\mathbf{X})$. It follows from Lemma \ref{s7:l9} that $\sigma(\mathcal{A})=\mathfrak{Ba}(\mathbf{X})$ and $\sigma(\mathcal{CO}(\mathbf{X}))=\mathfrak{Ba}_0(\mathbf{X})$.
	
	(i) Suppose that $\mathbf{X}$ is realcompact. It follows from Lemma \ref{s7:l7} that every member of $\mathcal{A}$ is $r$-embedded in $\mathbf{X}$.  Using transfinite induction and applying Lemmas \ref{s7:l7} and \ref{s7:l9}, we can show that every Baire set in $\mathbf{X}$ is $r$-embedded in $\mathbf{X}$.
	
	(ii) Suppose that $\mathbf{X}$ is $\mathbb{N}$-compact. Then every clopen set in $\mathbf{X}$ is $r_{\mathbb{N}}$-embedded in $\mathbf{X}$.  Using transfinite induction and Proposition \ref{s7:p6}, we infer from Lemmas \ref{s7:l7} and \ref{s7:l9} that every zero-Baire set in $\mathbf{X}$ is $r_{\mathbb{N}}$-embedded in $\mathbf{X}$.
	
	To conclude the proof, it suffices to apply Corollary \ref{s7:c4}.
\end{proof}

Statement (i) of Theorem \ref{s7:t10} was shown to be true in $\mathbf{ZFC}$ in \cite[Theorem 3.8]{neg}, so we have weakened the assumption of $\mathbf{AC}$ to $\mathbf{CMC}$ to get the main result of Section 3 of \cite{neg}.  That (ii) of Theorem \ref{s7:t10} holds in $\mathbf{ZF+CMC}$ is a quite new result. In $\mathbf{ZFC}$, a measure-theoretic proof of statement (i) of our Theorem \ref{s7:t10} is given in \cite[pp. 487--488, Chapter VIII.4, the proof of Corollary 1]{nag} but we believe that this proof is not complete. In the forthcoming section, by modifying the idea of the proof of Corollary 1 of Chapter VIII.4 in \cite{nag}, we give a detailed measure-theoretic alternative proof of statement (ii) of Theorem \ref{s7:t10} in $\mathbf{ZF+CAC}$, as well as a measure-theoretic proof of statement (i) of Theorem \ref{s7:t10} in $\mathbf{ZF+CMC}$. 

\section{Measure-theoretic aspects of $\mathbb{N}$-compactness and realcompactness}
\label{s8}

It is known, for instance, from \cite[Chapter VIII.4]{nag} that some topological properties, among them also realcompactness (see \cite[Theorem VIII.14]{nag}) can be characterized in terms of measures in $\mathbf{ZFC}$. Up to now, there has been no attempt to investigate this measure-theoretic approach to realcompactness in the absence of the Axiom of Choice. To fill this gap, a characterization of realcompactness and $\mathbb{N}$-compactness by 2-valued measures is given in $\mathbf{ZF}$ in this section. 

For a family $\mathcal{A}$ of subsets of a set $X$, let $\sigma_f(\mathcal{A})$ denote the $\sigma$-field generated by $\mathcal{A}$. Let $\mathfrak{F}(\mathcal{A})$ be the field generated by $\mathcal{A}$.

\begin{remark}
	\label{s8:r1}
	Let $\mathcal{A}$ be a family of subsets of a set $X$ such that $\emptyset\in\mathcal{A}$ and, for every $A\in\mathcal{A}$, $X\setminus A\in\mathcal{A}$. It follows from Lemma \ref{s7:l9} that it holds in $\mathbf{ZF}$ that if $\omega_1$ is regular, then $\sigma(\mathcal{A})=\sigma_f(\mathcal{A})$.
\end{remark}

\begin{definition}
	\label{s8:d2}
	\begin{enumerate}
		\item[(a)] Let $\mathfrak{M}$ be a field of subsets of a set $X$. 
		\begin{enumerate}
			\item[(i)] A measure on $\mathfrak{M}$ is a function $\mu: \mathfrak{M}\to [0, +\infty)$  such that , for every pair $A,B$ of disjoint members of $\mathfrak{M}$, $\mu(A\cup B)=\mu(A)+\mu(B)$ (that is, $\mu$ is \emph{finitely additive}).
			\item[(ii)] A measure $\mu$ on $\mathfrak{M}$ is called \emph{countably additive} or $\sigma$-\emph{additive} if, for every disjoint family $\{A_n: n\in\omega\}$ of members of $\mathfrak{M}$ such that $\bigcup\limits_{n\in\omega}A_n\in\mathfrak{M}$, the following equality holds: $\mu(\bigcup\limits_{n\in\omega}A_n)=\sum\limits_{n=0}^{+\infty}\mu(A_n)$. 
			\item[(iii)] A measure $\mu$ on $\mathfrak{M}$ is called the \emph{Dirac measure determined by a point} $x\in X$ if:
			$$
			\mu(A)=\begin{cases} 1 &\text{ if } x\in A\in\mathfrak{M};\\
				0 & \text{ if } x\notin A\in\mathfrak{M}.\end{cases}
			$$
			A \emph{Dirac measure} on $\mathfrak{M}$ is the Dirac measure on $\mathfrak{M}$ determined by some point of $X$. 
			\item[(iv)] A 2-valued measure on $\mathfrak{M}$ is a measure $\mu$ on $\mathfrak{M}$ such that $\mu[\mathfrak{M}]=\{0, 1\}$. 
			\item[(v)] If $\mu$ is a measure on $\mathfrak{M}$, then $\mu^{\ast}:\mathcal{P}(X)\to [0,+\infty)$ is a function defined as follows:  for every $A\subseteq X$, 
			$$\mu^{\ast}(A)=\inf\{\sum\limits_{n=0}^{+\infty}\mu(C_n): ((\forall n\in\omega)C_n\in\mathfrak{M}(\mathbf{X}))\wedge A\subseteq\bigcup\limits_{n\in\omega}C_n\}.$$
		\end{enumerate}
		\item[(b)] Let $\mathbf{X}$ be a topological space. 
		\begin{enumerate}
			\item[(vi)] A $z$-\emph{measure} on $\mathbf{X}$ is a measure on $\mathfrak{F}(\mathcal{Z}(\mathbf{X}))$. A \emph{Dirac} $z$-\emph{measure} on $\mathbf{X}$ is a Dirac measure on $\mathfrak{F}(\mathcal{Z}(\mathbf{X}))$. A 2-valued $z$-measure $\mu$ on $\mathbf{X}$ is a $z$-measure on $\mathbf{X}$ whose values are from the set $\{0, 1\}$ and $\mu(X)=1$. 
			\item[(vii)] A $c$-\emph{measure} on $\mathbf{X}$ is a measure on $\mathcal{CO}(\mathbf{X})$. A \emph{Dirac} $c$-\emph{measure} on $\mathbf{X}$ is a Dirac measure on $\mathcal{CO}(\mathbf{X})$. A 2-valued $c$-measure $\mu$ on $\mathbf{X}$ is a $c$-measure on $\mathbf{X}$ whose values are from the set $\{0, 1\}$ and $\mu(X)=1$. 
			\item[(viii)] (Cf. \cite[p. 483]{nag}.) A $z$-measure $\mu$ on $\mathbf{X}$ is called \emph{regular} if, for every $A\in \mathfrak{F}(\mathcal{Z}(\mathbf{X}))$, the following equality holds:
			$$\mu(A)=\inf\{\mu(V): A\subseteq V\in\mathcal{Z}^c(\mathbf{X})\}.$$
		\end{enumerate} 
	\end{enumerate}
\end{definition}

It is obvious that the following proposition holds:

\begin{proposition}
	\label{s8:p3}
	$[\mathbf{ZF}]$ A $z$-measure $\mu$ on a topological space $\mathbf{X}$ is regular if and only if, for every $A\in \mathfrak{F}(\mathcal{Z}(\mathbf{X}))$, the following equality holds:
	$$\mu(A)=\sup\{\mu(Z): Z\subseteq A\wedge Z\in \mathcal{Z}(\mathbf{X})\}.$$
\end{proposition}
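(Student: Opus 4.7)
The plan is to deduce both conditions from a single duality via complementation. First, I would observe that $\mathfrak{F}(\mathcal{Z}(\mathbf{X}))$ is by definition a field of subsets of $X$, hence closed under complements, and contains $X = Z(\mathbf{0})$. Consequently, for every $A\in\mathfrak{F}(\mathcal{Z}(\mathbf{X}))$ the finite additivity of the $z$-measure $\mu$ yields
\[
\mu(A)+\mu(X\setminus A)=\mu(X).
\]
Moreover, the map $Z\mapsto X\setminus Z$ is a bijection from $\mathcal{Z}(\mathbf{X})$ onto $\mathcal{Z}^{c}(\mathbf{X})$, and for a fixed $A$ it restricts to a bijection between the family $\{Z\in\mathcal{Z}(\mathbf{X}):Z\subseteq A\}$ and the family $\{V\in\mathcal{Z}^{c}(\mathbf{X}):X\setminus A\subseteq V\}$.

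The key step is to translate infima to suprema through these bijections. For any $B\in\mathfrak{F}(\mathcal{Z}(\mathbf{X}))$ I would compute
\[
\inf\{\mu(V):B\subseteq V\in\mathcal{Z}^{c}(\mathbf{X})\}=\inf\{\mu(X)-\mu(Z):Z\in\mathcal{Z}(\mathbf{X}),\ Z\subseteq X\setminus B\}
\]
\[
=\mu(X)-\sup\{\mu(Z):Z\in\mathcal{Z}(\mathbf{X}),\ Z\subseteq X\setminus B\}.
\]
Hence the regularity equality $\mu(B)=\inf\{\mu(V):B\subseteq V\in\mathcal{Z}^{c}(\mathbf{X})\}$ for $B$ is equivalent, after subtracting both sides from $\mu(X)$ and using $\mu(X)-\mu(B)=\mu(X\setminus B)$, to
\[
\mu(X\setminus B)=\sup\{\mu(Z):Z\in\mathcal{Z}(\mathbf{X}),\ Z\subseteq X\setminus B\}.
\]

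Finally, since $B\mapsto X\setminus B$ is a bijection of $\mathfrak{F}(\mathcal{Z}(\mathbf{X}))$ onto itself, the regularity condition holds for every $B\in\mathfrak{F}(\mathcal{Z}(\mathbf{X}))$ if and only if the supremum formula holds for every $A=X\setminus B\in\mathfrak{F}(\mathcal{Z}(\mathbf{X}))$, which is exactly the displayed equality in the proposition. This gives both implications simultaneously. I do not anticipate any serious obstacle: the only verifications are the set-theoretic identity $\{Z\in\mathcal{Z}(\mathbf{X}):Z\subseteq A\}=\{X\setminus V:V\in\mathcal{Z}^{c}(\mathbf{X}),\ X\setminus A\subseteq V\}$ and the elementary observation that translating by the constant $\mu(X)$ and negating converts $\inf$ into $\sup$; no choice principle is needed, so the proof goes through in $\mathbf{ZF}$.
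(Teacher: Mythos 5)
Your proof is correct: the complementation duality $Z\mapsto X\setminus Z$ between zero-sets contained in $A$ and co-zero sets containing $X\setminus A$, combined with $\mu(A)+\mu(X\setminus A)=\mu(X)$ and the identity $\inf(\mu(X)-S)=\mu(X)-\sup S$, is exactly the standard argument, and all the ingredients (that $X=Z(\mathbf{0})$ and $\emptyset=Z(\mathbf{1})$ lie in $\mathcal{Z}(\mathbf{X})$, so the field is closed under complementation and the relevant inf/sup are over non-empty bounded sets of reals) are available in $\mathbf{ZF}$. The paper states this proposition without proof, declaring it obvious, and your argument is precisely the justification it has in mind.
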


The following proposition can be deduced from standard proofs of well-known facts in measure theory (see, e.g., \cite[pp. 54--57]{hal} and \cite{yh}), so we omit its proof.

\begin{proposition}
	\label{s8:p4}
	$[\mathbf{ZF}]$ Let $\mu$ be a measure on a field $\mathfrak{M}$ of subsets of a set $X$. For every $E\subseteq X$, let $\mathfrak{C}(E)$ be the family of all $C\in\mathfrak{M}^{\omega}$ such that $E\subseteq\bigcup\limits_{i\in\omega}C(i)$ and, for every pair $i,j$ of distinct members of $\omega$, $C(i)\cap C(j)=\emptyset$.
	\begin{enumerate}
		\item[(i)] $\mu$ is countably additive if and only if, for every family $\{A_n: n\in\mathbb{N}\}$ of members of $\mathfrak{M}$ such that $\bigcap\limits_{n\in\mathbb{N}}A_n=\emptyset$ and, for every $n\in\mathbb{N}$, $A_{n+1}\subseteq A_n$, we have $\lim\limits_{n\to+\infty}\mu(A_n)=0$.
		\item[(ii)] If $\mu$ is countably additive then, for every $E\subseteq X$, 
		$$\mu^{\ast}(E)=\inf\{\sum\limits_{i=0}^{+\infty}\mu(C(i)): C\in\mathfrak{C}(E)\}.$$
		\item[(iii)] If $\mu$ is a countably additive  2-valued measure, then, for every $E\subseteq X$, $\mu^{\ast}(E)\in\{0, 1\}$.
		\item[(iv)] $\mathbf{CAC}$ implies that $\mu^{\ast}$ is an outer measure, and $\mu^{\ast}\upharpoonright \sigma_f(\mathfrak{M})$ is a countably additive measure on the $\sigma$-field $\sigma_f(\mathfrak{M})$ such that, for every $A\in\mathfrak{M}$, $\mu^{\ast}(A)=\mu(A)$.
	\end{enumerate}
\end{proposition}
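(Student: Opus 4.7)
The plan is to handle the four parts in sequence, each via a standard measure-theoretic argument adapted to $\mathbf{ZF}$ with care taken over the axiom of choice.

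For (i), I would first prove the forward implication. If $(A_n)_{n\in\mathbb{N}}$ is decreasing with $\bigcap_n A_n = \emptyset$, then the sets $B_n = A_n \setminus A_{n+1}$ form a disjoint family in $\mathfrak{M}$ with $A_1 = \bigcup_{n\geq 1} B_n \in \mathfrak{M}$. Countable additivity gives the telescoping identity $\mu(A_1) - \mu(A_{N+1}) = \sum_{n=1}^{N}(\mu(A_n) - \mu(A_{n+1})) = \sum_{n=1}^{N} \mu(B_n)$; since the series converges to $\mu(A_1)$, we get $\mu(A_{N+1}) \to 0$. For the converse, given a disjoint family $(B_n)_{n\in\omega}$ in $\mathfrak{M}$ with $B = \bigcup_n B_n \in \mathfrak{M}$, the sets $A_n = B \setminus \bigcup_{i<n} B_i \in \mathfrak{M}$ are decreasing with empty intersection, so $\mu(A_n) \to 0$ by hypothesis, and the finite-additivity identity $\mu(B) = \sum_{i<n} \mu(B_i) + \mu(A_n)$ forces $\sum_n \mu(B_n) = \mu(B)$.

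For (ii), I would prove both inequalities. Since every $C \in \mathfrak{C}(E)$ provides a countable $\mathfrak{M}$-cover of $E$, the inequality $\mu^*(E) \leq \inf_{C \in \mathfrak{C}(E)} \sum_i \mu(C(i))$ is immediate from the definition of $\mu^*$. For the reverse, take any countable $\mathfrak{M}$-cover $(C_n)_{n\in\omega}$ of $E$ and disjointify via $D_n = C_n \setminus \bigcup_{i<n} C_i \in \mathfrak{M}$. Then $(D_n)_{n\in\omega} \in \mathfrak{C}(E)$, and since $D_n \subseteq C_n$, finite additivity yields $\mu(D_n) \leq \mu(C_n)$, so $\sum_n \mu(D_n) \leq \sum_n \mu(C_n)$. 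Taking infima over covers gives the desired equality. For (iii), one observes that if $\mu(X) = 0$ the claim is vacuous, so assume $\mu(X) = 1$. For every $C \in \mathfrak{C}(E)$ and every $n \in \omega$, finite additivity gives $\sum_{i<n} \mu(C(i)) = \mu\bigl(\bigcup_{i<n} C(i)\bigr) \leq 1$, so at most one $C(i)$ has measure $1$; hence $\sum_i \mu(C(i)) \in \{0,1\}$. By (ii), $\mu^*(E)$ is the infimum of such sums, and an infimum of a subset of $\{0,1\}$ lies in $\{0,1\}$.

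For (iv), this is essentially the classical Carath\'eodory construction, and the task is to check that $\mathbf{CAC}$ suffices. Monotonicity and $\mu^*(\emptyset) = 0$ are trivial. For countable subadditivity, given $E \subseteq \bigcup_n E_n$ and $\varepsilon > 0$, I would use $\mathbf{CAC}$ to pick simultaneously, for each $n \in \omega$, a sequence $(C_{n,k})_{k\in\omega} \in \mathfrak{M}^{\omega}$ covering $E_n$ with $\sum_k \mu(C_{n,k}) < \mu^*(E_n) + \varepsilon / 2^{n+1}$; amalgamating the doubly-indexed family yields $\mu^*(E) \leq \sum_n \mu^*(E_n) + \varepsilon$. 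The identity $\mu^*(A) = \mu(A)$ for $A \in \mathfrak{M}$ follows from (ii) by intersecting any $(C_n) \in \mathfrak{C}(A)$ with $A$: the sets $A \cap C_n$ form a disjoint $\mathfrak{M}$-partition of $A$, so countable additivity gives $\mu(A) = \sum_n \mu(A \cap C_n) \leq \sum_n \mu(C_n)$. A direct finite-additivity computation shows each $A \in \mathfrak{M}$ is Carath\'eodory $\mu^*$-measurable, hence $\sigma_f(\mathfrak{M})$ is contained in the $\sigma$-field of $\mu^*$-measurable sets, on which $\mu^*$ is countably additive by the classical argument. The main obstacle is discipline about choice: all selections must be reduced to countably many non-empty sets, which is precisely the strength of $\mathbf{CAC}$, and the $\varepsilon/2^{n+1}$-approximate cover step is the only place where it is genuinely invoked.
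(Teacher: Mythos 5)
Your proof is correct, and it coincides with what the paper intends: the paper gives no proof of Proposition \ref{s8:p4} at all, remarking only that it ``can be deduced from standard proofs of well-known facts in measure theory'' (citing Halmos), and your argument is exactly that standard route --- telescoping over the differences $A_n\setminus A_{n+1}$ for (i), disjointification for (ii) and hence the $\{0,1\}$-valued partial-sum bound for (iii), and the Carath\'eodory construction for (iv). You also correctly isolate the one genuine use of $\mathbf{CAC}$, namely the simultaneous selection of $\varepsilon/2^{n+1}$-optimal covers in the proof of countable subadditivity of $\mu^{\ast}$, the rest of the Carath\'eodory argument being choice-free.
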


The following proposition is strictly related to \cite[Proposition A, Chapter VIII.4, p. 484]{nag}.
\begin{proposition}
	\label{s8:p5}
	$[\mathbf{ZF}]$ Let $\mu$ be a $z$-measure on a topological space $\mathbf{X}$. Let (i)--(iii) denote the following conditions, respectively:
	\begin{enumerate}
		\item[(i)] $\mu$ is countably additive;
		\item[(ii)] $\mu$ is regular and, for every increasing sequence $(U_n)_{n\in\omega}$ of co-zero sets of $\mathbf{X}$ such that $\bigcup_{n\in\omega}U_n=X$, $\lim\limits_{n\to+\infty}\mu(U_n)=\mu(X)$;
		\item[(iii)]  $\mu$ is regular and, for every decreasing sequence $(Z_n)_{n\in\omega}$ of zero-sets of $\mathbf{X}$ such that $\bigcap_{n\in\omega}Z_n=\emptyset$, $\lim\limits_{n\to+\infty}\mu(Z_n)=0$.
	\end{enumerate}
	Then (i) and (iii) are equivalent, (i) implies both (ii) and (iii). Furthermore, $\mathbf{CMC}$ implies that (i) follows from (iii) (so also from (ii)).
\end{proposition}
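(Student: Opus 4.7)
The plan is to establish the implications in the order (i)$\Rightarrow$(iii), (ii)$\Leftrightarrow$(iii), (i)$\Rightarrow$(ii), and finally (iii)$\Rightarrow$(i) under $\mathbf{CMC}$. The only nontrivial ingredients are the regularity half of (i)$\Rightarrow$(iii) and the approximation argument for (iii)$\Rightarrow$(i); the rest is mostly complementation and a direct appeal to Proposition \ref{s8:p4}(i).

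For (i)$\Rightarrow$(iii), the limit condition is immediate: if $(Z_n)$ is a decreasing sequence of zero-sets with $\bigcap Z_n=\emptyset$, then Proposition \ref{s8:p4}(i) yields $\lim\mu(Z_n)=0$. For regularity I would first handle zero-sets. Given $Z=Z(f)$ with $0\le f\le 1$, the co-zero sets $V_n=\{x\in X: f(x)<1/n\}$ form a decreasing sequence with $\bigcap V_n=Z$. Then $V_n\setminus Z$ is a co-zero set and $(V_n\setminus Z)_{n\in\mathbb{N}}$ decreases to $\emptyset$, so by Proposition \ref{s8:p4}(i) $\mu(V_n)\to\mu(Z)$, giving $\mu(Z)=\inf\{\mu(V):Z\subseteq V\in\mathcal{Z}^c(\mathbf{X})\}$. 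To extend this to all $A\in\mathfrak{F}(\mathcal{Z}(\mathbf{X}))$, I would verify in $\mathbf{ZF}$ that the collection $\mathcal{R}$ of sets $A\in\mathfrak{F}(\mathcal{Z}(\mathbf{X}))$ for which both approximation identities (outer by co-zero sets and, equivalently via Proposition \ref{s8:p3}, inner by zero-sets) hold is closed under complements and finite disjoint unions, hence contains the field generated by $\mathcal{Z}(\mathbf{X})$ since it contains every zero-set.

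The equivalence (ii)$\Leftrightarrow$(iii) is essentially a complementation: if $(U_n)$ is an increasing sequence of co-zero sets with $\bigcup U_n=X$, then $Z_n=X\setminus U_n$ is a decreasing sequence of zero-sets with empty intersection, and $\mu(U_n)=\mu(X)-\mu(Z_n)$. Hence (i)$\Rightarrow$(ii) follows from (i)$\Rightarrow$(iii) with no extra work.

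The hard part is (iii)$\Rightarrow$(i) in $\mathbf{ZF+CMC}$. By Proposition \ref{s8:p4}(i) it suffices to show that for every decreasing sequence $(A_n)_{n\in\omega}$ in $\mathfrak{F}(\mathcal{Z}(\mathbf{X}))$ with $\bigcap A_n=\emptyset$, $\lim\mu(A_n)=0$. Fixing $\varepsilon>0$, for each $n\in\omega$ let
\[
\mathcal{C}_n=\{Z\in\mathcal{Z}(\mathbf{X}): Z\subseteq A_n \wedge \mu(A_n)-\mu(Z)<\tfrac{\varepsilon}{2^{n+1}}\}.
\]
Regularity of $\mu$ (via Proposition \ref{s8:p3}) makes every $\mathcal{C}_n$ non-empty, so by $\mathbf{CMC}$ there exists a sequence $(F_n)_{n\in\omega}$ of non-empty finite subsets $F_n\subseteq\mathcal{C}_n$. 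Setting $Z_n=\bigcup F_n$ gives a zero-set contained in $A_n$ with $\mu(A_n)-\mu(Z_n)<\varepsilon/2^{n+1}$ (zero-sets are closed under finite unions in $\mathbf{ZF}$, and the measure of the union is at least the measure of any member of $F_n$). Then $W_n=\bigcap_{k\le n}Z_k$ is a zero-set, $(W_n)$ is decreasing, and $\bigcap W_n\subseteq\bigcap A_n=\emptyset$; hypothesis (iii) yields $\mu(W_n)\to 0$. The crucial estimate is
\[
\mu(A_n\setminus W_n)\le\sum_{k=0}^{n}\mu(A_n\cap(X\setminus Z_k))\le\sum_{k=0}^{n}\mu(A_k\setminus Z_k)<\sum_{k=0}^{n}\tfrac{\varepsilon}{2^{k+1}}<\varepsilon,
\]
so $\mu(A_n)<\mu(W_n)+\varepsilon$ and $\limsup\mu(A_n)\le\varepsilon$; letting $\varepsilon\to 0$ finishes the argument.

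The main obstacle is ensuring that the approximation step survives without full countable choice. Two issues deserve care: (a) one needs a \emph{sequence} of zero-set approximants $Z_n$, not a single choice, and the use of $\mathbf{CMC}$ rather than $\mathbf{CAC}$ is exactly the trick of replacing a choice of $Z_n\in\mathcal{C}_n$ by a finite selection $F_n\subseteq\mathcal{C}_n$ and exploiting that $\bigcup F_n$ is again a zero-set whose measure dominates that of each element of $F_n$; (b) the inner regularity used to make $\mathcal{C}_n\neq\emptyset$ must be verified in $\mathbf{ZF}$ from Proposition \ref{s8:p3}, which is the purely formal reformulation of the outer regularity built into the definition of regular $z$-measure. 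Once these points are in place, Proposition \ref{s8:p4}(i) converts the estimate into countable additivity, completing the proposition.
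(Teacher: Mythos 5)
Your proof is correct and follows essentially the same route as the paper: the complementation argument for (ii)$\Leftrightarrow$(iii), and for (iii)$\Rightarrow$(i) the identical $\mathbf{CMC}$ trick of selecting finite families $F_n\subseteq\mathcal{C}_n$ of inner zero-set approximants, taking $Z_n=\bigcup F_n$ and $W_n=\bigcap_{k\le n}Z_k$, and estimating $\mu(A_n\setminus W_n)<\varepsilon$ by finite subadditivity. The only difference is that you spell out the regularity half of (i)$\Rightarrow$(iii) (via the co-zero sets $V_n=\{f<1/n\}$ and the extension from $\mathcal{Z}(\mathbf{X})$ to the generated field), whereas the paper simply cites Nagata for that step.
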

\begin{proof}
	It is obvious that (ii) and (iii) are equivalent. Assuming (i), in much the same way, as in \cite[p. 484]{nag}, we can prove that $\mu$ is regular. Hence (i) implies both (ii) and (iii) by Proposition \ref{s8:p4}. 
	
	Assuming that both (iii) and  $\mathbf{CMC}$ hold, we show that (i) holds. To this aim, we fix a decreasing sequence $(A_n)_{n\in\omega}$  of members of $\mathfrak{F}(\mathcal{Z}(\mathbf{X}))$ such that $\bigcap_{n\in\omega}A_n=\emptyset$. In view of Proposition \ref{s8:p4}, it suffices to prove that $\lim\limits_{n\to+\infty}\mu(A_n)=0$. We fix a positive real number $\varepsilon$.  For every $n\in\omega$, let $\mathcal{Z}_n=\{Z\in\mathcal{Z}(\mathbf{X}): Z\subseteq A_n\wedge \mu(A_n)\leq\mu(Z)+\frac{\epsilon}{2^{n+1}}\}$. By the regularity of $\mu$, for every $n\in\omega$, $\mathcal{Z}_n\neq\emptyset$. It follows from $\mathbf{CMC}$ that there exists a family $\{\mathcal{C}_n: n\in\omega\}$ of non-empty finite sets such that, for every $n\in\omega$, $\mathcal{C}_n\subseteq\mathcal{Z}_n$. For a given $n\in\omega$, let $Z_n=\bigcup\mathcal{C}_n$. Then $Z_n\in\mathcal{Z}(\mathbf{X})$, $Z_n\subseteq A_n$ and $\mu(A_n)\leq\mu(Z_n)+\frac{\varepsilon}{2^{n+1}}$. We put $E_n=\bigcap_{k\in n+1}Z_k$. Then $\mu(A_n\setminus E_n)\leq\sum_{k\in n+1}\mu(A_k\setminus Z_k)\leq\varepsilon$. Hence $\mu(A_n)\leq\mu (E_n)+\varepsilon$. Since $\bigcap_{n\in\omega}E_n=\emptyset$ and, for every $n\in\omega$, $E_n\in\mathcal{Z}(\mathbf{X})$ and $E_{n+1}\subseteq E_n$, it follows from (iii) that $\lim\limits_{n\to+\infty}\mu(E_n)=0$. In consequence, $\lim\limits_{n\to+\infty}\mu(A_n)\leq\varepsilon$. This implies that $\lim\limits_{n\to+\infty}\mu(A_n)=0$. It follows from Proposition \ref{s8:p4} that $\mu$ is countably additive.
\end{proof}

\begin{definition}
	\label{s8:d6}
	We say that a $z$-measure $\mu$ on a topological space $\mathbf{X}$ is \emph{weakly} $\sigma$-\emph{additive} if $\mu$ is regular and, for every functionally accessible family $\{Z_n: n\in\omega\}$ of zero-sets of $\mathbf{X}$ such that $\bigcap_{n\in\omega}Z_n=\emptyset$ and, for every $n\in\omega$, $Z_{n+1}\subseteq Z_n$, we have $\lim\limits_{n\to+\infty}\mu(Z_n)=0$.
\end{definition}

\begin{proposition}
	\label{s8:p7}
	$[\mathbf{ZF}]$ Let $\mathcal{F}$ be a $z$-ultrafilter on a non-empty topological space $\mathbf{X}$. Let $\mu_{\mathcal{F}}:\mathfrak{F}(\mathcal{Z}(\mathbf{X}))\to\{0,1\}$ be the function defined as follows. For every $A\in\mathfrak{F}(\mathcal{Z}(\mathbf{X}))$:
	$$
	\mu_{\mathcal{F}}(A)=\begin{cases} 1 &\text{ if there exists } Z\in\mathcal{F} \text{ such that } Z\subseteq A;\\
		0 &\text{ otherwise}. \end{cases}
	$$
	Then $\mu_{\mathcal{F}}$ is a regular 2-valued $z$-measure on $\mathbf{X}$. Furthermore, if $\mathcal{F}$ has the weak countable intersection property, then the measure $\mu_{\mathcal{F}}$ is weakly $\sigma$-additive.
\end{proposition}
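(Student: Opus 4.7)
The plan is to reduce all four claims to the Boolean-algebraic structure of $\hat{\mathcal{F}}:=\{A\in\mathfrak{F}(\mathcal{Z}(\mathbf{X})):\mu_{\mathcal{F}}(A)=1\}$. The easy preliminaries are immediate: $\mu_{\mathcal{F}}$ is $\{0,1\}$-valued by fiat, $\mu_{\mathcal{F}}(\emptyset)=0$ because $\emptyset\notin\mathcal{F}$, and $\mu_{\mathcal{F}}(X)=1$ since any member of the nonempty filter $\mathcal{F}$ is contained in $X$. The filter axioms for $\hat{\mathcal{F}}$ are just as easy: upward closure follows from transitivity of $\subseteq$, and closure under finite intersections follows from closure of $\mathcal{F}$ under finite intersections (if $Z_i\in\mathcal{F}$ with $Z_i\subseteq A_i$, then $Z_1\cap Z_2\in\mathcal{F}$ and $Z_1\cap Z_2\subseteq A_1\cap A_2$).

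The decisive step --- and the one I expect to require the most care --- is proving that $\hat{\mathcal{F}}$ is in fact an \emph{ultra}filter in $\mathfrak{F}(\mathcal{Z}(\mathbf{X}))$, i.e., that for every $A\in\mathfrak{F}(\mathcal{Z}(\mathbf{X}))$ either $A\in\hat{\mathcal{F}}$ or $X\setminus A\in\hat{\mathcal{F}}$. I will establish this by induction on the level at which $A$ is produced in the inductive construction of $\mathfrak{F}(\mathcal{Z}(\mathbf{X}))$ from $\mathcal{Z}(\mathbf{X})$ under complements, finite intersections, and finite unions. The base case $A=Z\in\mathcal{Z}(\mathbf{X})$ is exactly the defining property of a $z$-ultrafilter: either $Z\in\mathcal{F}$ (so $Z\in\hat{\mathcal{F}}$) or there exists $Z'\in\mathcal{F}$ disjoint from $Z$, whence $Z'\subseteq X\setminus Z$ and $X\setminus Z\in\hat{\mathcal{F}}$. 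The complement step is trivial by swapping the two alternatives. For $A\cap B$: if both $A,B\in\hat{\mathcal{F}}$, use closure under intersection; otherwise the inductive hypothesis supplies $X\setminus A$ or $X\setminus B$ in $\hat{\mathcal{F}}$, and $X\setminus(A\cap B)$ contains either of these, so upward closure finishes the job. The case $A\cup B$ is dual.

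Once $\hat{\mathcal{F}}$ is known to be an ultrafilter, finite additivity on disjoint pairs is automatic: disjointness of $A$ and $B$ rules out both lying in $\hat{\mathcal{F}}$, and the ultra property rules out $A\cup B\in\hat{\mathcal{F}}$ with neither $A$ nor $B$ in $\hat{\mathcal{F}}$ (otherwise $(X\setminus A)\cap(X\setminus B)=X\setminus(A\cup B)\in\hat{\mathcal{F}}$ would coexist with $A\cup B\in\hat{\mathcal{F}}$), leaving only the additive patterns $(0,0,0)$, $(1,0,1)$, and $(0,1,1)$ for $(\mu_{\mathcal{F}}(A),\mu_{\mathcal{F}}(B),\mu_{\mathcal{F}}(A\cup B))$. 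Regularity is then a short corollary: if $\mu_{\mathcal{F}}(A)=1$, every co-zero $V\supseteq A$ has $\mu_{\mathcal{F}}(V)=1$ by upward closure, so the infimum is $1$; if $\mu_{\mathcal{F}}(A)=0$, then $X\setminus A\in\hat{\mathcal{F}}$ by the ultra property, providing $Z\in\mathcal{F}$ with $Z\subseteq X\setminus A$ and hence a co-zero set $V:=X\setminus Z\supseteq A$ for which $\mu_{\mathcal{F}}(V)=0$ (else some $Z''\in\mathcal{F}$ with $Z''\subseteq X\setminus Z$ would give $Z\cap Z''=\emptyset$, contradicting $Z\cap Z''\in\mathcal{F}$).

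Finally, weak $\sigma$-additivity under the weak countable intersection property is nearly immediate. Let $(Z_n)_{n\in\omega}$ be a functionally accessible decreasing sequence of zero-sets with $\bigcap_n Z_n=\emptyset$. Since $\mu_{\mathcal{F}}$ is monotone and $\{0,1\}$-valued, $\lim_n\mu_{\mathcal{F}}(Z_n)=0$ fails only if $\mu_{\mathcal{F}}(Z_n)=1$ for every $n$, which upgrades --- by the filter property applied within $\mathcal{Z}(\mathbf{X})$ --- to $Z_n\in\mathcal{F}$ for every $n$. The family $\{Z_n:n\in\omega\}$ is then a functionally accessible countable subfamily of $\mathcal{F}$ with empty intersection, directly contradicting the weak c.i.p. of $\mathcal{F}$.
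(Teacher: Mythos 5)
Your proof is correct. For the weak $\sigma$-additivity claim it coincides with the paper's own argument: if the limit were nonzero then, by monotonicity, $\mu_{\mathcal F}(Z_n)=1$ for every $n$; upward closure of $\mathcal F$ within $\mathcal Z(\mathbf X)$ upgrades this to $Z_n\in\mathcal F$ for every $n$, and the weak countable intersection property is contradicted. For the first claim, however, the paper gives no argument at all --- it simply defers to Proposition D on p.~485 of Nagata's book --- so your self-contained verification is genuine added content. Your organization (show that $\hat{\mathcal F}=\mu_{\mathcal F}^{-1}[\{1\}]$ is an ultrafilter of the Boolean algebra $\mathfrak F(\mathcal Z(\mathbf X))$ by induction on the generation of the field from $\mathcal Z(\mathbf X)$ under complements, finite intersections and finite unions, and then read off finite additivity, properness and regularity) is sound in $\mathbf{ZF}$: the induction is an ordinary $\omega$-recursion requiring no choice, and the exhaustiveness of the three closure operations does generate all of $\mathfrak F(\mathcal Z(\mathbf X))$ since $X\in\mathcal Z(\mathbf X)$ and $A\setminus B=A\cap(X\setminus B)$. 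The one step worth spelling out is the base case: under Definition~\ref{s1:d27}(iii), the assertion ``$Z\notin\mathcal F$ implies some $Z'\in\mathcal F$ is disjoint from $Z$'' is an equivalent characterization rather than the literal definition of a $z$-ultrafilter; it follows because otherwise the upward closure in $\mathcal Z(\mathbf X)$ of $\{F\cap Z: F\in\mathcal F\cup\{X\}\}$ would be a filter in $\mathcal Z(\mathbf X)$ containing $\mathcal F\cup\{Z\}$. Since the paper uses exactly this characterization elsewhere (e.g., in the proof of Theorem~\ref{s6:t5}), this is a routine remark and not a gap.
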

\begin{proof}
	That $\mu_{\mathcal{F}}$ is a regular $z$-measure can be shown in much that same way, as Proposition D in \cite[p. 485]{nag}. 
	
	Suppose that $\mathcal{F}$ has the weak countable intersection property. Fix a functionally accessible family $\{Z_n: n\in\omega\}$ of zero-sets of $\mathbf{X}$ such that $\bigcap_{n\in\omega}Z_n=\emptyset$ and, for every $n\in\omega$, $Z_{n+1}\subseteq Z_n$. Since $\mathcal{F}$ has the weak countable intersection property and $\bigcap_{n\in\omega}Z_n=\emptyset$, there exists $n_0\in\omega$ such that $Z_{n_0}\notin\mathcal{F}$. Then, for every $n\in\omega$ with $n_0\in n$, we have $\mu_{\mathcal{F}}(Z_n)=0$, so $\lim\limits_{n\to+\infty}\mu_{\mathcal{F}}(Z_n)=0$.
\end{proof}

The following theorem is a $\mathbf{ZF}$-modification of Theorem VIII.4 in \cite[p. 487]{nag}:

\begin{theorem}
	\label{s8:t8}
	$[\mathbf{ZF}]$ A Tychonoff space $\mathbf{X}$ is realcompact if and only if every weakly $\sigma$-additive 2-valued $z$-measure on $\mathbf{X}$ is a Dirac $z$-measure on $\mathbf{X}$.
\end{theorem}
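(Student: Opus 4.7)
The strategy is to establish a tight correspondence between weakly $\sigma$-additive 2-valued $z$-measures on $\mathbf{X}$ and $z$-ultrafilters in $\mathbf{X}$ with the weak countable intersection property, and then invoke the $\mathbf{ZF}$-characterization of realcompactness from Theorem \ref{s6:t10}.

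For the backward implication, I would take a free $z$-ultrafilter $\mathcal{F}$ in $\mathbf{X}$ with the weak countable intersection property (which exists if $\mathbf{X}$ is not realcompact, by Theorem \ref{s6:t10}) and consider the measure $\mu_{\mathcal{F}}$ of Proposition \ref{s8:p7}. That proposition already yields that $\mu_{\mathcal{F}}$ is a regular 2-valued $z$-measure and is weakly $\sigma$-additive, so by hypothesis it must be the Dirac $z$-measure at some point $x_0\in X$. Then for every $Z\in\mathcal{F}$ we have $\mu_{\mathcal{F}}(Z)=1$, hence $x_0\in Z$, so $x_0\in\bigcap\mathcal{F}$, contradicting that $\mathcal{F}$ is free. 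By Theorem \ref{s6:t10}, $\mathbf{X}$ is realcompact.

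For the forward implication, assume $\mathbf{X}$ is realcompact and let $\mu$ be a weakly $\sigma$-additive 2-valued $z$-measure on $\mathbf{X}$. I would associate with $\mu$ the family
$$\mathcal{F}_{\mu}=\{Z\in\mathcal{Z}(\mathbf{X}):\mu(Z)=1\}$$
and verify in $\mathbf{ZF}$ that it is a $z$-ultrafilter. Closure under finite intersection follows from finite additivity together with $\mu(X)=1$ and $\mu\in\{0,1\}$: if $\mu(Z_1)=\mu(Z_2)=1$, then $\mu(Z_1)+\mu(Z_2)=\mu(Z_1\cup Z_2)+\mu(Z_1\cap Z_2)\leq 1+\mu(Z_1\cap Z_2)$ forces $\mu(Z_1\cap Z_2)=1$. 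Upward closure within $\mathcal{Z}(\mathbf{X})$ follows from monotonicity. The ultrafilter property uses the regularity of $\mu$ (which is part of the definition of weak $\sigma$-additivity): if $Z\in\mathcal{Z}(\mathbf{X})\setminus\mathcal{F}_{\mu}$, then $\mu(Z)=0$ and, by regularity, there exists $V\in\mathcal{Z}^c(\mathbf{X})$ with $Z\subseteq V$ and $\mu(V)=0$; then $X\setminus V\in\mathcal{F}_{\mu}$ is disjoint from $Z$.

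Next I would verify that $\mathcal{F}_{\mu}$ has the weak countable intersection property, which is the part where the weak $\sigma$-additivity of $\mu$ is crucially used. Given a functionally accessible family $\{Z_n:n\in\omega\}\subseteq\mathcal{F}_{\mu}$ with chosen witnesses $f_n\in C^{\ast}(\mathbf{X})$ satisfying $Z_n=Z(f_n)$, I would form the explicit decreasing sequence $W_n=Z\!\left(\sum_{k\leq n}|f_k|\right)=\bigcap_{k\leq n}Z_k$; this is still functionally accessible in $\mathbf{ZF}$ since each $W_n$ comes with an explicit defining function. If $\bigcap_{n\in\omega}Z_n=\emptyset$, then $\bigcap_{n\in\omega}W_n=\emptyset$, and weak $\sigma$-additivity gives $\lim_{n\to\infty}\mu(W_n)=0$, contradicting $\mu(W_n)=1$ (a consequence of closure of $\mathcal{F}_{\mu}$ under finite intersections). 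Hence $\bigcap_{n\in\omega}Z_n\neq\emptyset$. Theorem \ref{s6:t10} then gives a point $x_0\in\bigcap\mathcal{F}_{\mu}$. Finally, to show $\mu$ is the Dirac $z$-measure at $x_0$, I would take an arbitrary $A\in\mathfrak{F}(\mathcal{Z}(\mathbf{X}))$ and use regularity twice: if $\mu(A)=1$, Proposition \ref{s8:p3} provides $Z\in\mathcal{Z}(\mathbf{X})$ with $Z\subseteq A$ and $\mu(Z)=1$, so $Z\in\mathcal{F}_{\mu}$ and $x_0\in Z\subseteq A$; if $\mu(A)=0$, the upper regularity in Definition \ref{s8:d2}(viii) yields $V\in\mathcal{Z}^c(\mathbf{X})$ with $A\subseteq V$ and $\mu(V)=0$, so $X\setminus V\in\mathcal{F}_{\mu}$, giving $x_0\in X\setminus V$ and hence $x_0\notin A$.

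The main technical obstacle is the verification of the weak countable intersection property of $\mathcal{F}_{\mu}$: one must be careful to track explicit witnesses $f_n$ in order to keep the decreasing sequence $(W_n)$ functionally accessible without invoking any form of choice, since Definition \ref{s8:d6} only supplies the limit condition along functionally accessible descending chains of zero-sets. Everything else is a routine translation between the language of 2-valued regular $z$-measures and the language of $z$-ultrafilters, handled inside $\mathbf{ZF}$ by Propositions \ref{s8:p3} and \ref{s8:p7} and the realcompactness criterion of Theorem \ref{s6:t10}.
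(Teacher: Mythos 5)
Your proof is correct and follows essentially the same route as the paper's: both directions rest on the correspondence between weakly $\sigma$-additive $2$-valued $z$-measures and $z$-ultrafilters with the weak countable intersection property, combined with Theorem \ref{s6:t10} and Proposition \ref{s8:p7}. The only difference is that you spell out the verifications (the ultrafilter axioms via regularity, the weak countable intersection property via the explicitly witnessed decreasing sequence $(W_n)$, and the final Dirac identification) that the paper delegates to the arguments of Theorem VIII.4 in Nagata's book.
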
 

\begin{proof}
	Suppose that $\mathbf{X}$ is a realcompact space, and $\mu$ is a weakly $\sigma$-additive 2-valued $z$-measure on $\mathbf{X}$. Arguing in much the same way, as in the proof of Theorem VIII.4 in \cite{nag}, we can show that $\mathcal{F}=\{ Z\in\mathcal{Z}(\mathbf{X}): \mu(Z)=1\}$ is a $z$-ultrafilter in $\mathbf{X}$ such that $\mathcal{F}$ has the weak countable intersection property. By Theorem \ref{s6:t10}, there exists $x\in\bigcap\mathcal{F}$. Using the same arguments, as in the proof of Theorem VIII.4 in \cite{nag}, we can show that $\mu$ is the Dirac $z$-measure on $\mathbf{X}$ determined by $x$.
	
	Now suppose that $\mathbf{X}$ is a Tychonoff space such that every 2-valued weakly $\sigma$-additive $z$-measure on $\mathbf{X}$ is a Dirac $z$-measure on $\mathbf{X}$. Let $\mathcal{F}$ be an arbitrary $z$-ultrafilter in $\mathbf{X}$ such that $\mathcal{F}$ has the weak countable intersection property. Then the measure $\mu_{\mathcal{F}}$, defined in Proposition \ref{s8:p7}, is a weakly $\sigma$-additive 2-valued $z$-measure on $\mathbf{X}$. Suppose that $x\in X$ is such that $\mu$ is the Dirac $z$-measure determined by $x$ on $\mathbf{X}$. Then $x\in\bigcap\mathcal{F}$. Hence $\mathbf{X}$ is realcompact by Theorem \ref{s6:t10}.
\end{proof}

\begin{corollary}
	\label{s8:c9}
	$[\mathbf{ZF}]$ If $\mathbf{X}$ is a realcompact space, then every 2-valued weakly $\sigma$-additve $z$-measure on $\mathbf{X}$ is countably additive.
\end{corollary}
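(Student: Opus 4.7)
The plan is to combine Theorem \ref{s8:t8} with the observation that every Dirac $z$-measure is automatically countably additive, which is a direct consequence of the disjointness of the family in question together with the two-valued nature of the measure.

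First I would note that since $\mathbf{X}$ is realcompact, it is in particular $\mathbb{R}$-compact, hence Tychonoff, so Theorem \ref{s8:t8} applies. Let $\mu$ be a $2$-valued weakly $\sigma$-additive $z$-measure on $\mathbf{X}$. By Theorem \ref{s8:t8}, there exists a point $x_0\in X$ such that $\mu$ is the Dirac $z$-measure on $\mathbf{X}$ determined by $x_0$, that is, for every $A\in\mathfrak{F}(\mathcal{Z}(\mathbf{X}))$, $\mu(A)=1$ if $x_0\in A$, and $\mu(A)=0$ otherwise.

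It then remains to verify that a Dirac $z$-measure is countably additive. To that end, I would take an arbitrary disjoint family $\{A_n:n\in\omega\}\subseteq\mathfrak{F}(\mathcal{Z}(\mathbf{X}))$ such that $A=\bigcup_{n\in\omega}A_n\in\mathfrak{F}(\mathcal{Z}(\mathbf{X}))$ and distinguish two cases. If $x_0\notin A$, then $x_0\notin A_n$ for every $n\in\omega$, whence $\mu(A)=0=\sum_{n=0}^{+\infty}\mu(A_n)$. If $x_0\in A$, the disjointness of the family forces the existence of a unique $n_0\in\omega$ with $x_0\in A_{n_0}$; then $\mu(A_{n_0})=1$, $\mu(A_n)=0$ for every $n\in\omega\setminus\{n_0\}$, and $\mu(A)=1$, so again $\mu(A)=\sum_{n=0}^{+\infty}\mu(A_n)$. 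This establishes countable additivity of $\mu$.

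No serious obstacle is expected here: the Axiom of Choice is not needed because we are handed a single point $x_0$ by Theorem \ref{s8:t8}, and the uniqueness of the index $n_0$ containing $x_0$ in the second case is a purely logical consequence of disjointness, requiring no selection principle. The argument goes through entirely in $\mathbf{ZF}$.
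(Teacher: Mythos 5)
Your proof is correct and follows exactly the route the paper intends for this corollary: apply Theorem \ref{s8:t8} to identify $\mu$ as a Dirac $z$-measure determined by some point $x_0\in X$, and then observe that any Dirac measure on a field is countably additive by the trivial case analysis on whether $x_0$ lies in the union. Your remark that realcompactness yields the Tychonoff separation needed to invoke Theorem \ref{s8:t8}, and that no choice principle is required, is also accurate.
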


\begin{problem}
	\label{s8:q10}
	May a weakly $\sigma$-additive $z$-measure on a realcompact space $\mathbf{X}$ fail to be countably additive in a model of $\mathbf{ZF}$?
\end{problem}

The following corollary follows directly from  Proposition \ref{s8:p5}, Corollary \ref{s6:c11} and the proof of Theorem \ref{s8:t8}:

\begin{corollary}
	\label{s8:c11}
	$[\mathbf{ZF}]$ $\mathbf{CMC}$ implies that a Tychonoff space $\mathbf{X}$ is realcompact if and only if every 2-valued countably additive $z$-measure on $\mathbf{X}$ is a Dirac $z$-measure on $\mathbf{X}$.
\end{corollary}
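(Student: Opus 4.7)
My plan is to deduce the corollary from Theorem~\ref{s8:t8} by showing, under $\mathbf{CMC}$, that a $2$-valued $z$-measure on $\mathbf{X}$ is countably additive if and only if it is weakly $\sigma$-additive. Once this equivalence is established, both implications of the biconditional transfer verbatim from Theorem~\ref{s8:t8}.

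For the forward direction of that equivalence, I would argue in $\mathbf{ZF}$ alone. Let $\mu$ be a $2$-valued countably additive $z$-measure on $\mathbf{X}$. By the easy half of Proposition~\ref{s8:p5}, $\mu$ is regular and satisfies condition~(iii): every decreasing sequence $(Z_n)_{n\in\omega}$ of zero-sets with $\bigcap_{n\in\omega}Z_n=\emptyset$ satisfies $\lim_{n\to+\infty}\mu(Z_n)=0$. In particular, this conclusion holds for every functionally accessible such sequence, so $\mu$ is weakly $\sigma$-additive in the sense of Definition~\ref{s8:d6}.

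For the reverse direction, I would use $\mathbf{CMC}$ essentially. Let $\mu$ be a $2$-valued weakly $\sigma$-additive $z$-measure on $\mathbf{X}$. I would invoke the nontrivial half of Proposition~\ref{s8:p5}, which states that, under $\mathbf{CMC}$, condition~(iii) of that proposition implies countable additivity. So it suffices to verify~(iii) for $\mu$. Fix any decreasing sequence $(Z_n)_{n\in\omega}$ of zero-sets of $\mathbf{X}$ with $\bigcap_{n\in\omega}Z_n=\emptyset$. Arguing exactly as in the proof of Proposition~\ref{s6:p9}, for each $n\in\omega$ the set $\mathcal{F}_n=\{f\in C(\mathbf{X}):Z(f)=Z_n\}$ is nonempty; by $\mathbf{CMC}$ there is a family $\{\mathcal{H}_n:n\in\omega\}$ with each $\mathcal{H}_n$ a nonempty finite subset of $\mathcal{F}_n$, and then $f_n=\sum_{f\in\mathcal{H}_n}|f|$ belongs to $C(\mathbf{X})$ and satisfies $Z(f_n)=Z_n$. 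Hence $\{Z_n:n\in\omega\}$ is functionally accessible, and the weak $\sigma$-additivity of $\mu$ yields $\lim_{n\to+\infty}\mu(Z_n)=0$. This establishes~(iii), so $\mu$ is countably additive.

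Putting the pieces together: assume $\mathbf{CMC}$ and let $\mathbf{X}$ be a Tychonoff space. By Theorem~\ref{s8:t8}, $\mathbf{X}$ is realcompact iff every $2$-valued weakly $\sigma$-additive $z$-measure on $\mathbf{X}$ is Dirac; by the equivalence just proved, this is the same as saying that every $2$-valued countably additive $z$-measure on $\mathbf{X}$ is Dirac, which is the desired conclusion. I do not anticipate a genuine obstacle here: the work has been done in Theorem~\ref{s8:t8} and Proposition~\ref{s8:p5}, and the only delicate point is the $\mathbf{CMC}$-application that upgrades an arbitrary countable decreasing family of zero-sets to a functionally accessible one, which is exactly the trick used in Proposition~\ref{s6:p9}.
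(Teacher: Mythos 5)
Your proof is correct. The paper's own justification is a one-line citation of Proposition \ref{s8:p5}, Corollary \ref{s6:c11} and the proof of Theorem \ref{s8:t8}: it goes through the ultrafilter characterization, i.e., under $\mathbf{CMC}$ realcompactness is equivalent to the fixedness of every $z$-ultrafilter with the countable intersection property (this is $\mathbf{CHR}$), and one re-runs the argument of Theorem \ref{s8:t8} with c.i.p. in place of the weak countable intersection property. You instead stay entirely on the measure side: you show that, under $\mathbf{CMC}$, the class of $2$-valued countably additive $z$-measures coincides with the class of $2$-valued weakly $\sigma$-additive ones, and then invoke Theorem \ref{s8:t8} as a black box. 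Both routes rest on exactly the same $\mathbf{CMC}$ device --- upgrading an arbitrary countable decreasing family of zero-sets to a functionally accessible one, as in Proposition \ref{s6:p9} --- together with the nontrivial half of Proposition \ref{s8:p5}. Your decomposition is arguably the more modular one, since it isolates a reusable equivalence of measure classes and never touches ultrafilters; the paper's route has the advantage of making the connection with $\mathbf{CHR}$ explicit. One cosmetic point: Definition \ref{s6:d8} asks the witnessing functions for functional accessibility to lie in $C^{\ast}(\mathbf{X})$, so the functions $f_n$ you construct should be truncated (e.g., replaced by $\min\{f_n,\mathbf{1}\}$); the paper's own proof of Proposition \ref{s6:p9} elides the same step, so this is not a gap relative to the paper's standards.
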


Now, let us pass to a characterization of $\mathbb{N}$-compactness in terms of 2-valued measures.

\begin{theorem}
	\label{s8:t12}
	$[\mathbf{ZF}]$ For every non-empty zero-dimensional $T_1$-space $\mathbf{X}$, it holds that $\mathbf{X}$ is $\mathbb{N}$-compact if and only if every countably additive 2-valued  $c$-measure  on $\mathbf{X}$ is a Dirac $c$-measure on $\mathbf{X}$.
\end{theorem}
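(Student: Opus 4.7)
The plan is to deduce the theorem from Chew's characterization of $\mathbb{N}$-compactness (Theorem \ref{s6:t1}) by setting up a natural bijective correspondence between $2$-valued countably additive $c$-measures on $\mathbf{X}$ and clopen ultrafilters in $\mathbf{X}$ with the countable intersection property, under which Dirac $c$-measures correspond precisely to fixed clopen ultrafilters. The whole argument should go through in $\mathbf{ZF}$ without any choice principle, since both constructions (measure $\mapsto$ filter and filter $\mapsto$ measure) are explicit.

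For the forward direction, assume $\mathbf{X}$ is $\mathbb{N}$-compact and let $\mu$ be a countably additive $2$-valued $c$-measure on $\mathbf{X}$. First I would set $\mathcal{F}_{\mu}=\{C\in\mathcal{CO}(\mathbf{X}):\mu(C)=1\}$ and verify routinely, using only $2$-valuedness together with the identity $\mu(A\cup B)=\mu(A)+\mu(B\setminus A)$, that $\mathcal{F}_{\mu}$ is a clopen ultrafilter. Next, to see that $\mathcal{F}_{\mu}$ has the countable intersection property, I would take a family $\{C_n:n\in\omega\}\subseteq\mathcal{F}_{\mu}$, put $A_n=\bigcap_{k\leq n}C_k\in\mathcal{CO}(\mathbf{X})$ (each of measure $1$ by induction), observe that $(A_n)$ is decreasing, and invoke Proposition \ref{s8:p4}(i): if the intersection were empty, then $\lim_{n\to\infty}\mu(A_n)=0$, contradicting $\mu(A_n)=1$ for every $n$. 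Hence $\bigcap_n C_n\supseteq\bigcap_n A_n\neq\emptyset$. Chew's theorem then gives a point $x\in\bigcap\mathcal{F}_{\mu}$, and since $\mathcal{F}_{\mu}$ is an ultrafilter in $\mathcal{CO}(\mathbf{X})$ one verifies immediately that $\mu$ coincides with the Dirac $c$-measure determined by $x$.

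For the converse, suppose every countably additive $2$-valued $c$-measure on $\mathbf{X}$ is a Dirac $c$-measure, and let $\mathcal{U}$ be any clopen ultrafilter with the countable intersection property. I would define $\mu_{\mathcal{U}}:\mathcal{CO}(\mathbf{X})\to\{0,1\}$ by $\mu_{\mathcal{U}}(C)=1$ iff $C\in\mathcal{U}$. Finite additivity follows by the standard three-case analysis using that a clopen ultrafilter contains $A\cup B$ iff it contains $A$ or $B$. For countable additivity I would again use Proposition \ref{s8:p4}(i): given a decreasing sequence $(A_n)$ in $\mathcal{CO}(\mathbf{X})$ with $\bigcap_n A_n=\emptyset$, the sequence $(\mu_{\mathcal{U}}(A_n))$ is non-increasing in $\{0,1\}$, so it is either eventually $0$ (and the limit is $0$) or identically $1$; the second case is excluded because the $A_n$ would all lie in $\mathcal{U}$ and the c.i.p.\ would force $\bigcap_n A_n\neq\emptyset$. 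The hypothesis then yields a point $x\in X$ with $\mu_{\mathcal{U}}=\delta_x$, which means $\mathcal{U}=\{C\in\mathcal{CO}(\mathbf{X}):x\in C\}$, so $\mathcal{U}$ is fixed. Another application of Chew's theorem concludes that $\mathbf{X}$ is $\mathbb{N}$-compact.

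There is no real obstacle in either direction; the only point requiring a little care is verifying countable additivity of $\mu_{\mathcal{U}}$ in the converse, since the natural argument via disjointification needs the union of a disjoint countable clopen family to lie in $\mathcal{CO}(\mathbf{X})$, which is not automatic. The cleaner route, outlined above, is to go through the equivalent characterization in Proposition \ref{s8:p4}(i) phrased in terms of decreasing sequences with empty intersection, where no such issue arises. The proof is thus a direct translation between the measure-theoretic language and the filter-theoretic characterization from Section \ref{s6}, and it carries through in $\mathbf{ZF}$ without invoking $\mathbf{CMC}$ or any other choice principle, in contrast with the realcompact analogue in Theorem \ref{s8:t8}.
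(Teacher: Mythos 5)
Your proposal is correct and follows essentially the same route as the paper's own proof: both directions translate between $2$-valued countably additive $c$-measures and clopen ultrafilters with the countable intersection property, use Proposition \ref{s8:p4}(i) (decreasing sequences with empty intersection) to handle countable additivity, and invoke Chew's characterization (Theorem \ref{s6:t1}) to pass between fixedness and $\mathbb{N}$-compactness. Your explicit reduction to the decreasing sequence $A_n=\bigcap_{k\leq n}C_k$ in the forward direction is a minor point of extra care that the paper leaves implicit, but the argument is otherwise identical.
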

\begin{proof}
	Let $\mathbf{X}$ be a non-empty zero-dimensional $T_1$-space. Suppose that $\mu$ is a countably additive 2-valued $c$-measure on $\mathbf{X}$. One can easily verify that
	$$\mathcal{F}=\{ C\in\mathcal{CO}(\mathbf{X}): \mu(C)=1\}$$
	is an ultrafilter in $\mathcal{CO}(\mathbf{X})$. 
	To show that $\mathcal{F}$ has the countable intersection property, suppose that $\{C_n: n\in\omega\}$ is a family of members of $\mathcal{F}$ such that, for every $n\in\omega$, $C_{n+1}\subseteq C_n$. If $\bigcap\limits_{n\in\omega}C_n=\emptyset$, by Proposition \ref{s8:p4}(i), it follows from the countable additivity of $\mu$ that $0=\lim\limits_{n\to+\infty}\mu(C_n)$, which is impossible because, for every $n\in\omega$, $\mu(C_n)=1$. This shows that $\mathcal{F}$ has the countable intersection property. Assuming that $\mathbf{X}$ is $\mathbb{N}$-compact, we deduce from Theorem \ref{s6:t1} that $\mathcal{F}$ is fixed. Hence, there exists $p\in X$ such that $\bigcap\mathcal{F}=\{p\}$. This implies that $\mu$ is the Dirac $c$-measure on $\mathbf{X}$ determined by $p$.
	
	Conversely, suppose that $\mathbf{X}$ is such that every countably additive 2-valued $c$-measure on $\mathbf{X}$ is a Dirac $c$-measure on $\mathbf{X}$.  Let $\mathcal{U}$ be an ultrafilter in $\mathcal{CO}(\mathbf{X})$ with the countable intersection property. We define a mapping $\mu_{\mathcal{U}}:\mathcal{CO}(\mathbf{X})\to\{0, 1\}$ as follows:
	$$
	\mu_{\mathcal{U}}(A)=\begin{cases} 1 &\text{ if $A\in\mathcal{U}$;}\\
		0 &\text{ if $X\setminus A\in\mathcal{U}$.}\end{cases}
	$$
	Clearly, $\mu_{\mathcal{U}}$ is a 2-valued $c$-measure on $\mathbf{X}$. To show that $\mu_{\mathcal{U}}$ is countably additive, we consider any family $\{E_n: n\in\omega\}$ of clopen subsets of $\mathbf{X}$ such that $\bigcap\limits_{n\in\omega}E_n=\emptyset$ and, for every $n\in\omega$, $E_{n+1}\subseteq E_n$. Since $\mathcal{U}$ has the countable intersection property, there exists $n_0\in\omega$ such that $\mu_{\mathcal{U}}(E_{n_0})=0$ and, in consequence, $\lim\limits_{n\to+\infty}\mu_{\mathcal{U}}(E_n)=0$. Hence $\mu_{\mathcal{U}}$ is countably additive by Proposition \ref{s8:p4}(i). By our hypothesis, there exists $q\in X$ such that $\mu_{\mathcal{U}}$ is the Dirac $c$-measure determined by $q$. Then $q\in\bigcap\mathcal{U}$, so $\mathcal{U}$ is fixed. By Theorem \ref{s6:t1}, $\mathbf{X}$ is $\mathbb{N}$-compact.
\end{proof}

Although $\mathbf{CAC}$ implies $\mathbf{CMC}$, to apply the idea of the proof of Corollary 1 of Chapter VIII.4 in \cite{nag}, let us demonstrate in $\mathbf{ZF+CAC}$ an alternative measure-theoretic proof of statement (ii) of our Theorem \ref{s7:t10}. 

\begin{theorem}
	\label{s8:t13}
	$[\mathbf{ZF+CAC}]$
	Every zero-Baire set in an $\mathbb{N}$-compact space is $\mathbb{N}$-compact.
\end{theorem}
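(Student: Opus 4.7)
The plan is to apply the measure-theoretic characterisation of $\mathbb{N}$-compactness from Theorem~\ref{s8:t12}. Given an $\mathbb{N}$-compact zero-dimensional $T_1$ space $\mathbf{X}$ and a zero-Baire set $S\in\mathfrak{Ba}_0(\mathbf{X})$, I would fix an arbitrary countably additive $2$-valued $c$-measure $\nu$ on the subspace $\mathbf{S}$ and aim to prove that $\nu$ is the Dirac $c$-measure at some point $p\in S$; Theorem~\ref{s8:t12} applied to the zero-dimensional $T_1$ space $\mathbf{S}$ then delivers the $\mathbb{N}$-compactness of $\mathbf{S}$.

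Invoking $\mathbf{CAC}$ and Proposition~\ref{s8:p4}(iv), I first extend $\nu$ to a countably additive $2$-valued measure $\tilde\nu$ on $\mathfrak{Ba}_0(\mathbf{S})$. Next I push $\nu$ forward to $\mathbf{X}$ by setting $\mu(A)=\nu(A\cap S)$ for $A\in\mathcal{CO}(\mathbf{X})$. A routine verification, using countable additivity of $\nu$, shows that $\mu$ is a countably additive $2$-valued $c$-measure on $\mathbf{X}$ with $\mu(X)=\nu(S)=1$, so by Theorem~\ref{s8:t12} and the $\mathbb{N}$-compactness of $\mathbf{X}$ there is a unique $p\in X$ at which $\mu$ is Dirac. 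I then extend $\mu$ to a countably additive $2$-valued measure $\tilde\mu$ on $\mathfrak{Ba}_0(\mathbf{X})$, again via Proposition~\ref{s8:p4}(iv).

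To pin down that $p\in S$, I would run the standard $\sigma$-field argument, showing that $\mathcal{G}=\{B\in\mathfrak{Ba}_0(\mathbf{X}):\tilde\mu(B)=1\Leftrightarrow p\in B\}$ is a $\sigma$-field containing $\mathcal{CO}(\mathbf{X})$ and hence equals $\mathfrak{Ba}_0(\mathbf{X})$. Once this is established, $p\in S$ follows from $\tilde\mu(S)=1$, which I would obtain as follows: for any countable cover $S\subseteq\bigcup_n C_n$ with $C_n\in\mathcal{CO}(\mathbf{X})$, countable subadditivity of $\tilde\nu$ on $\mathfrak{Ba}_0(\mathbf{S})$ yields
$$\sum_n\mu(C_n)=\sum_n\tilde\nu(C_n\cap S)\geq\tilde\nu(S)=\nu(S)=1,$$
so $\tilde\mu(S)=\mu^{\ast}(S)\geq 1$, and $\tilde\mu(S)\leq\tilde\mu(X)=1$ forces $\tilde\mu(S)=1$.

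The hard part is transferring the Dirac behaviour of $\mu$ back down to $\nu$: a general $B\in\mathcal{CO}(\mathbf{S})$ need not be of the form $A\cap S$ for some $A\in\mathcal{CO}(\mathbf{X})$, and in fact $\mathcal{CO}(\mathbf{S})$ can properly exceed the trace family $\{B\cap S:B\in\mathfrak{Ba}_0(\mathbf{X})\}$, so the push-forward does not settle the question on $\mathcal{CO}(\mathbf{S})$ by itself. To close this gap I would exploit the zero-dimensionality of $\mathbf{X}$: given $B\in\mathcal{CO}(\mathbf{S})$ with $p\in B$, write $B=U\cap S$ for some open $U$ of $\mathbf{X}$, pick $A\in\mathcal{CO}(\mathbf{X})$ with $p\in A\subseteq U$, and observe that $A\cap S\subseteq B$ and $\nu(A\cap S)=\mu(A)=1$; monotonicity of $\nu$ then forces $\nu(B)=1$. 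Applying the same argument to $S\setminus B$ when $p\notin B$ handles the converse, so $\nu$ is identified as the Dirac $c$-measure at $p\in S$, completing the proof.
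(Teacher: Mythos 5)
Your proposal is correct and follows the same overall architecture as the paper's proof: push $\nu$ forward to $\mu(A)=\nu(A\cap S)$ on $\mathcal{CO}(\mathbf{X})$, invoke Theorem~\ref{s8:t12} on the $\mathbb{N}$-compact space $\mathbf{X}$ to obtain the Dirac point $p$, show that the induced measure on $\mathfrak{Ba}_0(\mathbf{X})$ is Dirac at $p$, deduce $p\in S$, and transfer back to $\nu$ on $\mathcal{CO}(\mathbf{S})$ using zero-dimensionality of $\mathbf{X}$ and monotonicity (your final step is verbatim the paper's). The one genuine difference is the middle step, namely how one proves that $\tilde\mu=\mu^{\ast}\upharpoonright\mathfrak{Ba}_0(\mathbf{X})$ takes the value $1$ exactly on the zero-Baire sets containing $p$. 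The paper gets $\mu^{\ast}(A)=0$ for $p\notin A$ from the fact (established in the proof of Theorem~\ref{s7:t10} via transfinite induction, Lemma~\ref{s7:l9} and Proposition~\ref{s7:p6}) that every zero-Baire set is $r_{\mathbb{N}}$-embedded, so $p$ is separated from $A$ by a $c_{\delta}$-set. You instead run the good-sets argument: $\mathcal{G}=\{B\in\mathfrak{Ba}_0(\mathbf{X}):\tilde\mu(B)=1\Leftrightarrow p\in B\}$ is closed under complements and, by countable subadditivity of $\tilde\mu$, under countable unions, hence is a $\sigma$-field containing $\mathcal{CO}(\mathbf{X})$ and therefore all of $\mathfrak{Ba}_0(\mathbf{X})$; this is a valid $\mathbf{ZF}$ argument once $\tilde\mu$ exists, and it has the advantage of not routing through the $r_{\mathbb{N}}$-embedding machinery of Section~\ref{s7} (in particular it does not need the regularity of $\omega_1$ behind Lemma~\ref{s7:l9}), whereas the paper's version reuses work already done there. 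Two small economies you could make: the auxiliary extension $\tilde\nu$ is not needed, since the estimate $\sum_n\mu(C_n)\geq 1$ for a clopen cover of $S$ follows from countable additivity of $\nu$ itself after disjointifying the traces $C_n\cap S$ in $\mathcal{CO}(\mathbf{S})$; and in the pull-back step you should say explicitly that $p\in S\setminus B$ (not merely $p\notin B$) before applying the separation argument to $S\setminus B$, which is immediate since $p\in S$.
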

\begin{proof}
	In what follows, our set-theoretic assumption is $\mathbf{ZF+CAC}$. Let $S$ be a non-empty zero-Baire set in an $\mathbb{N}$-compact space $\mathbf{X}$. Let $\nu$ be a countably additive  2-valued $c$-measure on the subspace $\mathbf{S}$ of $\mathbf{X}$. We define a function $\mu:\mathcal{CO}(\mathbf{X})\to\{0,1\}$ as follows: for every $A\in\mathcal{CO}(\mathbf{X})$, 
	$$\mu(A)=\nu(A\cap S).$$
	One can easily check that $\mu$ is a countably additive  2-valued $c$-measure on $\mathbf{X}$. By Theorem \ref{s8:t12}, there exists $p\in X$ such that $\mu$ is the Dirac $c$-measure $\delta_p$ on $\mathbf{X}$ determined by $p$.  By Proposition \ref{s8:p4}(iv), for $\mathfrak{M}=\mathcal{CO}(\mathbf{X})$,  $\mu^{\ast}$ is an outer measure, and $\mu^{\ast}\upharpoonright \mathfrak{Ba}_0(\mathbf{X})$ is a countably additive measure on $\mathfrak{Ba}_{0}(\mathbf{X})$ such that, for every $A\in\mathfrak{Ba}_0(\mathbf{X})$, $\mu^{\ast}(A)\in\{0, 1\}$ and, for every $B\in\mathcal{CO}(\mathbf{X})$, $\mu^{\ast}(B)=\mu(B)$.
	
	Consider any $A\in\mathfrak{Ba}_0(\mathbf{X})$. If $p\in A$, it follows from Proposition \ref{s8:p4}(ii) that $\mu^{\ast}(A)=1$. Suppose that $p\notin A$. It has been shown in the proof of Theorem \ref{s7:t10} that every zero-Baire set of $\mathbf{X}$ is $r_{\mathbb{N}}$-embedded in $\mathbf{X}$. Hence, by Proposition \ref{s7:p6}(i), there exists $C\in\mathcal{CO}_{\delta}(\mathbf{X})$ such that $p\in C\subseteq X\setminus A$. Then $\mu^{\ast}(X\setminus A)=1$, so $\mu^{\ast}(A)=0$.  This implies that, for every $A\in\mathfrak{Ba}_0(\mathbf{X})$, the following equality holds:
	$$
	\mu^{\ast}(A)=\begin{cases} 1 &\text{ if $p\in A$;}\\
		0 &\text{ if $p\notin A$}.\end{cases}
	$$
	Furthermore, if $p\notin S$, then there exists a disjoint family $\{C_n: n\in\omega\}$ of clopen sets of $\mathbf{X}$ such that $S\subseteq\bigcup_{n\in\omega}C_n$ and $p\notin\bigcup_{n\in\omega}C_n$. Then $\mu^{\ast}(\bigcup_{n\in\omega}C_n)=0=\sum\limits_{n=0}^{+\infty}\mu(C_n)=\sum\limits_{n=0}^{+\infty}\nu(C_n\cap S)=\nu(S)=1$. The contradiction obtained proves that $p\in S$.
	
	Suppose that $U\in\mathcal{CO}(\mathbf{S})$, $\nu(U)=1$ but $p\notin U$. Then there exists $V\in\mathcal{CO}(\mathbf{X})$ such that $p\in V\cap S\subseteq S\setminus U$. But this is impossible because $\nu$ is a 2-valued measure on $\mathcal{CO}(\mathbf{S})$, and $\nu(S)=\nu(U)+\nu(S\setminus U)\geq \nu(U)+\mu(V)=2$. Hence, for every $U\in\mathcal{CO}(\mathbf{S})$, it holds that $\nu(U)=1$ if and only if $p\in U$. This implies that $\nu$ is the Dirac measure on $\mathcal{CO}(\mathbf{S})$ determined by $p$. It follows from Theorem \ref{s8:t12} that $\mathbf{S}$ is $\mathbb{N}$-compact.
\end{proof}

Since $\mathbf{CMC}$ follows from $\mathbf{CAC}$ in $\mathbf{ZF}$, Theorem \ref{s8:t13} is a weaker version of Theorem \ref{s7:t10}(i).

\begin{remark}
	\label{s8:r14}
	Assume $\mathbf{ZF+CMC}$. Let $S$ be a non-empty Baire set in a realcompact space $\mathbf{X}$. Let $\nu$ be a countably additive  2-valued $z$-measure on the subspace $\mathbf{S}$ of $\mathbf{X}$. Let us consider the function $\mu:\mathfrak{F}(\mathcal{Z}(\mathbf{X}))\to\{0,1\}$ defined as follows: for every $A\in\mathfrak{F}(\mathcal{Z}(\mathbf{X}))$, 
	$$\mu(A)=\nu(A\cap S).$$
	It follows from Theorem \ref{s8:t8} that there exists $p\in X$ such that $\mu$ is the Dirac $z$-measure on $\mathbf{X}$ determined by $p$. Since every Baire set in a realcompact space is $r$-embedded, the set $S$ is $r$-embedded in $\mathbf{X}$. This, together with the fact that $\mu(S)=1$, implies that $p\in S$. If $A\in\mathfrak{F}(\mathcal{Z}(\mathbf{S}))$ and $p\notin A$, one can show in $\mathbf{ZF+CMC}$ that there exists $C\in\mathfrak{F}(\mathcal{Z}(\mathbf{X}))$ with $p\in C\subseteq X\setminus A$. Hence $\nu$ is the Dirac $z$-measure on $\mathbf{S}$, so $\mathbf{S}$ is realcompact by Theorem \ref{s8:t8} and Proposition \ref{s8:p5}. This is an alternative proof of our Theorem \ref{s7:t10}(i), based on the idea of the $\mathbf{ZFC}$-proof of Corollary 1 of Chapter VIII.4 of \cite{nag}.
\end{remark}

\section{Characters on $U_{\aleph_0}(\mathbf{X})$ when $\mathbf{X}$ is $\mathbb{N}$-compact}
\label{s9}

In what follows, let $\mathbf{X}$ be a non-empty topological space. We recall that, for a real number $c$, we denote by $\mathbf{c}$ the constant function from $C(\mathbf{X})$ such that $\mathbf{c}[X]=\{c\}$. Let us also recall the definition of a character on a subring of $C(\mathbf{X})$ and a definition of a real ideal of a subring of $C(\mathbf{X})$.

\begin{definition}
	\label{s9:d1}
	Let $\mathbb{H}$ be a subring of $C(\mathbf{X})$ which contains all constant functions from $C(\mathbf{X})$.
	\begin{enumerate}
		\item[(a)] A \emph{character} on $\mathbb{H}$ is a function $\chi:\mathbb{H}\to\mathbb{R}$ which satisfies the following conditions:
		\begin{enumerate}
			\item[(i)] $(\forall f,g\in \mathbb{H})\text{ } \chi(f+g)=\chi(f)+\chi(g)$;
			\item[(ii)] $(\forall f,g\in \mathbb{H})\text{ } \chi(f\cdot g)=\chi(f)\cdot\chi(g)$;
			\item[(iii)] $(\forall c\in\mathbb{R})\text{ } \chi(\mathbf{c})=c$.
		\end{enumerate}
		\item[(b)] For $w\in X$, the character on $\mathbb{H}$ determined by $w$ is the function $\chi_w: \mathbb{H}\to\mathbb{R}$ defined as follows:
		$$(\forall f\in\mathbb{H})\text{ } \chi_w(f)=f(w).$$
		\item[(c)] An ideal $M$ of $\mathbb{H}$ is called a \emph{real ideal} of $\mathbb{H}$ if the quotient ring $\mathbb{H}/M$ is isomorphic with the field $\mathbb{R}$.
		\item[(d)] An ideal $M$ of $\mathbb{H}$ is called \emph{fixed} if there exists $p\in X$ such that $M=\{f\in\mathbb{H}: f(p)=0\}$.
	\end{enumerate}
\end{definition} 

A very elegant, elementary proof of the following theorem of $\mathbf{ZF}$  is given in \cite{bo}:

\begin{theorem} 
	\label{s9:t2}(Cf. \cite[Theorem 1]{bo}.) $[\mathbf{ZF}]$ Let $\mathbf{X}$ be a non-empty realcompact space and let $\chi$ be a character on $C(\mathbf{X})$. Then there exists $w\in X$ such that, for every $f\in C(\mathbf{X})$, $\chi(f)=f(w)$.
\end{theorem}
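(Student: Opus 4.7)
The plan is to realize the character $\chi$ as evaluation at some point of the closure of $X$ inside the product space $\mathbb{R}^{C(\mathbf{X})}$, and then invoke realcompactness to conclude that this point in fact lies in $X$. Since every realcompact space is, by Definition \ref{s1:d1}, $\mathbb{R}$-completely regular, Proposition \ref{s4:p1} tells us that the evaluation map $e=e_{C(\mathbf{X})}\colon \mathbf{X}\to \mathbb{R}^{C(\mathbf{X})}$ is a homeomorphic embedding, and Corollary \ref{s4:c3} guarantees that $e[X]$ is closed in $\mathbb{R}^{C(\mathbf{X})}$. Consider the point $p_{\chi}\in \mathbb{R}^{C(\mathbf{X})}$ defined by $p_{\chi}(f)=\chi(f)$ for every $f\in C(\mathbf{X})$. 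If one shows $p_{\chi}\in\cl_{\mathbb{R}^{C(\mathbf{X})}}(e[X])$, then closedness of $e[X]$ yields a unique $w\in X$ with $e(w)=p_{\chi}$, and unpacking the definition of $e$ gives $f(w)=\chi(f)$ for every $f\in C(\mathbf{X})$, as required.

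The heart of the argument is the density step. I would take an arbitrary basic product neighborhood of $p_{\chi}$, specified by finitely many functions $f_1,\ldots,f_n\in C(\mathbf{X})$ and a real number $\varepsilon>0$, and seek $x\in X$ with $|f_i(x)-\chi(f_i)|<\varepsilon$ for every $i\in\{1,\ldots,n\}$. The key device is the auxiliary function
\[
g=\sum_{i=1}^{n}(f_i-\chi(f_i)\mathbf{1})^2\in C(\mathbf{X}).
\]
Using that $\chi$ is a ring homomorphism satisfying $\chi(\mathbf{c})=c$ for every $c\in\mathbb{R}$, one computes $\chi(g)=\sum_i(\chi(f_i)-\chi(f_i))^2=0$. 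If one had $g(x)\geq\varepsilon^2>0$ for every $x\in X$, then $g$ would be nowhere-vanishing, whence $\tfrac{1}{g}\in C(\mathbf{X})$, and applying $\chi$ to $g\cdot\tfrac{1}{g}=\mathbf{1}$ would give $0=\chi(g)\chi(\tfrac{1}{g})=\chi(\mathbf{1})=1$, a contradiction. Hence there exists $x\in X$ with $g(x)<\varepsilon^2$, and since each summand in $g(x)$ is non-negative this forces $(f_i(x)-\chi(f_i))^2<\varepsilon^2$, i.e.\ $|f_i(x)-\chi(f_i)|<\varepsilon$, for every $i$. This is precisely the density claim.

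There is no genuine obstacle here, and in particular no appeal to any choice principle is needed: the only external ingredients are Proposition \ref{s4:p1} and Corollary \ref{s4:c3}, both proved in $\mathbf{ZF}$ above, plus the purely algebraic observation that a nowhere-vanishing function in $C(\mathbf{X})$ is a unit. The mild care required is only in making sure that the point $w$ produced by closedness of $e[X]$ is genuinely unique, which is immediate since $\mathbf{X}$ is Hausdorff (being $\mathbb{R}$-completely regular) and $e$ is injective. Hence the proof proceeds entirely within $\mathbf{ZF}$, matching Bo's short argument from \cite{bo}.
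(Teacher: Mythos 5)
Your proof is correct and works entirely in $\mathbf{ZF}$. Note first that the paper does not actually supply a proof of this theorem: it only cites Boulabiar's note \cite{bo} for ``a very elegant, elementary proof,'' so there is no in-paper argument to compare against line by line. Your route is the natural one given the paper's own machinery: realcompactness gives, via Proposition \ref{s4:p1} and Corollary \ref{s4:c3}, that $e_{C(\mathbf{X})}$ is an embedding with closed image in $\mathbb{R}^{C(\mathbf{X})}$, and the density of $e[X]$ at the point $p_{\chi}=(\chi(f))_{f\in C(\mathbf{X})}$ follows from the purely algebraic observation that $g=\sum_{i}(f_i-\chi(f_i)\mathbf{1})^2$ satisfies $\chi(g)=0$ and hence cannot be bounded away from $0$ (otherwise $\tfrac{1}{g}\in C(\mathbf{X})$ forces $0=\chi(g)\chi(\tfrac{1}{g})=\chi(\mathbf{1})=1$). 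Every step is constructive: the neighborhood involves only finitely many functions, so no choice principle is invoked, and only the three defining properties of a character from Definition \ref{s9:d1}(a) are used (in particular you do not need the order-preservation of Lemma \ref{s9:l3}). Boulabiar's published argument is organized instead around the zero-sets $Z(f-\chi(f)\mathbf{1})$ and their intersection properties, but the substance — the same sum-of-squares trick to get the finite intersection property, followed by realcompactness to locate a common point — is the same; your version has the advantage of plugging directly into Corollary \ref{s4:c3} rather than into a filter-theoretic characterization, which keeps it visibly choice-free. The uniqueness remark at the end is also fine, since $e$ is injective on the Tychonoff space $\mathbf{X}$.
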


\begin{remark}
	\label{s9:r03}
	One can easily observe that if $\mathbf{X}$ is a functionally Hausdorff space, $\chi$ is a character on $C(\mathbf{X})$, and $w_1, w_2\in X$ are such that, for every $f\in C(\mathbf{X})$, $\chi(f)=f(w_1)=f(w_2)$, then $w_1=w_2$.
\end{remark}

Interesting newer results on characters relevant to $\mathbb{N}$-compactness can be found, for instance, in \cite{ol}. In this section, we are going to prove in $\mathbf{ZF}$ that if $\mathbf{X}$ is a non-empty $\mathbb{N}$-compact space, then every character on $C(\mathbf{X}, \mathbb{R}_{disc})$ is determined by a point of $X$ and, furthermore, $\mathbf{CMC}$ implies that every character on $U_{\aleph_0}(\mathbf{X})$ is also determined by a point of $X$. To do this, we need the following simple lemma:

\begin{lemma}
	\label{s9:l3}
	$[\mathbf{ZF}]$ For a non-empty topological space $\mathbf{X}$, let $\mathbb{H}$ be either $C(\mathbf{X}, \mathbb{R}_{disc})$ or $U_{\aleph_0}(\mathbf{X})$. Let $\chi$ be a character on $\mathbb{H}$. Then the following conditions are all satisfied:
	\begin{enumerate}
		\item[(i)] $(\forall f, g\in\mathbb{H})\text{ } (f\leq g\rightarrow \chi(f)\leq\chi(g))$;
		\item[(ii)] $(\forall f\in\mathbb{H})\text{ } \chi(|f|)=|\chi(f)|$;
		\item[(iii)] $\chi$ is a continuous function from the subspace $\mathbb{H}$ of $C_{u}(\mathbf{X})$ to $\mathbb{R}$ with the natural topology;
		\item[(iv)] if $\mathbb{H}=C(\mathbf{X}, \mathbb{R}_{disc})$, $f\in \mathbb{H}$ and $\chi(f)=0$, then there exists $p\in X$ such that $f(p)=0$.
	\end{enumerate}
\end{lemma}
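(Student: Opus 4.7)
The plan is to derive (ii)--(iv) from (i), so the central step is proving monotonicity of $\chi$. For this it suffices to show that $\chi(h)\geq 0$ whenever $h\in\mathbb{H}$ satisfies $h\geq 0$, because then $f\leq g$ gives $\chi(g)-\chi(f)=\chi(g-f)\geq 0$. The standard device is to observe that for every real $c>0$, the function $h+c$ is bounded below by $c$, so if one can produce $g\in\mathbb{H}$ with $g^2=h+c$, then $\chi(h)+c=\chi(h+c)=\chi(g)^2\geq 0$; letting $c\to 0^+$ yields $\chi(h)\geq 0$.

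The main obstacle is verifying $g:=\sqrt{h+c}\in\mathbb{H}$. When $\mathbb{H}=C(\mathbf{X},\mathbb{R}_{disc})$ this is immediate, since continuity into $\mathbb{R}_{disc}$ is preserved by composition with any real function, so $g$ is locally constant. When $\mathbb{H}=U_{\aleph_0}(\mathbf{X})$ one needs uniform control: given $\varepsilon>0$, apply the definition of $U_{\aleph_0}(\mathbf{X})$ to $h$ (equivalently to $h+c$) with tolerance $2\sqrt{c}\,\varepsilon$ to obtain a countable clopen cover $\mathcal{A}$ with $\osc_A(h+c)\leq 2\sqrt{c}\,\varepsilon$ for every $A\in\mathcal{A}$; since $\sqrt{\cdot}$ is Lipschitz on $[c,+\infty)$ with constant $1/(2\sqrt{c})$, this gives $\osc_A(g)\leq\varepsilon$, so $g\in U_{\aleph_0}(\mathbf{X})$. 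This is the only computational point that really needs care.

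Item (ii) then follows by observing that $|f|\in\mathbb{H}$ in both cases (trivially in the first, and because $\osc_A(|f|)\leq\osc_A(f)$ in the second), that $|f|\geq 0$, and that $|f|^2=f^2$; hence $\chi(|f|)^2=\chi(f^2)=\chi(f)^2$ and (i) forces $\chi(|f|)=|\chi(f)|$. For (iii), if $d_u(f,g)<\varepsilon<1$ then $\min\{|f(x)-g(x)|,1\}<\varepsilon$ for every $x$ forces $|f-g|\leq\mathbf{\varepsilon}$ pointwise; applying (i) to $-\mathbf{\varepsilon}\leq f-g\leq\mathbf{\varepsilon}$ (or (ii) to $|f-g|$) yields $|\chi(f)-\chi(g)|=|\chi(f-g)|\leq\varepsilon$, which is exactly the continuity (in fact a Lipschitz estimate) of $\chi$ on $\mathbb{H}\subseteq C_u(\mathbf{X})$.

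Finally, (iv) is a direct algebraic observation, independent of (i)--(iii). Assume $\mathbb{H}=C(\mathbf{X},\mathbb{R}_{disc})$ and, for contradiction, that $f(p)\neq 0$ for every $p\in X$. Because $f$ is continuous into $\mathbb{R}_{disc}$, it is locally constant, hence so is the pointwise reciprocal $1/f$, which therefore belongs to $C(\mathbf{X},\mathbb{R}_{disc})=\mathbb{H}$. But then $\chi(f)\cdot\chi(1/f)=\chi(f\cdot(1/f))=\chi(\mathbf{1})=1$, contradicting $\chi(f)=0$; thus some $p\in X$ must satisfy $f(p)=0$.
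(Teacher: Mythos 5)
Your proposal is correct and follows essentially the same route as the paper: both reduce everything to showing $\chi(h)\geq 0$ for $h\geq 0$ via a square-root argument (the paper takes $\sqrt{h}$ directly, using that $\sqrt{\;\cdot\;}$ is uniformly continuous on $[0,+\infty)$ so that $\sqrt{h}\in U_{\aleph_0}(\mathbf{X})$, whereas you use the shifted $\sqrt{h+c}$ with the Lipschitz bound and let $c\to 0^+$ — a harmless technical variant), and both prove (iv) by the same reciprocal-of-a-locally-constant-function contradiction. Your write-up is simply more explicit about how (i)--(iii) follow from the positivity step, which the paper leaves to the reader.
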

\begin{proof}
	First, we notice that if $h\in \mathbb{H}$ and, for every $x\in X$, $0\leq h(x)$ then, since the function $\sqrt\square:[0,+\infty)\to\mathbb{R}$ is uniformly continuous, $\sqrt h\in\mathbb{H}$. Hence $\chi(h)=\chi(\sqrt h)\cdot\chi(\sqrt h)$, so $0\leq\chi(h)$. This implies (i), (ii) and (iii). 
	
	To prove (iv), we assume that $\mathbb{H}=C(\mathbf{X}, \mathbb{R}_{disc})$. Let $f\in\mathbb{H}$ be such that $\chi(f)=0$. Suppose that, for every $x\in X$, $f(x)\neq 0$. Then $\frac{1}{f}\in\mathbb{H}$ and $1=\chi(\mathbf{1})=\chi(f)\cdot\chi(\frac{1}{f})=0$ -a contradiction. Hence, there exists $p\in X$ with $f(p)=0$. 
\end{proof}

\begin{remark}
	\label{s9:r4} 
	For a non-empty topological space $\mathbf{X}$, let $\mathbb{H}$ be a subring of $C(\mathbf{X})$ such that every constant function from $C(\mathbf{X})$ belongs to $\mathbb{H}$ and, for every $f\in\mathbb{H}$ with $0\leq f$, $\sqrt f\in\mathbb{H}$.
	\begin{enumerate}
		\item[(i)] One can easily check that every character $\chi$ on $\mathbb{H}$ has properties (i)-(iii) of Lemma \ref{s9:l3}. Furthermore, if a function $\psi: \mathbb{H}\to\mathbb{R}$ has properties (i)-(ii) of Definition \ref{s9:d1}(a), then $\psi$ is a character on $\mathbb{H}$. 
		\item[(ii)] Let $M$ be a real ideal of $\mathbb{H}$. Then $M$ is a maximal ideal. Given an isomorphism $\phi$ of the field $\mathbb{H}/M$ onto the field $\mathbb{R}$,  we can define a mapping $\chi_M:\mathbb{H}\to\mathbb{R}$ as follows: 
		$$(\forall f\in\mathbb{H})\text{ } \chi_M(f)=\phi(f+M).$$
		Then $\chi_M$ is a character on $\mathbb{H}$, and $M=\{f\in\mathbb{H}: \chi_M(f)=0\}$, so $M$ is the kernel of the homomorphism $\chi_M$. 
		\item[(iii)] For every $p\in X$, the set $M^p=\{f\in\mathbb{H}: f(p)=0\}$ is a real ideal of $\mathbb{H}$ and, obviously, $M^{p}$ is fixed.
	\end{enumerate}
\end{remark}

\begin{theorem}
	\label{s9:t5}
	$[\mathbf{ZF}]$ Let $\mathbf{X}$ be a non-empty $\mathbb{N}$-compact space and let $\chi$ be a character on $C(\mathbf{X}, \mathbb{R}_{disc})$. Then there exists a unique $w\in X$ such that, for every $h\in C(\mathbf{X}, \mathbb{R}_{disc})$, $\chi(h)=h(w)$. 
\end{theorem}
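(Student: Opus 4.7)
My plan is to distill from $\chi$ a clopen ultrafilter $\mathcal{F}$ that has the countable intersection property, invoke $\mathbb{N}$-compactness through Theorem \ref{s6:t1} to conclude that $\mathcal{F}$ is fixed, and then read off $\chi(h)$ as $h(w)$ at the point $w$ singling out $\mathcal{F}$. Uniqueness of $w$ is immediate: an $\mathbb{N}$-compact space is a zero-dimensional $T_1$-space, so distinct points $w_1,w_2\in X$ can be separated by a clopen set $C$ whose characteristic function $\mathbf{1}_C$ lies in $C(\mathbf{X},\mathbb{R}_{disc})$ and fails to take the same value at both points.

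\textbf{The ultrafilter.} For every $C\in\mathcal{CO}(\mathbf{X})$, the identity $\mathbf{1}_C^2=\mathbf{1}_C$ inside $C(\mathbf{X},\mathbb{R}_{disc})$ forces $\chi(\mathbf{1}_C)\in\{0,1\}$. I would define $\mathcal{F}=\{C\in\mathcal{CO}(\mathbf{X}):\chi(\mathbf{1}_C)=1\}$ and verify, directly from the identities $\mathbf{1}_{C_1\cap C_2}=\mathbf{1}_{C_1}\mathbf{1}_{C_2}$ and $\mathbf{1}_C+\mathbf{1}_{X\setminus C}=\mathbf{1}$, together with $\chi(\mathbf{1})=1$ and $\chi(\mathbf{0})=0$, that $\mathcal{F}$ is a clopen ultrafilter in $\mathbf{X}$.

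\textbf{The main obstacle.} The hard part will be verifying that $\mathcal{F}$ has the countable intersection property, because this is the only place where the $\mathbb{N}$-compactness hypothesis must really be leveraged. Suppose for contradiction that $\{C_n:n\in\omega\}\subseteq\mathcal{F}$ has empty intersection. Replacing each $C_n$ by $\bigcap_{i\leq n}C_i$, which is a definable and choice-free manipulation of the indexed sequence, I may assume the $C_n$ are decreasing. I would then define $f:X\to\mathbb{R}$ by setting $f(x)=n$ whenever $x\in C_{n-1}\setminus C_n$ (with the convention $C_{-1}=X$); since every level set is clopen, $f$ belongs to $C(\mathbf{X},\mathbb{R}_{disc})$. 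Pointwise one has $f\geq N\cdot\mathbf{1}_{C_{N-1}}$ for every positive integer $N$, so Lemma \ref{s9:l3}(i) and multiplicativity of $\chi$ yield
\[
\chi(f)=\chi(f)\,\chi(\mathbf{1}_{C_{N-1}})=\chi(f\cdot\mathbf{1}_{C_{N-1}})\geq N\,\chi(\mathbf{1}_{C_{N-1}})=N,
\]
a contradiction with $\chi(f)\in\mathbb{R}$.

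\textbf{Conclusion.} Theorem \ref{s6:t1} then produces $w\in\bigcap\mathcal{F}$, and the ultrafilter property of $\mathcal{F}$ forces $\mathcal{F}=\{C\in\mathcal{CO}(\mathbf{X}):w\in C\}$, since any clopen set containing $w$ must belong to $\mathcal{F}$ (otherwise its complement would lie in $\mathcal{F}$ yet miss $w$). For an arbitrary $h\in C(\mathbf{X},\mathbb{R}_{disc})$, the level set $C_h=h^{-1}[\{h(w)\}]$ is clopen and contains $w$, so $\chi(\mathbf{1}_{C_h})=1$; applying $\chi$ to the ring identity $h\cdot\mathbf{1}_{C_h}=h(w)\cdot\mathbf{1}_{C_h}$ then gives $\chi(h)=h(w)$, completing the proof.
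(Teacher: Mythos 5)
Your proof is correct, but it takes a genuinely different route from the paper's. The paper works directly with a closed embedding of $\mathbf{X}$ into a power $\mathbb{R}_{disc}^J$: it defines $w(j)=\chi(\pi_j\upharpoonright X)$, shows $w\in X$ by applying Lemma \ref{s9:l3}(iv) (every $f\in C(\mathbf{X},\mathbb{R}_{disc})$ with $\chi(f)=0$ vanishes somewhere, via invertibility of nowhere-zero functions) to a finite sum $\sum_{j\in K}|\psi_j-w(j)|$ witnessing the closedness of $X$, and then repeats the trick with $g=|h-\chi(h)|+\sum_{j\in H}|\psi_j-w(j)|$ to evaluate $\chi(h)$. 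You instead extract from $\chi$ the clopen ultrafilter $\mathcal{F}=\{C:\chi(\mathbf{1}_C)=1\}$, prove it has the countable intersection property by the unboundedness argument with the step function $f(x)=n$ on $C_{n-1}\setminus C_n$ (which is indeed continuous into $\mathbb{R}_{disc}$ and definable without choice), and then invoke the paper's $\mathbf{ZF}$-version of Chew's theorem (Theorem \ref{s6:t1}) to fix $\mathcal{F}$ at a point $w$; the final evaluation $\chi(h)=h(w)$ via the idempotent identity $h\cdot\mathbf{1}_{C_h}=h(w)\cdot\mathbf{1}_{C_h}$ is clean and correct. Your approach buys a conceptual link that the paper leaves implicit: a character on $C(\mathbf{X},\mathbb{R}_{disc})$, restricted to idempotents, is exactly a $2$-valued finitely additive $c$-measure, and the countable intersection property is where the ring structure (unboundedness of $\chi$-values would otherwise result) meets $\mathbb{N}$-compactness; this parallels the paper's own Theorem \ref{s8:t12}. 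The paper's approach is more self-contained within Section \ref{s9} (it needs only Lemma \ref{s9:l3}, not the filter machinery of Section \ref{s6}) and mirrors Boulabiar's argument for the realcompact case (Theorem \ref{s9:t2}). Both proofs are valid in $\mathbf{ZF}$, and both uniqueness arguments coincide.
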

\begin{proof}
	There exists a non-empty set $J$ such that $\mathbf{X}$ is homeomorphic to a closed subspace of $\mathbb{N}^J$. For simplicity, we may assume that $\mathbf{X}$ is a closed subspace of the Tychonoff product $\mathbb{R}_{disc}^J$. For every $j\in J$, let $\pi_j: \mathbb{R}^J\to \mathbb{R}$ be the standard $j$-th projection, and let $\psi_j=\pi_j\upharpoonright X$. It is obvious that, for every $j\in J$, $\psi_j\in C(\mathbf{X}, \mathbb{R}_{disc})$. Therefore, we can define a point $w\in\mathbb{R}^J$ as follows:
	$$(\forall j\in J)\text{ } w(j)=\chi(\psi(j)).$$
	Let us prove that $w\in X$. To this aim, suppose that $w\notin X$. Since $X$ is closed in $\mathbb{R}_{disc}^J$, there exists a non-empty finite subset $K$ of $J$ such that 
	$$ X\cap \bigcap\limits_{j\in K}\pi_j^{-1}[\{w(j)\}]=\emptyset.$$
	We define a function $f_0\in C(\mathbf{X}, \mathbb{R}_{disc})$ as follows:
	$$f_0=\sum\limits_{j\in K}|\psi_j-w(j)|.$$
	By Lemma \ref{s9:l3}(ii), $\chi(f_0)=0$. Therefore, it follows from Lemma \ref{s9:l3}(iv) that there exists $z_0\in X$ such that $f_0(z_0)=0$. Then, by the definition of $f_0$, for every $j\in K$, $\psi_j(z_0)=w(j)$. This implies that $z_0\in X\cap \bigcap\limits_{j\in K}\pi_j^{-1}[\{w(j)\}]$, which is impossible. The contradiction obtained proves that $w\in X$. 
	
	Now, consider an arbitrary  $h\in C(\mathbf{X},\mathbb{R}_{disc})$ and show that $\chi(h)=h(w)$. To this aim, we notice that, since $h:\mathbf{X}\to\mathbb{R}_{disc}$ is continuous, there exists a non-empty finite set $H\subseteq J$ such that, if
	$$U=X\cap\bigcap\limits_{j\in H}\pi_j^{-1}[\{w(j)],$$
	then, for every $t\in U$, $h(t)=h(w)$. Let $g\in C(\mathbf{X}, \mathbb{R}_{disc})$ be defined as follows:
	$$ g= |h-\chi(h)|+\sum\limits_{j\in H}|\psi_j-w(j)|.$$
	We infer from Lemma \ref{s9:l3}(ii) that $\chi(g)=0$. By Lemma \ref{s9:l3}(iv), there exists $s_0\in X$ such that $g(s_0)=0$. Then, by the definition of $g$, $h(s_0)=\chi(h)$ and, moreover, for every $j\in H$, $\psi_j(s_0)=w(j)$. This implies that $s_0\in U$ and, in consequence, $\chi(h)=h(s_0)=h(w)$. This shows that $\chi$ is the character $\chi_w$ on $C(\mathbf{X}, \mathbb{R}_{disc})$ determined by $w$. 
	
	Suppose that $v\in X\setminus\{w\}$ is such that $\chi=\chi_v$. Since $\mathbf{X}$ is a zero-dimensional $T_1$-space, there exists $C\in\mathcal{CO}(\mathbf{X})$ such that $v\in C$ and $w\notin C$. Let $f_C\in C(\mathbf{X}, \mathbb{R}_{disc})$ be  defined as follows:
	$$
	f_C(x)=\begin{cases} 1 &\text{ if } x\in X\setminus C;\\
		0 &\text{ if } x\in C.\end{cases}
	$$
	We notice that $\chi(f_C)=f_C(v)=1=f_C(w)=0$. This is impossible. Hence, if $v,w\in X$ are such that $\chi_v=\chi_w$, then $v=w$. 
\end{proof}

\begin{theorem}
	\label{s9:t6}
	$[\mathbf{ZF}]$ Let $\mathbf{X}$ be a non-empty $\mathbb{N}$-compact space and let $\chi$ be a character on $U_{\aleph_0}(\mathbf{X})$. Then $\mathbf{CMC}$ implies that there exists a unique $w\in X$ such that, for every $h\in U_{\aleph_0}(\mathbf{X})$, $\chi(h)=h(w)$. 
\end{theorem}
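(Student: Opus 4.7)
The plan is to reduce Theorem~\ref{s9:t6} to Theorem~\ref{s9:t5} by restricting $\chi$ to the subring $C(\mathbf{X},\mathbb{R}_{disc})$, and then to propagate the resulting identification from the smaller ring to all of $U_{\aleph_0}(\mathbf{X})$ via a density argument in $C_u(\mathbf{X})$. The single essential use of $\mathbf{CMC}$ will be to invoke the equality $U_{\aleph_0}(\mathbf{X})=\cl_{C_{u}(\mathbf{X})}(C(\mathbf{X},\mathbb{R}_{disc}))$ from Theorem~\ref{s1:t21}(c)(ii); everything else will go through in pure $\mathbf{ZF}$.

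First I would verify that $C(\mathbf{X},\mathbb{R}_{disc})$ is a subring of $U_{\aleph_0}(\mathbf{X})$ containing all constants. Containment in $U_{\aleph_0}(\mathbf{X})$ holds in $\mathbf{ZF}$ by fixing an enumeration $\{q_n:n\in\omega\}$ of $\mathbb{Q}$ and noting that, for any $f\in C(\mathbf{X},\mathbb{R}_{disc})$ and any $\varepsilon>0$, the family $\{f^{-1}[(q_n-\varepsilon/2,q_n+\varepsilon/2)]:n\in\omega\}$ is a countable clopen cover of $X$ on each member of which the oscillation of $f$ is at most $\varepsilon$. Hence the restriction $\chi_1:=\chi\upharpoonright C(\mathbf{X},\mathbb{R}_{disc})$ is a character on $C(\mathbf{X},\mathbb{R}_{disc})$. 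Since $\mathbf{X}$ is non-empty and $\mathbb{N}$-compact, Theorem~\ref{s9:t5} furnishes a unique $w\in X$ such that $\chi_1(h)=h(w)$ for every $h\in C(\mathbf{X},\mathbb{R}_{disc})$.

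Next, invoking $\mathbf{CMC}$ and Theorem~\ref{s1:t21}(c)(ii), we have $U_{\aleph_0}(\mathbf{X})=\cl_{C_{u}(\mathbf{X})}(C(\mathbf{X},\mathbb{R}_{disc}))$, so $C(\mathbf{X},\mathbb{R}_{disc})$ is a dense subspace of $U_{\aleph_0}(\mathbf{X})$ in the topology inherited from $C_{u}(\mathbf{X})$. Define $\chi_w:U_{\aleph_0}(\mathbf{X})\to\mathbb{R}$ by $\chi_w(f)=f(w)$; this is clearly a character. By Lemma~\ref{s9:l3}(iii), both $\chi$ and $\chi_w$ are continuous as maps from the subspace $U_{\aleph_0}(\mathbf{X})$ of $C_{u}(\mathbf{X})$ into $\mathbb{R}$. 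Since they coincide on the dense subset $C(\mathbf{X},\mathbb{R}_{disc})$ and $\mathbb{R}$ is Hausdorff, Proposition~\ref{s1:p12} forces $\chi=\chi_w$ on all of $U_{\aleph_0}(\mathbf{X})$, which is the desired representation.

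Uniqueness of $w$ is inherited from the uniqueness clause of Theorem~\ref{s9:t5}: if some $v\in X$ also satisfies $\chi(h)=h(v)$ for all $h\in U_{\aleph_0}(\mathbf{X})$, then in particular $\chi_1(h)=h(v)$ for all $h\in C(\mathbf{X},\mathbb{R}_{disc})$, whence $v=w$. The only real obstacle is that the density step genuinely requires $\mathbf{CMC}$: without it the equality $U_{\aleph_0}(\mathbf{X})=\cl_{C_{u}(\mathbf{X})}(C(\mathbf{X},\mathbb{R}_{disc}))$ need not be available, and this is precisely where the choice hypothesis is consumed, accounting for the difference in set-theoretic strength between Theorem~\ref{s9:t5} (provable in $\mathbf{ZF}$) and Theorem~\ref{s9:t6} (requiring $\mathbf{CMC}$).
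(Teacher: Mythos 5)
Your proof is correct and follows essentially the same route as the paper's: restrict $\chi$ to $C(\mathbf{X},\mathbb{R}_{disc})$, apply Theorem~\ref{s9:t5} to obtain $w$, use $\mathbf{CMC}$ via Theorem~\ref{s1:t21}(c) to get density of $C(\mathbf{X},\mathbb{R}_{disc})$ in $U_{\aleph_0}(\mathbf{X})$, and conclude $\chi=\chi_w$ by continuity of characters (Lemma~\ref{s9:l3}(iii)). The only differences are cosmetic — you spell out the $\mathbf{ZF}$-inclusion $C(\mathbf{X},\mathbb{R}_{disc})\subseteq U_{\aleph_0}(\mathbf{X})$ and cite Proposition~\ref{s1:p12} explicitly for the dense-agreement step, both of which the paper leaves implicit.
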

\begin{proof}
	Since $C(\mathbf{X},\mathbb{R}_{disc})\subseteq U_{\aleph_0}(\mathbf{X})$, it makes sense to define a mapping $\psi$ by the equality: $\psi=\chi\upharpoonright C(\mathbf{X}, \mathbb{R}_{disc})$. Clearly, $\psi$ is a character on $C(\mathbf{X}, \mathbb{R}_{disc})$. By Theorem \ref{s9:t5}, there exists a unique $w\in X$ such that, for every $h\in C(\mathbf{X}, \mathbb{R}_{disc})$, $\psi(h)=h(w)$. Let $\chi_w$ be the character on $U_{\aleph_0}(\mathbf{X})$ determined by the point $w$. Then, for every $h\in C(\mathbf{X}, \mathbb{R}_{disc})$, $\chi_w(h)=\chi(h)$. By Theorem \ref{s1:t21}(c), $\mathbf{CMC}$ implies that the set $C(\mathbf{X}, \mathbb{R}_{disc})$ is dense in $U_{\aleph_0}(\mathbf{X})$. In consequence, it follows from Lemma \ref{s9:l3}(iii) that $\mathbf{CMC}$ implies that  $\chi=\chi_w$. If $v\in X$ and $\chi=\chi_v$, it follows from Theorem \ref{s9:t5} that $v=w$.
\end{proof}

Unfortunately, we are unable to answer the following question:
\begin{question}
	\label{s9:q7}
	Is it provable in $\mathbf{ZF}$ that, for every non-empty $\mathbb{N}$-compact space $\mathbf{X}$, every character on $U_{\aleph_0}(\mathbf{X})$ is determined by a point of $X$?
\end{question}

A well-known result of $\mathbf{ZFC}$ is that a Tychonoff space $\mathbf{X}$ is realcompact if and only if every real ideal of $C(\mathbf{X})$ is fixed. In \cite{com}, it was shown that this characterization of realcompactness by maximal ideals holds also in $\mathbf{ZF}$. In the following theorem we give, in a sense, an analogous characterization of $\mathbb{N}$-compactness. A relevant result on $\mathbb{N}$-compactness is given in \cite[Theorem 2.2]{ol}.

\begin{theorem}
	\label{s9:t8}
	$[\mathbf{ZF}]$
	Let $\mathbf{X}$ be a non-empty Tychonoff space.
	\begin{enumerate} 
		\item[(i)] (See \cite{com}.) $\mathbf{X}$ is realcompact if and only  every real ideal of $C(\mathbf{X})$ is fixed.
		\item[(ii)] If $\mathbf{X}$ is zero-dimensional, then $\mathbf{X}$ is $\mathbb{N}$-compact if and only if every real ideal of $C(\mathbf{X}, \mathbb{R}_{disc})$ is fixed.
		\item[(iii)] If $\mathbf{X}$ is zero-dimensional, then $\mathbf{CMC}$ implies that $\mathbf{X}$ is $\mathbb{N}$-compact if and only if every real ideal of $U_{\aleph_0}(\mathbf{X})$ is fixed.
	\end{enumerate}
\end{theorem}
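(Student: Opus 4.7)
Statement (i) is attributed to \cite{com} in the theorem statement itself, so the plan is simply to cite that result and concentrate on (ii) and (iii). Both of these split into a forward and a converse direction, and the forward directions follow immediately from the machinery already built in Section~\ref{s9}: given a real ideal $M$ of the relevant ring $\mathbb{H}$ (either $C(\mathbf{X},\mathbb{R}_{disc})$ or $U_{\aleph_0}(\mathbf{X})$), Remark~\ref{s9:r4}(ii) produces a character $\chi_M$ on $\mathbb{H}$ with $\ker\chi_M=M$. Theorem~\ref{s9:t5} (for (ii)) and Theorem~\ref{s9:t6} (for (iii), which is where $\mathbf{CMC}$ first enters) then assert that $\chi_M$ is evaluation at a unique point $w\in X$, so $M=\{f\in\mathbb{H}:f(w)=0\}$ is fixed.

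For the converse of (ii), I would argue by contrapositive: assume $\mathbf{X}$ is not $\mathbb{N}$-compact and exhibit a non-fixed real ideal of $C(\mathbf{X},\mathbb{R}_{disc})$. Fix any $p\in v_{\mathbb{N}}X\setminus X$. Since $\mathbb{R}_{disc}$ is $\mathbb{N}$-compact by Proposition~\ref{s5:p5}, each $f\in C(\mathbf{X},\mathbb{R}_{disc})$ extends uniquely to some $\tilde f\in C(v_{\mathbb{N}}\mathbf{X},\mathbb{R}_{disc})$, and the map $\varepsilon_p(f):=\tilde f(p)$ is easily verified to be a character on $C(\mathbf{X},\mathbb{R}_{disc})$, so $M^p:=\ker\varepsilon_p$ is a real ideal. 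To see $M^p$ is not fixed, pick any $q\in X$; since $p\neq q$ and $v_{\mathbb{N}}\mathbf{X}$ is zero-dimensional and $T_1$ (being $\mathbb{N}$-compact), choose $U\in\mathcal{CO}(v_{\mathbb{N}}\mathbf{X})$ with $p\in U$, $q\notin U$, and let $g\in C(v_{\mathbb{N}}\mathbf{X},\mathbb{R}_{disc})$ be the characteristic function of $v_{\mathbb{N}}X\setminus U$. Then $f:=g\upharpoonright X$ lies in $M^p$ (its extension $\tilde f=g$ vanishes at $p$) while $f(q)=1$, so $M^p$ cannot coincide with $\{h:h(q)=0\}$, contradicting its presumed fixedness.

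The converse of (iii) follows the same template with $C(\mathbf{X},\mathbb{R}_{disc})$ replaced by $U_{\aleph_0}(\mathbf{X})$, and is precisely where $\mathbf{CMC}$ is needed a second time. The key input is Theorem~\ref{s5:t4}(i), which under $\mathbf{CMC}$ yields the restriction isomorphism $U_{\aleph_0}(\mathbf{X})\cong U_{\aleph_0}(v_{\mathbb{N}}\mathbf{X})$; this ensures that each $f\in U_{\aleph_0}(\mathbf{X})$ has a canonical extension $\tilde f\in U_{\aleph_0}(v_{\mathbb{N}}\mathbf{X})\subseteq C(v_{\mathbb{N}}\mathbf{X})$, so that $\varepsilon_p$ again defines a character on $U_{\aleph_0}(\mathbf{X})$. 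The same separating function $g$ lies in $C(v_{\mathbb{N}}\mathbf{X},\mathbb{R}_{disc})\subseteq U_{\aleph_0}(v_{\mathbb{N}}\mathbf{X})$, so $g\upharpoonright X$ supplies the required witness. I expect the main obstacle to be bookkeeping rather than genuine conceptual difficulty: one must verify carefully that $\varepsilon_p$ really is a character in each case (including $\varepsilon_p(\mathbf{c})=c$, which follows from the uniqueness of continuous extensions), and one must be attentive to the two distinct roles of $\mathbf{CMC}$ in part (iii) — once via Theorem~\ref{s9:t6} for the forward direction and once via Theorem~\ref{s5:t4} for the converse — since these two appearances are exactly what prevents (iii) from being provable in plain $\mathbf{ZF}$ via this strategy.
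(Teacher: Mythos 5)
Your proposal is correct and follows essentially the same route as the paper: the forward directions via Remark \ref{s9:r4}(ii) together with Theorems \ref{s9:t5} and \ref{s9:t6}, and the converses by producing a non-fixed real ideal from a point $p\in v_{\mathbb{N}}X\setminus X$ (the paper writes this as $\phi^{-1}[M^p]$ for the extension isomorphism $\phi$, which is the same ideal as your $\ker\varepsilon_p$), with $\mathbf{CMC}$ entering the converse of (iii) exactly through Theorem \ref{s5:t4}(i). The only cosmetic difference is that the paper works out (iii) in detail and leaves (ii) as a modification, whereas you do the reverse.
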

\begin{proof}
	Let us start with the proof of (iii) which, to some extent, is similar to the proof of Theorem 2.2 in \cite{ol}.
	
	Assume that $\mathbf{X}$ is a non-empty $\mathbb{N}$-compact space, and $M$ is a real ideal of $U_{\aleph_0}(\mathbf{X})$. Let $\psi: U_{\aleph_0}(\mathbf{X})/M\to\mathbb{R}$ be an isomorphism. For every $f\in U_{\aleph_0}(\mathbf{X})$, let $\chi_M(f)=\psi(f+M)$. By Remark \ref{s9:r4}(ii), $\chi_M$ is a character of $U_{\aleph_0}(\mathbf{X})$ whose kerner is equal to $M$. Assuming $\mathbf{CMC}$, we infer from Theorem \ref{s9:t6} that there exists $w\in X$ such that, for every $f\in U_{\aleph_0}(\mathbf{X})$, $\chi_M(f)=f(w)$. Then $M=\{f\in U_{\aleph_0}(\mathbf{X}): f(w)=0\}$, so $M$ is fixed.
	
	Now, assume that $\mathbf{X}$ is a zero-dimensional space that is not $\mathbb{N}$-compact. For every $f\in U_{\aleph_0}(\mathbf{X})$, let $\tilde{f}\in C(v_{\mathbb{N}}\mathbf{X})$ be such that $\tilde{f}\upharpoonright X=f$. Let us assume $\mathbf{CMC}$. We deduce from Theorem \ref{s5:t4}(i) that the mapping $\phi: U_{\aleph_0}(\mathbf{X})\to U_{\aleph_0}(v_{\mathbb{N}}\mathbf{X})$ defined by:
	$$ (\forall f\in U_{\aleph_0}(\mathbf{X}))\text{ } \phi(f)=\tilde{f}$$
	is an isomorphism of the ring $U_{\aleph_0}(\mathbf{X})$ onto the ring $U_{\aleph_0}(v_{\mathbb{N}}\mathbf{X})$. Since $\mathbf{X}$ is not $\mathbb{N}$-compact, we can fix a point $p\in v_{\mathbb{N}}X\setminus X$. Let $M^{p}=\{g\in U_{\aleph_0}(v_{\mathbb{N}}\mathbf{X}): g(p)=0\}$. Then, by Remark \ref{s9:r4}(iii), $M^p$ is a real ideal of  $U_{\aleph_0}(v_{\mathbb{N}}\mathbf{X})$. Hence, the ideal $\phi^{-1}[M^p]=\{f\in U_{\aleph_0}(\mathbf{X}): \tilde{f}\in M^p\}$ is a real ideal of $U_{\aleph_0}(\mathbf{X})$ which is not fixed. This completes the proof of (iii). 
	
	Using Theorem \ref{s9:t2} (respectively, Theorem \ref{s9:t5}) and similar (but accurately modified) arguments to the ones in the proof of (iii), one can prove that (i) (respectively, (ii)) holds in $\mathbf{ZF}$. 
\end{proof}

Let us note that Comfort's argument given in \cite{com} that (i) of Theorem \ref{s9:t8} above holds in $\mathbf{ZF}$ is distinct from ours.

The following question pops up at this moment:

\begin{question}
	\label{s9:q9}
	Is it provable in $\mathbf{ZF}$ that a non-empty zero-dimensional $T_1$-space is $\mathbb{N}$-compact if and only if every real ideal of $U_{\aleph_0}(\mathbf{X})$ is fixed?
\end{question}

\section{When can $U_{\aleph_0}(\mathbf{X})$ be isomorphic with $C(\mathbf{Y})$?}
\label{s10}

Our goal is to prove by applying characters that the following theorem  holds in $\mathbf{ZF}$:

\begin{theorem}
	\label{s10:t1}
	$[\mathbf{ZF}]$ For every non-empty $\mathbb{N}$-compact space $\mathbf{X}$ whose Banaschewski compactification exists, $\mathbf{CMC}$ implies that $\mathbf{X}$ is strongly zero-dimensional if and only if there exists a Tychonoff space $\mathbf{Y}$ such that the rings $U_{\aleph_0}(\mathbf{X})$ and $C(\mathbf{Y})$ are isomorphic.
\end{theorem}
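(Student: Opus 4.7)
The forward implication is immediate: assuming $\mathbf{CMC}$ and that $\mathbf{X}$ is strongly zero-dimensional, Theorem \ref{s1:t21}(c)(iv) yields $U_{\aleph_0}(\mathbf{X}) = C(\mathbf{X})$, and since every zero-dimensional $T_1$-space is Tychonoff I would simply take $\mathbf{Y} = \mathbf{X}$. The substantive work lies in the converse. Given a ring isomorphism $\phi : U_{\aleph_0}(\mathbf{X}) \to C(\mathbf{Y})$ for a Tychonoff space $\mathbf{Y}$, my plan is to show that $\mathbf{Y}$ (after passing to its Hewitt realcompactification) is homeomorphic to $\mathbf{X}$ via the canonical correspondence of characters, then to deduce $U_{\aleph_0}(\mathbf{X}) = C(\mathbf{X})$ and invoke Theorem \ref{s1:t21}(c)(iv) once more.

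I would first replace $\mathbf{Y}$ by $v\mathbf{Y} = v_{\mathbb{R}}\mathbf{Y}$, which exists by Theorem \ref{s4:t8}; since continuous extension produces a ring isomorphism $C(\mathbf{Y}) \cong C(v\mathbf{Y})$ (by Definition \ref{s1:d5} and Proposition \ref{s1:p12}), no generality is lost by assuming $\mathbf{Y}$ itself is realcompact. I would also observe that $\mathbf{X}$ is realcompact, as a closed subspace of some $\mathbb{N}^{J}$: $\mathbb{N}$ is realcompact, products of realcompact spaces are realcompact by Theorem \ref{s4:t5}(i), and closed subspaces of realcompact spaces are realcompact by Proposition \ref{s4:p4}. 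Next, for every $y \in Y$, the composition $\chi_{y} \circ \phi$ is a character on $U_{\aleph_0}(\mathbf{X})$, and Theorem \ref{s9:t6} (which needs $\mathbf{CMC}$ and $\mathbb{N}$-compactness of $\mathbf{X}$) supplies a unique $\eta(y) \in X$ with $\phi(f)(y) = f(\eta(y))$ for all $f \in U_{\aleph_0}(\mathbf{X})$. The map $\eta : Y \to X$ is injective because $\mathbf{Y}$ is functionally Hausdorff and $\phi$ is surjective; it is surjective because, for each $x \in X$, the character $\chi_{x} \circ \phi^{-1}$ on $C(\mathbf{Y})$ is a point evaluation at some $y \in Y$ by Theorem \ref{s9:t2}, which applies now that $\mathbf{Y}$ is realcompact.

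The final step is to show that $\eta$ is a homeomorphism and to read off the equality $U_{\aleph_0}(\mathbf{X}) = C(\mathbf{X})$. For continuity, since $\mathbf{X}$ is zero-dimensional $T_{1}$ it suffices to check that $\eta^{-1}[C]$ is clopen for every clopen $C \subseteq X$; but $\chi_{C} \in C(\mathbf{X}, \mathbb{R}_{disc}) \subseteq U_{\aleph_0}(\mathbf{X})$, and the identity $\phi(\chi_{C}) = \chi_{\eta^{-1}[C]}$ exhibits $\eta^{-1}[C]$ as the preimage of $\{1\}$ under a continuous $\{0,1\}$-valued function on $\mathbf{Y}$. For openness, $\mathbf{Y}$ being Tychonoff gives a subbase of sets of the form $g^{-1}[O]$ with $g = \phi(f) \in C(\mathbf{Y})$ and $O \subseteq \mathbb{R}$ open, and bijectivity of $\eta$ converts this to $\eta[g^{-1}[O]] = f^{-1}[O]$, which is open in $\mathbf{X}$. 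Thus $\eta : \mathbf{Y} \to \mathbf{X}$ is a homeomorphism, so the rule $f \mapsto f \circ \eta$ is simultaneously a bijection $U_{\aleph_0}(\mathbf{X}) \to C(\mathbf{Y})$ (namely $\phi$) and a bijection $C(\mathbf{X}) \to C(\mathbf{Y})$, forcing $U_{\aleph_0}(\mathbf{X}) = C(\mathbf{X})$ and, by Theorem \ref{s1:t21}(c)(iv), strong zero-dimensionality of $\mathbf{X}$. The main obstacle I anticipate is the simultaneous continuity and openness of $\eta$ in the absence of full choice: continuity depends on clopen indicator functions always lying in $U_{\aleph_0}(\mathbf{X})$ rather than merely in $C(\mathbf{X})$, while both openness and the surjectivity of $\eta$ hinge on Theorem \ref{s9:t2}, which in turn necessitates the initial reduction to a realcompact $\mathbf{Y}$.
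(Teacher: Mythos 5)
Your proof is correct, but it takes a genuinely different route through the converse direction than the paper does. The paper first exploits the hypothesis that $\beta_0\mathbf{X}$ exists: via Theorem \ref{s2:t29} the isomorphism forces $\beta_T\mathbf{Y}$ to exist and be homeomorphic with $\beta_0\mathbf{X}$, whence (through Theorems \ref{s1:t25}, \ref{s2:t6} and \ref{s1:t21}) $\mathbf{Y}$ is strongly zero-dimensional and $C(\mathbf{Y})=U_{\aleph_0}(\mathbf{Y})$; only then does it define a map $\pi:X\to Y$ from the characters $\chi_x\circ\psi$ on $C(\mathbf{Y})$ (Theorem \ref{s9:t2}), show $\pi$ is a dense embedding, and obtain surjectivity from the $\mathbb{N}$-compactness of $\pi[X]$ together with an extension argument for $\mathbb{N}$-valued functions over $\mathbf{Y}$ (Theorem \ref{s4:t2}), which is exactly where the zero-dimensionality of $\mathbf{Y}$ is needed. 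You instead build the map in the opposite direction, $\eta:Y\to X$, from characters on $U_{\aleph_0}(\mathbf{X})$ via Theorem \ref{s9:t6}, recover surjectivity from Theorem \ref{s9:t2} applied to $\chi_x\circ\phi^{-1}$, and check continuity and openness directly from the clopen idempotents $\chi_C$ and the cozero base of $\mathbf{Y}$; the identity $U_{\aleph_0}(\mathbf{X})=C(\mathbf{X})$ then drops out because $f\mapsto f\circ\eta$ is injective on $C(\mathbf{X})$ while its restriction $\phi$ is already onto $C(\mathbf{Y})$. The notable payoff of your arrangement is that the hypothesis on the existence of $\beta_0\mathbf{X}$ is never invoked, so your argument, as written, establishes the formally stronger statement in which that hypothesis is dropped. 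Two small points you gloss over, which the paper also leaves implicit: the maps $\chi_y\circ\phi$ and $\chi_x\circ\phi^{-1}$ are a priori only unital ring homomorphisms into $\mathbb{R}$, and one needs Remark \ref{s9:r4}(i) (both rings are closed under square roots of nonnegative elements) to see that they automatically fix all real constants and hence are characters; and the passage from $\mathbf{Y}$ to $v_{\mathbb{R}}\mathbf{Y}$ rests on the unique-extension isomorphism $C(\mathbf{Y})\cong C(v_{\mathbb{R}}\mathbf{Y})$ furnished by Theorem \ref{s4:t8} and Proposition \ref{s1:p12}, exactly as in the paper.
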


First, we recall that, by Theorem 1.19 (c)(iv), it holds in $\mathbf{ZF}$ that if $\mathbf{X}$ is a Tychonoff space, then  $\mathbf{CMC}$ implies that $U_{\aleph_0}(\mathbf{X})=C(\mathbf{X})$ if and only if $\mathbf{X}$ is strongly zero-dimensional. Hence, we have the following proposition:

\begin{proposition}
	\label{s10:p2}
	$[\mathbf{ZF}]$ Let $\mathbf{X}$ be a non-empty strongly zero-dimensional $T_1$-space. Then $\mathbf{CMC}$ implies that there exists a non-empty Tychonoff space $\mathbf{Y}$ such that the rings $U_{\aleph_0}(\mathbf{X})$ and $C(\mathbf{Y})$ are isomorphic.
\end{proposition}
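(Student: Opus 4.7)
The plan is to take $\mathbf{Y}=\mathbf{X}$ itself and invoke the characterization of strong zero-dimensionality via $U_{\aleph_0}$ already recorded in Theorem \ref{s1:t21}.

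First I would note that every strongly zero-dimensional space is, by Definition \ref{s1:d15}(iii), completely regular; since $\mathbf{X}$ is also $T_1$ by hypothesis, $\mathbf{X}$ is a non-empty Tychonoff space. Next I would apply Theorem \ref{s1:t21}(c)(iv): under $\mathbf{CMC}$, for a completely regular space $\mathbf{X}$, strong zero-dimensionality is equivalent to the equality $U_{\aleph_0}(\mathbf{X})=C(\mathbf{X})$. Therefore $U_{\aleph_0}(\mathbf{X})=C(\mathbf{X})$, and the identity map on this common underlying set is a ring isomorphism between $U_{\aleph_0}(\mathbf{X})$ and $C(\mathbf{Y})$ with $\mathbf{Y}=\mathbf{X}$.

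There is essentially no obstacle here: the statement is an immediate consequence of the already-stated Theorem \ref{s1:t21}(c)(iv) together with the fact that ``strongly zero-dimensional $T_1$'' already entails ``Tychonoff.'' The only care needed is to remark explicitly that, since strong zero-dimensionality is defined in Definition \ref{s1:d15}(iii) as ``completely regular plus the separation-of-disjoint-zero-sets-by-clopens'' property, the $T_1$-hypothesis immediately upgrades $\mathbf{X}$ to a Tychonoff space, which is what is required of the witness $\mathbf{Y}$.
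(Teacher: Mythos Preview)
Your proof is correct and is essentially the same as the paper's: the paper also observes (just before stating the proposition) that by Theorem~\ref{s1:t21}(c)(iv), under $\mathbf{CMC}$ a completely regular space $\mathbf{X}$ is strongly zero-dimensional iff $U_{\aleph_0}(\mathbf{X})=C(\mathbf{X})$, so one may take $\mathbf{Y}=\mathbf{X}$.
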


Our proof of Theorem \ref{s10:t1} will be completed if we show that the following theorem holds:

\begin{theorem}
	\label{s10:t3}
	$[\mathbf{ZF+CMC}]$ Let $\mathbf{X}$ be a non-empty $\mathbb{N}$-compact space such that $\beta_0\mathbf{X}$ exists. Suppose that there exists a non-empty Tychonoff space $\mathbf{Y}$ such that the rings $U_{\aleph_0}(\mathbf{X})$ and $C(\mathbf{Y})$ are isomorphic. Then $\mathbf{X}$ is strongly zero-dimensional.
\end{theorem}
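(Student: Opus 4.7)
The plan is to construct a homeomorphism $\sigma:\mathbf{X}\to v\mathbf{Y}$ and to show independently that $v\mathbf{Y}$ is strongly zero-dimensional, so that $\mathbf{X}$ inherits the property. First, by Theorem \ref{s2:t29}(ii), the hypotheses ($\mathbf{CMC}$, existence of $\beta_0\mathbf{X}$, and the isomorphism $C(\mathbf{Y})\cong U_{\aleph_0}(\mathbf{X})$) yield that $\beta_T\mathbf{Y}$ exists and $\beta_T\mathbf{Y}\approx\beta_0\mathbf{X}$. Since $\beta_0\mathbf{X}$ is a compact zero-dimensional $T_1$-space, it is strongly zero-dimensional; Theorem \ref{s1:t25} gives $\beta^{f}\mathbf{Y}\approx\beta_T\mathbf{Y}$, and then Theorem \ref{s2:t6}(i) forces $\mathbf{Y}$ to be strongly zero-dimensional. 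Because $\mathbb{R}$ is trivially $\mathbb{R}$-compact, Theorem \ref{s5:t2} then lifts this to $v\mathbf{Y}$.

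Next I would pass from $\mathbf{Y}$ to $v\mathbf{Y}$: by Definition \ref{s1:d5} and Proposition \ref{s1:p12}, restriction is a ring isomorphism $C(v\mathbf{Y})\to C(\mathbf{Y})$, so composing with $\phi:C(\mathbf{Y})\to U_{\aleph_0}(\mathbf{X})$ yields a ring isomorphism $\tilde\phi:C(v\mathbf{Y})\to U_{\aleph_0}(\mathbf{X})$; after this reduction I may assume $\mathbf{Y}$ itself is realcompact, Tychonoff and strongly zero-dimensional. Now I exploit characters. The isomorphism $\phi$ necessarily fixes the constant functions, as $\mathbb{R}$ admits no nontrivial field automorphism and positivity in each ring is captured by being a square. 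Hence, for every $x\in X$, the composition $\chi_x\circ\phi$ is a character of $C(\mathbf{Y})$; Theorem \ref{s9:t2} applied to the realcompact $\mathbf{Y}$ produces a unique $\sigma(x)\in Y$ with $\phi(f)(x)=f(\sigma(x))$ for every $f\in C(\mathbf{Y})$. Symmetrically, Theorem \ref{s9:t6} (using $\mathbf{CMC}$ and $\mathbb{N}$-compactness of $\mathbf{X}$) produces a unique $\tau(y)\in X$ with $\phi^{-1}(g)(y)=g(\tau(y))$ for every $g\in U_{\aleph_0}(\mathbf{X})$. The identity $\phi\circ\phi^{-1}=\mathrm{id}$ yields $g(\tau(\sigma(x)))=g(x)$ for all $g\in U_{\aleph_0}(\mathbf{X})$, and since $\mathbf{X}$ is a zero-dimensional $T_1$-space the family $C(\mathbf{X},\mathbf{2})\subseteq U_{\aleph_0}(\mathbf{X})$ separates points of $\mathbf{X}$; arguing symmetrically, $\sigma$ and $\tau$ are mutually inverse bijections satisfying $\phi(f)=f\circ\sigma$.

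To upgrade $\sigma$ to a homeomorphism I would use complete regularity of both spaces. Given an open $U\subseteq Y$ and $x\in\sigma^{-1}[U]$, I pick $f\in C(\mathbf{Y})$ with $f(\sigma(x))\neq 0$ and $f\upharpoonright(Y\setminus U)\equiv 0$; then $\phi(f)=f\circ\sigma\in C(\mathbf{X})$ is continuous, nonzero at $x$, and vanishes off $\sigma^{-1}[U]$, so $\sigma^{-1}[U]$ is a neighbourhood of $x$. The symmetric argument with $\phi^{-1}$ yields continuity of $\tau=\sigma^{-1}$. Hence $\sigma$ is a homeomorphism of $\mathbf{X}$ onto $v\mathbf{Y}$, and the first paragraph supplies strong zero-dimensionality of $\mathbf{X}$.

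The main difficulty I foresee is the character-theoretic step. Theorem \ref{s9:t6} genuinely requires $\mathbf{CMC}$, because the argument that every character of $U_{\aleph_0}(\mathbf{X})$ for an $\mathbb{N}$-compact $\mathbf{X}$ is a point evaluation rests on density of $C(\mathbf{X},\mathbb{R}_{disc})$ in $U_{\aleph_0}(\mathbf{X})$, itself a consequence of $\mathbf{CMC}$ via Theorem \ref{s1:t21}(c). The preliminary passage to $v\mathbf{Y}$ is equally indispensable: without realcompactness on the $\mathbf{Y}$-side, characters of $C(\mathbf{Y})$ need not arise from points of $Y$, so the target of $\sigma$ would be missing. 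Once these two symmetric character theorems are in place, the remaining homeomorphism check is routine but depends crucially on zero-dimensionality of $\mathbf{X}$ (supplying the separating family $C(\mathbf{X},\mathbf{2})$) and on the fact that $\phi$ fixes the constants.
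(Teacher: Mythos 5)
Your argument is correct, and its first half coincides with the paper's: both invoke Theorem~\ref{s2:t29}(ii) to get $\beta_T\mathbf{Y}\approx\beta_0\mathbf{X}$, deduce via Theorems~\ref{s1:t25} and~\ref{s2:t6} that $\mathbf{Y}$ is strongly zero-dimensional, replace $\mathbf{Y}$ by $v_{\mathbb{R}}\mathbf{Y}$, and then use the isomorphism to turn each $x\in X$ into a character on $C(\mathbf{Y})$, which Theorem~\ref{s9:t2} realizes as evaluation at a point $\sigma(x)\in Y$. Where you diverge is in proving that $\sigma$ is a homeomorphism onto $Y$: the paper shows (deferring details to the reference \cite{ol}) that $\sigma$ is a homeomorphic embedding with dense image and then forces surjectivity by extending $\mathbb{N}$-valued functions from $\sigma[X]$ to $\mathbf{Y}$ and invoking $\mathbb{N}$-compactness of $\sigma[X]$; you instead run the character argument symmetrically, applying Theorem~\ref{s9:t6} on the $\mathbf{X}$-side to manufacture an explicit inverse $\tau$, and then verify bicontinuity directly from complete regularity of $\mathbf{Y}$ and zero-dimensionality of $\mathbf{X}$. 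Your route is more self-contained (it replaces the appeal to \cite{ol} with a theorem already proved in the paper) at the cost of a second use of $\mathbf{CMC}$ through Theorem~\ref{s9:t6}, which is harmless here since $\mathbf{CMC}$ is already hypothesized; you are also right to flag, and correctly justify, the point the paper leaves implicit, namely that the ring isomorphism must fix the constant functions so that the induced evaluations really are characters in the sense of Definition~\ref{s9:d1}.
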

\begin{proof}
	Assuming $\mathbf{ZF+CMC}$, let us fix a non-empty Tychonoff space $\mathbf{Y}$ such that the rings $U_{\aleph_0}(\mathbf{X})$ and $C(\mathbf{Y})$ are isomorphic. By Theorem \ref{s2:t29},  $\beta_{T}\mathbf{Y}$ exists and is homeomorphic with $\beta_0\mathbf{X}$. Therefore, $\beta_T\mathbf{Y}$ is strongly zero-dimensional. By Theorem 1.21, the extension $\beta^{f}\mathbf{Y}$ of $\mathbf{Y}$ is compact and equivalent to $\beta_T\mathbf{Y}$. Hence $\beta^{f}\mathbf{Y}$ is zero-dimensional. By Theorem \ref{s2:t6}, $\mathbf{Y}$ is strongly zero-dimensional. It follows from Theorem \ref{s1:t21} that $C(\mathbf{Y})=U_{\aleph_0}(\mathbf{Y})$.  Without loss of generality, we may assume that $\mathbf{Y}$ is realcompact because we may replace $\mathbf{Y}$ with $v_{\mathbb{R}}\mathbf{Y}$.
	
	Let $\psi: C(\mathbf{Y})\to U_{\aleph_0}(\mathbf{X})$ be a ring isomorphism. Similarly to \cite{ol}, for every $x\in X$, we define a character $\phi_x: C(\mathbf{Y})\to\mathbb{R}$ as follows: 
	$$(\forall f\in C(\mathbf{Y}))\text{ } \phi_x(f)=\psi(f)(x).$$
	Since $\mathbf{Y}$ is realcompact, it follows from Theorem \ref{s9:t2} and Remark \ref{s9:r03} that there exists a mapping $\pi: X\to Y$ such that:
	$$(\forall x\in X)(\forall f\in C(\mathbf{Y})) \text{ } \phi_x(f)=f(\pi(x)).$$
	In much the same way, as in \cite{ol}, we can show that $\pi$ is a homeomorphic embedding of $\mathbf{X}$ into $\mathbf{Y}$ such that $\pi[X]$ is dense in $\mathbf{Y}$. Furthermore, we can show that, for every $f\in U_{\aleph_0}(\pi[X])$, there exists $T\in U_{\aleph_0}(\mathbf{Y})$ with $T\circ\pi= f\circ\pi$. We deduce that if $f\in C(\pi[X], \mathbb{N})$, then there exists $T\in C(\mathbf{Y}, \mathbb{N})$ such that $T\circ\pi=f\circ\pi$. This, together with the $\mathbb{N}$-compactness of $\pi[X]$, implies that $\pi$ is a surjection onto $\mathbf{Y}$. Hence, the spaces $\mathbf{X}$ and $\mathbf{Y}$ are homeomorphic and, in consequence, $\mathbf{X}$ is strongly zero-dimensional as required.
\end{proof}

\begin{corollary}
	\label{s10:c4}
	$[\mathbf{ZF+CMC}]$ 
	Let $\mathbf{X}$ be a non-empty $\mathbb{N}$-compact space for which $\beta_0\mathbf{X}$ exists. Suppose that $\mathbf{Y}$ is a realcompact space such that the rings $U_{\aleph_0}(\mathbf{X})$ and $C(\mathbf{Y})$ are isomorphic. Then $\mathbf{X}$ and $\mathbf{Y}$ are homeomorphic strongly zero-dimensional spaces.
\end{corollary}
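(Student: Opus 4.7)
The plan is to obtain Corollary \ref{s10:c4} as an almost immediate consequence of Theorem \ref{s10:t3} by re-reading the proof of that theorem under the stronger hypothesis of Corollary \ref{s10:c4}. First, I would apply Theorem \ref{s10:t3} directly to the realcompact space $\mathbf{Y}$ to conclude that $\mathbf{X}$ is strongly zero-dimensional; combining this with Theorem \ref{s1:t21}(c)(iv) (available under $\mathbf{CMC}$) yields the equality $U_{\aleph_0}(\mathbf{X}) = C(\mathbf{X})$, so the given ring isomorphism $\psi: C(\mathbf{Y}) \to U_{\aleph_0}(\mathbf{X})$ may be regarded as an isomorphism $\psi: C(\mathbf{Y}) \to C(\mathbf{X})$.

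Next I would re-run the character construction from the proof of Theorem \ref{s10:t3}, noting that the hypothesis that $\mathbf{Y}$ is already realcompact removes the need for the replacement of $\mathbf{Y}$ by $v_{\mathbb{R}}\mathbf{Y}$. Explicitly, for each $x \in X$, define $\phi_x: C(\mathbf{Y}) \to \mathbb{R}$ by $\phi_x(f) = \psi(f)(x)$; each $\phi_x$ is a character on $C(\mathbf{Y})$, and since $\mathbf{Y}$ is realcompact and Tychonoff, Theorem \ref{s9:t2} together with Remark \ref{s9:r03} supplies a unique point $\pi(x) \in Y$ such that $\phi_x(f) = f(\pi(x))$ for all $f \in C(\mathbf{Y})$. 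The resulting mapping $\pi: X \to Y$ is, by the same verifications as in the proof of Theorem \ref{s10:t3}, a homeomorphic embedding with $\pi[X]$ dense in $\mathbf{Y}$, and any $f \in C(\pi[X], \mathbb{N})$ admits $T \in C(\mathbf{Y}, \mathbb{N})$ with $T \circ \pi = f \circ \pi$.

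The final step is surjectivity of $\pi$. Since $\mathbf{X}$ is $\mathbb{N}$-compact and $\pi$ is a homeomorphism of $\mathbf{X}$ onto $\pi[X]$, the subspace $\pi[X]$ of $\mathbf{Y}$ is also $\mathbb{N}$-compact. Assuming for contradiction that there exists $y_0 \in Y \setminus \pi[X]$, one separates $\pi[X]$ from $y_0$ using the $\mathbb{N}$-valued extendability, producing an $\mathbb{N}$-valued continuous function on $\pi[X]$ with no continuous extension to $\pi[X] \cup \{y_0\}$ — contradicting $\mathbb{N}$-compactness of $\pi[X]$ via Theorem \ref{s4:t16}(ii) applied inside $\mathbf{Y}$. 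Hence $\pi$ is a homeomorphism of $\mathbf{X}$ onto $\mathbf{Y}$, and since $\mathbf{X}$ is strongly zero-dimensional, so is $\mathbf{Y}$.

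The only genuine concern is bookkeeping rather than a real obstacle: one must check that the extendability of $\mathbb{N}$-valued functions used to force surjectivity is preserved when we skip the step ``replace $\mathbf{Y}$ by $v_{\mathbb{R}}\mathbf{Y}$,'' but this is automatic because $\mathbf{Y}$ is already realcompact, and the transport of $\mathbb{N}$-valued functions through the isomorphism $\psi$ uses only Theorem \ref{s5:t4}(i) and $\mathbf{CMC}$ exactly as in the proof of Theorem \ref{s10:t3}.
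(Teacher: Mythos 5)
Your proposal is correct and follows the paper's own (implicit) route: the corollary is obtained by re-running the proof of Theorem \ref{s10:t3} with the replacement of $\mathbf{Y}$ by $v_{\mathbb{R}}\mathbf{Y}$ omitted (unnecessary since $\mathbf{Y}$ is already realcompact), so that the embedding $\pi$ lands onto $\mathbf{Y}$ itself and strong zero-dimensionality transfers along the homeomorphism. One small correction to your surjectivity step: the contradiction is not that some $\mathbb{N}$-valued function on $\pi[X]$ fails to extend to $\pi[X]\cup\{y_0\}$, but rather that \emph{every} $f\in C(\pi[X],\mathbb{N})$ does extend over $\mathbf{Y}$, which by Theorem \ref{s4:t2}(ii) forces the $\mathbb{N}$-compact dense subspace $\pi[X]$ to equal any Hausdorff space containing it densely with this extension property, so $\pi[X]=Y$.
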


\section{The shortlist of open problems}
\label{s11}

Finally, in this section,  for the convenience of readers, we collect the open problems posed in the previous sections and add several new ones to show possible directions for future research in this field.
\begin{enumerate}
	\item  (Problem \ref{s2:q7}.) Is there a model $\mathcal{M}$ of $\mathbf{ZF}$ in which there is a strongly zero-dimensional $T_1$-space $\mathbf{X}$ for which $\beta_0\mathbf{X}$ exists in $\mathcal{M}$ but $\beta\mathbf{X}$ does not exist in $\mathcal{M}$? 
	
	\item  (Problem \ref{s2:q8}.) Is there a model $\mathcal{M}$ of $\mathbf{ZF}$ in which there is a Tychonoff space $\mathbf{X}$ for which $\beta_T\mathbf{X}$ exists in $\mathcal{M}$ but $\beta\mathbf{X}$ does not exist in $\mathcal{M}$? 
	
	\item (Problem \ref{s2:q9}.) Can a Cantor cube fail to be strongly zero-dimensional in a model of $\mathbf{ZF}$?
	
	\item Is it provable in $\mathbf{ZF}$ that, for every strongly zero-dimensional space $\mathbf{X}$, the equality $U_{\aleph_0}(\mathbf{X})=C(\mathbf{X})$ holds? (See Remark \ref{s2:r14}.)
	
	\item Is there a model of $\mathbf{ZF+\neg CMC}$ in which the following statement is true:  ``There exists a non-empty zero-dimensional $T_1$-space $\mathbf{X}$ for which $\beta_0\mathbf{X}$ exists but the spaces $\beta_0\mathbf{X}$ and $\BMax(U_{\aleph_0}(\mathbf{X}))$ are not homeomorphic'' ? (Cf. Theorem \ref{s2:t22}.)
	
	\item Is it provable in $\mathbf{ZF}$ that, for every non-empty Tychonoff space $\mathbf{Y}$ and every non-empty zero-dimensional $T_1$-space $\mathbf{X}$ such that $\beta_0\mathbf{X}$ exists, if the rings $C(\mathbf{Y})$ and $U_{\aleph_0}(\mathbf{X})$ are isomorphic, then $\beta_T\mathbf{Y}$ exists and is homeomorphic with $\beta_0\mathbf{X}$? (See Theorem \ref{s2:t29}(ii).)
	
	\item (Problem \ref{s3:q7}.) Is there a model of $\mathbf{ZF}$ in which there exists a Tychonoff space $\mathbf{X}$ of countable pseudocharacter for which $(\mathbf{X})_z$ is not discrete?
	
	\item Can a locally compact Hausdorff space $\mathbf{E}$ fail to be compactly $\mathbf{E}$-Urysohn in a model of $\mathbf{ZF}$?
	
	\item (Problem \ref{s3:q8}.) Is there a model of $\mathbf{ZF}$ in which there exists a Tychonoff space $\mathbf{X}$ for which $(\mathbf{X})_z\neq (\mathbf{X})_{\delta}$? 
	
	\item (Problem \ref{s5:q8}.) Let $\mathbf{E}$ be a Tychonoff space such that $\mathbb{R}_{disc}$ is $\mathbf{E}$-compact. 
	\begin{enumerate}
		\item[(i)]  If $\mathbf{X}$ is an $\mathbf{E}$-completely regular $P$-space, may $v_{\mathbf{E}}\mathbf{X}$ fail to be a $P$-space in a model of $\mathbf{ZF}$?
		\item[(ii)] If $\mathbf{X}$ is a non-empty $\mathbf{E}$-completely regular space, may the rings $U_{\aleph_0}(\mathbf{X})$ and $U_{\aleph_0}(v_{\mathbf{E}}\mathbf{X})$ fail to be isomorphic in a model of $\mathbf{ZF}$?
	\end{enumerate}
	
	\item Is there a model of $\mathbf{ZF+\neg CMC}$ in which there exists a Tychonoff, not realcompact space $\mathbf{X}$ such that every $z$-ultrafilter in $\mathbf{X}$ with the countable intersection property is fixed? (See Problem \ref{s6:q7} and Corollary \ref{s6:c11}.)
	
	\item Is there a model $\mathcal{M}$ of $\mathbf{ZF+\neg CMC}$ in which there exists a Tychonoff, not realcompact space $\mathbf{X}$ which can be expressible in $\mathcal{M}$ as a countable union of $z$-embedded subspaces of $\mathbf{X}$? (See Theorem \ref{s6:t12}.)
	
	\item Is there a model of $\mathbf{ZF+\neg CMC}$ in which it is true that a $c$-embedded subspace of a zero-dimensional $T_1$-space $\mathbf{X}$ need not be $c_{\delta}$-embedded in $\mathbf{X}$? (See Proposition \ref{s6:p14}.)
	
	\item  If a zero-dimensional $T_1$-space $\mathbf{X}$ is expressible as a ountable union of its $c_{\delta}$-embedded (or $c$-embedded) $\mathbb{N}$-compact subspaces, may $\mathbf{X}$ fail to be $\mathbb{N}$-compact in a model of $\mathbf{ZF+\neg CMC}$? (See Theorem \ref{s6:t16} and Corollary \ref{s6:c17}.)
	
	\item Can a Baire set in a realcompact space fail to be realcompact in a model of $\mathbf{ZF+\neg CMC}$? (See Theorem \ref{s7:t10}(i).)
	
	\item Can a zero-Baire set in an $\mathbb{N}$-compact space fail to be $\mathbb{N}$-compact in a model of $\mathbf{ZF+\neg CMC}$? (See Theorem \ref{s7:t10}(ii).)
	
	\item Can a weakly $\sigma$-additive $z$-measure on a topological space fail to be countably additive in a model of $\mathbf{ZF+\neg CMC}$? (See Proposition \ref{s8:p5} and Definition \ref{s8:d6}.)
	
	\item (Problem \ref{s8:q10}.) May a weakly $\sigma$-additive $z$-measure on a realcompact space $\mathbf{X}$ fail to be countably additive in a model of $\mathbf{ZF}$?
	
	\item  (Question \ref{s9:q7}.) Is it provable in $\mathbf{ZF}$ that, for every non-empty $\mathbb{N}$-compact space $\mathbf{X}$, every character on $U_{\aleph_0}(\mathbf{X})$ is determined by a point of $X$?
	
	\item (Question \ref{s9:q9}.) Is it true in $\mathbf{ZF}$ that a non-empty zero-dimensional $T_1$-space is $\mathbb{N}$-compact if and only if every real ideal of $U_{\aleph_0}(\mathbf{X})$ is fixed?
	
	\item Is there a model $\mathcal{M}$ of $\mathbf{ZF+\neg CMC}$ in which there exists a non-empty strongly zero-dimensional $T_1$-space such that, for every non-empty Tychonoff space $\mathbf{Y}$ in $\mathcal{M}$, the rings $C(\mathbf{Y})$ and $U_{\aleph_0}(\mathbf{X})$ are not isomorphic in $\mathcal{M}$? (See Proposition \ref{s10:p2}.)
	
	\item Is there a model $\mathcal{M}$ of $\mathbf{ZF+\neg CMC}$ in which there exists a non-empty,  $\mathbb{N}$-compact, not strongly zero-dimensional space $\mathbf{X}$ and a non-empty Tychonoff space $\mathbf{Y}$, such that the rings $C(\mathbf{Y})$ and $U_{\aleph_0}(\mathbf{X})$ are isomorphic in $\mathcal{M}$ and $\mathbf{X}$ has its Banaschewski compactification in $\mathcal{M}$? (See Theorem \ref{s10:t3}.)
\end{enumerate}

Of course, many other relevant open problems could be added to this shortlist.

\end{document}